\definecolor{aoenglish}{rgb}{0.0, 0.5, 0.0}
\newcommand{\bbone}{\mathbbm{1}}
\newif\ifverbose
\author{Antonio Mejías Gil}
\title{A Main Conjecture in non-commutative Iwasawa theory}
\begin{document}

    \frontmatter

    {\pagestyle{plain}

\thispagestyle{empty}

\centerline{\Huge A Main Conjecture}

\vspace{1em}

\centerline{\Huge in non-commutative Iwasawa theory}

\vspace{5em}

This is the doctoral dissertation of
\vspace{1.5em}

\centerline{\huge Antonio Mejías Gil}

\centerline{\large (\texttt{antonio.mejias-gil@stud.uni-due.de})}

\vspace{0.5em}
submitted to the Faculty of Mathematics of the 
\vspace{1.5em}

\centerline{\Large University of Duisburg-Essen}

\vspace{.5em}
on November 2, 2022.

\vspace{3em}

\centerline{\Large\textbf{Abstract}}

    We formulate a new equivariant Main Conjecture in Iwasawa theory of number fields and study its properties. This is done for arbitrary one-dimensional $p$-adic Lie extensions $L_\infty/K$ containing the cyclotomic $\ZZ_p$-extension $K_\infty$ of the base field. As opposed to existing conjectures in the area, no requirement that $L_\infty/K$ be abelian or that $L_\infty$ be totally real is imposed. We prove the independence of the Main Conjecture of essentially all of its parameters and explore its functorial behaviour. It is furthermore shown that, to a large extent, this new conjecture generalises existing ones of Burns, Kurihara and Sano and Ritter and Weiss, which enables us to deduce its validity in several cases.

%\newpage
%\thispagestyle{empty}
%\addtocounter{page}{-1}
%\mbox{}

\newpage

\thispagestyle{empty}

\centerline{\Large\textbf{Acknowledgements}}

\vspace{1.5em}

    I am indebted to the following people for their role in this project:

    First and foremost, my supervisor Andreas Nickel, whose mathematical insight has been invaluable. Thank you for your immense generosity, dedication, patience and encouragement.

    My parents, my fiancée Marysia and my brother Gonza, who have supported me and made this journey so much more meaningful.
    
    The people from the ESAGA, especially Bence, Chirantan, Gürkan and Nils.

    On a more academic note, I would like to thank Alexandre Maksoud, Dominik Bullach, Otmar Venjakob and Sören Kleine for insightful conversations.

    I cannot forget my Master's advisor, Daniel Macías Castillo, without whom I might not have started a doctorate in Essen in the first place.

    Last but not least, many thanks goes to Werner Bley for agreeing to review this dissertation.

%\newpage
%\thispagestyle{empty}
%\addtocounter{page}{-1}
%\mbox{}

        \setcounter{page}{1}
        \tableofcontents

    \cleardoublepage}

\newpage
\chapter*{Introduction}
\addcontentsline{toc}{chapter}{Introduction}

    The study of class numbers occupies a central place in modern number theory. In this regard, much credit is due to Lamé's 1847 incorrect proof of Fermat's last theorem for prime exponent $p$, which failed to account for the failure of unique factorisation in what we recognise today as rings of integers of cyclotomic fields. Kummer realised that the argument could be salvaged by replacing the complex numbers with what he referred to as \textit{ideal numbers} under the assumption that $p$ does not divide the \textit{class number} of $\QQ(\zeta_p)$. This and many other examples showcase the usefulness of this invariant for solving Diophantine equations and, more broadly, understanding the arithmetic of more general rings than the integers.

    In many instances, the problem of determining the class number $h_K$ of an arbitrary number field $K$ is intractable in isolation. One of the most successful approaches is to instead consider its behaviour along a tower of such fields. The inception of this area of research can be traced back to the work of Iwasawa in the early 1960s, which resulted in an asymptotic law for the $p$-part of the class number of a number field $K$ along its \textit{cyclotomic $\ZZ_p$-extension} $K = K_0 \subseteq K_1 \subseteq \cdots \subseteq K_\infty$. This behaviour turned out to be governed by a certain module $X_{nr}$ over the so-called Iwasawa algebra $\Lambda(\Gamma)$, a completed group ring isomorphic to the power series ring $\ZZ_p[[T]]$. This sparked interest in the study of this and other related modules, which were eminently algebraic in nature, and in the question of whether they could be constructed using analytic methods as well. Iwasawa postulated, in what came to be known as his Main Conjecture, that the characteristic polynomial of $X_{nr}$ for the base field $K = \QQ$ coincides, up to a unit of $\Lambda(\Gamma)$, with a particular power series arising from the \textit{$p$-adic $L$-function} defined by Kubota and Leopoldt - an analytic object interpolating values of the Riemann zeta function.

    Given the depth and richness of both disciplines, the idea of relating algebraic and analytic invariants seems, with the benefit of hindsight, extremely natural. The quintessential example of such a connection is the analytic class number formula
    \[
        \lim_{s \to 1} (s - 1) \zeta_K(s) = \frac{2^{r_1} (2\pi)^{r_2} h_K R_K}{w_K \sqrt{\abs{d_K}}}.
    \]
    The left-hand side is the residue at 1 of the Dedekind zeta function of the number field $K$, which coincides with Riemann's celebrated function when $K = \QQ$. On the right-hand side are a number of algebraic invariants describing the arithmetic of $K$ - most conspicuously, its class number $h_K$, which is often the most challenging magnitude to determine directly. The class number formula therefore offers an alternative, often effective method to compute it.

    A vast generalisation of said formula comes in the form of the equivariant Tamagawa Number Conjecture (eTNC) proposed by Burns and Flach in \cite{bf_etnc}. Much research has been devoted to this conjecture, originally formulated for arbitrary motives. However, it is only Tate motives that concern this discussion, as they amount to certain extensions of number fields. In their case, the eTNC can intuitively be thought of as a global, finite-level version of a Main Conjecture - and yet making this connection precise is the subject of active research. On one hand, the limit of the $p$-part of the eTNC along an infinite tower should imply the validity of a Main Conjecture for that tower. On the other, the Main Conjecture for a given tower should yield a particular instance of the eTNC when passing to finite level. In practice, the latter is a more difficult problem due to \textit{descent issues}. The article \cite{bks} of Burns, Kurihara and Sano, one of the main inspirations for this work, explores precisely this question. More specifically, it is shown in it that eTNC can indeed be recovered from a specific Main Conjecture under the additional assumption of the Mazur-Rubin-Sano conjecture and a classical conjecture of Gross. Another example of the significance of the eTNC is the fact that the Birch and Swinnerton-Dyer conjecture also arises as a particular case of it, namely when the motive in question arises from an elliptic curve.

    Although it would be futile to try and list all existing Main Conjectures in Iwasawa theory of number fields, we point out a few milestones which may help put this text into perspective. For a much more authoritative survey on the topic, we refer to \cite{arithmetic_of_l}. Iwasawa's original conjecture described above was quickly extended to the case of the cyclotomic $\ZZ_p$-extension $L_\infty$ of a totally real finite Galois extension  $L$ of $K$. The Kubota-Leopoldt $p$-adic $L$-function still serves as the analytic object when $L$ is an abelian extension of $K = \QQ$, but in the general case it must be replaced by a new type of $p$-adic $L$-function constructed, independently, by Pierrette Cassou-Nogu\`es (\cite{cassou-nogues}), Deligne and Ribet (\cite{deligne_ribet}) and Barsky (\cite{barsky}). The earliest conjectures in this context were \textit{character-wise}. In other words, they claimed a relation between a quotient of power series coming from the $L$-function associated to a single irreducible character $\chi$ of $\Gal(L/K)$, and one arising from the \textit{$\chi$-part} of some Iwasawa module. The first major breakthrough in this direction came in 1984, when Mazur and Wiles settled the abelian case with $K = \QQ$ in \cite{mazur_wiles}. Wiles then extended these techniques and gave a proof of the general totally real case six years later (cf. \cite{wiles}). A completely different approach would later be put forward by Rubin, who gave a proof of the Main Conjecture with base field $\QQ$ making use of the notion of an Euler system introduced by Kolyvagin and ideas of Thaine. This tool also allowed him to prove a Main Conjecture for abelian extensions of \textit{imaginary quadratic} fields in \cite{rubin}.

    In the early 2000s, the first examples of \textit{equivariant} Main Conjectures made an appearance - due in large part to the interest in the eTNC. In this context, the term ``equivariant'' alludes to the fact that information about \textit{all characters} of some Galois group is considered simultaneously, by contrast to character-wise conjectures. One of the first examples of this is Burns and Greither's equivariant conjecture from \cite{burns_greither}, which they proved for abelian extensions of $\QQ$ and used to deduce the corresponding case of the eTNC. Shortly after, Ritter and Weiss formulated Main Conjecture for arbitrary totally real number fields in \cite{rwii} - with no abelianity requirement. A series of follow-up papers would culminate with a proof of their conjecture in the so-called $\mu = 0$ case (cf. \cite{rw_on_the}). Independently, Kakde formulated and proved in \cite{kakde_mc} a Main Conjecture in the totally real, $\mu = 0$ case, which additionally allowed for $\ZZ_p$-extensions of higher rank. Nickel (cf. \cite{nickel_plms}) and Venjakob (cf. \cite{venjakob_on_rw}) independently proved Kakde's conjecture to be equivalent to that of Ritter and Weiss. The case of imaginary quadratic base field initiated by Rubin did not stay isolated from this equivariant trend, with some cases of such conjectures being settled by Bley in \cite{bley} and, more recently, Bullach and Hofer in \cite{bullach_hofer}, as an intermediate step towards proving new instances of the eTNC.

    Research in this field remains very active. In 2017, Burns, Kurihara and Sano proposed the aforementioned Main Conjecture for abelian extensions of arbitrary number fields which, as a distinctive feature, is formulated in the language of determinant functors. We single out recent work of Dasgupta and Kakde proving important cases of the Brumer-Stark conjecture and hence of Hilbert's twelfth problem (see \cite{dasgupta_kakde_twelfth} and \cite{dasgupta_kakde}); and the subsequent proof of an abelian Main Conjecture in remarkable generality by Johnston and Nickel in \cite{johnston_nickel}.

    The aim of this work is to formulate a Main Conjecture which generalises, and in doing so also unifies, some of the above. Namely, our two conjectures of reference are those of Ritter and Weiss, and of Burns, Kurihara and Sano. There are essentially two aspects in favour this new conjecture's generality. Firstly, unlike in the conjecture of Ritter and Weiss, the field $L_\infty$ is not required to be totally real. As an immediate consequence, the existence of $p$-adic $L$-functions is not guaranteed, and must therefore be stated as part of the conjecture and its implications studied. A more subtle effect of this departure from the totally real case is that some of the relevant arithmetic modules are no longer torsion over the relevant Iwasawa algebra, which complicates the definition of an algebraic object in the conjecture. This is resolved through the use of refined Euler characteristics. Secondly, unlike in the conjecture of Burns, Kurihara, and Sano, we do not assume $\cG = \Gal(L_\infty/K)$ to be abelian. This has a profound impact on the algebraic machinery and representation theory involved. Furthermore, it prevents the conjecture from being stated in the language of classical determinant functors.

    As far as generality is concerned, a special mention is due to Fukaya and Kato's article \cite{fk}, which outlines a very general - and hence necessarily non-explicit - framework for the formulation of Main Conjectures \textit{for motives}. However, the discussion below does not seem to be fully covered by the proposed broad  motivic umbrella: the article in question concerns \textit{critical motives} only, which in the number field case corresponds to the non-vanishing of the relevant $L$-value - but the analytic objects in our conjecture interpolate \textit{leading coefficients}.

    This work is structured as follows: In the preparatory chapter 0, we establish some notation and lay out the basic tools needed throughout the text in four different areas: Iwasawa algebras and their modules, representation theory of finite groups, Artin $L$-series and algebraic $K$-theory of rings. The last of these is especially useful for illustrating the general outline of the conjecture, since it is in the so-called \textit{localisation sequence} of $K$-theory that the statement of the conjecture takes place. More specifically, two fundamental rings in our endeavours will be a certain Iwasawa algebra $\Lambda(\cG)$ and its total ring of fractions $\cQ(\cG)$, which is Artinian semisimple. Then, letting $Z(\cQ(\cG))$ denote the centre of the latter, $K$-theory provides homomorphisms of abelian groups
    \[
        \units{Z(\cQ(\cG))} \xleftarrow{\nr} K_1(\cQ(\cG)) \xrightarrow{\partial} K_0(\Lambda(\cG), \cQ(\cG))
    \]
    known as the \textit{reduced norm} and the \textit{boundary} (or \textit{connecting}) \textit{homomorphism}. The strategy is to define an analytic and an algebraic element in the first and last terms, respectively, and conjecture that they both arise as images of the same \textit{zeta element} in $K_1(\cQ(\cG))$.

    The aim of chapter \ref{chap:construction_of_the_complex} is to construct the \textit{main complex} of $\Lambda(\cG)$-modules $\cC_{S, T}\q$, which constitutes the central algebraic object of the Main Conjecture. It arises as the cone of a morphism from local to global complexes, both of which are instances of the \textit{translation functor} developed by Ritter and Weiss in \cite{rw_tate_sequence}. As proved in section \ref{sec:description_in_terms_of_rgamma_complexes}, the resulting complex is isomorphic in the derived category of $\Lambda(\cG)$-modules to that employed by Burns, Kurihara and Sano in \cite{bks} - so this chapter gives a new, arguably more explicit description of the previously known object. In the process of defining $\cC_{S, T}\q$, we will also become acquainted with a multitude of Iwasawa modules relevant to the sequel.

    Chapter \ref{chap:formulation_of_the_main_conjecture} is devoted to the formulation of the Main Conjecture and therefore occupies a distinguished place in the discussion. The first section recalls the definition of refined Euler characteristics and explains how one can be defined for our main complex by means of a certain map $\alpha$. It is through the use of this machinery that $\cC_{S, T}\q$, together with a \textit{trivialisation} constructed from that map, can be regarded as an element of the relative $K_0$ group $K_0(\Lambda(\cG), \cQ(\cG))$. Sections \ref{sec:morphisms_on_finite_level} and \ref{sec:regulators} delve into the definition of \textit{regulators}, complex $p$-adic numbers that the special $L$-values need to be divided by before they can stand a chance of being interpolated. These regulators, based on the work of Tate on Stark's conjectures \cite{tate}, measure the difference between a finite-level map induced by $\alpha$ and the classical Dirichlet regulator map. The next section continues this analytic trend by studying how $Z(\cQ(\cG))$ decomposes as a direct product of quotient fields of power series rings by virtue of structural results of Ritter and Weiss. Only in the last section is the Main Conjecture finally stated, which takes place in two steps: first an \textit{Interpolation Conjecture} asserting the existence of power series quotients in $Z(\cQ(\cG))$ which interpolate regulated special $L$-values; and then an \textit{equivariant Main Conjecture}, which claims the existence of a zeta element in $K_1(\cQ(\cG))$ which is mapped to these analytic objects as well as to the refined Euler characteristic of $\cC_{S, T}\q$. A version of this conjecture \textit{with uniqueness} is also formulated, in line with existing conjectures

    The properties of the Main Conjecture are the subject of chapter \ref{chap:properties_of_the_main_conjecture}, the longest by a substantial margin. The conjecture is shown to be unconditionally independent of all of its parameters - where we do not include the extension $L_\infty/K$ itself - with the possible exception of an arbitrarily chosen isomorphism ${\beta \colon \CC_p \isoa \CC}$. Independence of the latter is only proved assuming Stark's conjecture as formulated by Tate. Nonetheless, we also prove a weaker, yet unconditional, form of independence of $\beta$. The remainder of the chapter explores functoriality properties. We first show that the conjecture for $L_\infty/K$ implies those for $L_\infty'/K$ and $L_\infty/K'$ with $L_\infty' \subseteq L_\infty$ and $K' \supseteq K$. Then the converse problem is studied, resulting in a characterisation of the conjecture for $L_\infty/K$ in terms of those for two different families of subextensions. Many particular features of the formulation of the Main Conjecture find their justification in the proofs in this chapter.

    The final chapter draws a rigorous connection to the aforementioned conjectures of Ritter and Weiss and of Burns, Kurihara and Sano. In the first section, we show that our conjecture is equivalent to a modified version of that in \cite{bks} in the cases when both can be formulated - that is, $L_\infty/K$ abelian and $K_\infty/K$ cyclotomic. This modification amounts to the replacement of a claim ``for all characters'' by the same one ``for almost all characters'' (meaning all but finitely many), a feature of our conjecture which is necessary for some of the results in chapter \ref{chap:properties_of_the_main_conjecture}. Section \ref{sec:the_conjecture_of_ritter_and_weiss} focuses instead on the conjecture of Ritter and Weiss. We show that, when $L$ is a CM field containing a primitive $p$-th root of unity, the \textit{minus part} of our Main Conjecture for $L_\infty/K$ holds if and only if that of Ritter and Weiss does for $L_\infty\pl/K$. It follows from these equivalence results that our conjecture holds whenever $L_\infty/K$ is abelian if the base field is either $\QQ$ or, under some additional restrictions, imaginary quadratic; and its minus part also does for the aforementioned type of CM extensions under some assumptions.

    Two small appendices are included at the end in order not to interrupt the flow of the exposition. The first one contains a proof that refined Euler characteristics behave well under extension and restriction of scalars, a fact which is crucial for the functoriality proofs of section \ref{sec:functoriality}. The second one recalls some basic properties of determinant functors and shows two simple results which are used in the comparison to the conjecture of Burns, Kurihara and Sano.

    Several directions for further research present themselves as natural continuations of this work. Ideally, the cyclotomic assumption should be done away with completely. For the vast majority of our discussion, it is enough for $K_\infty/K$ to satisfy the \textit{weak Leopoldt conjecture} and for no finite places of $K$ to split completely in $L_\infty$ - two properties which notably hold in the cyclotomic case. Removing these two assumptions would, however, require substantial changes. If, in addition, the rank-one $p$-adic Lie group $\cG = \Gal(L_\infty/K)$ could be replaced by one of arbitrary higher rank, the resulting conjecture would in fact rank among the most general in the Iwasawa theory of number fields. Another, perhaps more relevant question, is that of the specific relation to the eTNC - as already mentioned, this is the motivation behind several of the most recent Main Conjectures. However, this problem would also necessitate some new ideas beyond what is contained below. Lastly, we would be remiss not to point out the most evident of challenges: that of a general proof of the conjecture. Although hardly within reach at the moment, that is indeed the ultimate goal. For every Main Conjecture dreams of being proved, after all.

    \mainmatter

%\newpage
%\thispagestyle{empty}
%\addtocounter{page}{-1}
%\mbox{}

\newpage
\chapter{Notation, conventions and preliminaries}
\label{chap:notation_conventions_and_preliminaries}

    This preparatory chapter summarises some properties of certain classical objects which will be featured repeatedly in the sequel: Iwasawa algebras, representations of finite groups, $K$-theory and Artin $L$-functions. The notation for many of these is also established here. We will be less detailed than in subsequent chapters and defer some of the explanations to other sources. None of the results in this chapter are original.

    Some conventions and notation in place throughout the entire text are the following:
    \begin{itemize}
        \item{
            $0 \in \NN$.
        }
        \item{
            Rings have a unit element different from 0 but are not necessarily commutative. Given a ring $R$, we denote its centre by $Z(R)$. Analogously, we denote the centre of a group $G$ by $Z(G)$.
        }
        \item{
            The left ideal generated by an element $r$ of a ring $R$ is denoted by $\ideal{r}_R = \set{sr : s \in R}$, or simply $\ideal{r}$ if the ring is clear from context.
        }
        \item{
            Given a ring $R$ and a positive $n \in \NN$, we denote the ring of $n$-by-$n$ matrices over $R$ by $M_n(R)$.
        }
        \item{
            Modules are left modules unless otherwise stated. A finite module is one of finite order, rather than a finitely generated one.
        }
        \item{
            All topological groups are assumed to be Hausdorff.
        }
        \item{
            Unadorned $\otimes$ of modules always denotes $\otimes_\ZZ$. However, on basic tensors $m \otimes n \in M \otimes_R N$ we always omit the ring.
        }
        \item{
            If $\varphi \colon R \to S$ is a ring homomorphism and $f \colon M \to N$ is a homomorphism of (left) $R$-modules, then $S \otimes_\varphi f$ denotes the extension of $f$ to $S \otimes_\varphi M \to S \otimes_\varphi N$ by the identity on $S$. We may write $S \otimes_R -$ instead of $S \otimes_\varphi -$ if $\varphi$ is clear from context.
        }
        \item{
            If $M$ is a module over a ring $R$, then $M[0]$ denotes the cochain complex consisting of $M$ in degree 0 and trivial modules elsewhere; and for $i \in \ZZ$, $M[i] = (M[0])[i]$ denotes the shift of $M[0]$ by $i$, which consists of $M$ \textit{placed in degree $-i$} (not $i$) and trivial modules elsewhere.
        }
        \item{
            Given a profinite group $G$, we denote by $G^{ab}$ its abelianisation - that is, the quotient of $G$ by the closure of the subgroup generated by all commutators. We also denote by $G^{ab}(p)$ the maximal abelian pro-$p$ quotient of $G$.
        }
        \item{
            If $G$ is a topological group, then $H \leq_o G$ (resp. $H \leq_c G$) denotes that $H$ is an open (resp. closed) subgroup of $G$. The corresponding notation for normal subgroups is $\trianglelefteq_o, \trianglelefteq_c$.
        }
        \item{
            Given a field $E$ of characteristic $0$, we denote by $E^c$ an algebraic closure of $E$ and set ${G_E = \Gal(E^c/E)}$.
        }
        \item{
            An (algebraic) \textbf{number field}\index{number field} is a finite extension of $\QQ$, that is, a global field of characteristic 0. Given a rational prime $p$, a \textbf{$p$-adic field}\index{padic field@$p$-adic field} is a finite extension of $\QQ_p$, that is, a non-archimedean local field of characteristic 0. If $K$ is a number field or a $p$-adic field, then $\cO_K$ denotes its ring of integers.
        }
        \item{
            Given a set of places $\Sigma$ of a number field $K$ and an algebraic extension $L/K$, the set of places of $L$ above places in $\Sigma$ is denoted by $\Sigma(L)$. If the place $w$ is above $v$, we may refer to $w$ as a \textbf{prolongation} of $v$. In particular, $\set{v}(L)$ denotes the set of prolongations of $v$ to $L$.
        }
        \item{
            Given a set of places $\Sigma$ of a number field $K$ and a tower of algebraic extensions $L/K'/K$, we say that $L/K'$ is unramified at $\Sigma$ if it is so at every place of $K'$ above $\Sigma$  (i.e. at $\Sigma(K')$), and analogously for the terminology \textit{completely split at $\Sigma$}. The phrase \textbf{$\Sigma$-ramified} means \textit{unramified outside $\Sigma$}.
        }
        \item{
            Given a place $v$ of a number field $K$, we denote the corresponding normalised absolute value by $\abs{-}_v$. The completion of $K$ with respect to $\abs{-}_v$ is denoted by $K_v$.
        }
        \item{
            We denote the residue field of a number field at a non-archimedean place $v$ by $\kappa(v)$ (the number field being implicit in $v$). The \textbf{(absolute) norm} $\fN(v)$ of $v$ is defined as $\abs{\kappa(v)}$.
        }
        \item{
            If $K$ is a number field, $L/K$ is a (not necessarily finite) Galois extension and $w$ is a place of $L$, then $\Gal(L/K)_w$ denotes the decomposition group of $L/K$ at $w$. If $w$ is non-archimedean, we denote the corresponding inertia group by $I_w$ (the extension being implicit). Thus, in our notation, $\Gal(L/K)_w/I_w \iso \Gal(\kappa(w)/\kappa(v))$ for archimedean places (where $v$ is the place of $K$ below $w$).
        }
        \item{
            Given an archimedean place $v$ of a number field $K$ and an algebraic extension $L/K$, we say that a prolongation $w$ of $v$ to $L$ is \textbf{ramified} if it is complex and $v$ is real. In other words, if $v$ does not split completely in $L/K$. If $L/K$ is Galois, this is in turn equivalent to $\Gal(L/K)_w$ having order $2$ (as opposed to being trivial).
        }
    \end{itemize}

    We write $\blacksquare$ at the end of proofs and $\square$ at the end of other roman-font environments unless they consist of a bulleted list.

    \newpage
    \section{Iwasawa algebras and modules}
    \label{sec:iwasawa_algebras_and_modules}

        We shall first recall some basic properties of Iwasawa algebras and their modules. These objects are at the heart of the algebraic machinery necessary for the construction of our complex of interest and the formulation of the Main Conjecture. All of the results presented are well known and can be found in \cite{nsw}, although a more specific reference will be given on several occasions.

        Let $p$ be a rational prime, $E$ a $p$-adic field and $G$ a profinite group. The ring of integers $\cO_E$ is Noetherian and local with maximal ideal generated by a uniformiser $\pi$. Furthermore, it is compact and complete with respect to its $\pi$-adic topology and $\cO_E/\pi^n\cO_E$ is finite of order a power of $p$ for all $n \in \NN$. The \textbf{Iwasawa algebra}\index{Iwasawa algebra} of $G$ with coefficients in $\cO_E$ is the completed group ring
        \[
            \Lambda^{\cO_E}(G) = \cO_E[[G]] = \varprojlim_{U \trianglelefteq_o G} \cO_E \Big[\faktor{G}{U}\Big]
        \]
        with transition maps induced by the projections $G/U \sa G/V$ for $U \leq V$. Note that $\cO_E$ embeds canonically as a subring of $\Lambda^{\cO_E}(G)$ and $G$ embeds canonically as a subgroup of the units $\units{\Lambda^{\cO_E}(G)}$. If $G$ is finite, this Iwasawa algebra coincides with the usual group ring $\cO_E[G]$. The superscript $\ZZ_p$ is often omitted in $\Lambda(G) = \Lambda^{\ZZ_p}(G)$ and related objects such as $\Delta(G) = \Delta^{\ZZ_p}(G)$ (see below).

        We endow $\Lambda^{\cO_E}(G)$ with the topology determined by the basis of open subgroups
        \[
            \bigg\{\Delta_{n, U} = \ker\bigg(\Lambda^{\cO_E}(G) \sa \faktor{\cO_E}{\pi^n \cO_E} \Big[\faktor{G}{U}\Big]\bigg) \colon n \in \NN, U \trianglelefteq_o G \bigg\}.
        \]
        By a $\Lambda^{\cO_E}(G)$-module we always mean a (Hausdorff left) \textit{topological} module, which amounts to a topological $\cO_E$-module with a continuous $G$-action. Similarly, by a homomorphism of $\Lambda^{\cO_E}(G)$-modules we always mean a continuous such homomorphism. It is easy to show that topologically finitely generated modules and finitely generated modules coincide and they are compact; and that any abstract finitely generated $\Lambda^{\cO_E}(G)$-module can be endowed with a \textit{unique} topology which makes it into a compact topological module (cf. \cite{nsw} section V\S II). Compact modules over Iwasawa algebras are known as \textbf{Iwasawa modules}\index{Iwasawa module}.

        Given $H \trianglelefteq_c G$, the \textbf{augmentation map}\index{augmentation map} of $\Lambda^{\cO_E}(G)$ with respect to $H$ is the canonical continuous surjection $\aug_H \colon \Lambda^{\cO_E}(G) \sa \Lambda^{\cO_E}(G/H)$. Its kernel $\Delta^{\cO_E}(G, H)$ fits into the short exact sequence of $\Lambda(G)$-modules
        \begin{equation}
        \label{eq:general_augmentation_ideal}
            0 \to \Delta^{\cO_E}(G, H) \to \Lambda^{\cO_E}(G) \xrightarrow{\aug_H} \Lambda^{\cO_E}\Big(\faktor{G}{H}\Big) \to 0.
        \end{equation}
        and is therefore a two-sided closed ideal known as the \textbf{augmentation ideal}\index{augmentation ideal}. When $H = G$, we denote it simply by $\Delta^{\cO_E}(G)$ and we write $\aug$ for $\aug_G$. One has $\Delta^{\cO_E}(G, H) = \Delta^{\cO_E}(H) \Lambda^{\cO_E}(G)$.

        With $H$ as above, the \textbf{module of $H$-invariants} $M^H$\index{invariants}\index{Iwasawa module!of invariants} of a $\Lambda^{\cO_E}(G)$-module $M$ is defined as the largest submodule with trivial $H$-action. It consists of all elements of $M$ fixed by $H$. The \textbf{module of $H$-coinvariants}\index{coinvariants}\index{Iwasawa module!of coinvariants} of $M$ is the largest quotient module with trivial $H$-action, and it is given by
        \begin{equation}
        \label{eq:definition_coinvariants}
            M_H = \faktor{M}{\Delta^{\cO_E}(G, H)M} = \faktor{M}{\Delta^{\cO_E}(H)M}
        \end{equation}
        as the denominator is generated by elements of the form $\sigma m - m$ with $\sigma \in H, m \in M$. Both $M^H$ and $M_H$ inherit a natural $\Lambda^{\cO_E}(G/H)$-module structure (cf. equation \eqref{eq:general_augmentation_ideal}). The notation for the module of coinvariants only features $H$, since it is irrelevant whether $M$ is regarded as a $\Lambda^{\cO_E}(G)$-module or one over $\Lambda^{\cO_E}(H)$ in the first place.

        The only modules over Iwasawa algebras which will be of interest to us are compact and discrete ones. Every compact $\Lambda^{\cO_E}(G)$-module $M$ coincides with the inverse limit of its modules of coinvariants
        \[
            M = \varprojlim_{U \trianglelefteq_o G} M_U
        \]
        and is also a pro-$p$ group (cf. \cite{nsw} proposition 5.2.4). Conversely, every discrete $\Lambda^{\cO_E}(G)$-module $N$ coincides with the direct limit of its modules of invariants
        \[
            N = \varinjlim_{U \trianglelefteq_o G} N^U
        \]
        and is also a $p$-torsion group (by which we mean every element has order a power of $p$). This points to some manner of duality between the two, which is formalised in the notion of \textbf{Pontryagin duality}\index{Pontryagin duality}. Given a locally compact (Hausdorff) abelian group $A$, we define its Pontryagin dual as
        \[
            A^\vee = \Hom_{cts}(A, \RR/\ZZ)
        \]
        where the unit circle $\RR/\ZZ$ is endowed with the quotient topology of the usual topology on $\RR$. The compact-open topology (cf. \cite{nsw} section I\S 1) makes $A^\vee$ into a locally compact abelian group, and thus $-^\vee$ induces a contravariant autoduality on the category of locally compact abelian groups. It restricts to a contravariant duality between the categories of compact abelian groups and discrete abelian groups.

        If $M$ is a locally compact $\Lambda^{\cO_E}(G)$-module, then $M^\vee$ inherits a natural $\Lambda^{\cO_E}(G)$-module structure with $o \in \cO_{E}$ acting as $(of)(m) = f(om)$ and $\sigma \in G$ acting as $(\sigma f)(m) = f(\sigma^{-1} m)$. This action is compatible with Pontryagin duality, which therefore induces a contravariant autoduality on the category of locally compact $\Lambda^{\cO_E}(G)$-modules and a contravariant duality between compact and discrete $\Lambda^{\cO_E}(G)$-modules. In particular, it is exact on short exact sequences of $\Lambda^{\cO_E}(G)$-modules.

        It is easy to verify that, for $U \trianglelefteq_o G$, one has natural isomorphisms $(M_U)^\vee \iso (M^\vee)^U$ for $M$ compact; and $(N^U)^\vee \iso (N^\vee)_U$ for $N$ discrete. Note that, since compact and discrete $\Lambda^{\cO_E}(G)$-modules are pro-$p$ and $p$-torsion respectively, the functor $-^\vee$ coincides with $\Hom_{cts}(-, \QQ_p/\ZZ_p)$ on these two classes of modules, where $\QQ_p/\ZZ_p$ is the Prüfer group.

        The usual notion of the tensor product is often not sufficient in the theory of Iwasawa modules. Suppose, for instance, that $M$ is a compact right $\Lambda^{\cO_E}(G)$-module and $N$ is a compact left one. If both modules have infinitely many open neighbourhoods of $0$ and we choose two countable families of nested ones $M \supseteq U_0 \supseteq U_1 \supseteq \cdots$ and $N \supseteq V_0 \supseteq V_1 \supseteq \cdots$ and two families of elements $\set{m_i \in U_i}_{i \in \NN}$ and $\set{y_i \in V_i}_{i \in \NN}$, then $\sum_{i = 0}^\infty x_i \otimes y_i$ will in general not converge to an element in $M \otimes_{\Lambda^{\cO_E}(G)} N$ even though the terms become arbitrarily small. This is addressed by introducing the \textbf{complete tensor product}\index{complete tensor product}
        \[
            M \ctp_{\Lambda^{\cO_E}(G)} N = \varprojlim_{U, V} \faktor{M}{U} \otimes_{\Lambda^{\cO_E}(G)} \faktor{N}{V},
        \]
        where $M$ and $N$ are compact as above and $U$ (resp. $V$) runs over the compact open submodules of $M$ (resp. $N$). This is itself a compact $\cO_E$-module with respect to the inverse limit topology, a basis of which is given by the images of the subgroups $M \otimes V + U \otimes N$ with $U$ and $V$ as above (it is in fact the completion of $M \otimes_{\Lambda^{\cO_E}(G)} N$ with respect to that topology). There is also a natural monomorphism $M \otimes_{\Lambda^{\cO_E}(G)} N \ia M \ctp_{\Lambda^{\cO_E}(G)} N$. These and other properties of the construction can be found in \cite{nsw} section V\S2. Of particular relevance to us is the fact that, if $M$ or $N$ is finitely presented, then $M \otimes_{\Lambda^{\cO_E}(G)} N$ is already complete and therefore the natural monomorphism from before is in fact an equality
        \begin{equation}
        \label{eq:completed_tensor_product_equal}
            M \otimes_{\Lambda^{\cO_E}(G)} N = M \ctp_{\Lambda^{\cO_E}(G)} N.
        \end{equation}

        The complete tensor product allows us to define a \textbf{compact induction}\index{compact induction} for Iwasawa modules. Namely, if $H \leq_c G$ is a closed subgroup and $M$ is a compact left $\Lambda^{\cO_E}(H)$-module, then we let
        \[
            \Ind_H^G M = \Lambda^{\cO_E}(G) \ctp_{\Lambda^{\cO_E}(H)} M.
        \]
        This is itself a compact left $\Lambda^{\cO_E}(G)$-module (the action being on the first term) with the above topology, and thus compact induction defines an additive functor from the category of compact left $\Lambda^{\cO_E}(H)$-modules to that of compact left $\Lambda^{\cO_E}(G)$-modules (with continuous homomorphisms in both cases).

        Suppose $N$ is already a compact left $\Lambda^{\cO_E}(G)$-module and $f \colon M \to N$ is a $\Lambda^{\cO_E}(H)$-homomorphism with $M$ as above (where $N$ has restricted scalars). Then $f$ induces a natural homomorphism of $\Lambda^{\cO_E}(G)$-modules
        \begin{equation}
        \label{eq:partial_induction}
            \Ind_H^G f \colon \Ind_H^G M \to N.
        \end{equation}
        This should not be confused with the homomorphism $\Ind_H^G M \to \Ind_H^G N$ given by functoriality, which is usually denoted by $\Ind_H^G f$ as well. We will clarify what $\Ind_H^G f$ refers to wherever ambiguity exists.

        The following properties of induction will be used on occasion:
        \begin{lem}
        \label{lem:properties_of_induction}
            Let $G$ be a profinite group, $H \leq_c G$ a closed subgroup and $M$ a compact left $\Lambda^{\cO_E}(H)$-module. Then
            \begin{enumerate}[i)]
                \item{
                    $\Ind_H^G M \iso \varprojlim_{U \trianglelefteq_o G} \Ind^{G/U}_{H/H \cap U} M_{H \cap U}$ as $\Lambda^{\cO_E}(G)$-modules.
                }
                \item{
                    $(\Ind_H^G M)^\vee \iso \Map_{cts, H}(G, M^\vee)$ as $\Lambda^{\cO_E}(G)$-modules, where a continuous map $f \colon G \to M^\vee$ belongs to $\Map_{cts, H}(G, M^\vee)$ if $f(\tau\sigma) = \tau f(\sigma)$ for all $\tau \in H, \sigma \in G$. The $G$-action on $\Map_{cts, H}(G, M^\vee)$ is given by $(\sigma f)(\sigma') = f(\sigma' \sigma)$.
                }
            \end{enumerate}
        \end{lem}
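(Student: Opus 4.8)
The plan is to reduce both parts to the case of finite groups and then pass to limits, exploiting that $\Ind_H^G M$ is compact and that Pontryagin duality exchanges the relevant inverse and direct limits. For part (i): since $\Ind_H^G M$ is compact it equals $\varprojlim_{U \trianglelefteq_o G}(\Ind_H^G M)_U$, so it suffices to produce natural $\Lambda^{\cO_E}(G)$-isomorphisms $(\Ind_H^G M)_U \iso \Ind^{G/U}_{H/H \cap U} M_{H \cap U}$ compatible with the transition maps. To compute the left-hand side I would use the formal properties of $\ctp$ recorded in \cite{nsw}, section V\S2. For any compact $\Lambda^{\cO_E}(G)$-module $N$ the submodule $\Delta^{\cO_E}(G, U) N$ is closed — it is the kernel of the continuous surjection $N \sa N_U$ onto a Hausdorff module — so right exactness of $\ctp$ identifies $N_U$ with $\Lambda^{\cO_E}(G/U) \ctp_{\Lambda^{\cO_E}(G)} N$. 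Applying this to $N = \Ind_H^G M$ and cancelling $\Lambda^{\cO_E}(G)$ by associativity of $\ctp$ gives $(\Ind_H^G M)_U \iso \Lambda^{\cO_E}(G/U) \ctp_{\Lambda^{\cO_E}(H)} M$; inserting $\Lambda^{\cO_E}(H/H \cap U)$ by the same two moves rewrites this as $\Lambda^{\cO_E}(G/U) \ctp_{\Lambda^{\cO_E}(H/H \cap U)} M_{H \cap U}$; and since $\cO_E[G/U]$ is free of finite rank — hence finitely presented — over $\cO_E[H/H \cap U]$, equation \eqref{eq:completed_tensor_product_equal} collapses this to the ordinary tensor product, which is precisely $\Ind^{G/U}_{H/H \cap U} M_{H \cap U}$. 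The rest is bookkeeping: $\Lambda^{\cO_E}(G)$-equivariance, compatibility with transition maps (immediate from naturality), and the cofinality remark that $\{H \cap U : U \trianglelefteq_o G\}$ is cofinal among open normal subgroups of $H$ (every open subgroup of $H$ has the form $H \cap W$ with $W \leq_o G$, and $W$ contains some $U \trianglelefteq_o G$), so that the inverse system on the right-hand side is the intended one.

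For part (ii) I would apply $-^\vee$ to part (i): duality turns the inverse limit of compact modules into a direct limit of discrete ones, so $(\Ind_H^G M)^\vee \iso \varinjlim_U (\Ind^{G/U}_{H/H \cap U} M_{H \cap U})^\vee$. Next I would invoke the classical finite-group fact that, for a subgroup $\bar H \leq \bar G$ of a finite group and a compact $\Lambda^{\cO_E}(\bar H)$-module $N$, one has $(\Ind_{\bar H}^{\bar G} N)^\vee \iso \Map_{\bar H}(\bar G, N^\vee)$ — the finite analogue of the target module, with $\bar G$ acting by $(\sigma f)(\sigma') = f(\sigma'\sigma)$ — realised by the perfect pairing $\langle f, \sigma \otimes n \rangle = f(\sigma^{-1})(n)$; this is just the statement that Pontryagin duality interchanges induction with coinduction. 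Together with the isomorphism $(M_{H \cap U})^\vee \iso (M^\vee)^{H \cap U}$ recorded earlier in this section, this rewrites the colimit as $\varinjlim_U \Map_{H/H \cap U}(G/U, (M^\vee)^{H \cap U})$. It remains to identify this colimit with $\Map_{cts, H}(G, M^\vee)$ via inflation along $G \sa G/U$ and the inclusions $(M^\vee)^{H \cap U} \ia M^\vee$: these maps are compatible, and their union is everything because a continuous $H$-equivariant map $f \colon G \to M^\vee$ has finite image (compact source, discrete target), hence factors through some $G/U$ and lands in some $(M^\vee)^{H \cap U}$, with normality of $U$ guaranteeing that the descended map is genuinely $H/H \cap U$-equivariant. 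Transporting the $\bar G$-action through this chain produces exactly the claimed $G$-action $(\sigma f)(\sigma') = f(\sigma'\sigma)$.

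The only real content — and hence the main obstacle — lies in part (i): one must make sure every manipulation of $\ctp$ is licensed (right exactness, associativity and cancellation, and the finite-presentation collapse of \eqref{eq:completed_tensor_product_equal}) and that $\Delta^{\cO_E}(G, U) N$ is genuinely closed, so that coinvariants really coincide with $\Lambda^{\cO_E}(G/U) \ctp_{\Lambda^{\cO_E}(G)} -$. Once the finite-level isomorphism of part (i) is in hand, part (ii) is a formal consequence, the only mild nuisance being the inverses to keep track of in the various $\bar G$- and $G$-actions.
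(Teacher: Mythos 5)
The paper does not actually prove this lemma: it simply refers the reader to \cite{nsw}, section XI\S3, so there is no in-text argument to compare against. Your proof is a correct, self-contained version of the standard argument and matches what the reference does in spirit: reduce to finite level via $\Ind_H^G M = \varprojlim_U (\Ind_H^G M)_U$, identify each finite-level coinvariant module with $\Ind^{G/U}_{H/H\cap U} M_{H\cap U}$ by associativity of $\ctp$ and the collapse \eqref{eq:completed_tensor_product_equal} over the finite group rings, and then obtain (ii) by dualising, using the finite-group duality between induction and coinduction together with $(M_{H\cap U})^\vee \iso (M^\vee)^{H\cap U}$ and the local constancy of continuous maps into the discrete module $M^\vee$. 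All the individual moves check out, including the pairing $\langle f, \sigma\otimes n\rangle = f(\sigma^{-1})(n)$ and the resulting action $(\sigma f)(\sigma') = f(\sigma'\sigma)$.

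One justification as written is circular: you argue that $\Delta^{\cO_E}(G,U)N$ is closed because it is the kernel of the continuous surjection onto the Hausdorff module $N_U$, but $N_U$ is \emph{defined} as the quotient by $\Delta^{\cO_E}(G,U)N$ and is Hausdorff precisely when that submodule is closed. This is harmless in context, since the identification of coinvariants with $\Lambda^{\cO_E}(G/U)\ctp_{\Lambda^{\cO_E}(G)} -$ is exactly the fact the paper already invokes (it writes $-_G \iso \ZZ_p \ctp_{\Lambda(G)} -$ when introducing homology, and relies on \cite{nsw} proposition 5.2.4 for $M = \varprojlim_U M_U$), so you should simply cite that rather than offer the circular argument. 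A final cosmetic remark: the cofinality of $\{H\cap U : U \trianglelefteq_o G\}$ among open normal subgroups of $H$ is not needed for part (i) as literally stated (the limit there is indexed by $U \trianglelefteq_o G$), though it is genuinely used in part (ii) when you locate the finite image of $f$ inside some $(M^\vee)^{H\cap U}$, and your argument for it is fine.
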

        Both facts can be found in \cite{nsw} section XI\S3. Since a compact $\Lambda^{\cO_E}(H)$-module $M$ coincides with $\varprojlim_{U \trianglelefteq_o H} M_U$, point i) in the lemma can be worded as: \textit{induction commutes with inverse limits}. This will become meaningful in the introduction to chapter \ref{chap:construction_of_the_complex}.

        In the rest of this section we consider only the case $E = \QQ_p$, although most of the following facts still hold for any $E$ as above. Turning our attention briefly to cohomological considerations, let $M$ be a discrete $\Lambda(G)$-module. One can construct \textbf{cohomology groups} $H^i(G, M)$ for $i \geq 0$ in several equivalent ways, for instance as the right derived functors of the $G$-invariants functor ${-^G \iso \Hom_{\Lambda(G)}(\ZZ_p, -)}$ in the category of discrete $\Lambda(G)$-modules (which has sufficiently many injectives). Various definitions can be found in \cite{nsw} sections I\S2, II\S5 and V\S2. The resulting cohomology groups are canonically isomorphic to the direct limit $H^i(G, M) \iso \varinjlim_{U \trianglelefteq_o G} H^i(G/U, M^U)$ of classical cohomology groups and have a natural discrete $\Lambda(G)$-module structure. A short exact sequence $M' \ia M \sa M''$ of discrete $\Lambda(G)$-modules induces the usual long exact cohomology sequence
        \begin{equation}
        \label{eq:long_exact_sequence_cohomology_iwasawa}
            0 \to (M')^G \to M^G \to (M'')^G \to H^1(G, M') \to H^1(G, M) \to H^1(G, M'') \to H^2(G, M') \to \cdots
        \end{equation}
        of discrete $\Lambda(G)$-modules.

        Cohomology groups find their dual notion in the homology groups $H_i(G, N)$ given by the left derived functors of the $G$-coinvariants functor $-_G \iso \ZZ_p \ctp_{\Lambda(G)} -$ on a compact module $N$. These are again compact modules and satisfy the following relation:
        \begin{lem}
        \label{lem:cohomology_homology_duality}
            Let $G$ be a profinite group and $N$ a compact left $\Lambda(G)$-module. Then there exist functorial isomorphisms of $\Lambda(G)$-modules
            \[
                H^i(G, N^\vee) \iso H_i(G, N)^\vee
            \]
            for all $i \geq 0$.
        \end{lem}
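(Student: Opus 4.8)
The plan is to deduce the statement from the formalism of derived functors together with the exactness of Pontryagin duality. Recall from the discussion above that $-^\vee$ is an exact contravariant functor realising a duality between the abelian categories of compact and discrete $\Lambda(G)$-modules, with $(-)^{\vee\vee}$ naturally isomorphic to the identity; that $H^i(G,-)$ is the $i$-th right derived functor of $(-)^G$ on discrete $\Lambda(G)$-modules (a category with enough injectives); and that $H_i(G,-)$ is the $i$-th left derived functor of $(-)_G$ on compact $\Lambda(G)$-modules.

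First I would upgrade the natural isomorphisms $(M_U)^\vee\iso(M^\vee)^U$ and $(N^U)^\vee\iso(N^\vee)_U$ recalled above from open normal $U$ to the case $U=G$ itself. For a compact $L$ this amounts to checking that $\overline{\Delta(G)L^\vee}$ is the annihilator of $L^G$ inside $L^\vee$ (equivalently, after biduality, that the annihilator of $\overline{\Delta(G)L^\vee}$ in $L$ is exactly $L^G$), which is immediate since $\Delta(G)L^\vee$ is topologically generated by the elements $\sigma f-f$. This yields natural isomorphisms $(L_G)^\vee\iso(L^\vee)^G$ for compact $L$ and $(N^G)^\vee\iso(N^\vee)_G$ for discrete $N$. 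In particular the case $i=0$ of the Lemma is $H^0(G,N^\vee)=(N^\vee)^G\iso(N_G)^\vee=H_0(G,N)^\vee$.

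For the general case, choose an injective resolution $0\to N^\vee\to I^0\to I^1\to\cdots$ of the discrete module $N^\vee$, so that by definition $H^i(G,N^\vee)=H^i\big((I^\bullet)^G\big)$. Applying the exact functor $-^\vee$ produces an exact complex $\cdots\to(I^1)^\vee\to(I^0)^\vee\to N\to 0$ resolving $N$ by compact $\Lambda(G)$-modules; since $-^\vee$ is a contravariant equivalence of abelian categories, each $(I^j)^\vee$ is a projective object, hence $H_k\big(G,(I^j)^\vee\big)=0$ for $k>0$ and the homology of $N$ may be computed from this resolution: $H_i(G,N)=H_i\big(((I^\bullet)^\vee)_G\big)$. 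By the previous paragraph $((I^j)^\vee)_G\iso\big((I^j)^G\big)^\vee$ naturally, so $((I^\bullet)^\vee)_G$ is the term-by-term Pontryagin dual of the cochain complex $(I^\bullet)^G$; as $-^\vee$ is exact it commutes with passage to (co)homology, whence $H_i(G,N)\iso H^i\big((I^\bullet)^G\big)^\vee=H^i(G,N^\vee)^\vee$, i.e. $H^i(G,N^\vee)\iso H_i(G,N)^\vee$. Functoriality in $N$ follows because injective resolutions are functorial up to chain homotopy and every identification used is natural.

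The only point requiring genuine care is the claim that the Pontryagin dual of an injective discrete $\Lambda(G)$-module is $(-)_G$-acyclic; I would justify it by the equivalence-of-categories argument above (the dual of an injective is projective) or, if one prefers not to discuss projectives in the category of compact $\Lambda(G)$-modules, by citing the corresponding homological triviality statement from \cite{nsw}. Alternatively one can run the argument in the opposite direction, starting from a projective (or merely $(-)_G$-acyclic) resolution of the compact module $N$ and dualising it to an injective resolution of $N^\vee$; the bookkeeping is identical. A third, more computational route is to write down the standard continuous (co)chain complexes computing the two sides and observe that they are termwise Pontryagin dual via the complete-induction identities of Lemma \ref{lem:properties_of_induction}, but this requires first recording those explicit complexes.
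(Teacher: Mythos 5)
Your argument is correct. The paper itself gives no proof of this lemma — it simply cites \cite{nsw} theorem 2.6.9 — and what you have written is essentially the standard proof given there: dualise an injective resolution of $N^\vee$ into a projective resolution of $N$ using that Pontryagin duality is an exact anti-equivalence between discrete and compact $\Lambda(G)$-modules, identify $((I^j)^G)^\vee$ with $((I^j)^\vee)_G$ degree by degree, and pass to (co)homology. The only blemish is a bookkeeping slip in the paragraph establishing the case $U=G$: for compact $L$ the identity $(L_G)^\vee\cong(L^\vee)^G$ is verified by computing the annihilator of $\overline{\Delta(G)L}$ in $L^\vee$, whereas the computation you describe (annihilators of $\Delta(G)L^\vee$) proves the companion identity $(L^G)^\vee\cong(L^\vee)_G$; both are immediate and both are needed, so this is cosmetic.
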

        For a proof, see \cite{nsw} theorem 2.6.9.

        When studying Iwasawa modules which are arithmetic in nature, the notion of Tate twists is sometimes relevant. Let $L/K$ be a Galois extension of fields of characteristic 0 such that $L$ contains the group $\mu_{p^\infty}$ of all $p$-power roots of unity in some algebraic closure of $K$. Then the \textbf{$p$-adic cyclotomic character}\index{cyclotomic character}\index{character!cyclotomic} of $G$ is defined as the unique group homomorphism
        \[
            \chi_{\tcyc} \colon \Gal(L/K) \to \units{\ZZ_p}
        \]
        such that, for all $\sigma \in \Gal(L/K)$, one has $\sigma(\zeta) = \zeta^{\chi_{\tcyc}(\sigma)}$ for all $\zeta \in \mu_{p^\infty}$ - which makes it automatically continuous. In other words, $\chi_{\tcyc}$ captures the action of $\Gal(L/K)$ on the $p$-power roots of unity.  It is injective if and only if $L = K(\mu_{p^\infty})$. Surjectivity is equivalent to $K \cap \mu_{p^\infty} = \set{1}$ if $p$ is odd, and to $K \cap \mu_{p^\infty} = \set{\pm 1}$ if $p = 2$.

        Suppose now that $G = \Gal(L/K)$ is as above and $M$ is a $\Lambda(G)$-module. For $i \in \ZZ$, the \textbf{$i$-th Tate twist}\index{Tate twist} of $M$, denoted by $M(i)$, is defined as the $\ZZ_p$-module $M$ endowed with the $G$-action
        \[
            \sigma \cdot^i m = \chi_{\tcyc}^i (\sigma) \sigma m,
        \]
        with $\chi_{\tcyc}(\sigma)^i \sigma \in \Lambda(G)$ acting on $m \in M$ via the original action. It follows immediately that $M(0) = M$ and $M(i)(j) = M(i + j) = M(j)(i)$ for all $i, j \in \ZZ$. If $\QQ_p/\ZZ_p$ and $\ZZ_p$ are given the trivial $G$-action, then
        \begin{equation}
        \label{eq:tate_twist_rou}
            \faktor{\QQ_p}{\ZZ_p}(1) \iso \mu_{p^\infty} = \varinjlim_n \mu_{p^n} \quad \text{and} \quad \ZZ_p(1) \iso \varprojlim_n \mu_{p^n},
        \end{equation}
        where $\mu_{p^n}$ is the $p^n$-torsion of $\mu_{p^\infty}$. Pontryagin duality changes the sign of Tate twists: if $M$ is a locally compact $\Lambda(G)$-module, then $M(i)^\vee$ is canonically isomorphic to $M^\vee(-i)$. This comes from inversion of the $G$-action on duals: $(\sigma f)(m) = f(\sigma^{-1} m)$.

        We now specialise to the \textit{classical case} $G = \Gamma \iso \ZZ_p, E = \QQ_p$. There is a fundamental non-canonical isomorphism of topological $\ZZ_p$-algebras
        \begin{equation}
        \label{eq:iso_iwasawa_algebra}
            \Lambda(\Gamma) \iso \ZZ_p[[T]],
        \end{equation}
        where $\ZZ_p[[T]]$ (the ring of formal power series) is endowed with the $\ideal{p, T}$-adic topology (see \cite{nsw} proposition 5.3.5 for a proof). This isomorphism arises from the choice of a topological generator $\gamma$ of $\Gamma$ (which we shall always write multiplicatively) by mapping $\gamma \to T + 1$. The diagram
        \begin{center}
            \begin{tikzcd}
            \ZZ_p[\Gamma/\Gamma^{p^{n + 1}}] \arrow[r, "\sim"] \arrow[d, two heads] & \faktor{\ZZ_p[T]}{\ideal{w_{n + 1}}} \arrow[d, two heads] \arrow[r, "\sim"] & \faktor{\ZZ_p[[T]]}{\ideal{w_{n + 1}}} \arrow[d, two heads] \\
            \ZZ_p[\Gamma/\Gamma^{p^n}] \arrow[r] \arrow[r, "\sim"]                  & \faktor{\ZZ_p[T]}{\ideal{w_{n + 1}}} \arrow[r, "\sim"]                      & {\faktor{\ZZ_p[[T]]}{\ideal{w_n}}}
            \end{tikzcd}
        \end{center}
        with $w_i = (T + 1)^{p^i} - 1$ then commutes and yields an isomorphism on inverse limits. We denote the successive quotients of the $w_i$ for $i \geq 1$ by $\xi_i = w_i / w_{i - 1}$, all of which are irreducible in $\ZZ_p[T]$ and hence, although not obvious, in $\ZZ_p[[T]]$. We also set $\xi_0 = w_0 = T$. The $\xi_i$ are known as \textbf{cyclotomic polynomials}. Both $w_i$ and $\xi_i$ are examples of \textbf{Weierstrass polynomials}\index{Weierstrass polynomial}, which are defined as monic polynomials whose non-leading coefficients lie in $p\ZZ_p$ - that is, they become a power of $T$ after projecting to $\FF_p[[T]]$. A recurring idea in chapter \ref{chap:formulation_of_the_main_conjecture} will be the fact that, under the isomorphism $\Lambda(\Gamma) \iso \ZZ_p[[T]]$, the augmentation map $\aug = \aug_\Gamma$ corresponds to evaluation at $T = 0$. In particular, the latter is independent of the non-canonical choice of $\gamma$, since so is $\aug$.

        This isomorphism with the ring of formal powers series allows one to deduce a plethora of properties of the classical Iwasawa algebra $\Lambda(\Gamma)$. It is an integrally closed, Noetherian unique factorisation domain and a two-dimensional regular local ring whose maximal ideal is $\ideal{p, \gamma - 1}$ (the kernel $\Delta_{1, \Gamma}$ of the projection $\Lambda(\Gamma) \sa \FF_p$) and whose height-one prime ideals are all principal, generated by either $p$ or an irreducible Weierstrass polynomial. We denote the localisation of $\Lambda(\Gamma)$ at a prime ideal $\fp$ by $\Lambda_\fp(\Gamma)$.

        The \textbf{support} of a $\Lambda(\Gamma)$-module $M$ is
        \[
            \supp(M) = \set{\fp \et{a prime ideal of} \Lambda(\Gamma) : M_\fp \neq 0},
        \]
        which is finite if $M$ is finitely generated and torsion. $M$ is said to be \textbf{pseudo-null}\index{Iwasawa module!pseudo-null} if $\supp(M)$ contains no height-one prime ideals, which is equivalent to $M$ being finite (cf. \cite{nsw} remark after definition 5.1.4). A homomorphism $f \colon M \to N$ of $\Lambda(\Gamma)$-modules is said to be a \textbf{pseudo-isomorphism}\index{pseudo-isomorphism} if the localisation $f_\fp$ at every height-one prime ideal $\fp$ is an isomorphism of $\Lambda_\fp(\Gamma)$-modules, i.e. if $\ker(f)$ and $\coker(f)$ are finite. We often denote this by $f \colon M \xrightarrow{\approx} N$, which defines a reflexive and transitive relation on $\Lambda(\Gamma)$-modules. It is furthermore symmetric (and thus an equivalence relation) on finitely generated \textit{torsion} Iwasawa modules. One of the central results of classical Iwasawa theory is the following classification:
        \begin{thm}[Structure theorem for Iwasawa modules]
        \label{thm:structure_theorem_iwasawa}
            Let $M$ be a finitely generated $\Lambda(\Gamma)$-module, where $\Gamma \iso \ZZ_p$. Then there exists a pseudo-isomorphism of (topological) $\Lambda(\Gamma)$-modules
            \begin{equation}
            \label{eq:structure_theorem}
                M \xrightarrow{\approx} \Lambda(\Gamma)^r \oplus \bigoplus_{i = 1}^s \faktor{\Lambda(\Gamma)}{\ideal{p^{m_i}}} \oplus \bigoplus_{j = 1}^t \faktor{\Lambda(\Gamma)}{\ideal{F_j^{l_j}}}
            \end{equation}
            for some $r, s, t \in \NN$, $m_i, l_j > 0$ and irreducible Weierstrass polynomials $F_j$, and these are all unique up to order.
        \end{thm}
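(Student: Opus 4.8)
The plan is to exploit the non-canonical isomorphism $\Lambda(\Gamma) \iso \ZZ_p[[T]]$ from \eqref{eq:iso_iwasawa_algebra}, which reduces the statement to the structure theory of finitely generated modules over the formal power series ring $\ZZ_p[[T]]$. The key algebraic fact is that $\ZZ_p[[T]]$ is a two-dimensional regular local ring (as noted in the excerpt), hence in particular a Noetherian, integrally closed Krull domain of Krull dimension $2$; its height-one primes are exactly the principal ideals generated by $p$ or by an irreducible Weierstrass polynomial (a consequence of the Weierstrass preparation theorem). The strategy is the one standard for finitely generated modules over Krull domains: compare $M$ with the direct sum of its localisations at the height-one primes, show these localisations are over the discrete valuation rings $\Lambda_\fp(\Gamma)$ and hence decompose into a free part plus torsion, and then show that assembling a candidate $E$ with the asserted shape yields a map $M \to E$ with finite kernel and cokernel.

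First I would reduce to the torsion case. The torsion submodule $M_{\mathrm{tors}}$ is finitely generated (Noetherian), and $M/M_{\mathrm{tors}}$ is finitely generated torsion-free over the regular local ring $\Lambda(\Gamma)$; a finitely generated torsion-free module over a regular local ring of dimension $\le 2$ is not necessarily free, but it is \emph{reflexive up to pseudo-isomorphism}, and in fact a finitely generated torsion-free $\Lambda(\Gamma)$-module is pseudo-isomorphic to $\Lambda(\Gamma)^r$ where $r$ is its rank — this follows because the module, embedded in $\Lambda(\Gamma)^r \otimes \mathrm{Frac}$, differs from a free lattice only at finitely many height-one primes (where it is free, being torsion-free over a DVR), and the discrepancy is supported in codimension $\ge 2$, hence finite. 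This splits off the $\Lambda(\Gamma)^r$ summand up to pseudo-isomorphism (using that $\mathrm{Ext}^1$ into a free module by a finite module is finite, so the extension splits after pseudo-isomorphism). So it remains to treat $N := M_{\mathrm{tors}}$.

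For the torsion module $N$, its support $\supp(N)$ is a finite set of primes; let $\fp_1, \dots, \fp_k$ be the height-one ones among them, each principal, say $\fp_i = \ideal{\varpi_i}$ with $\varpi_i \in \{p\} \cup \{\text{irreducible Weierstrass polynomials}\}$. Each localisation $\Lambda_{\fp_i}(\Gamma)$ is a discrete valuation ring, so $N_{\fp_i}$ is a finite direct sum of modules $\Lambda_{\fp_i}(\Gamma)/\ideal{\varpi_i^{e}}$ by the classification over a PID. The candidate target is
\[
    E = \bigoplus_{i=1}^{k} \bigoplus_{\text{summands}} \faktor{\Lambda(\Gamma)}{\ideal{\varpi_i^{e}}},
\]
which by construction has the same localisation as $N$ at every height-one prime. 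I would then construct a homomorphism $N \to E$ (or $E \to N$ — one direction may be easier to build explicitly, lifting generators) inducing an isomorphism after localising at each $\fp_i$, and conclude that its kernel and cokernel have empty intersection with all height-one primes, hence are pseudo-null, hence finite (by the equivalence "pseudo-null $\Leftrightarrow$ finite" recorded in the excerpt). Constructing this map is where the work lies: one picks a pseudo-isomorphism from $N$ onto $\bigoplus N/\varpi_i^{\infty}N$-type pieces by clearing the "wrong" primes, typically by multiplying by a suitable element of $\Lambda(\Gamma)$ not lying in the relevant primes. Finally, uniqueness follows because $r$ is the $\mathrm{Frac}(\Lambda(\Gamma))$-rank (a pseudo-isomorphism invariant), and the exponents $m_i, l_j$ are recovered from the lengths of $N_{\fp}$ over $\Lambda_\fp(\Gamma)$, which only depend on the pseudo-isomorphism class of $N$ since localisation at a height-one prime kills finite modules.

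The main obstacle I anticipate is the explicit construction of the pseudo-isomorphism in the torsion case — concretely, showing that a torsion module $N$ whose support meets several height-one primes admits a map to the "diagonal" module $E$ with finite kernel and cokernel. The clean way is to work prime-by-prime: for each height-one $\fp$ in the support, $N$ has a canonical $\fp$-primary-like piece, and the subtlety is that $N$ need not literally decompose as a direct sum of these pieces (there can be pseudo-null "glue"), so one only gets the decomposition after discarding finite submodules and quotients — exactly what a pseudo-isomorphism allows. Handling the torsion-free quotient $M/M_{\mathrm{tors}}$ (proving it is pseudo-isomorphic to $\Lambda(\Gamma)^r$ even though it may fail to be free) is the second delicate point, and both rely essentially on the regularity / dimension-$2$ hypothesis ensuring that all obstructions live in codimension $\ge 2$ and are therefore finite.
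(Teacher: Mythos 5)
First, note that the paper does not actually prove this theorem: it is stated and immediately attributed with the single line ``This is \cite{nsw} theorem 5.3.8'', so there is no in-text argument to compare yours against. Your sketch does follow the same general route as the proof in that reference, namely the structure theory of finitely generated modules over a two-dimensional regular local (Krull) domain: split off the torsion submodule, identify the height-one primes of $\Lambda(\Gamma)$ as $\ideal{p}$ and the ideals generated by irreducible Weierstrass polynomials, classify the localisations at these primes over the discrete valuation rings $\Lambda_\fp(\Gamma)$, and assemble an elementary module with the same localisations.

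As a proof, however, your text has genuine gaps, which you yourself flag. The two steps carrying essentially all the content are deferred rather than carried out: (1) the construction of an actual $\Lambda(\Gamma)$-homomorphism from the torsion module $N$ to the elementary module $E$ inducing isomorphisms at every height-one prime --- knowing $N_\fp \iso E_\fp$ for all such $\fp$ does not by itself produce a global map, and clearing denominators while controlling the kernel and cokernel is where the work lies; and (2) the claim that a finitely generated torsion-free module is pseudo-isomorphic to $\Lambda(\Gamma)^r$, which rests on the nontrivial fact that reflexive modules over a two-dimensional regular local ring are free; your justification (``the discrepancy is supported in codimension $\geq 2$'') is not an argument for this. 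Two smaller points: splitting the free part off the torsion part requires more than ``$\Ext^1$ of a finite module into a free module is finite'', since the relevant extension is of $M/M_{\mathrm{tors}}$ (not a finite module) by $M_{\mathrm{tors}}$; and in the uniqueness argument the exponents are not recovered from the \emph{lengths} of $N_\fp$, which cannot distinguish $\Lambda(\Gamma)/\ideal{\varpi^2}$ from $(\Lambda(\Gamma)/\ideal{\varpi})^2$, but from the full elementary-divisor decomposition of $N_\fp$ over the DVR $\Lambda_\fp(\Gamma)$ --- which is indeed a pseudo-isomorphism invariant because localisation at a height-one prime kills finite modules.
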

        This is \cite{nsw} theorem 5.3.8. Here $r = \rank_{\Lambda(\Gamma)} M$ and one defines the \textbf{Iwasawa invariants} $\mu(M) = \sum_{i = 1}^s m_i$ and $\lambda(M) = \sum_{j = 1}^t l_j \deg(F_j)$, as well as the characteristic polynomial\footnote{In some sources, the characteristic polynomial is scaled by $p^{\mu(M)}$.} ${\Char(M) = \prod_{j = 1}^t F_j^{l_j}}$. If $M$ is a torsion module, then the height-one prime ideals in $\supp(M)$ are precisely the $\ideal{F_j}$, and potentially $\ideal p$ (if $\mu(M) \neq 0$). If the pseudo-isomorphism in \eqref{eq:structure_theorem} is an isomorphism, $M$ is said to be \textbf{elementary}\index{Iwasawa module!elementary}.

        Given a short exact sequence $M' \ia M \sa M''$ of $\Lambda(\Gamma)$-modules, an easy application of the snake lemma to the endomorphism of each module given by multiplication by $\gamma - 1$ yields the \textbf{invariants-coinvariants exact sequence}\index{invariants-coinvariants exact sequence}
        \begin{equation}
        \label{eq:invariants_coinvariants}
            0 \to (M')^\Gamma \to M^\Gamma \to (M'')^\Gamma \to M_\Gamma' \to M_\Gamma \to M_\Gamma'' \to 0.
        \end{equation}
        Here one may replace $\Gamma$ by $\Gp{n}$ either by considering the endomorphism $\gp{n} - 1$ instead, or by noting that $\Gp{n} \iso \ZZ_p$ and therefore $\Lambda(\Gp{n})$ is itself a classical Iwasawa algebra.

    \newpage
    \section{Representations of finite groups}
    \label{sec:representations_of_finite_groups}

        Group representations and group algebras will play a central role in the subsequent discussion. Both finite and infinite groups will be involved in the formulation of the Main Conjecture, yet much of the necessary machinery will be concerned with finite groups alone - even for infinite groups, we shall only consider characters which factor through a finite quotient. The following definitions and properties can be found in any standard reference on the topic, for instance \cite{cr1} (primarily subsections \S9A and \S9B).

        Let $G$ be a finite group and $E$ a field. We assume $\Char(E) = 0$, although this requirement can be weakened to $\Char(E) \nmid \abs{G}$ in most instances. A \textbf{representation $(V, \rho)$ of $G$ over $E$}\index{representation} is a non-trivial $E$-vector space $V$ together with a group homomorphism $\rho \colon G \to \GLg(V)$. This is equivalent to endowing $V$ with a left $E[G]$-module structure with $G$-action via $\sigma v = \rho(\sigma)(v)$ for $\sigma \in G, v \in V$, and both descriptions will be used interchangeably. We often denote the representation simply by $V$ or $\rho$, the other element being implicit. The dimension of $V$ is $d = \dim_E V$, which will always be finite in this text. After choosing a basis, this representation corresponds uniquely to a homomorphism $\rho \colon G \to \GLg_d(E)$. Two representations with chosen bases $\rho, \rho' \colon G \to \GLg_d(E)$ are said to be \textbf{equivalent} if they differ by a change of basis, i.e. if $\rho'(\sigma) = M \rho(\sigma) M^{-1}$ for all $\sigma \in G$ for a fixed $M \in \GLg_d(E)$.

        The \textbf{character}\index{character} $\chi$ of $(V, \rho)$ is the map
        \begin{align*}
            \chi \colon & G         \to     E \\
                        & \sigma    \mapsto \Tr(\rho(\sigma)),
        \end{align*}
        which is well defined regardless of the choice of a basis for $V$. In particular, $\chi(1)$ is the dimension of the representation. We say $V$ \textbf{affords} $\chi$. Since the trace is invariant under conjugation, characters are \textbf{class functions}, i.e. they are invariant under conjugation by elements of the group. If $\rho$ is linear (i.e. $\dim_E(V) = 1$), then $\chi$ and $\rho$ coincide and the character becomes a group homomorphism $\chi \colon G \to \units{E}$. Two representations have the same character if and only if they are equivalent, which in turn happens if and only if the associated vector spaces are isomorphic as $E[G]$-modules.

        Let $\rho \colon G \to \GLg_d(E)$ be a representation with character $\chi$ and choose $\sigma \in G$. Since $G$ is finite, $\rho(\sigma)$ has finite (multiplicative) order and thus all of its eigenvalues in an algebraic closure $E^c$ of $E$ are roots of unity. In particular, $\chi(\sigma) \in E$ is a sum of roots of unity in $E^c$. The \textbf{kernel} of $\chi$ is defined as the kernel of $\rho$ as a group homomorphism, and it can be shown that
        \[
            \ker(\chi) = \set{\sigma \in G : \chi(\sigma) = \chi(1)}.
        \]
        The \textbf{trivial representation} is the linear representation which takes the constant value 1, i.e. it makes every element $\sigma \in G$ act as the identity on $V = E$. We denote its character by $\bbone_G$, or $\bbone$ if the group is clear from context. In particular, $\ker(\bbone_G) = G$.

        A representation $(V, \rho)$ of $G$ over $E$ (or, equivalently, its character $\chi$) naturally gives rise to several other representations:
        \begin{itemize}
            \item{
                If $G$ is a subgroup of a finite group $A$, then the \textbf{induction} of $V$ to $A$ is the $E[A]$-module
                \[
                    \indu_G^A V = E[A] \otimes_{E[G]} V,
                \]
                where $E[A]$ is regarded as a right $E[G]$-module. Its character $\indu_G^A \chi$ sends $\sigma \in A$ to
                \begin{equation}
                \label{eq:induction_character}
                    (\indu_G^A \chi)(\sigma) = \frac{1}{\abs{G}} \sum_{\substack{\tau \in A \\ \tau \sigma \tau^{-1} \in G}} \chi(\tau \sigma \tau^{-1}).
                \end{equation}
            }
            \item{
                If $G$ is a quotient of a finite group $B$ and we denote the corresponding projection map by $\pi \colon B \sa G$, then the \textbf{inflation} of  $\rho$ to $B$ is the representation
                \[
                    \infl_G^B \rho = \rho \circ \pi \colon B \sa G \to \GLg(V),
                \]
                which has character $\infl_G^B \chi = \chi \circ \pi$.
            }
            \item{
                If $C$ is a subgroup of $G$, then the \textbf{restriction} of $\rho$ to $C$ is the representation
                \[
                    \rest_C^G \rho = \restr{\rho}{C} \colon C \hookrightarrow G \xrightarrow{\rho} \GLg(V),
                \]
                which has character $\rest_C^G \chi = \restr{\chi}{C}$.
            }
            \item{
                The \textbf{dual} or \textbf{contragredient} representation $(\rho^\ast, V^\ast)$ is the $E$-vector space $V^\ast = \Hom_E(V, E)$ together with the $G$-action $(\sigma f)(v) = f(\sigma^{-1} v)$ for $\sigma \in G, f \in V^\ast, v \in V$. If $\cB$ is an $E$-basis of $V$ and $\cB^\ast$ its dual, then the matrix of $\sigma$ acting on $V^\ast$ with respect to $\cB^\ast$ is the transpose inverse of that of $\sigma$ acting on $V$ with respect to $\cB$. In other words, $\rho^\ast = (\rho^{-1})^t \colon G \to \GLg_{\dim_E(V)}(E)$. We denote the character of $\rho^\ast$ by $\check{\chi}$, which satisfies $\check{\chi}(\sigma) = \chi(\sigma^{-1})$ for all $\sigma \in G$.

                In the case $E = \CC$, the eigenvalues of $\rho^\ast(\sigma) = (\rho(\sigma)^{-1})^t$ are the inverses of those of $\rho(\sigma)$, i.e. their complex conjugates (because, as argued above, these are roots of unity). In particular, $\check{\chi}(\sigma) = \overline{\chi(\sigma)} \in \CC$ for all $\sigma \in G$.
            }
            \item{
                If $H$ is a subgroup of $G$, we denote the subspace of \textbf{$H$-invariants} by
                \[
                    V^H = \set{v \in V : hv = v \et{for all} h \in H}.
                \]
                This is an $E$-vector subspace of $V$, but it may fail to be closed under the action of $G$. However, it is so if $H$ is normal, in which case $V^H$ is again a representation of $G$ over $E$.
            }
            \item{
                If $(V', \rho')$ is another representation of $G$ over $E$ and we denote its character by $\chi'$, then the \textbf{sum} of $\rho$ and $\rho'$ is the $E$-vector space $V \oplus V'$ with $G$-action given by $(\rho \oplus \rho')(\sigma) = \rho(\sigma) \oplus \rho'(\sigma)$  for $\sigma \in G$. After choosing bases, this amounts to the matrix block sum of the two representations. If we denote the character of $\rho \oplus \rho'$ by $\chi + \chi'$, then one has $(\chi + \chi')(\sigma) = \chi(\sigma) + \chi'(\sigma)$ for all $\sigma \in G$. In particular, $\chi + \chi' = \chi' + \chi$.
            }
            \item{
                If $V', \rho'$ and $\chi'$ are as above, then the \textbf{tensor product} of $\rho$ and $\rho'$ is the $E$-vector space $V \otimes_E V'$ with $G$-action given by $(\rho \otimes \rho')(\sigma) = \rho(\sigma) \otimes \rho'(\sigma)$ (the Kronecker product of matrices after choosing bases) for $\sigma \in G$. If we denote the character of $\rho \otimes \rho'$ by $\chi \otimes \chi'$, then one has $(\chi \otimes \chi')(\sigma) = \chi(\sigma) \cdot \chi'(\sigma)$ for all $\sigma \in G$. In particular, $\chi \otimes \chi' = \chi' \otimes \chi$.
            }
        \end{itemize}

        We say a representation $(V, \rho)$ (or its associated character $\chi$) is \textbf{irreducible} if $V$ is a simple $E[G]$-module, and reducible otherwise. In the latter case, $V$ can be decomposed into a sum of several non-empty $E$-vector subspaces which are closed under the action of $G$. We denote the set of all (resp. all irreducible) $E$-valued characters of $G$ by $\CChars{E}{G}$ (resp. $\Irrs{E}{G}$). Character addition turns $\CChars{E}{G}$ into a commutative semigroup generated by $\Irrs{E}{G}$, since every character decomposes uniquely as a sum of irreducible ones. This semigroup can be enlarged to an abelian group by allowing formal integer combinations of characters and identifying them according to character addition - that is, the relation $(\chi) + (\chi') \sim (\chi + \chi')$. This is the usual process of Grothendieck completion after a formal identity (which maps every $\sigma \in G$ to 0) has been added. The result coincides in fact with the free abelian group on $\Irrs{E}{G}$, many of whose elements - the so-called \textbf{virtual characters}\index{character!vritual} - no longer arise as traces of representations. If we endow this additive group with the tensor product of characters, we obtain the \textbf{ring $\cR_E(G)$ of $E$-valued virtual characters of $G$}. A central result in the complex case is the following:
        \begin{thm}[Brauer's induction theorem]
        \label{thm:brauer_induction}
            The ring $\cR_\CC(G)$ is additively generated by
            \[
                \set{\indu_H^G \lambda \colon H \et{an elementary subgroup of} G, \lambda \et{a linear} \CC\text{-valued character of} \ H}.
            \]
        \end{thm}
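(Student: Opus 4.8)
The plan is to prove Brauer's induction theorem following the classical character-theoretic argument, which I would structure around showing that the subgroup $R$ of $\cR_\CC(G)$ generated by characters induced from linear characters of elementary subgroups actually contains the trivial character $\bbone_G$ — once this is achieved, the full statement follows quickly. Indeed, since induction is transitive and satisfies the projection formula $\indu_H^G(\lambda \otimes \rest_H^G \psi) = (\indu_H^G \lambda) \otimes \psi$ for $\psi \in \cR_\CC(G)$, the set $R$ is not merely a subgroup but an \emph{ideal} of the ring $\cR_\CC(G)$; hence if $\bbone_G \in R$, then for every character $\chi$ of $G$ we get $\chi = \bbone_G \otimes \chi \in R$, which is exactly the claim.

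First I would reduce to the statement $\bbone_G \in R$ by establishing the ideal property carefully: one checks transitivity of induction, and then the projection formula by a direct manipulation of $\CC[G] \otimes_{\CC[H]} (V \otimes_\CC \rest_H^G W) \iso (\CC[G]\otimes_{\CC[H]} V) \otimes_\CC W$ as $\CC[G]$-modules. Next comes the technical heart: showing $\bbone_G \in R$. Here I would use the \emph{$p$-elementary} refinement — a group is elementary if it is the direct product of a $p$-group and a cyclic group of order prime to $p$ — and prove, for each prime $p$, that there is a virtual character $\theta_p \in R$ with $\theta_p(\sigma) \equiv 1 \pmod{p}$ in the ring of algebraic integers for all $\sigma \in G$, built by inducing suitable linear characters from $p$-elementary subgroups. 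The construction of $\theta_p$ rests on two ingredients: (i) a number-theoretic lemma that for a cyclic group $C$ and a prime $p$, some explicit $\ZZ$-combination of inductions of linear characters of $C$ along a filtration by subgroups is $\equiv |C| \pmod{p}$-adically constant, and (ii) the observation that every element $\sigma \in G$ lies in a $p$-elementary subgroup (namely, writing $\sigma = \sigma_p \sigma_{p'}$ for its $p$- and $p'$-parts, one takes a Sylow $p$-subgroup of the centraliser $C_G(\sigma_{p'})$ together with $\langle \sigma_{p'}\rangle$). Combining the $\theta_p$ over all primes $p$ dividing $|G|$ by a coprimality/CRT argument on the integers, and using that a virtual character all of whose values are divisible by $|G|$ as algebraic integers must be a multiple of the regular character, one extracts $\bbone_G$ as a $\ZZ$-combination of the $\theta_p$, hence $\bbone_G \in R$.

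I expect the main obstacle to be step (i) — the explicit computation showing that an appropriate integral combination of monomial (linear-induced) characters on a cyclic group is $p$-adically close to a constant. This is the place where one genuinely uses cyclotomic arithmetic (Gauss sums / Vandermonde-type determinants over $\ZZ[\zeta_n]$, or an induction on the order of the cyclic group using the subgroup lattice), and getting the congruence uniform in $\sigma$ requires care. The surrounding formalism — transitivity, the projection formula, the ideal property, and the final CRT assembly — is comparatively routine. I would also need the standard fact that if $\psi \in \cR_\CC(G)$ takes only values divisible by $|G|$ in $\ZZ[\zeta_{|G|}]$ then $\psi \in |G|\cdot\cR_\CC(G)$, which follows from the orthogonality relations and the integrality of character inner products; this is what converts the family $\{\theta_p\}$ into an honest expression for $\bbone_G$. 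Since the excerpt states this as a cited theorem from \cite{cr1}, in the actual write-up I would likely present the reduction to $\bbone_G \in R$ in full and then invoke \cite{cr1} for the construction of the $\theta_p$, rather than reproducing the cyclotomic computation in detail.
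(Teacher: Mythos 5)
The paper does not actually prove this statement: Brauer's induction theorem is quoted from \cite{cr1}, and the whole of chapter 0 is explicitly declared non-original, so there is no in-text argument to compare yours against. Judged on its own terms, your outline is the standard classical proof: the projection formula makes the span $R$ of characters induced from linear characters of elementary subgroups an ideal of $\cR_\CC(G)$, everything reduces to $\bbone_G \in R$, and one attacks that prime by prime using the $p$-elementary subgroups built from $\sigma = \sigma_p\sigma_{p'}$ and a Sylow $p$-subgroup of the centraliser of $\sigma_{p'}$. All of that is right, and your plan to cite \cite{cr1} for the cyclotomic computation is consistent with what the paper itself does.

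The one genuine error is in your final assembly. The auxiliary fact you invoke --- that a virtual character all of whose values are divisible by $\abs{G}$ is a multiple of the regular character, equivalently that such a $\psi$ lies in $\abs{G}\cdot\cR_\CC(G)$ --- is false: $\abs{G}\cdot\bbone_G$ has all values divisible by $\abs{G}$ but is not a multiple of the regular character, and conversely the regular character $r_G$ has all values divisible by $\abs{G}$ but does not lie in $\abs{G}\cdot\cR_\CC(G)$, since its $\bbone_G$-isotypic multiplicity is $1$. (The orthogonality computation you sketch only shows the inner products $\sprod{\psi,\chi}$ are integers, not that they are divisible by $\abs{G}$; the true statement in this circle of ideas is that a virtual character vanishing off the identity is an integer multiple of $r_G$, which is not what your CRT step needs.) The standard repair is to prove, for each prime $p$ with $\abs{G} = p^{n_p}\ell_p$ and $p \nmid \ell_p$, that $\ell_p\cdot\bbone_G$ lies in the subgroup $V_p \subseteq R$ generated by inductions from $p$-elementary subgroups; this is where the cyclotomic lemma enters, together with Artin's induction theorem from cyclic subgroups (which are $p$-elementary) to absorb the error term divisible by $p^{n_p}$. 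Since the integers $\ell_p$ are coprime as $p$ varies, Bézout gives $\bbone_G = \sum_p c_p\,\ell_p\,\bbone_G \in \sum_p V_p \subseteq R$, with no appeal to the false divisibility lemma.
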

        Recall that a finite group $H$ is said to be \textbf{$p$-elementary}\index{p-elementary@$p$-elementary!finite group} (with $p$ a prime) if it decomposes as $H = C_n \times H_p$ with $C_n$ a cyclic group of order $n$ coprime to $p$ and $H_p$ a $p$-group; and \textbf{elementary}\index{elementary finite group} if it is $p$-elementary for at least one $p$.

        The set of $E$-valued class functions on $G$ has a natural $E$-vector space structure, and it can be shown that $\Irrs{E}{G}$ constitutes a basis. This space admits a \textbf{scalar product} $\sprod{-, -}_G$ (where the subindex $G$ might be omitted if clear from the context) given by
        \[
            \sprod{\chi, \psi}_G = \frac{1}{\abs{G}} \sum_{\sigma \in G} \chi(\sigma) \psi(\sigma^{-1}) \in E.
        \]
        This is symmetric, bilinear over $E$ and satisfies \textbf{Frobenius reciprocity}\index{Frobenius reciprocity}: given a subgroup of $H \leq G$ and two characters (not arbitrary class functions) $\chi$ and $\psi$ of $G$ and $H$ respectively, one has
        \begin{equation}
        \label{eq:frobenius_reciprocity}
            \sprod{\chi, \indu_H^G \psi}_G = \sprod{\rest_H^G \chi, \psi}_H.
        \end{equation}
        The following relation is a direct consequence of the definition of the scalar product: for characters $\chi, \psi$ and $\varphi$ of $G$, one has
        \begin{equation}
        \label{eq:scalar_product_dual}
            \sprod{\chi, \psi \otimes \varphi}_G = \sprod{\chi \otimes \check{\psi}, \varphi}_G.
        \end{equation}

        Characters provide a natural framework to understand the \textbf{Wedderburn-Artin structure theorem}. Assume for the rest of this section that \textit{$E$ is algebraically closed} (and still of characteristic 0). Then, a version of the aforementioned theorem states that the semisimple Artinian ring $E[G]$ is in fact a finite sum of matrix rings over $E$. With some character theory, one can show
        \begin{equation}
        \label{eq:wedderburn}
            E[G] = \bigoplus_{\chi \in \Irrs{E}{G}} E[G] e(\chi) \iso \bigoplus_{\chi \in \Irrs{E}{G}} M_{\chi(1)}(E),
        \end{equation}
        where the isomorphism is character-wise. Here the $e(\chi)$ are (all) the \textbf{primitive central idempotents}\footnote{A common notation for these in the literature is $e_\chi$. In this text, however, the notation $e_\chi$ will be reserved for a different - although related - object (cf. section \ref{sec:evaluation_maps}) and the two should not be confused.} of $E[G]$\index{primitive central idempotent!e1@$e(\chi)$}, meaning that $e(\chi) \in Z(E[G])$, $e(\chi)^2 = e(\chi)$, and $e(\chi)$ cannot be expressed as a sum of two non-zero central idempotents. They are given by the formula
        \begin{equation}
        \label{eq:definition_pci}
            e(\chi) = \frac{\chi(1)}{\abs{G}} \sum_{\sigma \in G} \chi(\sigma^{-1}) \sigma.
        \end{equation}

        In the isomorphism \eqref{eq:wedderburn}, $E[G]e(\chi) \iso M_{\chi(1)}(E)$ should be understood as the ring of $E$-linear endomorphisms $\End_E(V_\chi)$ (with an implicit choice of basis), where $V_\chi$ is an irreducible $E[G]$-module which affords $\chi$. Given $\sigma \in G$, the element $\sigma e(\chi) \in E[G]e(\chi) \subseteq E[G]$ is sent to the matrix describing the action of $\sigma$ on $V_\chi$. In particular, $e(\chi)$ itself is mapped to the identity matrix. By the Wedderburn-Artin theorem, $E[G]e(\chi)$ is a ring with unity $e(\chi)$, but not a subring of $E[G]$ in general since the unity does not coincide.

        A remarkable property of these primitive central idempotents is their behaviour with respect to projection: if $N$ is a normal subgroup of $G$ and we let $Q = G/N$, the canonical ring surjection ${\varepsilon \colon E[G] \sa E[Q]}$ satisfies the following: for an irreducible character $\chi \in \Irr_E(G)$,
        \begin{equation}
        \label{eq:pci_projection}
            \varepsilon(e(\chi)) =
            \begin{cases}
                e(\overline{\chi}), & N \subseteq \ker(\chi) \\
                0, & \text{otherwise},
            \end{cases}
        \end{equation}
        where $\overline{\chi}$ is the projection of $\chi$ to $Q$ (which is necessarily irreducible), i.e. it is defined by ${\chi = \infl_N^G \overline{\chi}}$.

        It follows from the arithmetic of the primitive central idempotents that if $\chi$ and $\chi'$ are irreducible, then
        \[
            \sprod{\chi, \chi'}_G =
            \begin{cases}
                1, & \chi = \chi' \\
                0, & \text{otherwise.}
            \end{cases}
        \]
        This is known as \textbf{Schur's orthogonality relation}. In particular, the irreducible characters of $G$ form an \textit{orthonormal} basis of the $E$-vector space of class functions on $G$. As mentioned above, any $\psi \in \CChars{E}{G}$ admits a unique decomposition
        \[
            \psi = \sum_{\chi \in \Irrs{E}{G}} n_\chi \chi
        \]
        with $n_\chi \in \NN$. By the orthogonality relation, we can compute the multiplicities as $n_\chi = \sprod{\chi, \psi}_G$. We say $\chi$ \textbf{divides} $\psi$ if $n_\chi > 0$; and $\psi$ is \textbf{isotypic} if only one irreducible character divides it. The above is simply a restatement of the fact that every module over a semisimple Artinian ring decomposes uniquely as a direct sum of simple modules: the representations $V_\chi$ for $\chi \in \Irrs{E}{G}$ constitute a system of representatives of simple $E[G]$-modules up to isomorphism, and therefore any finitely generated $E[G]$-module $M$ has a unique expression as
        \begin{equation}
        \label{eq:character_associated_module}
            M \iso \bigoplus_{\chi \in \Irrs{E}{G}} V_\chi^{n_{\chi, M}}
        \end{equation}
        with $n_{\chi, M} \in \NN$. $M$ is then the unique (up to isomorphism) module with character $\psi_M = \sum_\chi \chi^{n_{\chi, M}}$.

        Not all hope is lost for fields which are not algebraically closed: given an irreducible $E$-valued representation $(V, \rho)$ of $G$ with character $\chi$ (still assuming $E = E^c$), the entries of all matrices in the image of $\rho$ after the choice of a basis $\cB$ only amount to a finite set of elements of $E$. If $F$ is a subfield of $E$ which contains all of those entries, we say $\chi$ has a \textbf{realisation}\index{character!realisation}\index{realisation of a character} (or realises) over $F$. This implies the image of $\chi$ is also contained in $F$, and therefore, $e(\chi) \in F[G] \subseteq E[G]$ is a primitive central idempotent in $F[G]$. In this case, $F[G] e(\chi)$ is only a matrix ring over a \textit{skew field} whose centre contains $F$.

    \newpage
    \section{Artin $L$-series}
    \label{sec:artin_l-series}

        Artin $L$-series generalise classical objects such as the Dedekind zeta function and Dirichlet $L$-series. Their special values play a central role in modern number theory, and they will do so in our Main Conjecture as well. All properties below can be found in \cite{ant} section 7\S10 and \cite{tate} section I\S3. The definitions from section \ref{sec:representations_of_finite_groups} will be particularly relevant.

        Let $L/K$ be a Galois extension of number fields with Galois group $G = \Gal(L/K)$. Given a prolongation $w$ to $L$ of a non-archimedean place $v$ of $K$ and a $\CC[G]$-module $V_\chi$ with character ${\chi \in \CChars{\CC}{G}}$, there is a well-defined $\CC$-linear action of the Frobenius element $\varphi_w \in \Gal(\kappa(w)/\kappa(v)) \iso G_w / I_w$ on the complex vector space $V_\chi^{I_w}$. We define the local \textbf{Euler factors}\footnote{$L_v$ does not denote the completion of $L$ at $v$ here. There will not be any ambiguity in practice.}\index{Euler factor}
        \begin{equation}
        \label{eq:local_L-factors}
            L_v(\chi, s) = \det(1 - \fN(v)^{-s} \varphi_w \mid V_\chi^{I_w})^{-1}
        \end{equation}
        and
        \begin{equation}
        \label{eq:local_delta-factors}
            \delta_v(\chi, s) = \det(1 - \fN(v)^{1 - s} \varphi_w \mid V_\chi^{I_w})
        \end{equation}
        for $s \in \CC$, where the first 1 is common notation for the identity matrix. The subscript $-_v$ is justified: different prolongations $w$ of $v$ lead to conjugate decomposition and inertia groups, and $\varphi_w$ acts on $V_\chi^{I_w}$ via the same matrix (under a suitable basis) as $\varphi_{\sigma(w)}$ does on $V_\chi^{I_{\sigma(w)}} = V_\chi^{\sigma I_w \sigma^{-1}} = \sigma(V_\chi^{I_w})$. A similar argument shows that the Euler factors depend on the character $\chi$ rather than on the representation $V_\chi$: two representations with the same character $\chi$ are equivalent and hence differ only by matrix conjugation, which does not affect the determinant.

        If the action of $\varphi_w$ on $V_\chi^{I_w}$ has (not necessarily distinct) eigenvalues $\lambda_1, \ldots, \lambda_n \in \CC$, then
        \begin{equation}
        \label{eq:euler_factors_analytic}
            \det(1 - \fN(v)^{-s} \varphi_w \mid V_\chi^{I_w}) = \prod_{i = 1}^n (1 - \fN(v)^{-s} \lambda_i)
        \end{equation}
        is an entire complex function on $s$. In particular, its inverse $L_v(\chi, s)$ is meromorphic. By the same token, $\delta_v(\chi, s)$ is entire as well.

        Let $S$ and $T$ be two finite sets of places of $K$ such that $S \supseteq S_\infty$ (the archimedean places) and $S \cap T = \varnothing$. These are sometimes referred to as the \textbf{depletion} and \textbf{smoothing} set, respectively. The \textbf{$(S, T)$-modified\footnote{Traditionally, Artin $L$-series have been defined for a single chosen set $S$. This is the case in the two references cited at the beginning of the present section. The $(S, T)$-modified version is of more recent introduction, and can be found, for instance, in \cite{burns_derivatives}. However, the proofs of the properties in lemma \ref{lem:properties_of_L-functions} for the $S$-version found in the given references also apply to the $(S, T)$-version, since the $\delta$-factors have the same functorial properties as the $L$-ones and do not vanish at $s = 0$} Artin $L$-series attached to $\chi$}\index{L-series@$L$-series}\index{Artin L-series@Artin $L$-series} is defined as
        \begin{equation}
        \label{eq:artin_l_series}
            L_{K, S, T}(\chi, s) = \prod_{v \notin S} L_v(\chi, s) \cdot \prod_{v \in T} \delta_v(\chi, s)
        \end{equation}
        for $s \in \CC, \re(s) > 1$. Note that there are infinitely many $L$-factors but only finitely many $\delta$-ones (see the proposition below for convergence) and that primes in $T$ appear in both types. We denote $L_{S, \varnothing}(\chi, s)$ simply by $L_S(\chi, s)$. For instance, in the case $L = K$ and $S = S_\infty$, the Artin $L$-series $L_{K, S_\infty}(\bbone, s)$ coincides with the series defining the Dedekind zeta function $\zeta_K(s)$.

        Before stating some basic properties of these series, we need to introduce a few classical $\ZZ[G]$-modules. The first one is
        \[
            \cY_{L, S}^\ZZ = \bigoplus_{w \in L(S)} \ZZ \cdot w \iso \bigoplus_{v \in S} \Ind_{G_w}^G \ZZ,
        \]
        where, in the last term, $w$ is an arbitrarily chosen prolongation of $v$ to $L$ and $\Ind_{G_w}^G -$ stands for $\ZZ[G] \otimes_{\ZZ[G_w]} -$. Note that, if $G_w$ is normal in $G$, then $\Ind_{G_w}^G \ZZ$ is simply $\ZZ[G/G_w]$. Different choices of prolongations yield canonically isomorphic modules, since they have conjugate decomposition groups. The natural $G$-action is given by permuting the places or, under the isomorphism, by simple multiplication in $G$. If we extend scalars from $\ZZ$ to a field of characteristic 0, then $\cY_{L, S}^\ZZ$ becomes (canonically isomorphic to) the sum of inductions of the trivial representations $\bbone_{G_w}$ of each $G_w$ in the language of section \ref{sec:representations_of_finite_groups}.

        The second relevant Galois module is the kernel $\cX_{L, S}^\ZZ$ of the \textbf{augmentation map}\index{augmentation!map}
        \begin{align}
        \label{eq:definition_y_z}
            \cY_{L, S}^\ZZ & \twoheadrightarrow \ZZ \\
            \sum_{w \in L(S)} z_w \cdot w & \mapsto \sum_{w \in L(S)} z_w, \nonumber
        \end{align}
        which is a $\ZZ[G]$-homomorphism when $\ZZ$ is endowed with the trivial $G$-action. This kernel is closely related to the group of \textbf{$S$-units}
        \begin{equation}
        \label{eq:definition_s-units}
            \units{\cO_{L, S}} = \set{u \in \units{L} \colon \abs{u}_w = 1 \et{for every place} w \et{of} L \et{outside} S(L)},
        \end{equation}
        which contains the finite-index subgroup of \textbf{$(S, T)$-units}
        \begin{equation}
        \label{eq:definition_st-units}
            \units{\cO_{L, S, T}} = \set{u \in \units{\cO_{L, S}} \colon \abs{u - 1}_w < 1 \et{for all} w \in S(T)}.
        \end{equation}
        In other words, the $S$-units are the non-zero elements of $L$ which are integral locally at all places outside $S(L)$, and the $(S, T)$-units are the $S$-units which are principal locally at all places in $S(T)$. Both are equipped with the natural action of $G$ on $\units{L}$. The connection to $\cX_{L, S}^\ZZ$ comes in the form of \textbf{Dirichlet's unit theorem}\index{Dirichlet's unit theorem}, which asserts that the \textbf{Dirichlet regulator map}\index{Dirichlet regulator map}
        \begin{align}
        \label{eq:dirichlet_regulator_map_real}
            \RR \otimes \units{\cO_{L, S}} & \to \RR \otimes \cX_{L, S}^\ZZ \\
            r \otimes u & \mapsto - \sum_{w \in S(L)} r \log(\abs{u}_w) \otimes w \nonumber
        \end{align}
        is an isomorphism of $\RR[G]$-modules.

        We point out that the definition of $\cY_{L, S}^\ZZ$ and $\cX_{L, S}^\ZZ$ does not truly require $S \supseteq S_\infty$ (but Dirichlet's unit theorem does). Furthermore, one can consider all four modules above over a single number field rather a Galois extension - this simply corresponds to the case $L = K, G = 1$.

        The relevance of these objects in the discussion of Artin $L$-series is clear in point iv) of the following result:
        \begin{lem}
        \label{lem:properties_of_L-functions}
            In the above notation, one has the following:
            \begin{enumerate}[i)]
                \item{
                    $L_{K, S, T}(\chi, s)$ converges on $\re(s) > 1$ and has a meromorphic continuation to all of $\CC$, which we denote the same and refer to as the \textbf{$(S, T)$-modified Artin $L$-function attached to $\chi$}.
                }
                \item{
                    If $M$ is a finite Galois extension of $K$ containing $L$, then $L_{K, S, T}(\chi, s) = L_{K, S, T}(\infl_G^{\Gal(M/K)}\chi, s)$.
                }
                \item{
                    If $H$ is a subgroup of $G$ with fixed field $L' = L^H$ and $\chi = \indu_H^G \psi$ for some $\psi \in \CChars{\CC}{H}$, then $L_{K, S, T}(\chi, s) = L_{L', S(L'), T(L')}(\psi, s)$.
                }
                \item{
                    If $\chi'$ is another complex character of $G$, then $L_{K, S, T}(\chi + \chi', s) = L_{K, S, T}(\chi, s) L_{K, S, T}(\chi', s)$.
                }
                \item{
                    The order of vanishing $r_S(\chi)$ of $L_{K, S, T}(\chi, s)$ at $s = 0$ coincides with
                    \[
                        \bigg(\sum_{v \in S} \dim_\CC V_\chi^{G_w} \bigg) - \dim_\CC V_\chi^G = \sprod{\chi, \psi_\cX} = \dim_\CC(\Hom_{\CC[G]}(V_\chi^\ast, \CC \otimes \cX_{L, S}^\ZZ)),
                    \]
                    where, in the second term, $w$ denotes an arbitrarily chosen prolongation of $v$; and, in the third term, $\psi_\cX$ is the character of $\CC \otimes \cX_{L, S}^\ZZ$ as a $\CC[G]$-module. In particular, it is independent of $T$ and non-negative, i.e. $L_{K, S, T}(\chi, s)$ does not have a pole at $s = 0$.
                }
            \end{enumerate}
        \end{lem}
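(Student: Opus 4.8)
The strategy is to strip off the smoothing ($\delta$-) factors and reduce every claim to the classical theory of the $S$-modified Artin $L$-series, as indicated in the footnote above. By \eqref{eq:euler_factors_analytic} (with $\fN(v)^{1-s}$ in place of $\fN(v)^{-s}$) each $\delta_v(\chi,s)$ is entire, so $\prod_{v\in T}\delta_v(\chi,s)$ is a \emph{finite} product of entire functions; moreover $\delta_v(\chi,0)=\det(1-\fN(v)\varphi_w\mid V_\chi^{I_w})\neq 0$ because every eigenvalue $\lambda$ of the finite-order element $\varphi_w$ is a root of unity, whence $\abs{\fN(v)\lambda}=\fN(v)>1$. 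Consequently the $\delta$-part affects neither convergence nor continuation (i), is multiplicative in $\chi$ by block-diagonality of determinants (so it respects (iv)), obeys the same place-by-place inflation and induction formalism as the $L$-part by the identical arguments (so it respects (ii) and (iii)), and is a nonzero holomorphic function near $s=0$ (so it does not change the order $r_S(\chi)$ in (v)). Thus it suffices to prove (i)--(v) for $L_{K,S}(\chi,s)=\prod_{v\notin S}L_v(\chi,s)$, which is the classical content of \cite{ant} 7\S10 and \cite{tate} I\S3.

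For (iv), $V_{\chi+\chi'}^{I_w}=V_\chi^{I_w}\oplus V_{\chi'}^{I_w}$ is $\varphi_w$-stable with block-diagonal Frobenius, so $L_v(\chi+\chi',s)=L_v(\chi,s)L_v(\chi',s)$ for every $v$; multiply over $v\notin S$. For (ii), fix $v$ and prolongations $\tilde w$ (to $M$) over $w$ (to $L$): the inertia group $I_{\tilde w}$ surjects onto $I_w$ and acts on $V_{\infl\chi}$ through that surjection, so $V_{\infl\chi}^{I_{\tilde w}}=V_\chi^{I_w}$ with $\varphi_{\tilde w}$ acting as $\varphi_w$; hence $L_v(\infl\chi,s)=L_v(\chi,s)$ placewise. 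For (iii) --- Artin's ``inductivity in degree $0$'' --- decomposing $\Ind_H^G$ by the Mackey double-coset formula with respect to a decomposition group regroups $\prod_{v\notin S}L_v(\indu_H^G\psi,s)$ as the product over the places of $L'$ above $S^{c}$ of the local factors of $\psi$, i.e.\ as $L_{L',S(L')}(\psi,s)$. For (i), convergence on $\re(s)>1$ follows by comparison, the Dirichlet coefficients of $L_{K,S}(\chi,s)$ being non-negative and dominated by those of $\zeta_K(s)^{\chi(1)}$; for meromorphic continuation to all of $\CC$ one invokes Brauer's induction theorem (Theorem \ref{thm:brauer_induction}) to write $\chi$ as a $\ZZ$-combination of characters $\indu_{H_i}^G\lambda_i$ with $\lambda_i$ linear, so that by (iii) and (iv) $L_{K,S}(\chi,s)$ is a finite product of integer powers of ray-class $L$- and $\zeta$-functions, all of which are known to continue meromorphically.

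Part (v) is the substantive point. The three right-hand expressions agree by character theory: tensoring $0\to\cX_{L,S}^\ZZ\to\cY_{L,S}^\ZZ\to\ZZ\to 0$ with $\CC$ and applying the exact functor $\Hom_{\CC[G]}(V_\chi^\ast,-)$ gives $\dim_\CC\Hom_{\CC[G]}(V_\chi^\ast,\CC\otimes\cX_{L,S}^\ZZ)=\dim_\CC\Hom_{\CC[G]}(V_\chi^\ast,\CC\otimes\cY_{L,S}^\ZZ)-\dim_\CC\Hom_{\CC[G]}(V_\chi^\ast,\CC)$; by Shapiro's lemma applied to $\cY_{L,S}^\ZZ\iso\bigoplus_{v\in S}\Ind_{G_w}^G\ZZ$ the middle term equals $\sum_{v\in S}\dim_\CC V_\chi^{G_w}$, the last term equals $\dim_\CC V_\chi^G$, and the left-hand side equals $\sprod{\chi,\psi_\cX}$ by Schur orthogonality (using that $\psi_\cX$ is self-dual, as $\cX_{L,S}^\ZZ$ is defined over $\QQ$). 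It then remains to prove $r_S(\chi)=\sprod{\chi,\psi_\cX}$. Both sides are $\ZZ$-linear in the virtual character $\chi$ (analytically via (iv), algebraically by bilinearity of $\sprod{-,-}$), both are invariant under inflation (analytically by (ii); algebraically because $(\CC\otimes\cX_{M,S}^\ZZ)^{\Gal(M/L)}\iso\CC\otimes\cX_{L,S}^\ZZ$), and both transform compatibly under induction from $H\le G$ with $L'=L^H$ (analytically by (iii); algebraically by Frobenius reciprocity \eqref{eq:frobenius_reciprocity} and the identification $\rest_H^G\psi_{\cX_{L,S}}=\psi_{\cX_{L,S(L')}}$, the underlying $\ZZ[H]$-module being literally unchanged). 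Hence, by Theorem \ref{thm:brauer_induction} and (ii), it suffices to prove the identity when $G$ is cyclic and $\chi$ is a faithful linear character, i.e.\ for the $L$-function of a primitive ray-class character over the base field; there the right-hand side becomes $\#\{v\in S:\chi(G_w)=1\}-\dim_\CC V_\chi^G$, with $\dim_\CC V_\chi^G$ equal to $1$ if $\chi=\bbone$ and $0$ otherwise, and this is the classical order-of-vanishing formula at $s=0$: for $\chi=\bbone$ it reads $\mathrm{ord}_{s=0}\zeta_{K,S}(s)=\#S-1$, visible from the simple pole of $\zeta_K$ at $s=1$ together with the Euler factors deleted at the finite places of $S$; for $\chi\neq\bbone$ one combines the nonvanishing and regularity of $L(\chi,s)$ on $\re(s)\ge 1$ with the archimedean $\Gamma$-factors in the functional equation $\Lambda(\chi,s)=W(\chi)\Lambda(\check\chi,1-s)$ to transport the behaviour at $s=1$ to $s=0$. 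Finally, independence of $T$ and non-negativity of $r_S(\chi)$ are immediate: the $T$-factors are holomorphic units at $s=0$, and $\sprod{\chi,\psi_\cX}\geq 0$ as the multiplicity of $\chi$ in the genuine character $\psi_\cX$.

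The genuine obstacle is confined to this last step: proving that the order of vanishing at $s=0$ of a ray-class $L$-function is read off from the local data $\dim_\CC V_\chi^{G_w}$ requires the functional equation and careful bookkeeping of the $\Gamma$-factors. Everything else --- the $\delta$-reduction, the Artin formalism, the Brauer reduction, and the character-theoretic identification of the three right-hand sides --- is formal.
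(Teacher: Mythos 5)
Your proof is correct and follows the same route the paper takes implicitly: the paper gives no proof of this lemma, citing \cite{ant} section 7\S10 and \cite{tate} section I\S3 for the $S$-version and noting in a footnote that the $\delta$-factors at $T$ are entire, non-vanishing at $s=0$, and obey the same functorial formalism as the $L$-factors, which is precisely your reduction. The only blemish is the assertion that the Dirichlet coefficients of $L_{K,S}(\chi,s)$ are non-negative --- they are not for non-trivial $\chi$; what is true (and suffices for convergence on $\re(s)>1$) is that they are dominated in absolute value by the coefficients of $\zeta_K(s)^{\chi(1)}$.
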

        In properties ii) - iv), the equality means equality as series and hence equality of Artin $L$-functions. These functorial properties constitute a powerful tool when combined with representation-theoretic results such as Brauer's induction theorem \ref{thm:brauer_induction}. For instance, it follows immediately that Artin $L$-functions decompose as products of those attached to linear characters - which are easier to manipulate. Since virtual characters $R_\CC(G)$ are freely generated by $\Irrs{\CC}{G}$, property iv) above provides a natural way to define $L$-functions attached to virtual characters. Another observation is that property ii) justifies why the field $L$ is not featured in the notation $L_{K, S, T}$ (although it is implicit in $\chi$ at any rate): enlarging or - when possible - shrinking the top field does not affect the $L$-function. In that sense, there is a minimal field over which the function can be defined: $L^{\ker(\chi)}$, which is sometimes referred to as the field \textbf{cut out} by $\chi$.

        The \textbf{leading coefficient}\index{leading coefficient} of $L_{K, S, T}(\chi, s)$ at $s = 0$ is defined as
        \begin{equation}
        \label{eq:definition_leading_coefficient}
            L_{K, S, T}^\ast(\chi, 0) = \lim_{s \to 0} s^{-r_S(\chi)} L_{K, S, T}(\chi, s) \in \units{\CC}.
        \end{equation}
        In other words, it is the first non-zero coefficient of the series expansion of $L_{K, S, T}(\chi, s)$ around 0. This is the special value featured in our Main Conjecture.

    \newpage
    \section{Algebraic $K$-theory}
    \label{sec:algebraic-k-theory}

        Algebraic $K$-theory will provide the right setting for the formulation of our Main Conjecture. The algebraic and analytic sides of the conjecture come together in a classical so-called localisation sequence. We will only be concerned with $K$-groups in low degrees, namely 0 and 1. Our main reference is \cite{swan}.

        Let $R$ be a ring and denote by $\PPf_0(R)$ the category of finitely generated projective left $R$-modules with $R$-module homomorphisms. We define the following $K$-groups\index{K-group@$K$-group|(}:
        \begin{itemize}
            \item{
                $K_0(R)$ is the quotient of the free abelian group on the set of isomorphism classes of modules in $\PPf_0(R)$ by the subgroup generated by
                \[
                    (\overline{M \oplus N}) - (\overline{M}) - (\overline{N}),
                \]
                where $M, N \in \PPf_0(R)$ and $\overline{-}$ means isomorphism class. Here and below, $(-)$ denotes the image of $-$ in the corresponding free abelian group. Given $M \in \PPf_0(R)$, we denote the class of $(\overline{M})$ in $K_0(R)$ by $[M]$.

                For $M, N \in \PPf_0(R)$, one has $[M] = [N]$ in $K_0(R)$ if and only if there exists a $P \in \PPf_0(R)$ such that $M \oplus P \iso N \oplus P$ (cf. \cite{swan}, theorem 1.10). In that case, $M$ and $N$ are said to be \textbf{stably isomorphic}, which is in general a weaker condition than isomorphism.

                This construction can be formulated naturally in terms of a Grothendieck completion, a notion which already appeared in section \ref{sec:representations_of_finite_groups}.
            }
            \item{
                Given a ring homomorphism $R \xrightarrow{\varphi} S$, consider the category $\PPf_0^\varphi(R, S)$ whose objects are triples $(M, f, N)$ with $M, N \in \PPf_0(R)$ and $f \colon S \otimes_R M \xrightarrow{\sim} S \otimes_R N$ an isomorphism of $S$-modules. The map $f$ \textit{need not} arise as the scalar extension of a morphism of $R$-modules $M \to N$. A morphism of triples $\nu \colon (M, f, N) \to (M', f', N')$ consists of a pair of homomorphisms of $R$-modules $\nu_M \colon M \to M'$, $\nu_N \colon N \to N'$ making the diagram
                \begin{center}
                    \begin{tikzcd}[column sep=large]
                        S \otimes_R M \arrow[r, "S \otimes_R \nu_M"] \arrow[d, "f"] & S \otimes_R M' \arrow[d, "f'"] \\
                        S \otimes_R N \arrow[r, "S \otimes_R \nu_N"]                & S \otimes_R N'
                    \end{tikzcd}
                \end{center}
                commute. Note that, since the two vertical arrows are isomorphisms, either horizontal arrow determines the other uniquely. Composition and identity morphisms in this category are defined in the obvious way.

                The relative $K_0$ group\footnote{Some authors define the relative $K_0$ group for $R$ and a two-sided ideal $\fa \trianglelefteq R$, in which case it is often denoted by $K_0(R, \fa)$. This coincides with the above definition of $K_0^\varphi(R, R/\fa)$ (cf. \cite{swan} p. 214) if one chooses as $\varphi$ the canonical projection $R \sa R/\fa$. In particular, Swan's definition is more general (and necessary for our purposes).} of $R$ and $S$, denoted by $K_0^\varphi(R, S)$, is the quotient of the free abelian group on the set of isomorphism classes of objects in $\PPf_0^\varphi(R, S)$ (in order to avoid notational clutter, we do not distinguish between $(M, f, N)$ and its isomorphism class) by the subgroup generated by all elements of the form
                \begin{enumerate}[i)]
                    \item{
                        $((M, gf, P)) - ((M, f, N)) - ((N, g, P))$.
                    }
                    \item{
                        $((M, f, N)) - ((M', f', N')) - ((M'', f'', N''))$ if there exists a short exact sequence
                        \[
                            0 \to (M', f', N') \to (M, f, N) \to (M'', f'', N'') \to 0
                        \]
                        in $\PPf_0^\varphi(R, S)$ in the obvious sense.
                    }
                \end{enumerate}
                We denote the class of $((M, f, N))$ by $[M, f, N]$. An immediate consequence of i) is that $[M, \Id, M] = 0$ and $[M, f, M] = - [M, f^{-1}, M]$. As a direct application of ii), we have the following: if $f_R \colon M \xrightarrow{\sim} N$ is an isomorphism of (finitely generated projective) $R$-modules, then
                \begin{equation}
                \label{eq:iso_k0_over_r_trivial}
                    [M, S \otimes_R f_R, N] = 0.
                \end{equation}

                The notation $K_0(R, S)$ may be used instead of $K_0^\varphi(R, S)$ if the morphism $\varphi$ is clear from context.
            }
            \item{
                Let $\GLg(R)$ denote the infinite general linear group over $R$, that is, the direct limit $\varinjlim_n \GLg_n(R)$ with transition maps $\GLg_n(R) \ia \GLg_{n + 1}(R)$ given by $M \mapsto M \oplus 1$. Then the \textbf{Whitehead group} of $R$ is the abelianisation
                \[
                    K_1(R) = \GLg(R)^{ab}.
                \]
                By a classical result, the commutator of $\GLg(R)$ is precisely the subgroup generated by elementary matrices. We denote the class of a matrix $A \in \GLg_n(R) \subset \GLg(R)$ in $K_1(R)$ by $[A]$.

                We will on occasion need an alternative description of $K_1$ resembling that of the relative $K_0$ group. Let $\PPf_0^{\Aut}(R)$ denote the category whose objects are pairs $(M, f)$ with $M \in \PPf_0(R)$ and $f \in \Aut_R(M)$; and where a morphism $\nu \colon (M, f) \to (M', f')$ is a morphism of $R$-modules $\nu \colon M \to M'$ making the diagram
                \begin{center}
                    \begin{tikzcd}
                        M \arrow[r, "\nu"] \arrow[d, "f"] & M' \arrow[d, "f'"] \\
                        M \arrow[r, "\nu"]                & M'
                    \end{tikzcd}
                \end{center}
                commute. Composition and identity morphisms are defined in the obvious way. The group $K_1^{\det}(R)$ is then defined as the quotient of the free abelian group on the set of isomorphism classes of objects in $\PPf_0^{\Aut}(R)$ (which, as before, we do not distinguish from the objects themselves) by the subgroup generated by all elements of the form
                \begin{enumerate}[i)]
                    \item{
                        $((M, gf)) -  ((M, f)) - ((M, g))$.
                    }
                    \item{
                        $((M, f)) - ((M', f')) - ((M'', f''))$ if there exists a short exact sequence
                        \[
                            0 \to (M', f') \to (M, f) \to (M'', f'') \to 0
                        \]
                        in $\PPf_0^{\Aut}(R)$ in the obvious sense.
                    }
                \end{enumerate}
                Analogously to the relative $K_0$, we denote the class of $((M, f))$ by $[M, f]$. It follows from i) that $[M, \Id] = 0$ and $[M, f] = - [M, f^{-1}]$.

                These two descriptions of $K_1$ give rise to the same group: the map
                \begin{align*}
                    K_1(R) & \to K_1^{\det}(R) \\
                       [A] & \mapsto [R^n, f_A]
                \end{align*}
                is a well-defined group isomorphism (cf. \cite{cr2} theorem 40.6 or \cite{swan} theorem 13.4). Here $n$ is any positive integer such that $A \in \GLg_n(R) \subseteq \GLg(R)$ and $f_A$ is the $R$-automorphism of $R^n$ induced by multiplication by $A$. Note that, since $R$ may not be commutative, multiplying elements of $R^n$ by $A$ is only guaranteed to yield a homomorphism of left $R$-modules if done \textit{on the right}. In other words, $x \in R^n$ is regarded as a row vector and $f_A(x) = xA$.

                The inverse of the above isomorphism can be described as follows: let $[M, f] \in K_1^{\det}(R)$. There exists an $N \in \PPf_0(R)$ and an isomorphism $\iota \colon M \oplus N \xrightarrow{\sim} R^n$ for some $n \in \NN$. Then $\iota \circ (f \oplus \Id_N) \circ \iota^{-1}$ is an $R$-automorphism of $R^n$, which in particular has a matrix expression $A_f \in \GLg_n(R)$ - once again regarding elements of $R^n$ as row vectors. Mapping $[M, f]$ to $[A_f]$ is well defined and yields the inverse $K_1^{\det}(R) \xrightarrow{\sim} K_1(R)$ (cf. loc. cit.).

                In the sequel, we regard this canonical isomorphism as an identification $K_1(R) = K_1^{\det}(R)$.
            }
        \end{itemize}\index{K-group@$K$-group|)}

        We shall write the groups $K_0(R)$, $K_0(R, S)$ and $K_1^{\det}(R)$ in additive notation, since their operations are related to the direct sum of modules; and the group $K_1(R)$ in multiplicative notation, since its operation comes from matrix multiplication. Note that all three groups are abelian.

        Let now $\varphi \colon R \to S$ be any ring homomorphism. Consider the sequence abelian groups
        \begin{equation}
        \label{eq:exact_sequence_k-theory}
            K_1(R) \xrightarrow{K_1(\varphi)} K_1(S) \xrightarrow{\partial} K_0(R, S) \xrightarrow{\psi} K_0(R) \xrightarrow{K_0(\varphi)} K_0(S)
        \end{equation}
        where each arrow is defined as follows:
        \begin{itemize}
            \item{
                $K_1(\varphi)$ sends $[A]$ to $[\varphi(A)]$, where $\varphi$ is applied entry-wise.
            }
            \item{
                $\partial$ sends $[A]$ to $[R^n, f_A, R^n]$ for $A \in \GLg_n(S)$, where $f_A$ is the automorphism of $S^n$ given by multiplication by $A$ on the right. This map is often called the \textbf{connecting}\index{connecting homomorphism} or \textbf{boundary homomorphism}\index{boundary homomorphism}\footnote{Another common choice for $\partial$ the inverse of the above, i.e. $\partial([A]) = [R^n, f_A^{-1}, R^n]$. This does not affect the exactness of the sequence, but it has implications on other conventions. Our choice is consistent with the sources we will refer to, most notably \cite{swan} and \cite{bb}.}.
            }
            \item{
                $\psi$ sends $[M, f, N]$ to $[M] - [N]$.
            }
            \item{
                $K_0(\varphi)$ is induced by extension of scalars $S \otimes_R - \colon \PPf_0(R) \to \PPf_0(S)$ via $\varphi$. In other words, it sends $[M]$ to $[S \otimes_R M]$.
            }
        \end{itemize}
        These four maps are well-defined group homomorphisms which make \eqref{eq:exact_sequence_k-theory} into an exact sequence (cf. \cite{swan} theorem 15.5), known as the \textbf{exact sequence of $K$-theory}\index{exact sequence of K-theory@exact sequence of $K$-theory} - although technically this term refers to an infinite exact sequence continuing to the left with higher $K$-groups. If $R \xrightarrow{\varphi} S$ is the embedding of $R$ into some localisation, the sequence is also referred to as the \textbf{localisation sequence}\index{localisation sequence}.

        As an addendum to the definition of the boundary homomorphism, we also let ${\partial \colon K_1^{\det}(S) \to K_0(R, S)}$ denote the map resulting from the identification $K_1(S) = K_1^{\det}(S)$. Relation ii) in the definition of $K_0(R, S)$ then implies the following: given $M \in \PPf_0(R)$ and an $S$-automorphism $f$ of $S \otimes_R M$, one has
        \begin{equation}
        \label{eq:partial_on_k1det}
            \partial([S \otimes_R M, f]) = [M, f, M].
        \end{equation}

        $K_0$ and $K_1$ as above define covariant functors from $\fR$ (rings with unity) to $\fA\fb$ (abelian groups). There is a natural way to define \textit{contravariant} functors on a certain subcategory instead. To do so, let $\varphi \colon R \to S$ be a ring homomorphism such that $S$ is finitely generated and projective as an $R$-module via $\varphi$. Then restriction of scalars $\restr{-}{R}$ defines a covariant functor $\PPf_0(S) \to \PPf_0(R)$. This in turn induces a \textbf{transfer} functor on $K$-theory (cf. \cite{weibel} definition IV.6.3.2), which we can easily make explicit in low degree. On $K_0$, the map is clear:
        \begin{align*}
            K_0^\rest(\varphi) \colon K_0(S) & \to K_0(R) \\
               [M] & \mapsto [\restr{M}{R}].
        \end{align*}
        In the case of $K_1$, the map has an immediate description in terms of $K_1^{\det}$:
        \begin{align*}
            K_1^{{\det}, \rest}(\varphi) \colon K_1^{\det}(S) & \to K_1^{\det}(R) \\
               [M, f] & \mapsto [\restr{M}{R}, \restr{f}{R}].
        \end{align*}
        This then induces a map on the usual $K_1$ by imposing commutativity of the diagram
        \begin{center}
            \begin{tikzcd}
                K_1(S) \arrow[d, "K_1^{\rest}(\varphi)", dashed] \arrow[r, "\sim"] & K_1^{\det}(S) \arrow[d, "{K_1^{{\det}, \rest}(\varphi)}"] \\
                K_1(R) \arrow[r, "\sim"]                                           & {K_1^{\det}(R)\nospacecomma}
            \end{tikzcd}
        \end{center}
        The resulting homomorphism $K_1^{\rest}(\varphi)$ does not have a seamless definition in terms of matrices, but it can be described as follows. For simplicity, assume that $S$ is free as an $R$-module and hence there exists an $R$-isomorphism $\iota \colon R^r \to S$. Let $A = (a_{i , j})_{i, j} \in \GLg_n(S)$. For each $i, j$, let $m_{i, j}$ be the homomorphism of $S$-(and $R$-)modules $S \to S$ given by multiplication by $a_{i, j}$ on the right. The composition $\iota^{-1} \circ m_{i, j} \circ \iota$ is an endomorphism of the left $R$-module $R^r$ and hence can be represented by a matrix $M_{i, j} \in M_r(R)$. Then $K_1^\rest(\varphi)([A])$ is the class in $K_1(R)$ of the $rn$-by-$rn$ matrix resulting from replacing each $a_{i, j}$ by $M_{i, j}$ in $A$.

        Having defined change-of-ring maps for $K_0$ and $K_1$, it remains to extend these to the relative $K$-group $K_0(R, S)$ in a compatible manner. This will be essential to prove the good functorial behaviour of refined Euler characteristics (appendix \ref{app:functoriality_of_refined_euler_characteristics}) and hence of the Main Conjecture. The construction naturally involves four rings $R, S, R'$ and $S'$ and homomorphisms making the diagram
        \begin{equation}
        \label{eq:functoriality_relative_k0_setup}
            \begin{tikzcd}
                R \arrow[d, "\rho"] \arrow[r, "\varphi"] & S \arrow[d, "\sigma"] \\
                R' \arrow[r, "\varphi'"]                 & S'
            \end{tikzcd}
        \end{equation}
        commute. Extension of scalars is then given by
        \begin{align*}
            K_0(\rho, \sigma) \colon K_0(R, S) & \to K_0(R', S') \\
            [M, f, N] & \mapsto [R' \otimes_R M, S' \otimes_S f, R' \otimes_R N].
        \end{align*}
        The fact that this is well defined relies crucially on $S' \otimes_S (S \otimes_R -)$ being the same as $S' \otimes_{R'} (R' \otimes_R -)$, which is ensured by \eqref{eq:functoriality_relative_k0_setup}. The exact $K$-theory sequences \eqref{eq:exact_sequence_k-theory} induced by $\varphi$ and $\varphi'$ are then connected into a commutative diagram
        \begin{equation}
        \label{eq:functoriality_relative_k0_payoff}
            \begin{tikzcd}
                K_1(R) \arrow[r, "K_1(\varphi)"] \arrow[d, "K_1(\rho)"] & K_1(S) \arrow[r] \arrow[d, "K_1(\sigma)"] & {K_0(R, S)} \arrow[r] \arrow[d, "{K_0(\rho, \sigma)}"] & K_0(R) \arrow[r, "K_0(\varphi)"] \arrow[d, "K_0(\rho)"] & K_0(S) \arrow[d, "K_0(\sigma)"] \\
                K_1(R') \arrow[r, "K_1(\varphi')"]                      & K_1(S') \arrow[r]                         & {K_0(R', S')} \arrow[r]                                & K_0(R') \arrow[r, "K_0(\varphi')"]                      & K_0(S')\nospaceperiod
            \end{tikzcd}
        \end{equation}

        In the case of restriction of scalars, we place the same additional requirement on diagram \eqref{eq:functoriality_relative_k0_setup} as before, namely that $R'$ be a finitely generated projective $R$-module via $\rho$ and $S'$ a finitely generated projective $S$-module via $\sigma$. Then the map
        \begin{align*}
            K_0^\rest(\rho, \sigma) \colon K_0(R', S') & \to K_0(R, S) \\
            [M, f, N] & \mapsto [\restr{M}{R}, \restr{f}{S}, \restr{N}{R}]
        \end{align*}
        is well defined and makes the diagram
        \begin{equation}
        \label{eq:localisation_sequence_restriction}
            \begin{tikzcd}
                K_1(R') \arrow[r, "K_1(\varphi')"] \arrow[d, "K_1^\rest(\rho)"] & K_1(S') \arrow[r] \arrow[d, "K_1^\rest(\sigma)"] & {K_0(R', S')} \arrow[r] \arrow[d, "{K_0^\rest(\rho, \sigma)}"] & K_0(R') \arrow[r, "K_0(\varphi')"] \arrow[d, "K_0^\rest(\rho)"] & K_0(S') \arrow[d, "K_0^\rest(\sigma)"] \\
                K_1(R) \arrow[r, "K_1(\varphi)"]                      & K_1(S) \arrow[r]                         & {K_0(R, S)} \arrow[r]                                & K_0(R) \arrow[r, "K_0(\varphi)"]                      & K_0(S)
            \end{tikzcd}
        \end{equation}
        commute.

        We end this section by briefly introducing the \textbf{reduced norm}\index{reduced norm} $\nr \colon K_1(R) \to \units{Z(R)}$ for a semisimple Artinian ring $R$. This map will be instrumental in formulating the analytic side of the Main Conjecture, as it connects the localisation sequence above to certain (fraction fields of) rings of power series where the analogues of $p$-adic $L$-series are expected to live. The specifics of the construction will not play an important role. More details can be found in \cite{weibel} III.1.2.4.

        Consider first a division algebra $D$ of finite dimension $d$ over its centre $F = Z(D)$, which is necessarily a field. Choose a splitting field $E$, that is, an extension of $F$ such that $E \otimes_F D \iso M_n(E)$ - in particular, $d = n^2$ ($n$ is called the \textbf{Schur index} of $D$). Splitting fields always exist - any maximal field inside $D$ is one. The reduced norm map
        \[
            \nr \colon D \ia E \otimes_F D \iso M_n(E) \xrightarrow{\det} E
        \]
        is known to have image contained in $F$ and to be independent of the choice of $E$ and the isomorphism in the middle. It restricts to a group homomorphism $\units{D} \to \units{F}$. The same construction can be used to define maps $M_r(D) \to F$ which restrict to homomorphisms $\GLg_r(D) \to \units{F}$ for all $r \geq 1$. These are compatible with the embeddings $\GLg_r(D) \ia \GLg_{r + 1}(D)$ and hence extend to a homomorphism
        \[
            \nr \colon K_1(D) \to \units{F}.
        \]
        This is easily generalised to the case where $R$ is a semisimple Artinian ring in the form of
        \[
            \nr \colon K_1(R) \iso \prod_{i = 1}^s K_1(M_{n_i}(D_i)) = \prod_{i = 1}^s K_1(D_i) \xrightarrow{\prod \nr} \prod_{i = 1}^s \units{F_i} = \units{Z(R)},
        \]
        where each $D_i$ is a division ring of finite dimension $d_i$ over its centre $F_i$. Here the first isomorphism is given by the Wedderburn-Artin theorem $R \iso \prod M_{n_i}(D_i)$ together with the fact that $K_1$ sends products to products. The second equality is known as \textbf{Morita invariance} (cf. \cite{weibel} example III.1.1.4 and proposition III.1.6.4) and comes from the natural identifications $\GLg_{a}(M_b(D)) = \GLg_{ab}(D)$. As before, the resulting homomorphism $\nr$ is independent of the choice of an isomorphism in the Wedderburn-Artin theorem.

        When $R$ is a \textit{commutative} semisimple Artinian ring, the same theorem implies it decomposes as a finite product of fields. In particular, the reduced norm becomes an isomorphism
        \begin{equation}
        \label{eq:reduced_norm_iso}
            \nr \colon K_1(R) \isoa \units{R}.
        \end{equation}

\newpage
\chapter{Construction of the complex}
\label{chap:construction_of_the_complex}

    This chapter is devoted to the construction of a certain complex $\cC_{S, T}\q$ in explicit Galois-theoretic and cohomological terms. This complex is the central algebraic object in our Main Conjecture, and it coincides in the relevant derived category with that employed by Burns, Kurihara and Sano in their recent conjecture (cf. \cite{bks}). Although not strictly necessary for the formulation of the Main Conjecture, having this explicit description of $\cC_{S, T}\q$ is valuable in itself, and it will simplify subsequent tasks such as the study of how changes in the parameters (most notably the sets of places $S$ and $T$) affect the complex.

    The first two sections deal with the definition of some global and local complexes, respectively, which are based on the \textbf{translation functor} of Ritter and Weiss (cf. \cite{rw_tate_sequence}). In section \ref{sec:local-to-global_maps}, we define a natural morphism from the local complexes to the global one. The definition of $\cC_{S, T}\q$ as the cone of that morphism, along with some of its key properties, are the object of section \ref{sec:the_main_complex}. The remaining section proves the relation to the complex of Burns, Kurihara and Sano.

    The reader who is familiar with the language of \textit{Weil-étale cohomology complexes} or is otherwise not interested in the construction may skip this chapter and refer only to proposition \ref{prop:complexes_are_perfect} and theorem \ref{thm:cohomology_of_complex} for the essential properties of the complex. However, the next few pages (until the beginning of section \ref{sec:the_global_complex}) lay out some important notation for subsequent chapters - alongside that introduced at the beginning of chapter \ref{chap:notation_conventions_and_preliminaries}, which we assume now. Specifically, the following setting will apply to most of our considerations\footnote{For the sake of clarity, we will reference this and subsequent settings in the definitions and results where it is in place.}:

    \begin{sett}
    \label{sett:construction}
    \addcontentsline{toc}{section}{Setting A}

        Let $K$ be a number field. We consider:
        \begin{itemize}
            \item{
                A rational prime $p$, different from 2 if $K$ is not totally imaginary.
            }
            \item{
                A fixed algebraic closure $\QQ^c$ of $\QQ$. For each place $v$ of $K$, we choose and fix a distinguished prolongation $v^c$ of $v$ to $\QQ^c$. We denote the place of a number field $K'$ below $v^c$ by $v(K')$, which is simply $K' \cap v^c$ if $v$ is non-archimedean. This should not be confused with $\set{v}(K')$, the set of prolongations of $v$ to $K' \supseteq K$ in the notation introduced in chapter \ref{chap:notation_conventions_and_preliminaries}.
            }
            \item{
                $L_\infty$, a \textit{one-dimensional $p$-adic Lie extension} of $K$ containing its cyclotomic $\ZZ_p$-extension $K_\infty$ - which simply amounts to $H = \Gal(L_\infty/K_\infty)$ being finite. We denote $\Gal(K_\infty/K)$ by $\Gamma_K$, $\Gal(L_\infty/K)$ by $\cG$ and choose and fix an open central subgroup $\Gamma \trianglelefteq_o \cG$ such that $\Gamma \iso \ZZ_p$ as topological groups. Then $\Gamma \cap H = \set{1}$ and therefore, letting $L = L_\infty^\Gamma$, one has $L_\infty = LK_\infty$. In particular, $L_\infty$ is itself the cyclotomic $\ZZ_p$-extension of $L$. Lemma \ref{lem:splitting_and_central} below shows that such a $\Gamma$ exists.

                $K_n$ denotes the $n$-th layer of $K_\infty/K$, and thus $\Gal(K_\infty/K_n) = \Gamma_K^{p^n}$ and $\Gal(K_n/K)$ is cyclic of order $p^n$. The same notation applies to $L_n$ in $L_\infty/L$. We define $N \in \NN$ by $L \cap K_\infty = K_N$, so the image of the projection $\Gamma \ia \cG \sa \Gamma_K$ is precisely $\Gamma_K^{p^N}$. Since $\Gamma \subseteq Z(\cG)$, the extension $L_n/K$ is Galois (and finite) for all $n \in \NN$ and we denote the corresponding Galois group by $\cG_n = \cG/\Gp{n}$.
            }
            \item{
                $S \supseteq S_\infty \cup S_{\ram}(L_\infty/K)$, a finite set of places of $K$ containing the archimedean places and the ramified places in $L_\infty/K$. We denote the set of non-archimedean places in $S$ by $S_f$. Since the places which ramify in the cyclotomic $\ZZ_p$-extension of any number field are precisely the $p$-adic ones, we have $S_f \supseteq S_p$ (the set of $p$-adic places of $K$).

                Non-archimedean places are finitely decomposed in the cyclotomic $\ZZ_p$-extension, and therefore the subgroup
                \[
                    \Gamma \cap \bigcap_{w_\infty \in S_f(L_\infty)} \cG_{w_\infty}
                \]
                is open in $\Gamma$, i.e. of the form $\Gamma^{p^{n(S)}}$ for some $n(S) \in \NN$. In particular, $L_{n(S)}$ is the smallest layer $L_n$ of $L_\infty/L$ such that every place in $S_f(L_n)$ is non-split in $L_\infty/L_n$.
            }
            \item{
                $T$, a finite set of places of $K$ disjoint from $S$.  We let
                \[
                    T^p = \set{v \in T \colon L_w \et{contains a primitive} p\text{-th root of unity for some (any) place} \ w \in \set{v}(L)},
                \]
                where the equivalence between \textit{some} and \textit{any} comes from the fact that the elements of $\Gal(L/K)$ induce isomorphisms among the residue fields $\kappa(w)$ of the various prolongations $w \mid v$. By Hensel's lemma, $L_w$ contains a primitive $p$-th root of unity if and only if $p \mid \fN(w) - 1$.
            }
        \end{itemize}
    \end{sett}

    We \textbf{do not assume}
    \begin{itemize}
        \item{
            $\cG$ to be abelian.
        }
        \item{
            $L$ or $K$ to be totally real or imaginary, with the aforementioned caveat if $p = 2$. Furthermore, we do not prescribe the behaviour of archimedean places in $L_\infty/K$.
        }
        \item{
            $\abs{H}$ (i.e. $[L_\infty : K_\infty]$) to be prime to $p$.
        }
    \end{itemize}

    \begin{rem}
        An alternative approach to the above setting is to start with an extension $M/K$ of number fields and assume that the cyclotomic $\ZZ_p$-extension $M_\infty$ of $M$ is Galois over $K$ - which is always the case, for instance, if $M/K$ is Galois. It follows that $[M_\infty : K_\infty]$ is finite (with $K_\infty$ the cyclotomic $\ZZ_p$-extension of $K$) and one is thus in the same situation as above with $L_\infty = M_\infty$. In fact, $L$ can then be chosen to be one of the layers $M_n$ by simply forming the compositum of $M$ and an arbitrarily chosen $L$ as in the setting. Since our Main Conjecture will not depend on the choice of $L$ (as shown in subsection \ref{subsec:the_choice_of_l}), both approaches are equivalent. \qedef
    \end{rem}

    A simple argument (see for instance \cite{rwii} section 1) shows that a $\Gamma$ as in setting \ref{sett:construction} exists:
    \begin{lem}
    \label{lem:splitting_and_central}
        Let $K$ be a number field and $L_\infty/K$ a Galois extension such that $\cG = \Gal(L_\infty/K)$ contains a finite (and hence closed) subgroup $H$ with quotient $\Gamma_K = \cG/H \iso \ZZ_p$. Then, the short exact sequence
        \[
            1 \to H \to \cG \to \Gamma_K \to 1
        \]
        of profinite groups splits and $\cG$ contains an open central subgroup $\Gamma \iso \ZZ_p$. For any such $\Gamma$, the field $L = L_\infty^\Gamma$ is a finite Galois extension of $K$ and one has $\Gamma \cap H = \set{1}$ and $L_\infty^H L = L_\infty$.
    \end{lem}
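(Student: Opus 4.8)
The plan is to produce $\Gamma$ in two moves — first split the sequence off an \emph{open} $\ZZ_p$-subgroup, then shrink that subgroup to make it central — and finally deduce the remaining (purely Galois-theoretic) assertions. Note first that for $\cG/H$ to be a group, $H$ must be normal in $\cG$, and being finite it is closed.

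\emph{Splitting.} I would fix a topological generator $\gamma$ of $\Gamma_K \iso \ZZ_p$ and a lift $g \in \cG$, and set $C = \overline{\langle g \rangle}$. Then $C$ is procyclic, hence abelian; its image in $\cG/H$ is closed and contains $\gamma$, so $C$ surjects onto $\Gamma_K$; and $C \cap H$ is finite. Writing $C = \prod_\ell C_\ell$ for the decomposition into pro-$\ell$ Sylow subgroups, the factors $C_\ell$ with $\ell \neq p$ map trivially to the pro-$p$ group $\Gamma_K$, hence lie in the finite group $C \cap H$, so $C' := \prod_{\ell \neq p} C_\ell$ is finite and $C = C_p \times C'$. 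Consequently $C_p$ surjects onto $\Gamma_K$ with finite kernel $C_p \cap H$; since $C_p$ is a procyclic pro-$p$ group surjecting onto the infinite group $\ZZ_p$, it is itself isomorphic to $\ZZ_p$, and because every continuous surjection $\ZZ_p \to \ZZ_p$ is an isomorphism (multiplication by a unit), $C_p \cap H = 1$ and $C_p \to \Gamma_K$ is an isomorphism. Its inverse is a continuous section of $\cG \to \Gamma_K$, so the sequence splits, $\cG = H \rtimes C_p$, and $C_p$ is open of index $\abs{H}$.

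\emph{Making it central.} Conjugation gives a homomorphism $\cG \to \Aut(H)$ with finite image (as $H$ is finite), so its kernel $\cG_1 = C_\cG(H)$ is open in $\cG$. I would then take $\Gamma := C_p \cap \cG_1$: it is open in $C_p \iso \ZZ_p$, hence isomorphic to $\ZZ_p$ and open in $\cG$; it centralises $H$ (being contained in $\cG_1$) and centralises $C_p$ (which is abelian), so, since $\cG = H C_p$, it lies in $Z(\cG)$. This is the desired open central $\Gamma$.

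\emph{The remaining claims, for any such $\Gamma$.} Here $\Gamma \cap H$ is a finite subgroup of the torsion-free group $\Gamma \iso \ZZ_p$, hence trivial. Openness of $\Gamma$ gives $[L:K] = [\cG:\Gamma] < \infty$, and centrality (so normality) of $\Gamma$ makes $L/K$ Galois with group $\cG/\Gamma$. Finally, by the fundamental theorem of infinite Galois theory applied to $L_\infty/K$, the compositum satisfies $L_\infty^H L = L_\infty^H L_\infty^\Gamma = L_\infty^{H \cap \Gamma} = L_\infty$, using $H \cap \Gamma = \set{1}$.

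The one genuinely non-formal step is the splitting: it rests on the structure of procyclic profinite groups, the passage to the pro-$p$ Sylow subgroup, and the Hopfian property of $\ZZ_p$. Everything else is routine bookkeeping with open subgroups and the Galois correspondence.
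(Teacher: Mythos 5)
Your proof is correct and follows essentially the same strategy as the paper: lift a topological generator, extract the pro-$p$ part of the resulting procyclic subgroup to obtain the splitting, and then pass to the open subgroup acting trivially on $H$ by conjugation to achieve centrality. The only (cosmetic) difference is that you decompose $\overline{\langle g\rangle}$ into its pro-$\ell$ Sylow factors, whereas the paper replaces $g$ by $g^n$ with $n$ the prime-to-$p$ part of $\abs{H}$ to land directly in a pro-$p$ subgroup.
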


    \begin{proof}
        Let $\pi$ denote the projection $\cG \sa \Gamma_K$ and choose a topological generator $\gamma_K$ of $\Gamma_K$. Choose a preimage $g$ of $\gamma_K$ under $\pi$. If $n$ is the index of a $p$-Sylow subgroup $\cG_p$ of $\cG$ (i.e. the prime-to-$p$ part of $\abs{H}$), then $\pi(g^n) = \gamma_K^n$ is also a topological generator of $\Gamma_K$ and $g^n$ therefore generates a procyclic, and hence pro-$p$, subgroup of $\cG_p$. The assignment $\gamma_K^n \mapsto g^n$ uniquely determines a continuous embedding $\sigma \colon \Gamma_K \ia \cG$ which splits the short exact sequence from the proposition.

        Let $\Gamma_\cG = \sigma(\Gamma_K) \iso \ZZ_p$. The splitting $\sigma$ yields a semidirect product decomposition $\cG = H \rtimes \Gamma_\cG$ via the conjugation homomorphism $\Gamma_\cG \to \Aut(H)$. But $\Aut(H)$ is finite and hence there exists an open subgroup $\Gamma = \Gamma_\cG^{p^m} \trianglelefteq_o \Gamma_\cG$ which is mapped to the identity automorphism, i.e. whose elements commute with those of $H$ and therefore with all of $\cG$. Since $H$ is finite and $\Gamma$ has no elements of finite order, one has $\Gamma \cap H = \set{1}$, from which the last statement follows.
    \end{proof}

    It should be noted that the choice of $\Gamma$ is far from unique. In fact, if $\Gamma$ is as in the setting, then $\Gp{n}$ is another valid choice for any $n \in \NN$. In other words, we can move freely along the cyclotomic tower $L_\infty/L$. This will be addressed in section \ref{sec:independence_of_the_choice_of_parameters}.

    We now introduce notation for some distinguished fields:
    \begin{itemize}
        \item{
            $M_S$ is defined as the maximal $S$-ramified (i.e. unramified outside $S$) extension of $L_\infty$, and analogously for $M_{S \cup T}$.
        }
        \item{
            $M_{T, S}^{cs}$ is defined as the maximal $T$-ramified extension of $L_\infty$ which is completely split at $S$.
        }
        \item{
            $M_S^{cs} = M_{\varnothing, S}^{cs}$ is defined as the maximal unramified extension of $L_\infty$ which is completely split at $S$.
        }
    \end{itemize}

    By maximality, the above fields are all Galois over $L_\infty$, and so is $M_{S \cup T}$ over $K$ (because $L_\infty/K$ is unramified outside $S \cup T$). We set $H_{S \cup T} = \Gal(M_{S \cup T}/L_\infty) \trianglelefteq_c G_{S \cup T} = \Gal(M_{S \cup T}/K)$. The following diagram illustrates some of the relevant global fields and Galois groups:
    \vspace{-0.2em}
    \begin{center}
        \begin{tikzcd}
                                                                                                                                    &                                                                   & M_{S \cup T} \arrow[ddd, "H_{S \cup T}", no head, bend left]    \\
                                                                                                                                    & M_S \arrow[ru, no head]                                           & {M_{T, S}^{cs}} \arrow[u, no head]                              \\
                                                                                                                                    &                                                                   & M_S^{cs} \arrow[lu, no head] \arrow[u, no head]                 \\
                                                                                                                                    &                                                                   & L_\infty \arrow[u, no head] \arrow[ld, "\Gamma^{p^n}", no head] \\
                                                                                                                                    & L_n \arrow[ld, no head] \arrow[ldd, "\cG_n"', no head, bend left] &                                                                 \\
            L \arrow[rruu, "\Gamma", no head, bend left]                                                                            &                                                                   &                                                                 \\
            K \arrow[u, no head] \arrow[rruuu, "\cG"', no head, bend right] \arrow[rruuuuuu, "G_{S \cup T}", no head, bend left=60] &                                                                   &
        \end{tikzcd}
    \end{center}

    One of the reasons we restrict ourselves to the cyclotomic $\ZZ_p$-extension is the fact that non-archimedean places of the bottom field are \textit{finitely split} in it, that is, they split only into finitely many places in the top field. The other main reason is the \textbf{weak Leopoldt conjecture}\index{weak Leopoldt conjecture}, which is known in the cyclotomic case. It asserts the boundedness of the so-called \textit{Leopoldt defect} along a $\ZZ_p$-tower, although we shall make use of a cohomological formulation:
    \begin{thm}
    \label{thm:weak_leopoldt}
        Setting \ref{sett:construction}. One has $H_2(H_{S \cup T}, \ZZ_p) = 0$.
    \end{thm}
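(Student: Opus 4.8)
The statement is the homological incarnation of the weak Leopoldt conjecture for the cyclotomic $\ZZ_p$-extension $L_\infty/L$, and the plan is to make that identification precise and invoke the known case. First I would pass to cohomology: applying Lemma \ref{lem:cohomology_homology_duality} to the profinite group $H_{S \cup T}$ and the compact module $N = \ZZ_p$ with trivial action, and using $\ZZ_p^\vee = \Hom_{cts}(\ZZ_p, \QQ_p/\ZZ_p) = \QQ_p/\ZZ_p$, one obtains a functorial isomorphism
\[
    H_2(H_{S \cup T}, \ZZ_p)^\vee \iso H^2(H_{S \cup T}, \QQ_p/\ZZ_p).
\]
Since Pontryagin duality is faithful on compact (resp. discrete) modules, it suffices to prove $H^2(H_{S \cup T}, \QQ_p/\ZZ_p) = 0$.

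Next I would recognise $H_{S \cup T}$ as a standard restricted-ramification Galois group. By construction $M_{S \cup T}$ is the maximal extension of $L_\infty$ unramified outside $S \cup T$; as $L_\infty/L$ is itself unramified outside $S$ (it is the cyclotomic $\ZZ_p$-extension of $L$ and $S_f \supseteq S_p$), the same field $M_{S \cup T}$ is the maximal extension of $L$ unramified outside the set $\Sigma := (S \cup T)(L)$ of places of $L$ above $S \cup T$. Hence $H_{S \cup T} = G_\Sigma(L_\infty)$, the Galois group over $L_\infty$ of the maximal $\Sigma$-ramified extension, and $\Sigma$ contains all archimedean and all $p$-adic places of $L$ because $S \supseteq S_\infty \cup S_p$. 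With these identifications, $H^2(G_\Sigma(L_\infty), \QQ_p/\ZZ_p) = 0$ is precisely the cohomological form of the weak Leopoldt conjecture for the cyclotomic $\ZZ_p$-extension of $L$, which is a theorem of Iwasawa (cf. \cite{nsw} Theorem 10.3.25); its hypotheses are met, so the proof is complete.

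No genuine obstacle arises at the level of this lemma: the mathematical depth sits entirely inside Iwasawa's theorem, and the work is limited to the duality translation and to checking that Setting \ref{sett:construction} supplies the hypotheses of the cited result. For orientation, the heart of Iwasawa's proof is the boundedness of the Leopoldt defect $\delta_{L_n}$ along $L_\infty/L$, which, combined with a Hochschild--Serre argument for $1 \to G_\Sigma(L_\infty) \to G_\Sigma(L) \to \Gamma \to 1$ — where one uses that $G_\Sigma(L)$ has cohomological $p$-dimension at most $2$, valid since $p$ is odd or $L$ is totally imaginary by the standing hypothesis on $p$ in Setting \ref{sett:construction} — promotes the (automatic) vanishing of the $\Gamma$-coinvariants of $H^2(G_\Sigma(L_\infty), \QQ_p/\ZZ_p)$ to the vanishing of the whole module.
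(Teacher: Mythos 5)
Your argument is correct and matches the paper's treatment: the paper likewise defines the homology group via Lemma \ref{lem:cohomology_homology_duality} and then simply cites \cite{nsw} Theorems 10.3.22 and 10.3.25, i.e.\ the cohomological weak Leopoldt conjecture for the cyclotomic $\ZZ_p$-extension, exactly as you do. Your additional identification of $H_{S\cup T}$ with the restricted-ramification group over $L$ and the sketch of Iwasawa's argument are accurate but not needed beyond the citation.
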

    The homology group is defined as in lemma \ref{lem:cohomology_homology_duality} (or immediately before it) by regarding $\ZZ_p$ as a $\Lambda(H_{S \cup T})$-module, where the $H_{S \cup T}$-action is trivial. A proof can be found in \cite{nsw} theorems 10.3.22 and 10.3.25.

    The Iwasawa algebra at the centre of our construction is $\Lambda(\cG)$. Some general properties of such rings are discussed in section \ref{sec:iwasawa_algebras_and_modules}. $\Lambda(\cG)$ contains the integral domain $\Lambda(\Gamma) \iso \ZZ_p[[T]]$, over which it is a free module of finite rank equal to $[\cG : \Gamma] = [L : K]$. In particular, a $\Lambda(\cG)$-module is finitely generated if and only if it is so over $\Lambda(\Gamma)$. As a consequence, since $\Lambda(\Gamma)$ is Noetherian, so is $\Lambda(\cG)$. Unlike $\Lambda(\cG)$, the classical Iwasawa algebra $\Lambda(\Gamma)$ has many desirable properties, and restriction of scalars to it will be an essential algebraic tool when studying the $\Lambda(\cG)$-modules of interest to us.

    In setting \ref{sett:construction}, we have fixed a prolongation $v^c$ to $\QQ^c$ of each place $v$ of $K$. If $K'/K$ is Galois with group $G$, we may denote the decomposition group $G_{v(K')}$ simply by $G_v$ - and we will often do so for $K' = L_\infty$. This is a slight abuse of notation as the choice of $v(K')$ (that is, of $v^c$) is not canonical. However, the resulting objects will always be canonically isomorphic. The clearest example is the compact induction $\Ind_{\cG_{w_\infty}}^\cG M = \Lambda(\cG) \ctp_{\Lambda(\cG_{w_\infty})} M$ (section \ref{sec:iwasawa_algebras_and_modules}) of a compact $\Lambda(\cG_{w_\infty})$-module $M$, where $w_\infty$ is an arbitrary prolongation of $v$ to $L_\infty$. If $w_\infty'$ is a different prolongation and $\sigma \in \cG$ is any Galois automorphism sending $w_\infty$ to $w_\infty'$, one has $\cG_{w_\infty} = \sigma^{-1} \cG_{w_\infty'} \sigma$. Consider the $\Lambda(\cG_{w_\infty'})$-module ${}^{\sigma^{-1}}M$, which is identical to $M$ as a $\ZZ_p$-module but has $\cG_{w_\infty'}$-action given by $\tau \cdot^{\sigma^{-1}} m = (\sigma^{-1} \tau \sigma) m$. Then the continuous $\ZZ_p$-linear map given by
    \begin{align*}
        \Ind_{\cG_{w_\infty}}^\cG M & \to \Ind_{\cG_{w_\infty'}}^\cG {}^{\sigma^{-1}}M \\
        \lambda \otimes m & \mapsto \sigma \lambda \sigma^{-1} \otimes m
    \end{align*}
    for $\lambda \in \Lambda(\cG), m \in M$ is a well-defined isomorphism of $\Lambda(\cG)$-modules.

    We are thus justified in using the notation $\cG_v$ for $\cG_{v(L_\infty)}$, and in particular $\Ind_{\cG_v}^\cG -$ for $\Ind_{\cG_{v(L_\infty)}}^\cG -$. What does play an important role in some formal arguments is the fact that the chosen set of prolongations is compatible, that is, $v^c \mid \cdots \mid v(L_\infty) \mid v(L_n) \mid v(L) \mid v$. A natural way to think of compact induction is as the sum of one copy of the local (i.e. $\Lambda(\cG_v)$-)module $M$ for each place of $L_\infty$ above $v$, endowed with a natural $\Lambda(\cG)$-action by having $\cG$ permute these local components. This is especially clear if $\cG_v$ is open in $\cG$, which is the case for all non-archimedean $v$.

    A situation we will face repeatedly in this chapter is the following: suppose given an inverse system $(\set{M_n}, \set{\varphi_n \colon M_{n + 1} \to M_n}, n \in \NN)$, where each $M_n$ is a compact $\Lambda((\cG_n)_v)$-module and $\varphi_n$ is a $\Lambda((\cG_{n + 1})_v)$-homomorphism. Formally, one can regard this as an inverse system in the category of left $\Lambda(\cG)$-modules via the canonical augmentation maps $\Lambda(\cG) \sa \Lambda(\cG_n)$. The inductions $\Ind_{(\cG_n)_v}^{\cG_n} M_n$ then have natural transition maps from level $n + 1$ to level $n$, induced by the $\varphi_n$ and the projection of the local component at a prime of $L_{n + 1}$ to the prime of $L_n$ below it. The crucial idea now is the fact that the inverse limit of this new system coincides with the outcome of first taking the inverse limit $\varprojlim_n M_n$, which yields a $\Lambda(\cG_v)$-module, and only then inducing this local module to $\cG$. More rigorously, there is a canonical $\Lambda(\cG)$-isomorphism
    \begin{equation}
    \label{eq:induction_inverse_limit}
        \varprojlim_n \Ind_{(\cG_n)_v}^{\cG_n} M_n \iisoo \Ind_{\cG_v}^\cG \varprojlim_n M_n,
    \end{equation}
    or in other words, \textit{induction commutes with inverse limits}. Note that this is essentially part i) of lemma \ref{lem:properties_of_induction} together with the fact that a compact module coincides with the inverse limit of its modules of coinvariants (section \ref{sec:iwasawa_algebras_and_modules}).

    This concludes the necessary preparations for the construction of the complex $\cC_{S, T}\q$.

    \section{\texorpdfstring{The global complex $\cT_{S \cup T}^\bullet$}{The global complex}}
    \label{sec:the_global_complex}

        Our global complex is given by a four-term exact sequence of $\Lambda(\cG)$-modules, which shall be regarded as a complex concentrated in degrees 0 and 1. It is essentially an application of the \textit{translation functor} defined by Ritter and Weiss (cf. \cite{rw_tate_sequence}), so the aim of this brief section is to show what explicit form it takes in our setting.

        For the sake of generality, we consider a finite set $\Sigma$ of places of $K$ containing $S_\infty \cup S_{\ram}(L_\infty/K)$ - the choice of interest to us is $\Sigma = S \cup T$. As discussed above, the maximal $\Sigma$-ramified extension $M_\Sigma$ of $L_\infty$ is Galois over both $L_\infty$ and $K$. We use the same notation $G_\Sigma = \Gal(M_\Sigma/K)$ and $H_\Sigma = \Gal(M_\Sigma/L_\infty)$. In particular, $H_\Sigma \trianglelefteq_c G_\Sigma$ and $\cG \iso  G_\Sigma / H_\Sigma$.

        The starting point of the construction is the canonical short exact sequence of $\Lambda(G_\Sigma)$-modules
        \begin{equation}\label{eq:ses_augmentation_gs}
            0 \to \Delta(G_\Sigma) \to \Lambda(G_\Sigma) \xrightarrow{\aug} \ZZ_p \to 0
        \end{equation}
        in the notation of section \ref{sec:iwasawa_algebras_and_modules}. Recall that taking $H_\Sigma$-coinvariants of a $\Lambda(G_\Sigma)$-module amounts to taking its quotient by $\Delta(G_\Sigma, H_\Sigma)$, which is defined by the short exact sequence
        \begin{equation}\label{eq:ses_generalised_augmentation_gs}
            0 \to \Delta(G_\Sigma, H_\Sigma) \to \Lambda(G_\Sigma) \to \Lambda(\cG) \to 0
        \end{equation}
        Applying the $H_\Sigma$-coinvariants functor to \eqref{eq:ses_augmentation_gs} yields a long exact sequence of $\Lambda(\cG)$-modules
        \begin{equation}
        \label{eq:coinvariants_sequence_global}
            \cdots \to H_1(H_\Sigma, \Lambda(G_\Sigma)) \to H_1(H_\Sigma, \ZZ_p) \to \Delta(G_\Sigma)_{H_\Sigma} \to \Lambda(G_\Sigma)_{H_\Sigma} \to (\ZZ_p)_{H_\Sigma} \to 0,
        \end{equation}
        the dual of \eqref{eq:long_exact_sequence_cohomology_iwasawa}. Looking at each term more closely, we have:
        \begin{itemize}
            \item{
                $H_1(H_\Sigma, \Lambda(G_\Sigma)) = 0$. Note that $H_1(H_\Sigma, \Lambda(G_\Sigma)) \iso H^1(H_\Sigma, \Lambda(G_\Sigma)^\vee)^\vee$ by lemma \ref{lem:cohomology_homology_duality} and
                \begin{align*}
                    \Lambda(G_\Sigma)^\vee & = \Hom_{cts}\left(\Lambda(G_\Sigma), \faktor{\QQ_p}{\ZZ_p}\right) \\
                    & \iso \varinjlim_{\substack{n \in \NN \\ U \trianglelefteq_o G_\Sigma}} \Hom_{cts}\left(\fzmod{p^n}\left[\faktor{G_\Sigma}{U}\right], \faktor{\QQ_p}{\ZZ_p}\right) \\
                    & \iso \varinjlim_{U \trianglelefteq_o G_\Sigma} \Map\left(\faktor{G_\Sigma}{U}, \faktor{\QQ_p}{\ZZ_p}\right) \\
                    & = \Map_{cts}\left(G_\Sigma, \faktor{\QQ_p}{\ZZ_p}\right)
                \end{align*}
                 is $G_\Sigma$-induced. Therefore, it is also $H_\Sigma$-induced and hence cohomologically trivial by \cite{nsw} propositions 1.3.6 (ii) and 1.3.7.
            }
            \item{
                $H_1(H_\Sigma, \ZZ_p) \iso H_\Sigma^{ab}(p)$, the maximal abelian pro-$p$ quotient of $H_\Sigma$, or the Galois group over $L_\infty$ of its maximal $\Sigma$-ramified abelian pro-$p$ extension. This is the classical $\Sigma$-ramified Iwasawa module, which we denote by $X_\Sigma$.
            }
            \item{
                A middle term $\Delta(G_\Sigma)_{H_\Sigma}$, which we denote by $Y_\Sigma$.
            }
            \item{
                $\Lambda(G_\Sigma)_{H_\Sigma} \iso \Lambda(\cG)$ by \eqref{eq:ses_generalised_augmentation_gs}.
            }
            \item{
                $(\ZZ_p)_{H_\Sigma} = \ZZ_p$, since the $H_\Sigma$-action on $\ZZ_p$ is trivial.
            }
        \end{itemize}

        Thus, \eqref{eq:coinvariants_sequence_global} becomes a four-term exact sequence
        \[
            0 \to X_\Sigma \to Y_\Sigma \to \Lambda(\cG) \to \ZZ_p \to 0,
        \]
        which proves the following:
        \begin{prop}
        \label{prop:global_complex}
            Setting \ref{sett:construction}. Let $\Sigma$ be a finite set of places of $K$ containing $S_\infty \cup S_{\ram}(L_\infty/K)$. Then there exists a cochain complex of $\Lambda(\cG)$-modules concentrated in degrees 0 and 1
            \[
                \cT_\Sigma\q = [\stackrel{0}{Y_\Sigma} \to \stackrel{1}{\Lambda(\cG)}]
            \]
            such that $H^0(\cT_\Sigma\q) = X_\Sigma$ and $H^1(\cT_\Sigma\q) = \ZZ_p$. It is referred to as the \textbf{global complex}\index{global complex}\index{complex!global}.
        \end{prop}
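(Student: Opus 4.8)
The plan is to read the complex off the homology long exact sequence already displayed above, so that the proof is mostly bookkeeping together with a single genuine vanishing assertion.

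First I would record the consequences of the hypothesis $\Sigma \supseteq S_\infty \cup S_\ram(L_\infty/K)$: the extension $L_\infty/K$ is then unramified outside $\Sigma$, so $M_\Sigma$ is Galois over both $L_\infty$ and $K$, and one has $H_\Sigma = \Gal(M_\Sigma/L_\infty) \trianglelefteq_c G_\Sigma = \Gal(M_\Sigma/K)$ with $\cG \iso G_\Sigma/H_\Sigma$. I would then start from the canonical short exact sequence \eqref{eq:ses_augmentation_gs} of $\Lambda(G_\Sigma)$-modules and apply the $H_\Sigma$-coinvariants functor, obtaining the long exact homology sequence of $\Lambda(\cG)$-modules \eqref{eq:coinvariants_sequence_global}, which is the Pontryagin dual of \eqref{eq:long_exact_sequence_cohomology_iwasawa}.

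The next step is to identify its five relevant terms, which is exactly the content of the bulleted list preceding the statement: (iii) the middle term $\Delta(G_\Sigma)_{H_\Sigma}$ is by definition the module $Y_\Sigma$; (iv) $\Lambda(G_\Sigma)_{H_\Sigma} \iso \Lambda(\cG)$ by the generalised augmentation sequence \eqref{eq:ses_generalised_augmentation_gs}; (v) $(\ZZ_p)_{H_\Sigma} = \ZZ_p$ because $H_\Sigma$ acts trivially; (ii) $H_1(H_\Sigma, \ZZ_p) \iso H_\Sigma^{ab}(p) =: X_\Sigma$ is the standard identification of the first homology of a profinite group with trivial $\ZZ_p$-coefficients; and (i) $H_1(H_\Sigma, \Lambda(G_\Sigma)) = 0$. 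Only (i) requires an argument, and it is the main — indeed the only nonformal — obstacle: I would invoke Lemma \ref{lem:cohomology_homology_duality} to reduce it to $H^1(H_\Sigma, \Lambda(G_\Sigma)^\vee) = 0$, compute $\Lambda(G_\Sigma)^\vee \iso \Map_{cts}(G_\Sigma, \QQ_p/\ZZ_p)$ by passing to the limit over finite quotients (as in the displayed chain of isomorphisms above), observe that this module is $G_\Sigma$-induced and hence $H_\Sigma$-induced, and conclude cohomological triviality from \cite{nsw} propositions 1.3.6(ii) and 1.3.7.

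With these identifications in place, \eqref{eq:coinvariants_sequence_global} collapses to the four-term exact sequence $0 \to X_\Sigma \to Y_\Sigma \xrightarrow{d} \Lambda(\cG) \to \ZZ_p \to 0$. I would then define $\cT_\Sigma\q := [\,Y_\Sigma \xrightarrow{d} \Lambda(\cG)\,]$ with $Y_\Sigma$ placed in degree $0$ and $\Lambda(\cG)$ in degree $1$. Exactness of the four-term sequence gives $H^0(\cT_\Sigma\q) = \ker d$, which is canonically $X_\Sigma$ since $X_\Sigma \to Y_\Sigma$ is injective, and $H^1(\cT_\Sigma\q) = \coker d$, which is canonically $\ZZ_p$ since $\Lambda(\cG) \to \ZZ_p$ is surjective with kernel the image of $d$. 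This yields the asserted complex.
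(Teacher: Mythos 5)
Your proposal is correct and follows exactly the paper's own construction: taking $H_\Sigma$-coinvariants of the augmentation sequence, identifying the five terms (with the vanishing of $H_1(H_\Sigma,\Lambda(G_\Sigma))$ handled via Pontryagin duality and cohomological triviality of the induced module), and reading the complex off the resulting four-term exact sequence. No differences of substance.
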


    \section{\texorpdfstring{The local complexes $\cL_v\q$}{The local complexes}}
    \label{sec:the_local_complexes}

        We now use a similar process to construct a local complex $\cL_v$ at any place $v$ of $K$ (whether archimedean or non-archimedean). Several such complexes will be added together and induced to the global Galois group $\cG$ in order to define the main local-to-global morphism in the next section.

        Consider first the case of a \textbf{non-archimedean place} $v$ of $K$. Let us explore the connection between the global and local settings. By \textit{local} we mean we consider the tower of fields
        \[
            (\QQ^c)_{v^c} \ / \ (L_\infty)_{v(L_\infty)} \ / \ L_{v(L)} \ / \ K_v,
        \]
        where $(\QQ^c)_{v^c}$ denotes the union $\varinjlim_{K'/\QQ} K_{v(K')}'$ of the completions of all number fields $K'$ at the compatible set of places determined by $v^c$. This field identifies naturally with the algebraic closure $K_v^c$ of $K_v$ (cf. \cite{nsw} proposition 8.1.5), which determines an isomorphism $(G_K)_{v^c} \iso G_{K_v}$. Analogous notation will be used for other infinite algebraic extensions of $K$. For instance, $(L_\infty)_{v(L_\infty)}$ is the union of the completions of all number fields contained in $L_\infty$, and one has ${\Gal((L_\infty)_{v(L_\infty)}/K_v) \iso \cG_v}$. We regard these isomorphisms, and similar ones below, as identifications. Their dependence on the choice of $v^c$ is rendered irrelevant after taking induction, as explained in the introduction to this chapter.

        The extension $(L_\infty)_{v(L_\infty)} / L_{v(L)}$ has Galois group $\Gamma_{v(L_\infty)}$, which is open in $\Gamma$ and hence has the form $\Gamma^{p^n}$ for some $n \in \NN$. This means that $L_{v(L)} = (L_n)_{v(L_n)}$, that is, localising may cause the first few (finitely many) layers of the cyclotomic $\ZZ_p$-extension to \textit{collapse}. Note that the field $(L_\infty)_{v(L_\infty)}$ is still the cyclotomic $\ZZ_p$-extension of $L_{v(L)}$. In the non-$p$-adic case $v \notin S_p$, it must necessarily be the so-called \textit{unramified $\ZZ_p$-extension} of $L_{v(L)}$, since that is the only $\ZZ_p$-extension of an $l$-adic field for $l \neq p$. To avoid further notational clutter, we denote the absolute Galois group of $(L_\infty)_{v(L_\infty)}$ by $G_{L_\infty, v} \trianglelefteq_c G_{K_v}$, instead of $G_{(L_\infty)_{v(L_\infty)}}$. As before, this coincides with the decomposition group of $v^c$ in $G_{L_\infty}$.

        As for the extension $L/K$, it is locally trivial ($L_{v(L)} = K_v$) if $v$ splits completely in it, and the converse is true if the extension is Galois.

        \begin{rem}
            The field $M_{S \cup T}$, which played a prominent role in the construction of the global complex, does not make an appearance in the local case. As the maximal $(S \cup T)$-ramified extension of $K$, one could expect it to locally coincide with the entire algebraic closure of $K_v$ for $v \in S \cup T$. However, this is a subtle question related to the global realisation of local extensions, and we shall only briefly touch on it in the next section. The correct top field to consider in the local case is  $K_v^c$. \qedef
        \end{rem}

        We now proceed along the same lines as in section \ref{sec:the_global_complex}, starting at the short exact sequence of $\Lambda(G_{K_v})$-modules
        \[
            0 \to \Delta(G_{K_v}) \to \Lambda(G_{K_v}) \to \ZZ_p \to 0
        \]
        and taking $G_{L_{\infty, v}}$-coinvariants. This yields a long exact sequence of compact $\Lambda(\cG_v)$-modules
        \begin{equation}\label{eq:ses_augmentation_l}
            \cdots \to H_1(G_{L_{\infty, v}}, \Lambda(G_{K_v})) \to H_1(G_{L_{\infty, v}}, \ZZ_p) \to \Delta(G_{K_v})_{G_{L_{\infty, v}}} \to \Lambda(G_{K_v})_{G_{L_{\infty, v}}} \to (\ZZ_p)_{G_{L_{\infty, v}}} \to 0.
        \end{equation}
        By the same argument as for the global sequence \eqref{eq:coinvariants_sequence_global}, this reduces to
        \[
            0 \to G_{L_{\infty, v}}^{ab}(p) \to Y_v \to \Lambda(\cG_v) \to \ZZ_p \to 0,
        \]
        where $Y_v = \Delta(G_{K_v})_{G_{L_{\infty, v}}}$. Note that $G_{L_{\infty, v}}^{ab}(p)$ is the Galois group over $(L_\infty)_{v(L_\infty)}$ of its maximal abelian pro-$p$ extension.

        We now move on to the case of an \textbf{archimedean place} $v$ of $K$. The completions at places above $v$ then have the usual interpretation as $\RR$ or $\CC$ according on whether those places are real or complex, but this will not play a part in the construction. Decomposition groups of archimedean places are \textit{very small}, either trivial or of order 2 generated by complex conjugation. This simplifies the argument considerably, with the caveat that inducing from $\cG_v$ to $\cG$, as we will do later on, is no longer a finite process - now decomposition groups are not open.

        This time we directly consider the short exact sequence
        \begin{equation}
        \label{eq:ses_augmentation_la}
            0 \to \Delta(\cG_v) \to \Lambda(\cG_v) \to \ZZ_p \to 0
        \end{equation}
        and distinguish two cases:
        \begin{enumerate}[i)]
            \item{
                If $v(L_\infty)$ is \textit{unramified} in $L_\infty/K$, then $\Delta(\cG_v)$ is trivial and the above sequence is an equality $\ZZ_p = \ZZ_p$. This is always the case if $p = 2$ by assumption.
            }
            \item{
                If $v(L_\infty)$ is \textit{ramified} in $L_\infty/K$, let $\tau_{v(L_\infty)}$ denote the complex conjugation induced by $v(L_\infty)$ on $L_\infty$. We then have a canonical decomposition
                \[
                    \Lambda(\cG_v) \iisoo \left(\frac{1 + \tau_{v(L_\infty)}}{2}\right) \Lambda(\cG_v) \oplus \left(\frac{1 - \tau_{v(L_\infty)}}{2}\right) \Lambda(\cG_v)
                \]
                (because $2 \neq p$ is invertible in $\ZZ_p$), the two coefficients being precisely the primitive (central) idempotents of the semisimple algebra $\QQ_p^c[\cG_v]$ (cf. section \ref{sec:representations_of_finite_groups}). The two summands on the right-hand side are $\ZZ_p$-free of rank 1 (generated by these idempotents), sometimes denoted by $\ZZ_p^+$ and $\ZZ_p^-$. They are the maximal submodules of $\Lambda(\cG_v)$ on which $\tau_{v(L_\infty)}$ acts as the identity and as -1, respectively. The augmentation ideal $\Delta(\cG_v)$ coincides with $\ZZ_p^-$, and mapping $z \in \ZZ_p \subseteq \Lambda(\cG_v)$ to $z (1 + \tau_{v(L_\infty)})/2$ makes the diagram
                \begin{center}
                    \begin{tikzcd}
                        0 \arrow[r] & \Delta(\cG_v) \arrow[r] \arrow[d, equals] & \Lambda(\cG_v) \arrow[r] \arrow[d, equals] & \ZZ_p \arrow[d, "\rsim"] \arrow[r] & 0 \\
                        0 \arrow[r] & \ZZ_p^- \arrow[r]                 & \Lambda(\cG_v) \arrow[r]           & \ZZ_p^+ \arrow[r]         & 0
                    \end{tikzcd}
                \end{center}
                commute.
            }
        \end{enumerate}

        \begin{rem}
        \phantomsection
        \label{rem:local_complexes_general_construction}
            \begin{enumerate}[i)]
                \item{
                    Instead of starting directly at \eqref{eq:ses_augmentation_la}, we could have considered the analogous sequence for $G_{K_v}$ and taken $G_{L_\infty, v}$-coinvariants as we did in the non-archimedean case. Note that the term $H_1(G_{L_\infty, v}, \ZZ_p)$ of the analogous sequence to \eqref{eq:ses_augmentation_l} would vanish, since $G_{L_\infty, v}$ (of order $2$) has no non-trivial pro-$p$ quotient. This approach leads to the very same complexes.
                }
                \item{
                    The ramification (or splitting) behaviour of an archimedean place in $L_\infty/K$ is already determined at $L/K$, since these places are never ramified in $\ZZ_p$-extensions: their decomposition groups are finite, but $\ZZ_p$ has no non-trivial finite subgroups.
                }
            \end{enumerate}
        \end{rem}

        In order to uniformise the notation, we set $Y_v = \Delta(G_{K_v})_{G_{L_{\infty, v}}}$ as before. By the previous remark, this module is trivial if $v$ is unramified in $L_\infty/K$, and $\ZZ_p^-$  otherwise. We can now collect the constructions from this section:

        \begin{prop}
        \label{prop:local_complexes}
            Setting \ref{sett:construction}. For every place $v$ of $K$ there exists a cochain complex of $\Lambda(\cG_v)$-modules
            \[
                \cL_v\q = [\stackrel{0}{Y_v} \to \stackrel{1}{\Lambda(\cG_v)}]
            \]
            such that $H^0(\cL_v\q) = G_{L_\infty, v}^{ab}(p)$ and $H^1(\cL_v\q) = \ZZ_p$. If $v$ is archimedean, then $H^0(\cL_v\q) = 0$ and
            \[
                \cL_v\q =
                \begin{cases}
                    [0 \to \ZZ_p], & v \et{is unramified in} L_\infty/K \\
                    [\Delta(\cG_v) \to \Lambda(\cG_v)], & otherwise.
                \end{cases}
            \]
            It is referred to as the \textbf{local complex}\index{local complex}\index{complex!local} at $v$.
        \end{prop}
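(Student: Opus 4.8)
The plan is to assemble $\cL_v\q$ from the four-term exact sequences produced in this section and to read off its cohomology by exactness; there is no genuine mathematical obstacle, the content being the bookkeeping of the non-archimedean and archimedean cases.

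\emph{Non-archimedean $v$.} First I would check, exactly as in the proof of Proposition~\ref{prop:global_complex}, that the long exact $G_{L_\infty, v}$-homology sequence \eqref{eq:ses_augmentation_l} attached to $0 \to \Delta(G_{K_v}) \to \Lambda(G_{K_v}) \xrightarrow{\aug} \ZZ_p \to 0$ reduces to a four-term exact sequence of $\Lambda(\cG_v)$-modules
\[
    0 \to G_{L_\infty, v}^{ab}(p) \to Y_v \xrightarrow{d} \Lambda(\cG_v) \to \ZZ_p \to 0,
\]
with $Y_v = \Delta(G_{K_v})_{G_{L_\infty, v}}$: the term $H_1(G_{L_\infty, v}, \Lambda(G_{K_v}))$ vanishes because $\Lambda(G_{K_v})$ is $G_{K_v}$-induced, hence $G_{L_\infty, v}$-induced and cohomologically trivial; $H_1(G_{L_\infty, v}, \ZZ_p) \iso G_{L_\infty, v}^{ab}(p)$; $\Lambda(G_{K_v})_{G_{L_\infty, v}} \iso \Lambda(\cG_v)$ via the augmentation sequence for the pair $(G_{K_v}, G_{L_\infty, v})$; and $(\ZZ_p)_{G_{L_\infty, v}} = \ZZ_p$ since the action is trivial. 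Setting $\cL_v\q = [\stackrel{0}{Y_v} \xrightarrow{d} \stackrel{1}{\Lambda(\cG_v)}]$, exactness at $Y_v$ gives $H^0(\cL_v\q) = \ker(d) = G_{L_\infty, v}^{ab}(p)$, and exactness at $\Lambda(\cG_v)$ together with surjectivity of $\Lambda(\cG_v) \sa \ZZ_p$ gives $H^1(\cL_v\q) = \coker(d) = \ZZ_p$.

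\emph{Archimedean $v$.} Here I would take $\cL_v\q$ to be the two-term complex $[Y_v \to \Lambda(\cG_v)]$ extracted from \eqref{eq:ses_augmentation_la}, which by the case distinction already carried out equals $[0 \to \ZZ_p]$ if $v$ is unramified in $L_\infty/K$ and $[\Delta(\cG_v) \ia \Lambda(\cG_v)]$ with $\Delta(\cG_v) = \ZZ_p^-$ otherwise. In both cases $H^0(\cL_v\q) = 0$ (trivially in the first; because $\ZZ_p^- \ia \Lambda(\cG_v)$ is injective in the second), while $H^1(\cL_v\q)$ is $\ZZ_p$, respectively $\Lambda(\cG_v)/\ZZ_p^- \iso \ZZ_p^+ \iso \ZZ_p$. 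This agrees with the uniform statement because $G_{L_\infty, v}^{ab}(p) = 0$ in the archimedean case --- indeed $G_{L_\infty, v}$ has order $1$ or $2$, and $p$ is odd whenever an archimedean place of $K$ can ramify in an extension (if $p = 2$ then $K$ is totally imaginary by the running hypothesis, so every archimedean place is complex, hence unramified in $L_\infty/K$) --- and, by Remark~\ref{rem:local_complexes_general_construction}(i), the $G_{L_\infty, v}$-coinvariants construction (whose sequence then loses its $H_1(G_{L_\infty, v}, \ZZ_p)$ term) produces the same complex with $Y_v = \Delta(G_{K_v})_{G_{L_\infty, v}}$. The only points meriting a sentence of justification are this coherence in the archimedean case and the vanishing of $G_{L_\infty, v}^{ab}(p)$ there; the rest either repeats the argument for the global complex verbatim or is immediate.
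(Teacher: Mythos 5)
Your proposal is correct and follows essentially the same route as the paper: the non-archimedean case repeats the coinvariants argument from the global complex, and the archimedean case uses the direct augmentation sequence \eqref{eq:ses_augmentation_la} with the ramified/unramified dichotomy, reconciled with the uniform description exactly as in Remark~\ref{rem:local_complexes_general_construction}(i). The extra sentence verifying $G_{L_\infty,v}^{ab}(p)=0$ at archimedean places (order $1$ or $2$, with $p$ odd whenever the order-$2$ case can occur) is the same observation the paper makes when noting the vanishing of the $H_1(G_{L_\infty,v},\ZZ_p)$ term.
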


        Although this construction works for arbitrary $v$, the definition of our main complex only relies on the local complexes $\cL_v\q$ for places $v \in S$.

    \section{Local-to-global maps}
    \label{sec:local-to-global_maps}

        At this point, we have defined a global complex $\cT_{S \cup T}\q$ and local complexes $\cL_v\q$. It is time to define morphisms from the latter to the former, which we shall then induce from $\cG_v$ to $\cG$. As mentioned at the beginning of this chapter, conceptually, induction amounts to adding a copy of $\cL_v\q$ for each prolongation of $v$ to $L_\infty$ and justifies the notation $\cL_v\q$ for an object which is actually defined at $v(L_\infty)$. Before applying induction, the goal is therefore to construct morphisms of complexes of $\Lambda(\cG_v)$-modules $\cL_v\q \to \cT_{S \cup T}\q$.

        We first address degree 0 - that is, the modules $Y_v$ and $Y_{S \cup T}$. For a place $v \in S$, consider the continuous group homomorphism $f_v$ given by either composition in the commutative diagram
        \begin{equation}
        \label{eq:local_to_global_maps_galois}
            \begin{tikzcd}
                G_{K_v} \arrow[r, hook] \arrow[d, two heads] \arrow[rd, "f_v", dashed] & G_K \arrow[d, two heads] \\
                (G_{S \cup T})_{v(M_{S \cup T})} \arrow[r, hook]                       & G_{S \cup T}
            \end{tikzcd}
        \end{equation}
        in the notation of the previous sections. Recall here that the injection $G_{K_v} \ia G_K$ relies on the choice of a prolongation $v^c$ of $v$ to $\QQ^c$ made in setting \ref{sett:construction}. The map $f_v$ induces a continuous ring homomorphism $\Lambda(G_{K_v}) \to \Lambda(G_{S \cup T})$ which commutes with the augmentation and hence restricts to $\Delta(G_{K_v}) \to \Delta(G_{S \cup T})$. Since $f_v(G_{L_{\infty, v}}) \subseteq H_{S \cup T}$, the image of $\Delta(G_{L_{\infty, v}})$ under this homomorphism is contained in $\Delta(H_{S \cup T})$, which allows us to define a homomorphism of $\Lambda(\cG_v)$-modules
        \[
            \alpha_v^0 \colon \faktor{\Delta(G_{K_v})}{\Delta(G_{L_{\infty, v}}) \Delta(G_{K_v})} \to \faktor{\Delta(G_{S \cup T})}{\Delta(H_{S \cup T})\Delta(G_{S \cup T})},
        \]
        that is, $Y_v \to Y_{S \cup T}$ (cf. \eqref{eq:definition_coinvariants}). Note that this argument does not depend on whether $v$ is archimedean or not: as shown in remark \ref{rem:local_complexes_general_construction}, the description of the degree-zero term $Y_v$ of $\cL_v\q$ as $\Delta(G_{K_v})_{G_{L_{\infty, v}}}$ applies to both archimedean and non-archimedean $v$.

        Before continuing with the construction, we briefly touch on the natural question: for $v$ non-archimedean, should we expect $f_v \colon G_{K_v} \to G_{S \cup T}$ to be injective or surjective? The answer is, in general, \textit{no} to both. The injectivity of $f_v$ is equivalent to that of the left vertical arrow in \eqref{eq:local_to_global_maps_galois}, i.e. to whether $(M_{S \cup T})_{v(M_{S \cup T})}$ coincides with $K_v^c$. This is known to be the case for \textit{very large} (of density 1, in particular infinite) sets $S \cup T$ (see for instance \cite{nsw} theorem 9.4.3), but that is far from our situation. As to the failure of surjectivity in general, \cite{nsw} theorem 10.8.1 contains the following example: if $K$ is not totally real and $v \notin S_p$, then the image of $f_v$ is far from all of $G_{S \cup T}$. Note that the cited theorem refers to the maximal pro-$p$ quotients of these groups, but this implies $f_v$ itself cannot be surjective.

        The maps in degree 1 are substantially simpler: we let
        \[
            \alpha_v^1 \colon \Lambda(\cG_v) \ia \Lambda(\cG)
        \]
        be the canonical embedding. Together with $\alpha_v^0$, this induces a morphism of complexes of $\Lambda(\cG_v)$-modules $\alpha_v \colon \cL_v\q \to \cT_{S \cup T}\q$, which is to say that the diagram
        \begin{center}
            \begin{tikzcd}
                Y_v \arrow[d, "\alpha_v^0"] \arrow[r] & \Lambda(\cG_v) \arrow[d, "\alpha_v^1"] \\
                Y_{S \cup T} \arrow[r]                & \Lambda(\cG)
            \end{tikzcd}
        \end{center}
        commutes. Indeed, all the arrows are homomorphisms of topological $\ZZ_p$-algebras, so it suffices to verify this for the classes in $Y_v = \Delta(G_{K_v})_{G_{L_{\infty, v}}}$ of the generators $\set{\sigma - 1 : \sigma \in G_{K_v}}$ of $\Delta(G_{K_v})$. But on these elements, commutativity is clear.

        Now that we have the desired maps, we induce the local complexes from $\cG_v$ to all of $\cG$. The first important realisation is that, on the exact sequence
        \[
            0 \to H^0(\cL_v\q) \to [Y_v \to \Lambda(\cG_v)] \to H^1(\cL_v\q) \to 0
        \]
        defining $\cL_v\q$, the compact induction functor $\Ind_{\cG_v}^\cG - = \Lambda(\cG) \ctp_{\Lambda(\cG_v)} -$ introduced in section \ref{sec:iwasawa_algebras_and_modules} coincides with $\Lambda(\cG) \otimes_{\Lambda(\cG_v)} -$ and is exact:
        \begin{itemize}
            \item{
                If $v$ is non-archimedean, then $\cG_v$ is open in $\cG$ and therefore $\Lambda(\cG)$ is a free right $\Lambda(\cG_v)$-module of finite rank - the generators being any set of left coset representatives of $\cG/\cG_v$.
            }
            \item{
                If $v$ is archimedean, all $\Lambda(\cG_v)$-modules in $\cL_v\q$ and its cohomology ($\Delta(\cG_v), \Lambda(\cG_v)$ and $\ZZ_p$) are finitely generated, and hence finitely presented. Note here that $\Lambda(\cG_v)$ is Noetherian as it contains the subring $\ZZ_p$, which is itself Noetherian and of finite index in $\Lambda(\cG_v)$.
            }
        \end{itemize}
        In both cases, equation \eqref{eq:completed_tensor_product_equal} shows the desired equality of functors. Since $\Lambda(\cG)$ is a flat right $\Lambda(\cG_v)$-module (this is even true after replacing $\cG_v$ by any closed subgroup of $\cG$ by  \cite{venjakob} lemma B.1), exactness follows. In other words, the complex
        \[
            \Ind_{\cG_v}^\cG \cL_v\q = [\stackrel{0}{\Ind_{\cG_v}^\cG Y_v} \to \stackrel{1}{\Ind_{\cG_v}^\cG \Lambda(\cG_v)}]
        \]
        has cohomology groups
        \begin{equation}
        \label{eq:local_cohomology_compatible_induction}
            H^i(\Ind_{\cG_v}^\cG \cL_v\q) = \Ind_{\cG_v}^\cG H^i(\cL_v\q).
        \end{equation}

        $\cT_{S \cup T}\q$ has the structure of a complex of $\Lambda(\cG)$-(and not only $\Lambda(\cG_v)$-)modules, so induction yields a morphism of complexes of $\Lambda(\cG)$-modules
        \[
            \Ind_{\cG_v}^\cG \alpha_v \colon \Ind_{\cG_v}^\cG \cL_v\q \to \cT_{S \cup T}\q
        \]
        as mentioned in \eqref{eq:partial_induction}. Some of the resulting maps will be studied more closely when determining the cohomology of the main complex in the next section.

    \section{\texorpdfstring{The main complex $\cC_{S, T}\q$}{The main complex}}
    \label{sec:the_main_complex}

        We may now bring together the construction of the local and global complexes, as well as the morphisms between them, into the definition of our complex of interest. On the local side, the complexes at all places $v \in S$ are combined into
        \[
            \cL_S\q = \bigoplus_{v \in S} \Ind_{\cG_v}^\cG \cL_v\q.
        \]
        The maps from the last section then produce a morphism $\alpha_{S, T} = \sum_{v \in S} \Ind_{\cG_v}^\cG \alpha_v \colon \cL_S\q \to \cT_{S \cup T}\q$.
        \begin{defn}
        \label{defn:main_complex}
            Setting \ref{sett:construction}. The \textbf{main complex}\index{main complex}\index{complex!main} $\cC_{S, T}\q$ of $\Lambda(\cG)$-modules is defined as
            \[
                \cC_{S, T}\q = \Cone(\cL_S\q \xrightarrow{\alpha_{S, T}} \cT_{S \cup T}\q)[-1].
            \]
            \qedef
        \end{defn}

        The fact that this complex is perfect will be essential for the formulation of the Main Conjecture. We recall that a \textbf{strictly perfect} complex\index{strcitly perfect complex}\index{complex!strictly perfect} is a bounded complex consisting of finitely generated projective modules, and a \textbf{perfect} complex\index{perfect complex}\index{complex!perfect} is one which is isomorphic to a strictly perfect complex in the relevant derived category - in this case, $\cD(\Lambda(\cG))$. The perfection of $\cC_{S, T}\q$ boils down to the following technical result from the theory of presentations of Iwasawa modules (cf. \cite{nsw} section V\S6):

        \begin{lem}
        \label{lem:y_modules_projective_dimension}
            Let $N \ia G \sa Q$ be a short exact sequence of profinite groups such that $G$ is topologically finitely generated and has cohomological $p$-dimension $\cd_p G \leq 2$, and $H_2(N, \ZZ_p) = 0$. Then the $\Lambda(Q)$-module $Y_{G, N} = \Delta(G)_N$ has projective dimension $\pd_{\Lambda(Q)} Y_{G, N} \leq 1$.

            In particular, the complex of $\Lambda(Q)$-modules
            \[
                C\q = [\stackrel{0}{Y_{G, N}} \xrightarrow{v} \stackrel{1}{\Lambda(Q)}]
            \]
            is perfect.
        \end{lem}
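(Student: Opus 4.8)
The plan is to deduce the statement from the cohomological characterization of projective dimension over Iwasawa algebras, combined with a presentation of $Y_{G,N}$ obtained by taking $N$-coinvariants of the augmentation sequence for $\Lambda(G)$, exactly as was done for $\cT_\Sigma\q$ in section \ref{sec:the_global_complex}. First I would take the canonical short exact sequence of $\Lambda(G)$-modules $0 \to \Delta(G) \to \Lambda(G) \xrightarrow{\aug} \ZZ_p \to 0$ and apply the $N$-coinvariants functor (i.e. $-\otimes_{\Lambda(N)}\ZZ_p$, whose derived functors are the homology groups $H_i(N,-)$). Since $\Lambda(G)$ is $\Lambda(N)$-free, it is $N$-cohomologically trivial (equivalently, by Pontryagin duality and Lemma \ref{lem:cohomology_homology_duality}, $\Lambda(G)^\vee \iso \Map_{cts}(G,\QQ_p/\ZZ_p)$ is $N$-induced, hence $H_i(N,\Lambda(G))=0$ for $i\geq 1$), so the long exact homology sequence collapses to
\[
    0 \to H_1(N,\ZZ_p) \to \Delta(G)_N \to \Lambda(G)_N \to (\ZZ_p)_N \to 0,
\]
i.e. $0 \to H_1(N,\ZZ_p) \to Y_{G,N} \to \Lambda(Q) \to \ZZ_p \to 0$, using $\Lambda(G)_N \iso \Lambda(Q)$ from the generalized augmentation sequence and the fact that $N$ acts trivially on $\ZZ_p$.

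Next I would produce a length-one projective resolution of $Y_{G,N}$. Splitting the four-term sequence above into $0 \to H_1(N,\ZZ_p) \to Y_{G,N} \to \Delta(Q) \to 0$ (where $\Delta(Q)=\ker(\aug\colon\Lambda(Q)\to\ZZ_p)$), it suffices to control $Y_{G,N}$ directly. The cleanest route is to invoke the criterion that a finitely generated $\Lambda(Q)$-module $M$ has $\pd_{\Lambda(Q)} M \leq 1$ if and only if $\Ext^2_{\Lambda(Q)}(M, \Lambda(Q)) = 0$ together with $M$ having no nonzero pseudo-null $\ZZ_p$-torsion submodule of a forbidden type — but more robustly, one uses \cite{nsw} section V\S6 (presentations of Iwasawa modules): since $G$ is topologically finitely generated with $\cd_p G \leq 2$, the module $\Delta(G)$ admits a presentation by finitely generated projective (indeed free) $\Lambda(G)$-modules of length one, $0 \to P_1 \to P_0 \to \Delta(G) \to 0$. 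Applying $-\otimes_{\Lambda(N)}\ZZ_p$ and using $H_1(N,P_i)=0$ (the $P_i$ are $\Lambda(N)$-free, hence $H_i(N,P_i)=0$ for $i\geq 1$) together with the hypothesis $H_2(N,\ZZ_p)=0$ to kill the connecting map, one gets an exact sequence $0 \to (P_1)_N \to (P_0)_N \to Y_{G,N} \to 0$ with $(P_i)_N$ finitely generated projective $\Lambda(Q)$-modules; this is the desired length-one projective resolution, so $\pd_{\Lambda(Q)} Y_{G,N} \leq 1$. The hypothesis $H_2(N,\ZZ_p)=0$ is precisely what is needed here: without it, the sequence would only give $(P_1)_N \to (P_0)_N \to Y_{G,N} \to 0$ and possibly a nonzero $H_2(N,\ZZ_p)$ obstructing exactness on the left.

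For the ``in particular'' clause, once $\pd_{\Lambda(Q)} Y_{G,N} \leq 1$ is established, choose a length-one projective resolution $0 \to Q_1 \xrightarrow{d} Q_0 \to Y_{G,N} \to 0$. Then the two-term complex $C\q = [Y_{G,N} \xrightarrow{v} \Lambda(Q)]$ in degrees $0$ and $1$ is quasi-isomorphic to the complex $[Q_1 \to Q_0 \to \Lambda(Q)]$ (with $Q_0$ in degree $0$, $\Lambda(Q)$ in degree $1$, and $Q_1$ in degree $-1$): indeed, replacing the degree-$0$ term $Y_{G,N}$ by its resolution and using that $\Lambda(Q)$ is already projective gives a strictly perfect complex — bounded, with finitely generated projective terms — isomorphic to $C\q$ in $\cD(\Lambda(Q))$. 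Hence $C\q$ is perfect.

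The main obstacle is the first step: rigorously extracting a \emph{length-one} projective presentation of $\Delta(G)$ (equivalently, showing $\pd_{\Lambda(G)}\Delta(G)\leq 1$) from the hypotheses $\cd_p G \leq 2$ and topological finite generation. This is where one must lean on the structure theory in \cite{nsw} section V\S6 — the relevant statement being that $\cd_p G \leq 2$ translates, via $\Delta(G)$ being the first syzygy of $\ZZ_p$, into $\pd_{\Lambda(G)}\ZZ_p \leq 2$ and thence $\pd_{\Lambda(G)}\Delta(G)\leq 1$ — and then to check that this presentation descends correctly along the $N$-coinvariants functor, which is exactly where the vanishing $H_2(N,\ZZ_p)=0$ enters. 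Everything after that is formal homological algebra.
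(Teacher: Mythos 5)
Your argument is correct and follows essentially the same route as the paper: both proofs obtain a length-one projective resolution of $Y_{G,N}$ by descending a length-one projective presentation of $\Delta(G)$ along the $N$-coinvariants functor, with $\cd_p G \leq 2$ supplying the presentation and $H_2(N, \ZZ_p) = 0$ (equivalently, the vanishing of $H_1(N, \Delta(G))$) guaranteeing exactness on the left. The only difference is packaging: where you argue abstractly that $\cd_p G \leq 2$ gives $\pd_{\Lambda(G)} \Delta(G) \leq 1$ and then descend, the paper cites \cite{nsw} proposition 5.6.7, which delivers the explicit presentation $0 \to H_2(N, \ZZ_p) \to (M^{ab}(p))_N \to \Lambda(Q)^d \to Y_{G,N} \to 0$ (with $M^{ab}(p)$ the $p$-relation module of a free presentation of $G$) and the projectivity of $(M^{ab}(p))_N$ in a single stroke.
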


        \begin{proof}
            By proposition \cite{nsw} 5.6.7, there exists an exact sequence of $\Lambda(Q)$-modules
            \begin{equation}
            \label{eq:differential_presentation_y}
                0 \to H_2(N, \ZZ_p) \to (M^{ab}(p))_N \to \Lambda(Q)^d \to Y_{G, N} \to 0,
            \end{equation}
            where $M^{ab}(p)$ is the so called \textit{$p$-relation module} of $G$ with respect to a presentation
            \[
                1 \to M \to F \to G \to 1
            \]
            of $G$ by some free profinite group $F$ of rank $d$ - which is not unique: larger $d$ simply results in a larger kernel $M$. Then $M^{ab}(p)$ has a natural $\Lambda(G)$-module structure (with $G$ acting by lifting and conjugating), which in turn makes $(M^{ab}(p))_N$ into a $\Lambda(Q)$-module. The latter is is projective whenever $\cd_p G \leq 2$ by the same proposition, which together with the triviality of $H_2(N, \ZZ_p)$ shows that \eqref{eq:differential_presentation_y} is a projective resolution of $Y_{G, N}$ of length at most 1.

            The last claim in the lemma follows from the classical fact that a bounded complex consisting of modules of finite projective dimension is perfect - which can be shown by induction on the length of the complex using the fact that the cone of a morphism of perfect complexes is perfect.
        \end{proof}

        The consequence of relevance to us is the following:

        \begin{prop}
        \label{prop:complexes_are_perfect}
            Setting \ref{sett:construction}. The global complex $\cT_{S \cup T}\q$ and the main complex $\cC_{S, T}\q$ are perfect complexes of $\Lambda(\cG)$-modules. For any place $v$ of $K$, the local complex $\cL_v\q$ is a perfect complex of $\Lambda(\cG_v)$-modules.
        \end{prop}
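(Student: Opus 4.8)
The plan is to reduce everything to Lemma~\ref{lem:y_modules_projective_dimension}, which is tailored to the two-term complexes appearing in Propositions~\ref{prop:global_complex} and~\ref{prop:local_complexes}, and then to bootstrap to $\cC_{S,T}\q$ by formal properties of cones. For the global complex I would invoke Lemma~\ref{lem:y_modules_projective_dimension} with $G=G_{S\cup T}$, $N=H_{S\cup T}$ and $Q=\cG$, so that $Y_{G,N}$ is the module $Y_{S\cup T}=\Delta(G_{S\cup T})_{H_{S\cup T}}$ of the global complex. Its three hypotheses are that $G_{S\cup T}$ be topologically finitely generated, that $\cd_p G_{S\cup T}\le 2$, and that $H_2(H_{S\cup T},\ZZ_p)=0$; the last is exactly Theorem~\ref{thm:weak_leopoldt}, while the first two are classical facts about Galois groups of $\Sigma$-ramified extensions of number fields (see \cite{nsw}, chapter~X): finiteness of $S\cup T$ forces topological finite generation via Hermite--Minkowski, and $\cd_p G_{S\cup T}\le 2$ holds because $S_f\supseteq S_p$ together with the standing assumption of Setting~\ref{sett:construction} that $p$ be odd whenever $K$ is not totally imaginary. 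The lemma then yields $\pd_{\Lambda(\cG)}Y_{S\cup T}\le 1$, whence $\cT_{S\cup T}\q$ is perfect over $\Lambda(\cG)$.

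For the local complexes I would argue place by place. At a non-archimedean $v$, apply Lemma~\ref{lem:y_modules_projective_dimension} with $G=G_{K_v}$, $N=G_{L_\infty,v}$ and $Q=\cG_v$, so that $Y_{G,N}=Y_v$: here $G_{K_v}$ is the absolute Galois group of a local field of characteristic $0$, hence topologically finitely generated with $\cd_p G_{K_v}\le 2$, and the vanishing $H_2(G_{L_\infty,v},\ZZ_p)=0$ follows because $(L_\infty)_{v(L_\infty)}/L_{v(L)}$ is a $\ZZ_p$-extension of the local field $L_{v(L)}$ --- its Galois group $\Gamma_{v(L_\infty)}$ is open in $\Gamma\iso\ZZ_p$, as recorded in section~\ref{sec:the_local_complexes} --- so $\cd_p G_{L_\infty,v}\le 1$ by the dimension-drop theorem for $\ZZ_p$-extensions of local fields (\cite{nsw}, section~7\S1), and then $H_2(G_{L_\infty,v},\ZZ_p)\iso H^2(G_{L_\infty,v},\QQ_p/\ZZ_p)^\vee=0$ by Lemma~\ref{lem:cohomology_homology_duality}, since $\QQ_p/\ZZ_p$ is $p$-torsion. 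At an archimedean $v$ the claim is immediate from Proposition~\ref{prop:local_complexes}: either $\cL_v\q=[0\to\ZZ_p]$, a bounded complex of free $\Lambda(\cG_v)=\ZZ_p$-modules, or $v$ ramifies --- which forces $K$ not totally imaginary and hence $p\ne 2$ --- and $\cL_v\q=[\Delta(\cG_v)\to\Lambda(\cG_v)]$ with $\cG_v$ of order $2$, so $\Lambda(\cG_v)\iso\ZZ_p\times\ZZ_p$ via the idempotents $(1\pm\tau_{v(L_\infty)})/2$ and $\Delta(\cG_v)=\ZZ_p^-$ is one of the two factors, in particular projective; in both cases $\cL_v\q$ is a bounded complex of finitely generated projective modules.

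It then remains to put together $\cC_{S,T}\q$. By the discussion in section~\ref{sec:local-to-global_maps}, $\Ind_{\cG_v}^\cG-$ agrees with $\Lambda(\cG)\otimes_{\Lambda(\cG_v)}-$ and $\Lambda(\cG)$ is flat over $\Lambda(\cG_v)$, so this functor is exact base change along $\Lambda(\cG_v)\ia\Lambda(\cG)$ and therefore carries perfect complexes of $\Lambda(\cG_v)$-modules to perfect complexes of $\Lambda(\cG)$-modules. Hence $\cL_S\q=\bigoplus_{v\in S}\Ind_{\cG_v}^\cG\cL_v\q$ is a perfect complex of $\Lambda(\cG)$-modules, and $\cC_{S,T}\q=\Cone(\cL_S\q\xrightarrow{\alpha_{S,T}}\cT_{S\cup T}\q)[-1]$ is the shift of the cone of a morphism of perfect complexes, hence perfect --- using the fact, quoted inside the proof of Lemma~\ref{lem:y_modules_projective_dimension}, that cones of morphisms of perfect complexes are perfect.

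I expect the only genuinely non-formal inputs to be the two homological vanishing statements feeding Lemma~\ref{lem:y_modules_projective_dimension}: $H_2(H_{S\cup T},\ZZ_p)=0$ globally (the weak Leopoldt conjecture, available here as Theorem~\ref{thm:weak_leopoldt}) and $H_2(G_{L_\infty,v},\ZZ_p)=0$ locally. The remaining care goes into verifying the group-theoretic hypotheses of that lemma in each instance, and in particular the bound $\cd_p G_{S\cup T}\le 2$ is the one place where the parity restriction on $p$ in Setting~\ref{sett:construction} is indispensable; everything beyond these verifications is formal manipulation of base change and cones.
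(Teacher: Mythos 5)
Your argument is correct and follows essentially the same route as the paper's proof: Lemma \ref{lem:y_modules_projective_dimension} applied to $H_{S\cup T}\ia G_{S\cup T}\sa\cG$ globally and to $G_{L_\infty,v}\ia G_{K_v}\sa\cG_v$ locally, the archimedean complexes handled directly via the projectivity of $\ZZ_p^-$, and perfection of $\cC_{S,T}\q$ by exactness of induction and stability of perfection under cones. The only (harmless) deviation is in the local vanishing $H_2(G_{L_\infty,v},\ZZ_p)=0$, which you deduce from the drop $\cd_p G_{L_\infty,v}\le 1$ along the $\ZZ_p$-tower, whereas the paper computes $H^2(G_{L_\infty,v},\QQ_p/\ZZ_p)=\varinjlim_n H^2(G_{L_n,v},\QQ_p/\ZZ_p)=0$ as a direct limit of finite-level vanishings; both are standard and valid.
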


        \begin{proof}
            Both the global and local complexes are of the form considered in lemma \ref{lem:y_modules_projective_dimension}. In the global case $\cT_{S \cup T}\q$, we apply it to the short exact sequence of profinite groups $H_{S \cup T} \ia G_{S \cup T} \sa \cG$ in the notation of section \ref{sec:the_global_complex}. The group $G_{S \cup T}$ is topologically finitely generated by \cite{nsw} corollary 10.11.15 and has $\cd_p G_{S \cup T} = 2$ by \cite{nsw} proposition 10.11.3. The hypothesis $H_2(H_{S \cup T}, \ZZ_p) = 0$ holds by the validity of the weak Leopoldt conjecture (theorem  \ref{thm:weak_leopoldt}).

            For the local complex $\cL_v\q$, we distinguish two cases. If $v$ is archimedean, then $\cL_v\q$ is in fact strictly perfect as seen directly from proposition \ref{prop:local_complexes}. Recall at this point that, in the archimedean ramified case (where $p \neq 2$ by assumption), $\Delta(\cG_v) \iso \ZZ_p^-$ is a direct summand of $\Lambda(\cG_v)$ and hence projective.

            For non-archimedean $v$, we again apply lemma \ref{lem:y_modules_projective_dimension} to $G_{L_\infty, v} \ia G_{K_v} \sa \cG_v$ in the notation of section \ref{sec:the_local_complexes}. This time around, theorems 7.4.1 and 7.1.8 from \cite{nsw} give finite topological generatedness of $G_{K_v}$ and $\cd_p G_{K_v} = 2$, respectively. The vanishing of $H_2(G_{L_\infty, v}, \ZZ_p) \iso H^2(G_{L_\infty, v}, \QQ_p/\ZZ_p)^\vee$ (cf. lemma \ref{lem:cohomology_homology_duality}) follows from
            \[
                H^2(G_{L_\infty, v}, \QQ_p/\ZZ_p) = \varinjlim_n H^2(G_{L_n, v}, \QQ_p/\ZZ_p) = 0,
            \]
            where $G_{L_n, v}$ denotes the absolute Galois group of $(L_n)_{v(L_n)}$ and the equalities are \cite{nsw} propositions 1.5.1 and 7.3.10.

            In order to conclude that the main complex is perfect, we first note that $\Ind_{\cG_v}^\cG \cL_v\q$ is perfect over $\Lambda(\cG)$ for any $v$: both modules in $[Y_v \to \Lambda(\cG_v)]$ have projective dimension $\leq$ 1 by the previous proposition, and compact induction sends projective resolutions to projective resolutions. Hence, as (a shift of) the cone of a morphism between perfect complexes, $\cC_{S, T}\q = \Cone(\cL_S\q \xrightarrow{\alpha_{S, T}} \cT_{S \cup T}\q)[-1]$ is itself perfect.
        \end{proof}

        We now turn our attention to the cohomology\footnote{The structure of these cohomology groups will also follow from the isomorphism (in the derived category $\cD(\Lambda(\cG))$) between $\cC_{S, T}\q$ and certain $R\Gamma$-complex proved in section \ref{sec:description_in_terms_of_rgamma_complexes}. We compute them explicitly here mainly for two reasons: first, this involves the study of some classical objects and sequences which will be used in later chapters. Second, doing so provides a self-contained argument which does not rely on known results for the $R\Gamma$-complex. As a minor additional justification, section \ref{sec:description_in_terms_of_rgamma_complexes} requires $\cG$ to be abelian for compatibility with \cite{bks}, although this is only formal - the same isomorphism holds in the general case.} of the main complex. Its determination involves the following $\Lambda(\cG)$-modules, all of which arise by taking classical Galois modules, extending scalars to $\ZZ_p$ and passing to the inverse limit over the cyclotomic tower $L_\infty/L$:
        \begin{itemize}
            \item{
                $E_{S, T} = \varprojlim_n (\ZZ_p \otimes \units{\cO_{L_n, S, T}})$, where $\units{\cO_{L_n, S, T}}$ is the group of $(S, T)$-units of $L_n$ (cf. \eqref{eq:definition_st-units}) and the inverse limit is taken with respect to the norm maps.
            }
            \item{
                $E_S = E_{S, \varnothing} = \varprojlim_n (\ZZ_p \otimes \units{\cO_{L_n, S}})$, where $\units{\cO_{L_n, S}}$ is the group of $S$-units of $L_n$ (cf. \eqref{eq:definition_s-units}).
            }
            \item{
                $\cY_S = \bigoplus_{v \in S} \Ind_{\cG_v}^\cG \ZZ_p$, where $\ZZ_p$ is endowed with the trivial $\cG_v$-action. This is an instance of the situation described by equation \eqref{eq:induction_inverse_limit} with $M_n = \ZZ_p$ for all $n$ and transition maps $M_{n + 1} \to M_n$ equal to the identity. Therefore, $\cY_S$ can be identified with the inverse limit of
                \begin{equation}
                \label{eq:y_finite_level}
                    \cY_{L_n, S} = \bigoplus_{v \in S} \Ind_{(\cG_n)_{v(L_n)}}^{\cG_n} \ZZ_p \iso \bigoplus_{w_n \in S(L_n)} \ZZ_p \cdot w_n,
                \end{equation}
                where the last term is a free $\ZZ_p$-module with $\cG_n$-action given by Galois conjugation of places.

                By \eqref{eq:local_cohomology_compatible_induction}, $\cY_S$ coincides with $H^0(\cL_S\q)$
            }
            \item{
                $\cX_S = \ker(\cY_S \sa \ZZ_p)$ is the kernel the of canonical projection (which is essentially an augmentation map). For the same reason as above, it can be identified with the inverse limit of the kernels $\cX_{L_n, S} = \ker(\cY_{L_n, S} \sa \ZZ_p)$. Note that $\cX_{L_n, S}$ is simply $\ZZ_p \otimes \cX_{L_n, S}^\ZZ$ in the notation of section \ref{sec:artin_l-series}, and analogously for $\cY_{L_n, S}$.

                The module $\cX_S$ should not be confused with $X_S$, the Galois group over $L_\infty$ of its maximal $S$-ramified abelian pro-$p$ extension; and neither should $\cY_S$ with the $Y_S = \Delta(G_S)_{H_S}$ defined in section \ref{sec:the_global_complex}.
            }
        \end{itemize}

        In subsequent chapters, we will use analogous notation $E_{S', T'}, E_{S'}, \cY_{S'}, \cX_{S'}$ for sets of places $S'$ and $T'$ other than $S$ and $T$. These modules will be instrumental in the formulation of the Main Conjecture (see the beginning of chapter \ref{chap:formulation_of_the_main_conjecture}, in particular the homomorphism $\alpha \colon \cY_{S_\infty} \ia E_{S, T}$). All of them are finitely generated over $\Lambda(\cG)$ (for $E_S$, this follows from its structure as a $\Lambda(\Gamma)$-module given in \cite{nsw} theorem 11.3.11; and for $E_{S, T} \subseteq E_S$, this is a consequence of the Noetherianity of $\Lambda(\cG)$).

        In preparation for the computation of the cohomology of $\cC_{S, T}\q$, we determine the kernel and cokernel of
        \[
            H^0(\alpha_{S, T}) \colon H^0(\cL_S\q) \to H^0(\cT_{S \cup T}\q).
        \]
        By section \ref{sec:the_local_complexes}, $H^0(\cL_S\q) = \bigoplus_{v \in S_f} \Ind_{\cG_v}^\cG G_{L_\infty, v}^{ab}(p)$. Local class field theory yields an isomorphism of $\Lambda(\cG_v)$-modules
        \[
            G_{L_\infty, v}^{ab}(p) \iisoo \varprojlim_n \big(\units{(L_n)_{v(L_n)}}\big)^\wedge,
        \]
        where
        \[
            \big(\units{(L_n)_{v(L_n)}}\big)^\wedge = \varprojlim_m \faktor{\units{(L_n)_{v(L_n)}} \; }{\big(\units{(L_n)_{v(L_n)}}\big)^{p^m}}
        \]
        denotes the $p$-completion of the group of units of the local field $(L_n)_{v(L_n)}$. Thus, $H^0(\cL_S\q)$ identifies with the Iwasawa module $A_S$ in the notation of \cite{nsw} section XI\S3 (note that, for archimedean places $v \in S_\infty$, there is nothing to add as cohomology vanishes in degree 0). It should be pointed out that the definitions of $E_{S, T}$ and $E_S$ (and also $\cY_S$ and $\cX_S$) boil down to a limit of $p$-completions as well, but in those instances $p$-completion amounts to $\ZZ_p \otimes -$ because the abelian groups involved are finitely generated.

        \begin{prop}
        \label{prop:kernel_cokernel_h0_alpha}
            Setting \ref{sett:construction}. There exists a short exact sequence of $\Lambda(\cG)$-modules
            \[
                0 \to E_{S, T} \to H^0(\cL_S\q) \xrightarrow{H^0(\alpha_{S, T})} H^0(\cT_{S \cup T}\q) \to X_{T, S}^{cs} \to 0,
            \]
            where $X_{T, S}^{cs} = \Gal(M_{T, S}^{cs}/L_\infty)^{ab}(p)$ is the Galois group over $L_\infty$ of its maximal $T$-ramified abelian pro-$p$ extension which is completely split at $S$.
        \end{prop}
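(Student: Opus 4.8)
The plan is to compute $\ker H^0(\alpha_{S, T})$ and $\coker H^0(\alpha_{S, T})$ separately: the cokernel will turn out to be essentially formal, whereas the kernel requires global class field theory together with the weak Leopoldt conjecture. First one pins down the three terms and the map. By \eqref{eq:local_cohomology_compatible_induction} and Proposition \ref{prop:local_complexes}, $H^0(\cL_S\q) = \bigoplus_{v \in S_f} \Ind_{\cG_v}^\cG G_{L_\infty, v}^{ab}(p)$ (archimedean places contributing nothing), and by Proposition \ref{prop:global_complex}, $H^0(\cT_{S \cup T}\q) = X_{S \cup T} = H_{S \cup T}^{ab}(p)$. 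Unwinding the construction of $\alpha_{S, T}$ from section \ref{sec:local-to-global_maps}, the map $H^0(\alpha_{S, T})$ is, on the component at $v \in S$, the map on $H_1(-, \ZZ_p) = (-)^{ab}(p)$ induced by the restriction of $f_v$ to $G_{L_\infty, v}$; since $f_v$ carries $G_{L_\infty, v}$ onto the decomposition subgroup $D_v$ of $v(L_\infty)$ in $H_{S \cup T}$, this is the composite $G_{L_\infty, v}^{ab}(p) \sa D_v^{ab}(p) \to X_{S \cup T}$, followed by compact induction up to $\cG$ and summation over $v \in S$. Local class field theory provides the identification $G_{L_\infty, v}^{ab}(p) \iso \varprojlim_n \units{(L_n)_{v(L_n)}}^\wedge$ (with $\wedge$ denoting $p$-completion), which will be needed below.

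For the cokernel, observe from this description that the image of $H^0(\alpha_{S, T})$ is exactly the $\Lambda(\cG)$-submodule of $X_{S \cup T}$ generated by the decomposition subgroups at all places of $L_\infty$ above $S$ (the compact induction of the $v$-component having image $\Lambda(\cG) \cdot D_v = \sum_{\sigma \in \cG} \sigma D_v \sigma^{-1}$, the sum of the decomposition groups at all prolongations of $v$). The quotient of $X_{S \cup T}$ by this submodule is, by definition, the Galois group over $L_\infty$ of its maximal abelian pro-$p$ extension which is unramified outside $S \cup T$ and in which every place above $S$ splits completely; as splitting completely at $S$ already forces unramifiedness outside $T$, this quotient is precisely $X_{T, S}^{cs}$. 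This gives exactness at $H^0(\cT_{S \cup T}\q)$ and at the rightmost term.

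For the kernel one descends to a finite layer $L_n$. The classical exact sequence of global class field theory with congruence conditions at $T$ — obtainable from the Poitou--Tate sequence for $\ZZ/p^m(1)$ after passing to the limit in $m$, or as the standard ``$S$-units / semilocal units / reciprocity'' sequence, cf. \cite{nsw} and \cite{bks} — takes the form
\[
    0 \to \units{\cO_{L_n, S, T}} \otimes \ZZ_p \to \bigoplus_{v \in S} \units{(L_n)_{v(L_n)}}^\wedge \xrightarrow{\mathrm{rec}} X_{L_n, S \cup T} \to X_{L_n, T, S}^{cs} \to 0,
\]
where $X_{L_n, S \cup T}$ (resp. $X_{L_n, T, S}^{cs}$) is the Galois group over $L_n$ of its maximal $(S \cup T)$-ramified (resp. maximal $T$-ramified, $S$-split) abelian pro-$p$ extension, and the $T$-congruence is exactly what cuts the $S$-units down to those whose semilocal reciprocity class vanishes on the (tame, pro-$p$) ramification at $T$ detected by $X_{L_n, S \cup T}$. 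Taking the inverse limit over $n$: the semilocal term becomes $H^0(\cL_S\q)$ by \eqref{eq:induction_inverse_limit} (induction commutes with inverse limits) together with the local class field theory identification above; the unit term becomes $E_{S, T}$ and the rightmost term $X_{T, S}^{cs}$ by definition; and $\varprojlim_n X_{L_n, S \cup T} = X_{S \cup T}$, with $\varprojlim_n \mathrm{rec}$ matching $H^0(\alpha_{S, T})$ by the compatibility of the $f_v$ with the local Artin maps. Exactness survives the limit since all terms are compact (dualize via Pontryagin duality to an exact filtered colimit of discrete modules); the weak Leopoldt conjecture (Theorem \ref{thm:weak_leopoldt}) is what guarantees that the limit of the injections on the unit terms remains injective, i.e. that $E_{S, T} \to H^0(\cL_S\q)$ is injective. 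Combining the two computations produces the asserted four-term exact sequence.

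The main obstacle is the kernel computation: extracting the correct finite-level input from global class field theory so that the kernel comes out as the module of $(S, T)$-units $E_{S, T}$ rather than $E_S$ — the $T$-congruence being forced by the $(S \cup T)$-ramification of $\cT_{S \cup T}\q$ playing against the local complexes, which only involve places in $S$ — and controlling the inverse limit, where the injectivity of $E_{S, T}$ into the semilocal module is precisely the point at which the restriction to the cyclotomic tower (through the weak Leopoldt conjecture) enters. The cokernel, by contrast, is a formal consequence of the description of $H^0(\alpha_{S, T})$ in terms of decomposition groups.
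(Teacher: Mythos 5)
Your proposal is correct in overall strategy but takes a genuinely different route from the paper's. The paper first settles the case $T = \varnothing$ by citing \cite{nsw} theorem 11.3.10 — which is precisely the $T=\varnothing$ instance of the class-field-theoretic sequence you are reconstructing — and then measures the effect of $T$ by comparing two five-term exact sequences: one from the snake lemma applied to $H^0(\alpha_{S,T})$ versus $H^0(\alpha_{S,\varnothing})$, the other the inverse limit of the classical sequence relating $\units{\cO_{L_n,S,T}}$, $\units{\cO_{L_n,S}}$, $\bigoplus_{w_n \in T(L_n)}\units{\kappa(w_n)}$ and the ray class groups. That bootstrap manipulates only torsion modules and keeps the delicate analytic input inside the citation. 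You instead compute the cokernel directly, from the observation that the image of $H^0(\alpha_{S,T})$ is the closed submodule of $X_{S\cup T}$ generated by the decomposition groups above $S$ (the paper's remark after its proof concedes this direct argument works), and you recompute the kernel by writing the $T$-modified semilocal sequence at each layer and passing to the limit. This is more self-contained but forces you to reprove the hardest part of \cite{nsw} 11.3.10 with $T$ built in, rather than quote it.

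That is exactly where the one genuine gap sits. The finite-level sequence you assert is not exact at its leftmost term: injectivity of $\units{\cO_{L_n,S,T}} \otimes \ZZ_p$ into the semilocal module is Leopoldt's conjecture for $L_n$ and $p$, which is open. What class field theory gives at level $n$ is that the kernel of the reciprocity map is the \emph{closure} of the image of the $(S,T)$-units. Consequently your sentence that weak Leopoldt ``guarantees that the limit of the injections on the unit terms remains injective'' is backwards: if the finite-level maps were injective, exactness of inverse limits of compact modules would preserve this for free and Theorem \ref{thm:weak_leopoldt} would be irrelevant. The actual role of $H_2(H_{S\cup T},\ZZ_p)=0$ is to \emph{create} injectivity in the limit, i.e. to identify $\varprojlim_n$ of the closures with $E_{S,T}$; this identification is the substance of the proof of \cite{nsw} 11.3.10 and is not supplied by compactness alone. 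To repair the step, either carry the closures through the limit and give the weak-Leopoldt argument explicitly, or do as the paper does and reduce to the cited $T=\varnothing$ statement.

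Two minor points. The semilocal term at level $n$ should run over all places of $L_n$ above $S$ (i.e. be the induced module), not only the distinguished prolongations. And in the cokernel computation you quotient $X_{S\cup T}$ only by decomposition groups at places above $S_f$; one should note that complete splitting at $S_\infty$ is automatic in an abelian pro-$p$ quotient under the standing hypotheses on $p$, so that this quotient really is $X_{T,S}^{cs}$.
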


        \begin{proof}
            The case $T = \varnothing$, i.e. $H^0(\alpha_{S, \varnothing}) \colon H^0(\cL_S\q) \to H^0(\cT_S\q) = X_S$, is settled in \cite{nsw} theorem\footnote{Because of the hypothesis $p \nmid [L : K]$ in p. 739 of the reference, the cited theorem does not directly refer to the $\Lambda(\cG)$-module structure of the objects. However, it can still be used to determine the kernel and cokernel of $H^0(\alpha_{S, \varnothing})$ as a $\Lambda(\Gamma)$-homomorphism (i.e. choosing $L$ rather than $K$ as the base field), after which we simply point out that the maps $E_S \ia A_S \iso H^0(\alpha_{S, \varnothing})$ and $X_S \sa X_S^{cs}$ are $\cG$-equivariant.} 11.3.10: the kernel and cokernel are isomorphic to $E_S$ and $X_{\varnothing, S}^{cs}$, respectively, where the latter is usually simply denoted by $X_S^{cs}$. In applying the cited theorem, we note that $H_2(H_S, \ZZ_p) = 0$ by the weak Leopoldt conjecture (theorem \ref{thm:weak_leopoldt}). The fact that $H^0(\alpha_{S, \varnothing})$ coincides with the $A_S \to X_S$ therein under $H^0(\cL_S\q) \iso A_S$ is a consequence of our definition of the local-to-global-maps, which are induced by natural inclusions and projections of Galois groups.

            We therefore need to measure the change caused by the introduction of $T$. Consider the commutative diagram with exact rows
            \begin{center}
                \begin{tikzcd}
                    0 \arrow[r] & 0 \arrow[r] \arrow[d] & H^0(\cL_S\q) \arrow[d, "H^0(\alpha_{S, T})"] \arrow[r, equal] & H^0(\cL_S\q) \arrow[r] \arrow[d, "H^0(\alpha_{S, \varnothing})"] & 0 \\
                    0 \arrow[r] & \ker(\pi) \arrow[r]    & X_{S \cup T} \arrow[r, "\pi"]  & X_S \arrow[r]           & 0
                \end{tikzcd}
            \end{center}
            where $\pi$ is the natural projection - that is, restriction of Galois automorphisms. The snake lemma then yields an exact sequence
            \begin{equation}
            \label{eq:five_term_sequence_alpha}
                0 \to \ker(H^0(\alpha_{S, T})) \to E_S \to \ker(\pi) \to \coker(H^0(\alpha_{S, T})) \to X_S^{cs} \to 0.
            \end{equation}

            In order to determine the unknown terms, we construct the same sequence in a different manner, starting at the exact sequence of $\ZZ[\cG_n]$-modules
            \begin{equation}
            \label{eq:classical_five_term_sequence_units}
                1 \to \units{\cO_{L_n, S, T}} \to \units{\cO_{L_n, S}} \to \bigoplus_{w_n \in T(L_n)} \units{\kappa(w_n)} \to Cl_{L_n, S, T} \to Cl_{L_n, S} \to 1,
            \end{equation}
            where $Cl_{L_n, S}$ and $Cl_{L_n, S, T}$ denote the $S$-class group and $(S, T)$-ray class group at the layer $L_n$, respectively. These are the quotients of the class group and ray class group mod ${\fm_T = \prod_{w_n \in T(L_n)} w_n}$ by the subgroup generated by classes of ideals in $S_f$ (see \cite{nickel_plms} subsection 1.4 for the precise definition and the above sequence). Applying $\ZZ_p \otimes -$ is exact and yields a new sequence whose last three terms (not counting $1$) are the $p$-parts of those above, since they are finite abelian groups. In particular, $\ZZ_p \otimes \units{\kappa(w_n)} = \units{\kappa(w_n)}(p)$ is isomorphic to the group of $p$-power roots of unity of $(L_n)_{w_n}$ (by definition, places above $T$ cannot be $p$-adic).

            All terms resulting from tensoring \eqref{eq:classical_five_term_sequence_units} with $\ZZ_p$ are compact $\Lambda(\cG_n)$-modules, as they are finitely generated over $\ZZ_p$ (for the first two, this follows from Dirichlet's unit theorem: see \eqref{eq:dirichlet_regulator_map_real}). Therefore, taking inverse limits along the cyclotomic tower with respect to the norm maps produces a sequence of $\Lambda(\cG)$-modules
            \begin{equation}
            \label{eq:modified_five_term_sequence_alpha}
                0 \to E_{S, T} \to E_S \to \bigoplus_{v \in T^p} \Ind_{\cG_v}^\cG \ZZ_p(1) \to X_{T, S}^{cs} \to X_S^{cs} \to 0
            \end{equation}
            which is exact by \cite{nsw} proposition 5.2.4. In the middle term, we have used the fact that induction commutes with inverse limits (recall \eqref{eq:induction_inverse_limit}). Note that the group of $p$-power roots of unity in the cyclotomic $\ZZ_p$-extension of $L_{v(L)}$ is trivial if $L_{v(L)}$ itself contains no primitive $p$-th root of unity, and $\mu_{p^\infty}$ (i.e. all possible) otherwise. Hence the only terms that survive in the limit are those corresponding to places $v \in T^p$ as defined in setting \ref{sett:construction}, which yield precisely $\ZZ_p(1)$ (see equation \eqref{eq:tate_twist_rou}). Finally, the fact that the limits of $Cl_{L_n, S, T}$ and $Cl_{L_n, S}$ are precisely $X_{T, S}^{cs}$ and $X_S^{cs}$ is a well-known consequence of global class field theory.

            The last step is to connect sequences \eqref{eq:five_term_sequence_alpha} and \eqref{eq:modified_five_term_sequence_alpha}. The middle terms $\bigoplus_{v \in T^p} \Ind_{\cG_v}^\cG \ZZ_p(1)$ and $\ker(\pi)$ are isomorphic by \cite{nsw} theorem 11.3.5 (the isomorphism of the summands with $\Ind_{\cG_v}^\cG \ZZ_p(1)$ is addressed only in the proof) - this follows from applying it to $S \cup T$ and $S$ separately, then using the snake lemma. For the fourth terms, we use the fact that the canonical projection ${X_{S \cup T} \sa X_{T, S}^{cs}}$ is trivial on the image of $H^0(\cL_S\q)$, as the latter is generated precisely by the projections to $X_{S \cup T}$ of the decomposition groups $G_{L_\infty, v}^{ab}(p) = (G_{L_\infty})_{v^c}^{ab}(p)$ for $v \in S$. It therefore factors as ${\varepsilon \colon \coker(H^0(\alpha_{S, T})) \sa X_{T, S}^{cs}}$. This yields a diagram with exact rows
            \begin{center}
                \begin{tikzcd}[column sep=small]
                    0 \arrow[r] & {\ker(H^0(\alpha_{S, T}))} \arrow[r] \arrow[d, dashed] & E_S \arrow[r] \arrow[d, equals] & \ker(\pi) \arrow[r] \arrow[d, "\rsim"]                            & {\coker(H^0(\alpha_{S, T}))} \arrow[r] \arrow[d, two heads, "\varepsilon"] & X_S^{cs} \arrow[d, equals] \arrow[r] & 0 \\
                    0 \arrow[r] & {E_{S, T}} \arrow[r]                           & E_S \arrow[r]                       & \bigoplus_{v \in T^p} \Ind_{\cG_v}^\cG \ZZ_p(1) \arrow[r] & {X_{T, S}^{cs}} \arrow[r]                                          & X_S^{cs} \arrow[r]                       & 0
                \end{tikzcd}
            \end{center}
            where the first vertical arrow is defined by commutativity of the corresponding square, and all other squares commute by the aforementioned naturality of the maps. Now a simple diagram chase shows the first and fourth vertical arrows are isomorphisms as well.
        \end{proof}

        \begin{rem}
            The argument used to define $\varepsilon \colon \coker(H^0(\alpha_{S, T})) \sa X_{T, S}^{cs}$ above is not far from showing directly that it is an isomorphism. The bulk of the proof is aimed at determining the cohomology in degree 0, which is subtler as it relies on class field theory. The proof also showcases some objects and techniques which will be relevant later on - for instance, in subsection \ref{subsec:the_choice_of_s_and_t}. \qedef
        \end{rem}

        We can now determine the cohomology of the main complex:

        \begin{thm}
        \label{thm:cohomology_of_complex}
            Setting \ref{sett:construction}. The perfect complex $\cC_{S, T}\q$ is acyclic outside degrees 0 and 1 and satisfies:
            \begin{itemize}
                \item{
                    $H^0(\cC_{S, T}\q) \iso E_{S, T}$.
                }
                \item{
                    $H^1(\cC_{S, T}\q)$ fits into the short exact sequence of $\Lambda(\cG)$-modules
                    \[
                        0 \to X_{T, S}^{cs} \to H^1(\cC_{S, T}\q) \to \cX_S \to 0,
                    \]
                    with $X_{T, S}^{cs}$ and $\cX_S$ as defined in and before proposition \ref{prop:kernel_cokernel_h0_alpha}, respectively.
                }
            \end{itemize}
        \end{thm}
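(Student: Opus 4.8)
The plan is to read off the cohomology of $\cC_{S, T}\q$ from the long exact sequence of the mapping cone. By Definition \ref{defn:main_complex} the complex $\cC_{S, T}\q = \Cone(\cL_S\q \xrightarrow{\alpha_{S, T}} \cT_{S \cup T}\q)[-1]$ sits in the distinguished triangle $\cC_{S, T}\q \to \cL_S\q \xrightarrow{\alpha_{S, T}} \cT_{S \cup T}\q \to \cC_{S, T}\q[1]$ in $\cD(\Lambda(\cG))$, which induces the long exact cohomology sequence $\cdots \to H^i(\cC_{S, T}\q) \to H^i(\cL_S\q) \xrightarrow{H^i(\alpha_{S, T})} H^i(\cT_{S \cup T}\q) \to H^{i + 1}(\cC_{S, T}\q) \to \cdots$. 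Since $\cC_{S, T}\q$ is the shift of the cone of a morphism of two-term complexes concentrated in degrees $0$ and $1$, it lives in degrees $0$, $1$, $2$, so $H^i(\cC_{S, T}\q) = 0$ for $i \notin \{0, 1, 2\}$ is automatic; and $\cL_S\q$, $\cT_{S \cup T}\q$ are acyclic outside degrees $0$ and $1$ by Propositions \ref{prop:local_complexes} and \ref{prop:global_complex} together with \eqref{eq:local_cohomology_compatible_induction}. Thus the long exact sequence collapses to
\[ 0 \to H^0(\cC_{S, T}\q) \to H^0(\cL_S\q) \xrightarrow{H^0(\alpha_{S, T})} H^0(\cT_{S \cup T}\q) \to H^1(\cC_{S, T}\q) \to H^1(\cL_S\q) \xrightarrow{H^1(\alpha_{S, T})} H^1(\cT_{S \cup T}\q) \to H^2(\cC_{S, T}\q) \to 0, \]
so the whole statement reduces to computing the kernels and cokernels of $H^0(\alpha_{S, T})$ and $H^1(\alpha_{S, T})$.

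In degree $0$ this is exactly Proposition \ref{prop:kernel_cokernel_h0_alpha}, which yields $\ker H^0(\alpha_{S, T}) \iso E_{S, T}$ and $\coker H^0(\alpha_{S, T}) \iso X_{T, S}^{cs}$. The first identity gives $H^0(\cC_{S, T}\q) \iso E_{S, T}$ immediately, and the second identifies the image of $H^0(\cT_{S \cup T}\q) \to H^1(\cC_{S, T}\q)$ with $X_{T, S}^{cs}$, i.e. the left-hand term of the claimed sequence. For degree $1$ I would show that $H^1(\alpha_{S, T}) \colon H^1(\cL_S\q) \to H^1(\cT_{S \cup T}\q)$ is precisely the canonical augmentation $\cY_S \sa \ZZ_p$: by \eqref{eq:local_cohomology_compatible_induction} and Proposition \ref{prop:local_complexes} one has $H^1(\cL_S\q) \iso \bigoplus_{v \in S} \Ind_{\cG_v}^\cG \ZZ_p = \cY_S$, by Proposition \ref{prop:global_complex} the identification $H^1(\cT_{S \cup T}\q) \iso \ZZ_p$ is induced by $\aug \colon \Lambda(\cG) \to \ZZ_p$, and since each degree-one component $\alpha_v^1 \colon \Lambda(\cG_v) \ia \Lambda(\cG)$ commutes with the augmentations while $\cG$ acts trivially on $\ZZ_p$, the adjunction description of $\Ind_{\cG_v}^\cG \alpha_v$ from \eqref{eq:partial_induction} makes $H^1(\Ind_{\cG_v}^\cG \alpha_v)$ the sum map $\Ind_{\cG_v}^\cG \ZZ_p \to \ZZ_p$; summing over $v \in S$ gives the canonical surjection $\cY_S \sa \ZZ_p$, whose kernel is $\cX_S$ by definition. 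Hence $\coker H^1(\alpha_{S, T}) = 0$, forcing $H^2(\cC_{S, T}\q) = 0$ (so $\cC_{S, T}\q$ is acyclic outside degrees $0$ and $1$), and $\ker H^1(\alpha_{S, T}) = \cX_S$, so the displayed sequence restricts to $0 \to X_{T, S}^{cs} \to H^1(\cC_{S, T}\q) \to \cX_S \to 0$, as claimed; the perfection of $\cC_{S, T}\q$ is Proposition \ref{prop:complexes_are_perfect}.

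I do not anticipate a genuine obstacle: all the arithmetic input — global class field theory, the ray-class sequence \eqref{eq:classical_five_term_sequence_units}, and the weak Leopoldt conjecture — has already been absorbed into Proposition \ref{prop:kernel_cokernel_h0_alpha}, and what is left is a formal diagram chase in the long exact sequence of a cone. The only step that requires a little care is the identification of $H^1(\alpha_{S, T})$ with the augmentation map, which is just a matter of unwinding the adjunction in \eqref{eq:partial_induction} and the normalisations fixed in Propositions \ref{prop:local_complexes} and \ref{prop:global_complex}; once that is done the surjectivity of $H^1(\alpha_{S, T})$ (hence $H^2(\cC_{S, T}\q) = 0$) and the computation of its kernel as $\cX_S$ are immediate from the definition of $\cX_S$.
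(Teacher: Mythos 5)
Your proposal is correct and follows essentially the same route as the paper: the exact triangle of the cone, the long exact cohomology sequence, Proposition \ref{prop:kernel_cokernel_h0_alpha} for degree $0$, and the identification of $H^1(\alpha_{S, T})$ with the canonical surjection $\cY_S \sa \ZZ_p$ (whose surjectivity kills $H^2$ and whose kernel is $\cX_S$). The only difference is that you spell out the adjunction argument for that last identification, which the paper dispatches with ``as already discussed.''
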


        \begin{proof}
            In the derived category $\cD(\Lambda(\cG))$, the complex fits in an exact triangle
            \[
                \cC_{S, T}\q \to \cL_S\q \xrightarrow{\alpha_{S, T}} \cT_{S \cup T}\q \to
            \]
            by definition. The cohomology of $\cL_S\q$ and $\cT_{S \cup T}\q$ is given by propositions \ref{prop:global_complex} and \ref{prop:local_complexes}, and it is trivial outside degrees 0 and 1 in both cases. Therefore, the long exact cohomology sequence becomes
            \begin{center}
                \begin{tikzcd}
                    0 \arrow[r] & {H^0(\cC_{S, T}\q)} \arrow[r] & H^0(\cL_S\q) \arrow[rr, "{H^0(\alpha_{S, T})}"] &  & H^0(\cT_{S \cup T}\q) \arrow[llld, out=0, in=180, looseness=1.5, overlay] &                               &   \\
                                & {H^1(\cC_{S, T}\q)} \arrow[r] & H^1(\cL_S\q) \arrow[rr, "{H^1(\alpha_{S, T})}"] &  & H^1(\cT_{S \cup T}\q) \arrow[r]    & {H^2(\cC_{S, T}\q)} \arrow[r] & 0
                \end{tikzcd}
            \end{center}
            and $\cC_{S, T}\q$ is acyclic outside degrees 0, 1 and 2. It follows that $H^0(\cC_{S, T}\q)$ is isomorphic to the kernel of $H^0(\alpha_{S, T})$ and $H^1(\cC_{S, T}\q)$ fits into the short exact sequence
            \[
                0 \to \coker(H^0(\alpha_{S, T})) \to H^1(\cC_{S, T}\q) \to \ker(H^1(\alpha_{S, T})) \to 0.
            \]
            As already discussed, $H^1(\cL_S\q) \iso \cY_S$ and the map $H^1(\alpha_{S, T}) \colon H^1(\cL_S\q) \to H^1(\cT_{S \cup T}\q) = \ZZ_p$ identifies with the canonical projection $\cY_S \sa \ZZ_p$, whose kernel is $\cX_S$ by definition. Since it is surjective, $H^2(\cC_{S, T}\q)$ is trivial. The kernel and cokernel of $H^0(\alpha_{S, T})$ were computed in proposition \ref{prop:kernel_cokernel_h0_alpha}, which concludes the proof.
        \end{proof}

    \section{\texorpdfstring{Description in terms of $R\Gamma$-complexes}{Description in terms of RGamma-complexes}}
    \label{sec:description_in_terms_of_rgamma_complexes}

        This section is devoted to showing that the complex constructed in this chapter is isomorphic to that employed by Burns, Kurihara and Sano in \cite{bks} (pp. 1534 and 1539-1540) in the cases where the settings coincide. This is a necessary step for section \ref{sec:the_conjecture_of_burns_kurihara_and_sano}, where we prove that the Main Conjectures formulated here (section \ref{sec:the_main_conjecture}) and in said article (conjecture 3.1) are essentially the same whenever they can be compared.

        The differences between the settings are as follows:
        \begin{itemize}
            \item{
                Here $L_\infty/K$ is only assumed to be Galois, whereas it is required to be abelian in \cite{bks}. The definition of the complex therein does not truly require commutativity (see for instance \cite{burns_flach}, first paragraph of section 3.2), but the formulation of the Main Conjecture (conjecture 3.1) does.
            }
            \item{
                Here $K_\infty$ is the cyclotomic $\ZZ_p$-extension of $K$, whereas it is an arbitrary $\ZZ_p$-extension in the article. Only at times do they specialise to the cyclotomic case - namely in subsections 3D and 4B and, to some extent, section 5. We note that the cyclotomic $\ZZ_p$-extension is the only one if $K$ is totally real and Leopoldt's conjecture holds for $K$ and $p$.
            }
            \item{
                Our condition \textit{$p \neq 2$ if $K$ is not totally imaginary} does not appear in the earlier part of the cited article - neither in the definition of the complex nor in the formulation of the Main Conjecture. The condition $p \neq 2$ is indeed required in their reformulation of the same conjecture (\cite{bks} conjecture 3.14).
            }
        \end{itemize}

        The present section, slightly more technical in its use of derived categories, $R\Gamma$-complexes and duality properties, may be skipped without significant effect on the comprehension of the sequel - the fundamental takeaway is theorem \ref{thm:iso_bks_complex}, i.e. the isomorphism between the complexes outlined above.

        We start by introducing the complex used in \cite{bks}, which is essentially given by an inverse limit of finite-level complexes from \cite{burns_flach}:

        \begin{defn}
        \phantomsection
        \label{defn:complexes_bks}
            \begin{enumerate}[i)]
                \item{
                    Let $p$ be a prime number, $F/K$ a Galois extension of number fields and $S$ and $T$ two disjoint finite sets of places of $K$ such that $S \supseteq S_\infty \cup S_{\ram}(F/K)$. We set
                    \[
                        \cB_{F, S, \varnothing}\q = R \Hom_{\ZZ_p}(R \Gamma_c(H_{F, S}, \ZZ_p), \ZZ_p)[-2],
                    \]
                    where $H_{F, S}$ denotes the Galois group over $F$ of its maximal $S$-ramified extension and $R \Gamma_c$ denotes the compactly supported cohomology complex (cf. \cite{burns_flach} equation (3)). We then define $\cB_{F, S, T}\q$ by the exact triangle
                    \begin{equation}
                    \label{eq:exact_triangle_bks}
                        \cB_{F, S, T}\q \to \cB_{F, S, \varnothing}\q \to \bigoplus_{w \in T(F)} \ZZ_p \otimes \units{\kappa(w)}[0] \to,
                    \end{equation}
                    where the second arrow is as in \cite{bks}.
                }
                \item{
                    Setting \ref{sett:construction}. We define
                    \[
                        \cB_{S, T}\q = \varprojlim_n \cB_{L_n, S, T}\q,
                    \]
                    where the limit is taken with respect to the natural transition maps induced by restriction (note that $H_{L_{n + 1}, S} \leq H_{L_n, S}$). This is the complex featured in \cite{bks} conjecture 3.1.
                }
            \end{enumerate}
        \end{defn}

        Let us examine in which category one may naturally consider these complexes. In part i), $\cB_{F, S, \varnothing}\q$ has a canonical structure as a complex of $\Lambda(\Gal(F/K))$-modules in the usual way (cf. \cite{nsw} proposition 1.6.3) because the maximal $S$-ramified extensions of $F$ and $K$ coincide, and hence $H_{F, S}$ is normal in the Galois group of this extension over $K$. The term $\bigoplus_{w \in T(F)} \ZZ_p \otimes \units{\kappa(w)}$ also has a natural $G = \Gal(F/K)$-action, which in particular allows us to regard it as $\bigoplus_{v \in T} \Ind_{G_v}^G \ZZ_p \otimes \units{\kappa(w)}$ (as usual, here $G_v$ denotes $G_{w_0}$ for any chosen prolongation $w_0$ of $v$). This endows $\cB_{F, S, T}\q$ itself with the structure of a complex of $\Lambda(\Gal(F/K))$-modules. As for part ii), the transition maps in $\varprojlim_n \cB_{L_n, S, T}$ are Galois-equivariant, and hence $\cB_{S, T}\q$ is a complex of $\Lambda(\cG)$-modules.

        We recall at this point that $R\Gamma(G, M)$ is notation for the complex which computes the cohomology of $G$ with coefficients in $M$ (in some suitable category, which in our case was described in section \ref{sec:iwasawa_algebras_and_modules}) - either the one resulting from the standard resolution, or any isomorphic one in the relevant derived category. We must first relate some fashion of $R\Gamma$-complexes to complexes of the form considered in previous sections. This is achieved by the following proposition, the idea of whose proof is largely taken from \cite{nickel_plms} (see theorem 2.4 therein):

        \begin{prop}
        \label{prop:iso_rgamma_complexes}
            Let $G$ be a profinite group and $N \trianglelefteq_c G$ a closed normal subgroup such that $H_n(N, \ZZ_p) = 0$ for all $n \geq 2$, and set $Q = G/N$. Then there exists an isomorphism of complexes
            \[
                R\Hom(R\Gamma(N, \QQ_p/\ZZ_p), \QQ_p/\ZZ_p) \iisoo [\stackrel{-1}{Y_{G, N}} \xrightarrow{v} \stackrel{0}{\Lambda(Q)}]
            \]
            in the derived category $\cD(\Lambda(Q))$, where $Y_{G, N} = \Delta(G)_N = \Delta(G)/\Delta(N)\Delta(G)$ and $\upsilon$ is the natural composition of the lift to $\Delta(G)$ with the projection to $\Lambda(Q)$.
        \end{prop}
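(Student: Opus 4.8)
The plan is to compute $R\Gamma(N, \QQ_p/\ZZ_p)$ explicitly by means of a short acyclic resolution and then to dualise. The starting point is the Pontryagin dual of the augmentation sequence \eqref{eq:general_augmentation_ideal} taken with $H = G$ and $\cO_E = \ZZ_p$, namely $0 \to \Delta(G) \to \Lambda(G) \xrightarrow{\aug} \ZZ_p \to 0$. Since Pontryagin duality is exact on short exact sequences of locally compact $\Lambda(G)$-modules and $\ZZ_p^\vee = \QQ_p/\ZZ_p$, dualising yields an exact sequence of discrete $\Lambda(G)$-modules
\[
    0 \to \QQ_p/\ZZ_p \to \Lambda(G)^\vee \to \Delta(G)^\vee \to 0 ,
\]
which we view as a two-term resolution of $\QQ_p/\ZZ_p$ in the category of discrete $\Lambda(N)$-modules (restricting scalars).

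The first key step is to verify that both $\Lambda(G)^\vee$ and $\Delta(G)^\vee$ are acyclic for $N$-cohomology. For $\Lambda(G)^\vee \iso \Map_{cts}(G, \QQ_p/\ZZ_p)$ this is precisely the argument already used just before Proposition \ref{prop:global_complex}: the module is $G$-induced, hence $N$-induced, hence cohomologically trivial by \cite{nsw} propositions 1.3.6(ii) and 1.3.7. For $\Delta(G)^\vee$ I would run the long exact $N$-cohomology sequence (cf. \eqref{eq:long_exact_sequence_cohomology_iwasawa}) of the displayed short exact sequence; the vanishing of $H^i(N, \Lambda(G)^\vee)$ for $i \geq 1$ gives $H^i(N, \Delta(G)^\vee) \iso H^{i+1}(N, \QQ_p/\ZZ_p)$ for $i \geq 1$, and the right-hand side vanishes for $i \geq 1$ because $H^{i+1}(N, \QQ_p/\ZZ_p) \iso H_{i+1}(N, \ZZ_p)^\vee = 0$ by Lemma \ref{lem:cohomology_homology_duality} (applied to $\ZZ_p$ with trivial $N$-action, so that $\ZZ_p^\vee = \QQ_p/\ZZ_p$) together with the hypothesis $H_n(N, \ZZ_p) = 0$ for $n \geq 2$. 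Consequently $R\Gamma(N, \QQ_p/\ZZ_p)$ is represented in $\cD(\Lambda(Q))$ by the complex $[(\Lambda(G)^\vee)^N \to (\Delta(G)^\vee)^N]$ concentrated in degrees $0$ and $1$ (a two-term resolution by $N$-acyclic modules computes $R\Gamma(N,-)$; cf. \cite{nsw}).

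Next I would identify the terms. Using the natural isomorphism $(M^\vee)^N \iso (M_N)^\vee$ (Pontryagin duality interchanges invariants and coinvariants; cf. Section \ref{sec:iwasawa_algebras_and_modules}), together with $\Lambda(G)_N = \Lambda(Q)$ (cf. \eqref{eq:general_augmentation_ideal}, \eqref{eq:definition_coinvariants}) and $\Delta(G)_N = Y_{G,N}$ by definition, the differential $(\Lambda(G)^\vee)^N \to (\Delta(G)^\vee)^N$ - being $(-)^N$ of the dual of the inclusion $\Delta(G) \hookrightarrow \Lambda(G)$ - corresponds to $\upsilon^\vee \colon \Lambda(Q)^\vee \to Y_{G,N}^\vee$, all identifications being $\Lambda(Q)$-equivariant. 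Thus $R\Gamma(N, \QQ_p/\ZZ_p) \iso [\stackrel{0}{\Lambda(Q)^\vee} \xrightarrow{\upsilon^\vee} \stackrel{1}{Y_{G,N}^\vee}]$ in $\cD(\Lambda(Q))$. Finally I would apply $R\Hom_{\ZZ_p}(-, \QQ_p/\ZZ_p)$, which needs no derivation since $\QQ_p/\ZZ_p$ is injective as a $\ZZ_p$-module and which coincides with term-wise Pontryagin duality on the discrete $p$-torsion modules above, so it is exact and reverses cohomological degree; using the biduality $M^{\vee\vee} \iso M$ for the compact modules $Y_{G,N}$ and $\Lambda(Q)$ and $\upsilon^{\vee\vee} = \upsilon$, this produces $[Y_{G,N} \xrightarrow{\upsilon} \Lambda(Q)]$ in degrees $-1$ and $0$, as claimed. (As a consistency check, the cohomology of both sides is $N^{ab}(p)$ in degree $-1$ and $\ZZ_p$ in degree $0$.)

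I expect the main obstacle to be the bookkeeping of the $\Lambda(Q)$-module structures and of naturality through the chain of $N$-(co)invariants and Pontryagin dualities, so as to pin down the final differential as $\upsilon$ itself rather than merely an abstract map with the correct kernel and cokernel; and, relatedly, making precise that a two-term resolution by $N$-acyclic modules computes $R\Gamma(N,-)$ in the continuous/topological setting, for which one appeals to \cite{nsw}. A secondary delicate point is the acyclicity of $\Delta(G)^\vee$, which is exactly where the hypothesis $H_n(N, \ZZ_p) = 0$ for all $n \geq 2$ - and nothing weaker - is used.
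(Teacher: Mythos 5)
Your argument is correct and is, up to Pontryagin duality, the same as the paper's. The paper dualises at the outset --- identifying $R\Hom(R\Gamma(N,\QQ_p/\ZZ_p),\QQ_p/\ZZ_p)$ with $\ZZ_p \ctp_{\Lambda(N)}^{\LL} \ZZ_p$ --- and then computes this derived complete tensor product as the cone of $\Delta(G)\ctp^{\LL}_{\Lambda(N)}\ZZ_p \to \Lambda(G)\ctp^{\LL}_{\Lambda(N)}\ZZ_p$, both of which collapse to their degree-zero homology $Y_{G,N}$ and $\Lambda(Q)$; you instead stay on the discrete side, resolve $\QQ_p/\ZZ_p$ by the $N$-acyclic modules $\Lambda(G)^\vee$ and $\Delta(G)^\vee$ (the Pontryagin dual of the same augmentation sequence), and only dualise back at the end. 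The two computations are mirror images of one another: your acyclicity of $\Delta(G)^\vee$ is exactly the paper's vanishing of $H_i(N,\Delta(G))$ for $i \geq 1$, both resting on the induced-ness of $\Lambda(G)^\vee$ and on the hypothesis $H_n(N,\ZZ_p)=0$ for $n\geq 2$, and your final biduality step replaces the paper's initial observation that $R\Hom(-,\QQ_p/\ZZ_p)$ is degree-wise $(-)^\vee$ on discrete modules. What your route buys is a more concrete handle on the differential, which you track explicitly as $\upsilon^\vee$ through the resolution rather than extracting it from a cone construction; what it costs is the extra care you yourself flag, namely justifying that a two-term acyclic resolution computes continuous $R\Gamma(N,-)$, and that $(M^\vee)^N \iso (M_N)^\vee$ holds for the \emph{closed} normal subgroup $N$ --- the paper states this only for open normal subgroups, though the extension is routine since $\Delta(N)M$ is closed in the compact module $M$. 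Neither point is a gap.
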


        \begin{proof}
            Cohomology $H^i(N, -)$ is by definition the right derived functor of $-^N = \Hom_{\Lambda(N)}(\ZZ_p, -)$. Therefore, $R\Gamma(N, \QQ_p/\ZZ_p)$ can be identified with $R\Hom_{\Lambda(N)}(\ZZ_p, \QQ_p/\ZZ_p)$ in $\cD(\Lambda(Q))$ (here and below, the $\Lambda(Q)$-module structure is given by \cite{nsw} proposition 1.6.3 as before).

            The contravariant functor $R\Hom(-, \QQ_p/\ZZ_p)$ coincides with $R\Hom_{\ZZ_p}(-, \QQ_p/\ZZ_p)$ on complexes of discrete modules, and it amounts to applying $\Hom_{\ZZ_p}(-, \QQ_p/\ZZ_p) = (-)^\vee$ at each degree as $\QQ_p/\ZZ_p$ is $\ZZ_p$-injective. Noting that $M \ctp_{\Lambda(N)} \ZZ_p \iso \Hom_{\Lambda(N)}(M, \QQ_p/\ZZ_p)^\vee$ for any compact right $\Lambda(N)$-module $M$ (see the proof of \cite{nsw} corollary 5.2.9), we conclude that
            \begin{equation}
            \label{eq:iso_rg_complex_tensor}
                    R\Hom(R\Gamma(N, \QQ_p/\ZZ_p), \QQ_p/\ZZ_p) \iisoo \ZZ_p \ctp_{\Lambda(N)}^\LL \ZZ_p
            \end{equation}
            in $\cD(\Lambda(Q))$ (where $- \ctp_{\Lambda(N)}^\LL \ZZ_p$ denotes the left derived functor of $- \ctp_{\Lambda(N)} \ZZ_p$) by dualising a projective resolution of $\ZZ_p$ into an injective one of $\QQ_p/\ZZ_p$.

            Now we consider the canonical short exact sequence $\Delta(G) \ia \Lambda(G) \sa \ZZ_p$ of compact $\Lambda(N)$-modules, which induces an exact triangle
            \[
                \Delta(G) \ctp_{\Lambda(N)}^\LL \ZZ_p \to \Lambda(G) \ctp_{\Lambda(N)}^\LL \ZZ_p \to \ZZ_p \ctp_{\Lambda(N)}^\LL \ZZ_p \to
            \]
            in $\cD(\Lambda(Q))$ and hence an isomorphism
            \begin{equation}
            \label{eq:zp_derived_cone}
                \ZZ_p \ctp_{\Lambda(N)}^\LL \ZZ_p \iisoo \Cone(\Delta(G) \ctp_{\Lambda(N)}^\LL \ZZ_p \to \Lambda(G) \ctp_{\Lambda(N)}^\LL \ZZ_p).
            \end{equation}

            The groups $H_i(N, \Lambda(G))$ vanish for all $i \geq 1$, since $\Lambda(G)^\vee \iso \Map_{cts}(G, \QQ_p/\ZZ_p)$ is $G$-induced and therefore cohomologically trivial (see also the first point after \eqref{eq:coinvariants_sequence_global}). In particular, there are isomorphisms $H_{i + 1}(N, \ZZ_p) \iso H_i(N, \Delta(G))$ for all $i \geq 1$. This implies $H_i(N, \Delta(N)) = 0$ for all $i \geq 1$ by hypothesis, and hence the complexes $\Delta(G) \ctp_{\Lambda(N)}^\LL \ZZ_p$ and $\Lambda(G) \ctp_{\Lambda(N)}^\LL \ZZ_p$ are both acyclic outside degree 0. But a complex with a single non-trivial cohomology group is isomorphic in the derived category to the complex consisting of that group concentrated in the corresponding degree, i.e.
            \[
                \Delta(G) \ctp_{\Lambda(N)}^\LL \ZZ_p \iso (\Delta(G) \ctp_{\Lambda(N)} \ZZ_p)[0] = Y_{G, N}[0]
            \]
            and
            \[
                \quad \Lambda(G) \ctp_{\Lambda(N)}^\LL \ZZ_p \iso (\Lambda(G) \ctp_{\Lambda(N)} \ZZ_p)[0] = \Lambda(Q)[0].
            \]
            This concludes the proof by \eqref{eq:iso_rg_complex_tensor} and \eqref{eq:zp_derived_cone}, since by definition of the cone we have
            \[
                \Cone(Y_{G, N}[0] \to \Lambda(Q)[0]) = [\stackrel{-1}{Y_{G, N}} \xrightarrow{v} \stackrel{0}{\Lambda(Q)}]
            \]
            and the morphisms defining both cones are compatible.
        \end{proof}

        \begin{rem}
        \label{rem:complex_independent_of_g_q}
            It follows that, as a complex of abelian groups, $[Y_{G, N} \to \Lambda(Q)]$ is actually independent of $G$ and $Q$ up to isomorphism in the derived category - as long as they fit into a short exact sequence $N \ia G \sa Q$. This should not be too surprising, as the cohomology groups of complexes of that form were shown to be $N^{ab}(p)$, $\ZZ_p$ and $\set{1}$ in sections \ref{sec:the_global_complex} and \ref{sec:the_local_complexes}. Naturally, the $\Lambda(Q)$-structure does depend on the choice of groups, and it will be fundamental in our discussion. \qedef
        \end{rem}

        The above proposition can now be applied to the global and local complexes to obtain isomorphisms
        \begin{equation}
        \label{eq:iso_global_rhom}
            \cT_{S \cup T}\q \iisoo R\Hom(R\Gamma(H_S, \QQ_p/\ZZ_p), \QQ_p/\ZZ_p)[-1]
        \end{equation}
        and
        \[
            \cL_v\q \iisoo R\Hom(R\Gamma(G_{L_\infty, v}, \QQ_p/\ZZ_p), \QQ_p/\ZZ_p)[-1],
        \]
        where $G_{L_\infty, v}$ denotes $G_{(L_\infty)_{v(L_\infty)}} = (G_{L_\infty})_{v^c}$ as in section \ref{sec:the_local_complexes}. The hypothesis on the vanishing of $H_n(N, \ZZ_p)$ is satisfied in these cases by the same reasons described at the beginning of the proof of proposition \ref{prop:complexes_are_perfect}. The only case not covered there is that of an archimedean place $v$, which is even simpler by classical homology of finite groups: if $G_{L_\infty, v}$ is trivial, then its homology in degree $\geq 1$ vanishes for any module; and otherwise, $\abs{G_{L_\infty, v}} = 2$ (so $p$ is odd by setting \ref{sett:construction}) and hence the groups $H_n(G_{L_\infty, v}, \ZZ_p)$ are 2-torsion $\ZZ_p$-modules, i.e. trivial, for all $n \geq 1$.

        In combination with some duality properties, this new description of the global and local complexes allows us to prove the main result of this section. As mentioned before, nothing in the construction of $\cB_{S, T}\q$ requires $\cG$ to be abelian and we can therefore state the following theorem in the generality of setting \ref{sett:construction}.

        \begin{thm}
        \label{thm:iso_bks_complex}
            Setting \ref{sett:construction}. There is an isomorphism of complexes $\cC_{S, T}\q \iso \cB_{S, T}\q$ in the derived category $\cD(\Lambda(\cG))$.
        \end{thm}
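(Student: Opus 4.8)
The plan is to reduce to the case $T=\varnothing$ and then to compare $\cC_{S,\varnothing}\q$ with $\cB_{S,\varnothing}\q$ by unwinding both as cones of a local-to-global morphism of $R\Gamma$-complexes, using the identifications \eqref{eq:iso_global_rhom} and its local analogue together with the duality built into the definition of compactly supported cohomology. For the reduction, the inclusion $M_S\subseteq M_{S\cup T}$ gives a surjection $H_{S\cup T}\sa H_S$, hence a morphism $\cT_{S\cup T}\q\to\cT_S\q$ which is the identity in degree $1$ and is compatible with the local-to-global maps of Section \ref{sec:local-to-global_maps}; this produces a morphism $\cC_{S,T}\q\to\cC_{S,\varnothing}\q$ in $\cD(\Lambda(\cG))$ whose cone, by the octahedral axiom, is the cone of $\cT_{S\cup T}\q\to\cT_S\q$. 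The computation of $\ker(\pi)$ in the proof of Proposition \ref{prop:kernel_cokernel_h0_alpha} identifies that cone with $\bigoplus_{v\in T^p}\Ind_{\cG_v}^\cG\ZZ_p(1)[0]$. On the other side, applying $\varprojlim_n$ to the defining triangle \eqref{eq:exact_triangle_bks} and invoking \eqref{eq:modified_five_term_sequence_alpha} to evaluate $\varprojlim_n\bigoplus_{w\in T(L_n)}\ZZ_p\otimes\units{\kappa(w)}$, again as $\bigoplus_{v\in T^p}\Ind_{\cG_v}^\cG\ZZ_p(1)$, shows that $\cC_{S,T}\q\to\cC_{S,\varnothing}\q$ and $\cB_{S,T}\q\to\cB_{S,\varnothing}\q$ complete to triangles with the same third term; once it is checked that the two third arrows agree — which again comes down to the $\ZZ_p(1)$-identifications already in play — an isomorphism $\cC_{S,\varnothing}\q\iso\cB_{S,\varnothing}\q$ propagates to general $T$.

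In the case $T=\varnothing$ one has $\cB_{L_n,S,\varnothing}\q=R\Hom_{\ZZ_p}(R\Gamma_c(H_{L_n,S},\ZZ_p),\ZZ_p)[-2]$, and by definition (\cite{burns_flach}, equation (3)) $R\Gamma_c(H_{L_n,S},\ZZ_p)$ is the shift of the cone of the localisation morphism $R\Gamma(H_{L_n,S},\ZZ_p)\to\bigoplus_{v\in S}R\Gamma(G_{L_n,v},\ZZ_p)$, the archimedean local complexes being acyclic for the coefficient $\ZZ_p$ in our setting. I would apply the contravariant exact functor $R\Hom_{\ZZ_p}(-,\ZZ_p)[-2]$, which turns this into a presentation of $\cB_{L_n,S,\varnothing}\q$ as
\[
    \Cone\Big(\bigoplus_{v\in S}R\Hom_{\ZZ_p}(R\Gamma(G_{L_n,v},\ZZ_p),\ZZ_p)[-2]\longrightarrow R\Hom_{\ZZ_p}(R\Gamma(H_{L_n,S},\ZZ_p),\ZZ_p)[-2]\Big),
\]
the transition maps of the system $(\cB_{L_n,S,\varnothing}\q)_n$ becoming the corestriction morphisms. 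Passing to $\varprojlim_n$ — which commutes with the cone, and with $R\Hom_{\ZZ_p}(-,\ZZ_p)$ up to a higher inverse limit that vanishes by Mittag--Leffler, the relevant cohomology modules being finitely generated over the Noetherian ring $\Lambda(\Gamma)$ — exhibits $\cB_{S,\varnothing}\q$ as a cone of $\bigoplus_{v\in S}\varprojlim_n R\Hom_{\ZZ_p}(R\Gamma(G_{L_n,v},\ZZ_p),\ZZ_p)[-2]$ into $\varprojlim_n R\Hom_{\ZZ_p}(R\Gamma(H_{L_n,S},\ZZ_p),\ZZ_p)[-2]$.

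It then remains to identify the global entry $\varprojlim_n R\Hom_{\ZZ_p}(R\Gamma(H_{L_n,S},\ZZ_p),\ZZ_p)[-2]$ with $\cT_S\q$ and the local entries $\varprojlim_n R\Hom_{\ZZ_p}(R\Gamma(G_{L_n,v},\ZZ_p),\ZZ_p)[-2]$ with $\Ind_{\cG_v}^\cG\cL_v\q$, and to match the connecting map with $\alpha_{S,\varnothing}$. For the global entry I would use that corestriction annihilates the $\QQ_p$-summand in the limit, so that this complex is, after the appropriate shift, isomorphic to $R\Hom(R\Gamma(H_S,\QQ_p/\ZZ_p),\QQ_p/\ZZ_p)[-1]$ — a form of Iwasawa-theoretic Artin--Verdier/Poitou--Tate duality — which by \eqref{eq:iso_global_rhom} is exactly $\cT_S\q$; the local entries are handled identically with $G_{L_\infty,v}$ in place of $H_S$, using the local version of Proposition \ref{prop:iso_rgamma_complexes} and the commutation of induction with inverse limits \eqref{eq:induction_inverse_limit}. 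Finally, the localisation morphism $R\Gamma(H_{L_n,S},\ZZ_p)\to\bigoplus_{v\in S}R\Gamma(G_{L_n,v},\ZZ_p)$ is induced by the inclusions of decomposition groups into $H_{L_n,S}$, which is precisely the datum from which $\alpha_{S,\varnothing}$ was built in Section \ref{sec:local-to-global_maps}, so that the two connecting morphisms coincide and the cones are isomorphic in $\cD(\Lambda(\cG))$.

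The conceptual skeleton is short; the work lies in the bookkeeping and, more seriously, in naturality. Converting between the $\ZZ_p$-duality present in $\cB_{S,\varnothing}\q$ and the $\QQ_p/\ZZ_p$-duality of Proposition \ref{prop:iso_rgamma_complexes} requires correctly isolating the free parts that the corestriction limit kills and tracking the resulting shifts and signs. The genuinely delicate point, however, is to verify that the morphism produced by these identifications is literally $\alpha_{S,\varnothing}$ in the derived category — i.e. that the isomorphisms of Proposition \ref{prop:iso_rgamma_complexes} and \eqref{eq:iso_global_rhom} are natural with respect to the inclusions of (decomposition) subgroups. This naturality is the crux of the argument, and it is also the step that most closely follows the proof of theorem 2.4 in \cite{nickel_plms}.
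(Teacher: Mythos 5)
Your overall architecture — reduce to $T=\varnothing$ by comparing the two defining triangles (whose third terms are both identified with $\bigoplus_{v\in T^p}\Ind_{\cG_v}^\cG\ZZ_p(1)[0]$ via the computation of $\ker(\pi)$ and the limit of $\bigoplus_{w\in T(L_n)}\ZZ_p\otimes\units{\kappa(w)}$), then treat $\cC_{S,\varnothing}\q$ and $\cB_{S,\varnothing}\q$ as cones of local-to-global morphisms — is exactly the paper's, and the $T$-reduction is fine. The problem is the step you yourself flag as delicate, and it is worse than a bookkeeping issue: your proof decomposes $\cB_{S,\varnothing}\q$ into its global and local constituents and asserts an \emph{entry-wise} isomorphism $\varprojlim_n R\Hom_{\ZZ_p}(R\Gamma(H_{L_n,S},\ZZ_p),\ZZ_p)[\,\cdot\,]\iso R\Hom(R\Gamma(H_S,\QQ_p/\ZZ_p),\QQ_p/\ZZ_p)[-1]=\cT_S\q$ (and likewise locally). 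This is not a form of Poitou--Tate or Artin--Verdier duality — it relates neither global to local cohomology nor introduces a Tate twist — and it does not follow from Proposition \ref{prop:iso_rgamma_complexes} or from anything cited. Concretely, already in degree $-1$ the left-hand side involves $\varprojlim_n\Hom_{\ZZ_p}(\Hom_{cts}(H_{L_n,S},\ZZ_p),\ZZ_p)$, a limit of $\ZZ_p$-double duals of modules that are not finitely generated over $\ZZ_p$, and you would have to prove that this recovers $X_S=\varprojlim_n H_{L_n,S}^{ab}(p)$; the Mittag--Leffler remark does not touch this, nor does ``corestriction kills the $\QQ_p$-part''. Note also that knowing $\cC_{S,\varnothing}\q\iso\cB_{S,\varnothing}\q$ would not retroactively justify the entry-wise claim: two isomorphic complexes can be presented as cones with non-isomorphic entries.

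The paper avoids ever needing such a statement by never taking $R\Gamma_c$ apart on the $\cB$-side. It goes in the opposite direction: $\cC_{S,\varnothing}\q[2]$ is the limit of cones of the finite-level morphisms $\alpha_n$ between $\QQ_p/\ZZ_p$-duals of $\QQ_p/\ZZ_p$-coefficient $R\Gamma$-complexes; \cite{lim} theorem 4.2.6 identifies each such cone with $R\Hom(R\Gamma_c(H_{L_n,S},\QQ_p/\ZZ_p),\QQ_p/\ZZ_p)$ (this is where Poitou--Tate duality genuinely enters, applied to the whole compactly supported complex); global duality then gives $R\Gamma(H_{L_n,S},\ZZ_p(1))[1]$, and Nekov\'a\v{r} duality converts that into $R\Hom_{\ZZ_p}(R\Gamma_c(H_{L_n,S},\ZZ_p),\ZZ_p)[-2]=\cB_{L_n,S,\varnothing}\q$. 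Each duality is a packaged theorem about the total complex, so the only naturality to check is that the $\alpha_n$ come from the same inflation/restriction data as $\alpha_{S,\varnothing}$ — which is the one point where your diagnosis of the difficulty coincides with the paper's actual argument. To repair your route you would either have to prove the entry-wise duality from scratch (essentially redoing these three dualities in a split form) or abandon the decomposition of $R\Gamma_c$ and follow the chain above.
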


        \begin{proof}
            As in some of the previous proofs, it is convenient to initially disregard the set $T$ altogether and consider
            \[
                \cC_{S, \varnothing}\q = \Cone(\cL_S\q \xrightarrow{\alpha_{S, \varnothing}} \cT_S\q)[-1].
            \]

            Let us consider the local side first, letting $G_{L_n, v} = G_{(L_n)_{v(L_n)}} = (G_{L_n})_{v^c}$ by analogy with $G_{L_\infty, v}$. We also set $\cG_{n, v} = (\cG_n)_{v(L_n)}$. Then there are isomorphisms
            \begin{align*}
                \cL_S\q  = \bigoplus_{v \in S} \Ind_{\cG_v}^\cG \cL_v\q & \iso \bigoplus_{v \in S} \Ind_{\cG_v}^\cG R\Hom(R\Gamma(G_{L_\infty, v}, \QQ_p/\ZZ_p), \QQ_p/\ZZ_p)[-1] \\
                & \iso \bigoplus_{v \in S} \Ind_{\cG_v}^\cG R\Hom(\varinjlim_n R\Gamma(G_{L_n, v}, \QQ_p/\ZZ_p), \QQ_p/\ZZ_p)[-1] \\
                & \iso \varprojlim_n \bigoplus_{v \in S} \Ind_{\cG_{n, v}}^{\cG_n} R\Hom(R\Gamma(G_{L_n, v}, \QQ_p/\ZZ_p), \QQ_p/\ZZ_p)[-1] \\
                & \iso \varprojlim_n  R\Hom\big(\bigoplus_{v \in S} \Ind_{\cG_{n, v}}^{\cG_n} R\Gamma(G_{L_n, v}, \QQ_p/\ZZ_p), \QQ_p/\ZZ_p\big)[-1],
            \end{align*}
            in $\cD(\Lambda(\cG))$, where
            \begin{itemize}
                \item{
                    the first isomorphism is essentially proposition \ref{prop:iso_rgamma_complexes}, which we can apply as explained after remark \ref{rem:complex_independent_of_g_q}. On the left-hand side, $\Ind_{\cG_v}^\cG -$ denotes the functor $\Lambda(\cG) \ctp_{\Lambda(\cG_v)} -$ applied to the local complex degree-wise. The isomorphism $ \cL_v\q \iso R\Hom(R\Gamma(G_{L_\infty, v}, \QQ_p/\ZZ_p), \QQ_p/\ZZ_p)[-1]$, however, takes place in the derived category $\cD(\Lambda(\cG_v))$, where induction is really \textit{derived induction}, i.e. the derived completed tensor product $\Lambda(\cG) \ctp_{\Lambda(\cG_v)}^\LL -$ applied to complexes. The reason these two notions of induction coincide in this case is that, as discussed at the end of section \ref{sec:local-to-global_maps}, $\Lambda(\cG) \ctp_{\Lambda(\cG_v)} -$ is exact on the sequence defining $\cL_v\q$ for all $v$.
                }
                \item{
                    the second isomorphism is \cite{nsw} proposition 1.5.1. Note that $G_{L_\infty, v} = \bigcap_n G_{L_n, v}$ and this intersection is simply an inverse limit with respect to the canonical embeddings. Even though the cited result only refers to cohomology groups, its proof shows the isomorphism exists on the level of $R\Gamma$-complexes.
                }
                \item{
                    the last two isomorphisms are formal: $R\Hom$ takes colimits (in particular direct limits and sums) in the first component to limits (in particular inverse limits and products) and, in the cases above, $\Ind_{\cG_{n, v}}^{\cG_n} -$ commutes with $\varprojlim_n$ in the derived category. To see this, note that if $v$ is a non-archimedean place, then $\Ind_{\cG_v}^\cG -$ applied to a complex amounts to a finite sum of complexes isomorphic to it; and if $v$ is archimedean, then the inverse limit coincides with the complex at the layer $L_0 = L$ (such places do not split in the cyclotomic tower).
                }
            \end{itemize}

            By the same token, the global complex is isomorphic to
            \[
                \cT_S\q \iisoo R\Hom(R\Gamma(H_S, \QQ_p/\ZZ_p), \QQ_p/\ZZ_p)[-1] \iisoo \varprojlim_n R\Hom(R\Gamma(H_{L_n, S}, \QQ_p/\ZZ_p), \QQ_p/\ZZ_p)[-1],
            \]
            where $H_S = \Gal(M_S/L_\infty)$ and $H_{L_n, S} = \Gal(M_S/L_n)$. Recall that $M_S$ was defined as the the maximal $S$-ramified extension of $L_\infty$ - and hence of $L_n$ for all $n$.

            It follows that $\cC_{S, \varnothing}\q[2]$ is isomorphic to the cone of some morphism
            \[
                \bigg(\varprojlim_n R\Hom\big(\bigoplus_{v \in S} \Ind_{\cG_{n, v}}^{\cG_n} R\Gamma(G_{L_n, v}, \QQ_p/\ZZ_p), \QQ_p/\ZZ_p\big)\bigg) \xrightarrow{\alpha} \bigg(\varprojlim_n R\Hom(R\Gamma(H_{L_n, S}, \QQ_p/\ZZ_p), \QQ_p/\ZZ_p)\bigg)
            \]
            and therefore to the inverse limit of the cones of the finite-level morphisms
            \[
                R\Hom\big(\bigoplus_{v \in S} \Ind_{\cG_{n, v}}^{\cG_n} R\Gamma(G_{L_n, v}, \QQ_p/\ZZ_p), \QQ_p/\ZZ_p\big) \xrightarrow{\alpha_n} R\Hom(R\Gamma(H_{L_n, S}, \QQ_p/\ZZ_p), \QQ_p/\ZZ_p)
            \]
            because cones are defined in terms of direct sums and shifts, both of which commute with inverse limits. The map $\alpha$ is induced by the map $\alpha_{S, T}$ from section \ref{sec:the_main_complex} (with $T = \varnothing$) by definition and hence comes from the inflation and restriction associated to $G_{L_\infty} \sa H_S$ and $G_{L_\infty, v} \ia G_{L_\infty}$ respectively, as these are (after changing base field from $L_\infty$ to $K$) the homomorphisms used in the definition of the local-to-global maps (equation \eqref{eq:local_to_global_maps_galois}). Note that $R\Hom(-, \QQ_p/\ZZ_p)$ causes a reversal in the direction of the resulting arrows between $R\Gamma$-complexes. It follows that $\alpha_n$ is induced from $G_{L_n} \sa H_{L_n, S}$ and $G_{L_n, v} \ia G_{L_n}$.

            The definition of compact-support cohomology as the shifted cone of global-to-local restriction will yield the desired isomorphism by virtue of local and global Tate duality in their derived-categorical form. Namely, \cite{lim} theorem 4.2.6 (right column of the diagram therein) shows that the cone of $\alpha_n$ above is isomorphic in $\cD(\Lambda(\cG_n))$ to $R\Hom(R\Gamma_c(H_{L_n, S}, \QQ_p/\ZZ_p), \QQ_p/\ZZ_p)$. We therefore have isomorphisms
            \[
                \cC_{S, \varnothing}\q \iso \varprojlim_n R\Hom(R\Gamma_c(H_{L_n, S}, \QQ_p/\ZZ_p), \QQ_p/\ZZ_p)[-2] \iso \varprojlim_n R\Gamma(H_{L_n, S}, \ZZ_p(1))[1],
            \]
            the last one being global Tate duality (bottom row of the diagram in the cited theorem). Lastly, we apply Nekovář duality. Although introduced in \cite{nekovar}, a closer formulation to our setting is given by Lim and Sharifi in \cite{lim_sharifi} (theorem in p. 623). We choose $R = \ZZ_p$, dualising complex $\omega_R = \ZZ_p[0]$ and complex $T = \ZZ_p(1)[0]$. Here the Tate twist refers to the $H_{L_n, S}$-action, which is not relevant in the category $\cD(\ZZ_p)$ where we shall apply the cited theorem, but it is inherited by the cohomology groups of complexes in that category. It is then easy to see that $R\Hom_{\ZZ_p}(T, \omega_R)$ is represented by $T^\ast = \ZZ_p(-1)[0]$, and thus the aforementioned theorem states that
            \begin{equation}
            \label{eq:rgamma_finite_level}
                R\Gamma(H_{L_n, S}, \ZZ_p(1)) \iisoo R\Hom_{\ZZ_p}(R\Gamma_c(H_{L_n, S}, \ZZ_p), \ZZ_p)[-3]
            \end{equation}
            and hence
            \begin{equation}
            \label{eq:iso_inverse_zp(1)}
                \cC_{S, \varnothing}\q \iso \varprojlim_n R\Gamma(H_{L_n, S}, \ZZ_p(1))[1] \iso \varprojlim_n R\Hom_{\ZZ_p}(R\Gamma_c(H_{L_n, S}, \ZZ_p), \ZZ_p)[-2] = \cB_{S, \varnothing}\q.
            \end{equation}

            It remains to bring $T$ into the discussion. The difference between $\cC_{S, T}\q$ and $\cC_{S, \varnothing}\q$ is given by the following exact triangle in $\cD(\Lambda(\cG))$:
            \[
                \cC_{S, T}\q \to \cC_{S, \varnothing}\q \to \bigoplus_{v \in T^p} \Ind_{\cG_v}^\cG \ZZ_p(1)[0] \to.
            \]
            This is not difficult to show using sequence \eqref{eq:modified_five_term_sequence_alpha}, and we shall give a formal proof much later on, namely in proposition \ref{prop:exact_triangle_difference_s_and_t}. We stress that said proof does not use any result from this section.

            As to the complex of Burns, Kurihara and Sano, the limit of \eqref{eq:exact_triangle_bks} along the cyclotomic tower yields an exact triangle
            \[
                \cB_{S, T}\q \to \cB_{S, \varnothing}\q \to \varprojlim_n \bigoplus_{w \in T(L_n)} \ZZ_p \otimes \units{\kappa(w)}[0] \to.
            \]
            In the last term, the limit is taken with respect to the norm maps (cf. \cite{bks} p. 1540) and it was already determined in the proof of proposition \ref{prop:kernel_cokernel_h0_alpha}: it is precisely $\bigoplus_{v \in T^p} \Ind_{\cG_v}^\cG \ZZ_p(1)[0]$. The result now follows by comparing the last two exact triangles and noting that the isomorphisms between their second and third terms are compatible because they come from the natural dualities and transition maps.
        \end{proof}

\newpage
\chapter{Formulation of the Main Conjecture}
\label{chap:formulation_of_the_main_conjecture}

    At the core of our discussion lies the Main Conjecture, where a multitude of objects and techniques come together: number theory, homological algebra, representation theory, some $p$-adic analysis and, gluing it all, the localisation sequence of $K$-theory. This chapter develops the necessary tools in those areas and culminates in the formulation of the conjecture in section \ref{sec:the_main_conjecture}. Whereas the preceding chapter delved into an explicit construction of the arithmetic complex $\cC_{S, T}\q$, and subsequent ones will study some fundamental properties of the Main Conjecture, it is this chapter that plays the central role in our pursuits.

    On its most essential level, the conjecture is an existence claim in a certain group $K_1(\cQ(\cG))$. As is often the case in Iwasawa theory, the predicted \textit{zeta element $\zeta_{S, T}^{\alpha, \beta}$} should have some \textit{analytic} properties, in terms of interpolation of special $L$-values; and some \textit{arithmetic-algebraic} properties, in the form of a specific relation to $\cC_{S, T}\q$. These two conditions are captured by the vertical and horizontal arrows in the key diagram
%        \begin{equation}
%        \label{diag:key_diagram}
    \begin{center}
        \begin{tikzcd}
            K_1(\cQ(\cG)) \arrow[d, "\psi_\chi"] \arrow[r, "\partial"] & {K_0(\Lambda(\cG), \cQ(\cG))} \\
            \units{\cQ^c(\Gamma_\chi)} \arrow[d, "ev_{\gamma_{\chi \otimes \rho}}"]     &                               \\
            \QQ_p^c \cup \set{\infty}                                   &
        \end{tikzcd}
    \end{center}
%        \end{equation}
    respectively. Some of these objects have been introduced in a general setting in the preliminary chapter: Artin $L$-series (section \ref{sec:artin_l-series}), the connecting homomorphism $\partial \colon K_1(\cQ(\cG)) \to K_0(\Lambda(\cG), \cQ(\cG))$ (section \ref{sec:algebraic-k-theory}) and, although concealed behind the map $\psi_\chi$, reduced norms (ibid).

    In the first section, we address the concept of trivialised complexes and refined Euler characteristics. This homological tool will allow us to regard $\cC_{S, T}\q$ as an element in the relative $K$-group $K_0(\Lambda(\cG), \cQ(\cG))$ after a suitable homomorphism $\alpha \colon \cY_{S_\infty} \to E_{S, T}$ has been chosen, paving the way for the arithmetic side of the conjecture. It is here that we will first become acquainted with some algebraic properties of $\cQ(\cG)$, the total ring of fractions of $\Lambda(\cG)$. Much of the necessary machinery was developed in \cite{rwii}.

    Section \ref{sec:morphisms_on_finite_level} is devoted to showing that certain maps $\varphi_n^\alpha$ induced by $\alpha$ on the layer $L_n$ have desirable properties - namely, we prove they are isomorphisms on $\chi$-parts for almost all \textit{Artin characters} $\chi$ of $\cG$. This allows us to define \textit{regulators} (for the special $L$-values featured in the Main Conjecture) in section \ref{sec:regulators}. Regulators are a common occurrence at the intersection between algebraic and analytic number theory, and some of the rationale behind them in our context is explained at the beginning of the corresponding section. Our definition of the regulator is based on the work of Stark and Tate, especially \cite{tate}.

    We then move on to study certain evaluation maps in section \ref{sec:evaluation_maps}. The term $\units{\cQ^c(\Gamma_\chi)}$ in the above diagram is the group of units of the fraction field of a power series ring with $p$-adic coefficients. The analytic side of the conjecture claims that, when evaluated at 0, the series quotient $\psi_\chi(\zeta_{S, T}^{\alpha, \beta})$ takes the (regulated) special $L$-value for the character $\chi$. However, series associated to so-called \textit{$W$-equivalent} characters turn out to have closely related evaluation-at-0 maps, which leads us to define \textit{twisted evaluation maps $ev_{\gamma_{\chi \otimes \rho}}$}. These correspond to evaluation at (infinitely many) points in the open unit ball of $\QQ_p^c$ centred at 0. Requiring interpolation of $L$-values at almost all of these points determines a unique conjectural element $F_{S, T, \chi}^{\alpha, \beta} \in \units{\cQ^c(\Gamma_\chi)}$, which plays the role of $p$-adic $L$-functions. This Interpolation Conjecture, formulated in the final section of the chapter, constitutes the first half of the Main Conjecture. The second half is the equivariant Main Conjecture, which combines the analytic and algebraic sides by requiring that the zeta element be mapped to the interpolating series quotient $F_{S, T, \chi}^{\alpha, \beta}$ via $\psi_\chi$ for all $\chi$, and to the inverse of the refined Euler characteristic of $\cC_{S, T}\q$ via $\partial$.

    The division of the Main Conjecture into these two halves is only conceptual: one may instead postulate the existence of a $\zeta_{S, T}^{\alpha, \beta} \in K_1(\cQ(\Gamma))$ which is sent to the regulated special $L$-value directly by $ev_{\gamma_\chi} \psi_\chi$ for almost all $\chi$ (and with the same arithmetic property regarding $\cC_{S, T}\q$) - which yields a completely equivalent conjecture. This latter approach is used, for instance, in the Main Conjecture in \cite{bks} (although in the rather different language of \textit{determinant functors}). In our setting, however, the definition of twisted evaluation maps and a separate Interpolation Conjecture will facilitate the study of its properties, as will become apparent in the next chapter. As already mentioned, the consideration of $F_{S, T, \chi}^{\alpha, \beta} \in \units{\cQ^c(\Gamma_\chi)}$ also brings about a direct connection to the $p$-adic $L$-functions featured in Main Conjectures for totally real fields.

    Now that the path has been outlined, we proceed to incorporate two new elements into the setting of the previous chapter which will be necessary for the formulation of the conjecture:

    \begin{sett}
    \label{sett:formulation}
    \addcontentsline{toc}{section}{Setting B}
        All objects and assumptions from setting \ref{sett:construction} carry over to this one. Additionally, we fix the following:
        \begin{itemize}
            \item{
                A homomorphism of $\Lambda(\cG)$-modules $\alpha \colon \cY_{S_\infty} \to E_{S, T}$ such that $\ker(\alpha)$ and $\coker(\alpha)$ are  $\Lambda(\Gamma)$-torsion, where $\cY_{S_\infty}$ and $E_{S, T}$ are as in section \ref{sec:the_main_complex}. Proposition \ref{prop:existence_injectivity_alpha} below shows that such an $\alpha$ exists and is necessarily injective.
            }
            \item{
                An isomorphism of abstract fields $\beta \colon \CC_p \xrightarrow{\sim} \CC$, where $\CC_p$ denotes the field of $p$-adic complex numbers. See for instance the remark after \cite{nsw} proposition 10.3.2 for existence.
            }
        \end{itemize}
    \end{sett}

    Neither $\alpha$ nor $\beta$ is uniquely determined by the properties above. In the next chapter, we will study how the choice of these parameters affects the validity of the Main Conjecture. Note that setting \ref{sett:construction} (sometimes with the addition of $\alpha$ as above) still suffices for a few of the results in this chapter - which shall be made clear each time.

    The local structure of $E_{S, T}$ determined by Nickel allows us to prove the existence of $\alpha$:
    \begin{prop}
    \label{prop:existence_injectivity_alpha}
        Setting \ref{sett:construction}. There exists a homomorphism of $\Lambda(\cG)$-modules $\alpha \colon \cY_{S_\infty} \to E_{S, T}$ such that $\ker(\alpha)$ and $\coker(\alpha)$ are $\Lambda(\Gamma)$-torsion. Furthermore, any such $\alpha$ is injective.
    \end{prop}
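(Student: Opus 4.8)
The plan is to work generically, i.e.\ after base change along $\Lambda(\cG)\hookrightarrow\cQ(\cG)$, to identify $\cY_{S_\infty}$ with $E_{S,T}$ there, and then to descend the resulting isomorphism to an honest $\Lambda(\cG)$-homomorphism by clearing denominators. Since $\Lambda(\cG)$ is free of finite rank over the central Noetherian domain $\Lambda(\Gamma)$, its total ring of fractions is the flat localisation $\cQ(\cG)=\cQ(\Gamma)\otimes_{\Lambda(\Gamma)}\Lambda(\cG)$, so $\cQ(\cG)\otimes_{\Lambda(\cG)}-$ is exact and annihilates every $\Lambda(\Gamma)$-torsion module. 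Applying this to $0\to\cX_S\to\cY_S\to\ZZ_p\to0$, and using that the non-archimedean summands of $\cY_S=\cY_{S_\infty}\oplus\bigoplus_{v\in S_f}\Ind_{\cG_v}^\cG\ZZ_p$ are finitely generated over $\ZZ_p$ (non-archimedean places are finitely split in $L_\infty/L$) and hence $\Lambda(\Gamma)$-torsion, yields $\cQ(\cG)\otimes_{\Lambda(\cG)}\cY_{S_\infty}\iso\cQ(\cG)\otimes_{\Lambda(\cG)}\cX_S$. Likewise, the sequence \eqref{eq:modified_five_term_sequence_alpha} together with the fact that $\bigoplus_{v\in T^p}\Ind_{\cG_v}^\cG\ZZ_p(1)$ is finitely generated over $\ZZ_p$ gives $\cQ(\cG)\otimes_{\Lambda(\cG)}E_{S,T}\iso\cQ(\cG)\otimes_{\Lambda(\cG)}E_S$. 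The whole proposition thus reduces to producing, generically, an isomorphism between $E_S$ and $\cX_S$.

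The statement $\cQ(\cG)\otimes_{\Lambda(\cG)}E_S\iso\cQ(\cG)\otimes_{\Lambda(\cG)}\cX_S$ is the equivariant Iwasawa-theoretic incarnation of Dirichlet's unit theorem. As $\cQ(\cG)$ is semisimple, it amounts to the equality of the associated $\QQ_p^c$-characters. At each finite layer $L_n$ the Dirichlet regulator map \eqref{eq:dirichlet_regulator_map_real} is an isomorphism of $\RR[\cG_n]$-modules $\RR\otimes\units{\cO_{L_n,S}}\iso\RR\otimes\cX_{L_n,S}^\ZZ$, so $\QQ\otimes\units{\cO_{L_n,S}}$ and $\QQ\otimes\cX_{L_n,S}^\ZZ$ afford the same $\cG_n$-character, in which each $\chi\in\Irrs{\QQ_p^c}{\cG_n}$ occurs with multiplicity $\sprod{\chi,\psi_\cX}=r_S(\chi)$ by Lemma \ref{lem:properties_of_L-functions}. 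The delicate point is to transfer this layerwise equality to the generic $\cG$-representations attached to the inverse limits $E_S$ and $\cX_S$: a priori the $\cQ(\cG)$-module $\cQ(\cG)\otimes_{\Lambda(\cG)}E_S$ need not be read off from the system $(\QQ_p^c\otimes\units{\cO_{L_n,S}})_n$ without control information. This is precisely what the structure of $E_S$ as a $\Lambda(\Gamma)$-module (\cite{nsw} theorem 11.3.11), together with Nickel's finer determination of its local $\Lambda(\cG)$-structure, supplies, and it delivers the desired generic isomorphism. I expect this step to be the main obstacle; everything else is formal.

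Granting an isomorphism $f\colon\cQ(\cG)\otimes_{\Lambda(\cG)}\cY_{S_\infty}\iso\cQ(\cG)\otimes_{\Lambda(\cG)}E_{S,T}$, I would construct $\alpha$ by descent. First, $\cY_{S_\infty}$ is finitely generated projective over $\Lambda(\cG)$: if $\cG_v$ is trivial then $\Ind_{\cG_v}^\cG\ZZ_p=\Lambda(\cG)$, and if $v$ is ramified — so $\#\cG_v=2$ and $p$ is odd by Setting \ref{sett:construction}, as $K$ is then not totally imaginary — then $\Ind_{\cG_v}^\cG\ZZ_p\iso\Lambda(\cG)\cdot\frac{1+\tau_v}{2}$ is a finitely generated direct summand of $\Lambda(\cG)$, with $\tau_v$ the complex conjugation at $v$. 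Consequently $\cY_{S_\infty}$, on restriction of scalars, is a finitely generated projective — hence free — $\Lambda(\Gamma)$-module, so the canonical map $\iota\colon\cY_{S_\infty}\to\cQ(\cG)\otimes_{\Lambda(\cG)}\cY_{S_\infty}$ is injective. Choose finitely many $\Lambda(\cG)$-generators of $\cY_{S_\infty}$; since every element of $\cQ(\cG)\otimes_{\Lambda(\cG)}E_{S,T}=\cQ(\Gamma)\otimes_{\Lambda(\Gamma)}E_{S,T}$ has the form $s^{-1}j(e)$ with $0\neq s\in\Lambda(\Gamma)$ and $e\in E_{S,T}$, where $j\colon E_{S,T}\to\cQ(\cG)\otimes_{\Lambda(\cG)}E_{S,T}$ is the canonical map, a common denominator $s$ makes $(s f)\circ\iota$ take values in $\mathrm{im}(j)$. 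Projectivity of $\cY_{S_\infty}$ then lifts this through $j$ to a $\Lambda(\cG)$-homomorphism $\alpha\colon\cY_{S_\infty}\to E_{S,T}$ with $j\circ\alpha=(s f)\circ\iota$. Since $j\circ\alpha$ is injective ($sf$ is an isomorphism and $\iota$ is injective), so is $\alpha$; and from the exact sequence $0\to\cY_{S_\infty}\xrightarrow{\alpha}E_{S,T}\to\coker(\alpha)\to0$ together with $\rank_{\Lambda(\Gamma)}E_{S,T}=\rank_{\Lambda(\Gamma)}\cY_{S_\infty}$ we get $\rank_{\Lambda(\Gamma)}\coker(\alpha)=0$, i.e.\ $\coker(\alpha)$ is $\Lambda(\Gamma)$-torsion.

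Finally, the last assertion needs no new idea: if $\alpha\colon\cY_{S_\infty}\to E_{S,T}$ is any $\Lambda(\cG)$-homomorphism with $\ker(\alpha)$ and $\coker(\alpha)$ both $\Lambda(\Gamma)$-torsion, then $\ker(\alpha)$ is a $\Lambda(\Gamma)$-torsion submodule of the $\Lambda(\Gamma)$-free module $\cY_{S_\infty}$ (projective over $\Lambda(\cG)$ as above, hence free over the local ring $\Lambda(\Gamma)$), so $\ker(\alpha)=0$ and $\alpha$ is injective.
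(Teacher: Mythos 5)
Your argument is correct in outline and, at its core, rests on the same external input as the paper: the known structure of $E_S$ up to $\Lambda(\Gamma)$-torsion. The differences are worth recording. The paper does not pass through $\cX_S$ at all: it invokes Nickel's local structure theorem for $E_S$ (an isomorphism $(E_S)_\fp \iso T_\fp \oplus \Lambda_\fp(\cG)^{r_1 - r_1' + r_2} \oplus \bigoplus_{v \in S_\infty'} (\Ind_{\cG_v}^\cG \ZZ_p)_\fp$ at a suitable height-one prime $\fp$), observes that the displayed module differs from $\cY_{S_\infty}$ only by the torsion module $T$, and thus obtains the generic isomorphism $\cQ(\Gamma) \otimes_{\Lambda(\Gamma)} \cY_{S_\infty} \iso \cQ(\Gamma) \otimes_{\Lambda(\Gamma)} E_{S, T}$ directly; it then clears denominators via the isomorphism \eqref{eq:tensor_hom} rather than via a projectivity lift (both descent mechanisms are fine, and your observation that $\cY_{S_\infty}$ is projective, hence $\Lambda(\Gamma)$-free, is exactly how the paper settles injectivity). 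Your detour through $\cX_S$ buys a conceptual link to Dirichlet's unit theorem but costs you the one step you flag as ``the main obstacle'': the finite-level regulator is an archimedean isomorphism, so it cannot be passed to the inverse limit to compare $\cQ(\cG) \otimes_{\Lambda(\cG)} E_S$ with $\cQ(\cG) \otimes_{\Lambda(\cG)} \cX_S$; one genuinely needs the $p$-adic structure theory of $E_S$ (\cite{nsw} theorem 11.3.11 or Nickel's refinement) to control the generic multiplicities of the limit. Since you correctly identify that input and it is precisely what the paper imports as a black box, there is no gap relative to the paper's own proof — but be aware that once you have Nickel's statement in hand, the comparison with $\cY_{S_\infty}$ is immediate and the intermediate module $\cX_S$ is unnecessary here (the generic identification of $\cX_S$ with $\cY_{S_\infty}$ only becomes relevant later, in the construction of the trivialisation $t^\alpha$).
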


    \begin{proof}
        Recall that $\Lambda_\fp(\Gamma)$ denotes the localisation of the integral domain $\Lambda(\Gamma)$ at a prime ideal $\fp$. Since $\Lambda(\Gamma)$ is central in $\Lambda(\cG)$, the localisation $\Lambda_\fp(\cG) = \Lambda_\fp(\Gamma) \otimes_{\Lambda(\Gamma)} \Lambda(\cG)$ has a natural ring structure. In particular, any $\Lambda(\cG)$-module $M$ can be localised to a $\Lambda_\fp(\cG)$-module
        \[
            M_\fp = \Lambda_\fp(\cG) \otimes_{\Lambda(\cG)} M = \Lambda_\fp(\Gamma) \otimes_{\Lambda(\Gamma)} M.
        \]

        By \cite{nickel_swan} theorem 5.5, there exists an isomorphism of $\Lambda_{\fp}(\cG)$-modules
        \[
            \varphi \colon T_\fp \oplus \Lambda_{\fp}(\cG)^{r_1 - r_1' + r_2} \oplus \bigoplus_{v \in S_\infty'} (\Ind_{\cG_v}^{\cG} \ZZ_p)_\fp \xrightarrow{\sim} (E_S)_\fp
        \]
        for almost any (and conjecturally any) height-one prime ideal $\fp$ of $\Lambda(\Gamma)$, where $T$ is either the trivial module or $\ZZ_p(1)$ (the inverse limit of $p$-power roots of unity), $S_\infty'$ is the set of archimedean places of $K$ which ramify in $L_\infty$, and $r_1, r_1'$ and $r_2$ are the number of real, real ramified (in $L_\infty$) and complex places of $K$, respectively.

        Fix a prime ideal $\fp$ for which such $\varphi$ exists and set
        \[
            \cY' = T \oplus \Lambda(\cG)^{r_1 - r_1' + r_2} \oplus \bigoplus_{v \in S_\infty'} \Ind_{\cG_v}^{\cG} \ZZ_p,
        \]
        so that $\varphi \colon (\cY')_\fp \to (E_S)_\fp$. There is a canonical embedding of
        \[
            \cY_{S_\infty} = \bigoplus_{v \in S_\infty} \Ind_{\cG_v}^{\cG} \ZZ_p = \bigoplus_{v \in S_\infty \setminus S_\infty'} \Lambda(\cG) \oplus \bigoplus_{v \in S_\infty'} \Ind_{\cG_v}^{\cG} \ZZ_p
        \]
        into $\cY'$ with cokernel $T$. Let $\cQ(\Gamma)$ denote the field of fractions of $\Lambda(\Gamma)$. Consider the composition $\varphi'$ given by
        \begin{center}
        \vspace{-1.5em}
            \begin{tikzcd}[column sep=small]
                \cQ(\Gamma) \otimes_{\Lambda(\Gamma)} \cY_{S_\infty} \arrow[r] &
                \cQ(\Gamma) \otimes_{\Lambda(\Gamma)} \cY' \arrow[r, equal] &
                \cQ(\Gamma) \otimes_{\Lambda_\fp(\Gamma)} (\cY')_\fp \arrow[d, "\cQ(\Gamma) \otimes \varphi"]                      &                  &   \\
                &                                       &
                \cQ(\Gamma) \otimes_{\Lambda_\fp(\Gamma)} (E_S)_\fp \arrow[r, equal] &
                \cQ(\Gamma) \otimes_{\Lambda(\Gamma)} E_S \arrow[r] &
                \cQ(\Gamma) \otimes_{\Lambda(\Gamma)} E_{S, T},
            \end{tikzcd}
        \end{center}
        where the last map is the inverse of the isomorphism induced by the embedding $E_{S, T} \ia E_S$ (which has $\Lambda(\Gamma)$-torsion cokernel, as can be deduced from sequence \eqref{eq:modified_five_term_sequence_alpha} and the structure theorem \ref{thm:structure_theorem_iwasawa}). Since $T$ is torsion as well, $\varphi'$ is a $\cQ(\Gamma) \otimes_{\Lambda(\Gamma)} \Lambda(\cG)$-isomorphism. We can now clear denominators by observing that the natural map
        \begin{equation}
        \label{eq:tensor_hom}
            \cQ(\Gamma) \otimes_{\Lambda(\Gamma)} \Hom_{\Lambda(\cG)}(M, N) \to \Hom_{\cQ(\Gamma) \otimes_{\Lambda(\Gamma)} \Lambda(\cG)}(\cQ(\Gamma) \otimes_{\Lambda(\Gamma)} M, \cQ(\Gamma) \otimes_{\Lambda(\Gamma)} N)
        \end{equation}
        is an isomorphism whenever $M$ is finitely generated over $\Lambda(\cG)$ (cf. \cite{cr1} corollary 8.18). Since $\cY_{S_\infty}$ is so (by $\abs{S_\infty}$ elements), there exist a non-zero $d \in \Lambda(\Gamma)$ and a homomorphism of $\Lambda(\cG)$-modules $\alpha \colon \cY_{S_\infty} \to E_{S, T}$ such that $\varphi' = d^{-1}\alpha$. In particular,
        \[
            \cQ(\Gamma) \otimes_{\Lambda(\Gamma)} \alpha \colon \cQ(\Gamma) \otimes_{\Lambda(\Gamma)} \cY_{S_\infty} \to \cQ(\Gamma) \otimes_{\Lambda(\Gamma)} E_{S, T}
        \]
        is an isomorphism (because multiplication by $d$ is itself an isomorphism after localising), which implies that $\ker(\alpha)$ and $\coker(\alpha)$ are $\Lambda(\Gamma)$-torsion.

        We now show that $\cY_{S_\infty}$ is $\Lambda(\Gamma)$-torsion free and consequently $\ker(\alpha)$ is trivial. As shown after \eqref{eq:ses_augmentation_la}, $\ZZ_p$ is a projective $\Lambda(\cG_v)$-module for all $v \in S_\infty$, and hence $\Ind_{\cG_v}^\cG \ZZ_p$ is projective over $\Lambda(\cG)$. But $\Lambda(\cG)$ is itself free as a $\Lambda(\Gamma)$-module, so $\Ind_{\cG_v}^\cG \ZZ_p$ is also projective over $\Lambda(\Gamma)$. Since projective modules over integral domains are torsion-free, the claim follows.
    \end{proof}

    \begin{rem}
    \phantomsection
    \label{rem:existence_alpha}
        \begin{enumerate}[i)]
            \item{
                It follows from the last part of the proof that $\cY_{S_\infty}$ is in fact free as $\Lambda(\Gamma)$-module, as the classical Iwasawa algebra is a local ring. This can be seen from the definition too: as a $\Lambda(\Gamma)$-module, $\cY_{S_\infty}$ is simply $\bigoplus_{w \in S_\infty(L)} \Ind_{\Gamma_{w_\infty}}^\Gamma \ZZ_p$ (use lemma \ref{lem:properties_of_induction} i)), where $w_\infty$ is any chosen prolongation of $w$ to $L_\infty$. Since archimedean places do not split in $L_\infty/L$, this sum of inductions amounts to $\Lambda(\Gamma)^{\abs{S_\infty(L)}}$.
            }
            \item{
                Once we understand the structure of $\Lambda(\cG)$ in some detail, it will become clear that whether a $\Lambda(\cG)$-module is $\Lambda(\Gamma)$-torsion or not is in fact independent of the choice of $\Gamma$ (with the conditions from setting \ref{sett:construction}). This is unnecessary for the moment and will be explained at the beginning of subsection \ref{subsec:the_choice_of_l}.
            }
        \end{enumerate}
    \end{rem}

    \section{An integral trivialisation}
    \label{sec:an_integral_trivialisation}

        The aim of this section is to construct a trivialisation for the complex $\cC_{S, T}\q$ and define the corresponding refined Euler characteristic\footnote{In our treatment of refined Euler characteristics, we follow \cite{bb} sections 5 and 6. Our conventions match those of the object $\chi^{\text{old}}$ therein, which is related to an alternative definition via theorem 6.2 from the article.} $\chi_{\Lambda(\cG), \cQ(\cG)}(\cC_{S, T}\q, t^\alpha)$ in the relative $K$-group $K_0(\Lambda(\cG), \cQ(\cG))$, which plays a major role in the formulation of the Main Conjecture. After introducing refined Euler characteristics in an abstract setting, we specialise to our objects of interest, present the fundamental ring $\cQ(\cG)$ and use the homomorphism $\alpha$ from setting \ref{sett:formulation} to obtain $\chi_{\Lambda(\cG), \cQ(\cG)}(\cC_{S, T}\q, t^\alpha)$.

        Let $R$ be a ring. Recall the notion of perfect complexes introduced in section \ref{sec:the_main_complex}: a (cochain) complex of (left) $R$-modules is called strictly perfect if it is bounded and consists of finitely generated projective modules, and perfect if it isomorphic to a strictly perfect complex in the derived category $\cD(R)$. Given a complex $C\q$ of $R$-modules, we denote its modules of $i$-th cocycles and $i$-th coboundaries by $Z^i(C\q)$ and $B^i(C\q)$ respectively, both inside $C^i$. We set
        \[
            H^\even(C\q) = \bigoplus_{i \in \ZZ}H^{2i}(C\q) \quad \text{and} \quad H^\odd(C\q) = \bigoplus_{i \in \ZZ} H^{2i + 1}(C\q)
        \]
        and define $C^\even$, $C^\odd$, $Z^\even(C\q)$, $Z^\odd(C\q)$, $B^\even(C\q)$ and $B^\odd(C\q)$ analogously. If $C\q$ is bounded (for instance, strictly perfect), all of these are finite sums. If $C\q$ is perfect, at least $H^\even(C\q)$ and $H^\odd(C\q)$ are.

        Let $\varphi \colon R \to S$ be a homomorphism of rings such that $S$ is flat as a right $R$-module via $\varphi$ and semisimple Artinian as a ring. Given an $R$-module $M$, we denote (in this section only) the $S$-module $S \otimes_R M$ by $M_S$. Analogously, $C_S\q$ denotes the result of applying $S \otimes_R -$ to the complex $C\q$ degree-wise.

        A \textbf{trivialisation}\index{trivialisation} (over $S$) for a perfect complex $C\q$ of $R$-modules is an isomorphism of $S$-modules
        \[
             t \colon H^\odd(C_S\q) \xrightarrow{\sim} H^\even(C_S\q).
        \]
        Note that $t$ need not come from extension of scalars of an $R$-homomorphism $H^\odd(C\q) \to H^\even(C\q)$. One may then refer to the pair $(C\q, t)$ as a \textbf{trivialised complex}. Such a trivialisation gives rise to a well-defined \textbf{refined Euler characteristic}\index{refined Euler characteristic} $\chi_{R, S}(C\q, t) \in K_0(R, S)$ as follows (cf. section \ref{sec:algebraic-k-theory} for the definition and properties of the relative $K_0$). Choose a strictly perfect representative $P\q$ of $C\q$ (i.e. $P\q \iso C\q$ in $\cD(R)$). The flatness of $S$ implies $H^i(P_S\q) = H^i(P\q)_S$ for all $i$, and analogously for $B^i$ and $Z^i$. Therefore, the sequences
        \[
            0 \to B^i(P_S\q) \to Z^i(P_S\q) \to H^i(P_S\q) \to 0
        \]
        are exact for all $i$ and the same is true of
        \[
            0 \to Z^i(P_S\q) \to P_S^i \to B^{i + 1}(P_S\q) \to 0.
        \]
        Furthermore, by semisimplicity of $S$, both kinds of sequences are split. A choice of splittings then gives rise to an isomorphism
        \begin{align}
        \label{eq:refined_euler_characteristic_map}
            \varphi_t \colon P_S^\odd & \iso B^\even(P_S\q) \oplus Z^\odd(P_S\q) \\
            & \iso B^\even(P_S\q) \oplus B^\odd(P_S\q) \oplus H^\odd(P_S\q) \nonumber \\
            & \iso B^\even(P_S\q) \oplus B^\odd(P_S\q) \oplus H^\even(P_S\q) \nonumber \\
            & \iso Z^\even(P_S\q) \oplus B^\odd(P_S\q) \nonumber \\
            & \iso P_S^\even \nonumber
        \end{align}
        of $S$-modules, where the third isomorphism is induced by $t$. This trivialisation was defined on the cohomology of $C\q$, but isomorphic complexes in $\cD(R)$ have isomorphic cohomology and therefore $t$ induces a trivialisation for $P\q$ as well. The \textbf{refined Euler characteristic} of the trivialised complex $(C\q, t)$ is defined as
        \begin{equation}
        \label{eq:definition_rec_general}
            \chi_{R, S}(C\q, t) = [P^\odd, \varphi_t, P^\even] \in K_0(R, S).
        \end{equation}
        Crucially, $\chi_{R, S}(C\q, t)$ is independent of the choices of a strictly perfect representative $P\q$ of $C\q$ and splittings for the short exact sequences - only the complex and the trivialisation matter. See \cite{bb}, sections 5 and 6 for a proof of this\footnote{As mentioned above, the Euler characteristic defined here corresponds to the $\chi^{\text{old}}$ in section 6 of the cited paper. Independence is really shown for a different construction in a more general setting in section 5. However, theorem 6.2 therein shows the difference between both constructions is measured by a certain $\delta_{\Lambda, \Sigma}^1(B^\odd(P_{\Sigma}), -\Id)$ which is indeed independent of the above choices.}. As an immediate consequence, if $D\q$ is another complex and $q \colon C\q \isoa D\q$ is an isomorphism in $\cD(R)$ under which $t$ becomes $t_D \colon H^\odd(D_S\q) \isoa H^\even(D_S\q)$, then
        \begin{equation}
        \label{eq:rec_independent_derived_iso}
            \chi_{R, S}(C\q, t) = \chi_{R, S}(D\q, t_D).
        \end{equation}

        In order to apply this construction to our context, we introduce the rings $\cQ(\Gamma)$ and $\cQ(\cG)$, which will in fact be essential for most of the sequel. While both were already present in the proof of proposition \ref{prop:existence_injectivity_alpha} (one explicitly and one implicitly), a closer look is in order now. In general, given a profinite group $G$, we denote by $\cQ(G)$ the total ring of fractions of the Iwasawa algebra $\Lambda(G) = \ZZ_p[[G]]$ (section \ref{sec:iwasawa_algebras_and_modules}) obtained by inverting all regular elements. For an $p$-adic field $E$ (i.e. a finite extension of $\QQ_p$), we set $\cQ^E(G) = E \otimes_{\QQ_p} \cQ(G)$. We also let
        \begin{equation}
        \label{eq:q_direct_limit_finite}
            \cQ^c(G) = \QQ_p^c \otimes_{\QQ_p} \cQ(G) = \varinjlim_E \cQ^E(G),
        \end{equation}
        with $E$ running over all $p$-adic fields and transition maps given by the natural inclusions.

        In the notation from setting \ref{sett:construction}, $\Lambda(\Gamma)$ is an integral domain which lies in the centre of  $\Lambda(\cG)$ and such that $[\Lambda(\cG) : \Lambda(\Gamma)] = [L : K]$ is finite. Therefore, for a $\Lambda(\cG)$-module, being finitely generated is equivalent to being so over $\Lambda(\Gamma)$. An essential property of $\cQ(\cG)$ is that it can in fact be obtained by adding inverses of all non-zero elements of $\Lambda(\Gamma)$ to $\Lambda(\cG)$. In other words, there is a canonical isomorphism
        \begin{equation}
        \label{eq:lambda_regular_elements}
            \cQ(\cG) \iisoo \cQ(\Gamma) \otimes_{\Lambda(\Gamma)} \Lambda(\cG)
        \end{equation}
        (see \cite{rwii} p. 551 for a proof\footnote{\label{foot:rw}We will resort to this article for multiple algebraic results. The setting of the article differs from ours in that $L_\infty$ is assumed to be totally real there, and $p$ odd. However, the algebraic results we cite are not affected by this difference.}). In particular a $\Lambda(\cG)$-module is torsion if and only if it is $\Lambda(\Gamma)$-torsion (which we may even take as a definition). It also follows that $\cQ(\cG)$ is a finite-dimensional $\cQ(\Gamma)$-algebra and $\Lambda(\cG)$ is a $\Lambda(\Gamma)$-order in it. Given the desirable properties of the ring ${\Lambda(\Gamma) \iso  \ZZ[[T]]}$ (Noetherian integral domain, regular local ring, easily characterised prime ideals, etc.), some questions concerning $\Lambda(\cG)$-modules will be tackled by restricting scalars to $\Lambda(\Gamma)$. As explained in section \ref{sec:iwasawa_algebras_and_modules}, the non-canonical isomorphism $\Lambda(\Gamma) \iso \ZZ_p[[T]]$ is the continuous $\ZZ_p$-linear map induced by
        \[
            \gamma \mapsto T + 1
        \]
        for any topological generator $\gamma$ of $\Gamma$, which in our case was fixed in setting $\ref{sett:construction}$. We sometimes refer to the elements of $\cQ(\Gamma) \iso \ffrac(\ZZ_p[[T]])$ as \textbf{series quotients}\index{series quotient}. It is worth noting that $\cQ(\Gamma)$ does not contain something like $\QQ_p[[T]]$ - we say the elements in $\cQ(\Gamma)$ have \textit{bounded denominators}. It also follows from definition \eqref{eq:q_direct_limit_finite} that every series quotient in $\cQ^c(\Gamma)$ has coefficients contained in a finite extension of $\QQ_p$ (in both the numerator and the denominator): it cannot, for instance, be of the form $\sum_{i = 0}^\infty x_i T^i$ with $\QQ_p(x_i \colon i \in \NN)$ of infinite degree over $\QQ_p$.

        In order to construct a refined Euler characteristic for the perfect complex $\cC_{S, T}\q$ of $\Lambda(\cG)$-modules from chapter \ref{chap:construction_of_the_complex}, we consider the embedding $\Lambda(\cG) \ia \cQ(\cG)$. The semisimplicity of $\cQ(\cG)$ is shown in the proof of \cite{rwii} proposition 5. The flatness of $\cQ(\cG)$ as a right $\Lambda(\cG)$-module is clear: on $\Lambda(\cG)$-modules, $\cQ(\cG) \otimes_{\Lambda(\cG)} - $ coincides with $\cQ(\Gamma) \otimes_{\Lambda(\Gamma)} -$ by \eqref{eq:lambda_regular_elements}, and fields of fractions of integral domains are flat.

        The cohomology of $\cC_{S, T}\q$ was established in theorem \ref{thm:cohomology_of_complex}: one has $H^\even(\cC_{S, T}\q) = H^0(\cC_{S, T}\q) \iso E_{S, T}$, and $H^\odd(\cC_{S, T}\q) = H^1(\cC_{S, T}\q)$ fits in a short exact sequence $X_{T, S}^{cs} \ia H^1(\cC_{S, T}\q) \sa \cX_S$. Therefore, a trivialisation amounts to an isomorphism
        \[
            t \colon \cQ(\cG) \otimes_{\Lambda(\cG)} H^1(\cC_{S, T}\q) \xrightarrow{\sim} \cQ(\cG) \otimes_{\Lambda(\cG)} E_{S, T}.
        \]
        Note here that, even though $H^\even(\cC_{S, T}\q)$ is not technically $E_{S, T}$ but rather isomorphic to it, we can indeed identify them  by \eqref{eq:rec_independent_derived_iso}. Our chosen trivialisation is given by extension of scalars ${t^\alpha = \cQ(\cG) \otimes_{\Lambda(\cG)} t_\iota^\alpha}$ of a map $t_\iota^\alpha \colon H^1(\cC_{S, T}\q) \to E_{S, T}$ on integral level. This $\Lambda(\cG)$-homomorphism is defined as the composition
        \begin{equation}
        \label{eq:integral_trivialisation}
            t_\iota^\alpha = \alpha\varphi\pi \colon H^1(\cC_{S, T}\q) \xrightarrow{\pi} \cX_S \xrightarrow{\varphi} \cY_{S_\infty} \xrightarrow{\alpha} E_{S, T},
        \end{equation}
        which we shall refer to as the \textbf{integral trivialisation}\index{integral trivialisation}\index{trivialisation!integral}. The notation $t_\iota^\alpha$ reflects the dependence on the choice of $\alpha$, which will be studied in subsection \ref{subsec:the_choice_of_alpha}. Let us consider each of the arrows separately:
        \begin{itemize}
            \item{
                $\pi$ is surjective with kernel $X^{cs}_{T, S}$, which is torsion: by sequence \eqref{eq:modified_five_term_sequence_alpha}, it is enough to show that $\bigoplus_{v \in T^p} \Ind_{\cG_v}^\cG \ZZ_p(1)$ and $X_S^{cs}$ are. The former has finite $\ZZ_p$-rank, which implies torsionness by the structure theorem \ref{thm:structure_theorem_iwasawa}; and the latter admits a surjection from the unramified Iwasawa module $X_{nr}$ (the limit of the $p$-Hilbert class groups), which is torsion by \cite{nsw} proposition 11.1.4.
            }
            \item{
                $\varphi$ is the inverse limit of the canonical projections $\cX_{L_n, S} \sa \cY_{L_n, S_\infty}$ - which simply discard the components at finite places - along the cyclotomic tower (cf. \eqref{eq:y_finite_level}). By the right-exactness of $\varprojlim_n$, it is surjective. Its kernel is $\cX_{S_f}$, which has finite $\ZZ_p$-rank (because so does $\cY_{S_f}$) and is therefore torsion.
            }
            \item{
                $\alpha$ as in setting \ref{sett:formulation} is injective and has torsion cokernel by definition.
            }
        \end{itemize}
        Since all maps have torsion kernels and cokernels, $t_\iota^\alpha$ becomes indeed an isomorphism after tensoring with $\cQ(\cG)$: our desired trivialisation\index{trivialisation} $t^\alpha = \cQ(\cG) \otimes_{\Lambda(\cG)} t_\iota^\alpha$. This induces a refined Euler characteristic\index{refined Euler characteristic}
        \begin{equation}
        \label{eq:definition_rec}
            \chi_{\Lambda(\cG), \cQ(\cG)}(\cC_{S, T}\q, t^\alpha) \in K_0(\Lambda(\cG), \cQ(\cG))
        \end{equation}
        as explained above.

        We conclude on the brief mention that $\cY_{S_\infty}$ and $E_{S, T}$ are known to be isomorphic in some cases. The classical example comes from Jannsen's work \cite{jann}, although the exact formulation that suits our setting appears in \cite{nsw} theorem 11.3.11 ii): if $p \nmid [L : K]$ and $L$ contains no primitive $p$-th root of unity, then $\cY_{S_\infty} \iso E_{S, \varnothing}$ as $\Lambda(\cG)$-modules. In particular, the $\alpha$ in setting \ref{sett:formulation} can be chosen to be this isomorphism. Note that $t_\iota^\alpha$ will rarely be an isomorphism itself, since $\pi$ is always surjective and $\varphi$ is never injective unless $S = S_p \cup S_\infty$ and $\abs{S_p} = 1$.

        The same result provides an example where $\cY_{S_\infty}$ and $E_{S, T}$ are certainly not isomorphic: if we instead require $L$ to contain a primitive $p$-th root of unity, then $E_{S, \varnothing}$ has $\ZZ_p(1)$ as a direct summand (the inverse limit of all $p$-power roots of unity), which could never occur in the torsion-free module $\cY_{S_\infty}$. In practice, one often imposes light conditions on the set $T$ precisely to do away with this torsion part.

    \section{Morphisms on finite level}
    \label{sec:morphisms_on_finite_level}

        As mentioned in the introduction to the present chapter, special $L$-values need to be regulated before they can stand a chance of being interpolated by quotients of $p$-adic power series. This regulation comes in the form of division by certain \textit{regulators}, ($p$-adic) complex numbers constructed as determinants of morphisms $\varphi_n^\alpha$ on finite level. While the definition of regulators is addressed in the next section, this one paves the way by defining the finite-level maps and proving the essential fact that they are isomorphisms on $\chi$-parts for almost all Artin characters $\chi$ - which will guarantee the resulting regulators to be non-zero. These maps should contain information about the homomorphism $\alpha$ from setting \ref{sett:formulation}, and choosing them in a suitable way will in fact ensure the independence of the Main Conjecture from the choice of $\alpha$ (subsection \ref{subsec:the_choice_of_alpha}). The passage to finite level is achieved by taking coinvariants of some of the $\Lambda(\cG)$-modules we have already encountered. However, some care has to be put into accounting for the difference between those coinvariants and their natural finite-level counterparts.

        We start by introducing Artin characters and proving some technical properties of modules of coinvariants. This is followed by a careful study of the kernel and cokernel of $\varphi_n^\alpha$, leading to the proof that they have the desired behaviour on $\chi$-parts (proposition \ref{prop:isomorphism_on_chi_parts}).

        By a ($p$-adic) \textbf{Artin character}\index{Artin character}\index{character!Artin} $\chi$ of a profinite group $G$ we always mean the trace of a representation $\rho_\chi \colon G \to \GLg_n(\QQ_p^c)$ with open kernel, where $n \geq 1$. Such a $\rho_\chi$ factors through the finite quotient $G/\ker(\rho_\chi)$ and takes values in $\GLg_n(E)$ for some $p$-adic field $E$, over which we say it \textit{realises}. We denote the set of irreducible Artin characters of $G$ by $\Irr_p(G)$. The usual operations on characters of finite groups (inflation, induction, restriction, sum, product, dual, etc.) can be performed on Artin characters as well in a natural way (conceptually, one can factor the characters through a common finite group first, then inflate the result of the operation back to $G$).

        Our main interest lies in the Artin characters of $\cG$. In this case, since any open subgroup of $\cG$ contains $\Gp{n}$ for some $n$ (because ${L_\infty = \bigcup_n L_n}$), $\rho_\chi$ and $\chi$ are, respectively, a representation and a character of ${\cG_n = \cG/\Gp{n} = \Gal(L_n/K)}$ in the sense of section \ref{sec:representations_of_finite_groups}. We will in general not distinguish $\chi$ from its projection to $\cG_n$ in the notation, but it will always be clear which layer $L_n$ we are projecting onto. The notation $\ker(\chi) = \ker(\rho_\chi)$ and terminology such as \textit{irreducible} and \textit{linear} carry over from the finite-group case.

        An Artin character of $\cG$ is said to be of \textbf{type $W$}\index{type-W character@type-$W$ character}\index{character!of type $W$} if it is linear and its kernel contains ${H = \Gal(L_\infty/K_\infty)}$. These characters identify bijectively with the group homomorphisms
        \[
            \rho \colon \Gamma_K \to \units{(\QQ_p^c)}
        \]
        with open kernel. Any character\footnote{We reserve the notation $\rho$ for type-$W$ characters, which will in practice not cause any confusion with the representation $\rho_\chi \colon \cG \to \GLg_n(\QQ_p^c)$ associated to an arbitrary Artin character $\chi$ of $\cG$.} $\rho$ of type $W$ factors through a finite layer $K_n/K$ and its image is the group of $p^m$-th roots of unity in $\QQ_p^c$ for some $m$. Indeed, there is a group isomorphism
        \begin{align}
        \label{eq:bijection_type_W_rou}
            \set{\text{Characters of} \ \cG \et{of type} W} & \to \mu_{p^\infty} \\
            \rho & \mapsto \rho(\gamma_K) \nonumber
        \end{align}
        where $\mu_{p^\infty}$ denotes the group of $p$-power roots of unity of $\QQ_p^c$ and the operation on the domain is character product. We point out that the multiplicative inverse $\rho^{-1}$ of any linear character $\rho$ is its dual $\check{\rho}$.

        Given two Artin characters $\chi$ and $\chi'$ of $\cG$, we write $\chi \sim_W \chi'$ and say they are \textbf{$W$-equivalent}\index{character!W-equivalent@$W$-equivalent} if $\chi' = \chi \otimes \rho$ for some character $\rho$ of type $W$. This can easily be verified to be an equivalence relation. Note that $\chi$ is irreducible if and only $\chi \otimes \rho$ is, as $\oplus$ distributes over $\otimes$ and linear characters are invertible.

        The first necessary result concerning Artin characters is the following lemma. Here and in the sequel, expressions of the form $e(\chi)S \otimes_R -$ denote $e(\chi)(S \otimes_R -)$ rather than $(e(\chi)S) \otimes_R -$.

        \begin{lem}
        \label{lem:torsion_modules_bounded_coinvariants}
            Let $\Lambda(\Gamma)$ be the Iwasawa algebra of a profinite group $\Gamma \iso \ZZ_p$.
            \begin{enumerate}[i)]
                \item{
                    Choose a topological generator $\gamma$ of $\Gamma$, which in particular induces an isomorphism ${\Lambda(\Gamma) \iso \ZZ_p[[T]]}$ as in section \ref{sec:iwasawa_algebras_and_modules}. Given a finitely generated torsion $\Lambda(\Gamma)$-module $M$, set
                    \[
                        n_0 = \max\set{n \in \NN \et{such that} \ideal{\xi_n} \in \supp(M)}
                    \]
                    with $\xi_n$ as defined in the same section, or $n_0 = 0$ if that set is empty. Then $n_0 \in \NN$ and, for all $n \geq n_0$, one has
                    \begin{equation}
                    \label{eq:rank_stabilisation}
                        \rank_{\ZZ_p} \ginv{M}{n} = \rank_{\ZZ_p} \gcoinv{M}{n} = \rank_{\ZZ_p} \gcoinv{M}{n_0}.
                    \end{equation}
                }
                \item{
                    Let $G$ be a profinite group which has $\Gamma$ as an open central subgroup, and $m < n$ two natural numbers. Furthermore, let $\chi$ be an irreducible Artin character of $G$ such that $\ker(\chi)$ contains $\Gp{n}$ but not $\Gp{m}$. If $\rank_{\ZZ_p} \gcoinv{M}{n} = \rank_{\ZZ_p} \gcoinv{M}{m}$ and $E$ is any extension of $\QQ_p$ over which $\chi$ has a realisation, then
                    \[
                        e(\chi) E \otimes_{\ZZ_p} \gcoinv{M}{n} = e(\chi) E \otimes_{\ZZ_p} \ginv{M}{n} = 0,
                    \]
                    where $e(\chi) \in E[G/\Gp{n}]$ is the primitive central idempotent given by \eqref{eq:definition_pci}.
                }
            \end{enumerate}
        \end{lem}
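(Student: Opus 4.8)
The plan is to handle part i) by pure $\ZZ_p$-rank bookkeeping under multiplication by $w_n=(T+1)^{p^n}-1$, and then to feed part i) into part ii) only as numerical input, extracting the vanishing of the $e(\chi)$-part from semisimplicity of $E[\Delta]$ together with the idempotent projection formula \eqref{eq:pci_projection}.

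For part i), I would first note that a finitely generated torsion $\Lambda(\Gamma)$-module $M$ has $\rank_{\ZZ_p}M=\lambda(M)<\infty$: by the structure theorem \ref{thm:structure_theorem_iwasawa} the summands $\Lambda(\Gamma)/\ideal{p^{m_i}}$ are $\ZZ_p$-torsion while each $\Lambda(\Gamma)/\ideal{F_j^{l_j}}$ is $\ZZ_p$-free of rank $l_j\deg(F_j)$ by Weierstrass division, and a pseudo-isomorphism changes none of this. Since $\ginv{M}{n}=\ker(w_n\colon M\to M)$ and $\gcoinv{M}{n}=\coker(w_n\colon M\to M)$, the exact sequence
\[
    0 \to \ginv{M}{n} \to M \xrightarrow{w_n} M \to \gcoinv{M}{n} \to 0
\]
of modules of finite $\ZZ_p$-rank immediately gives $\rank_{\ZZ_p}\ginv{M}{n}=\rank_{\ZZ_p}\gcoinv{M}{n}$ for every $n$.

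For the stabilisation I would reduce to $M$ elementary — a short diagram chase shows a pseudo-isomorphism leaves $\rank_{\ZZ_p}\ginv{M}{n}$ and $\rank_{\ZZ_p}\gcoinv{M}{n}$ unchanged, because the $w_n$-kernel and $w_n$-cokernel of a finite module are finite — and then compute summand by summand. The $\Lambda(\Gamma)/\ideal{p^{m_i}}$ contribute $0$ to all these ranks, and for $\Lambda(\Gamma)/\ideal{F^l}$ with $F$ an irreducible Weierstrass polynomial, writing $w_n=\prod_{i=0}^{n}\xi_i$ and using unique factorisation in $\Lambda(\Gamma)$, both the $w_n$-kernel and the $w_n$-cokernel have $\ZZ_p$-rank $\deg(F)$ if $F=\xi_i$ for some $i\le n$ and rank $0$ otherwise. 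As $\ideal{\xi_n}\in\supp(M)$ holds exactly when $\xi_n$ equals one of the $F_j$ (two Weierstrass polynomials generating the same ideal coincide), the set defining $n_0$ is finite, so $n_0\in\NN$, and $\rank_{\ZZ_p}\gcoinv{M}{n}$ is constant for $n\ge n_0$; combined with the first equality this is \eqref{eq:rank_stabilisation}. (Alternatively one avoids the elementary reduction by showing directly that $\rank_{\ZZ_p}(M/\xi_nM)=0$ whenever $\ideal{\xi_n}\notin\supp(M)$, forcing the non-decreasing sequence $n\mapsto\rank_{\ZZ_p}\gcoinv{M}{n}$ to stabilise from $n_0$ on.)

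For part ii), put $Q_k=G/\Gp{k}$, which is finite because $\Gamma$ is open in $G$, and $\Delta=\Gp{m}/\Gp{n}$, a cyclic $p$-group that is central in $Q_n$ since $\Gamma$ is central in $G$. The module $V=E\otimes_{\ZZ_p}\gcoinv{M}{n}$ is a finitely generated, hence finite-dimensional, $E[Q_n]$-module with $\dim_E V=\rank_{\ZZ_p}\gcoinv{M}{n}$ (the torsion of $\gcoinv{M}{n}$ dies after $\otimes_{\ZZ_p}E$), and $E\otimes_{\ZZ_p}\gcoinv{M}{m}=V_\Delta$ because coinvariants compose and commute with the flat base change $\otimes_{\ZZ_p}E$. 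As $\abs{\Delta}$ is invertible in $E$, the $E[\Delta]$-module $V$ is semisimple, so $V_\Delta\iso V^\Delta$, whence $\dim_E V^\Delta=\rank_{\ZZ_p}\gcoinv{M}{m}=\rank_{\ZZ_p}\gcoinv{M}{n}=\dim_E V$ by hypothesis; therefore $\Delta$ acts trivially on $V$. Consequently $V$ is a module over $E[Q_m]=E[Q_n/\Delta]$ via the canonical surjection $\varepsilon\colon E[Q_n]\sa E[Q_m]$, and since $\Gp{m}\not\subseteq\ker(\chi)$ the irreducible character $\chi$ of $Q_n$ does not have $\Delta$ in its kernel, so \eqref{eq:pci_projection} yields $\varepsilon(e(\chi))=0$; as $e(\chi)$ acts on $V$ through $\varepsilon$, we conclude $e(\chi)\,E\otimes_{\ZZ_p}\gcoinv{M}{n}=0$. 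The assertion for $\ginv{M}{n}$ follows identically: $\ginv{M}{m}=(\ginv{M}{n})^\Delta$, and the first equality of part i) at levels $m$ and $n$ gives $\rank_{\ZZ_p}\ginv{M}{m}=\rank_{\ZZ_p}\ginv{M}{n}$, so again $\Delta$ acts trivially and $\varepsilon(e(\chi))=0$ annihilates the $e(\chi)$-part.

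The main obstacle is conceptual rather than computational: the $G$-action on $M$ is not preserved by pseudo-isomorphism, so part ii) cannot be reduced to elementary modules and part i) must enter only through $\ZZ_p$-ranks — the vanishing of the $e(\chi)$-part then has to be produced from semisimplicity of $E[\Delta]$ plus the projection formula \eqref{eq:pci_projection}. The other delicate point is the verification in part i) that pseudo-isomorphisms genuinely leave $\rank_{\ZZ_p}\ginv{M}{n}$ and $\rank_{\ZZ_p}\gcoinv{M}{n}$ unchanged.
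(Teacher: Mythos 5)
Your proposal is correct and follows essentially the same route as the paper: part i) by reduction to an elementary module (after checking pseudo-isomorphisms do not affect the relevant $\ZZ_p$-ranks) and the $\gcd(F^{l},w_n)$ computation, and part ii) by comparing $\ZZ_p$-ranks of coinvariants at levels $m$ and $n$ and invoking \eqref{eq:pci_projection}. The only cosmetic difference is that in part ii) you deduce triviality of the $\Delta$-action via semisimplicity of $E[\Delta]$ and $V_\Delta \iso V^\Delta$, whereas the paper directly observes that the surjection $E \otimes_{\ZZ_p}\gcoinv{M}{n} \sa E \otimes_{\ZZ_p}\gcoinv{M}{m}$ is an isomorphism by equality of dimensions; the two arguments are interchangeable.
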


        \begin{proof}
            We start with part i). Finitely generated torsion $\Lambda(\Gamma)$-modules have finite support (which shows $n_0 \in \NN)$ and finite $\ZZ_p$-rank by the structure theorem \ref{thm:structure_theorem_iwasawa}, and hence the exact sequence
            \[
                0 \to \ginv{M}{n} \to M \xrightarrow{\gamma^{p^n} - 1} M \to \gcoinv{M}{n} \to 0
            \]
            shows that $\rank_{\ZZ_p} \ginv{M}{n} = \rank_{\ZZ_p} \gcoinv{M}{n}$ for all $n \in \NN$. We therefore only need to show \eqref{eq:rank_stabilisation} for one these, and we do so for coinvariants. Let
            \[
                f \colon M \xrightarrow{\approx} E_M = \bigoplus_{i = 1}^s \faktor{\Lambda(\Gamma)}{\ideal{p^{m_i}}} \oplus \bigoplus_{j = 1}^t \faktor{\Lambda(\Gamma)}{\ideal{F_j^{l_j}}}
            \]
            be a pseudo-isomorphism as in the structure theorem for some $m_i, l_j > 0$ and irreducible Weierstrass polynomials $F_j$ - all of which are unique up to order. Note that the height-one prime ideals in $\supp(M)$ are precisely the $\ideal{F_j}$ and, if $s > 0$, $\ideal{p}$ as well.

            We now show that it suffices to prove \eqref{eq:rank_stabilisation} for $E_M$ rather than $M$. Indeed, decomposing the exact sequence
            \begin{equation}
            \label{eq:reduction_to_elementary}
                \begin{tikzcd}[column sep=tiny]
                0 \arrow[rr] &  & \ker(f) \arrow[rr] &  & M \arrow[rr, "f"] \arrow[rd, two heads] &                          & E_M \arrow[rr] &  & \coker(f) \arrow[rr] &  & 0 \\
                             &  &                    &  &                                         & \img(f) \arrow[ru, hook] &                &  &                      &  &
                \end{tikzcd}
            \end{equation}
            into two short ones and taking invariants-coinvariants (cf. \eqref{eq:invariants_coinvariants}) on each of them yields new exact sequences
            \[
                \cdots \to \gcoinv{\ker(f)}{n} \to \gcoinv{M}{n} \to \gcoinv{\img(f)}{n} \to 0
            \]
            and
            \[
                \cdots \to \ginv{\coker(f)}{n} \to \gcoinv{\img(f)}{n} \to \gcoinv{(E_M)}{n} \to \gcoinv{\coker(f)}{n} \to 0
            \]
            for all $n \in \NN$. Since $\ker(f)$ and $\coker(f)$ are finite, so are their invariants and coinvariants, which in particular have $\ZZ_p$-rank equal to 0. It therefore follows from the last two sequences that $\rank_{\ZZ_p} \gcoinv{M}{n} = \rank_{\ZZ_p} \gcoinv{(E_M)}{n}$. The right-hand side is easily determined, since taking $\Gamma^{p^n}$-coinvariants amounts to forming the quotient by the submodule generated by $w_n = (T + 1)^{p^n} - 1$, and hence
            \[
                (E_M)_{\Gamma^{p^n}} = \bigoplus_{i = 1}^s \fzmod{p^{m_i}}[1, T, \ldots, T^{p^n - 1}] \oplus \bigoplus_{j = 1}^t \faktor{\Lambda(\Gamma)}{\ideal{F_j^{l_j}, w_n}}.
            \]
            In writing the first sum (the $\mu$-part), we are tacitly using \cite{nsw} corollary 5.3.3. All terms of that sum are finite and therefore do not contribute to the $\ZZ_p$-rank. For those in the second sum (the $\lambda$-part), we resort to the fact that, given any two non-zero elements $x, y \in \Lambda(\Gamma)$, the quotients $\Lambda(\Gamma)/\ideal{x, y}$ and $\Lambda(\Gamma)/\ideal{\gcd(x, y)}$ are pseudo-isomorphic (recall here that $\Lambda(\Gamma)$ is a unique factorisation domain) and in particular have the same $\ZZ_p$-rank.

            The polynomial $w_n$ decomposes into irreducibles as $w_n = \xi_n \cdot \ldots \cdot \xi_0$, with $\xi_i \neq \xi_{i'}$ whenever $i \neq i'$. The $F_j$ are irreducible by definition.  Furthermore, by the Weierstrass preparation theorem \cite{nsw} theorem 5.3.4, two Weierstrass polynomials which differ by a unit in $\Lambda(\Gamma)$ must in fact coincide. It follows that $\gcd(F_j^{l_j}, w_n)$ is $\xi_i$ if $F_j = \xi_i$ for some $0 \leq i \leq n$, and 1 otherwise. Therefore,
            \[
                \rank_{\ZZ_p} \faktor{\Lambda(\Gamma)}{\ideal{F_j^{l_j}, w_n}} =
                \begin{cases}
                    0, & F_j \neq \xi_i \ \et{for all} i \\
                    0, & F_j = \xi_i \et{and} n < i \\
                    \deg(F_j), & F_j = \xi_i \et{and} n \geq i. \\
                \end{cases}
            \]
            After fixing $F_j$, this rank is either identically 0 (first case) or independent of $n$ for $n \geq i$ (second and third cases). This shows i).

            Let us now prove part ii). The canonical surjection $\gcoinv{M}{n} \sa \gcoinv{M}{m}$ ($m < n$) induces an epimorphism
            \[
               q \colon E \otimes_{\ZZ_p} \gcoinv{M}{n} \sa E \otimes_{\ZZ_p} \gcoinv{M}{m}
            \]
            of $E[G/\Gp{n}]$-modules, where the codomain is regarded as such via the projection
            \[
               \varepsilon \colon E[G/\Gp{n}] \sa E[G/\Gp{m}].
            \]
            But $q$ is in fact an isomorphism (even after tensoring with $\QQ_p$ only) as both modules of coinvariants have the same $\ZZ_p$-rank. Since $\chi$ does not factor through $G/\Gp{m}$ by hypothesis, $\varepsilon(e(\chi)) = 0$ by \eqref{eq:pci_projection} (together with the argument in the last paragraph of that section) and hence
            \[
               e(\chi) E \otimes_{\ZZ_p} \gcoinv{M}{n} \iisoo \varepsilon(e(\chi)) E \otimes_{\ZZ_p} \gcoinv{M}{m} = 0.
            \]

            As for the invariants, we have
            \[
                \rank_{\ZZ_p} \ginv{M}{n} = \rank_{\ZZ_p} \gcoinv{M}{n} = \rank_{\ZZ_p} \gcoinv{M}{m} = \rank_{\ZZ_p} \ginv{M}{m}
            \]
            by the assumptions in ii) and the same argument as at the beginning of part i). Therefore, the canonical injection $\ginv{M}{m} \ia \ginv{M}{n}$ induces an embedding
            \[
               q \colon E \otimes_{\ZZ_p} \ginv{M}{m} \ia E \otimes_{\ZZ_p} \ginv{M}{n}
            \]
            of $E[G/\Gp{n}]$-modules, which turns out to be an isomorphism because of the above equality of $\ZZ_p$-ranks. This again implies
            \[
                0 = \varepsilon(e(\chi)) E \otimes_{\ZZ_p} \ginv{M}{m} \iso e(\chi) E \otimes_{\ZZ_p} \ginv{M}{n}.
            \]
        \end{proof}

        \begin{rem}
        \phantomsection
            \begin{enumerate}[i)]
                \item{
                    For a finitely generated $\Lambda(\Gamma)$-module, not only does the condition that it is torsion imply that its modules of $\Gp{n}$-coinvariants and $\Gp{n}$-invariants have bounded $\ZZ_p$-rank as $n \to \infty$ (as stated in part i) of the lemma), but the two are in fact equivalent. The converse can easily be proved by contradiction using the structure theorem for Iwasawa modules.
                }
                \item{
                    The proof shows that the $n_0$ in part i) is in fact minimal with property \eqref{eq:rank_stabilisation}. This invariant is the $d(M)$ in \cite{nsw} definition 5.3.12 in disguise, except for our convention that $n_0 = 0$ (as opposed to -1) when the support of $M$ contains no $\ideal{\xi_i}$.
                }
            \end{enumerate}
        \end{rem}

        The previous lemma will be useful when studying the vanishing of the kernel and cokernel of the finite-level maps $\varphi_n^\alpha$, which we shall construct by essentially taking coinvariants of the integral trivialisation ${t_\iota^\alpha = \alpha\varphi\pi}$ defined in \eqref{eq:integral_trivialisation}. Our aim is to compare these morphisms to the Dirichlet regulator map $\RR \otimes \units{\cO_{L_n, S}} \to \RR \otimes \cX_{L_n, S}^\ZZ$ \eqref{eq:dirichlet_regulator_map_real}, so only the last two arrows $\alpha\varphi \colon \cX_{S} \to E_{S, T}$ play a role. The difference between the coinvariants of $\cX_S$ and its finite-level counterpart $\cX_{L_n, S}$ (defined after \eqref{eq:y_finite_level}) is given by the following result:

        \begin{prop}
        \label{prop:isomoprhisms_y_x_modules}
            Setting \ref{sett:construction}. Let $S' \supseteq S_\infty$ be a finite set of places of $K$ containing all archimedean places and at least one non-archimedean place, and denote by $S_f' \neq \varnothing$ the set of non-archimedean places in $S'$. Then, for all $n \in \NN$, there exists a canonical isomorphism of $\Lambda(\cG_n)$-modules
            \[
                \gcoinv{(\cY_{S'})}{n} \iisoo \cY_{L_n, S'}
            \]
            and a canonical epimorphism of $\Lambda(\cG_n)$-modules
            \[
                \gcoinv{(\cX_{S'})}{n} \sa \cX_{L_n, S'}
            \]
            with finite kernel of order
            \[
                \min \set{[\Gp{n} : \Gp{n} \cap \cG_v] : v \in S_f'}.
            \]

            In particular, the above epimorphism is an isomorphism if and only if $\Gp{n} \subseteq \cG_v$ for some $v \in S_f'$, or in other words, if and only if some $w_n \in S_f'(L_n)$ is non-split in $L_\infty/L_n$.
        \end{prop}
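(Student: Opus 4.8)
The plan is to derive both assertions by taking $\Gp{n}$-coinvariants of the defining short exact sequence $0 \to \cX_{S'} \to \cY_{S'} \to \ZZ_p \to 0$ of $\Lambda(\cG)$-modules and comparing the outcome with the finite-level sequence $0 \to \cX_{L_n, S'} \to \cY_{L_n, S'} \xrightarrow{\aug} \ZZ_p \to 0$. First I would dispose of the $\cY$-part: applying Lemma \ref{lem:properties_of_induction} i) with $U = \Gp{n}$ to each summand — or, directly, using transitivity of the completed tensor product to write $(\Ind_{\cG_v}^\cG \ZZ_p)_{\Gp{n}} = \Lambda(\cG_n) \ctp_{\Lambda(\cG_v)} \ZZ_p$, together with $(\ZZ_p)_{\cG_v \cap \Gp{n}} = \ZZ_p$ (trivial action) and the fact that the image of $\cG_v$ in $\cG_n$ is $(\cG_n)_v$ — gives $(\Ind_{\cG_v}^\cG \ZZ_p)_{\Gp{n}} \iso \Ind_{(\cG_n)_v}^{\cG_n} \ZZ_p$. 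Summing over $v \in S'$ and invoking \eqref{eq:y_finite_level} yields the asserted canonical $\Lambda(\cG_n)$-isomorphism $(\cY_{S'})_{\Gp{n}} \iso \cY_{L_n, S'}$, and by naturality of the augmentation it is compatible with the projections onto $\ZZ_p = (\ZZ_p)_{\Gp{n}}$.

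Next I would treat the $\cX$-part via the homology long exact sequence of $0 \to \cX_{S'} \to \cY_{S'} \to \ZZ_p \to 0$ relative to $\Gp{n}$:
\[
    H_1(\Gp{n}, \cY_{S'}) \xrightarrow{f} H_1(\Gp{n}, \ZZ_p) \xrightarrow{\delta} (\cX_{S'})_{\Gp{n}} \to (\cY_{S'})_{\Gp{n}} \to (\ZZ_p)_{\Gp{n}} \to 0.
\]
Feeding in the isomorphism from the previous step identifies $\cX_{L_n, S'} = \ker(\cY_{L_n, S'} \to \ZZ_p)$ with the image of $(\cX_{S'})_{\Gp{n}} \to (\cY_{S'})_{\Gp{n}}$, hence with $(\cX_{S'})_{\Gp{n}} / \img(\delta)$. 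So there is a canonical epimorphism $(\cX_{S'})_{\Gp{n}} \sa \cX_{L_n, S'}$ (induced by $\cX_{S'} \hookrightarrow \cY_{S'}$) whose kernel is $\img(\delta) \iso \coker(f)$, and the whole statement reduces to computing $\coker(f)$ and showing it has the claimed order.

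For that I would use that $\Gp{n} \iso \ZZ_p$ admits the length-one projective resolution $0 \to \Lambda(\Gp{n}) \xrightarrow{\gamma^{p^n} - 1} \Lambda(\Gp{n}) \to \ZZ_p \to 0$, so $H_1(\Gp{n}, -) = (-)^{\Gp{n}}$ and $H_1(\Gp{n}, \ZZ_p) \iso \ZZ_p$. For archimedean $v$ the module $\Ind_{\cG_v}^\cG \ZZ_p$ is a direct summand of $\Lambda(\cG)$ (trivially when $\cG_v = 1$, and via the idempotent $(1 + \tau_{v(L_\infty)})/2$ when $p \neq 2$ and $\abs{\cG_v} = 2$), hence free over the subalgebra $\Lambda(\Gp{n})$ because $\Gp{n}$ is open in $\cG$; so it contributes $0$ to $H_1$. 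For non-archimedean $v$, $\cG_v$ is open and $\Ind_{\cG_v}^\cG \ZZ_p \iso \ZZ_p[\cG/\cG_v]$ with the permutation action on the finite set $\cG/\cG_v$; since $\Gp{n}$ is central, each of its orbits on $\cG/\cG_v$ has length $c_v := [\Gp{n} : \Gp{n} \cap \cG_v]$, whence $\ZZ_p[\cG/\cG_v] \iso \bigoplus_{\text{orbits}} \ZZ_p[\Gp{n}/(\Gp{n} \cap \cG_v)]$ as $\Lambda(\Gp{n})$-modules, on each summand of which the augmentation $\cY_{S'} \to \ZZ_p$ restricts to the augmentation. Writing $\Lambda(\Gp{n}) = \ZZ_p[[S]]$ with $S = \gamma^{p^n} - 1$, so that $\ZZ_p[\Gp{n}/(\Gp{n} \cap \cG_v)] = \Lambda(\Gp{n})/(w)$ with $w = (1 + S)^{c_v} - 1$, a direct computation with the above resolution shows that the map $H_1(\Gp{n}, \Lambda(\Gp{n})/(w)) \to H_1(\Gp{n}, \ZZ_p)$ induced by the augmentation becomes, after identifying both groups with $\ZZ_p$, multiplication by $u(0) = c_v$, where $u = w/S$. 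Consequently $\img(f) = \sum_{v \in S_f'} c_v \ZZ_p = (\min_{v \in S_f'} c_v)\ZZ_p$ (the gcd of a family of powers of $p$ being the smallest of them), so $\coker(f)$ has order $\min_{v \in S_f'} [\Gp{n} : \Gp{n} \cap \cG_v]$, as required.

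Finally, the epimorphism is an isomorphism exactly when this order equals $1$, i.e. when $\Gp{n} \cap \cG_v = \Gp{n}$, that is $\Gp{n} \subseteq \cG_v$, for some $v \in S_f'$; and since $\cG_v \cap \Gp{n}$ is the decomposition group of $v(L_n)$ in $L_\infty/L_n$, this is precisely the statement that some $w_n \in S_f'(L_n)$ is non-split in $L_\infty/L_n$. The main obstacle I anticipate is the computation of the connecting map $f$: one must keep track of the identifications carefully enough to recover the exact order $\min_v [\Gp{n} : \Gp{n} \cap \cG_v]$ — in particular that the $v$-summand contributes multiplication by $[\Gp{n} : \Gp{n} \cap \cG_v]$ up to a $p$-adic unit (which is all that is needed), rather than by $1$.
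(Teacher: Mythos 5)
Your proposal is correct and follows essentially the same route as the paper: the six-term homology sequence you invoke is exactly the invariants--coinvariants sequence \eqref{eq:invariants_coinvariants} used there, the kernel of the epimorphism is identified in both cases with the cokernel of the map $\ginv{\cY_{S'}}{n} = H_1(\Gp{n}, \cY_{S'}) \to \ZZ_p$, and the archimedean summands are discarded for the same reason (projectivity, hence freeness, over the local ring $\Lambda(\Gp{n})$). The only cosmetic difference is in computing the non-archimedean contribution: you resolve $\ZZ_p$ by $\gamma^{p^n}-1$ and evaluate $w/S$ at $S=0$, whereas the paper writes down the $\Gp{n}$-invariant elements of $\Ind_{\cG_v}^\cG \ZZ_p$ explicitly as orbit sums and augments them — both yield the image $[\Gp{n} : \Gp{n}\cap\cG_v]\ZZ_p$ per place.
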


        \begin{proof}
            As usual, we set $\cG_{n, v} = (\cG_n)_{v(L_n)}$. The isomorphism between the $\cY$-modules is
            \begin{align*}
                \gcoinv{(\cY_{S'})}{n} & \iso \bigoplus_{v \in S'} \gcoinv{(\Ind_{\cG_v}^\cG \ZZ_p)}{n} \\
                                      & \iso \bigoplus_{v \in S'} \Lambda(\cG_n) \otimes_{\Lambda(\cG)} \big(\Lambda(\cG) \otimes_{\Lambda(\cG_v)} \ZZ_p\big) \\
                                      & \iso \bigoplus_{v \in S'} \Lambda(\cG_n) \otimes_{\Lambda(\cG_v)} \ZZ_p \\
                                      & = \bigoplus_{v \in S'} \Lambda(\cG_n) \otimes_{\Lambda(\cG_{n, v})} \ZZ_p \\
                                      & = \bigoplus_{v \in S'} \Ind_{\cG_{n, v}}^\cG \ZZ_p \\
                                      & = \cY_{L_n, S'}.
            \end{align*}

            In the second isomorphism, we have used \eqref{eq:completed_tensor_product_equal} - the conditions to apply it are met as explained several times in the previous chapter (cf. for instance section \ref{sec:local-to-global_maps}).

            Consider now the $\Gp{n}$-invariants-coinvariants exact sequence \eqref{eq:invariants_coinvariants} induced by $\cX_{S'} \ia \cY_{S'} \sa \ZZ_p$:
            \[
                0 \to \ginv{\cX_{S'}}{n} \to \ginv{\cY_{S'}}{n} \xrightarrow{\varepsilon} \ZZ_p \to \gcoinv{(\cX_{S'})}{n} \to \gcoinv{(\cY_{S'})}{n} \to \ZZ_p \to 0.
            \]
            Since $\Gamma$ is central in $\cG$, all arrows are $\Lambda(\cG_n)$-homomorphisms. By exactness, there exists a canonical surjection
            \[
                \gcoinv{(\cX_{S'})}{n} \sa \ker(\gcoinv{(\cY_{S'})}{n} \sa \ZZ_p) \iso \cX_{L_n, S'},
            \]
            where the isomorphism follows from $\gcoinv{(\cY_{S'})}{n} \iso \cY_{L_n, S'}$. In order to determine its kernel, which is isomorphic to $\coker(\ginv{\cY_{S'}}{n} \xrightarrow{\varepsilon} \ZZ_p)$, we study the action of $\Gp{n}$ on $\cY_{S'}$ more closely. This module decomposes as $\cY_{S'} = \cY_{S_\infty'} \oplus \cY_{S_f'}$. We have already seen that $\cY_{S_\infty'}$ is free as a $\Lambda(\Gamma)$-module (cf. remark \ref{rem:existence_alpha} i)), so $\ginv{\cY_{S'}}{n}$ is trivial. For each non-archimedean $v \in S_f'$, we treat the direct summand $\Ind_{\cG_v}^\cG \ZZ_p$ of $\cY_{S_f'}$ separately.

            As explained in \eqref{eq:y_finite_level}, there is an isomorphism
            \[
                \Ind_{\cG_v}^\cG \ZZ_p \iso \varprojlim_m \Lambda(\cG_m) \otimes_{\Lambda(\cG_{m, v})} \ZZ_p \iso \varprojlim_m \bigoplus_{w_m \in {v}(L_m)} \ZZ_p \cdot w_m.
            \]
            This map is $\cG$-equivariant, with $\cG$ acting on the right-hand side by permuting the places $w_m$. Consider an arbitrary element $x = \varprojlim_m x_m \in \Ind_{\cG_v}^\cG \ZZ_p$, where
            \[
                x_m = \sum_{w_m} z_{w_m} w_m \in \bigoplus_{w_m} \ZZ_p \cdot w_m
            \]
            for all $m \in \NN$. Suppose $x$ is invariant under $\Gp{n}$, and therefore so is $x_m$ for all $m$. Let $v_n$ be any prolongation of $v$ to $L_n$. The group $\Gp{n}$ acts transitively on the set of places of $L_\infty$ above $v_n$ and thus, for any fixed $m \geq n$, the  coefficient $z_{w_m}$ must coincide for all $w_m \mid v_n$. Since $v_n$ splits into $p^{m - n}$ places in $L_m$ (because it is archimedean), the $p$-adic integer $z_{v_n}$ is infinitely divisible by $p$ and hence $0$. This holds for any $v_n \mid v$, so $x$ must in fact be trivial itself. This shows $\ginv{(\Ind_{\cG_v}^\cG \ZZ_p)}{n} = 0$.

            Let now $v \in S_f'$. Then $\cG_v$ is open in $\cG$ and
            \[
                \Ind_{\cG_v}^\cG \ZZ_p  \iisoo \bigoplus_{w_\infty \in v(L_\infty)} \ZZ_p \cdot w_\infty
            \]
            has finitely many summands. Suppose $x = \sum_{w_\infty} z_{w_\infty} w_\infty \in \Ind_{\cG_v}^\cG \ZZ_p$ is $\Gp{n}$-invariant. By the same transitivity argument as in the archimedean case, $z_{w_\infty}$ coincides with $z_{w_\infty'}$ whenever $w_\infty$ and $w_\infty'$ lie above the same place $v_n$ of $L_n$. That is, $x$ is of the form
            \[
                x = \sum_{v_n \mid v} z_{v_n}  \sum_{w_\infty \mid v_n} w_\infty.
            \]
            Conversely, any element of that form is $\Gp{n}$-invariant. Such an element has augmentation (i.e. sum of coefficients) given by
            \[
                \varepsilon(x) = \sum_{v_n \mid v} z_{v_n} [\Gp{n} : \Gp{n} \cap \cG_v] \in \ZZ_p.
            \]
            Here $[\Gp{n} : \Gp{n} \cap \cG_v]$ is the number of places of $L_\infty$ above $v(L_n)$, and therefore above $v_n$ (because $L_\infty/K$ is Galois). It is necessarily a power of $p$.

            All in all, the image of
            \[
                \ginv{\cY_{S'}}{n} = \ginv{\cY_{S_f'}}{n} \xrightarrow{\varepsilon} \ZZ_p
            \]
            is generated (as an ideal in $\ZZ_p$) by $\set{[\Gp{n} : \Gp{n} \cap \cG_v] : v \in S_f'}$. Since these are all $p$-powers, the same ideal is generated by their minimum, and it has index equal to that minimum. The result follows.
        \end{proof}

        In our case of interest, $S'$ will be the set $S$ from setting \ref{sett:construction}. The proposition shows that a sufficient condition for $\gcoinv{(\cX_{S})}{n} \isoa \cX_{L_n, S}$ is $n \geq n(S)$.

        \begin{ex}
            It can indeed happen that the map $\gcoinv{(\cX_S)}{n} \sa \cX_{L_n, S}$ fails to be injective. Let
            \[
                f_0(x) = x^3 - 30x - 1
            \]
            and
            \[
                f_1(x) = x^{9} - 3 x^{8} - 27 x^{7} + 90 x^{6} + 87 x^{5} - 336 x^{4} - 73 x^{3} + 288 x^{2} - 27 x - 27.
            \]
            For $i \in \set{0, 1}$, let $L_i$ be the number field formed by adjoining a root of $f_i$ to $\QQ$. These are  \cite[\href{https://www.lmfdb.org/NumberField/3.3.107973.1}{Number field 3.3.107973.1}]{lmfdb} and  \cite[\href{https://www.lmfdb.org/NumberField/9.9.1258767452176317.1}{Number field 9.9.1258767452176317.1}]{lmfdb}, respectively. As can be seen in the LMFDB, $L_1$ contains $L_0$ (for a suitable choice of roots) as well as ${\QQ(\rou{9})^+ = \QQ(\rou{9} + \rou{9}^{-1})}$. The field $\QQ(\rou{9})^+$ is the only subfield of $\QQ(\rou{9})$ of relative degree 2, and therefore necessarily the first layer of the cyclotomic $\ZZ_3$-extension of $\QQ$. Since $L_1$ is an extension of $L_0$ of degree 3 containing $\QQ(\rou{9})^+$ (and $L_0 \neq \QQ(\rou{9})^+$), it is precisely the first layer of the cyclotomic $\ZZ_3$-extension of $L_0$.

            Set $L = K = L_0$ and let $L_\infty$ be the cyclotomic $\ZZ_3$-extension of $L$. Let $\cG = \Gamma = \Gal(L_\infty/L) \iso \ZZ_3$. Consider the finite set of places $S = S_\infty \cup S_3$ of $K$ consisting of the archimedean places and the $3$-adic ones, and set $T = \varnothing$. This is an example of setting \ref{sett:construction} for $p = 3$.

            The referenced LMFDB pages show that the prime $3$ is totally ramified in $L$ but splits into three primes in $L_1$. Therefore, the only prime $v$ in $S_3(L)$ splits in in $L_1/L$, which implies
            \[
                \Abs{\ker((\cX_{S})_\Gamma \sa \cX_{L, S})} = [\Gamma : \Gamma_{w_\infty}] \geq 3
            \]
            by the previous proposition (with $w_\infty$ denoting any prolongation of $v$ to $L_\infty$). The next layer $L_2$ can be verified to contain exactly three places above 3, which means all $3$-adic places of $L_1$ are non-split in $L_2/L_1$ (and hence in $L_\infty/L_1$ by the structure of $\Gamma$). It follows that $\Gamma_{w_\infty} = \Gamma^3$ and the above inequality is in fact an equality.

            As an informal remark, random sampling using \textit{SageMath} seems to hint at this situation being rather infrequent. Specifically, the prime $p \in \set{3,  5}$ was already non-split in the first layer of the cyclotomic $\ZZ_p$-extension of most tested number fields (all of low degree). This would imply that $\gcoinv{(\cX_{S})}{n} \sa \cX_{L_n, S}$ is \textit{often} an isomorphism for all $n$. However, the testing was not systematic enough to make anything other than a passing comment out of it. \qedef
        \end{ex}

        Now that a relation between $\gcoinv{(\cX_S)}{n}$ and $\cX_{L_n, S}$ has been established, we address the same descent question for $E_{S, T}$. The key concept now is that of universal norms:
        \begin{defn}
            Let $F$ be an algebraic extension of a number field $E$. Consider two disjoint finite sets of places $S' \supseteq S_\infty$ and $T'$ of $E$. The group of $p$-adic \textbf{universal norms}\index{universal norm} of $(S', T')$-units in $E$ is defined as
            \[
                \cE_{E, S', T'}^F = \bigcap_{E'} N_{E'/E}(\ZZ_p \otimes \units{\cO_{E', S', T'}}) \subseteq \ZZ_p \otimes \units{\cO_{E, S', T'}},
            \]
            where $E'$ runs over the finite extensions of $E$ contained in $F$. Given a finite extension $E'$ of $E$ contained in $F$, $\cE_{E', S', T'}^F$ denotes $\cE_{E', S'(E'), T'(E')}^F$. \qedef
        \end{defn}

        If $E \subseteq F_0 \subseteq F_1 \subseteq \cdots$ is a family of finite extensions of $E$ such that $F = \bigcup_n F_n$, then one has
        \[
            \cE_{E, S', T'}^F = \bigcap_{n \in \NN} N_{F_n/E}(\ZZ_p \otimes \units{\cO_{F_n, S', T'}})
        \]
        by the transitivity of the norm maps. More generally, this works for any cofinal family in the set of all finite extensions of $E$ inside $F$. Also relevant to our purposes is the fact that $\cE_{E, S', T'}^F$ coincides with the image of the canonical projection
        \[
            \big(\varprojlim_{E'} \ZZ_p \otimes \units{\cO_{E', S', T'}}\big) \to \ZZ_p \otimes \units{\cO_{E, S', T'}}
        \]
        where $E'$ is as before and the inverse limit is taken with respect to the norm maps. This is not difficult to show using the classical result that the inverse limit of non-empty compact Hausdorff spaces is non-empty.

        Let us now specialise to our Iwasawa-theoretic setting \ref{sett:construction}. We omit the superscript $F$ from the notation of the universal norms whenever it is $L_\infty$. Since taking $\Gp{n}$-coinvariants yields the maximal quotient with trivial $\Gp{n}$-action, there is a canonical epimorphism of $\Lambda(\cG_n)$-modules
        \[
            \gcoinv{(E_{S, T})}{n} \sa \cE_{L_n, S, T}.
        \]
        by the previous displayed equation. This map can be shown to be injective, and therefore an isomorphism, using work of Fukaya and Kato. Namely, the short exact sequence ${\Lambda(\cG) \xhookrightarrow{1 - \gp{n}} \Lambda(\cG) \sa \Lambda(\cG_n)}$ induces an exact triangle
        \[
            \cC_{S, T}\q \to \cC_{S, T}\q \to \Lambda(\cG_n) \otimes_{\Lambda(\cG)}^\LL \cC_{S, T}\q \to
        \]
        in the derived category $\cD(\Lambda(\cG))$, and therefore a long exact sequence
        \[
            \cdots \to H^{-1}(\Lambda(\cG_n) \otimes_{\Lambda(\cG)}^\LL \cC_{S, T}\q) \to H^0(\cC_{S, T}\q) \xrightarrow{1 - \gp{n}} H^0(\cC_{S, T}\q) \to H^0(\Lambda(\cG_n) \otimes_{\Lambda(\cG)}^\LL \cC_{S, T}\q) \to \cdots.
        \]
        In order to determine the last term, we resort to a result whose proof we defer to chapter \ref{chap:properties_of_the_main_conjecture} for the sake of exposition: by lemma \ref{lem:complexes_coinvariants_finite_level}, there exist $\Lambda(\cG_n)$-isomorphisms
        \[
            H^i(\Lambda(\cG_n) \otimes_{\Lambda(\cG)}^\LL \cC_{S, T}\q) \iisoo H^i(\cB_{L_n, S, T}\q)
        \]
        for all $i$, where  $\cB_{L_n, S, T}\q$ is the complex of Burns, Kurihara and Sano on finite level (see definition \ref{defn:complexes_bks}). Its cohomology is trivial outside degrees 0 and 1, and $\ZZ_p \otimes \units{\cO_{L_n, S, T}}$ in degree 0 by \cite{bks} p. 1535. The exactness of the above sequence in cohomology yields an embedding ${\gcoinv{(E_{S, T})}{n} \iso \coker(H^0(\cC_{S, T}\q) \xrightarrow{1 - \gp{n}} H^0(\cC_{S, T}\q)) \ia \ZZ_p  \otimes \units{\cO_{L_n, S, T}}}$, which proves our injectivity claim. We point out that essentially the same question is treated using more direct methods in existing literature (see for instance theorem 7.3 from \cite{kuzmin} or section 1 of \cite{nguyen}), from which an alternative proof is likely to follow.

        We denote by $\iota_n$ the composition
        \[
            \iota_n \colon \gcoinv{(E_{S, T})}{n} \xrightarrow{\sim} \cE_{L_n, S, T} \ia \ZZ_p \otimes \units{\cO_{L_n, S, T}}.
        \]
        This concludes the preparations for the following definition:

        \begin{defn}
        \label{defn:finite_level_map}
            Setting \ref{sett:construction}, $\alpha$ as in setting \ref{sett:formulation}. For $n \geq n(S)$, the \textbf{finite-level map}\index{finite-level map} $\varphi_n^\alpha$ is the $\Lambda(\cG_n)$-homomorphism given by the composition
            \begin{center}
                \begin{tikzcd}
                    {\gcoinv{(\cX_S)}{n}} \arrow[r, "\gcoinv{(\alpha \varphi)}{n}"] & {\gcoinv{(E_{S, T})}{n}} \arrow[d, "\iota_n", hook] \\
                    {\cX_{L_n, S}} \arrow[u, "\rsim"] \arrow[r, "\varphi_n^\alpha", dashed]                   & {\ZZ_p \otimes \units{\cO_{L_n, S, T}}}
                \end{tikzcd}
            \end{center}
            where the left vertical arrow is the inverse of the canonical isomorphism from proposition \ref{prop:isomoprhisms_y_x_modules}, $\gcoinv{(\alpha \varphi)}{n}$ is the map induced by $\alpha \varphi \colon \cX_S  \to E_{S, T}$ (cf. \eqref{eq:integral_trivialisation}) on coinvariants and $\iota_n$ is as above. \qedef
        \end{defn}

        Note that $\varphi_n^\alpha$ can be defined for lower $n$ than $n(S)$: it is enough that \textit{one} (as opposed to \textit{each}) place above $S_f$ is non-split in $L_\infty/L_n$ by proposition \ref{prop:isomoprhisms_y_x_modules}. However, restricting to $n \geq n(S)$ will be necessary soon (cf. \ref{manualcond:kc} and proposition \ref{prop:isomorphism_on_chi_parts}) and causes no harm here. A feature of this critical layer $n(S)$ is that, for all $n \geq n(S)$, one has a natural bijection between $S_f(L_n)$ and $S_f(L_{n(S)})$, and hence canonical $\Lambda(\cG_n)$-(and even $\Lambda(\cG_{n(S)})$-)isomorphisms
        \begin{equation}
        \label{eq:xy_modules_coincide_above_ns}
            \cY_{L_n, S_f} \iso \cY_{L_{n(S)}, S_f} \quad \text{and} \quad \cX_{L_n, S_f} \iso \cX_{L_{n(S)}, S_f},
        \end{equation}
        which regard as identifications.

        The property of the finite-level maps which will ensure the regulators defined in the next section are non-zero is that they are isomorphisms on $\chi$-parts for almost all Artin characters $\chi$. The proof hinges on the following lemma, which allows us to control their kernels and cokernels:
        \begin{lem}
        \label{lem:zp_rank_bounds_finite level}
            Setting \ref{sett:construction}, $\alpha$ as in setting \ref{sett:formulation}. For $n \geq n(S)$, the kernel and cokernel of $\varphi_n^\alpha$ fit into the short exact sequences of $\Lambda(\cG_n)$-modules
            \begin{equation}
            \label{eq:ses_kernel_varphinalpha}
                0 \to \cX_{L_{n(S)}, S_f} \to \ker(\varphi_n^\alpha) \to \ginv{\coker(\alpha)}{n} \to 0
            \end{equation}
            and
            \begin{equation}
            \label{eq:ses_cokernel_varphinalpha}
                0 \to \gcoinv{\coker(\alpha)}{n} \to \coker(\varphi_n^\alpha) \to \coker(\iota_n) \to 0
            \end{equation}
            respectively. In particular,
            \[
                \rank_{\ZZ_p} \ker(\varphi_n^\alpha) = \rank_{\ZZ_p} \coker(\varphi_n^\alpha) = \rank_{\ZZ_p} \ginv{\coker(\alpha)}{n} + \abs{S_f(L_{n(S)})} - 1.
            \]
        \end{lem}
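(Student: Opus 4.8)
The plan is to break $\varphi_n^\alpha$ into its three constituents and analyse each of them with the invariants–coinvariants exact sequence \eqref{eq:invariants_coinvariants}. Recall from \eqref{eq:integral_trivialisation} that $\alpha\varphi$ factors as $\cX_S \xrightarrow{\varphi} \cY_{S_\infty} \xrightarrow{\alpha} E_{S,T}$, with $\varphi$ surjective and $\ker(\varphi) = \cX_{S_f}$, and $\alpha$ injective with $\Lambda(\Gamma)$-torsion cokernel; and that, after the identification $\cX_{L_n, S} \iso \gcoinv{(\cX_S)}{n}$ of proposition \ref{prop:isomoprhisms_y_x_modules} (legitimate since $n \geq n(S)$), the map $\varphi_n^\alpha$ equals $\iota_n \circ \gcoinv{\alpha}{n} \circ \gcoinv{\varphi}{n}$, where $\iota_n \colon \gcoinv{(E_{S,T})}{n} \ia \ZZ_p \otimes \units{\cO_{L_n, S, T}}$ is injective.

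First I would treat $\gcoinv{\varphi}{n}$. Applying \eqref{eq:invariants_coinvariants} to $0 \to \cX_{S_f} \to \cX_S \to \cY_{S_\infty} \to 0$ and using that $\cY_{S_\infty}$ is $\Lambda(\Gamma)$-free (remark \ref{rem:existence_alpha} i)), so that $\ginv{(\cY_{S_\infty})}{n} = 0$, the connecting map vanishes and $\gcoinv{\varphi}{n}$ is surjective with kernel $\gcoinv{(\cX_{S_f})}{n}$. For $n \geq n(S)$ every place of $S_f(L_\infty)$ has decomposition group in $\cG$ containing $\Gp{n}$, so $\Gp{n}$ fixes each element of the natural $\ZZ_p$-basis of $\cY_{S_f} \supseteq \cX_{S_f}$; hence $\Gp{n}$ acts trivially on $\cX_{S_f}$ and $\gcoinv{(\cX_{S_f})}{n} = \cX_{S_f} \iso \cX_{L_{n(S)}, S_f}$ via the stabilisation \eqref{eq:xy_modules_coincide_above_ns}.

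Next I would treat $\gcoinv{\alpha}{n}$, applying \eqref{eq:invariants_coinvariants} to $0 \to \cY_{S_\infty} \xrightarrow{\alpha} E_{S,T} \to \coker(\alpha) \to 0$. Right-exactness of coinvariants already gives $\coker(\gcoinv{\alpha}{n}) = \gcoinv{\coker(\alpha)}{n}$, together with a short exact sequence $0 \to \img(\gcoinv{\alpha}{n}) \to \gcoinv{(E_{S,T})}{n} \to \gcoinv{\coker(\alpha)}{n} \to 0$. For the kernel, $\ginv{(\cY_{S_\infty})}{n} = 0$ as before and, crucially, $\ginv{(E_{S,T})}{n} = 0$: this submodule is annihilated by $\gp{n} - 1$, hence is $\Lambda(\Gamma)$-torsion, hence is contained in the $\Lambda(\Gamma)$-torsion submodule of $E_S$, which by \cite{nsw} theorem 11.3.11 is either trivial or isomorphic to $\ZZ_p(1)$ — and $\gp{n} - 1$ acts invertibly on $\ZZ_p(1)$, since it acts as $u^{p^n} - 1$ for a non-trivial $u \in \units{\ZZ_p}$ in the image of the cyclotomic character (if $p = 2$ then $K$ is totally imaginary by setting \ref{sett:construction} and $u \in 1 + 4\ZZ_2 \setminus \set{1}$). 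Thus the connecting map embeds $\ginv{\coker(\alpha)}{n}$ into $\gcoinv{(\cY_{S_\infty})}{n}$ with image $\ker(\gcoinv{\alpha}{n})$, so $\ker(\gcoinv{\alpha}{n}) \iso \ginv{\coker(\alpha)}{n}$.

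Assembling the pieces: since $\iota_n$ is injective and $\gcoinv{\varphi}{n}$ surjective, $\ker(\varphi_n^\alpha)$ is the preimage of $\ker(\gcoinv{\alpha}{n})$ under $\gcoinv{\varphi}{n}$, so it sits in $0 \to \gcoinv{(\cX_{S_f})}{n} \to \ker(\varphi_n^\alpha) \to \ker(\gcoinv{\alpha}{n}) \to 0$, which is \eqref{eq:ses_kernel_varphinalpha} after the identifications above; and $\img(\varphi_n^\alpha) = \iota_n(\img(\gcoinv{\alpha}{n}))$, so filtering $\ZZ_p \otimes \units{\cO_{L_n,S,T}}$ by the submodules $\iota_n(\img(\gcoinv{\alpha}{n})) \subseteq \iota_n(\gcoinv{(E_{S,T})}{n})$ and using injectivity of $\iota_n$ yields \eqref{eq:ses_cokernel_varphinalpha}. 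For the ranks, the source $\cX_{L_n, S}$ and the target $\ZZ_p \otimes \units{\cO_{L_n,S,T}}$ of $\varphi_n^\alpha$ both have $\ZZ_p$-rank $\abs{S(L_n)} - 1$ by Dirichlet's unit theorem \eqref{eq:dirichlet_regulator_map_real} (the $(S,T)$-units forming a finite-index subgroup of the $S$-units), whence $\rank_{\ZZ_p}\ker(\varphi_n^\alpha) = \rank_{\ZZ_p}\coker(\varphi_n^\alpha)$; and \eqref{eq:ses_kernel_varphinalpha} together with $\rank_{\ZZ_p}\cX_{L_{n(S)},S_f} = \abs{S_f(L_{n(S)})} - 1$ (the kernel of the surjective augmentation $\cY_{L_{n(S)},S_f} \sa \ZZ_p$, which uses $S_f \neq \varnothing$) gives the stated formula. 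The one genuinely non-formal point is the vanishing $\ginv{(E_{S,T})}{n} = 0$ — everything else is a diagram chase — so I would make sure the appeal to the $\Lambda(\Gamma)$-module structure of $E_S$ is watertight in all cases, including $p = 2$.
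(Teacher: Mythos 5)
Your argument is correct and follows essentially the same strategy as the paper's proof: split the composition at $\cY_{S_\infty} \iso \img(\alpha)$, run the invariants--coinvariants sequence \eqref{eq:invariants_coinvariants} on the two resulting short exact sequences, and assemble the result via the kernel--cokernel sequence of the composition (the paper phrases this last step as a snake-lemma diagram). The one genuinely different ingredient is your proof that $\ginv{E_{S,T}}{n} = 0$: you deduce it from the structure of the $\Lambda(\Gamma)$-torsion submodule of $E_S$ via \cite{nsw} theorem 11.3.11, whereas the paper argues directly that the $L_n$-component of any $\Gp{n}$-invariant norm-compatible system of $(S,T)$-units is infinitely $p$-divisible, hence trivial. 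Both work; the paper's route avoids invoking the structure theorem and applies verbatim to $E_{S,T}$ without passing through $E_S$, while yours is shorter given that the paper quotes that structure elsewhere anyway. One small correction: $\gp{n} - 1$ does not act \emph{invertibly} on $\ZZ_p(1)$ --- it acts as multiplication by $u^{p^n} - 1 \in p\ZZ_p \setminus \set{0}$ --- but it does act injectively, which is all that is needed to kill the invariants. Your direct observation that $\Gp{n}$ acts trivially on $\cX_{S_f}$ for $n \geq n(S)$, giving $\gcoinv{(\cX_{S_f})}{n} = \cX_{S_f} \iso \cX_{L_{n(S)}, S_f}$, is a clean replacement for the paper's appeal to proposition \ref{prop:isomoprhisms_y_x_modules} at this point.
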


        \begin{proof}
            The homomorphism $\varphi_n^\alpha$ is given by a composition of three maps, the first of which is an isomorphism and therefore plays no role: $\ker(\varphi_n^\alpha) \iso \ker(\iota_n \gcoinv{(\alpha \varphi)}{n})$ and ${\coker(\varphi_n^\alpha) = \coker(\iota_n \gcoinv{(\alpha \varphi)}{n})}$. Using the injectivity of $\iota_n$, an easy application of the snake lemma yields a six-term exact sequence
            \begin{equation}
            \label{eq:alphaphi_kernel_cokernel}
                \ker(\gcoinv{(\alpha \varphi)}{n}) \ia  \ker(\iota_n \gcoinv{(\alpha \varphi)}{n}) \to 0 \to \coker(\gcoinv{(\alpha \varphi)}{n}) \to \coker(\iota_n \gcoinv{(\alpha \varphi)}{n}) \sa \coker(\iota_n).
            \end{equation}

            Let us consider $\gcoinv{(\alpha \varphi)}{n}$ first. Since $\alpha\varphi$ is neither injective nor surjective in general, it is convenient define the two short exact sequences
            \begin{center}
                \begin{tikzcd}[column sep=tiny]
                \ker(\alpha\varphi) \arrow[rr, hook] &  & \cX_S \arrow[rd, two heads] \arrow[rr, "\alpha\varphi"] &                                      & {E_{S, T}} \arrow[rr, two heads] &  & \coker(\alpha\varphi) \\
                                                     &  &                                                         & \img(\alpha\varphi) \arrow[ru, hook] &                                  &  &
                \end{tikzcd}
            \end{center}
            The map $\varphi$ is surjective with kernel $\cX_{S_f}$, so in particular $\img(\alpha\varphi) = \img(\alpha)$ and $\coker(\alpha\varphi) = \coker(\alpha)$. Since $\alpha$ is injective, we have $\img(\alpha) \iso \cY_{S_\infty}$ and $\ker(\alpha\varphi) = \ker(\varphi) = \cX_{S_f}$. We now take invariants-coinvariants of both short exact sequences above, the first of which yields
            \begin{equation}
            \label{eq:invariants-coinvariants_alphaphi}
                \ginv{\cX_{S_f}}{n} \ia \ginv{\cX_S}{n} \to \ginv{\img(\alpha)}{n} \to \gcoinv{(\cX_{S_f})}{n} \to \gcoinv{(\cX_S)}{n} \sa \gcoinv{\img(\alpha)}{n}.
            \end{equation}
            The $\Gp{n}$-coinvariants of $\cX_{S_f}$ and $\cX_S$ coincide with $\cX_{L_n, S_f}$and $\cX_{L_n, S}$, respectively, by proposition \ref{prop:isomoprhisms_y_x_modules}. The map between them is the canonical inclusion, which is injective and has cokernel ${\gcoinv{\img(\alpha)}{n} \iso \gcoinv{(\cY_{S_\infty})}{n} \iso \cY_{L_n, S_\infty}}$ by the same proposition.

            The second sequence induces
            \begin{equation}
            \label{eq:invariants-coinvariants_alphaphi_ii}
                \ginv{\img(\alpha)}{n} \ia \ginv{E_{S, T}}{n} \to \ginv{\coker(\alpha)}{n} \to \gcoinv{\img(\alpha)}{n} \to \gcoinv{(E_{S, T})}{n} \sa \gcoinv{\coker(\alpha)}{n}.
            \end{equation}
            The second term admits the description
            \[
                \ginv{E_{S, T}}{n} = \ginv{\big(\varprojlim_m \ZZ_p \otimes \units{\cO_{L_m, S, T}}\big)}{n} = \varprojlim_m \ZZ_p \otimes \ginv{(\units{\cO_{L_m, S, T}})}{n} = \varprojlim_m \ZZ_p \otimes (\units{\cO_{L_m, S, T}} \cap \units{L_n}),
            \]
            where the inverse limit is taken with respect to the norm maps (for the second equality, use the left exactness of $\varprojlim_m$, then apply the exact functor $\ZZ_p \otimes -$ to  $\ginv{\units{\cO_{L_m, S, T}}}{n} \ia \units{\cO_{L_m, S, T}} \xrightarrow{\gamma^{p^n} - 1} \units{\cO_{L_m, S, T}}$). But for $m \geq n$, a unit $x \in \units{\cO_{L_m, S, T}} \cap \units{L_n}$ is sent to $x^{p^{m -n}} \in \units{\cO_{L_n, S, T}}$ via the norm map $N_{L_m/L_n}$, which means that the $n$-th-layer component of any element of $\ginv{E_{S, T}}{n}$ must be infinitely $p$-divisible in $\units{L_n}$, i.e. trivial. The same argument applies to the component at any layer above $L_n$, and hence $\ginv{E_{S, T}}{n}$ is trivial and sequence \eqref{eq:invariants-coinvariants_alphaphi_ii} amounts to a four-term exact sequence.

            In order to recover some information about $\gcoinv{(\alpha \varphi)}{n}$, we note that this map is the composition
            \[
                \gcoinv{(\alpha \varphi)}{n} \colon \gcoinv{(\cX_S)}{n} \sa \gcoinv{(\img\alpha)}{n} \to \gcoinv{(E_{S, T})}{n}
            \]
            and apply the snake lemma incorporating the information from sequences \eqref{eq:invariants-coinvariants_alphaphi} and \eqref{eq:invariants-coinvariants_alphaphi_ii}:
            \begin{center}
                \begin{tikzcd}
                                & {\cX_{L_n, S_f}} \arrow[r, hook] \arrow[d, hook] & \ker(\gcoinv{(\alpha \varphi)}{n}) \arrow[d, hook] \arrow[r]                            & \ginv{\coker(\alpha)}{n} \arrow[d, hook] \arrow[llddd, out=0, in=178, looseness=1, overlay, dashed]                &   \\
                    0 \arrow[r] & {\cX_{L_n, S_f}} \arrow[d] \arrow[r]             & {\cX_{L_n, S}} \arrow[d, "\gcoinv{(\alpha \varphi)}{n}"'] \arrow[r]             & \gcoinv{\img(\alpha)}{n} \arrow[d] \arrow[r]          & 0 \\
                    0 \arrow[r] & 0 \arrow[d, two heads] \arrow[r]                 & {\gcoinv{(E_{S, T})}{n}} \arrow[d, two heads] \arrow[r, equals] & {\gcoinv{(E_{S, T})}{n}} \arrow[d, two heads] \arrow[r] & 0 \\
                                & 0 \arrow[r]                                      & \coker(\gcoinv{(\alpha\varphi)}{n}) \arrow[r, two heads]                     & \gcoinv{\coker(\alpha)}{n}                              &
                \end{tikzcd}
            \end{center}
            Together with \eqref{eq:alphaphi_kernel_cokernel}, this shows both short exact sequences in the proposition (recall also \eqref{eq:xy_modules_coincide_above_ns}). The equality $\rank_{\ZZ_p} \ker(\varphi_n^\alpha) = \rank_{\ZZ_p} \coker(\varphi_n^\alpha)$ follows from the exact sequence
            \[
                0 \to \ker(\varphi_n^\alpha) \to \cX_{L_n, S} \xrightarrow{\varphi_n^\alpha} \ZZ_p \otimes \units{\cO_{L_n, S, T}} \to \coker(\varphi_n^\alpha) \to 0,
            \]
            where the two middle terms have the same $\ZZ_p$-rank $\abs{S(L_n)} - 1$. The last equality in the proposition is then an immediate consequence of \eqref{eq:ses_kernel_varphinalpha}.
        \end{proof}

        Part ii) of lemma \ref{lem:torsion_modules_bounded_coinvariants} hints at the importance of the $\ZZ_p$-rank of the $\Gp{n}$-coinvariants of a torsion Iwasawa module becoming stable as $n$ grows. Let $n(\alpha) \in \NN$ be minimal with the property:
        \begin{equation}
        \label{eq:introduction_alpha_n}
            \rank_{\ZZ_p} \ginv{\coker(\alpha)}{n} = \rank_{\ZZ_p} \ginv{\coker(\alpha)}{n(\alpha)} \quad \text{for all} \ n \geq n(\alpha).
        \end{equation}
        Since $\coker(\alpha)$ is $\Lambda(\Gamma)$-torsion by definition, part i) of the same lemma shows $n(\alpha)$ is well defined. The other stability condition we need is satisfied by all layers starting at $n(S)$, which motivates the definition of
        \[
            n(S, \alpha)  = \max(n(S), n(\alpha)) \in \NN
        \]
        and the following \textbf{kernel condition}\index{kernel condition}:
        \begin{manualcond}{(KC)}
        \label{manualcond:kc}
            Setting \ref{sett:construction}, $\alpha$ as in setting \ref{sett:formulation}. We say an irreducible Artin character $\chi \in \Irr_p(\cG)$ of $\cG$ satisfies condition (KC) if $\Gp{n(S, \alpha)} \nsubseteq \ker(\chi)$. \qedef
        \end{manualcond}

        The fundamental property of the finite-level maps now follows immediately from lemmas \ref{lem:torsion_modules_bounded_coinvariants} and \ref{lem:zp_rank_bounds_finite level}:

        \begin{prop}
        \label{prop:isomorphism_on_chi_parts}
            Setting \ref{sett:construction}, $\alpha$ as in setting \ref{sett:formulation}. Let $\chi \in \Irr_p(\cG)$ satisfy \ref{manualcond:kc} and choose $n \in \NN$ such that $\Gp{n} \subseteq \ker(\chi)$ (in particular, $n > n(S, \alpha)$). Then $\varphi_n^\alpha$ induces an isomorphism
            \[
                e(\chi) \QQ_p^c \otimes_{\ZZ_p} \varphi_n^\alpha \colon e(\chi) \QQ_p^c \otimes_{\ZZ_p} \cX_{L_n, S} \xrightarrow{\ \sim \ } e(\chi) \QQ_p^c \otimes \units{\cO_{L_n, S, T}}
            \]
            of $\QQ_p^c[\cG_n]$-modules on $\chi$-parts.
        \end{prop}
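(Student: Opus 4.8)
The plan is to reduce the assertion to the vanishing of the $\chi$-part of $\ker(\varphi_n^\alpha)$ alone, and then to annihilate that kernel term by term using the description of $\ker(\varphi_n^\alpha)$ in lemma~\ref{lem:zp_rank_bounds_finite level} together with the vanishing criterion of lemma~\ref{lem:torsion_modules_bounded_coinvariants}~ii).

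First I would check that, after applying the exact functor $\QQ_p^c \otimes_{\ZZ_p} -$, the source and target of $\varphi_n^\alpha$ become isomorphic as $\QQ_p^c[\cG_n]$-modules. Indeed, $\QQ_p^c \otimes_{\ZZ_p} \cX_{L_n, S} = \QQ_p^c \otimes_\ZZ \cX_{L_n, S}^\ZZ$, while $\QQ_p^c \otimes_{\ZZ_p} (\ZZ_p \otimes \units{\cO_{L_n, S, T}}) = \QQ_p^c \otimes_\ZZ \units{\cO_{L_n, S, T}} = \QQ_p^c \otimes_\ZZ \units{\cO_{L_n, S}}$, the last equality because the $(S, T)$-units have finite index in the $S$-units; and Dirichlet's unit theorem (the regulator isomorphism \eqref{eq:dirichlet_regulator_map_real} at the layer $L_n$) exhibits $\units{\cO_{L_n, S}}$ and $\cX_{L_n, S}^\ZZ$ as $\cG_n$-modules with the same rational character, hence as isomorphic $\QQ_p^c[\cG_n]$-modules. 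In particular the $\chi$-parts $e(\chi)\QQ_p^c \otimes_{\ZZ_p} \cX_{L_n, S}$ and $e(\chi)\QQ_p^c \otimes \units{\cO_{L_n, S, T}}$ are $\QQ_p^c$-vector spaces of equal finite dimension. Since $\QQ_p^c \otimes_{\ZZ_p} -$ is exact and multiplication by the central idempotent $e(\chi)$ is an exact (direct-summand) operation on $\QQ_p^c[\cG_n]$-modules, the kernel of $e(\chi)\QQ_p^c \otimes_{\ZZ_p} \varphi_n^\alpha$ is canonically $e(\chi)\QQ_p^c \otimes_{\ZZ_p} \ker(\varphi_n^\alpha)$; and a $\QQ_p^c$-linear map between vector spaces of the same finite dimension is bijective as soon as it is injective, so it suffices to prove
\[
    e(\chi)\QQ_p^c \otimes_{\ZZ_p} \ker(\varphi_n^\alpha) = 0.
\]

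To this end I would apply $e(\chi)\QQ_p^c \otimes_{\ZZ_p} -$ to the short exact sequence \eqref{eq:ses_kernel_varphinalpha} of lemma~\ref{lem:zp_rank_bounds_finite level}, reducing the claim to the vanishing of the $\chi$-parts of $\cX_{L_{n(S)}, S_f}$ and of $\ginv{\coker(\alpha)}{n}$. The first module is, by the identification \eqref{eq:xy_modules_coincide_above_ns}, inflated from $\cG_{n(S)} = \cG/\Gp{n(S)}$; since $n(S) \leq n(S, \alpha)$, condition \ref{manualcond:kc} forces $\Gp{n(S)} \nsubseteq \ker(\chi)$, so $\chi$ does not factor through $\cG_{n(S)}$, and \eqref{eq:pci_projection} applied to the surjection $\QQ_p^c[\cG_n] \sa \QQ_p^c[\cG_{n(S)}]$ shows the image of $e(\chi)$ there is $0$; hence $e(\chi)$ annihilates $\QQ_p^c \otimes_{\ZZ_p} \cX_{L_{n(S)}, S_f}$. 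For the second module I would invoke lemma~\ref{lem:torsion_modules_bounded_coinvariants}~ii) with $M = \coker(\alpha)$ (finitely generated and $\Lambda(\Gamma)$-torsion by setting~\ref{sett:formulation}), $G = \cG$, and $m$ the largest integer with $\Gp{m} \nsubseteq \ker(\chi)$. This $m$ is well defined with $0 \leq m < n$: well-definedness uses $\Gamma \nsubseteq \ker(\chi)$, which again follows from \ref{manualcond:kc}, while $m < n$ is forced by $\Gp{n} \subseteq \ker(\chi)$. Moreover \ref{manualcond:kc} yields $m \geq n(S, \alpha) \geq n(\alpha)$, so the stabilisation property \eqref{eq:introduction_alpha_n}, together with the identity $\rank_{\ZZ_p} \gcoinv{M}{k} = \rank_{\ZZ_p} \ginv{M}{k}$ (valid for torsion $M$, from the $\gamma^{p^k} - 1$ exact sequence), gives $\rank_{\ZZ_p} \gcoinv{M}{n} = \rank_{\ZZ_p} \gcoinv{M}{m}$, which is exactly the hypothesis of part~ii). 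Its conclusion, with $E = \QQ_p^c$, is $e(\chi)\QQ_p^c \otimes_{\ZZ_p} \ginv{\coker(\alpha)}{n} = 0$, and combining the two vanishings finishes the argument.

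I expect the main difficulty to be purely organisational: verifying that condition \ref{manualcond:kc} is calibrated so that both the stabilisation hypothesis of lemma~\ref{lem:torsion_modules_bounded_coinvariants}~ii) and the non-factorisation through $\cG_{n(S)}$ hold simultaneously — which is precisely what dictates the choice $n(S, \alpha) = \max(n(S), n(\alpha))$. One could alternatively argue via the cokernel, using the sequence \eqref{eq:ses_cokernel_varphinalpha}, but that would require controlling the $\chi$-part of $\coker(\iota_n)$, i.e. of the $(S, T)$-units modulo universal norms, whereas the route through $\ker(\varphi_n^\alpha)$ combined with the dimension count above sidesteps this entirely.
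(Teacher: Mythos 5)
Your proposal is correct and follows essentially the same route as the paper: kill the $\chi$-part of $\ker(\varphi_n^\alpha)$ via the sequence \eqref{eq:ses_kernel_varphinalpha} and lemma \ref{lem:torsion_modules_bounded_coinvariants}~ii), then conclude surjectivity from the equality of dimensions supplied by the Dirichlet regulator. The only (harmless) cosmetic difference is that you dispose of the term $\cX_{L_{n(S)}, S_f}$ directly by inflation through $\cG_{n(S)}$ and \eqref{eq:pci_projection}, whereas the paper routes this through the rank-stabilisation hypothesis of the same lemma.
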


        \begin{proof}
            We start by noting that $\QQ_p^c$ is flat as a (right) $\ZZ_p$-module, so $\QQ_p^c \otimes_{\ZZ_p} -$ takes exact sequences of (left)  $\Lambda(\cG_n) = \ZZ_p[\cG_n]$-modules into ones of $\QQ_p^c[\cG_n]$-modules. Furthermore, since $e(\chi) \in \QQ_p^c[\cG_n]$ is a central idempotent, multiplication by it is an exact endofunctor on the category of $\QQ_p^c[\cG_n]$-modules. Sequence \eqref{eq:ses_kernel_varphinalpha} therefore induces a short exact sequence
            \[
                0 \to e(\chi) \QQ_p^c \otimes_{\ZZ_p} \cX_{L_{n(S)}, S_f} \to e(\chi) \QQ_p^c \otimes_{\ZZ_p} \ker(\varphi_n^\alpha) \to e(\chi) \QQ_p^c \otimes_{\ZZ_p} \ginv{\coker(\alpha)}{n} \to 0.
            \]

            By definition of $n(S, \alpha)$, we have $\rank_{\ZZ_p} \ginv{\coker(\alpha)}{n} = \rank_{\ZZ_p} \ginv{\coker(\alpha)}{n(S, \alpha)}$ (and hence the same equality on ranks of coinvariants) and
            \[
                \rank_{\ZZ_p} \gcoinv{(\cX_{S_f})}{n} = \rank_{\ZZ_p} \gcoinv{(\cX_{S_f})}{n(S, \alpha)} = \abs{S_f(L_{n(S)}) - 1}
            \]
            (recall here proposition \ref{prop:isomoprhisms_y_x_modules}). In conjunction with lemma \ref{lem:torsion_modules_bounded_coinvariants} ii), this shows
            \[
                e(\chi) \QQ_p^c \otimes_{\ZZ_p} \cX_{L_n, S_f} = e(\chi) \QQ_p^c \otimes_{\ZZ_p} \ginv{\coker(\alpha)}{n} = 0,
            \]
            which in turn yields $e(\chi) \QQ_p^c \otimes_{\ZZ_p} \ker(\varphi_n^\alpha) = 0$ by the above exact sequence.

            In order to conclude the triviality of the cokernel, we use the fact that the two middle terms of the exact sequence
            \[
                e(\chi) \QQ_p^c \otimes_{\ZZ_p} \ker(\varphi_n^\alpha) \ia e(\chi) \QQ_p^c \otimes_{\ZZ_p} \cX_{L_n, S} \xrightarrow{e(\chi) \QQ_p^c \otimes_{\ZZ_p} \varphi_n^\alpha} e(\chi) \QQ_p^c \otimes \units{\cO_{L_n, S, T}} \sa e(\chi) \QQ_p^c \otimes_{\ZZ_p} \coker(\varphi_n^\alpha)
            \]
            have the same $\QQ_p^c$-dimension by virtue of the classical Dirichlet regulator map \eqref{eq:dirichlet_regulator_map_real} (extending scalars to $\CC_p$ in both cases).
        \end{proof}

        The non-vanishing of the regulator at a character $\chi$, to be defined in the next section, will only be guaranteed if $\varphi_n^\alpha$ is an isomorphism on $\chi$ parts. The previous proposition therefore highlights the importance of kernel condition \ref{manualcond:kc} for the analytic side of the Main Conjecture. Fortunately, \textit{almost all} $\chi \in \Irr_p(\cG)$ satisfy it: the ones which do not are in bijection with the finitely many irreducible characters of $\cG_{n(S, \alpha)}$. Since $\Irr_p(\cG)$ is infinite, this leaves an ample supply of characters to consider.

    \section{Regulators}
    \label{sec:regulators}

        One of the few missing pieces for the formulation of the Interpolation Conjecture is the \textit{Stark-Tate regulator}. Although definition \ref{defn:regulator} below is tailored to our specific needs, namely to include the map $\alpha$ from setting \ref{sett:formulation}, it is largely influenced by (and indeed a particular case of) Tate's work on Stark's conjectures (cf. \cite{tate}) - which justifies the above terminology. Before delving into the technicalities, we point out several factors which motivate the introduction of these objects. On its analytic side, the Main Conjecture will assert the existence of certain series quotients in $\cQ^c(\Gamma_\chi) \iso \QQ_p^c \otimes_{\QQ_p} \ffrac(\ZZ_p[[T_\chi]])$ which interpolate leading coefficients at 0 of Artin $L$-functions (cf. \eqref{eq:definition_leading_coefficient}). The latter we shall regard as $p$-adic complex via the (inverse of the) isomorphism $\beta \colon \CC_p \xrightarrow{\sim} \CC$ chosen in setting \ref{sett:formulation}. The \textbf{regulated special value}\index{regulated special value} is simply the quotient
        \[
            \frac{\beta^{-1}(L_{K, S, T}^\ast(\beta \check{\chi}, 0))}{R_S^\beta(\alpha, \chi)}
        \]
        by a non-zero $R_S^\beta(\alpha, \chi) \in \units{\CC_p}$: the \textit{regulator}.
        \begin{itemize}
            \item{
                Series quotients in $ \cQ^c(\Gamma_\chi)$, when evaluated at the points we will, converge to values in $\QQ_p^c$. However, the leading coefficients $\beta^{-1}(L_{K, S, T}^\ast(\beta (\check{\chi}), 0))$ are not known to be $p$-adic algebraic in general - only $p$-adic complex.
            }
            \item{
                Leading coefficients are not independent of the choice of $\beta$, which would be a desirable feature for the Main Conjecture to have. Stark's conjecture claims (essentially) that the above quotient is indeed so. This also has implications for the field where the regulated special $L$-value should be expected to live. These topics are the subject of section \ref{sec:starks_conjecture_and_the_choice_of_beta}.
            }
            \item{
                The Main Conjecture postulates a relation between the interpolating series quotients and the refined Euler characteristic $\chi_{\Lambda(\cG), \cQ(\cG)}(\cC_{S, T}\q, t^\alpha)$ of the main complex. However, the latter depends fundamentally on the map $\alpha$, whereas the former are characterised by an interpolation property which is oblivious to it. The introduction of regulators, which contain information about $\alpha$, corrects this disparity. This will become apparent when we prove the Main Conjecture is independent of the choice of $\alpha$ in subsection \ref{subsec:the_choice_of_alpha}.
            }
        \end{itemize}

        As discussed a few lines ago, the kernel condition is essential for the definition of non-zero regulators. To that end, we introduce the following set:
        \begin{defn}
        \label{defn:r_chi}
            Setting \ref{sett:construction}, $\alpha$ as in setting \ref{sett:formulation}. For an irreducible Artin character $\chi$ of $\cG$, we define \[
                \cK_S^\alpha(\chi) = \set{\rho \in \Irr_p(\cG) \et{of type} W \et{such that} \chi \otimes \rho \ \text{satisfies  \ref{manualcond:kc}}}.
            \]
            \qedef
        \end{defn}

        \begin{lem}
        \label{lem:properties_of_r_chi}
            Setting \ref{sett:construction}, $\alpha$ as in setting \ref{sett:formulation}. Let $\chi \in \Irr_p(\cG)$. Then:
            \begin{enumerate}[i)]
                \item{
                    $\cK_S^\alpha(\chi)$ contains almost all characters of $\cG$ of type $W$. In particular, it is infinite.
                }
                \item{
                    For any character $\rho$ of $\cG$ of type $W$ one has $\cK_S^\alpha(\chi \otimes \rho) = \rho^{-1} \otimes \cK_S^\alpha(\chi)$, where the right-hand side denotes $\set{\rho^{-1} \otimes \rho' \colon \rho' \in \cK_S^\alpha(\chi)}$.
                }
                \item{
                    One has $\cK_S^\alpha(\check{\chi}) = \cK_S^\alpha(\chi)^{-1}$, where the right-hand side denotes $\set{\rho^{-1} : \rho \in \cK_S^\alpha(\chi)}$.
                }
            \end{enumerate}
        \end{lem}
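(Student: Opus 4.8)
The plan is to reduce all three assertions to bookkeeping with the group of characters of $\cG$ of type $W$ — which by \eqref{eq:bijection_type_W_rou} is isomorphic to $\mu_{p^\infty}$, so that $\rho^{-1}=\check\rho$ and products of type-$W$ characters are again of type $W$ — together with the fact from Section~\ref{sec:representations_of_finite_groups} that a character and its dual have the same kernel. The first and only substantive step is to describe, for a fixed $\chi\in\Irr_p(\cG)$, exactly which type-$W$ characters $\rho$ make $\chi\otimes\rho$ satisfy \ref{manualcond:kc}. Write $n=n(S,\alpha)$. Since $\Gamma$ is central in $\cG$ and $\rho_\chi$ is irreducible over the algebraically closed field $\QQ_p^c$, Schur's lemma provides a character $\omega_\chi\colon\Gamma\to\units{(\QQ_p^c)}$, of open kernel because $\ker(\rho_\chi)\cap\Gamma$ is open, with $\rho_\chi(\sigma)=\omega_\chi(\sigma)\Id$ for all $\sigma\in\Gamma$. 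Then $\rho_{\chi\otimes\rho}(\sigma)=\rho_\chi(\sigma)\otimes\rho(\sigma)=\omega_\chi(\sigma)\rho(\sigma)\Id$ for $\sigma\in\Gamma$, so, as $\Gp{n}$ is procyclic (topologically generated by $\gamma^{p^n}$), the character $\chi\otimes\rho$ \emph{fails} \ref{manualcond:kc}, i.e.\ $\Gp{n}\subseteq\ker(\chi\otimes\rho)$, precisely when $\rho$ and $\omega_\chi^{-1}$ agree on $\Gp{n}$. Hence the set of ``bad'' $\rho$ is a single fibre of the restriction homomorphism from type-$W$ characters of $\cG$ to characters of $\Gp{n}$; its kernel — the type-$W$ characters trivial on the open subgroup $\Gp{n}$ — factors through a finite quotient of $\cG$ and is therefore finite, so each fibre is finite. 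As there are infinitely many type-$W$ characters, this proves i).

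Parts ii) and iii) are then formal. For ii), associativity of $\otimes$ gives, for $\rho$ of type $W$, that $\rho'\in\cK_S^\alpha(\chi\otimes\rho)$ iff $\chi\otimes(\rho\otimes\rho')$ satisfies \ref{manualcond:kc} iff $\rho\otimes\rho'\in\cK_S^\alpha(\chi)$ iff $\rho'\in\rho^{-1}\otimes\cK_S^\alpha(\chi)$, using that $\rho$ is invertible in the group of type-$W$ characters; this is the claimed identity $\cK_S^\alpha(\chi\otimes\rho)=\rho^{-1}\otimes\cK_S^\alpha(\chi)$. For iii), since $\rho^{-1}=\check\rho$ for a linear character and duality is multiplicative, $\check\chi\otimes\rho=\check{\chi\otimes\rho^{-1}}$; because a character and its dual have equal kernels, $\check\chi\otimes\rho$ satisfies \ref{manualcond:kc} iff $\chi\otimes\rho^{-1}$ does, so $\rho\in\cK_S^\alpha(\check\chi)$ iff $\rho^{-1}\in\cK_S^\alpha(\chi)$, which is exactly $\cK_S^\alpha(\check\chi)=\cK_S^\alpha(\chi)^{-1}$.

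The genuine content sits entirely in the finiteness claim of i); everything else is routine. The step I would be most careful about is the translation of the profinite-group condition $\Gp{n(S,\alpha)}\subseteq\ker(\chi\otimes\rho)$ into the coincidence of the two linear characters $\rho|_{\Gp{n(S,\alpha)}}$ and $\omega_\chi^{-1}|_{\Gp{n(S,\alpha)}}$ on the procyclic group $\Gp{n(S,\alpha)}$: this is precisely where centrality of $\Gamma$ is used (to make $\rho_\chi$ scalar on $\Gamma$), and one must check that $\omega_\chi$ is a well-defined continuous homomorphism of open kernel. The rest — the group structure on type-$W$ characters, the multiplicativity of duality, and the equality $\ker(\check\psi)=\ker(\psi)$ — is already available from Section~\ref{sec:representations_of_finite_groups} and \eqref{eq:bijection_type_W_rou}.
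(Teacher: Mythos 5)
Your proposal is correct, and parts ii) and iii) coincide with the paper's argument (associativity of $\otimes$, invertibility of type-$W$ characters, and $\ker(\check\psi)=\ker(\psi)$). For part i) you take a slightly different, and in fact somewhat sharper, route: the paper fixes the target character and shows that every fibre of the map $\rho\mapsto\chi\otimes\rho$ is finite by evaluating traces at $\gp{n}$ (from $\chi(1)\rho(\gp{n})=\chi(\gp{n})\rho(\gp{n})=\chi'(\gp{n})=\chi(1)$ one gets $\rho(\gp{n})=1$), and then pulls back the finite set of irreducible characters failing \ref{manualcond:kc}; you instead invoke Schur's lemma to produce the central character $\omega_\chi$ of $\Gamma$ and identify the set of bad $\rho$ as exactly one fibre of restriction to $\Gp{n(S,\alpha)}$, i.e.\ one coset of the finite group of type-$W$ characters trivial on $\Gp{n(S,\alpha)}$. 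Both arguments rest on the same finiteness fact (only finitely many type-$W$ characters are trivial on an open subgroup of $\cG$); yours pins down the exceptional set precisely rather than merely bounding it, at the mild cost of having to justify that $\omega_\chi$ is a well-defined continuous character with open kernel — which you correctly flag and which does follow from centrality of $\Gamma$ and irreducibility of $\rho_\chi$ over $\QQ_p^c$.
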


        \begin{proof}
            The only subtle point in i) is that multiplication of $\chi$ by type-$W$ characters is not an injective operation. In other words, it may happen that $\chi \otimes \rho = \chi$ with $\rho \neq \bbone$ (cf. remark \ref{rem:properties_of_ev} i)). However, we claim that every fibre of the map
            \begin{align*}
                t_\chi \colon \set{\text{Characters of} \ \cG \et{of type} W} & \to \Irr_p(\cG) \\
                \rho & \mapsto \chi \otimes \rho
            \end{align*}
            is finite. To see this, let $\chi' \in \Irr_p(\cG)$. If $\chi' \not\sim_W \chi$, the fibre of $\chi'$ is empty and we are done. Otherwise, choose $n \in \NN$ such that $\Gp{n} \subseteq \ker(\chi) \cap \ker(\chi')$. Then, for every $\rho$ in the fibre of $\chi'$, one has
            \[
                \chi(1) \rho(\gp{n}) = \chi(\gp{n}) \rho(\gp{n}) = \chi'(\gp{n}) = \chi'(1) = \chi(1).
            \]
            Since $\chi(1) \neq 0$, this implies $\rho$ is trivial on $\Gp{n}$. But only finitely many linear characters of $\cG$ have that property.

            Having shown the claim, part i) is trivial: as mentioned at the end of the preceding section, the set of irreducible Artin characters of $\cG$ which do not satisfy \ref{manualcond:kc} is finite. Therefore, so is its preimage under $t_\chi$, which consists precisely of the type-$W$ characters which do not belong to $\cK_S^\alpha(\chi)$.

            Parts ii) and iii) are immediate by
            \[
                \widetilde{\rho} \in \cK_S^\alpha(\chi \otimes \rho) \iff (\chi \otimes \rho) \otimes \widetilde{\rho} \ \; \text{satisfies \ref{manualcond:kc}} \iff \chi \otimes (\rho \otimes \widetilde{\rho}) \ \; \text{satisfies \ref{manualcond:kc}} \iff \rho \otimes \widetilde{\rho} \in \cK_S^\alpha(\chi)
            \]
            and
            \[
                \rho \in \cK_S^\alpha(\check{\chi}) \iff \check{\chi} \otimes \rho \ \; \text{satisfies \ref{manualcond:kc}} \iff \chi \otimes \rho^{-1}  \ \; \text{satisfies \ref{manualcond:kc}} \iff \rho^{-1} \in \cK_S^\alpha(\chi),
            \]
            where in the next-to-last equality we have used the fact that the kernel of a character coincides with that of its dual.
        \end{proof}

        The aim of the Stark-Tate regulator is to capture the difference between the finite-level maps $\varphi_n^\alpha$ and the Dirichlet regulator map. In order to compare the two, we bring them together to a common scalar field:

        \begin{defn}
        \label{defn:p-adic_dirichlet_regulator_map}
            Setting \ref{sett:construction}, $\beta$ as in setting \ref{sett:formulation}. For $n \in \NN$, we define the \textbf{$p$-adic Dirichlet regulator map}\index{Dirichlet regulator map!$p$-adic} $\lambda_{n, S}^\beta$ via the commutative diagram
            \begin{center}
                \begin{tikzcd}
                    {\CC_p \otimes \units{\cO_{L_n, S}}} \arrow[d, "\rsim"] \arrow[r, dashed, "\lambda_{n, S}^\beta"]       & \CC_p \otimes_{\ZZ_p} \cX_{L_n, S} \\
                    {\CC_p \otimes_{\beta^{-1}} \CC \otimes_{\RR} \RR \otimes \units{\cO_{L_n, S}}} \arrow[r, "\sim"] & \CC_p \otimes_{\beta^{-1}} \CC \otimes_{\RR} \RR \otimes \cX_{L_n, S}^\ZZ \arrow[u, "\rsim"]
                \end{tikzcd}
            \end{center}
            where the bottom arrow is the scalar extension of the classical regulator map \eqref{eq:dirichlet_regulator_map_real}. \qedef
        \end{defn}

        Since the classical regulator map is Galois-equivariant, $\lambda_{n, S}^\beta$ is an isomorphism of $\CC_p[\cG_n]$-modules. Let $\chi \in \Irr_p(\cG)$ be an irreducible Artin character and choose $n \geq n(S)$ such that $\chi$ factors through $\cG_n$. We consider the $\CC_p[\cG_n]$-endomorphism
        \[
            \lambda_{n, S}^\beta \circ (\CC_p \otimes_{\ZZ_p} \varphi_n^\alpha) \colon \CC_p \otimes_{\ZZ_p} \cX_{L_n, S} \to \CC_p \otimes_{\ZZ_p} \cX_{L_n, S}.
        \]
        Note that, although the codomain of $\varphi_n^\alpha$ is $\ZZ_p \otimes \units{\cO_{L_n, S, T}}$, the index $[\units{\cO_{L_n, S}} : \units{\cO_{L_n, S, T}}]$ is finite and thus $\CC_p \otimes \units{\cO_{L_n, S, T}}$ coincides with $\CC_p \otimes \units{\cO_{L_n, S}}$. If $V_\chi$ is the simple $\CC_p[\cG_n]$-module with character $\chi$, the above map induces a $\CC_p$-linear endomorphism $(\lambda_{n, S}^\beta \circ (\CC_p \otimes_{\ZZ_p} \varphi_n^\alpha))_\ast$ of $\Hom_{\CC_p[\cG_n]}(V_\chi, \CC_p \otimes_{\ZZ_p} \cX_{L_n, S})$ by post-composition. By analogy with existing literature, we denote the determinant of this last endomorphism by
        \begin{equation}
        \label{eq:notation_endomorphism_acting_on_hom}
            \det(\lambda_{n, S}^\beta \circ (\CC_p \otimes_{\ZZ_p} \varphi_n^\alpha) \mid \Hom_{\CC_p[\cG_n]}(V_\chi, \CC_p \otimes_{\ZZ_p} \cX_{L_n, S})) \in \CC_p.
        \end{equation}
        Suppose now that $\chi$ satisfies \ref{manualcond:kc}. Then $\lambda_{n, S}^\beta \circ (\CC_p \otimes_{\ZZ_p} \varphi_n^\alpha)$ restricts to an isomorphism
        \[
            e(\chi)\CC_p \otimes_{\ZZ_p} \cX_{L_n, S} \to e(\chi)\CC_p \otimes_{\ZZ_p} \cX_{L_n, S}
        \]
        on $\chi$-parts by proposition \ref{prop:isomorphism_on_chi_parts} (after extending scalars from $\QQ_p^c$ to $\CC_p$). Since
        \begin{equation}
        \label{eq:hom_is_taking_chi-parts}
            \Hom_{\CC_p[\cG_n]}(V_\chi, M) = \Hom_{\CC_p[\cG_n]}(V_\chi, e(\chi)M)
        \end{equation}
        for any $\CC_p[\cG_n]$-module $M$ (because $e(\chi)$ acts trivially on $V_\chi$), the map $(\lambda_{n, S}^\beta \circ (\CC_p \otimes_{\ZZ_p} \varphi_n^\alpha))_\ast$ is in fact an automorphism. This justifies the non-vanishing of the regulator:

        \begin{defn}
        \label{defn:regulator}
            Setting \ref{sett:formulation}. Let $\chi \in \Irr_p(\cG)$ and choose $n \geq n(S)$ such that $\chi$ factors through $\cG_n$. The \textbf{Stark-Tate regulator}\index{regulator}\index{Stark-Tate regulator} of $\alpha$ on $\chi$-parts\footnote{Some authors, most notably Tate, denote by $R(\chi, \ldots)$ a similar regulator constructed by looking at \textit{$\check{\chi}$-parts} instead (cf. \cite{tate}, p. 26). The reader should be aware of the different notational choice in this text, where $R_S^\beta(\alpha, \chi)$ really refers to the regulator on \textit{$\chi$-parts}.} is defined as
            \[
                R_S^\beta(\alpha, \chi) = \det(\lambda_{n, S}^\beta \circ (\CC_p \otimes_{\ZZ_p} \varphi_n^\alpha) \mid \Hom_{\CC_p[\cG_n]}(V_\chi, \CC_p \otimes_{\ZZ_p} \cX_{L_n, S})) \in \CC_p,
            \]
            which is non-zero if $\chi$ satisfies \ref{manualcond:kc}.
            \qedef
        \end{defn}

        This object behaves well with respect to character inflation:

        \begin{lem}
        \label{lem:r_chi_independent_of_n}
            In definition \ref{defn:regulator}, $R_S^\beta(\alpha, \chi)$ is independent of the choice of $n$.
        \end{lem}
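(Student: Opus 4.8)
The plan is to compare the determinant defining $R_S^\beta(\alpha,\chi)$ computed at a level $n$ with the one computed at any larger admissible level $n'$ (both $\geq n(S)$ and such that $\chi$ factors through the corresponding quotient); since any two admissible choices are dominated by a common one, this suffices. The guiding idea is that all the data entering $\varphi_n^\alpha$ and $\lambda_{n,S}^\beta$ are compatible along the cyclotomic tower, so that the relevant endomorphism at level $n'$ becomes \emph{conjugate}, on the relevant $\Hom$-space, to the one at level $n$; conjugate endomorphisms have equal determinant.

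First I would record the compatibility. For $n'\geq n\geq n(S)$ the natural projection of coinvariants $\gcoinv{M}{n'}\to\gcoinv{M}{n}$ corresponds, under the canonical isomorphisms of Proposition \ref{prop:isomoprhisms_y_x_modules} and the universal-norm identifications $\iota_m$, to the "trace" (place-projection) map $\mathrm{tr}\colon\cX_{L_{n'},S}\to\cX_{L_n,S}$ on the $\cX$-side and to the norm $N_{L_{n'}/L_n}$ on the unit side; this is because $\cX_S$ and $E_{S,T}$ are precisely the inverse limits of these finite-level modules with transition maps exactly $\mathrm{tr}$ and the norm. Functoriality of $\Gp{n}$-coinvariants applied to the fixed $\Lambda(\cG)$-morphism $\alpha\varphi$ (recall Definition \ref{defn:finite_level_map}) then gives $N_{L_{n'}/L_n}\circ\varphi_{n'}^\alpha=\varphi_n^\alpha\circ\mathrm{tr}$. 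On the analytic side, the classical Dirichlet regulator map \eqref{eq:dirichlet_regulator_map_real} satisfies $\mathrm{tr}\bigl(\mathrm{reg}_{n'}(u)\bigr)=\mathrm{reg}_n\bigl(N_{L_{n'}/L_n}(u)\bigr)$, which follows from the identity $\prod_{w'\mid w}\abs{x}_{w'}=\abs{N_{L_{n'}/L_n}(x)}_w$ for normalised absolute values; after base change this reads $\mathrm{tr}\circ\lambda_{n',S}^\beta=\lambda_{n,S}^\beta\circ N_{L_{n'}/L_n}$. Writing $\phi_m=\lambda_{m,S}^\beta\circ(\CC_p\otimes_{\ZZ_p}\varphi_m^\alpha)$ for the endomorphism of $\CC_p\otimes_{\ZZ_p}\cX_{L_m,S}$ whose determinant on a $\Hom$-space defines the regulator, the two compatibilities combine into a commutative square
\[
\begin{tikzcd}
\CC_p \otimes_{\ZZ_p} \cX_{L_{n'}, S} \arrow[r, "\phi_{n'}"] \arrow[d, "\mathrm{tr}"'] & \CC_p \otimes_{\ZZ_p} \cX_{L_{n'}, S} \arrow[d, "\mathrm{tr}"] \\
\CC_p \otimes_{\ZZ_p} \cX_{L_n, S} \arrow[r, "\phi_n"'] & \CC_p \otimes_{\ZZ_p} \cX_{L_n, S}
\end{tikzcd}
\]
of $\CC_p[\cG_{n'}]$-modules, the bottom row being viewed through the inflation $\cG_{n'}\sa\cG_n$ and $\mathrm{tr}$ being the (manifestly $\cG_{n'}$-equivariant) trace.

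Next I would apply $\Hom_{\CC_p[\cG_{n'}]}(V_\chi,-)$ to this square. The crucial point is that the induced map $\mathrm{tr}_\ast\colon\Hom_{\CC_p[\cG_{n'}]}(V_\chi,\CC_p\otimes_{\ZZ_p}\cX_{L_{n'},S})\to\Hom_{\CC_p[\cG_n]}(V_\chi,\CC_p\otimes_{\ZZ_p}\cX_{L_n,S})$ is an isomorphism. Surjectivity holds because $\mathrm{tr}\colon\CC_p\otimes_{\ZZ_p}\cX_{L_{n'},S}\sa\CC_p\otimes_{\ZZ_p}\cX_{L_n,S}$ is an epimorphism of $\CC_p[\cG_{n'}]$-modules — an element of $\cX_{L_n,S}^\ZZ$ lifts to $\cX_{L_{n'},S}^\ZZ$ since place-projection preserves the augmentation — and $\CC_p[\cG_{n'}]$ is semisimple, so $\Hom_{\CC_p[\cG_{n'}]}(V_\chi,-)$ preserves the surjection. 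For the equality of dimensions I would invoke Lemma \ref{lem:properties_of_L-functions}: both $\Hom$-spaces have $\CC_p$-dimension equal to $r_S(\chi)$, the order of vanishing of $L_{K,S,T}(\chi,s)$ at $0$, which by part ii) of that lemma is unchanged under inflation and hence independent of the level $n$. A surjection of finite-dimensional vector spaces of equal dimension is an isomorphism.

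Finally, since $\mathrm{tr}_\ast$ conjugates the post-composition operator induced by $\phi_{n'}$ into the one induced by $\phi_n$, their determinants coincide; that is, the quantity in \eqref{eq:notation_endomorphism_acting_on_hom} is the same at levels $n$ and $n'$, which proves that $R_S^\beta(\alpha,\chi)$ is independent of the chosen $n$. The only step requiring genuine input rather than bookkeeping is the bijectivity of $\mathrm{tr}_\ast$: surjectivity is elementary, and the main obstacle is the dimension count, where one must either appeal to the order-of-vanishing formula of Lemma \ref{lem:properties_of_L-functions} as above, or, alternatively, decompose $\ker(\mathrm{tr})$ explicitly as a $\CC_p[\cG_{n'}]$-module and check directly that $V_\chi$ does not occur in it.
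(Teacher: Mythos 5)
Your argument is correct and follows essentially the same route as the paper's proof: both reduce the claim to showing that the transition map between levels induces an isomorphism of the two $\Hom$-spaces (surjectivity from semisimplicity, injectivity from an equality of dimensions) and then conjugate the post-composition operators through that isomorphism to equate the determinants. The only cosmetic differences are that you take the dimension count from Lemma \ref{lem:properties_of_L-functions}, which the paper explicitly notes as an admissible alternative to its self-contained Frobenius-reciprocity computation, and that you spell out the norm/trace compatibilities underlying the commutativity of the final square, which the paper leaves implicit.
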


        \begin{proof}
            Let $n$ be as in the definition and choose $m > n$. Denote the projections of $\chi$ to $\cG_n$ and $\cG_m$ by $\chi_n$ and $\chi_m$, respectively As usual, we write $\cG_{n, v}$ for the decomposition group of $v(L_n)$ in $L_n/K$ and analogously for $\cG_{m, v}$. We first compare the $\CC_p$-dimension of ${\Hom_{\CC_p[\cG_n]}(V_{\chi_n}, \CC_p \otimes_{\ZZ_p} \cX_{L_n, S})}$ and  $\Hom_{\CC_p[\cG_m]}(V_{\chi_m}, \CC_p \otimes_{\ZZ_p} \cX_{L_m, S})$. One could apply lemma \ref{lem:properties_of_L-functions} ii) and v), but we give a self-contained proof. The basic fact at play here is that, for an irreducible character $\psi \in \Irr_{\CC_p}(\cG_n)$, one has
            \[
                \dim_{\CC_p}(\Hom_{\CC_p[\cG_n]}(V_{\chi_n}, V_\psi)) =
                \begin{cases}
                    1, & \psi = \chi_n,\\
                    0, & \text{otherwise},
                \end{cases}
            \]
            the first case being a consequence of Wedderburn's theorem \eqref{eq:wedderburn} (regarding $V_\chi$ as a column vector). It follows that, if $\psi_n$ is the character of $\CC_p \otimes_{\ZZ_p} \cX_{L_n, S}$ in the sense of \eqref{eq:character_associated_module}, the  $\CC_p$-dimension of $\Hom_{\CC_p[\cG_n]}(V_{\chi_n}, \CC_p \otimes_{\ZZ_p} \cX_{L_n, S})$ is $\sprod{\chi_n, \psi_n}$. The short exact sequence
            \[
                0 \to \CC_p \otimes_{\ZZ_p} \cX_{L_n, S} \to \CC_p \otimes_{\ZZ_p} \cY_{L_n, S} \to \CC_p \to 0,
            \]
            together with the fact that $\CC_p \otimes_{\ZZ_p} \cY_{L_n, S} = \bigoplus_{v \in S} \Ind_{\cG_{n, v}}^{\cG_n} \CC_p$, implies $\psi_n = (\sum_{v \in S} \indu_{\cG_{n, v}}^{\cG_n} \bbone_{\cG_{n, v}}) - \bbone_{\cG_n}$. Therefore,
            \[
                \sprod{\chi_n, \psi_n}_{\cG_n} =
                \bigg(\sum_{v \in S} \sprod{\chi_n, \indu_{\cG_{n, v}}^{\cG_n} \bbone_{\cG_{n, v}}}_{\cG_n}\bigg) - \sprod{\chi_n, \bbone_{\cG_n}}_{\cG_n} = \bigg(\sum_{v \in S} \sprod{\rest_{\cG_{n, v}}^{\cG_n} \chi_n, \bbone_{\cG_{n, v}}}_{\cG_{n, v}}\bigg) - \sprod{\chi_n, \bbone_{\cG_n}}_{\cG_n}
            \]
            by Frobenius reciprocity \eqref{eq:frobenius_reciprocity}. An analogous argument yields
            \[
                \dim_{\CC_p}(\Hom_{\CC_p[\cG_m]}(V_{\chi_m}, \CC_p \otimes_{\ZZ_p} \cX_{L_m, S})) = \bigg(\sum_{v \in S} \sprod{\rest_{\cG_{m, v}}^{\cG_m} \chi_m, \bbone_{\cG_{m, v}}}_{\cG_{m, v}}\bigg) - \sprod{\chi_m, \bbone_{\cG_m}}_{\cG_m},
            \]
            after which a simple computation (for instance, directly using the definition of the scalar product of characters) shows that the two coincide.

            We may regard the canonical projection $\CC_p \otimes_{\ZZ_p} \cX_{L_m, S} \sa \CC_p \otimes_{\ZZ_p} \cX_{L_n, S}$ as a $\CC_p[\cG_m]$-homomorphism via $\CC_p[\cG_m] \sa \CC_p[\cG_n]$. By the same token, $V_{\chi_m}$ and $V_{\chi_n}$ are isomorphic $\CC_p[\cG_m]$-modules (observe here that $\chi_m = \infl_{\cG_n}^{\cG_m} \chi_n$). We therefore have a $\CC_p$-linear surjection
            \[
                \Hom_{\CC_p[\cG_m]}(V_{\chi_m}, \CC_p \otimes_{\ZZ_p} \cX_{L_m, S}) \sa \Hom_{\CC_p[\cG_m]}(V_{\chi_n}, \CC_p \otimes_{\ZZ_p} \cX_{L_n, S}) = \Hom_{\CC_p[\cG_n]}(V_{\chi_n}, \CC_p \otimes_{\ZZ_p} \cX_{L_n, S})
            \]
            by the semisimplicity of $\CC_p[\cG_m]$. But we have shown the first and last terms have the same $\CC_p$-dimensions, and hence this surjection is an isomorphism. The lemma now follows from the commutativity of the diagram
            \begin{center}
                \begin{tikzcd}
                    \Hom_{\CC_p[\cG_m]}(V_{\chi_m}, \CC_p \otimes_{\ZZ_p} \cX_{L_m, S}) \arrow[r, "\sim"] \arrow[d, "(\lambda_{m, S}^\beta \circ (\CC_p \otimes_{\ZZ_p} \varphi_m^\alpha))_\ast"] & \Hom_{\CC_p[\cG_n]}(V_{\chi_n}, \CC_p \otimes_{\ZZ_p} \cX_{L_n, S}) \arrow[d, "(\lambda_{n, S}^\beta \circ (\CC_p \otimes_{\ZZ_p} \varphi_n^\alpha))_\ast"] \\
                    \Hom_{\CC_p[\cG_m]}(V_{\chi_m}, \CC_p \otimes_{\ZZ_p} \cX_{L_m, S}) \arrow[r, "\sim"]                                                        &
                    \Hom_{\CC_p[\cG_n]}(V_{\chi_n}, \CC_p \otimes_{\ZZ_p} \cX_{L_n, S})
                \end{tikzcd}
            \end{center}
        \end{proof}

        As mentioned in \eqref{eq:hom_is_taking_chi-parts}, applying $\Hom_{\CC_p[\cG_n]}(V_\chi, -)$ to a $\CC_p[\cG_n]$-module is simply a way of taking $\chi$-parts. A more immediate method we have also encountered is to multiply the module by $e(\chi)$. Since $\lambda_{n, S}^\beta \circ (\CC_p \otimes_{\ZZ_p} \varphi_n^\alpha)$ is a $\CC_p[\cG_n]$-endomorphism of $e(\chi)\CC_p \otimes_{\ZZ_p} \cX_{L_n, S}$ (and hence $\CC_p$-linear), one could define the regulator as the determinant of this transformation instead. This approach is closely related to the above, but it yields a coarser magnitude: it can be shown that
        \[
            \det(\lambda_{n, S}^\beta \circ (\CC_p \otimes_{\ZZ_p} \varphi_n^\alpha) \mid e(\chi) \CC_p \otimes_{\ZZ_p} \cX_{L_n, S}) = R_S^\beta(\alpha, \chi)^{\chi(1)}.
        \]
        Subsequent sections (most notably \ref{sec:independence_of_the_choice_of_parameters}) will illustrate why $R_S^\beta(\alpha, \chi)$ is the right choice for our purposes.

    \section{Evaluation maps}
    \label{sec:evaluation_maps}

        As explained in the introduction to the present chapter, the Interpolation Conjecture will assert the existence of certain quotients of power series which interpolate regulated special $L$-values. More specifically, the series quotient associated to a character $\chi \in \Irr_p(\cG)$ is expected to interpolate the special values corresponding to almost all of its $\rho$-twists. By a $\rho$-twist of $\chi$ we mean its product with a character of type $W$ - in other words, any character which is $W$-equivalent to $\chi$ in the terminology of section \ref{sec:morphisms_on_finite_level}.

        In this section, we present the relation between evaluation at 0 of series quotients associated to $W$-equivalent characters. This is motivated by the structure of the centre of $\cQ^c(\cG)$ studied by Ritter and Weiss in \cite{rwii} and leads us to define \textit{twisted evaluation maps}, which amount to evaluation at certain $p$-adic algebraic points in the open unit ball. Requiring a single series quotient to interpolate $L$-values under infinitely many twisted evaluation maps therefore ensures its uniqueness - if it exists - which is important for the Main Conjecture.

        For the first definition, let $\tilde{\Gamma} \iso \ZZ_p$ be a profinite group and fix a topological generator $\tilde{\gamma}$ (the notation here has been chosen to avoid confusion with the specific $\Gamma$ and $\gamma$ from setting \ref{sett:construction}). The crucial isomorphism  \eqref{eq:iso_iwasawa_algebra} holds for more general rings of coefficients than $\ZZ_p$: given a $p$-adic field $E$, the homomorphism of topological $\cO_E$-algebras
        \begin{align*}
            \Lambda^{\cO_{E}}(\tilde{\Gamma}) & \to \cO_{E}[[\tilde{T}]] \\
            \tilde{\gamma} & \mapsto \tilde{T} + 1
        \end{align*}
        is an isomorphism, where $\cO_{E}[[\tilde{T}]]$ is equipped with the $\ideal{\pi, \tilde{T}}$-adic topology for any uniformiser $\pi$ of $E$. Under this identification, the augmentation map from section \ref{sec:iwasawa_algebras_and_modules} corresponds to evaluation at 0, i.e. the diagram
        \begin{center}
            \begin{tikzcd}[column sep=large]
            \Lambda^{\cO_{E}}(\tilde{\Gamma}) \arrow[r, "\tilde{\gamma} \: \mapsto \: \tilde{T} + 1"] \arrow[d, "\aug_{\tilde{\Gamma}}", two heads] & {\cO_E[[\tilde{T}]]} \arrow[d, "\tilde{T} = 0", two heads] \\
            \cO_E \arrow[r, equals]                                                       & \cO_E
            \end{tikzcd}
        \end{center}
        commutes. Recall that its kernel, the augmentation ideal $\Delta^{\cO_E}(\tilde{\Gamma}) = \ker(\aug_{\tilde{\Gamma}})$, is a height-one prime ideal of $\Lambda^{\cO_{E}}(\tilde{\Gamma})$ generated by $\tilde{\gamma} - 1$. The map $\aug_{\tilde{\Gamma}}$ has a natural extension to the  field of fractions $\cQ^E(\tilde{\Gamma}) = \ffrac(\Lambda^{\cO_E}(\tilde{\Gamma})) = E \otimes_{\QQ_p} \cQ(\tilde{\Gamma})$:
        \begin{defn}
        \label{defn:evaluation_at_zero}
            Let $E$ be a $p$-adic field and $\tilde{\gamma}$ a topological generator of a profinite group $\tilde{\Gamma} \iso \ZZ_p$. We define the evaluation-at-0 map with respect to $\tilde{\gamma}$ as
            \begin{align*}
                ev_{\tilde{\gamma}} \colon \cQ^E(\tilde{\Gamma}) & \to E \cup \set{\infty} \\
                q = \frac{f}{g} & \mapsto
                \begin{cases}
                    \frac{\aug_{\tilde{\Gamma}}(f)}{\aug_{\tilde{\Gamma}}(g)}, & q \in \Lambda^{\cO_{E}}(\tilde{\Gamma})_{\Delta^{\cO_{E}}(\tilde{\Gamma})} \\
                    \infty, & \text{otherwise}
                \end{cases}
            \end{align*}
            where the subscript in $\Lambda^{\cO_{E}}(\tilde{\Gamma})_{\Delta^{\cO_{E}}(\tilde{\Gamma})}$ denotes localisation and, in the first case, $g$ is chosen outside $\Delta^{\cO_{E}}(\tilde{\Gamma})$. Furthermore, we define
            \[
                ev_{\tilde{\gamma}} \colon \cQ^c(\tilde{\Gamma}) = \QQ_p^c \otimes_{\QQ_p} \cQ(\tilde{\Gamma}) = \varinjlim_E \cQ^E(\tilde{\Gamma}) \to \QQ_p^c \cup \set{\infty}
            \]
            as the direct limit of the above maps.\qedef
        \end{defn}

        \begin{rem}
        \phantomsection
        \label{rem:definition_of_evaluation}
            \begin{enumerate}[i)]
                \item{
                    The map $ev_{\tilde{\gamma}} \colon \cQ^E(\tilde{\Gamma}) \to E \cup \set{\infty}$ is well defined, i.e. independent of the choice of an expression $q = f/g$ for $q \in \Lambda^{\cO_{E}}(\tilde{\Gamma})_{\Delta^{\cO_{E}}(\tilde{\Gamma})}$, because $\aug_{\tilde{\Gamma}}$ is a ring homomorphism on $\Lambda^{\cO_E}(\tilde{\Gamma})$. The well-definedness of $ev_{\tilde{\gamma}} \colon \cQ^c(\tilde{\Gamma}) \to \QQ_p^c \cup \set{\infty}$ follows immediately from the compatibility of $\aug_{\tilde{\Gamma}}$ with extension of scalars - which also shows that, if $E'/E$ is an extension of $p$-adic fields, then $\Delta^{\cO_{E'}}(\tilde{\Gamma}) = \cO_{E'} \otimes_{\cO_E} \Delta^{\cO_E}(\tilde{\Gamma})$.
                }
                \item{
                    $E \cup \set{\infty}$ has no natural ring or $E$-vector-space structure, so $ev_{\tilde{\gamma}}$ is not a ring homomorphism and it is not $E$-linear. However, its restriction to the $\Lambda^{\cO_{E}}(\tilde{\Gamma})_{\Delta^{\cO_{E}}(\tilde{\Gamma})}$ is indeed a homomorphism of $E$-algebras for the same reasons as above. In other words, $ev_{\tilde{\gamma}}$ is compatible with sums, products and scaling as long as none of the involved series quotients evaluate to $\infty$.
                }
            \end{enumerate}
        \end{rem}

        The definition of $ev_{\tilde{\gamma}}$ as the \textit{evaluation-at-0 map with respect to $\tilde{\gamma}$} may appear superfluous, since it is independent of the choice of a topological generator $\tilde{\gamma}$ of $\tilde{\Gamma}$ (because so is $\aug_{\tilde{\Gamma}}$ by definition). Its relevance will become clear in a few lines, where $\tilde{\gamma}$ will be replaced not by another topological generator of the same group, but by a scaled version of itself. The inclusion of $\tilde{\gamma}$ in the notation will essentially allow us to track changes of variables in power series rings later on.

        The next two results are taken from \cite{rwii} (see the footnote\textsuperscript{\ref{foot:rw}} on page \pageref{foot:rw}) and stated here without proof. More specifically, they are a rearrangement of proposition 5 and the corollary to proposition 6 in the article. They concern structural properties of $\cQ^c(\cG)$ (cf. \eqref{eq:q_direct_limit_finite} and \eqref{eq:lambda_regular_elements}) which will be essential in the sequel. The first one characterises the primitive central idempotents of the ring:

        \begin{prop}
        \label{prop:rw_properties_q}
            Setting \ref{sett:construction}. The following hold:
            \begin{enumerate}[i)]
                \item{
                    The ring $\cQ^c(\cG)$ is semisimple Artinian.
                }
                \item{
                    Let $\chi \in \Irr_p(\cG)$. Then
                    \[
                        e_\chi = \sum_{\substack{\eta \in \Irr_{\QQ_p^c}(H)\\\eta \mid \rest_H^\cG \chi}} e(\eta)
                    \]
                    is a primitive central idempotent of $\cQ^c(\cG)$.\index{primitive central idempotent!e2@$e_\chi$}
                }
                \item{
                    Every primitive central idempotent of $\cQ^c(\cG)$ is of the form $e_\chi$ as above for some $\chi \in \Irr_p(\cG)$.
                }
                \item{
                    Let $\chi, \chi' \in \Irr_p(\cG)$. Then the following are equivalent:
                    \begin{itemize}
                        \item{
                            $e_\chi = e_{\chi'}$.
                        }
                        \item{
                            $\chi \sim_W \chi'$.
                        }
                        \item{
                            $\rest_H^\cG \chi = \rest_H^\cG \chi'$.
                        }
                    \end{itemize}
                }
            \end{enumerate}
        \end{prop}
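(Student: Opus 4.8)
The statement is Proposition~\ref{prop:rw_properties_q}, which is explicitly attributed to Ritter and Weiss and stated \emph{without proof}, so the ``proof'' consists of pointing to and unpacking the relevant results of \cite{rwii}. My plan is therefore to explain which pieces of that article assemble into this statement and how the minor gap between their setting (totally real $L_\infty$, $p$ odd) and ours is bridged.

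First I would recall the general principle, valid for any semisimple coefficient situation: since $\Gamma$ is an open central subgroup of $\cG$ with $\cG/\Gamma$ finite, equation \eqref{eq:lambda_regular_elements} identifies $\cQ^c(\cG)$ with $\cQ^c(\Gamma) \otimes_{\Lambda(\Gamma)} \Lambda(\cG) = \cQ^c(\Gamma)[\cG/\Gamma]$ up to the appropriate completed structure; because $\cQ^c(\Gamma)$ is a field of characteristic $0$ and $\cG/\Gamma$ is finite, Maschke's theorem gives part i). This is precisely the argument in the proof of \cite{rwii} Proposition~5, and since it only uses the group-theoretic input $[\cG:\Gamma]<\infty$ together with $\operatorname{char}\cQ^c(\Gamma)=0$, the hypothesis that $L_\infty$ be totally real plays no role — exactly as promised in footnote~\ref{foot:rw}.

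Next I would address parts ii)--iv). The key structural fact from \cite{rwii} (the corollary to Proposition~6 there) is that the Wedderburn components of $\cQ^c(\cG)$ are indexed by the $W$-equivalence classes of irreducible Artin characters of $\cG$, or equivalently by the $\cG$-orbits of irreducible characters of the finite group $H$ under the conjugation action of $\cG/H = \Gamma_K$. Concretely: restricting an irreducible Artin character $\chi$ of $\cG$ to $H$ yields a $\cG$-stable sum of $H$-conjugates of a single $\eta \in \Irr_{\QQ_p^c}(H)$, and the corresponding sum of primitive central idempotents $e(\eta)$ of $\QQ_p^c[H]$ — this is the $e_\chi$ in the statement — remains central and idempotent in $\cQ^c(\cG)$ and is in fact primitive there. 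That gives ii) and iii). For iv), the equivalences $e_\chi = e_{\chi'} \iff \rest_H^\cG\chi = \rest_H^\cG\chi' \iff \chi \sim_W \chi'$ are then read off: two irreducible Artin characters of $\cG$ have the same restriction to $H$ exactly when they differ by a character trivial on $H$, i.e. a type-$W$ character (using that $\cG/H = \Gamma_K \iso \ZZ_p$, so the characters of $\cG$ trivial on $H$ are precisely the type-$W$ ones per the bijection \eqref{eq:bijection_type_W_rou}); and the idempotent $e_\chi$ depends only on the set of $H$-constituents of $\rest_H^\cG\chi$, which in turn depends only on the $W$-equivalence class.

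The main thing to check — and the only point where our setting differs from \cite{rwii} — is that none of the above uses totally-realness or $p\ne 2$. The decomposition $\cQ^c(\cG) = \cQ^c(\Gamma)[\cG/\Gamma]$ and its semisimplicity, the description of central idempotents via characters of the finite part $H$, and the $W$-equivalence bookkeeping are all purely algebraic consequences of $\cG$ being a one-dimensional $p$-adic Lie group with finite part $H$ and central $\ZZ_p$-quotient $\Gamma$; the arithmetic hypotheses on $L_\infty/K$ are irrelevant here. Hence it suffices to cite \cite{rwii} Propositions~5 and~6 (and the corollary to the latter) verbatim, noting the harmless change of setting. I do not anticipate a genuine obstacle; the only care required is in matching the normalisations — in particular that $e_\chi$ as defined here (a sum of $e(\eta)$ over the $H$-constituents of $\chi$) agrees with the idempotent attached to the relevant Wedderburn block in \cite{rwii}, which is a routine verification using \eqref{eq:pci_projection} and the $\cG$-stability of $\rest_H^\cG\chi$.
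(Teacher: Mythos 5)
Your proposal is correct and matches the paper's treatment: the paper states this result without proof, explicitly attributing it to \cite{rwii} as a rearrangement of Proposition~5 and the corollary to Proposition~6 there, with the footnote noting that the totally-real/odd-$p$ hypotheses of that article are irrelevant to these purely algebraic statements. Your unpacking of the underlying arguments (Maschke for the crossed-product decomposition over $\cQ^c(\Gamma)$, Clifford theory for the idempotents, and the $W$-equivalence bookkeeping) is consistent with how the paper uses these facts elsewhere, e.g.\ in the discussion following the proposition and in lemma~\ref{lem:functoriality_1_chi_parts}.
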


        Consider $\chi$ and $\eta$ as in part ii). For $\sigma \in \cG$, let $\eta^\sigma$ be the conjugated character given by
        \[
            \eta^\sigma(\tau) = \eta(\sigma\tau\sigma^{-1})
        \]
        on $\tau \in H$. This is again an irreducible character of $H$ which divides $\rest_H^\cG \chi$, as $\chi^\sigma = \chi$. Thus, the elements of $\cG$ permute the irreducible constituents of $\rest_H^\cG \chi$, and this action is in fact transitive by Clifford theory. We define the stabiliser of $\eta$ as the group
        \[
            St(\eta) = \set{\sigma \in \cG \et{such that} \eta^\sigma = \eta} \leq \cG,
        \]
        so that when $\tau$ runs over a set of coset representatives of $St(\eta) \backslash \cG$, the character $\eta^\tau$ runs over the irreducible constituents of $\rest_H^\cG \chi$ exactly once. $St(\eta)$ is an open subgroup (as can be seen by projecting $\chi$ to a finite quotient $\cG_n$ of $\cG$) containing $H$, and hence of the form $\Gal(L_\infty/K_n)$ for some $n$. This shows that
        \begin{equation}
        \label{eq:definition_w_chi}
            w_\chi = [\cG : St(\eta)]
        \end{equation}
        is a power of $p$. Since all such subgroups are normal, Clifford theory shows that $St(\eta)$ does not depend of the choice of $\eta \mid \rest_H^\cG \chi$ and the notation of $w_\chi$ is thus justified. This index plays an important role in the next result, again taken from \cite{rwii}:

        \begin{prop}
        \label{prop:structure_gamma_chi}
            Setting \ref{sett:construction}. Let $\chi \in \Irr_p(\cG)$. Then the following hold:
            \begin{enumerate}[i)]
                \item{
                    There exists a unique $\gamma_\chi \in Z(\cQ^c(\cG)e_\chi)$ satisfying:
                    \begin{itemize}
                        \item{
                            $\gamma_\chi = gc$ for some $g \in \cG$ with projection $\gamma_K^{w_\chi} \in \Gamma_K$ (where $\gamma_K$ is the topological generator chosen in the setting) and some $c \in \punits{\QQ_p^c[H]e_\chi}$.
                        }
                        \item{
                            $\gamma_\chi$ acts trivially on the $\QQ_p^c$-vector space $V_\chi$ which affords $\chi$.
                        }
                    \end{itemize}
                    That $\gamma_\chi$ also satisfies $\gamma_\chi = cg$, and it topologically generates a subgroup $\Gamma_\chi \iso \ZZ_p$ of $\punits{\cQ^c(\cG)e_\chi}$.
                }
                \item{
                    Let $E$ be a $p$-adic field such that $\chi$ has a realisation over $E$. Then the group $\Gamma_\chi$ from i) is contained in $\units{Z(\cQ^E(\cG)e_\chi)}$, which induces an equality of fields ${\cQ^E(\Gamma_\chi) = Z(\cQ^E(\cG)e_\chi)}$.
                }
                \item{
                    With $E$ as in ii), the ring homomorphism
                    \[
                        j_\chi^E \colon Z(\cQ^E(\cG)) \sa Z(\cQ^E(\cG)e_\chi) = \cQ^E(\Gamma_\chi) \ia \cQ^E(\Gamma_K),
                    \]
                    where the last arrow is induced by $\gamma_\chi \mapsto \gamma_K^{w_\chi}$, is independent of the choice of $\gamma_K$.
                }
                \item{
                    If $\rho$ is a type-$W$ character of $\cG$, then $\gamma_{\chi \otimes \rho} = \rho(\gamma_K)^{-w_\chi} \gamma_\chi$.
                }
            \end{enumerate}
        \end{prop}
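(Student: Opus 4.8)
The plan is to pass to the simple Artinian ring $A := \cQ^c(\cG)e_\chi$ and analyse it and its centre $F := Z(A)$, the inputs being Clifford theory for the normal subgroup $H \trianglelefteq \cG$, the Skolem--Noether theorem, and the Ritter--Weiss description of $\cQ^c(\cG)$ as built from the one-variable field $\cQ^c(\Gamma)$. First fix an irreducible constituent $\eta$ of $\rest_H^\cG \chi$. Since $H$ is normal with procyclic pro-$p$ quotient $\Gamma_K = \cG/H$, Clifford theory gives that $St(\eta) \supseteq H$, is of the form $\Gal(L_\infty/K_m)$ with $[\cG:St(\eta)] = w_\chi = p^m$, is normal in $\cG$, and that the $\cG$-conjugates $e(\eta^\sigma)$ ($\sigma$ running over $St(\eta)\backslash\cG$) are pairwise orthogonal idempotents summing to $e_\chi$ which $\cG$ permutes simply transitively. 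Pick a lift $g \in \cG$ of $\gamma_K^{w_\chi}$; because $\gamma_K^{w_\chi}$ generates $St(\eta)/H$ one has $g \in St(\eta)$, and since $St(\eta) \trianglelefteq \cG$, conjugation by $g$ fixes each $e(\eta^\sigma)$ and hence stabilises each simple block $e(\eta^\sigma)\QQ_p^c[H]e(\eta^\sigma) \iso M_{\eta(1)}(\QQ_p^c)$ (Wedderburn, as $\QQ_p^c$ is algebraically closed). By Skolem--Noether this inner automorphism is effected by conjugation by a unit $c \in \units{\QQ_p^c[H]e_\chi}$, so that $gc$ commutes with $\QQ_p^c[H]e_\chi$.

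Next I would show that $gc$ can be further corrected, within the units of the centre of $\QQ_p^c[H]e_\chi$, so as also to commute with a lift $g_0 \in \cG$ of a topological generator of $\Gamma_K$; since $A$ is generated by $\QQ_p^c[H]e_\chi$, $g_0$ and the automatically central image of $\cQ^c(\Gamma)$, the resulting element lies in $F$. The remaining freedom (a unit of $Z(\QQ_p^c[H]e_\chi)$) is pinned down by the requirement of acting trivially on the unique simple $A$-module $V_\chi$: $g$ acts on $V_\chi$ invertibly, and — again because $g \in St(\eta)$, via Clifford theory — that operator lies in the image of $\QQ_p^c[H]e_\chi$, so $c$ can and must be scaled to make $gc$ act as the identity. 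This produces $\gamma_\chi := gc$; it commutes with $c^{-1} \in A$, whence also $\gamma_\chi = cg$. Uniqueness is then bookkeeping: two elements of $F$ with the same image $\gamma_K^{w_\chi}$ differ by an element of $F$ mapping to $1$, and acting trivially on $V_\chi$ forces that element to be $1$. Finally I would invoke the structural fact — a lemma of Ritter and Weiss — that $F$, being a finite field extension of $\cQ^c(\Gamma) \iso \QQ_p^c \otimes_{\QQ_p} \ffrac(\ZZ_p[[T]])$, is again of the form $\cQ^c(\tilde{\Gamma})$ for some $\tilde{\Gamma} \iso \ZZ_p$, and check by a degree count that $\gamma_\chi$ (a unit of $F$ projecting to the topological generator $\gamma_K^{w_\chi}$ of $St(\eta)/H \iso \ZZ_p$) is a topological generator of the corresponding procyclic subgroup $\Gamma_\chi \leq \units{F} \subseteq \punits{A}$.

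For ii) and iii), choose a $p$-adic field $E$ realising $\chi$, so that $e_\chi \in E[H] \subseteq \cQ^E(\cG)$ and $A = \QQ_p^c \otimes_E \cQ^E(\cG)e_\chi$. The defining properties of $\gamma_\chi$ are stable under $\Gal(\QQ_p^c/E)$ acting on $A$ — the projection condition trivially, and triviality on $V_\chi$ because $V_\chi$ is defined over $E$ — so by uniqueness $\gamma_\chi$ is $\Gal(\QQ_p^c/E)$-fixed, hence $\gamma_\chi \in \cQ^E(\cG)e_\chi$ and $\Gamma_\chi \subseteq \units{Z(\cQ^E(\cG)e_\chi)}$; extending scalars back to $\QQ_p^c$ and comparing with part i) gives $\cQ^E(\Gamma_\chi) = Z(\cQ^E(\cG)e_\chi)$. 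Independence of $j_\chi^E$ from $\gamma_K$ follows because, again by uniqueness, replacing $\gamma_K$ by $\gamma_K^u$ ($u \in \units{\ZZ_p}$) replaces $\gamma_\chi$ by its $u$-th power inside $\Gamma_\chi$, leaving the composite $Z(\cQ^E(\cG)) \sa \cQ^E(\Gamma_\chi) \ia \cQ^E(\Gamma_K)$ unchanged. Part iv) is a one-line consequence of uniqueness: $\rho$ of type $W$ gives $\rest_H^\cG(\chi\otimes\rho) = \rest_H^\cG\chi$, so $e_{\chi\otimes\rho} = e_\chi$ and $w_{\chi\otimes\rho} = w_\chi$, and $\rho(\gamma_K)^{-w_\chi}\gamma_\chi$ lies in $F$, still projects to $\gamma_K^{w_\chi}$, and acts trivially on $V_{\chi\otimes\rho}$ (which is $V_\chi$ with its $\cG$-action twisted by $\rho$) — the scalar $\rho(\gamma_K)^{w_\chi}$ by which the group-element part $g$ acts on the twist being cancelled by the prefactor — so it equals $\gamma_{\chi\otimes\rho}$.

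The main obstacle is the structural input: that $F = Z(\cQ^c(\cG)e_\chi)$ is isomorphic to $\cQ^c(\tilde{\Gamma})$ for a procyclic pro-$p$ group $\tilde{\Gamma}$, and that the explicitly built $\gamma_\chi$ is a \emph{topological generator} of the relevant group rather than merely a unit of $F$ with the right projection. This is the genuinely Iwasawa-theoretic content — it rests on $H$ being finite, $\Gamma$ being central of finite index in $\cG$, and the precise way $\cQ^c(\cG)$ is assembled from $\cQ^c(\Gamma)$ — and is essentially Ritter and Weiss's proposition 5, which the excerpt cites in place of a proof. Everything else (Clifford theory, Skolem--Noether, the uniqueness bookkeeping, the Galois descent in ii)--iii), and the twist identity iv)) is routine once that is available.
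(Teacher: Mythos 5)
There is nothing in the paper to compare your argument against: the text explicitly states that Proposition \ref{prop:structure_gamma_chi} (together with Proposition \ref{prop:rw_properties_q}) is ``taken from \cite{rwii} \dots and stated here without proof'', being a rearrangement of proposition 5 and the corollary to proposition 6 of Ritter and Weiss. So your proposal is not an alternative to the paper's proof but a reconstruction of the external reference. As such it is sound in outline and follows the Ritter--Weiss strategy: Clifford theory for $H\trianglelefteq\cG$ to see that $g$ lies in the normal stabiliser $St(\eta)$ and hence fixes each block $\QQ_p^c[H]e(\eta^\sigma)$, Skolem--Noether to produce $c$, normalisation by the action on $V_\chi$ to pin $\gamma_\chi$ down and get uniqueness, Galois descent for ii), and uniqueness again for iii) and iv) (your computation that $g$ acts on the $\rho$-twist by the extra scalar $\rho(\gamma_K)^{w_\chi}$, cancelled by the prefactor, is exactly right). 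You also correctly identify, and honestly defer to \cite{rwii} for, the one genuinely Iwasawa-theoretic input: that $Z(\cQ^c(\cG)e_\chi)$ is again a field of the form $\cQ^c(\tilde\Gamma)$ and that $\gamma_\chi$ topologically generates $\Gamma_\chi\iso\ZZ_p$ rather than merely being a unit with the right projection.

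One step you assert rather than argue is the passage from ``$gc$ centralises $\QQ_p^c[H]e_\chi$'' to ``$gc$ (after a further correction) is central in all of $\cQ^c(\cG)e_\chi$'', i.e.\ also commutes with a lift $g_0$ of $\gamma_K$. This is where a real argument is needed (e.g.\ comparing $gc$ with $g_0(gc)g_0^{-1}$, both of which centralise $\QQ_p^c[H]e_\chi$ and act trivially on $V_\chi$ once normalised, so they agree); as stated, your ``can be further corrected'' is a placeholder. Since you already lean on Ritter--Weiss for the structural facts, this does not invalidate the sketch, but it is the point you would have to flesh out to make the reconstruction self-contained.
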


        Proposition \ref{prop:rw_properties_q} implies that $\cQ^c(\cG)$ decomposes as a finite product of simple rings
        \begin{equation}
        \label{eq:decomposition_algebra_chi-parts}
            \cQ^c(\cG) \iisoo \prod_{\chi/{\sim}_W} \cQ^c(\cG)e_\chi,
        \end{equation}
        where $\chi$ runs over any set of representatives of $\Irr_p(\cG)$ modulo $W$-equivalence. The reduced norm on $K_1$ introduced in section \ref{sec:algebraic-k-theory} then factors over these $\chi$-parts by definition, i.e. there is a commutative diagram
        \begin{equation}
        \label{eq:k1_reduced_norm_chi_parts}
            \begin{tikzcd}
                K_1(\cQ^c(\cG)) \arrow[d, "\nr"] \arrow[r, "\sim"] & \prod_{\chi/{\sim}_W} K_1(\cQ^c(\cG)e_\chi) \arrow[d, "\prod \nr"] \\
                \units{Z(\cQ^c(\cG))} \arrow[r, "\sim"]                  & \prod_{\chi/{\sim}_W} \units{Z(\cQ^c(\cG)e_\chi)}
            \end{tikzcd}
        \end{equation}
        of abelian groups. By part ii) of the last proposition, each term in the bottom-right corner is in fact the group of units of a field  $\cQ^c(\Gamma_\chi)$ of series quotients in the variable $T_\chi = \gamma_\chi - 1$. It is precisely there that the interpolating elements of the Main Conjecture are expected to live. However, to conjecture the existence of an element therein which evaluates at 0 to the special $L$-value associated to $\chi$ would be an empty statement in isolation - and also entail an evident lack of uniqueness. Both issues can be resolved through the fundamental relation iv) above, which hints at the notion that one should consider interpolation for all $\rho$-twists of $\chi$ simultaneously. The following lemma lays the foundations for that approach:

        \begin{lem}
        \label{lem:twisted_evaluation_maps}
            Setting \ref{sett:construction}. Let $\chi, \rho \in \Irr_p(\cG)$ with $\rho$ of type $W$ and choose a $p$-adic field $E$ such that $\chi$ and $\rho$ (and in particular $\chi \otimes \rho$) have a realisation over it. Then $\cQ^E(\Gamma_\chi) = \cQ^E(\Gamma_{\chi \otimes \rho}) \subseteq \cQ^E(\cG)$ and, for a series quotient $q \in \cQ^E(\Gamma_\chi) \iso \ffrac(\cO_E[[T_\chi]])$, one has
            \[
                ev_{\gamma_{\chi \otimes \rho}}(q) = \restr{q}{T_\chi = \rho(\gamma_K)^{w_\chi} - 1} \in E \cup \set{\infty},
            \]
            where the right-hand side denotes evaluation at $T_\chi = \rho(\gamma_K)^{w_\chi} - 1 \in E$ and we set $k/0 = \infty$ for $k \in \units{E}$.
        \end{lem}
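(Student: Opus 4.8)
The plan is to trace through the definitions and reduce everything to the fact, recorded in Proposition~\ref{prop:structure_gamma_chi}~iv), that $\gamma_{\chi \otimes \rho} = \rho(\gamma_K)^{-w_\chi}\gamma_\chi$. First I would note that both $\Gamma_\chi$ and $\Gamma_{\chi \otimes \rho}$ live inside $\punits{\cQ^c(\cG)e_\chi}$: indeed $e_\chi = e_{\chi \otimes \rho}$ by Proposition~\ref{prop:rw_properties_q}~iv) since $\chi \sim_W \chi \otimes \rho$, so the simple factor $\cQ^c(\cG)e_\chi$ is the same for both. Over the $p$-adic field $E$ (which realises $\chi$, $\rho$, and hence $\chi \otimes \rho$, and over which $\rho(\gamma_K)^{w_\chi} - 1$ genuinely lies), part ii) of Proposition~\ref{prop:structure_gamma_chi} gives $\cQ^E(\Gamma_\chi) = Z(\cQ^E(\cG)e_\chi) = \cQ^E(\Gamma_{\chi \otimes \rho})$ as subfields of $\cQ^E(\cG)$; this is the first assertion. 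Since $\rho(\gamma_K)^{-w_\chi} \in \punits{\cO_E}$ and $\gamma_\chi$ generates $\Gamma_\chi$, the relation from iv) shows $\gamma_{\chi \otimes \rho}$ is a $\punits{\cO_E}$-scalar multiple of $\gamma_\chi$ inside this common field.

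The second step is to make the change of variables explicit. Under the identification $\cQ^E(\Gamma_\chi) \iso \ffrac(\cO_E[[T_\chi]])$ with $T_\chi = \gamma_\chi - 1$, we have $\gamma_{\chi \otimes \rho} - 1 = \rho(\gamma_K)^{-w_\chi}\gamma_\chi - 1 = \rho(\gamma_K)^{-w_\chi}(T_\chi + 1) - 1$. Writing $a = \rho(\gamma_K)^{w_\chi} \in \units{E}$, this means $T_{\chi \otimes \rho} = a^{-1}(T_\chi + 1) - 1$, equivalently $T_\chi = a(T_{\chi \otimes \rho} + 1) - 1$. By Definition~\ref{defn:evaluation_at_zero}, $ev_{\gamma_{\chi \otimes \rho}}(q)$ is obtained by writing $q$ as a quotient of elements of $\cO_E[[T_{\chi \otimes \rho}]]$ (possible up to inverting an element outside $\Delta^{\cO_E}(\Gamma_{\chi \otimes \rho})$, i.e. with nonzero constant term in $T_{\chi \otimes \rho}$) and applying $\aug_{\Gamma_{\chi \otimes \rho}}$, which under this identification is $T_{\chi \otimes \rho} = 0$. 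Substituting $T_{\chi \otimes \rho} = 0$ into $T_\chi = a(T_{\chi \otimes \rho} + 1) - 1$ gives exactly $T_\chi = a - 1 = \rho(\gamma_K)^{w_\chi} - 1$. Hence $ev_{\gamma_{\chi \otimes \rho}}(q)$ is the value of $q$, viewed as a series quotient in $T_\chi$, at $T_\chi = \rho(\gamma_K)^{w_\chi} - 1$, which is the claimed formula; the convention $k/0 = \infty$ on the right matches the $\infty$ case of Definition~\ref{defn:evaluation_at_zero} (denominator landing in the maximal ideal $\Delta^{\cO_E}(\Gamma_{\chi \otimes \rho})$).

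A minor point to check carefully is \emph{integrality of the substitution}: $T_\chi = a(T_{\chi \otimes \rho}+1) - 1$ does lie in $\cO_E[[T_{\chi \otimes \rho}]]$ only if $a \in \cO_E$, which need not hold for a root of unity $\rho(\gamma_K)$ scaled — actually $\rho(\gamma_K)$ is a $p$-power root of unity, hence a unit in $\cO_E$, so $a \in \punits{\cO_E}$ and the substitution is an automorphism of $\cO_E[[T]]$; this makes the identification $\cO_E[[T_\chi]] = \cO_E[[T_{\chi \otimes \rho}]]$ legitimate and shows $\Delta^{\cO_E}(\Gamma_\chi)$ and $\Delta^{\cO_E}(\Gamma_{\chi \otimes \rho})$ are \emph{distinct} height-one primes (unless $\rho$ is trivial), which is precisely why the two evaluation maps differ. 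I would also observe that everything is compatible with enlarging $E$, so one may pass to $\cQ^c(\Gamma_\chi)$ if desired. The only genuinely substantive input is Proposition~\ref{prop:structure_gamma_chi}~iv); the rest is bookkeeping with the change of variables. The main obstacle is therefore purely notational: keeping straight which variable ($T_\chi$ versus $T_{\chi \otimes \rho}$) each $\aug$ and each localisation refers to, and confirming that the well-definedness of $ev_{\gamma_{\chi \otimes \rho}}$ (independence of the chosen representation of $q$ as a fraction) transfers across the substitution — but this is immediate since an $\cO_E$-algebra automorphism of $\cO_E[[T]]$ carries ring homomorphisms to ring homomorphisms.
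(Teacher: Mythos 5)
Your proof is correct and follows essentially the same route as the paper's: the change of variables $T_\chi = \rho(\gamma_K)^{w_\chi}(T_{\chi\otimes\rho}+1)-1$ driven by Proposition~\ref{prop:structure_gamma_chi}~iv), with the equality $\cQ^E(\Gamma_\chi)=\cQ^E(\Gamma_{\chi\otimes\rho})$ obtained via Propositions~\ref{prop:rw_properties_q}~iv) and~\ref{prop:structure_gamma_chi}~ii) — a shortcut the paper explicitly endorses before opting for a more explicit computation with the Iwasawa algebras. One peripheral inaccuracy in your aside: the primes $\Delta^{\cO_E}(\Gamma_\chi)$ and $\Delta^{\cO_E}(\Gamma_{\chi\otimes\rho})$ coincide precisely when $\rho(\gamma_K)^{w_\chi}=1$, which can happen for non-trivial $\rho$ (cf.\ remark~\ref{rem:properties_of_ev}~i)), but this does not affect the argument.
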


        \begin{proof}
            We first show $\cQ^E(\Gamma_\chi) = \cQ^E(\Gamma_{\chi \otimes \rho})$. This follows immediately from propositions \ref{prop:rw_properties_q} iv) and \ref{prop:structure_gamma_chi} ii), but we present here a more explicit proof different from that in the reference. The Iwasawa algebras $\Lambda^{\cO_E}(\Gamma_\chi)$ and $\Lambda^{\cO_E}(\Gamma_{\chi \otimes \rho})$ are defined as
            \begin{equation}
            \label{eq:iwasawa_algebras_twists}
                \Lambda^{\cO_E}(\Gamma_\chi) = \varprojlim_{n} \cO_E\left[\faktor{\Gamma_\chi}{\Gamma_\chi^{p^n}}\right] \quad \text{and} \quad \Lambda^{\cO_E}(\Gamma_{\chi \otimes \rho}) = \varprojlim_{n} \cO_E\left[\faktor{\Gamma_{\chi \otimes \rho}}{\Gamma_{\chi \otimes \rho}^{p^n}}\right].
            \end{equation}
            Let us apply the relation $\gamma_\chi = \rho(\gamma_K)^{w_\chi} \gamma_{\chi \otimes \rho}$ from proposition \ref{prop:structure_gamma_chi}, denoting $\rho(\gamma_K)^{w_\chi}$ by $\zeta \in \cO_E$. Since $\zeta$ is a $p$-power root of unity (potentially 1), one has $\gamma_\chi^{p^n} = \gamma_{\chi \otimes \rho}^{p^n}$ (and hence $\Gamma_\chi^{p^n} = \Gamma_{\chi \otimes \rho}^{p^n}$) for all $n$ larger than some fixed $n_0$ satisfying $\zeta^{p^{n_0}} = 1$. This yields an equality
            \[
                 \sum_{i = 0}^{p^n - 1} a_i \cdot \gamma_\chi^i \Gamma_\chi^{p^n} = \sum_{i = 0}^{p^n - 1} (a_i \zeta^i) \cdot \gamma_{\chi \otimes \rho}^i \Gamma_{\chi \otimes \rho}^{p^n}
            \]
            in $\cO_E[\Gamma_\chi/\Gamma_\chi^{p^n}]$ for all such $n$, where the left-hand term is a generic element and the right-hand one clearly lies in $\cO_E[\Gamma_{\chi \otimes \rho}/\Gamma_{\chi \otimes \rho}^{p^n}]$. This equality commutes with the transition maps in \eqref{eq:iwasawa_algebras_twists} and therefore implies $\Lambda^{\cO_E}(\Gamma_\chi) = \Lambda^{\cO_E}(\Gamma_{\chi \otimes \rho})$, which extends to an equality of their fields of fractions $\cQ^E(\Gamma_\chi) = \cQ^E(\Gamma_{\chi \otimes \rho})$ inside the common ring $\cQ^E(\cG)$.

            The induced isomorphism
            \begin{align*}
                \cO_E[[T_\chi]] \xrightarrow{T_\chi \mapsto \gamma_\chi - 1} \Lambda^{\cO_E}(\Gamma_\chi) & = \Lambda^{\cO_E}(\Gamma_{\chi \otimes \rho}) \xrightarrow{\gamma_{\chi \otimes \rho} \mapsto T_{\chi \otimes \rho} + 1} \cO_E[[T_{\chi \otimes \rho}]] \\
                \gamma_\chi & = \rho(\gamma_K)^{w_\chi} \gamma_{\chi \otimes \rho}
            \end{align*}
            gives the relation $T_\chi = \rho(\gamma_K)^{w_\chi} (T_{\chi \otimes \rho} + 1) - 1$, which identifies evaluation at $T_{\chi \otimes \rho} = 0$ with evaluation at $T_\chi = \rho(\gamma_K)^{w_\chi} - 1$.

            Passing now to the field of fractions, let $q = f/g \in \cQ^E(\Gamma_{\chi \otimes \rho})$ be in reduced form (recall that $\cO_E[[T_{\chi \otimes \rho}]] \iso \Lambda^{\cO_E}(\Gamma_{\chi \otimes \rho})$ is a unique factorisation domain) with $fg \neq 0$. If $g \in \Delta^{\cO_E}(\Gamma_{\chi \otimes \rho})$, then the above argument shows
            \[
                \restr{g}{T_\chi = \rho(\gamma_K)^{w_\chi} - 1} = \restr{g}{T_{\chi \otimes \rho} = 0} = 0 \quad \text{and} \quad \restr{f}{T_\chi = \rho(\gamma_K)^{w_\chi} - 1} = \restr{f}{T_{\chi \otimes \rho} = 0} \neq 0
            \]
            (otherwise they would share the factor $T_{\chi \otimes \rho}$) and hence $\restr{q}{T_\chi = \rho(\gamma_K)^{w_\chi} - 1} = \infty$, which matches the definition of $ev_{\gamma_{\chi \otimes \rho}}(q)$. If $y \not \in \Delta^{\cO_E}(\Gamma_{\chi \otimes \rho})$, one has
            \[
                ev_{\gamma_{\chi \otimes \rho}}(q) =
                \frac{\restr{f}{T_{\chi \otimes \rho} = 0}}{\restr{g}{T_{\chi \otimes \rho} = 0}} =
                \frac{\restr{f}{T_\chi = \rho(\gamma_K)^{w_\chi} - 1}}{\restr{g}{T_\chi = \rho(\gamma_K)^{w_\chi} - 1}} =
                \restr{q}{T_\chi = \rho(\gamma_K)^{w_\chi} - 1}
            \]
            as well.
        \end{proof}

        \begin{rem}
        \phantomsection
        \label{rem:properties_of_ev}
            \begin{enumerate}[i)]
                \item{
                    The map $ev_{\gamma_\chi}$ depends on $\chi$ alone, rather than on its specific expression in terms of $\rho$-twists. As a sanity check, suppose that $\chi = \chi \otimes \rho$ for some $\rho$ of type $W$. The lemma then says that
                    \[
                        \restr{-}{T_\chi = \rho(\gamma_K)^{w_\chi} - 1} = ev_{\gamma_{\chi \otimes \rho}} = ev_{\gamma_{\chi}} = \restr{-}{T_\chi =0}.
                    \]
                    This can only happen if $\rho(\gamma_K)^{w_\chi} = 1$, but by proposition \ref{prop:structure_gamma_chi} iv), this is precisely the case.

                    For an example of $\chi = \chi \otimes \rho$ with non-trivial $\rho$, let $\rho$ be the inflation to $\cG$ of the linear character of $\Gamma_K$ which sends $\gamma_K$ to a primitive $p$-th root of unity. Denote by $G = \Gal(L_\infty/K_1)$ the preimage of $\Gamma_K^p$ in $\cG$ under the canonical projection, which is open and normal. Let $\psi$ be an Artin character of $G$ whose induction $\chi = \indu_G^\cG \psi$ is irreducible - such characters can be easily shown to exist in some cases. Then, for any $g \in \cG$, one has either $g \in G$, in which case $\rho(g) = 1$; or $g \not \in G$, in which case $\chi(g) = 0$. Therefore, $\chi = \chi \otimes \rho$.
                }
                \item{
                    It follows from the lemma that a series $x \in \cO_E[[T_\chi]]$ evaluated at $T_\chi = \zeta - 1$, where $\zeta$ is a $p$-power root of unity, yields an element in $\cO_E$ - in other words, it converges. This agrees with basic $p$-adic analysis: since the element $\zeta - 1$ is in the maximal ideal of $\ZZ[\zeta]$ and hence of $\cO_E$, any series
                    \[
                        \sum_{i = 0}^\infty a_i (\zeta - 1)^i
                    \]
                    in $\cO_E$ has terms tending to $0$ and is therefore convergent by the completeness of $E$.
                }
                \item{
                    Taking direct limits on the first statement of the lemma yields $\cQ^c(\Gamma_\chi) = \cQ^c(\Gamma_{\chi \otimes \rho})$.
                }
            \end{enumerate}
        \end{rem}

        It stands to reason to refer to $ev_{\gamma_{\chi \otimes \rho}}$ as a twist of $ev_{\gamma_\chi}$, and thus as a \textbf{twisted evaluation map}\index{twisted evaluation map}. Since type-$W$ characters can map $\gamma_K$ to roots unity of order an arbitrary power of $p$, fixing a series quotient $q \in  \cQ^c(\Gamma_\chi)$ and applying $ev_{\chi \otimes \rho}$ for all $\rho$ corresponds to evaluating it at infinitely many points of $\QQ_p^c$. These points lie in the open unit ball centred at 0 and become arbitrarily close to its boundary, as the $p$-adic valuation of $\zeta_{p^n} - 1$ (with $\zeta_{p^n}$ of order $p^n$)  is $1/(p^{n - 1}(p - 1))$ for $n \geq 1$.

        The twisted evaluation maps are closely related to the $\rho^\sharp$ in the cited article (cf. \cite{rwii}, definition on page 557). This automorphism of $\cQ^c(\Gamma_K)$ is induced by the continuous $\cO_E$-linear automorphism of $\Lambda^{\cO_E}(\Gamma_K)$ determined by $\rho^\sharp(\gamma_K) = \rho(\gamma_K) \gamma_K$. The connection to our maps is given by the fact that the composition
        \begin{equation}
        \label{eq:twisted_evaluation_maps_composition}
            \cQ^c(\Gamma_\chi) \xhookrightarrow{\gamma_\chi \mapsto \gamma_K^{w_\chi}} \cQ^c(\Gamma_K) \xrightarrow{\rho^\sharp} \cQ^c(\Gamma_K) \xrightarrow{ev_{\gamma_K}} \QQ_p^c \cup \set{\infty},
        \end{equation}
        is precisely $ev_{\gamma_{\chi \otimes \rho}}$. Indeed,
        \[
            ev_{\gamma_K}(\rho^\sharp(\gamma_K^{w_\chi})) = ev_{\gamma_K}(\rho(\gamma_K)^{w_\chi} \gamma_K^{w_\chi}) = \rho(\gamma_K)^{w_\chi} = ev_{\gamma_{\chi \otimes \rho}}(\gamma_\chi).
        \]
        The arrows in \eqref{eq:twisted_evaluation_maps_composition} play an important role in the $\Hom$-formulation of Ritter and Weiss, but we shall bypass them using the twisted evaluation maps. This will allow us to conjecture the existence of interpolating series quotients in $\cQ^c(\Gamma_\chi)$, rather than in the larger $\cQ^c(\Gamma_K)$.

        We now address the dependence of $ev_{\gamma_\chi}$ on the choice of $\gamma_K$:
        \begin{lem}
        \label{lem:twisted_evaluation_maps_independent_gamma_K}
            Setting \ref{sett:construction}. Let $\chi \in \Irr_p(\cG)$ and choose a finite extension $E$ of $\QQ_p$ over which $\chi$ has a realisation. Fix a topological generator $\tilde{\gamma}_K$ of $\Gamma_K$ in addition to the $\gamma_K$ in the setting, and let $\tilde{\gamma}_\chi$ and $\gamma_\chi$ be the corresponding elements of $Z(\cQ^E(\cG)e_\chi)$ constructed in proposition \ref{prop:structure_gamma_chi}. Denote by $\Gamma_\chi$ and $\widetilde{\Gamma}_\chi$ the groups topologically generated by $\gamma_\chi$ and $\tilde{\gamma}_\chi$, respectively. Then $\cQ^E(\Gamma_\chi) = \cQ^E(\widetilde{\Gamma}_\chi) = Z(\cQ^E(\cG)e_\chi)$ and the maps
            \[
                ev_{\gamma_\chi}, ev_{\gamma_{\tilde{\chi}}} \colon \cQ^E(\Gamma_\chi) = \cQ^E(\widetilde{\Gamma}_\chi) \to E \cup \set{\infty}
            \]
            coincide.
        \end{lem}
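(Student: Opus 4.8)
The plan is to reduce the statement to two facts already on record: that the evaluation-at-$0$ map on $\cQ^E(\tilde{\Gamma})$ for $\tilde{\Gamma}\iso\ZZ_p$ does not depend on the chosen topological generator (the remark after Definition \ref{defn:evaluation_at_zero}, since $\aug_{\tilde{\Gamma}}$ does not), and that the homomorphism $j_\chi^E$ of Proposition \ref{prop:structure_gamma_chi} iii) does not depend on the choice of $\gamma_K$. Granting these, everything comes down to showing that the two procyclic subgroups $\Gamma_\chi$ and $\widetilde{\Gamma}_\chi$ of $\punits{\cQ^E(\cG)e_\chi}$ actually coincide: once this is known, $\gamma_\chi$ and $\tilde{\gamma}_\chi$ are simply two topological generators of one group, the domains $\cQ^E(\Gamma_\chi)$ and $\cQ^E(\widetilde{\Gamma}_\chi)$ are literally the same field, and $ev_{\gamma_\chi}=ev_{\tilde{\gamma}_\chi}$ by the independence-of-generator remark.

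First I would note that Proposition \ref{prop:structure_gamma_chi} ii), applied to the generators $\gamma_K$ and $\tilde{\gamma}_K$ in turn — observing that the idempotent $e_\chi$ depends only on $\chi$ and $H$, not on $\gamma_K$ — gives $\cQ^E(\Gamma_\chi)=Z(\cQ^E(\cG)e_\chi)=\cQ^E(\widetilde{\Gamma}_\chi)$, so that $\gamma_\chi$ and $\tilde{\gamma}_\chi$ both lie in $\units{Z(\cQ^E(\cG)e_\chi)}$. Write $\iota$ and $\tilde{\iota}$ for the injections $Z(\cQ^E(\cG)e_\chi)\ia\cQ^E(\Gamma_K)$ sending $\gamma_\chi\mapsto\gamma_K^{w_\chi}$ and $\tilde{\gamma}_\chi\mapsto\tilde{\gamma}_K^{w_\chi}$ respectively; these are precisely the restrictions, to the direct factor of $Z(\cQ^E(\cG))$ cut out by $e_\chi$, of $j_\chi^E$ for the two choices of $\gamma_K$ — and forming this restriction involves no choice of generator. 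Hence Proposition \ref{prop:structure_gamma_chi} iii) forces $\iota=\tilde{\iota}$. Since $\iota$ is a continuous injection, $\iota(\Gamma_\chi)=\overline{\langle\gamma_K^{w_\chi}\rangle}=\Gamma_K^{w_\chi}$; writing $\tilde{\gamma}_K=\gamma_K^{u}$ with $u\in\units{\ZZ_p}$ one has $\tilde{\gamma}_K^{w_\chi}=(\gamma_K^{w_\chi})^{u}$, which topologically generates the same closed subgroup $\Gamma_K^{w_\chi}$, so $\tilde{\iota}(\widetilde{\Gamma}_\chi)=\Gamma_K^{w_\chi}$ as well. Injectivity of $\iota=\tilde{\iota}$ then yields $\Gamma_\chi=\widetilde{\Gamma}_\chi$, and the lemma follows as indicated in the first paragraph.

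The step I expect to be the crux is exactly this identification $\Gamma_\chi=\widetilde{\Gamma}_\chi$ — a priori these are different subgroups of $\punits{\cQ^E(\cG)e_\chi}$, attached to different generators — whereas everything else is formal. If one prefers to avoid invoking the independence of $j_\chi^E$, there is a direct alternative: from $\gamma_\chi=gc=cg$ with $g\in\cG$ lifting $\gamma_K^{w_\chi}$ and $c\in\punits{\QQ_p^c[H]e_\chi}$ (Proposition \ref{prop:structure_gamma_chi} i)) one gets $\gamma_\chi^{n}=g^{n}c^{n}$ for all $n\in\ZZ$, and passing to the $p$-adic limit inside the compact abelian subgroup $\overline{\langle g\rangle}\cdot\overline{\langle c\rangle}$ gives $\gamma_\chi^{u}=g^{u}c^{u}$ with $g^{u}\in\cG$ lifting $\gamma_K^{uw_\chi}=\tilde{\gamma}_K^{w_\chi}$ and $c^{u}\in\punits{\QQ_p^c[H]e_\chi}$; as $\gamma_\chi^{u}$ is central and acts trivially on $V_\chi$, the uniqueness clause of Proposition \ref{prop:structure_gamma_chi} i) identifies it with $\tilde{\gamma}_\chi$, whence $\widetilde{\Gamma}_\chi=\overline{\langle\gamma_\chi^{u}\rangle}=\Gamma_\chi$ once more.
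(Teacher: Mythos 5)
Your argument is correct and rests on the same pivotal input as the paper's proof, namely the independence of $j_\chi^E$ of the choice of $\gamma_K$ (proposition \ref{prop:structure_gamma_chi} iii)), which forces the two embeddings $\iota, \tilde{\iota} \colon Z(\cQ^E(\cG)e_\chi) \ia \cQ^E(\Gamma_K)$ to coincide. Where you diverge is the endgame: the paper never establishes $\Gamma_\chi = \widetilde{\Gamma}_\chi$ in the proof itself --- it simply observes that $ev_{\gamma_\chi} = ev_{\gamma_K} \circ \iota$ and $ev_{\tilde{\gamma}_\chi} = ev_{\tilde{\gamma}_K} \circ \tilde{\iota} = ev_{\gamma_K} \circ \tilde{\iota}$ (the three triangles of its diagram) and concludes directly from $\iota = \tilde{\iota}$. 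You instead push the equality $\iota = \tilde{\iota}$ through injectivity to deduce $\Gamma_\chi = \widetilde{\Gamma}_\chi$ and then invoke the generator-independence of evaluation-at-$0$. This is a valid route and buys you, as a by-product, the group equality $\Gamma_\chi = \widetilde{\Gamma}_\chi$, which the paper only records afterwards in remark \ref{rem:twisted_evaluation_maps_independent_gamma_K} with a citation to Ritter--Weiss; the cost is a small amount of point-set care (compactness of $\Gamma_\chi$ to identify $\iota(\overline{\ideal{\gamma_\chi}})$ with $\overline{\ideal{\gamma_K^{w_\chi}}}$, and continuity of $\iota$ on the relevant subgroups), which you gesture at but which is unproblematic. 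One caveat: your ``direct alternative'' for $\Gamma_\chi = \widetilde{\Gamma}_\chi$ via $\gamma_\chi^u = g^u c^u$ is not complete as stated --- raising $g \in \cG$ and $c \in \punits{\QQ_p^c[H]e_\chi}$ to a power $u \in \units{\ZZ_p}$ requires knowing that the relevant procyclic subgroups are pro-$p$ (or at least that the limits defining $g^u$ and $c^u$ exist and behave as claimed), and this convergence is precisely the nontrivial content of the Ritter--Weiss argument the paper cites; since this is offered only as an optional variant, it does not affect the validity of your main proof.
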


        \begin{proof}
            By proposition \ref{prop:structure_gamma_chi} ii), we have equalities $\cQ^E(\Gamma_\chi) = Z(\cQ^E(\cG)e_\chi) = \cQ^E(\widetilde{\Gamma}_\chi)$ (not mere isomorphisms), where the middle term - most notably, $e_\chi$ - does not depend on the choice of $\gamma_K$. Consider the diagram
            \begin{equation}
            \label{eq:diagram_ev_independent_gamma_K}
                \begin{tikzcd}
                    & \cQ^E(\Gamma_\chi) \arrow[rd, "\gamma_\chi \mapsto \gamma_K^{w_\chi}"'] \arrow[rr, "\gamma_\chi \mapsto 1"]             &                                       & E \cup \set{\infty} \arrow[dd, equal] \\
Z(\cQ^E(\cG)e_\chi) \arrow[ru, equal] \arrow[rd, equal] &                                                      & \cQ^E(\Gamma_K) \arrow[ru, "\gamma_K \mapsto 1"'] \arrow[rd, "\gamma_K \mapsto 1"] &                                                     \\
                    & \cQ^E(\widetilde{\Gamma}_\chi) \arrow[ru, "\tilde{\gamma}_\chi \mapsto \tilde{\gamma}_K^{w_\chi}"] \arrow[rr, "\tilde{\gamma}_\chi \mapsto 1"] &                                       & E \cup \set{\infty}
                \end{tikzcd}
            \end{equation}
            The diamond commutes by proposition \ref{prop:structure_gamma_chi} iii), as do clearly the three triangles. This shows the commutativity of the entire diagram, whose two horizontal arrows are precisely $ev_{\gamma_\chi}$ and $ev_{\gamma_{\tilde{\chi}}}$.
        \end{proof}

        \begin{rem}
        \label{rem:twisted_evaluation_maps_independent_gamma_K}
            The topological generators of $\ZZ_p$ are precisely the units $\units{\ZZ_p}$, and therefore any $\gamma_K$ and $\tilde{\gamma}_K$ as in the lemma satisfy $\tilde{\gamma}_K = \gamma_K^v$ for some $v \in \units{\ZZ_p}$. It can then be shown (cf. \cite{rwii} proof of proposition 6 (4)) that the resulting $\gamma_\chi$ and $\tilde{\gamma}_\chi$ are related by the equation $\tilde{\gamma}_\chi = \gamma_\chi^v$ as well, which implies $\Gamma_\chi = \tilde{\Gamma}_\chi$. Note the difference between this situation and that of $W$-equivalent characters (proposition \ref{prop:structure_gamma_chi}, lemma \ref{lem:twisted_evaluation_maps}), where $\Gamma_\chi$ and $\Gamma_{\chi \otimes \rho}$ may not in coincide even though $\cQ^E(\Gamma_\chi)$ and $\cQ^E(\Gamma_{\chi \otimes \rho})$ always do. \qedef
        \end{rem}

        Out last aim in this section is to show that series quotients are uniquely determined by their values at any infinite set of integral points where they converge and therefore, in our particular setting, by their values under the twisted evaluation maps $ev_{\gamma_{\chi \otimes \rho}}$. An important comment regarding evaluation of series quotients at points of $\QQ_p^c$ and $\CC_p$ is in order. We phrase it as a remark for subsequent reference:
        \begin{rem}
        \label{rem:evaluation_of_series_quotients}
            Consider a finite extension $E/\QQ_p$ and a series $f \in \cO_{E}[[T]]$. Let us denote the open unit ball in $\QQ_p^c$ by $B_1^{\QQ_p^c}(0) = \set{x \in \QQ_p^c \colon \abs{x}_p < 1}$ with $\abs{-}_p$ normalised to $\abs{p}_p = p^{-1}$, i.e. $v_p(p) = 1$. Any $x \in B_1^{\QQ_p^c}(0)$ lies some finite extension $F/\QQ_p$, which we can choose to contain $E$. In fact, $x$ belongs to the maximal ideal of $\cO_{F}$ and hence the series $f$ converges at $x$ to a value $f(x) \in \cO_{F}$ by completeness. The result is independent of the choice of $F$, and we can thus evaluate series in $\cO_{E}[[T]]$ at points of $B_1^{\QQ_p^c}(0)$.

            In the case of a series quotient $q \in \ffrac(\cO_{E}[[T]])$, evaluation is well defined by expressing ${q = f/g}$ with $f, g \in \cO_{E}[[T]]$ coprime and $fg \neq 0$ (if $f = 0$, the discussion is obvious). Namely, for $x \in B_1^{\QQ_p^c}(0)$, $x \in F$ as above, we set ${q(x) = f(x)/g(x) \in F \cup \set{\infty}}$ with the usual convention that $k/0 = \infty$ for $k \neq 0$. In order to prove that this is well defined, we only need to show one cannot simultaneously have $f(x) = g(x) = 0$. This is clear when $F = E$ but requires an argument otherwise. We resort to the Weierstrass preparation theorem (cf. \cite{nsw} theorem 5.3.4), by which $f$ has a (unique) decomposition
            \[
                f = \pi_E^s u P
            \]
            with $\pi_E$ a chosen uniformiser of $E$, $s \in \NN$, $u \in \units{\cO_{E}[[T]]}$ a unit and $P \in \cO_{E}[T]$ a Weierstrass polynomial (cf. section \ref{sec:iwasawa_algebras_and_modules}). Note that $u(x)$ cannot vanish, since crucially both $u$ and $u^{-1}$ converge at $x$ (because $\abs{x}_p < 1$). Hence $f(x) = 0$ implies $P(x) = 0$, and therefore the minimal polynomial $\min_E(x) \in E[T]$ of $x$ over $E$ divides $P$. An easy computation with the ultrametric inequality then shows that $\min_E(x) \in \cO_E[T]$. It follows that the decomposition $P = \min_E(x) P'$ in $E[T]$ takes place entirely over $\cO_E[T]$, as can be seen by multiplying $P'$ by a suitable power of $\pi_E$ passing to $\cO_E/\fm_E[T]$. This shows that $\min_E(x)$ divides the series $f$ in $\cO_E[[T]]$. By the same argument, $g(x) = 0$ implies $\min_E(x) \mid g$. Since we chose $q = f/g$ in reduced form, it cannot happen that $f(x) = g(x) = 0$, as desired.

            The key takeaway from this, which will be essential for the next two results, is that one can evaluate an element $h \in \QQ_p^c \otimes_{\QQ_p} \ffrac(\ZZ_p[[T]]) = \varinjlim_E \ffrac(\cO_E[[T]])$ at all points of $B_1^{\QQ_p^c}(0)$ by choosing a \textit{fixed}  representative $h = f/g$ independent of the point.

            Although it will not be relevant in the sequel, there is a natural way to extend the above discussion to $B_1^{\CC_p}(0) = \set{x \in \CC_p \colon \abs{x}_p < 1}$: for $h = f/g$ as above (with $fg \neq 0$) and $x \in B_1^{\CC_p}(0) \setminus B_1^{\QQ_p^c}(0)$, simply set $h(x) = f(x)/g(x)$. Note that, in this case, neither $f(x)$ nor $g(x)$ can be zero, as $x$ would then be a root of the polynomial appearing in the corresponding Weierstrass decomposition, and hence algebraic over $\QQ_p$. If we approximate $x$ by a Cauchy sequence $\set{x_n}_{n \in \NN} \subseteq B_1^{\QQ_p^c}(0)$, then
            \[
                h(x) = \frac{f(x)}{g(x)} = \frac{\lim\limits_{n \to \infty} f(x_n)}{\lim\limits_{n \to \infty} g(x_n)} = \lim_{n \to \infty} \frac{f(x_n)}{g(x_n)} = \lim_{n \to \infty} h(x_n),
            \]
            where the second and third equalities use the continuity of power series and that of a quotient of continuous functions. The last equality is immediate by definition if $f$ and $g$ are chosen coprime to begin with, but it holds in fact without this restriction as well: since the sequence $\set{x_n}$ does not converge in $\QQ_p^c$, any $p$-adic field - and in particular the splitting fields of the polynomials in the Weierstrass decompositions of $f$ and $g$ - contains only finitely many of its terms. Therefore, $f(x_n) \neq 0 \neq g(x_n)$ for large enough $n$.\qedef
        \end{rem}

        The following lemma extends a classical application of the Weierstrass preparation theorem to the case of series quotients:
        \begin{lem}
        \label{lem:series_quotient_uniqueness}
            Let $S$ be an infinite subset of $B_1^{\QQ_p^c}(0)$. If ${q, q' \in \QQ_p^c \otimes_{\QQ_p} \ffrac(\ZZ_p[[T]])}$ satisfy ${q(x) = q'(x)}$ for all $x \in S$ (including possibly $q(x) = q'(x) = \infty$), then $q = q'$.
        \end{lem}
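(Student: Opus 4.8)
The plan is to reduce the statement about series quotients to the classical uniqueness statement for honest power series via the Weierstrass preparation theorem, handling the denominators and the poles carefully. First I would clear denominators over a common $p$-adic field: since $q$ and $q'$ each lie in some $\ffrac(\cO_E[[T]])$ with $E$ finite over $\QQ_p$, I may enlarge $E$ so that both lie in $\ffrac(\cO_E[[T]])$, and write $q = f/g$, $q' = f'/g'$ in reduced form with $f,g,f',g' \in \cO_E[[T]]$, $gg' \neq 0$ (the case $q = 0$ or $q' = 0$ being trivial, as then $f$ or $f'$ vanishes at infinitely many points of the maximal ideal and hence is $0$ by the classical Weierstrass argument). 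As explained in Remark \ref{rem:evaluation_of_series_quotients}, these fixed representatives can be evaluated at every point of $B_1^{\QQ_p^c}(0)$, independently of the point.

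Next I would separate $S$ into the points where the common value is finite and those where it is $\infty$. Let $S_\infty = \set{x \in S : q(x) = q'(x) = \infty}$. If $S \setminus S_\infty$ is infinite, then on it we have $f(x)g'(x) = f'(x)g(x)$ for infinitely many $x$ in the maximal ideal, so the power series $fg' - f'g \in \cO_E[[T]]$ has infinitely many zeros in $B_1^{\QQ_p^c}(0)$; by the Weierstrass preparation theorem such a series must be identically $0$ (its associated Weierstrass polynomial would have infinitely many roots), whence $fg' = f'g$ and therefore $q = q'$ in $\ffrac(\cO_E[[T]])$. If instead $S_\infty$ is infinite, then $g$ vanishes at infinitely many points of $B_1^{\QQ_p^c}(0)$, so by the same Weierstrass argument $g = 0$, contradicting $g \neq 0$ — unless we allow $q$ to have been $\infty$-valued in a degenerate way, but since $q$ is a genuine element of $\ffrac(\cO_E[[T]])$ with $g \neq 0$, this case cannot occur; hence $S_\infty$ is finite and $S \setminus S_\infty$ is infinite, reducing us to the previous case.

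The one subtlety — and the main thing to be careful about rather than a genuine obstacle — is that a zero of $f g' - f' g$ at a point $x$ whose residue field is strictly larger than that of $E$ does not immediately exhibit a linear factor over $\cO_E$; here I would invoke exactly the argument already spelled out in Remark \ref{rem:evaluation_of_series_quotients}, namely that if a power series in $\cO_E[[T]]$ vanishes at $x \in B_1^{\QQ_p^c}(0)$, then its Weierstrass polynomial part is divisible by the minimal polynomial $\min_E(x) \in \cO_E[T]$, which has degree $[E(x):E] \geq 1$. So infinitely many such zeros, even if they cluster into finitely many conjugacy classes, still force the Weierstrass polynomial to have unbounded degree, hence the series is $0$. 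I expect the write-up to be short: the only real content is the reduction to a nonzero power series having infinitely many zeros in the open unit ball, and then citing the Weierstrass preparation theorem \cite{nsw} theorem 5.3.4 as in the preceding remark.
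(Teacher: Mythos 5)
Your proof is correct and follows essentially the same route as the paper's: reduce to a common coefficient field $E$, fix reduced representatives, observe that $fg'-f'g$ vanishes on an infinite subset of the open unit ball, and conclude it is identically zero via the Weierstrass preparation theorem. The paper handles the $\infty$-points slightly more directly — noting that $q(x)=q'(x)=\infty$ forces $g(x)=g'(x)=0$, so $(fg'-f'g)(x)=0$ holds there as well, with no need to split $S$ — but your case distinction is an equally valid way to dispose of them.
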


        \begin{proof}
            There exists a $p$-adic field $E$ such that
            \[
                q = \frac{f}{g}, \ q' = \frac{f'}{g'}
            \]
            with $f, g, f', g' \in \cO_E[[T]]$, $g g' \neq 0$ and both fractions irreducible in $\cO_E[[T]]$. By remark \ref{rem:evaluation_of_series_quotients}, $q$ and $q'$ can be evaluated (potentially to $\infty$) at all $x \in B_1^{\QQ_p^c}(0)$ as $q(x) = f(x)/g(x)$ (i.e. $0/0$ does not occur) and analogously for $q'$. Therefore, one has
            \begin{equation}
                (f g' - g f')(x) = 0
            \end{equation}
            for all $x \in S$. This illustrates why $q(x) = q'(x) = \infty$ is admitted: it is tantamount to ${g(x) = g'(x) = 0}$, which makes the above equation hold all the same (it can only happen at finitely many $x$ at any rate). By a consequence of the Weierstrass preparation theorem (see for instance \cite{wash} corollary 7.4), the series $f g' - g f' \in \cO_E[[T]]$ must be identically 0, as it vanishes at infinitely many points of $ B_1^{\QQ_p^c}(0)$. In other words, $q = f/g = f'/g' = q'$.
        \end{proof}

        The following immediate consequence, which hints at the sets $\cK_S^\alpha(\chi)$ from definition \ref{defn:r_chi}, will be relevant to the formulation of the Main Conjecture:
        \begin{cor}
        \label{cor:uniqueness_series_type_w}
            Setting \ref{sett:construction}. Let $\chi \in \Irr_p(\cG)$ be an irreducible Artin character of $\cG$, $\cK$ an infinite set of type-$W$ characters, and $q, q' \in \cQ^c(\Gamma_\chi)$ two series quotients. If $ev_{\gamma_{\chi \otimes \rho}}(q) = ev_{\gamma_{\chi \otimes \rho}}(q')$ for all $\rho \in \cK$, then $q = q'$.
        \end{cor}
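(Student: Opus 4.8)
The plan is to deduce the corollary directly from Lemma \ref{lem:series_quotient_uniqueness} by translating the hypothesis into a statement about evaluation of series quotients at a suitable infinite subset of the open unit ball $B_1^{\QQ_p^c}(0)$. First I would fix a $p$-adic field $E$ over which $\chi$ has a realisation, so that $q, q' \in \cQ^E(\Gamma_\chi) \iso \ffrac(\cO_E[[T_\chi]])$, and fix reduced representatives $q = f/g$ and $q' = f'/g'$ with numerators and denominators in $\cO_E[[T_\chi]]$; by remark \ref{rem:evaluation_of_series_quotients} these may then be evaluated (possibly to $\infty$) at every point of $B_1^{\QQ_p^c}(0)$, and the resulting value at a given point is independent of the $p$-adic field one works in. For each $\rho \in \cK$, Lemma \ref{lem:twisted_evaluation_maps} (applied over a field realising both $\chi$ and $\rho$) identifies $ev_{\gamma_{\chi \otimes \rho}}$ with evaluation at $T_\chi = \rho(\gamma_K)^{w_\chi} - 1$, so the hypothesis $ev_{\gamma_{\chi \otimes \rho}}(q) = ev_{\gamma_{\chi \otimes \rho}}(q')$ becomes the equality of the values of $q$ and $q'$ at the point $\rho(\gamma_K)^{w_\chi} - 1$.

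Next I would check that the set $\cS = \set{\rho(\gamma_K)^{w_\chi} - 1 : \rho \in \cK}$ is an infinite subset of $B_1^{\QQ_p^c}(0)$. By the isomorphism \eqref{eq:bijection_type_W_rou}, $\rho \mapsto \rho(\gamma_K)$ sends the infinite set $\cK$ injectively into $\mu_{p^\infty}$, so $\set{\rho(\gamma_K) : \rho \in \cK}$ is infinite. Since $w_\chi$ is a power of $p$ by \eqref{eq:definition_w_chi}, the endomorphism $\zeta \mapsto \zeta^{w_\chi}$ of $\mu_{p^\infty}$ has finite kernel and is therefore finite-to-one, whence $\set{\rho(\gamma_K)^{w_\chi} : \rho \in \cK}$ is still infinite; and as $v_p(\zeta - 1) > 0$ for every $\zeta \in \mu_{p^\infty}$, subtracting $1$ lands these infinitely many distinct points in the open unit ball.

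Finally, $q$ and $q'$, regarded as elements of $\QQ_p^c \otimes_{\QQ_p} \ffrac(\ZZ_p[[T_\chi]])$, agree at every point of the infinite set $\cS \subseteq B_1^{\QQ_p^c}(0)$, so Lemma \ref{lem:series_quotient_uniqueness} (with variable $T_\chi$) forces $q = q'$. The only slightly delicate point in carrying this out is the bookkeeping of base fields: one must be sure that the evaluation appearing in Lemma \ref{lem:twisted_evaluation_maps}, computed over a field depending on $\rho$, coincides with the evaluation of the fixed representatives $f/g$ and $f'/g'$ over $E$ used when invoking Lemma \ref{lem:series_quotient_uniqueness}. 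This is exactly guaranteed by the independence-of-field assertion in remark \ref{rem:evaluation_of_series_quotients}, so no genuine obstacle remains — the corollary is essentially a repackaging of the two preceding lemmas.
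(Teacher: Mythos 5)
Your argument is correct and follows exactly the route of the paper's own proof: identify $ev_{\gamma_{\chi\otimes\rho}}$ with evaluation at $T_\chi = \rho(\gamma_K)^{w_\chi}-1$ via lemma \ref{lem:twisted_evaluation_maps}, observe that the resulting set of points in $B_1^{\QQ_p^c}(0)$ is infinite because $-^{w_\chi}$ has finite kernel on $\mu_{p^\infty}$, and conclude with lemma \ref{lem:series_quotient_uniqueness}. The extra care you take with base-field bookkeeping is already absorbed into the statement of lemma \ref{lem:series_quotient_uniqueness} and remark \ref{rem:evaluation_of_series_quotients}, so nothing further is needed.
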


        \begin{proof}
            We regard $q$ and $q'$ as elements of $\QQ_p^c \otimes_{\QQ_p} \ffrac(\ZZ_p[[T_\chi]])$ via the usual identification of $\gamma_\chi - 1$ with $T_\chi$. A character $\rho$ of type $W$ corresponds uniquely to the choice $\rho(\gamma_K) = \zeta$ of a $p$-power root of unity $\zeta\in  \mu_{p^\infty} \subseteq \QQ_p^c$ (cf. \eqref{eq:bijection_type_W_rou}). In particular, the set $S = \set{\rho(\gamma_K)^{w_\chi} - 1 \colon \rho \in \cK} \subseteq B_1^{\QQ_p^c}(0)$ (with $w_\chi$ as in \eqref{eq:definition_w_chi}) is infinite because the homomorphism $-^{w_\chi} \colon \mu_{p^\infty} \to \mu_{p^\infty}$ has finite kernel. Since $ev_{\gamma_{\chi \otimes \rho}}$ corresponds to evaluation at $T_\chi = \rho(\gamma_K)^{w_\chi} - 1$ by lemma \ref{lem:twisted_evaluation_maps}, $q$ and $q'$ coincide at all points of $S$ by assumption. The result now follows from lemma \ref{lem:series_quotient_uniqueness}.
        \end{proof}

    \section{The Main Conjecture}
    \label{sec:the_main_conjecture}

        We are now in a position to formulate the equivariant Main Conjecture. As explained at the beginning of the chapter, this conjecture consists of two parts: interpolation and algebraicity. The former is of analytic nature and claims the existence of certain series quotients which interpolate regulated special values of Artin $L$-series. The latter postulates a $K$-theoretic relation between these series quotients and the refined Euler characteristic of the main complex $\cC_{S, T}\q$. After stating both conjectures, we shall discuss some immediate consequences of their formulation and introduce a refined version which incorporates a uniqueness claim.

        The Interpolation Conjecture concerns the irreducible Artin characters of $\cG$, which we have defined as $p$-adic in nature. Since Artin $L$-functions are constructed from complex characters a priori, we must relate the two spaces via the $\beta \colon \CC_p \xrightarrow{\sim} \CC$ fixed in setting \ref{sett:formulation}. Namely, given $\chi \in \Irr_p(\cG)$, the composition $\beta \chi$ is a complex character of $\cG$ which factors through some finite Galois extension $K'$ of $K$. This gives rise to an Artin $L$-function $L_{K, S, T}(\beta \chi, s)$ for $K'/K$ as explained in section \ref{sec:artin_l-series}, and one is free to enlarge $K'$ without altering the function by lemma \ref{lem:properties_of_L-functions} ii) - in practice, we will always take $K = L_n$ for some large enough $n$. The special value of interest to us is the leading coefficient $L_{K, S, T}^\ast(\beta \chi, 0)$ of the power series expansion of this function at $s = 0$ (cf. \eqref{eq:definition_leading_coefficient}), which we bring back to the $p$-adic setting by considering $\beta^{-1}(L_{K, S, T}^\ast(\beta \chi, 0)) \in \CC_p^\ast$. The resulting $p$-adic complex value is not independent of the choice of $\beta$, but it should be so when regulated - this is precisely the content of Stark's conjecture, which we explore in section \ref{sec:starks_conjecture_and_the_choice_of_beta}. The \textbf{Interpolation Conjecture}\index{Interpolation Conjecture} is the following assertion about the leading coefficients:

        \begin{conje*}[IC($L_\infty/K, \chi, L, S, T, \alpha, \beta$)]
        \phantomsection
        \label{conje:ic}
            Setting \ref{sett:formulation}. Let $\chi \in \Irr_p^c(\cG)$.

            There exists a series quotient $F_{S, T, \chi}^{\alpha, \beta} \in \units{\cQ^c(\Gamma_\chi)}$ such that
            \begin{equation}
            \label{eq:ic}
                ev_{\gamma_{\chi \otimes \rho}}(F_{S, T, \chi}^{\alpha, \beta}) = \frac{\beta^{-1}(L_{K, S, T}^\ast(\beta (\check{\chi} \otimes \rho^{-1}), 0))}{R_S^\beta(\alpha, \chi \otimes \rho)}
            \end{equation}
            for almost all $\rho \in \cK_S^\alpha(\chi)$.\qedef
        \end{conje*}

        \begin{rem}
        \phantomsection
        \label{rem:ic}
            \begin{enumerate}[i)]
                \item{
                    $\check{\chi} \otimes \rho^{-1}$ is the dual character $(\chi \otimes \rho)^{\check{}}$ of $\chi \otimes \rho$.
                }
                \item{
                    If the Interpolation Conjecture holds, the predicted element $F_{S, T, \chi}^{\alpha, \beta} \in \units{\cQ^c(\Gamma_\chi)}$ is unique by corollary \ref{cor:uniqueness_series_type_w}. Note that $\rho \in \cK_S^\alpha(\chi)$ contains infinitely many characters by lemma \ref{lem:properties_of_r_chi}.
                }
                \item{
                    The regulator $R_S^\beta(\alpha, \chi \otimes \rho)$ is non-zero for $\rho \in \cK_S^\alpha(\chi)$ (cf. definitions \ref{defn:r_chi} and \ref{defn:regulator}). Recall that, even though it relies on the implicit choice of an $n \geq n(S)$ to define the finite level maps $\varphi_n^\alpha$ over, it is independent of that choice by lemma \ref{lem:r_chi_independent_of_n}.
                }
                \item{
                    If Stark's conjecture holds, the field where the coefficients of ${F_{S, T, \chi}^{\alpha, \beta} \in \units{\cQ^c(\Gamma_\chi)}}$ lie can be narrowed down to $\QQ_{p, \chi} = \QQ_p(\chi(h) \colon h \in H)$ with some principal unit adjoined to it, and the same is true if one assumes the equivariant Main Conjecture formulated below instead. Both facts are explained at the end of section \ref{sec:starks_conjecture_and_the_choice_of_beta}.
                }
                \item{
                    The topological generator $\gamma_K$ of $\Gamma_K$ is not listed as a parameter of the conjecture, since both $\cQ^c(\Gamma_\chi)$ and the twisted evaluation maps $ev_{\gamma_{\chi \otimes \rho}}$ are independent of its choice by lemma \ref{lem:twisted_evaluation_maps_independent_gamma_K}.
                }
                \item{
                    The specific nature of $\Gamma_\chi$ does not play a role in the Interpolation Conjecture: one could have considered $\units{\cQ^c(\tilde{\Gamma})}$ for any abstract $\tilde{\Gamma} = \overline{\ideal{\tilde{\gamma}}} \iso \ZZ_p$ and replaced the twisted evaluation maps by evaluation at some specific values of $\tilde{T} = \tilde{\gamma} - 1$ as in lemma \ref{lem:twisted_evaluation_maps}. It is however convenient to expressly consider $\units{\cQ^c(\Gamma_\chi)}$ with a view towards the second part of the Main Conjecture, where a \textit{zeta element} is expected to be mapped to the $F_{S, T, \chi}^{\alpha, \beta}$ under certain maps $K_1(\cQ(\cG)) \to \units{\cQ^c(\Gamma_\chi)}$ where the meaning of the latter becomes relevant.
                }
                \item{
                    The left-hand side of \eqref{eq:ic} always lies in $\QQ_p^c \cup \set{\infty}$ (see for instance remark \ref{rem:evaluation_of_series_quotients}). Therefore, if the Interpolation Conjecture holds, the regulated special values (which are $p$-adic complex by definition) must be in fact algebraic over $\QQ_p$.
                }
            \end{enumerate}
        \end{rem}

        The interpolation property \eqref{eq:ic} of the predicted series quotient $F_{S, T, \chi}^{\alpha, \beta}$ is limited to $\rho$-twists of $\chi$ which do not factor through too low a layer, namely $n(S, \alpha)$ - and even then, only to almost all. The aim of this part of the conjecture is not to interpolate values of \textit{all} $L$-functions of intermediate extensions of $L_\infty/K$, but rather \textit{enough} values that the interpolating element is uniquely determined by them, since the essence of the Main Conjecture is the fact that these elements should be algebraic in nature. Whereas the restriction to $\rho \in \cK_S^\alpha(\chi)$ is necessary for the non-vanishing of the regulator, the reason interpolation is conjectured at \textit{almost all} such $\rho$, rather than \textit{all}, is that this will enable us to prove functoriality (that is, good behaviour under change of $K$ or $L_\infty$) in section \ref{sec:functoriality}.

        By virtue of our definition of the twisted evaluation maps, the Interpolation Conjecture does not refer to a single $\chi \in \Irr_p(\cG)$, but rather to its entire $W$-equivalence class:
        \begin{prop}
        \label{prop:ic_rho_twist}
            Setting \ref{sett:formulation}. Let $\widetilde{\rho}$ be a type-$W$ character of $\cG$ and $\chi \in \Irr_p(\cG)$. Set $\widetilde{\chi} = \chi \otimes \widetilde{\rho}$. Then \hyperref[conje:ic]{IC($L_\infty/K, \chi, L, S, T, \alpha, \beta$)} holds if and only if \hyperref[conje:ic]{IC($L_\infty/K, \widetilde{\chi}, L, S, T, \alpha, \beta$)} does, in which case the predicted elements $F_{S, T, \chi}^{\alpha, \beta}$ and $F_{S, T, \tilde{\chi}}^{\alpha, \beta}$ coincide.
        \end{prop}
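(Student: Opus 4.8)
The plan is to reduce \hyperref[conje:ic]{IC}$(L_\infty/K, \widetilde{\chi}, L, S, T, \alpha, \beta)$ to \hyperref[conje:ic]{IC}$(L_\infty/K, \chi, L, S, T, \alpha, \beta)$ by a bijective change of the twisting parameter, exploiting the fact that the field $\cQ^c(\Gamma_\chi)$, the twisted evaluation maps $ev_{\gamma_{\chi \otimes \rho}}$, the regulators, the $L$-values and the sets $\cK_S^\alpha$ all depend only on the $W$-equivalence class of the character at hand. First I would record the relevant identifications: since $\widetilde{\rho}$ is of type $W$, the direct-limit form of lemma \ref{lem:twisted_evaluation_maps} (remark \ref{rem:properties_of_ev} iii)) gives the \emph{equality} of fields $\cQ^c(\Gamma_\chi) = \cQ^c(\Gamma_{\widetilde{\chi}})$, so a candidate series quotient lives in the same place for both conjectures and $\units{\cQ^c(\Gamma_\chi)} = \units{\cQ^c(\Gamma_{\widetilde{\chi}})}$.

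Next, for a type-$W$ character $\rho$ of $\cG$, set $\sigma = \widetilde{\rho} \otimes \rho$, which is again of type $W$ because type-$W$ characters form a group under $\otimes$ (bijection \eqref{eq:bijection_type_W_rou}), and note that $\widetilde{\chi} \otimes \rho = \chi \otimes \sigma$ as Artin characters. Consequently the element $\gamma_{\widetilde{\chi} \otimes \rho}$ of proposition \ref{prop:structure_gamma_chi} — which is attached to the character alone, not to any chosen presentation of it (note also $w_{\widetilde\chi\otimes\rho} = w_{\chi\otimes\sigma}$, these being the same character) — equals $\gamma_{\chi \otimes \sigma}$, so $ev_{\gamma_{\widetilde{\chi} \otimes \rho}} = ev_{\gamma_{\chi \otimes \sigma}}$ on the common field $\cQ^c(\Gamma_\chi)$. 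On the analytic side, $\check{\widetilde{\chi}} \otimes \rho^{-1} = (\widetilde{\chi} \otimes \rho)^{\check{}} = (\chi \otimes \sigma)^{\check{}} = \check{\chi} \otimes \sigma^{-1}$ and $R_S^\beta(\alpha, \widetilde{\chi} \otimes \rho) = R_S^\beta(\alpha, \chi \otimes \sigma)$ (definition \ref{defn:regulator} depends on the character only), so the right-hand side of \eqref{eq:ic} for $\widetilde{\chi}$ at $\rho$ is \emph{literally} the right-hand side of \eqref{eq:ic} for $\chi$ at $\sigma$. Thus, for a fixed $F \in \units{\cQ^c(\Gamma_\chi)}$, the interpolation identity \eqref{eq:ic} for $\widetilde{\chi}$ at $\rho$ is the same equation as \eqref{eq:ic} for $\chi$ at $\sigma$.

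It then remains to handle the quantifier ``for almost all''. By lemma \ref{lem:properties_of_r_chi} ii), $\cK_S^\alpha(\widetilde{\chi}) = \widetilde{\rho}^{-1} \otimes \cK_S^\alpha(\chi)$, so $\rho \mapsto \widetilde{\rho} \otimes \rho = \sigma$ is a bijection $\cK_S^\alpha(\widetilde{\chi}) \isoa \cK_S^\alpha(\chi)$, which in particular carries cofinite subsets to cofinite subsets. Combined with the previous paragraph, this shows that an $F \in \units{\cQ^c(\Gamma_\chi)}$ witnesses \hyperref[conje:ic]{IC}$(\ldots, \widetilde{\chi}, \ldots)$ if and only if it witnesses \hyperref[conje:ic]{IC}$(\ldots, \chi, \ldots)$, which gives the claimed equivalence with the same element serving for both. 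For the coincidence of the predicted elements one argues as in remark \ref{rem:ic} ii): when both conjectures hold, $F_{S,T,\widetilde{\chi}}^{\alpha,\beta}$ (viewed inside $\cQ^c(\Gamma_\chi)$) and $F_{S,T,\chi}^{\alpha,\beta}$ both satisfy \eqref{eq:ic} for $\chi$ on a cofinite — hence infinite, by lemma \ref{lem:properties_of_r_chi} i) — subset of $\cK_S^\alpha(\chi)$, so they agree under $ev_{\gamma_{\chi \otimes \sigma}}$ for infinitely many $\sigma$ of type $W$, and corollary \ref{cor:uniqueness_series_type_w} forces $F_{S,T,\chi}^{\alpha,\beta} = F_{S,T,\widetilde{\chi}}^{\alpha,\beta}$.

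The argument is essentially bookkeeping; the only point that needs care is that the objects appearing in the conjecture — above all $\cQ^c(\Gamma_{\chi \otimes \rho})$ and $ev_{\gamma_{\chi \otimes \rho}}$ — depend only on the character $\chi \otimes \rho$ and not on its expression as a twist of $\chi$, which is exactly what proposition \ref{prop:structure_gamma_chi} and lemma \ref{lem:twisted_evaluation_maps} (with its remark) provide, and that ``almost all'' is preserved under the bijection $\rho \mapsto \widetilde{\rho} \otimes \rho$. I do not anticipate a genuine obstacle here.
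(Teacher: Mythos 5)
Your argument is correct and follows essentially the same route as the paper's proof: both rest on the identification $\cQ^c(\Gamma_\chi) = \cQ^c(\Gamma_{\widetilde{\chi}})$, the fact that $ev_{\gamma_{\widetilde{\chi}\otimes\rho}}$ and the regulated $L$-value depend only on the character $\widetilde{\chi}\otimes\rho = \chi\otimes(\widetilde{\rho}\otimes\rho)$ and not on its presentation as a twist, and the bijection of lemma \ref{lem:properties_of_r_chi} ii) to transport the ``almost all'' quantifier. The only cosmetic difference is that you invoke corollary \ref{cor:uniqueness_series_type_w} to conclude the coincidence of the predicted elements, whereas the paper observes directly that the same element witnesses both conjectures; these are equivalent.
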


        \begin{proof}
            We first point out that the parameters $L, S, T, \alpha$ and $\beta$ are independent of $\chi$, so one can indeed formulate the two Interpolation Conjectures by changing the character alone. Recall that $\cQ^c(\Gamma_\chi) = Z(\cQ^c(\cG)e_\chi) = \cQ^c(\Gamma_{\widetilde{\chi}})$ by propositions \ref{prop:rw_properties_q} iv) and \ref{prop:structure_gamma_chi} ii) - this was also shown explicitly in the proof of lemma \ref{lem:twisted_evaluation_maps}.

            Let $\rho \in \cK_S^\alpha(\chi)$. The characters $\chi \otimes \rho$ and $\widetilde{\chi} \otimes (\widetilde{\rho}^{-1} \otimes \rho)$ coincide, and therefore any $F_{S, T, \chi}^{\alpha, \beta} \in  \units{\cQ^c(\Gamma_\chi)}$ satisfying the interpolation property \eqref{eq:ic} for $\chi \otimes \rho$ will also do so for $\widetilde{\chi} \otimes (\widetilde{\rho}^{-1} \otimes \rho)$:
            \begin{align*}
                ev_{\gamma_{\widetilde{\chi} \otimes (\widetilde{\rho}^{-1} \otimes \rho)}}(F_{S, T, \chi}^{\alpha, \beta})
                & = ev_{\gamma_{\chi \otimes \rho}}(F_{S, T, \chi}^{\alpha, \beta}) \\
                & = \frac{\beta^{-1}(L_{K, S, T}^\ast(\beta (\check{\chi} \otimes \rho^{-1}), 0))}{R_S^\beta(\alpha, \chi \otimes \rho)} \\
                & = \frac{\beta^{-1}(L_{K, S, T}^\ast(\beta ((\widetilde{\chi})^{\check{}} \otimes (\widetilde{\rho}^{-1} \otimes \rho)^{-1}), 0))}{R_S^\beta(\alpha, \widetilde{\chi} \otimes (\widetilde{\rho}^{-1} \otimes \rho))}
            \end{align*}
            (cf. remark \ref{rem:properties_of_ev} i) for the first equality).

            Since the map $\widetilde{\rho}^{-1} \otimes - \colon \cK_S^\alpha(\chi) \to \cK_S^\alpha(\widetilde{\chi})$ is a bijection by lemma \ref{lem:properties_of_r_chi} ii), interpolation for almost all characters in the former implies that for almost all characters in the latter. This shows one implication in the statement, with the converse following from an analogous argument or by regarding $\chi$ as $\widetilde{\chi} \otimes \widetilde{\rho}^{-1}$.
        \end{proof}

        This proposition aligns perfectly with diagram \eqref{eq:k1_reduced_norm_chi_parts}, which shows that our algebraic objects of interest decompose into $\chi$-parts \textit{up to $W$-equivalence}. Indeed, the last step before the formulation of the equivariant Main Conjecture is to define maps from $K_1(\cQ(\cG))$ to the $\units{\cQ^c(\Gamma_\chi)}$:

        \begin{defn}
        \label{defn:k1_to_gamma_chi}
            Setting \ref{sett:construction}. For $\chi \in \Irr_p(\cG)$, we define the group homomorphism $\psi_\chi$ as the composition
            \[
                \psi_\chi \colon K_1(\cQ(\cG)) \xrightarrow{\nr} \units{Z(\cQ(\cG))} \ia \units{Z(\cQ^c(\cG))} \sa \units{Z(\cQ^c(\cG) e_\chi)} = \units{\cQ^c(\Gamma_\chi)}.
            \]
            where the first arrow is the reduced norm introduced in section \ref{sec:algebraic-k-theory} and the equality is proposition \ref{prop:structure_gamma_chi} ii).\qedef
        \end{defn}

        It should be noted that, even though ring homomorphisms have poor behaviour with respect to taking centres in general, the second and third arrow defining $\psi_\chi$ are well defined because they come from extension of scalars (from one field to another) and multiplication by a central element, respectively. An immediate consequence of the above definition is that $\psi_\chi$ and $\psi_{\chi'}$ coincide whenever $\psi_\chi \sim_W \psi_{\chi'}$. These maps are closely related to the $j_\chi$ of Ritter and Weiss (cf. proposition \ref{prop:structure_gamma_chi} iii)), which plays a prominent role in their paper \cite{rwii}. Namely, let us extend scalars by setting $j_\chi^c = \QQ_p^c \otimes_E j_\chi^E$. Then $j_\chi^c$ and $\psi_\chi$ fit into the commutative diagram

        \begin{equation}
        \label{eq:psi_and_j}
            \begin{tikzcd}
                K_1(\cQ(\cG)) \arrow[d, "\nr"] \arrow[rr, "\psi_\chi"]              &                                                 & \units{\cQ^c(\Gamma_\chi)} \arrow[dd, "\gamma_\chi \mapsto \gamma_K^{w_\chi}"] \\
                \units{Z(\cQ(\cG))} \arrow[d, hook]                                 & \units{Z(\cQ^c(\cG) e_\chi)} \arrow[ru, "\sim"] &                                                                                \\
                \units{Z(\cQ^c(\cG))} \arrow[ru, two heads] \arrow[rr, "j_\chi^c"'] &                                                 & \units{\cQ^c(\Gamma_K)}
            \end{tikzcd}
        \end{equation}
        Note that the map $\gamma_\chi \mapsto \gamma_K^{w_\chi}$ is injective and commutes with the evaluation maps (see for instance the proof of lemma \ref{lem:twisted_evaluation_maps_independent_gamma_K}).

        We now state the \textbf{equivariant Main Conjecture}\index{equivariant Main Conjecture}\index{Main Conjecture|see{equivariant Main Conjecture}}, which relates the refined Euler characteristic \eqref{eq:definition_rec} to the series quotients predicted in the Interpolation Conjecture by means of the localisation sequence of $K$-theory \eqref{eq:exact_sequence_k-theory}:

        \begin{conje*}[eMC($L_\infty/K, L, S, T, \alpha, \beta$)]
        \phantomsection
        \label{conje:emc}
            Setting \ref{sett:formulation}.

            Conjecture \hyperref[conje:ic]{IC($L_\infty/K, \chi, L, S, T, \alpha, \beta$)} holds for all $\chi \in \Irr_p(\cG)$ and there exists a $\zeta_{S, T}^{\alpha, \beta} \in K_1(\cQ(\cG))$ such that $\partial(\zeta_{S, T}^{\alpha, \beta}) = - \chi_{\Lambda(\cG), \cQ(\cG)}(\cC_{S, T}\q, t^\alpha) \in K_0(\Lambda(\cG), \cQ(\cG))$ and
            \[
                \psi_\chi(\zeta_{S, T}^{\alpha, \beta}) = F_{S, T, \chi}^{\alpha, \beta}
            \]
            for all $\chi$ as above, where $F_{S, T, \chi}^{\alpha, \beta} \in \units{\cQ^c(\Gamma_\chi)}$ is the unique series quotient satisfying the Interpolation Conjecture at $\chi$.
        \end{conje*}

        A natural refinement is the \textbf{equivariant Main Conjecture with uniqueness}\index{equivariant Main Conjecture!with uniqueness}:

        \begin{conje*}[eMC\textsuperscript{u}($L_\infty/K, L, S, T, \alpha, \beta$)]
        \phantomsection
        \label{conje:emcu}
            Setting \ref{sett:formulation}.

            Conjecture \hyperref[conje:ic]{IC($L_\infty/K, \chi, L, S, T, \alpha, \beta$)} holds for all $\chi \in \Irr_p(\cG)$ and there exists a \emph{unique} element ${\zeta_{S, T}^{\alpha, \beta} \in K_1(\cQ(\cG))}$ such that
            \[
                \psi_\chi(\zeta_{S, T}^{\alpha, \beta}) = F_{S, T, \chi}^{\alpha, \beta}
            \]
            for all $\chi$ as above, where $F_{S, T, \chi}^{\alpha, \beta} \in \units{\cQ^c(\Gamma_\chi)}$ is the unique series quotient satisfying the Interpolation Conjecture at $\chi$. Furthermore, one has $\partial(\zeta_{S, T}^{\alpha, \beta}) = - \chi_{\Lambda(\cG), \cQ(\cG)}(\cC_{S, T}\q, t^\alpha) \in K_0(\Lambda(\cG), \cQ(\cG))$.
        \end{conje*}

        \begin{rem}
        \phantomsection
        \label{rem:emcu}
            \begin{enumerate}[i)]
                \item{
                    \hyperref[conje:emcu]{eMC\textsuperscript{u}($L_\infty/K, L, S, T, \alpha, \beta$)} can be reformulated as: \hyperref[conje:emc]{eMC($L_\infty/K, L, S, T, \alpha, \beta$)} holds and the reduced norm ${\nr \colon K_1(\cQ(\cG)) \to \units{Z(\cQ(\cG))}}$ is injective.

                    The object $SK_1(R) = \ker(\nr \colon K_1(R) \to \units{Z(R)})$ for a general semisimple Artinian ring $R$ has considerable algebraic interest in itself and has been the subject of research for the last few decades. In our case of interest, Merkurjev reports (cf. \cite{merkurjev} p. 250) that Suslin conjectured the vanishing of $SK_1(R)$ for a class of rings which $\cQ(\cG)$ is an instance of (see \cite{rwii} p. 565 for more details). If this is the case, the equivariant Main Conjectures with and without uniqueness are equivalent.
                }
                \item{
                    The connecting homomorphism $\partial$ is known to be surjective in our setting by work of Witte (cf. \cite{witte} corollary 3.8), and thus $- \chi_{\Lambda(\cG), \cQ(\cG)}(\cC_{S, T}\q, t^\alpha)$ always has a preimage under $\partial$.
                }
            \end{enumerate}
        \end{rem}

%\newpage
%\thispagestyle{empty}
%\addtocounter{page}{-1}
%\mbox{}

\newpage
\chapter{Basic properties}
\label{chap:properties_of_the_main_conjecture}

    The construction presented in chapter \ref{chap:formulation_of_the_main_conjecture}, culminating in the formulation of the Interpolation Conjecture and equivariant Main Conjecture, gives rise to some natural questions. Arguably the most immediate one is that of the dependence on the many parameters which have made an appearance so far. In one form or another, that is the subject of most results in this chapter, which has three main aims distributed across its three sections.

    The first one is to study the dependence of the conjectures on the choice of $\beta \colon \CC_p \xrightarrow{\sim} \CC$. As already mentioned, this question is closely linked to Stark's conjecture on leading coefficients of Artin $L$-functions. This conjecture and its various refinements have been extensively studied by many authors - among them Tate in \cite{tate}, who shaped its modern formulation. It will be necessary for us to incorporate the set $T$ into conjecture, which a priori only features one set of places $S$. After showing the two variants to be equivalent, we shall prove that our Main Conjecture is independent of $\beta$ if Stark's holds, and unconditionally independent of $\beta$ as long as its restriction to $\QQ_p$ is fixed. Another consequence of Stark's conjecture will be explored, namely where the coefficients of the series quotients predicted by the Interpolation Conjecture should be expected to lie. This provides some measure of support for, and is also implied by, the Main Conjecture.

    The second aim is to establish that, once $L_\infty/K$ has been fixed, the Main Conjecture is unconditionally independent of the choices of all parameters other than $\beta$. Section \ref{sec:independence_of_the_choice_of_parameters} (as well as \ref{sec:functoriality}) has been exceptionally divided into subsections for the convenience of the reader. The following theorem encapsulates the main results in the former:
    \begin{thm}
    \label{thm:independence_of_parameters}
        Setting \ref{sett:formulation}. Let $\tilde{L}, \tilde{S}, \tilde{T}$ and $\tilde{\alpha}$ be another valid set of choices of the corresponding parameters in the setting. Then:
        \begin{enumerate}[i)]
            \item{
                For any $\chi \in \Irr_p(\cG)$, \hyperref[conje:ic]{IC($L_\infty/K, \chi, L, S, T, \alpha, \beta$)} holds if and only if \hyperref[conje:ic]{IC($L_\infty/K, \chi, \tilde{L}, \tilde{S}, \tilde{T}, \tilde{\alpha}, \beta$)} does.
            }
            \item{
                \hyperref[conje:emc]{eMC($L_\infty/K, L, S, T, \alpha, \beta$)} holds if and only if \hyperref[conje:emc]{eMC($L_\infty/K, \tilde{L}, \tilde{S}, \tilde{T}, \tilde{\alpha}, \beta$)} does.
            }
            \item{
                \hyperref[conje:emcu]{eMC\textsuperscript{u}($L_\infty/K, L, S, T, \alpha, \beta$)} holds if and only if \hyperref[conje:emcu]{eMC\textsuperscript{u}($L_\infty/K, \tilde{L}, \tilde{S}, \tilde{T}, \tilde{\alpha}, \beta$)} does.
            }
        \end{enumerate}
    \end{thm}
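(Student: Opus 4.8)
The plan is to reduce the theorem to four independent "one-parameter-at-a-time" statements, each isolating a single choice. Writing $\tilde L, \tilde S, \tilde T, \tilde\alpha$ for the new choices, I would factor the comparison through a chain of intermediate settings in which only one parameter is altered at each step: first change $L$ to $\tilde L$ (keeping the other three), then $S$, then $T$, then $\alpha$. Transitivity of "if and only if" then yields the full statement, so it suffices to treat each of the following four cases separately and for all three conjectures simultaneously (since the arguments will be essentially the same for \hyperref[conje:ic]{IC}, \hyperref[conje:emc]{eMC} and \hyperref[conje:emcu]{eMC\textsuperscript{u}}). I expect each of these to be broken out as its own subsection (the text already announces subsections \ref{subsec:the_choice_of_l}, \ref{subsec:the_choice_of_s_and_t} and \ref{subsec:the_choice_of_alpha}).

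For the choice of $L$: since $\cG = \Gal(L_\infty/K)$ and $\Gamma, \tilde\Gamma$ are two open central $\ZZ_p$-subgroups, after passing to a common open subgroup $\Gamma' \subseteq \Gamma \cap \tilde\Gamma$ (which by Lemma \ref{lem:splitting_and_central}-type reasoning corresponds to moving up the cyclotomic tower $L_\infty/L$) it is enough to compare $\Gamma$ with $\Gp{n}$, i.e.\ $L$ with $L_n$. Here the key point is that none of the genuinely conjectural data changes: the complex $\cC_{S,T}\q$, the ring $\cQ(\cG)$, the centre $Z(\cQ^c(\cG))$, the idempotents $e_\chi$, the elements $\gamma_\chi$ and the maps $\psi_\chi$, $ev_{\gamma_{\chi\otimes\rho}}$ are all defined in terms of $\cG$ and $L_\infty$, not $L$. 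What must be checked is that $E_{S,T}$, $\cY_{S_\infty}$, $\cX_S$ and hence the notion of "$\Lambda(\Gamma)$-torsion" and the trivialisation $t^\alpha$ do not depend on the choice of $\Gamma$ — this is precisely what Remark \ref{rem:existence_alpha} ii) promises, and it follows from \eqref{eq:lambda_regular_elements} together with the fact that $\Lambda(\Gp{n})$ is again a classical Iwasawa algebra with the same total ring of fractions $\cQ(\cG)$. An $\alpha$ valid for $\Gamma$ is then automatically valid for $\Gp{n}$ and vice versa, the regulators $R_S^\beta(\alpha,\chi)$ are unchanged (they are defined via finite-level maps $\varphi_n^\alpha$ for $n$ large, independent of $\Gamma$ by Lemma \ref{lem:r_chi_independent_of_n}), and so every ingredient of all three conjectures is literally the same object.

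For the choice of $S$ and $T$: here the complex genuinely changes, so the strategy is to produce an explicit isomorphism relating $\cC_{S,T}\q$ and $\cC_{S',T'}\q$ in $\cD(\Lambda(\cG))$ modulo perfect "correction complexes" concentrated at the extra places, and to match this against the change in the analytic side. Concretely, one uses the exact triangle $\cC_{S,T}\q \to \cC_{S,\varnothing}\q \to \bigoplus_{v\in T^p}\Ind_{\cG_v}^\cG \ZZ_p(1)[0] \to$ (proposition \ref{prop:exact_triangle_difference_s_and_t}, forward-referenced in the proof of Theorem \ref{thm:iso_bks_complex}) for the dependence on $T$, and an analogous triangle comparing $\cL_S\q$ and $\cL_{S'}\q$ (adding a place $v\notin S$ contributes a local complex $\cL_v\q$) for the dependence on $S$. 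On the $K$-theory side these triangles translate, via the additivity of refined Euler characteristics on triangles of trivialised complexes, into an explicit change in $\chi_{\Lambda(\cG),\cQ(\cG)}(\cC_{S,T}\q,t^\alpha)$ given by $\partial$ of an explicit unit; on the analytic side the Euler-factor definition \eqref{eq:artin_l_series} of $L_{K,S,T}$ gives the matching change in the special values $L_{K,S,T}^\ast(\beta\check\chi,0)$ (removing/adding $L_v$- and $\delta_v$-factors, which evaluate at $s=0$ to explicit elements expressible via reduced norms of the correction complexes), while the regulator changes by a compensating determinant coming from $\cX_S$ versus $\cX_{S'}$. One then checks that the would-be zeta element transforms by exactly the corresponding unit in $K_1(\cQ(\cG))$, so the existence (resp.\ existence-and-uniqueness) of $\zeta_{S,T}^{\alpha,\beta}$ is equivalent to that of $\zeta_{S',T'}^{\alpha,\beta}$.

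For the choice of $\alpha$: if $\alpha, \tilde\alpha\colon \cY_{S_\infty}\to E_{S,T}$ are two valid homomorphisms, then $\mu = \cQ(\cG)\otimes \tilde\alpha \circ (\cQ(\cG)\otimes\alpha)^{-1}$ is a $\cQ(\cG)$-automorphism of $\cQ(\cG)\otimes_{\Lambda(\cG)} E_{S,T}$, giving a class $[\mu]\in K_1(\cQ(\cG))$. The two trivialisations satisfy $t^{\tilde\alpha} = \mu\circ t^\alpha$, so by the behaviour of refined Euler characteristics under composition (relation i) in the definition of $K_0(\Lambda(\cG),\cQ(\cG))$, applied as in \eqref{eq:partial_on_k1det}) one gets $\chi_{\Lambda(\cG),\cQ(\cG)}(\cC_{S,T}\q,t^{\tilde\alpha}) - \chi_{\Lambda(\cG),\cQ(\cG)}(\cC_{S,T}\q,t^\alpha) = \partial([\mu])$. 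On the analytic side, the regulator definition \ref{defn:regulator} shows directly that $R_S^\beta(\tilde\alpha,\chi)/R_S^\beta(\alpha,\chi)$ is the determinant of $\mu$ on the $\chi$-part, i.e.\ $ev_{\gamma_\chi}(\psi_\chi([\mu]))$; this is exactly the point of inserting $\alpha$ into the regulator, as the text flags after definition \ref{defn:regulator} and in subsection \ref{subsec:the_choice_of_alpha}. Hence $F_{S,T,\chi}^{\tilde\alpha,\beta} = \psi_\chi([\mu])^{-1}\, F_{S,T,\chi}^{\alpha,\beta}$ and, setting $\zeta_{S,T}^{\tilde\alpha,\beta} = [\mu]^{-1}\zeta_{S,T}^{\alpha,\beta}$, all three equivariant statements transport. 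The main obstacle I anticipate is the $S$-and-$T$ case: getting the correction-complex bookkeeping to match the Euler-factor bookkeeping exactly — including keeping track of the Tate twists $\ZZ_p(1)$, the local complexes at archimedean versus non-archimedean places, and the shift conventions in $\Cone(\,\cdot\,)[-1]$ — and verifying that the change in $\chi_{\Lambda(\cG),\cQ(\cG)}(\cC_{S,T}\q,t^\alpha)$ really equals $\partial$ of the $K_1$-unit whose $\psi_\chi$-images are the ratios of regulated $L$-values, rather than being off by a sign or a unit coming from the choice of splittings in \eqref{eq:refined_euler_characteristic_map}. The $L$ and $\alpha$ cases, by contrast, should be essentially formal given the structural results of Ritter and Weiss already imported in the excerpt.
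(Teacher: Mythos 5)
Your overall strategy is the paper's: factor the comparison into one-parameter steps and match an explicit $K_1$-correction element against the change in the analytic data. The $L$ step and the $S$/$T$ step are essentially as in the text (for $S$ the correction term turns out to be $\Ind_{\cG_{v_0}}^{\cG}\ZZ_p[-1]$ rather than the full local complex $\cL_{v_0}\q$, because the $H^0$-part of $\cL_{v_0}\q$ cancels against the change in the global complex via class field theory; and the regulators for $S$ and $S\cup\{v_0\}$ are shown to be literally \emph{equal} for almost all $\rho$-twists, so there is no compensating determinant to track). Two points, however, need attention.

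First, in the $\alpha$ step the assertion that ``the regulator definition shows directly that $R_S^\beta(\tilde\alpha,\chi)/R_S^\beta(\alpha,\chi)$ is \dots $ev_{\gamma_\chi}(\psi_\chi([\mu]))$'' is a genuine gap. The regulator is defined as a determinant of a \emph{finite-level} map obtained by taking $\Gamma^{p^n}$-coinvariants, whereas $\psi_\chi([\mu])$ lives at infinite level; identifying the two requires the descent formula \eqref{eq:relation_reduced_norm_and_invariants}, which applies only to \emph{integral} endomorphisms of finitely generated \emph{projective} $\Lambda(\cG)$-modules with torsion kernel and cokernel. Your $\mu=(\cQ(\cG)\otimes\tilde\alpha)\circ(\cQ(\cG)\otimes\alpha)^{-1}$ is neither integral nor an endomorphism of a projective $\Lambda(\cG)$-module: $\alpha$ cannot be inverted over $\Lambda(\cG)$, and $E_{S,T}$ is not projective. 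The paper's workaround is to transport everything to $\cY_{S_\infty}$ (which is projective), multiply by a central annihilator $g$ of $\coker(\alpha)$ to obtain an integral endomorphism $a_g$, apply the descent formula to $a_g$ and to $m_g$ separately, and cancel the $m_g$ contribution on $\chi$-parts (which requires the kernel condition to know $m_{\overline g}$ is an isomorphism there). Without this, the equality of the finite-level determinant ratio with the evaluated reduced norm is unproven.

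Second, a bookkeeping issue in your chain $L\to S\to T\to\alpha$: changing $T$ changes the codomain $E_{S,T}$ of $\alpha$, so you cannot ``keep $\alpha$'' across the $T$ step. The paper resolves this by factoring both $\alpha$ and $\tilde\alpha$ through a common $\alpha_0\colon\cY_{S_\infty}\to E_{S\cap\tilde S,\,T\cup\tilde T}$ and composing with the canonical inclusions; your final $\alpha$-independence step will absorb the discrepancy, but the intermediate conjectures in your chain are not well-posed unless you specify which induced $\alpha$ accompanies each change of $T$.
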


    As a consequence, one is justified in writing \hyperref[conje:ic]{IC($L_\infty/K, \chi, \beta$)}, \hyperref[conje:emc]{eMC($L_\infty/K, \beta$)} and \hyperref[conje:emcu]{eMC\textsuperscript{u}($L_\infty/K, \beta$)} - which we nonetheless avoid in almost all formal mathematical claims. The proof of the independence of $L$ will be completely straightforward, unlike that of $\alpha, S$ and $T$. By remark \ref{rem:emcu} i), it will suffice to prove i) and ii): once the equivariant Main Conjectures are shown to be equivalent, uniqueness is a purely $K$-theoretic question. Note that we can only argue this way because $L_\infty/K$, and hence $\Lambda(\cG)$ and $\cQ(\cG)$, are the same for all conjectures in the theorem.

    The third and last goal of this chapter is to study the functoriality of the conjectures, that is, how modifying $L_\infty$ and $K$ affects their validity. In the case of a finite extension $L_\infty'/L_\infty$ such that $L_\infty/K'$ is Galois, we will prove that all three conjectures for $L_\infty'/K$ imply their counterparts for $L_\infty/K$. If one instead replaces $K$ by a finite extension $K'$ contained in $L_\infty$, it is necessary to assume the Interpolation Conjecture for several characters of $\Gal(L_\infty/K)$ in order to conclude that for one of $\Gal(L_\infty/K')$. As for the equivariant Main Conjecture, only the version without uniqueness will be treated. As an immediate consequence, we deduce that the general case of those two conjectures for $p$ odd follows from the case $K = \QQ$. We end the chapter by addressing the converse problem: whether one can deduce the conjectures for $L_\infty/K$ from their counterparts for a suitable family of subextensions. Two families $\cS$ and $\cE_p$ will be described where this is the case, consisting of the subextensions determined by certain elementary subgroups and certain $p$-elementary subquotients of $\cG$, respectively. In both instances, the discussion will be limited to the Interpolation Conjecture and the equivariant Main Conjecture without uniqueness. For the latter, we essentially use the argument from \cite{johnston_nickel} section 10 together with some additional verifications which are not necessary in the totally real case.

    The justification for many specific features of the formulation of the Main Conjecture will become apparent in the following sections, such as the duality between the characters in the numerator and denominator of the regulated $L$-value \eqref{eq:ic}, the \textit{almost all} quantifier in the Interpolation Conjecture, and the element $\zeta_{S, T}^{\alpha, \beta}$ being mapped to the \textit{inverse} of the refined Euler characteristic $\chi_{\Lambda(\cG), \cQ(\cG)}(\cC_{S, T}\q, t^\alpha)$.

    \section{\texorpdfstring{Stark's conjecture and the choice of $\beta$}{Stark's conjecture and the choice of beta}}
    \label{sec:starks_conjecture_and_the_choice_of_beta}

        Stark's conjecture, as formulated by Tate in \cite{tate}, asserts a certain Galois-invariance property of the quotient of the leading coefficient of an Artin $L$-series by a regulator constructed using the classical Dirichlet map. The Stark-Tate regulator from section \ref{sec:regulators} is a particular instance of such objects. The classical formulation only considers $S$-depleted Artin $L$-functions, so the purpose of this section is to introduce a $T$-smoothed version of it - that is, a conjecture for $(S, T)$-modified Artin $L$-functions - which will be equivalent to the original $S$-version; and to study its implications for the Main Conjecture.

        We work in the generality in which Artin $L$-functions were introduced in section \ref{sec:artin_l-series}, which is essentially the same as in our applications save for the fact that $S$ is not required to contain all ramified places here. Tate defines the following regulators: suppose given a Galois extension of number fields $L/K$ with Galois group $G$ and two finite sets of places $S$ and $T$ of $K$ such that $S \supseteq S_\infty$ and $S \cap T = \varnothing$. Consider an embedding\footnote{It is well known that $\CC_p \iso \CC$ as abstract fields by a transcendence-degree argument. However, this isomorphism is far from unique.} $\beta \colon \CC_p \ia \CC$ and a $\ZZ[G]$-homomorphism
        \[
            f \colon \cX_{L, S}^\ZZ \to \CC_p \otimes \units{\cO_{L, S}}
        \]
        in the notation of section \ref{sec:artin_l-series}. These maps induce a $\CC[G]$ homomorphism
        \begin{equation}
        \label{eq:f_beta}
            f^\beta \colon \CC \otimes \cX_{L, S}^\ZZ \to \CC \otimes \units{\cO_{L, S}}
        \end{equation}
        by extending scalars of $\cX_{L, S}^\ZZ \xrightarrow{f} \CC_p \otimes \units{\cO_{L, S}} \xrightarrow{\beta \otimes \Id} \CC \otimes \units{\cO_{L, S}}$. Finally, let ${\lambda_{L, S}^\CC \colon \CC \otimes \units{\cO_{L, S}} \to \CC \otimes \cX_{L, S}^\ZZ}$ be the extension of scalars of the Dirichlet regulator map \eqref{eq:dirichlet_regulator_map_real} from $\RR$ to $\CC$.

        We are interested in the character-wise determinant of $\lambda_{L, S}^\CC \circ f^\beta$. Specifically, if $\chi \in \CChars{\CC_p}{G}$ and $V_\chi$ is a $\CC_p[G]$-module which affords $\chi$, then $V_{\beta \chi} = \CC \otimes_\beta V_\chi$ is a $\CC[G]$-module which affords $\beta \chi$ and one may therefore define\footnote{Tate uses the notation $R(\chi^\beta, f^\beta)$ for the determinant on $\check{\chi}$-parts instead (cf. \cite{tate} p.28).}
        \begin{equation}
        \label{eq:regulator_tate}
            R_S(\beta \chi, f^\beta) = \det(\lambda_{L, S}^\CC \circ f^\beta \mid \Hom_{\CC[G]}(V_{\beta\chi}, \CC \otimes \cX_{L, S}^\ZZ)) \in \CC
        \end{equation}
        in the notation of \eqref{eq:notation_endomorphism_acting_on_hom}. In other words, $R_S(\beta \chi, f^\beta)$ is the determinant of the $\CC$-linear endomorphism $(\lambda_{L, S}^\CC \circ f^\beta)_\ast$ of $\Hom_{\CC[G]}(V_{\beta\chi}, \CC \otimes \cX_{L, S}^\ZZ)$ given by postcomposition with $\lambda_{L, S}^\CC \circ f^\beta$.

        Our $T$-smoothed version of Stark's conjecture\index{Stark's conjecture} is the following assertion:
        \begin{conje*}[Stark\textsuperscript{T}($L/K, \chi, f, S, T$)]
        \phantomsection
        \label{conje:stark}
            Consider:
            \begin{itemize}
                \item{
                    A Galois extension of number fields $L/K$.
                }
                \item{
                    A character $\chi \in \CChars{\CC_p}{\Gal(L/K)}$.
                }
                \item{
                    Two finite sets of places $S$ and $T$ of $K$ such that $S \supseteq S_\infty$ and $S \cap T = \varnothing$.
                }
                \item{
                    A homomorphism of $\ZZ[G]$-modules $f \colon \cX_{L, S}^\ZZ \to \CC_p \otimes \units{\cO_{L, S}}$.
                }
            \end{itemize}

            There exists an $\cL_{S, T}(\chi, f) \in \CC_p$ such that, for any ring homomorphism $\beta \colon \CC_p \ia \CC$, one has
            \begin{equation}
            \label{eq:starks_conjecture}
                \beta(\cL_{S, T}(\chi, f)) = \frac{R_S(\beta \check{\chi}, f^\beta)}{L_{K, S, T}^\ast(\beta \chi, 0)},
            \end{equation}
        \end{conje*}

        The original conjecture is the above claim for $T = \varnothing$. However, our $T$-smoothed version is a \textit{reformulation}, rather than a strengthening:
        \begin{prop}
        \label{prop:stark_independent_of_t}
            \hyperref[conje:stark]{Stark\textsuperscript{T}($L/K, \chi, f, S, T$)} holds if and only if Stark's conjecture as formulated\footnote{\label{foot:stark_coefficient_field}It should be pointed out that Tate formulates this conjecture for any field $E$ which can be embedded into $\CC$, whereas we specialise to the case $E = \CC_p$. However, the choice of $E$ is irrelevant (cf. section I\S6 in the reference).} in \cite{tate} 5.4 does for the same set of parameters minus $T$.
        \end{prop}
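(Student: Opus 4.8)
The plan is to compare the two conjectures by analyzing the only place where $T$ enters, namely the $\delta$-factors in $L_{K,S,T}^\ast(\beta\chi,0)$. By lemma \ref{lem:properties_of_L-functions} v), the order of vanishing $r_S(\chi)$ at $s=0$ is independent of $T$, so the leading coefficient is
\[
    L_{K, S, T}^\ast(\beta\chi, 0) = L_{K, S}^\ast(\beta\chi, 0) \cdot \prod_{v \in T} \delta_v(\beta\chi, 0),
\]
where each $\delta_v(\beta\chi, 0) = \det(1 - \fN(v)\varphi_w \mid V_{\beta\chi}^{I_w})$ is a non-zero complex number (it is an entire function not vanishing at $s=0$, as noted in section \ref{sec:artin_l-series}). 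The key observation is that this product of $\delta$-factors is itself the image under $\beta$ of a $p$-adic algebraic quantity: since $\varphi_w$ acts on the $\QQ_p^c$-vector space $V_\chi^{I_w}$ with eigenvalues that are roots of unity, the quantity
\[
    \delta_{S, T}(\chi) := \prod_{v \in T} \det(1 - \fN(v)\varphi_w \mid V_\chi^{I_w}) \in \QQ_p^c \subseteq \CC_p
\]
is well defined, non-zero, and satisfies $\beta(\delta_{S, T}(\chi)) = \prod_{v \in T} \delta_v(\beta\chi, 0)$ for every embedding $\beta \colon \CC_p \ia \CC$, because $\beta$ fixes roots of unity and commutes with the characteristic-polynomial computation (the $\varphi_w$-action on $V_\chi^{I_w}$ is defined over a number field, indeed over $\ZZ[\text{roots of unity}]$, so applying $\beta$ to the matrix entries is harmless).

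Given this, the proof is essentially a bookkeeping argument. First I would record that the regulator $R_S(\beta\check\chi, f^\beta)$ in \eqref{eq:regulator_tate} does not involve $T$ at all — it is built purely from $\cX_{L,S}^\ZZ$, $\units{\cO_{L,S}}$, the Dirichlet map and $f$. Hence if \hyperref[conje:stark]{Stark\textsuperscript{T}($L/K, \chi, f, S, \varnothing$)} holds with predicted element $\cL_{S, \varnothing}(\chi, f)$, then setting
\[
    \cL_{S, T}(\chi, f) := \frac{\cL_{S, \varnothing}(\chi, f)}{\delta_{S, T}(\check\chi)} \in \CC_p
\]
(note the dual character $\check\chi$ appears because the $L$-function in \eqref{eq:starks_conjecture} is attached to $\chi$ while the regulator uses $\check\chi$; one must be careful which character the $\delta$-factors are taken for, and in \eqref{eq:starks_conjecture} they belong to $\beta\chi$, so the $p$-adic correction factor is $\delta_{S,T}(\chi)$ — I would double-check this index matching carefully) gives, for any $\beta$,
\[
    \beta(\cL_{S, T}(\chi, f)) = \frac{\beta(\cL_{S, \varnothing}(\chi, f))}{\beta(\delta_{S, T}(\chi))} = \frac{R_S(\beta\check\chi, f^\beta)}{L_{K, S}^\ast(\beta\chi, 0)} \cdot \frac{1}{\prod_{v \in T}\delta_v(\beta\chi, 0)} = \frac{R_S(\beta\check\chi, f^\beta)}{L_{K, S, T}^\ast(\beta\chi, 0)},
\]
which is exactly \hyperref[conje:stark]{Stark\textsuperscript{T}($L/K, \chi, f, S, T$)}. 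Conversely, dividing by $\delta_{S,T}(\chi)$ in the other direction (i.e. multiplying) recovers the $T=\varnothing$ statement from the general one, so the two are equivalent. The only subtlety worth spelling out is that $\delta_{S,T}(\chi) \neq 0$, which is immediate since each factor $\det(1-\fN(v)\varphi_w \mid V_\chi^{I_w})$ has all its "roots" $\fN(v)^{-1}$ times roots of unity — never equal to $1$ unless $\fN(v)$ is a root of unity, impossible for a rational prime power $> 1$.

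I expect the main obstacle to be purely notational: keeping straight which character ($\chi$ versus $\check\chi$) the $\delta$-factors and the $L$-function are attached to on each side of \eqref{eq:starks_conjecture}, and confirming that the $p$-adic avatar $\delta_{S,T}(\chi)$ transforms correctly under an arbitrary abstract embedding $\beta$ — this hinges on the fact, already implicit in section \ref{sec:artin_l-series} and section \ref{sec:representations_of_finite_groups}, that a representation affording $\chi$ can be realized over a number field (indeed over a cyclotomic field, by the eigenvalues being roots of unity for elements of finite order), so that $\beta$ applied entrywise to the relevant Frobenius matrix yields the matrix for $\beta\chi$. Once that compatibility is in hand, there is no analytic content left: the continuation and order-of-vanishing statements are quoted from lemma \ref{lem:properties_of_L-functions}, and everything reduces to the multiplicative identity $L^\ast_{K,S,T} = L^\ast_{K,S} \cdot \prod_{v\in T}\delta_v$ at $s = 0$.
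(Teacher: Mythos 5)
Your proposal is correct and follows essentially the same route as the paper: factor the leading coefficient as $L^\ast_{K,S,T}=L^\ast_{K,S}\cdot\prod_{v\in T}\delta_v$ at $s=0$, check the $\delta$-factors do not vanish there, define a $p$-adic avatar of their product, verify it is $\beta$-equivariant, and divide. Two small remarks: the index you were unsure about resolves as in your verification computation (the correction factor is $\delta_{S,T}(\chi)$, attached to the same character as the $L$-function in the denominator of \eqref{eq:starks_conjecture}, not to $\check\chi$ as in your displayed definition), and your direct non-vanishing argument via the eigenvalues of the finite-order element $\varphi_w$ is a mild simplification of the paper's reduction to linear characters via Brauer induction.
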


        \begin{proof}
            Conjecture \cite{tate} 5.4 asserts the existence of some $\cL_S(\chi, f) \in \CC_p$ such that
            \begin{equation}
            \label{eq:starks_conjecture_tate}
                \beta(\cL_S(\chi, f)) = \frac{R_S(\beta \check{\chi}, f^\beta)}{L_{K, S, \varnothing}^\ast(\beta \chi, 0)}
            \end{equation}
            for any $\beta \colon \CC_p \ia \CC$. The $S$-depleted and $(S, T)$-modified $L$-functions differ by the $\delta$-factors (cf. \eqref{eq:local_delta-factors}) at $T$
            \begin{equation}
            \label{eq:l_functions_t-smoothed}
                L_{K, S, T}(\beta \chi, s) = L_{K, S, \varnothing}(\beta \chi, s) \cdot \prod_{v \in T} \delta_v(\beta\chi, s)
            \end{equation}
            in the half-plane $\re(s) > 1$. This relation extends to all of $\CC$, since Artin $L$-functions are by definition the meromorphic continuation of their Euler-product expression and two meromorphic functions which coincide on an open subset of $\CC$ (where they are analytic) are identical. Recall that $\delta$-factors are entire functions by \eqref{eq:euler_factors_analytic}.

            We now show that $\delta$-factors do not vanish at $s = 0$. They obey the same formalism as $L$-factors (cf. lemma \ref{lem:properties_of_L-functions}), since $\delta_v(\beta\chi, s) = L_v(\beta\chi, 1 - s)^{-1}$. Thus, Brauer's induction theorem \ref{thm:brauer_induction} allows one to express $\delta_v(\beta\chi, s)$ as a (finite) product of factors for the form $\delta_w(\lambda, s)$, where $\lambda$ is a linear \textit{complex} character of some $U \leq \Gal(L/K)$ and $w$ is a prolongation of $v$ to $L^U$. In the expression
            \begin{equation}
            \label{eq:delta_factors_linear_case}
                \delta_w(\lambda, s) = \det(1 - \fN(w)^{1 - s} \varphi_w \mid V_\lambda^{I_w}),
            \end{equation}
            $V_\lambda^{I_w}$ is either trivial (if $\lambda$ is not trivial on $I_w$), in which case the above determinant takes the value 1 at all $s$ by definition; or a 1-dimensional $\CC$-vector space (if $\lambda$ is trivial on $I_w$) upon which $\varphi_w$ acts as $\lambda(\varphi_w) \in \units{\CC}$. In this latter case,
            \[
                \delta_w(\lambda, 0) = 1 - \fN(w) \lambda(\varphi_w)
            \]
            is different from 0, as $\fN(w) > 1$ and $\lambda(\varphi_w)$ is a complex root of unity. Since the original $\delta_v(\beta\chi, 0)$ is a product of such linear factors, it does not vanish either.

            Equation \eqref{eq:l_functions_t-smoothed} now yields the relation
            \[
                L_{K, S, T}^\ast(\beta \chi, 0) = L_{K, S, \varnothing}^\ast(\beta \chi, 0) \cdot \prod_{v \in T} \delta_v(\beta\chi, 0).
            \]
            on leading coefficients. It therefore remains to show that $\delta$-factors behave well with respect to $\beta$. For $v \in T$, consider the \textit{$p$-adic $\delta$-factor}
            \[
                \delta_v^{\CC_p}(\chi, 0) = \det(1 - \fN(v) \varphi_w \mid V_\chi^{I_w}) \in \CC_p.
            \]
            Then
            \begin{equation}
            \label{eq:delta-factors_galois-equivariant}
                \beta(\delta_v^{\CC_p}(\chi, 0)) = \det(1 - \fN(v) \beta(\varphi_w \mid V_\chi^{I_w})),
            \end{equation}
            where $\beta(\varphi_w \mid V_\chi^{I_w})$ denotes $\beta$ applied entry-wise to the matrix describing the action of $\varphi_w$ on $V_\chi^{I_w}$. This coincides with the matrix describing the action of $\varphi_w$ on $V_{\beta\chi}^{I_w} = (\CC \otimes_{\beta} V_\chi)^{I_w} = \CC \otimes_{\beta} (V_\chi^{I_w})$ by definition, and hence
            \[
                \beta(\delta_v^{\CC_p}(\chi, 0)) = \det(1 - \fN(v) \varphi_w \mid V_{\beta\chi}^{I_w}) = \delta_v(\beta\chi, 0).
            \]
            In particular, $\delta_v^{\CC_p}(\chi, 0) \neq 0$. Therefore, assuming the existence of $\cL_S(\chi, f) \in \CC_p$ as in the beginning of the proof and setting $\cL_{S, T}(\chi, f) = \cL_S(\chi, f) \prod_{v \in T} \delta_v^{\CC_p}(\chi, 0)^{-1} \in \CC_p$, one has
            \begin{align*}
                \beta(\cL_{S, T}(\chi, f)) & = \frac{R_S(\beta \check{\chi}, f^\beta)}{L_{K, S, \varnothing}^\ast(\beta \chi, 0)} \cdot \frac{1}{\prod_{v \in T} \beta(\delta_v^{\CC_p}(\chi, 0))} \\
                & = \frac{R_S(\beta \check{\chi}, f^\beta)}{L_{K, S, \varnothing}^\ast(\beta \chi, 0) \cdot \prod_{v \in T} \delta_v(\beta\chi, 0)} \\
                & = \frac{R_S(\beta \check{\chi}, f^\beta)}{L_{K, S, T}^\ast(\beta \chi, 0)}
            \end{align*}
            for any $\beta \colon \CC_p \to \CC$, as claimed by Stark\textsuperscript{T}($L/K, \chi, f, S, T$).

            The converse is proved analogously by multiplying by $\prod_{v \in T} \delta_v^{\CC_p}(\chi, 0)$ at the end, rather than dividing by it.
        \end{proof}

        In view of the proposition, the known properties of Stark's conjecture carry over to the smoothed version. \cite{tate} constitutes a systematic study of these - here we limit ourselves to citing a few basic results from sections I\S6 and I\S7:
        \begin{enumerate}[i)]
            \item{
                If Stark's conjecture (or its $T$-smoothed version) holds for all characters of $\Gal(K/\QQ)$ for any finite Galois extension $K/\QQ$ (and all $S$, $T$ and $f$), then it holds in general. The proof relies on the invariance of the conjecture under inflation and induction.
            }
            \item{
                If Stark's conjecture (or its $T$-smoothed version) holds for all linear characters of $\Gal(L/K)$ for any Galois extension $L/K$ of number fields (and all $S$, $T$ and $f$), then it holds in general. The proof relies on Brauer's induction theorem \ref{thm:brauer_induction} and the behaviour of the conjecture with respect to character addition and induction.
            }
            \item{
                If Stark's conjecture (or its $T$-smoothed version) holds for a choice of $L/K$, $\chi$ and $S$ and an $f_0 \colon \cX_{L, S}^\ZZ \to \CC_p \otimes \units{\cO_{L, S}}$ such that $\CC_p \otimes f_0 \colon \CC_p \otimes \cX_{L, S}^\ZZ \xrightarrow{\sim} \CC_p \otimes \units{\cO_{L, S}}$ is an isomorphism, then it holds for any $f \colon \cX_{L, S}^\ZZ \to \CC_p \otimes \units{\cO_{L, S}}$ and the same remaining parameters. Note that the introduction of $\CC_p$ as the coefficient field is unnecessary here - one can work over $\CC$ instead (see also the footnote\textsuperscript{\ref{foot:stark_coefficient_field}} on page \pageref{foot:stark_coefficient_field}).
            }
            \item{
                Stark's conjecture (and its $T$-smoothed version) is independent of the choice of the set $S \supseteq S_\infty$. The proof is similar to that of proposition \ref{prop:stark_independent_of_t}, but one needs to treat the case where the local $L$-factors vanish (which, as we have seen, cannot occur to the $\delta$-factors).
            }
        \end{enumerate}

        Most cases of Stark's conjecture are unknown. Exceptions to this are:
        \begin{itemize}
            \item{
                If $L/K$ is a Galois extension of number fields with Galois group $G$ and $\chi$ is a character of $G$ such that $\chi(\sigma) \in \QQ$ for all $\sigma \in G$, then Stark's conjecture is known for all choices of $S$ and $f$. This is due to Tate and Artin (cf. \cite{tate} chapter II) and its proof can be reduced to the case of permutation characters, which is settled by virtue of the analytic class number formula. Under the above assumption on $\chi$, the element $\cL_{S, \emptyset}(\chi, f)$ from equation \eqref{eq:starks_conjecture} lies in $\QQ$.
            }
            \item{
                If $L/K$ is a finite abelian extension and $K$ is either $\QQ$ or an imaginary quadratic field, then Stark's conjecture is known for all choices of $\chi$, $S$ and $f$. These cases date back to Stark.
            }
        \end{itemize}

        We now study the implications of Stark's conjecture for the Interpolation Conjecture, which stem from the following fact:
        \begin{lem}
        \label{lem:stark_invariance_interpolated}
            Setting \ref{sett:formulation}. Let $\chi \in \Irr_p(\cG)$ satisfy \ref{manualcond:kc} and choose $n \in \NN$ large enough that $\Gp{n} \subseteq \ker(\chi)$. Define $f$ as the composition
            \[
                f \colon \cX_{L_n, S}^\ZZ \ia \cX_{L_n, S} \xrightarrow{\varphi_n^\alpha} \ZZ_p \otimes \units{\cO_{L_n, S, T}} \ia \CC_p \otimes \units{\cO_{L_n, S, T}} = \CC_p \otimes \units{\cO_{L_n, S}}
            \]
            with $\varphi_n^\alpha$ as in definition \ref{defn:finite_level_map}. If \hyperref[conje:stark]{Stark\textsuperscript{T}($L_n/K, \check{\chi}, f, S, T$)} holds, then
            \begin{equation}
            \label{eq:interpolation_conjecture_expression}
                 \frac{\beta^{-1}(L_{K, S, T}^\ast(\beta \check{\chi}, 0))}{R_S^\beta(\alpha, \chi)} \in \units{\CC_p}
            \end{equation}
            is independent of the choice of $\beta \colon \CC_p \xrightarrow{\sim} \CC$ and lies in $\QQ_p(\chi) = \QQ_p(\chi(g): g \in \cG)$. In particular, it is algebraic over $\QQ_p$.
        \end{lem}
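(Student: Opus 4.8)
The plan is to show that the quotient in \eqref{eq:interpolation_conjecture_expression} equals $\beta^{-1}(\cL_{S,T}(\check{\chi}, f))$ for the element $\cL_{S,T}(\check\chi, f) \in \CC_p$ predicted by \hyperref[conje:stark]{Stark\textsuperscript{T}($L_n/K, \check\chi, f, S, T$)}, and then to extract the algebraicity from general properties of that element. First I would unpack the definition of the Stark-Tate regulator $R_S^\beta(\alpha, \chi)$ from Definition \ref{defn:regulator} and compare it with Tate's regulator $R_S(\beta\chi', f^\beta)$ from \eqref{eq:regulator_tate}. The key observation is that the map $f$ defined in the lemma is built from $\varphi_n^\alpha$ in exactly the way needed so that $f^\beta$ (in the sense of \eqref{eq:f_beta}) is, up to the identification $\CC \otimes_{\beta^{-1}} \CC_p \otimes_{\ZZ_p} - \iso \CC \otimes -$, the complexification of $\CC_p \otimes_{\ZZ_p} \varphi_n^\alpha$ composed with the embedding into $\units{\cO_{L_n,S}}$; and $\lambda_{L_n,S}^\CC$ matches $\lambda_{n,S}^\beta$ under the defining commutative square of Definition \ref{defn:p-adic_dirichlet_regulator_map}. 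One must be careful about which character carries the dual: $R_S^\beta(\alpha,\chi)$ is the determinant on $V_\chi$-isotypic Hom-spaces, while Tate's $R_S(\beta\psi, f^\beta)$ is the determinant on $V_{\beta\psi}$-parts; since Stark's conjecture as stated involves $R_S(\beta\check\psi, f^\beta)$ in the numerator, applying it with $\psi = \check\chi$ produces $R_S(\beta\chi, f^\beta)$, which is what we want. Thus $R_S(\beta\chi, f^\beta) = \beta(R_S^\beta(\alpha,\chi))$ — this identity is the technical heart of the argument, and I expect checking it carefully (keeping track of the various scalar extensions, the position of the dual, and the fact that $\CC_p \otimes \units{\cO_{L_n,S,T}} = \CC_p \otimes \units{\cO_{L_n,S}}$ since the index is finite) to be the main obstacle.

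Granting that identity, \hyperref[conje:stark]{Stark\textsuperscript{T}($L_n/K, \check\chi, f, S, T$)} gives an $\cL_{S,T}(\check\chi, f) \in \CC_p$ with
\[
    \beta(\cL_{S,T}(\check\chi, f)) = \frac{R_S(\beta\chi, f^\beta)}{L_{K,S,T}^\ast(\beta\check\chi, 0)} = \frac{\beta(R_S^\beta(\alpha,\chi))}{L_{K,S,T}^\ast(\beta\check\chi, 0)}
\]
for every isomorphism $\beta \colon \CC_p \isoa \CC$. Rearranging yields
\[
    \cL_{S,T}(\check\chi, f)^{-1} = \beta^{-1}\!\left(\frac{L_{K,S,T}^\ast(\beta\check\chi, 0)}{\beta(R_S^\beta(\alpha,\chi))}\right) = \frac{\beta^{-1}(L_{K,S,T}^\ast(\beta\check\chi, 0))}{R_S^\beta(\alpha,\chi)},
\]
where I use that $\beta^{-1}(\beta(R_S^\beta(\alpha,\chi))) = R_S^\beta(\alpha,\chi)$. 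Note $\cL_{S,T}(\check\chi, f)$ is non-zero precisely because $R_S^\beta(\alpha,\chi) \neq 0$ (Proposition \ref{prop:isomorphism_on_chi_parts}, which applies since $\chi$ satisfies \ref{manualcond:kc}), so the right-hand side lies in $\units{\CC_p}$. Crucially, the left-hand side $\cL_{S,T}(\check\chi, f)^{-1}$ does not depend on $\beta$ at all — it is a single element of $\CC_p$ whose image under every $\beta$ is prescribed — so the quotient \eqref{eq:interpolation_conjecture_expression} is independent of the choice of $\beta$, establishing the first assertion. Independence of $n$ for the left-hand side follows from Lemma \ref{lem:r_chi_independent_of_n} together with the functorial behaviour of $L$-functions under inflation (Lemma \ref{lem:properties_of_L-functions} ii)), so this causes no ambiguity.

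For the field of definition, I would argue as follows. The element $w := \cL_{S,T}(\check\chi, f)^{-1} \in \CC_p$ satisfies $\beta(w) = L_{K,S,T}^\ast(\beta\check\chi,0)/\beta(R_S^\beta(\alpha,\chi))$ for all $\beta$. Let $\sigma$ be an automorphism of $\CC_p$ fixing $\QQ_p(\chi)$; it suffices to show $\sigma(w) = w$, since then $w$ is fixed by $\mathrm{Aut}(\CC_p / \QQ_p(\chi))$ and hence lies in (the perfect, indeed algebraic) closure considerations force $w \in \QQ_p(\chi)$ — more precisely one uses that $w \in \CC_p$ is fixed by all such $\sigma$ forces it into $\overline{\QQ_p(\chi)} \cap$ (the fixed field), which is $\QQ_p(\chi)$ itself once we know $w$ is algebraic. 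Concretely: pick any $\beta$; then $\beta\sigma^{-1}\colon \CC_p \to \CC$ is another field embedding, and applying the Stark identity with $\beta$ replaced by $\beta\sigma^{-1}$ gives $\beta\sigma^{-1}(w) = L_{K,S,T}^\ast(\beta\sigma^{-1}\check\chi, 0)/\beta\sigma^{-1}(R_S^\beta(\alpha, \chi))$. Now $\sigma^{-1}\check\chi = \check\chi$ because $\sigma$ fixes $\QQ_p(\chi) = \QQ_p(\check\chi)$, and by the explicit formula for $R_S^\beta(\alpha,\chi)$ as a determinant of a matrix with entries in $\QQ_p(\chi)$ — here one invokes that $\varphi_n^\alpha$ and the Dirichlet map are defined over $\QQ$ and $\QQ_p$ respectively and the only $\chi$-dependence enters through $e(\chi) \in \QQ_p(\chi)[\cG_n]$ — one gets $\sigma^{-1}(R_S^\beta(\alpha,\chi))$ transforms compatibly, so that $\beta\sigma^{-1}(w) = \beta(w)$, whence $\sigma(w) = w$. (One should phrase this last part via the standard fact, cf. \cite{tate} I\S6, that $R_S(\check\chi, f^\beta)$ transforms under $\mathrm{Aut}(\CC/\QQ)$ via its action on characters.) Finiteness of $\QQ_p(\chi)/\QQ_p$ — $\chi$ takes finitely many values, all sums of roots of unity of bounded order — then gives algebraicity over $\QQ_p$, completing the proof.
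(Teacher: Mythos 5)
Your proposal follows the paper's proof essentially step for step: the technical heart is indeed the identity $R_S(\beta \chi, f^\beta) = \beta(R_S^\beta(\alpha, \chi))$ (equation \eqref{eq:relation_regulator_tate_mine} in the text, proved by transporting a $\CC_p$-basis of $\Hom_{\CC_p[\cG_n]}(V_\chi, \CC_p \otimes_{\ZZ_p} \cX_{L_n, S})$ along $\beta$), after which the rearrangement of Stark's identity with $\psi = \check{\chi}$ and the twisting argument with $\tilde{\beta} = \beta\sigma^{-1}$ are exactly the paper's. Two points in your write-up need repair, however.

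First, your justification that the regulator transforms correctly under $\sigma$ — ``the explicit formula for $R_S^\beta(\alpha,\chi)$ as a determinant of a matrix with entries in $\QQ_p(\chi)$'' — is not correct: the matrix involves the $p$-adic Dirichlet regulator $\lambda_{n,S}^\beta$, whose entries are $\beta^{-1}$ of logarithms of archimedean absolute values and are in no sense elements of $\QQ_p(\chi)$. The clean argument (which your final parenthetical gestures at) is to combine $R_S(\beta\chi, f^\beta) = \beta(R_S^\beta(\alpha,\chi))$ with the observation that $f^\beta = f^{\beta\sigma^{-1}}$, because $f$ is the scalar extension of a map with coefficients in $\ZZ_p$, fixed by $G_{\QQ_p}$; this yields $R_S^{\beta\sigma^{-1}}(\alpha, \chi) = \sigma(R_S^\beta(\alpha, \sigma^{-1}\chi)) = \sigma(R_S^\beta(\alpha, \chi))$. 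Second, and more seriously, your passage from $G_{\QQ_p(\chi)}$-invariance of $w$ to $w \in \QQ_p(\chi)$ is circular as written: you conclude the fixed field is $\QQ_p(\chi)$ ``once we know $w$ is algebraic'', but algebraicity is precisely what is being proved, and $\CC_p$ is far larger than $\QQ_p^c$, so invariance alone does not formally place $w$ in any algebraic extension. The missing input is the Ax--Sen--Tate theorem, $(\CC_p)^{G_{\QQ_p(\chi)}} = \QQ_p(\chi)$ (the paper cites Tate's $p$-divisible groups paper, Theorem 1); with that in hand the argument closes without any prior knowledge of algebraicity.
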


        \begin{proof}
            Let $\beta$ be as in the statement. Since $\cX_{L_n, S} = \ZZ_p \otimes \cX_{L_n, S}^\ZZ$, an immediate computation shows that the map $f^\beta$ from \eqref{eq:f_beta} coincides with $\CC \otimes_{\beta} (\CC_p \otimes_{\ZZ_p} \varphi_n^\alpha)$. As for the Dirichlet regulator, we have
            \[
                \CC \otimes_\beta \lambda_{n, S}^\beta = \lambda_{L_n, S}^\CC
            \]
            by definition \ref{defn:p-adic_dirichlet_regulator_map}, where $\lambda_{L_n, S}^\CC$ is again the scalar extension to $\CC$ of the classical map.

            Let $M$ be the matrix of the $\CC_p$-linear action of $\lambda_{n, S}^\beta \circ (\CC_p \otimes_{\ZZ_p} \varphi_n^\alpha)$ on ${\Hom_{\CC_p[\cG_n]}(V_\chi, \CC_p \otimes_{\ZZ_p} \cX_{L_n, S})}$ (by postcomposition) with respect to an arbitrary basis $\set{f_1, \ldots, f_n}$. Then $\set{1 \otimes f_1, \ldots, 1 \otimes f_n}$ is a $\CC$-basis of
            \[
                \CC \otimes_\beta \Hom_{\CC_p[\cG_n]}(V_\chi, \CC_p \otimes_{\ZZ_p} \cX_{L_n, S}) \iisoo \Hom_{\CC[\cG_n]}(V_{\beta \chi}, \CC \otimes \cX_{L_n, S}^\ZZ)
            \]
            (canonically) with respect to which $\lambda_{L_n, S}^\CC \circ f^\beta = \CC \otimes_\beta (\lambda_{n, S}^\beta \circ (\CC_p \otimes_{\ZZ_p} \varphi_n^\alpha))$ acts by $\beta(M)$. Hence
            \begin{equation}
            \label{eq:relation_regulator_tate_mine}
                R_S(\beta \chi, f^\beta) = \det(\beta(M)) = \beta(\det(M)) = \beta(R_S^\beta(\alpha, \chi)).
            \end{equation}

            Assume Stark\textsuperscript{T}($L_n/K, \check{\chi}, f, S, T$) and let $\cL_{S, T}(\check{\chi}, f) \in \CC_p$ be the element predicted therein. Then
            \[
                \frac{R_S^\beta(\alpha, \chi)}{\beta^{-1}(L_{K, S, T}^\ast(\beta \check{\chi}, 0))} = \beta^{-1}\left(\frac{R_S(\beta \chi, f^\beta)}{L_{K, S, T}^\ast(\beta \check{\chi}, 0)} \right) = \cL_{S, T}(\check{\chi}, f),
            \]
            which is independent of the choice of $\beta$. We can then take inverses since $R_S^\beta(\alpha, \chi) \neq 0$ (because $\chi$ satisfies  \ref{manualcond:kc}), which proves the first part of the lemma.

            The field $\QQ_p(\chi)$, which coincides with $\QQ_p(\check{\chi})$, is a finite (and in fact abelian) extension of $\QQ_p$. The action of the absolute Galois group $G_{\QQ_p(\chi)}$ on $\QQ_p^c$ extends uniquely to a continuous action on $\CC_p$ (recall that every element of $\CC_p$ is by definition a limit of elements of $\QQ_p^c$) and one thus has ${G_{\QQ_p(\chi)} \ia \Aut_{\QQ_p(\chi)}(\CC_p)}$. Let $\sigma \in G_{\QQ_p(\chi)}$, so in particular $\sigma \chi = \chi$ and $\sigma \check{\chi} = \check{\chi}$. Then $\tilde{\beta} = \beta \sigma^{-1}$ is another isomorphism $\CC_p \xrightarrow{\sim} \CC$, and the first part of the lemma now yields
            \begin{align*}
                \frac{\beta^{-1}(L_{K, S, T}^\ast(\beta \check{\chi}, 0))}{R_S^\beta(\alpha, \chi)} & =
                \frac{\tilde{\beta}^{-1}(L_{K, S, T}^\ast(\tilde{\beta} \check{\chi}, 0))}{R_S^{\tilde{\beta}}(\alpha, \chi)} \\
                &= \frac{\sigma(\beta^{-1}(L_{K, S, T}^\ast(\beta \check{\chi}, 0))}{R_S^{\tilde{\beta}}(\alpha, \chi)} \\
                &= \frac{\sigma(\beta^{-1}(L_{K, S, T}^\ast(\beta \check{\chi}, 0))}{\sigma(R_S^\beta(\alpha, \sigma^{-1} \chi))} =
                \sigma \left( \frac{\beta^{-1}(L_{K, S, T}^\ast(\beta \check{\chi}, 0))}{R_S^\beta(\alpha, \chi)} \right),
            \end{align*}
            where the third equality follows easily from \eqref{eq:relation_regulator_tate_mine} noting that $f^\beta = f^{\tilde{\beta}}$ (they are scalar extensions of a map $\varphi_n^\alpha$ with coefficients in $\ZZ_p$, which is fixed by $G_{\QQ_p}$). But this shows that the regulated $L$-value lies in $(\CC_p)^{G_{\QQ_p(\chi)}}$, which is known to coincide $\QQ_p(\chi)$ (see for instance \cite{tate_p-divisible} theorem 1 on p. 176).
        \end{proof}

        Requiring Stark's conjecture might seem exceedingly restrictive in that it concerns all ring homomorphisms $\beta \colon \CC_p \ia \CC$, whereas only isomorphisms $\beta$ are relevant to us. However, it is not difficult to see that both claims - for all embeddings and for isomorphisms only - are equivalent.

        The main result of this section is the independence of the Main Conjecture of $\beta$ contingent on Stark's conjecture:
        \begin{prop}
        \label{prop:independence_of_beta}
            Setting \ref{sett:formulation}. Let $\tilde{\beta} \colon \CC_p \isoa \CC$. Then:
            \begin{enumerate}[i)]
                \item{
                    Given $\chi \in \Irr_p(\cG)$, assume that \hyperref[conje:stark]{Stark\textsuperscript{T}($L_n/K, \check{\chi} \otimes \rho^{-1}, f, S, T$)} holds for almost all $\rho \in \cK_S^\alpha(\chi)$, where $n$ is large enough that $\Gp{n} \subseteq \ker(\check{\chi} \otimes \rho^{-1})$ and $f$ is constructed as in lemma \ref{lem:stark_invariance_interpolated} for each character. Then \hyperref[conje:ic]{IC($L_\infty/K, \chi, L, S, T, \alpha, \beta$)} holds if and only if \hyperref[conje:ic]{IC($L_\infty/K, \chi, L, S, T, \alpha, \tilde{\beta}$)} does.
                }
                \item{
                    Assume that \hyperref[conje:stark]{Stark\textsuperscript{T}($L_n/K, \chi, f, S, T$)} holds for almost all $\chi \in \Irr_p(\cG)$, where $n$ is large enough that $\Gp{n} \subseteq \ker(\chi)$ and $f$ is constructed as in lemma \ref{lem:stark_invariance_interpolated} for each character. Then the conjecture \hyperref[conje:emc]{eMC($L_\infty/K, L, S, T, \alpha, \beta$)} holds if and only if \hyperref[conje:emc]{eMC($L_\infty/K, L, S, T, \alpha, \tilde{\beta}$)} does.
                }
                \item{
                    Under the same assumption as in ii), the conjecture \hyperref[conje:emcu]{eMC\textsuperscript{u}($L_\infty/K, L, S, T, \alpha, \beta$)} holds if and only if \hyperref[conje:emcu]{eMC\textsuperscript{u}($L_\infty/K, L, S, T, \alpha, \tilde{\beta}$)} does.
                }
            \end{enumerate}
        \end{prop}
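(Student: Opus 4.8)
The strategy is to reduce all three assertions to Lemma~\ref{lem:stark_invariance_interpolated}, which already contains the analytic heart of the matter: under the pertinent instance of Stark's conjecture, the regulated special value occurring on the right-hand side of \eqref{eq:ic} is independent of the chosen isomorphism $\CC_p \isoa \CC$.

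For part i), I would first note that for $\rho \in \cK_S^\alpha(\chi)$ the character $\chi \otimes \rho$ is irreducible and satisfies \ref{manualcond:kc} by Definition~\ref{defn:r_chi}, while $\check\chi \otimes \rho^{-1}$ is the dual character of $\chi \otimes \rho$. Applying Lemma~\ref{lem:stark_invariance_interpolated} with $\chi \otimes \rho$ in place of $\chi$ (and the corresponding $n$ and $f$), the hypothesis of i) yields that for all but finitely many $\rho \in \cK_S^\alpha(\chi)$ the quantity $\beta^{-1}(L_{K,S,T}^\ast(\beta(\check\chi \otimes \rho^{-1}),0))/R_S^\beta(\alpha,\chi \otimes \rho)$ does not depend on $\beta$. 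Intersecting this cofinite subset of $\cK_S^\alpha(\chi)$ with the cofinite subset at which \hyperref[conje:ic]{IC($L_\infty/K,\chi,L,S,T,\alpha,\beta$)} prescribes an interpolation value, the interpolation conditions defining \hyperref[conje:ic]{IC($\dots,\beta$)} and \hyperref[conje:ic]{IC($\dots,\tilde\beta$)} agree on a cofinite subset of $\cK_S^\alpha(\chi)$. Hence an element $F \in \units{\cQ^c(\Gamma_\chi)}$ witnesses one of them precisely when it witnesses the other, which is the claimed equivalence; and when they hold, uniqueness (Corollary~\ref{cor:uniqueness_series_type_w}) forces $F_{S,T,\chi}^{\alpha,\beta} = F_{S,T,\chi}^{\alpha,\tilde\beta}$.

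For part ii), I would record that every $W$-equivalence class of irreducible Artin characters of $\cG$ is infinite: the fibres of $\rho \mapsto \chi \otimes \rho$ are finite by the argument in the proof of Lemma~\ref{lem:properties_of_r_chi}~i), while there are infinitely many type-$W$ characters. Consequently, ``Stark\textsuperscript{T} holds for almost all $\chi$'' implies, for each fixed $\chi$, that Stark\textsuperscript{T} holds for all but finitely many members of the $W$-class of $\check\chi$, hence for $\check\chi \otimes \rho^{-1}$ for all but finitely many $\rho$ (recall also that $\cK_S^\alpha(\chi)$ is cofinite among type-$W$ characters by Lemma~\ref{lem:properties_of_r_chi}~i)). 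Thus part i) applies to every $\chi \in \Irr_p(\cG)$, so \hyperref[conje:ic]{IC($\dots,\beta$)} holds for all $\chi$ if and only if \hyperref[conje:ic]{IC($\dots,\tilde\beta$)} does, with $F_{S,T,\chi}^{\alpha,\beta} = F_{S,T,\chi}^{\alpha,\tilde\beta}$ whenever they exist. Finally, the refined Euler characteristic $\chi_{\Lambda(\cG),\cQ(\cG)}(\cC_{S,T}\q,t^\alpha)$ and the homomorphisms $\partial$ and $\psi_\chi$ are manifestly independent of $\beta$, so any zeta element $\zeta_{S,T}^{\alpha,\beta} \in K_1(\cQ(\cG))$ witnessing \hyperref[conje:emc]{eMC($\dots,\beta$)} also witnesses \hyperref[conje:emc]{eMC($\dots,\tilde\beta$)} verbatim, and conversely; this proves ii).

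Part iii) is then formal: by Remark~\ref{rem:emcu}~i), \hyperref[conje:emcu]{eMC\textsuperscript{u}($\dots,\beta$)} is equivalent to the conjunction of \hyperref[conje:emc]{eMC($\dots,\beta$)} with the injectivity of $\nr \colon K_1(\cQ(\cG)) \to \units{Z(\cQ(\cG))}$, and this injectivity is a property of the ring $\cQ(\cG)$ alone, with no reference to $\beta$; combining with ii) yields iii). I do not anticipate a genuine obstacle: the substance is already in Lemma~\ref{lem:stark_invariance_interpolated}, and the only delicate point is the bookkeeping of the two independent ``almost all'' quantifiers — the one coming from Stark's conjecture and the one built into the Interpolation Conjecture — together with the observation that the exceptional set for Stark's conjecture cannot exhaust an entire $W$-equivalence class.
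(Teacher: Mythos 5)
Your proposal is correct and follows essentially the same route as the paper: reduce everything to Lemma \ref{lem:stark_invariance_interpolated}, observe that the regulated value is then $\beta$-independent for almost all $\rho$, and note that every other ingredient of the conjectures ($\partial$, $\psi_\chi$, the refined Euler characteristic, the injectivity of $\nr$) never references $\beta$. Your extra bookkeeping in part ii) — that the exceptional set for Stark's conjecture cannot exhaust a $W$-equivalence class because such classes are infinite and the twisting map has finite fibres — is a point the paper's proof leaves implicit, but it is the same argument.
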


        \begin{proof}
            The only aspect of the Interpolation Conjecture which references $\beta$ is the regulated $L$-value $\beta^{-1}(L_{K, S, T}^\ast(\beta (\check{\chi} \otimes \rho^{-1}), 0))/R_S^\beta(\alpha, \chi \otimes \rho)$. Under the hypothesis in i), it is independent of $\beta$ for almost all $\rho \in \cK_S^\alpha(\chi)$ by lemma \ref{lem:stark_invariance_interpolated}. Hence, an $F_{S, T, \chi}^{\alpha, \beta} \in \units{\cQ^c(\Gamma_\chi)}$ satisfying the interpolation property \eqref{eq:ic} for almost all such $\rho$ will also satisfy the corresponding property for IC($L_\infty/K, \chi, L, S, T, \alpha, \tilde{\beta}$).

            As for ii) and iii), the only element of the equivariant Main Conjecture (with or without uniqueness) involving $\beta$ is $F_{S, T, \chi}^{\alpha, \beta}$, which has just been shown to be independent of it if Stark's conjecture holds for enough characters.
        \end{proof}

        We now illustrate a way in which Stark's conjecture supports the equivariant Main Conjecture by bringing attention to an issue we have tacitly ignored so far: while the interpolating series quotient $F_{S, T, \chi}^{\alpha, \beta} \in \units{\cQ^c(\Gamma_\chi)}$ has coefficients in some $p$-adic field, conjecture eMC($L_\infty/K, L, S, T, \alpha, \beta$) claims it is the image of a zeta element $\zeta_{S, T}^{\alpha, \beta} \in K_1(\cQ(\cG))$ - which seemingly has coefficients in $\QQ_p$.

        In order to understand this discrepancy, we endow $\cQ^c(\cG) = \QQ_p^c \otimes_{\QQ_p} \cQ(\cG)$ with the natural $G_{\QQ_p}$-action on the first component. It is known (cf. for instance \cite{nickel_conductor} equation (4)) that, given $\sigma \in G_{\QQ_p}$ and an irreducible Artin character $\chi$ of $\cG$, one has $\sigma(e_\chi) = e_{\sigma\chi}$ and $\sigma(\gamma_\chi) = \gamma_{\sigma\chi}$ in the notation of section \ref{sec:evaluation_maps}. In particular,
        \begin{equation}
        \label{eq:galois_automorphisms_t}
            \sigma(T_\chi) =  \sigma(\gamma_\chi - 1) = \sigma(\gamma_\chi) - 1 = \gamma_{\sigma\chi} - 1 = T_{\sigma\chi}
        \end{equation}
        under the usual identification. These relations imply that $G_{\QQ_p}$ not only acts on each individual component of
        \[
            Z(\cQ^c(\cG)) \iisoo \prod_{\chi / \sim_W} \cQ^c(\Gamma_\chi)
        \]
        (cf. propositions \ref{prop:rw_properties_q} and \ref{prop:structure_gamma_chi}), but it might also permute them. This translates into the following relation between interpolating series quotients associated to Galois-conjugated characters:
        \begin{prop}
        \label{prop:stark_galois_invariance_f}
            Setting \ref{sett:formulation}. Let $\sigma \in G_{\QQ_p}$ and $\chi \in \Irr_p(\cG)$. Suppose $F_{S, T, \chi}^{\alpha, \beta} \in \units{\cQ^c(\Gamma_\chi)}$ satisfies \hyperref[conje:ic]{IC($L_\infty/K, \chi, L, S, T, \alpha, \beta$)}. Assume furthermore that \hyperref[conje:stark]{Stark\textsuperscript{T}($L_n/K, \check{\chi} \otimes \rho^{-1}, f, S, T$)} holds for almost all $\rho \in \cK_S^\alpha(\chi)$ in the notation of proposition \ref{prop:independence_of_beta} i). Then $\sigma(F_{S, T, \chi}^{\alpha, \beta}) \in  \units{\cQ^c(\Gamma_{\sigma \chi})}$ satisfies \hyperref[conje:ic]{IC($L_\infty/K, \sigma \chi, L, S, T, \alpha, \beta$)}.
        \end{prop}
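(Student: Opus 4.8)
The plan is to show that applying the automorphism $\sigma$ to the defining interpolation property \eqref{eq:ic} for $\chi$ produces exactly the defining property for $\sigma\chi$, using that $\sigma$ intertwines all the relevant objects with their $\sigma$-conjugates. First I would record the basic compatibilities of $\sigma \in G_{\QQ_p}$, acting on $\cQ^c(\cG)$ through the $\QQ_p^c$-component: by \eqref{eq:galois_automorphisms_t} and the discussion preceding it, $\sigma$ maps $\cQ^c(\Gamma_\chi) = Z(\cQ^c(\cG)e_\chi)$ isomorphically onto $\cQ^c(\Gamma_{\sigma\chi}) = Z(\cQ^c(\cG)e_{\sigma\chi})$, sending $\gamma_\chi$ to $\gamma_{\sigma\chi}$ and hence $T_\chi$ to $T_{\sigma\chi}$. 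In particular $\sigma(F_{S,T,\chi}^{\alpha,\beta})$ is indeed a well-defined element of $\units{\cQ^c(\Gamma_{\sigma\chi})}$, so the statement at least typechecks. I would also note that $\sigma$ permutes the type-$W$ characters of $\cG$ (these form the group $\mu_{p^\infty}$ via \eqref{eq:bijection_type_W_rou}, on which $\sigma$ acts), and that $\sigma \mathcal{K}_S^\alpha(\chi) = \mathcal{K}_S^\alpha(\sigma\chi)$ because condition \ref{manualcond:kc} only depends on $\ker(\chi \otimes \rho)$, which $\sigma$ preserves; combined with lemma \ref{lem:properties_of_r_chi} this shows a character $\rho'$ ranges over almost all of $\mathcal{K}_S^\alpha(\sigma\chi)$ precisely when $\rho' = \sigma\rho$ for $\rho$ ranging over almost all of $\mathcal{K}_S^\alpha(\chi)$.

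Next I would analyse how $\sigma$ interacts with the twisted evaluation maps. For $\rho$ of type $W$, lemma \ref{lem:twisted_evaluation_maps} identifies $ev_{\gamma_{\chi \otimes \rho}}$ on $\cQ^c(\Gamma_\chi) \iso \ffrac(\ZZ_p[[T_\chi]]) \otimes \QQ_p^c$ with evaluation at $T_\chi = \rho(\gamma_K)^{w_\chi} - 1$. Since $\sigma$ sends $T_\chi \mapsto T_{\sigma\chi}$, sends the root of unity $\rho(\gamma_K)$ to $(\sigma\rho)(\gamma_K)$, and acts coefficient-wise on power series, one gets the clean compatibility
\[
    ev_{\gamma_{(\sigma\chi) \otimes (\sigma\rho)}}(\sigma q) = \sigma\big(ev_{\gamma_{\chi \otimes \rho}}(q)\big)
\]
for every $q \in \cQ^c(\Gamma_\chi)$ (with the convention $\sigma(\infty) = \infty$); here I would also use that $w_{\sigma\chi} = w_\chi$, which follows from $\sigma$ permuting the irreducible constituents of $\rest_H^\cG \chi$ and preserving their stabilisers (cf. the definition \eqref{eq:definition_w_chi} of $w_\chi$). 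This reduces the proposition to showing that $\sigma$ carries the right-hand side of \eqref{eq:ic} for $\chi$ to the right-hand side for $\sigma\chi$.

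The remaining, and genuinely substantive, step is the Galois-equivariance of the \emph{regulated special value}. Applying $\sigma$ to the numerator: $L$-functions satisfy $L_{K,S,T}(\sigma\psi, s) = \sigma$-conjugate of the Euler factors, which on leading coefficients gives $\beta^{-1}\big(L_{K,S,T}^\ast(\beta(\sigma\check\chi \otimes \sigma\rho^{-1}),0)\big) = \sigma\big(\beta^{-1}(L_{K,S,T}^\ast(\beta(\check\chi\otimes\rho^{-1}),0))\big)$ once one knows the regulated value is algebraic over $\QQ_p$ — and this is exactly where \hyperref[conje:stark]{Stark\textsuperscript{T}} enters, via lemma \ref{lem:stark_invariance_interpolated}, which not only gives algebraicity but pins the value down inside $\QQ_p(\chi \otimes \rho)$ and shows it is $\beta$-independent. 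Indeed the cleanest route is: by lemma \ref{lem:stark_invariance_interpolated} the quotient $\beta^{-1}(L_{K,S,T}^\ast(\beta(\check\chi\otimes\rho^{-1}),0))/R_S^\beta(\alpha,\chi\otimes\rho)$ equals $\cL_{S,T}(\check\chi\otimes\rho^{-1}, f) \in \QQ_p(\chi\otimes\rho)$, and I claim $\sigma$ sends this to $\cL_{S,T}(\sigma\check\chi\otimes\sigma\rho^{-1}, f^\sigma)$, the corresponding Stark element for $\sigma\chi$; this follows from the same computation as in the last paragraph of the proof of lemma \ref{lem:stark_invariance_interpolated}, replaying the identity \eqref{eq:relation_regulator_tate_mine} with $\tilde\beta = \beta\sigma^{-1}$ and using $f^\beta = f^{\tilde\beta}$ (the map $\varphi_n^\alpha$ has $\ZZ_p$-coefficients). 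The main obstacle is thus bookkeeping: one must check that the regulator $R_S^\beta(\alpha, \sigma\chi \otimes \sigma\rho)$ that appears on the $\sigma\chi$-side really is $\sigma$ applied to $R_S^\beta(\alpha, \chi\otimes\rho)$, which amounts to the determinant computation of definition \ref{defn:regulator} being compatible with the $G_{\QQ_p}$-action on $\Hom_{\CC_p[\cG_n]}(V_\chi, \CC_p \otimes_{\ZZ_p} \cX_{L_n,S})$ — and here one leans on the fact that $\cX_{L_n,S}$ and $\varphi_n^\alpha$ are defined over $\ZZ_p$ so that $\sigma$ acts only through the scalars $\CC_p$ and through $V_\chi \mapsto V_{\sigma\chi}$. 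Putting these pieces together, for almost all $\rho \in \mathcal{K}_S^\alpha(\chi)$ we obtain
\[
    ev_{\gamma_{(\sigma\chi)\otimes(\sigma\rho)}}\big(\sigma(F_{S,T,\chi}^{\alpha,\beta})\big)
    = \sigma\!\left(\frac{\beta^{-1}(L_{K,S,T}^\ast(\beta(\check\chi\otimes\rho^{-1}),0))}{R_S^\beta(\alpha,\chi\otimes\rho)}\right)
    = \frac{\beta^{-1}(L_{K,S,T}^\ast(\beta((\sigma\check\chi)\otimes(\sigma\rho)^{-1}),0))}{R_S^\beta(\alpha,(\sigma\chi)\otimes(\sigma\rho))},
\]
and since $\sigma\rho$ ranges over almost all of $\mathcal{K}_S^\alpha(\sigma\chi)$, the element $\sigma(F_{S,T,\chi}^{\alpha,\beta}) \in \units{\cQ^c(\Gamma_{\sigma\chi})}$ satisfies \hyperref[conje:ic]{IC($L_\infty/K, \sigma\chi, L, S, T, \alpha, \beta$)}, as required.
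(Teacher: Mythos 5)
Your proposal is correct and follows essentially the same route as the paper's proof: establish $ev_{\gamma_{\sigma\chi\otimes\sigma\rho}}(\sigma(F)) = \sigma(ev_{\gamma_{\chi\otimes\rho}}(F))$ via the action on $T_\chi$ and on roots of unity, note that $\sigma$ induces a bijection $\cK_S^\alpha(\chi)\to\cK_S^\alpha(\sigma\chi)$, and use lemma \ref{lem:stark_invariance_interpolated} (i.e.\ Stark) to transport the regulated special value. Your route through the Stark element $\cL_{S,T}$ is just a repackaging of the paper's substitution $\beta\mapsto\beta\sigma$; only note that your intermediate assertions that the numerator alone is $\sigma$-equivariant and that $R_S^\beta(\alpha,\sigma\chi\otimes\sigma\rho)=\sigma(R_S^\beta(\alpha,\chi\otimes\rho))$ (rather than $\sigma(R_S^{\beta\sigma}(\alpha,\chi\otimes\rho))$) are not quite right as stated, but your final argument via the $\beta$-independent quotient does not rely on them.
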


        \begin{proof}
            Let us abbreviate $F_{S, T, \chi}^{\alpha, \beta}$ to $F_\chi$. We first note that, for $x \in B_1^{\QQ_p^c}(0)$ in the notation of remark \ref{rem:evaluation_of_series_quotients}, one has
            \[
                \restr{\sigma(F_\chi)}{T_{\sigma \chi} = x} = \sigma\big(\restr{F_\chi}{T_\chi = \sigma^{-1}(x)}\big),
            \]
            where $\sigma$ acts on $F_\chi$ by regarding the latter as an element of $\cQ^c(\cG)$ rather than an abstract series quotient. In other words, if we express $F_\chi$ as a quotient of power series in $T_\chi$, then $\sigma$ is applied to both the coefficients and the variable - which explains why the result is a series quotient in $T_{\sigma\chi}$ by \eqref{eq:galois_automorphisms_t}. The above equality then follows from the continuity of the $G_{\QQ_p}$-action on any $p$-adic field and implies
            \begin{equation}
            \label{eq:evaluation_galois_automorphisms}
                ev_{\gamma_{\sigma \chi \otimes \rho}}(\sigma(F_\chi)) = \restr{\sigma(F_\chi)}{T_{\sigma\chi} = \rho(\gamma_K)^{w_{\sigma \chi}} - 1} =
                \sigma\big(\restr{F_\chi}{T_\chi = \sigma^{-1}(\rho(\gamma_K)^{w_\chi} - 1)}\big) = \sigma(ev_{\gamma_{\chi \otimes \sigma^{-1}\rho}}(F_\chi))
            \end{equation}
            for any $\rho$ of type $W$. Here we have used the fact that $w_\chi = w_{\sigma \chi}$, as $St(\eta) = St(\sigma \eta)$ for any $\QQ_p^c$-valued character $\eta$ of $H$.

            Let $\stackrel{\cdot}{=}$ denote equality for almost all $\rho \in \cK_S^\alpha(\chi)$. We have
            \begin{align*}
                ev_{\gamma_{\sigma\chi \otimes \rho}}(\sigma(F_\chi)) &= \sigma\big(ev_{\gamma_{\chi \otimes \sigma^{-1}\rho}}(F_\chi)\big) \\
                &\stackrel{\cdot}{=} \sigma\bigg(\frac{\beta^{-1}(L_{K, S, T}^\ast(\beta (\check{\chi} \otimes \sigma^{-1}\rho^{-1}), 0))}{R_S^\beta(\alpha, \chi \otimes \sigma^{-1}\rho)}\bigg)
            \end{align*}
            \begin{align*}
                \qquad \qquad \qquad \qquad \quad \ &\stackrel{\cdot}{=} \sigma\bigg(\frac{(\beta \sigma)^{-1}(L_{K, S, T}^\ast((\beta \sigma) (\check{\chi} \otimes \sigma^{-1}\rho^{-1}), 0))}{R_S^{\beta \sigma}(\alpha, \chi \otimes \sigma^{-1}\rho)}\bigg) \\
                &= \frac{\beta^{-1}(L_{K, S, T}^\ast(\beta (\sigma\check{\chi} \otimes \rho^{-1}), 0))}{\sigma(R_S^{\beta \sigma}(\alpha, \chi \otimes \sigma^{-1}\rho))}
            \end{align*}
            by, in order: equation \eqref{eq:evaluation_galois_automorphisms}; the Interpolation Conjecture for $F_\chi$; lemma \ref{lem:stark_invariance_interpolated} (note that composition with $\sigma$ induces a bijection $\cK_S^\alpha(\chi) \to \cK_S^\alpha(\sigma \chi) $); and a simple manipulation. The argument used at the end of the proof of the same lemma also yields $\sigma(R_S^{\beta \sigma}(\alpha, \chi \otimes \sigma^{-1}\rho)) = R_S^{\beta}(\alpha, \sigma \chi \otimes \rho)$. Hence,
            \[
                ev_{\gamma_{\sigma\chi \otimes \rho}}(\sigma(F_\chi)) \stackrel{\cdot}{=} \frac{\beta^{-1}(L_{K, S, T}^\ast(\beta (\sigma\check{\chi} \otimes \rho^{-1}), 0))}{R_S^{\beta}(\alpha, \sigma \chi \otimes \rho)}
            \]
            and $\sigma(F_\chi)$ satisfies IC($L_\infty/K, \sigma \chi, L, S, T, \alpha, \beta$).
        \end{proof}

        This has the following immediate Galois-invariance consequence for the Main Conjecture:
        \begin{cor}
        \label{cor:tuple_fchi_invariant}
            Setting \ref{sett:formulation}. Suppose \hyperref[conje:ic]{IC($L_\infty/K, \chi, L, S, T, \alpha, \beta$)} holds for all $\chi \in \Irr_p(\cG)$ and \hyperref[conje:stark]{Stark\textsuperscript{T}($L_n/K, \chi, f, S, T$)} (in the notation of proposition \ref{prop:independence_of_beta}) does for almost all $\chi \in \Irr_p(\cG)$. Then
            \[
                \prod_{\chi / \sim_W} F_{S, T, \chi}^{\alpha, \beta} \in \bigg(\prod_{\chi / \sim_W} \units{\cQ^c(\Gamma_\chi)} \bigg)^{G_{\QQ_p}} \subseteq \prod_{\chi / \sim_W} \units{\cQ^c(\Gamma_\chi)}.
            \]
        \end{cor}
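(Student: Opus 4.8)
The plan is to derive the corollary as a direct consequence of Proposition~\ref{prop:stark_galois_invariance_f} together with the combinatorial observation that the action of $G_{\QQ_p}$ on the index set of $W$-equivalence classes $\Irr_p(\cG)/\!\sim_W$ permutes the classes compatibly with the isomorphism $Z(\cQ^c(\cG)) \iso \prod_{\chi/\sim_W} \cQ^c(\Gamma_\chi)$. Concretely, I would first recall that by \eqref{eq:galois_automorphisms_t} and the relations $\sigma(e_\chi) = e_{\sigma\chi}$, $\sigma(\gamma_\chi) = \gamma_{\sigma\chi}$ cited before that equation, the $G_{\QQ_p}$-action on $Z(\cQ^c(\cG))$ sends the component indexed by the class of $\chi$ isomorphically onto the component indexed by the class of $\sigma\chi$. (One should note that $\sigma\chi$ is again irreducible and that $\chi \sim_W \chi'$ implies $\sigma\chi \sim_W \sigma\chi'$, since $\sigma$ maps type-$W$ characters to type-$W$ characters; hence the action on classes is well defined.) Thus an element of $\prod_{\chi/\sim_W} \units{\cQ^c(\Gamma_\chi)}$ is $G_{\QQ_p}$-fixed precisely when, for every $\sigma \in G_{\QQ_p}$ and every class representative $\chi$, the $\chi$-component maps under $\sigma$ to the $(\sigma\chi)$-component.

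Next I would apply the hypotheses: under IC($L_\infty/K, \chi, L, S, T, \alpha, \beta$) for all $\chi$ we obtain the family $(F_{S, T, \chi}^{\alpha, \beta})_{\chi/\sim_W}$, each entry being the \emph{unique} series quotient in $\units{\cQ^c(\Gamma_\chi)}$ satisfying the interpolation property (uniqueness by Corollary~\ref{cor:uniqueness_series_type_w}, as recorded in Remark~\ref{rem:ic} ii)). By Proposition~\ref{prop:stark_galois_invariance_f}, since Stark\textsuperscript{T}($L_n/K, \check{\chi}\otimes\rho^{-1}, f, S, T$) holds for almost all $\rho \in \cK_S^\alpha(\chi)$ — which follows from the assumed validity of Stark\textsuperscript{T} for almost all characters of $\cG$, together with Lemma~\ref{lem:properties_of_r_chi} ii) relating $\cK_S^\alpha(\chi)$ to $\cK_S^\alpha(\chi\otimes\rho)$ — the element $\sigma(F_{S, T, \chi}^{\alpha, \beta}) \in \units{\cQ^c(\Gamma_{\sigma\chi})}$ satisfies IC($L_\infty/K, \sigma\chi, L, S, T, \alpha, \beta$). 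By the uniqueness of the interpolating element at $\sigma\chi$, this forces $\sigma(F_{S, T, \chi}^{\alpha, \beta}) = F_{S, T, \sigma\chi}^{\alpha, \beta}$. Since $F_{S, T, \sigma\chi}^{\alpha, \beta}$ depends only on the $W$-class of $\sigma\chi$ (Proposition~\ref{prop:ic_rho_twist}), this is exactly the statement that $\sigma$ carries the $\chi$-component of the tuple to its $(\sigma\chi)$-component, which is the defining condition for $G_{\QQ_p}$-invariance identified in the previous paragraph.

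The only mild subtlety — and the place where I expect to spend a sentence of care rather than encountering a real obstacle — is bookkeeping the "almost all" quantifiers: I must ensure that the Stark hypothesis being assumed for almost all $\chi \in \Irr_p(\cG)$ genuinely feeds into the hypothesis of Proposition~\ref{prop:stark_galois_invariance_f}, which asks for Stark\textsuperscript{T} at almost all $\rho \in \cK_S^\alpha(\chi)$ twisting a \emph{fixed} $\chi$. This is fine because $\cK_S^\alpha(\chi)$ is infinite (Lemma~\ref{lem:properties_of_r_chi} i)) and the characters $\check{\chi}\otimes\rho^{-1}$ for $\rho$ ranging over it, apart from the finitely many $\chi$ (and hence finitely many $\rho$, by the finiteness of the fibres of $t_\chi$ established in the proof of Lemma~\ref{lem:properties_of_r_chi}) excluded by the global "almost all", all satisfy Stark\textsuperscript{T}. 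Assembling these observations gives the claimed inclusion
\[
    \prod_{\chi / \sim_W} F_{S, T, \chi}^{\alpha, \beta} \in \bigg(\prod_{\chi / \sim_W} \units{\cQ^c(\Gamma_\chi)} \bigg)^{G_{\QQ_p}},
\]
completing the proof.
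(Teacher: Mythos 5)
Your proposal is correct and follows essentially the same route as the paper: the paper's proof is precisely the observation that $\sigma$ permutes the $\chi$-components of $Z(\cQ^c(\cG))$ combined with proposition \ref{prop:stark_galois_invariance_f} and the uniqueness of the interpolating elements (remark \ref{rem:ic} ii)), yielding $\sigma(F_{S, T, \sigma^{-1}\chi}^{\alpha, \beta}) = F_{S, T, \chi}^{\alpha, \beta}$. Your extra care with the ``almost all'' bookkeeping is sound but not something the paper dwells on.
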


        \begin{proof}
            Given $\sigma \in G_{\QQ_p}$, one has
            \[
                \sigma \bigg( \prod_{\chi / \sim_W} F_{S, T, \chi}^{\alpha, \beta} \bigg) = \prod_{\chi / \sim_W} \sigma(F_{S, T, \sigma^{-1}\chi}^{\alpha, \beta}) = \prod_{\chi / \sim_W} F_{S, T, \chi}^{\alpha, \beta},
            \]
            where the first equality reflects how $\sigma$ permutes the $\chi$-parts of $Z(\cQ^c(\cG))$ and the second one is proposition \ref{prop:stark_galois_invariance_f} together with uniqueness of the interpolating elements (cf. remark \ref{rem:ic} ii)).
        \end{proof}

        The Galois invariants $\big(\prod_{\chi/{\sim}_W}\units{\cQ^c(\Gamma_\chi)}\big)^{G_{\QQ_p}} \iso (\units{Z(\cQ^c(\cG))})^{G_{\QQ_p}}$ are precisely $\units{Z(\cQ(\cG))}$ (see the proof of \cite{rwii} theorem 8) and therefore fit into the commutative diagram
        \begin{equation}
        \label{diag:centres_galois_invariants}
            \hspace{-2.2cm}
            \begin{tikzcd}
                K_1(\cQ(\cG)) \arrow[r, "\nr"] \arrow[d] \arrow[rrd, "\prod_{\chi/{\sim}_W} \psi_\chi"', out=195, in=210, looseness=1.5] & \units{Z(\cQ(\cG))} \arrow[d, hook] \arrow[r, "\sim"] & { \bigg(\prod_{\chi/{\sim}_W}\units{\cQ^c(\Gamma_\chi)}\bigg)^{G_{\QQ_p}} } \arrow[d, hook] \\
                K_1(\cQ^c(\cG)) \arrow[r, "\nr"]           & \units{Z(\cQ^c(\cG))} \arrow[r, "\sim"]               & \prod_{\chi/{\sim}_W} \units{\cQ^c(\Gamma_\chi)}
            \end{tikzcd}
            \vspace{-2em}
        \end{equation}
        (cf. \eqref{eq:k1_reduced_norm_chi_parts}). Both isomorphisms are canonical, and we regard them as identifications. Suppose now that \hyperref[conje:ic]{IC($L_\infty/K, \chi, L, S, T, \alpha, \beta$)} holds for every $\chi \in \Irr_p(\cG)$ and that Stark's conjecture does for almost all $\chi$ as in proposition \ref{prop:independence_of_beta} ii). Then corollary \ref{cor:tuple_fchi_invariant} shows that the tuple $\prod_{\chi/{\sim}_W} F_{S, T, \chi}^{\alpha, \beta}$ lies in fact in the image of the rightmost vertical arrow above. This supports the existence of a $\zeta_{S, T}^{\alpha, \beta} \in K_1(\cQ(\cG))$ which is mapped to it under $\prod_{\chi/{\sim}_W} \psi_\chi$ (which is precisely the analytic part of the equivariant Main Conjecture) despite the considerable difference between $\cQ(\cG)$ and $\cQ^c(\cG)$.

        The same reasoning shows that the equivariant Main Conjecture is \textit{weakly} independent of $\beta$. By this we mean that it is well defined on equivalence classes of isomorphisms $\CC_p \isoa \CC$ under the relation: $\beta \sim \tilde{\beta}$ if $\restr{\beta}{\QQ_p} = \restr{\tilde{\beta}}{\QQ_p}$.
        \begin{cor}
        \label{cor:emc_implies_weak_stark}
            Setting \ref{sett:formulation}. Let $\tilde{\beta} \colon \CC_p \isoa \CC$ be a ring isomorphism whose restriction to $\QQ_p$ coincides with that of $\beta$. Then \hyperref[conje:emc]{eMC($L_\infty/K, L, S, T, \alpha, \beta$)} holds if and only if \hyperref[conje:emc]{eMC($L_\infty/K, L, S, T, \alpha, \tilde{\beta}$)} does.
        \end{cor}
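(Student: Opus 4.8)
The conjectures \hyperref[conje:emc]{eMC($L_\infty/K, L, S, T, \alpha, \beta$)} and \hyperref[conje:emc]{eMC($L_\infty/K, L, S, T, \alpha, \tilde\beta$)} share all of their data except $\beta$, and $\beta$ enters only through the interpolating series quotients $F_{S, T, \chi}^{\alpha, \beta}$. So the plan is to show that if \hyperref[conje:emc]{eMC($L_\infty/K, L, S, T, \alpha, \beta$)} holds, then \hyperref[conje:ic]{IC($L_\infty/K, \chi, L, S, T, \alpha, \tilde\beta$)} holds for every $\chi$ with $F_{S, T, \chi}^{\alpha, \tilde\beta} = F_{S, T, \chi}^{\alpha, \beta}$, so that the very same zeta element $\zeta_{S, T}^{\alpha, \beta}$ witnesses \hyperref[conje:emc]{eMC($L_\infty/K, L, S, T, \alpha, \tilde\beta$)}; the reverse implication follows by symmetry, as the hypothesis $\restr{\beta}{\QQ_p} = \restr{\tilde\beta}{\QQ_p}$ is symmetric. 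Throughout I will write $r_{\beta, \phi} = \beta^{-1}(L_{K, S, T}^\ast(\beta \check\phi, 0))/R_S^\beta(\alpha, \phi) \in \units{\CC_p}$ for $\phi \in \Irr_p(\cG)$ satisfying \ref{manualcond:kc}, so that \eqref{eq:ic} reads $ev_{\gamma_{\chi \otimes \rho}}(F_{S, T, \chi}^{\alpha, \beta}) = r_{\beta, \chi \otimes \rho}$ for almost all $\rho \in \cK_S^\alpha(\chi)$.

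The first step is a bookkeeping lemma describing how $r$ changes with $\beta$. Since $\restr{\beta}{\QQ_p} = \restr{\tilde\beta}{\QQ_p}$, the automorphism $\delta = \beta^{-1}\tilde\beta$ of $\CC_p$ fixes $\QQ_p$ pointwise; as $\QQ_p^c \subseteq \CC_p$ is the subfield of elements algebraic over $\QQ_p$, it is $\delta$-stable and $\delta$ restricts there to an element $\tau \in G_{\QQ_p}$. Hence $\tilde\beta \psi = \beta(\tau\psi)$ for every $\QQ_p^c$-valued character $\psi$, and since the map $f$ attached to a character and a layer in Lemma~\ref{lem:stark_invariance_interpolated} is built from $\varphi_n^\alpha$, which has coefficients in $\ZZ_p$, one has $f^\beta = f^{\tilde\beta}$ (both depend on $\beta$ only through $\restr{\beta}{\ZZ_p}$). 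Combining this with \eqref{eq:relation_regulator_tate_mine} — valid for any isomorphism $\CC_p \isoa \CC$ — and the fact that Tate's regulator is a function of purely complex data, a direct computation gives $r_{\tilde\beta, \phi} = \delta^{-1}(r_{\beta, \tau\phi})$ for every $\phi$ satisfying \ref{manualcond:kc}. Note that $\tau$ preserves kernels of characters, hence preserves \ref{manualcond:kc}, and restricts to a bijection $\tau\colon \cK_S^\alpha(\chi) \isoa \cK_S^\alpha(\tau\chi)$ for each $\chi$.

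The decisive step extracts Galois-equivariance of $r_{\beta, -}$ from the hypothesis itself, where it is unavailable unconditionally. Assuming \hyperref[conje:emc]{eMC($L_\infty/K, L, S, T, \alpha, \beta$)}, the element $\nr(\zeta_{S, T}^{\alpha, \beta})$ lies in $\units{Z(\cQ(\cG))}$, which under the identification recalled before the statement is exactly $\big(\prod_{\chi/{\sim}_W} \units{\cQ^c(\Gamma_\chi)}\big)^{G_{\QQ_p}}$; since $\psi_\chi(\zeta_{S, T}^{\alpha, \beta}) = F_{S, T, \chi}^{\alpha, \beta}$, the tuple $(F_{S, T, \chi}^{\alpha, \beta})_{\chi/{\sim}_W}$ is $G_{\QQ_p}$-invariant. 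As $\tau$ permutes the factors via $\chi \mapsto \tau\chi$ (because $\tau(e_\chi) = e_{\tau\chi}$ and $\tau(\gamma_\chi) = \gamma_{\tau\chi}$), this gives $\tau(F_{S, T, \chi}^{\alpha, \beta}) = F_{S, T, \tau\chi}^{\alpha, \beta}$. Applying the twisted evaluation maps and the formal identity $ev_{\gamma_{\tau\chi \otimes \tau\rho}}(\tau(-)) = \tau(ev_{\gamma_{\chi \otimes \rho}}(-))$ from \eqref{eq:evaluation_galois_automorphisms}, together with the interpolation property for both $F_{S, T, \chi}^{\alpha, \beta}$ and $F_{S, T, \tau\chi}^{\alpha, \beta}$ and the bijection $\tau\colon \cK_S^\alpha(\chi) \isoa \cK_S^\alpha(\tau\chi)$, yields $\tau(r_{\beta, \chi \otimes \rho}) = r_{\beta, \tau(\chi \otimes \rho)}$ for all $\rho$ in a cofinite subset of $\cK_S^\alpha(\chi)$; both sides, being values of twisted evaluation maps of a unit and never $\infty$, lie in $\units{\QQ_p^c}$. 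Combining the two steps, for such $\rho$ we get $r_{\tilde\beta, \chi \otimes \rho} = \delta^{-1}(r_{\beta, \tau(\chi \otimes \rho)})$, and since $r_{\beta, \tau(\chi \otimes \rho)} \in \QQ_p^c$ we may replace $\delta^{-1}$ by $\tau^{-1}$, obtaining $r_{\tilde\beta, \chi \otimes \rho} = \tau^{-1}(\tau(r_{\beta, \chi \otimes \rho})) = r_{\beta, \chi \otimes \rho} = ev_{\gamma_{\chi \otimes \rho}}(F_{S, T, \chi}^{\alpha, \beta})$. Thus $F_{S, T, \chi}^{\alpha, \beta}$ satisfies the interpolation property required by \hyperref[conje:ic]{IC($L_\infty/K, \chi, L, S, T, \alpha, \tilde\beta$)}, which therefore holds, with $F_{S, T, \chi}^{\alpha, \tilde\beta} = F_{S, T, \chi}^{\alpha, \beta}$ by the uniqueness of Corollary~\ref{cor:uniqueness_series_type_w}. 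Since $\partial$, $\chi_{\Lambda(\cG), \cQ(\cG)}(\cC_{S, T}\q, t^\alpha)$ and the maps $\psi_\chi$ are independent of $\beta$, the same $\zeta_{S, T}^{\alpha, \beta}$ proves \hyperref[conje:emc]{eMC($L_\infty/K, L, S, T, \alpha, \tilde\beta$)}.

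The main obstacle is precisely this last step: there is no unconditional reason for $r_{\beta, -}$ to be Galois-equivariant in the character (that would essentially be a form of Stark's conjecture), so one cannot hope to prove $F_{S, T, \chi}^{\alpha, \beta} = F_{S, T, \chi}^{\alpha, \tilde\beta}$ for an arbitrary $\tilde\beta$. The equality of the restrictions to $\QQ_p$ is what reduces the discrepancy between $\beta$ and $\tilde\beta$ to an element of $G_{\QQ_p}$ on the algebraic part, while the existence of a zeta element is what forces the tuple of the $F$'s into $\units{Z(\cQ(\cG))}$ and hence makes it $G_{\QQ_p}$-invariant; these two facts are exactly what is needed for the discrepancy to cancel, and this is why only \emph{weak} independence of $\beta$ is obtained here, in contrast with Proposition~\ref{prop:independence_of_beta}.
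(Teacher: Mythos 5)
Your argument is correct and is essentially the paper's own proof: both rest on the observation that $\sigma=\tilde{\beta}^{-1}\beta$ (your $\delta^{-1}$) restricts to an element of $G_{\QQ_p}$, that the existence of the zeta element forces the tuple of interpolating series quotients into $\units{Z(\cQ(\cG))}=\big(\prod_{\chi/{\sim}_W}\units{\cQ^c(\Gamma_\chi)}\big)^{G_{\QQ_p}}$, and on combining \eqref{eq:evaluation_galois_automorphisms} with the regulator relation from the proof of lemma \ref{lem:stark_invariance_interpolated} to transport the interpolation property from $\beta$ to $\tilde{\beta}$. The only (harmless) imprecision is the parenthetical claim that evaluations of a unit are never $\infty$ — a unit of the field $\cQ^c(\Gamma_\chi)$ can evaluate to $0$ or $\infty$; what actually puts $r_{\beta,\chi\otimes\rho}$ in $\units{\QQ_p^c}$ is that it is a nonzero regulated $L$-value which, by interpolation, equals a value in $\QQ_p^c\cup\set{\infty}$.
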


        \begin{proof}
            The equivariant Main Conjecture for $\beta$ asserts that $\prod_{\chi/{\sim}_W} F_{S, T, \chi}^{\alpha, \beta} \in \prod_{\chi/{\sim}_W}\units{\cQ^c(\Gamma_\chi)}$ lives in the image of $K_1(\cQ(\cG))$ under $\prod_{\chi/{\sim}_W} \psi_\chi$ and is therefore $G_{\QQ_p}$-invariant by diagram \eqref{diag:centres_galois_invariants}. In particular, \eqref{eq:galois_automorphisms_t} implies $F_{S, T, \chi}^{\alpha, \beta} = \sigma(F_{S, T, \sigma^{-1}\chi}^{\alpha, \beta})$ for any $\sigma \in G_{\QQ_p}$. The composition $\sigma = \tilde{\beta}^{-1} \beta$ is an automorphism of $\CC_p$ which fixes $\QQ_p$ and thus restricts to an element of $G_{\QQ_p}$. Hence the same argument as in \eqref{eq:evaluation_galois_automorphisms} together with the relation $R_S^{\tilde{\beta}}(\alpha, \chi \otimes \rho) = \sigma(R_S^\beta(\alpha, \sigma^{-1}(\chi \otimes \rho)))$ (see the proof of lemma \ref{lem:stark_invariance_interpolated}) yields
            \begin{align*}
                ev_{\gamma_{\chi \otimes \rho}}(F_{S, T, \chi}^{\alpha, \beta}) =
                ev_{\gamma_{\chi \otimes \rho}}(\sigma(F_{S, T, \sigma^{-1}\chi}^{\alpha, \beta}))
                & = \sigma(ev_{\gamma_{\sigma^{-1}(\chi \otimes \rho)}}(F_{S, T, \sigma^{-1}\chi}^{\alpha, \beta})) \\
                & = \sigma\left(\frac{\beta^{-1}(L_{K, S, T}^\ast(\beta \sigma^{-1}(\check{\chi} \otimes \rho^{-1}), 0))}{R_S^\beta(\alpha, \sigma^{-1}(\chi \otimes \rho))}\right) \\
                & = \frac{\tilde{\beta}^{-1}(L_{K, S, T}^\ast(\tilde{\beta} (\check{\chi} \otimes \rho^{-1}), 0))}{R_S^{\tilde{\beta}}(\alpha, \chi \otimes \rho)}
            \end{align*}
            for almost all $\sigma^{-1}\rho \in \cK_S^\alpha(\sigma^{-1} \chi)$. Since composition with $\sigma$ defines a bijection ${\cK_S^\alpha(\sigma^{-1} \chi) \to \cK_S^\alpha(\chi)}$, $F_{S, T, \chi}^{\alpha, \beta}$ satisfies IC($L_\infty/K, \chi, L, S, T, \alpha, \tilde{\beta}$) as well. It follows that any $\zeta_{S, T}^{\alpha, \beta} \in K_1(\cQ(\cG))$ satisfying eMC($L_\infty/K, L, S, T, \alpha, \beta$) will also satisfy eMC($L_\infty/K, L, S, T, \alpha, \tilde{\beta}$), as only the analytic part of the Main Conjecture depends on $\beta$. A symmetric argument proves the converse.
        \end{proof}

        \begin{rem}
            \begin{enumerate}[i)]
                \item{
                    By remark \ref{rem:emcu} i), this corollary automatically implies the equivalence of the Main Conjectures with uniqueness \hyperref[conje:emcu]{eMC\textsuperscript{u}($L_\infty/K, L, S, T, \alpha, \beta$)} and \hyperref[conje:emcu]{eMC\textsuperscript{u}($L_\infty/K, L, S, T, \alpha, \tilde{\beta}$)}.
                }
                \item{
                     It follows from the proof (essentially, from the displayed equation) that \hyperref[conje:emc]{eMC($L_\infty/K, L, S, T, \alpha, \tilde{\beta}$)} implies a weaker version of \hyperref[conje:stark]{Stark\textsuperscript{T}($L_n/K, \chi, f, S, T$)} (in the notation of proposition \ref{prop:independence_of_beta}) for almost all $\chi \in \Irr_p(\cG)$, where one fixes a $\tilde{\beta} \colon \CC_p \isoa \CC$ and replaces ``for any ring homomorphism $\beta \colon \CC_p \ia \CC$'' by ``for any isomorphism $\beta \colon \CC_p \isoa \CC$ such that $\restr{\beta}{\QQ_p} = \restr{\tilde{\beta}}{\QQ_p}$''.
                }
            \end{enumerate}
        \end{rem}

        We end on a brief discussion of how Stark's conjecture  and the equivariant Main Conjecture narrow down the field where the coefficients of $F_{S, T, \chi}^{\alpha, \beta}$ should be expected to lie. In order to understand this, we introduce the following notation from \cite{nickel_conductor}: for an Artin character $\chi$ of $\cG$, set
        \[
            \QQ_{p, \chi} = \QQ_p(\chi(h) : h \in H).
        \]
        This is a finite abelian extension of $\QQ_p$, as it is contained in some cyclotomic field $\QQ_p(\zeta_n)$. Note that $\chi \sim_W \chi'$ implies $\QQ_{p, \chi} = \QQ_{p, \chi'}$. Given an automorphism $\sigma \in G_{\QQ_{p, \chi}}$, one has $\rest_H^{\cG} \chi = \rest_H^{\cG} \sigma\chi$ and hence $\sigma(e_\chi) = e_{\sigma \chi} = e_{\chi}$. Therefore, the $G_{\QQ_p}$-action on $\cQ^c(\cG)$ restricts to a $G_{\QQ_{p, \chi}}$-action on $\cQ^c(\cG)e_\chi = \cQ^c(\Gamma_\chi)$. It then follows from proposition \ref{prop:stark_galois_invariance_f} that, assuming the Interpolation Conjecture for $\chi$ and Stark's for suitable characters, one has $\sigma(F_{S, T, \chi}^{\alpha, \beta}) = F_{S, T, \sigma \chi}^{\alpha, \beta} = F_{S, T, \chi}^{\alpha, \beta}$ for all $\sigma \in G_{\QQ_{p, \chi}}$. In other words,
        \begin{equation}
        \label{eq:narrow_coefficient_field_interpolating_element}
            F_{S, T, \chi}^{\alpha, \beta} \in (\units{(\cQ^c(\Gamma_\chi))})^{G_{\QQ_{p, \chi}}}.
        \end{equation}
        The invariants in question have been explicitly determined in \cite{nickel_conductor}. Namely, there is a field isomorphism
        \[
            (\cQ^c(\Gamma_\chi))^{G_{\QQ_{p, \chi}}} = \cQ^{\QQ_{p, \chi}}(\Gamma_\chi'),
        \]
        where $\Gamma_\chi' \iso \ZZ_p$ is a subgroup of $\units{(\cQ^c(\Gamma_\chi))}$ which is topologically generated by $\gamma_\chi' = u \gamma_\chi$ for a certain principal unit $u$ in some $p$-adic field. In particular, $F_{S, T, \chi}^{\alpha, \beta}$ is a quotient of power series in $T_\chi$ with coefficients in $\QQ_{p, \chi}(u)$.

        By diagram \eqref{diag:centres_galois_invariants}, another way to ensure $\sigma(F_{S, T, \chi}^{\alpha, \beta}) = F_{S, T, \sigma \chi}^{\alpha, \beta} = F_{S, T, \chi}^{\alpha, \beta}$ for all $\sigma \in G_{\QQ_{p, \chi}}$, and therefore \eqref{eq:narrow_coefficient_field_interpolating_element}, is to assume \hyperref[conje:emc]{eMC($L_\infty/K, L, S, T, \alpha, \beta$)}.

    \section{Independence of the choice of parameters}
    \label{sec:independence_of_the_choice_of_parameters}

        Unlike in the case of $\beta$, where the assumption of Stark's conjecture is necessary to show the independence of the Main Conjecture, the rest of the parameters admit unconditional proofs. This concerns $L$, $\alpha$, $S$ and $T$ - the change of $L_\infty$ or $K$ is treated in the next section. We divide these results into three subsections devoted to $L$, $\alpha$, and $S$ and $T$, respectively. Although the idea is to modify these parameters one at a time, a change in one of them may have implications for some of the remaining ones, which will be accounted for in each subsection.

        \subsection{\texorpdfstring{The choice of $L$}{The choice of L}}
        \label{subsec:the_choice_of_l}

            We first prove the independence of a choice of $L$ (or equivalently, $\Gamma$). All other parameters of \hyperref[conje:ic]{IC($L_\infty/K, \chi, L, S, T, \alpha, \beta$)} - aside from $L_\infty/K$, which is fixed beforehand - are independent of this choice:
            \begin{itemize}
                \item{
                    $\chi \in \Irr_p(\cG)$ is an Artin character whose definition does not involve $\Gamma$. However, we will need to be mindful of related objects which do depend on $\Gamma$, most notably $\cK_S^\alpha(\chi)$ (see the point on $\alpha$ below).
                }
                \item{
                    $S$ and $T$ are defined over $K$ in terms of ramification in $L_\infty$. The set $T^p \subseteq T$, which a priori could depend on $L$, is not an arbitrary parameter (and it is in fact independent, as will become apparent).
                }
                \item{
                    $\alpha \colon \cY_{S_\infty} \ia E_{S, T}$ is $\cG$-equivariant. It should be noted that, although many objects have been defined as limits of Galois modules along the cyclotomic tower $L_\infty/L$, they do not truly depend on the choice of $L$: the same can be obtained as the inverse limit over all finite (Galois) extensions of $K$ contained in $L_\infty$ instead. This applies, for instance, to $\cY_{S_\infty}$ and $E_{S, T}$.

                    Another relevant fact, which already made an appearance earlier, is that the $\Lambda(\Gamma)$-torsionness of $\Lambda(\cG)$-modules (which is required of $\coker(\alpha)$) is independent of the choice of $\Gamma$. This is a direct consequence of \eqref{eq:lambda_regular_elements}: if $\widetilde{\Gamma}$ is another such choice and $M$ is a $\Lambda(\cG)$-module, then
                    \begin{equation}
                    \label{eq:lambda_torsionness_independent_gamma}
                        \cQ(\Gamma) \otimes_{\Lambda(\Gamma)} M = 0 \iff \cQ(\cG) \otimes_{\Lambda(\cG)} M = 0 \iff \cQ(\widetilde{\Gamma}) \otimes_{\Lambda(\widetilde{\Gamma})} M = 0.
                    \end{equation}
                    Explicitly, suppose $m \in M$ satisfies $\widetilde{\lambda} m = 0$ for some non-zero $\widetilde{\lambda} \in \Lambda(\widetilde{\Gamma})$, which is in particular central and regular in $\Lambda(\cG)$. By the cited equation, $\widetilde{\lambda}^{-1} = l/\lambda$ in $\cQ(\cG)$ for some $l \in \Lambda(\cG)$ and $\lambda \in \Lambda(\Gamma)$. Therefore, $\lambda m = l\widetilde{\lambda} m = 0$.

                     Although this shows that $\alpha$ is indeed independent of the choice of $L$, the related $n(S, \alpha) \in \NN$ (introduced immediately before \ref{manualcond:kc}) is not so.
                }
                \item{
                    $\beta \colon \CC_p \xrightarrow{\sim} \CC$ is unrelated to $L$.
                }
            \end{itemize}

            \begin{prop}
            \label{prop:indep_ic_l}
                Setting \ref{sett:formulation}. Let $\tilde{L}$ be another valid choice for the parameter $L$. Then:
                \begin{enumerate}[i)]
                    \item{
                        For any $\chi \in \Irr_p(\cG)$, \hyperref[conje:ic]{IC($L_\infty/K, \chi, L, S, T, \alpha, \beta$)} holds if and only if \hyperref[conje:ic]{IC($L_\infty/K, \chi, \tilde{L}, S, T, \alpha, \beta$)} does, in which case the interpolating elements $F_{S, T, \chi}^{\alpha, \beta}$ coincide.
                    }
                    \item{
                        \hyperref[conje:emc]{eMC($L_\infty/K, L, S, T, \alpha, \beta$)} holds if and only if \hyperref[conje:emc]{eMC($L_\infty/K, \tilde{L}, S, T, \alpha, \beta$)} does, in which case the zeta elements $\zeta_{S, T}^{\alpha, \beta}$ coincide.
                    }
                \end{enumerate}
            \end{prop}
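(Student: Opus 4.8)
The plan is to exploit the fact that moving from $L$ to $\tilde L$ means replacing the open central subgroup $\Gamma = \Gal(L_\infty/L)$ by another such subgroup $\tilde\Gamma = \Gal(L_\infty/\tilde L)$, both of which are isomorphic to $\ZZ_p$ and sit inside $\cG$. Since $L$ and $\tilde L$ are both finite over $K$ and contained in $L_\infty$, their compositum $L\tilde L$ is again of this type, and $\Gamma \cap \tilde\Gamma = \Gal(L_\infty/L\tilde L)$ is open in both $\Gamma$ and $\tilde\Gamma$. Hence it suffices to treat the case $\tilde\Gamma = \Gamma^{p^m} = \Gal(L_\infty/L_m)$, i.e. $\tilde L = L_m$, a layer of the cyclotomic tower $L_\infty/L$; the general case follows by applying this twice (once going down from $\Gamma$ to $\Gamma\cap\tilde\Gamma$, once from $\tilde\Gamma$ to $\Gamma\cap\tilde\Gamma$) and transitivity. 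The crucial structural point is that essentially every object in the conjecture is \emph{intrinsic to $L_\infty/K$} and does not reference the choice of $L$:

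First I would check that none of the genuinely conjectural data changes. The complex $\cC_{S,T}\q$ is a complex of $\Lambda(\cG)$-modules built from $S$, $T$ and the field tower $L_\infty/K$ alone; the modules $\cY_{S_\infty}$, $E_{S,T}$, $\cX_S$ can be described as inverse limits over \emph{all} finite extensions of $K$ inside $L_\infty$ (cofinality of $\{L_n\}$ inside that directed set), so they are unchanged, and $\alpha\colon \cY_{S_\infty}\ia E_{S,T}$ together with the condition that $\coker(\alpha)$ be torsion is independent of the choice of $\Gamma$ by \eqref{eq:lambda_torsionness_independent_gamma}. Consequently the integral trivialisation $t_\iota^\alpha = \alpha\varphi\pi$ and the refined Euler characteristic $\chi_{\Lambda(\cG),\cQ(\cG)}(\cC_{S,T}\q,t^\alpha)\in K_0(\Lambda(\cG),\cQ(\cG))$ are the same, as are $\cQ(\cG)$, $K_1(\cQ(\cG))$, the maps $\psi_\chi$ (which factor through $\nr$ and the primitive central idempotents $e_\chi$, all defined purely in terms of $\cG$), and the target fields $\cQ^c(\Gamma_\chi)$ with their twisted evaluation maps $ev_{\gamma_{\chi\otimes\rho}}$ (independent of $\gamma_K$ by lemma \ref{lem:twisted_evaluation_maps_independent_gamma_K}, and constructed from $Z(\cQ^c(\cG)e_\chi)$, not from $\Gamma$). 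Since the $L$-values $L_{K,S,T}^\ast(\beta(\check\chi\otimes\rho^{-1}),0)$ are defined over $K$, the only remaining place where $L$ could enter the statement of \hyperref[conje:ic]{IC} is the regulator $R_S^\beta(\alpha,\chi\otimes\rho)$ and the auxiliary set $\cK_S^\alpha(\chi)$, which is governed by the integer $n(S,\alpha)$.

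Next I would show the regulator is unaffected. $R_S^\beta(\alpha,\chi)$ is defined in Definition \ref{defn:regulator} as a determinant over a fixed layer $L_n$ containing $\ker(\chi)$, via $\lambda_{n,S}^\beta\circ(\CC_p\otimes\varphi_n^\alpha)$ acting on $\Hom_{\CC_p[\cG_n]}(V_\chi,\CC_p\otimes\cX_{L_n,S})$, and by Lemma \ref{lem:r_chi_independent_of_n} it does not depend on which such $n$ is chosen. For $n$ large (in particular $n\ge n(S,\alpha)$ and past whichever critical layers are associated to $\tilde L$), the map $\varphi_n^\alpha$ is manifestly defined without reference to $L$ versus $\tilde L$ — it uses the canonical isomorphism $\gcoinv{(\cX_S)}{n}\iso \cX_{L_n,S}$ from Proposition \ref{prop:isomoprhisms_y_x_modules} and $\gcoinv{(\alpha\varphi)}{n}$ — so the regulators agree. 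The set $\cK_S^\alpha(\chi)$ may genuinely change, since $n(S,\alpha)$ depends on $\Gamma$; but Lemma \ref{lem:properties_of_r_chi} i) guarantees that for each choice of $L$ the set $\cK_S^\alpha(\chi)$ contains \emph{almost all} type-$W$ characters, so the symmetric difference between the two versions is finite, and the phrase ``for almost all $\rho\in\cK_S^\alpha(\chi)$'' in \eqref{eq:ic} is insensitive to it. Therefore an $F_{S,T,\chi}^{\alpha,\beta}\in\units{\cQ^c(\Gamma_\chi)}$ witnessing \hyperref[conje:ic]{IC($L_\infty/K,\chi,L,S,T,\alpha,\beta$)} witnesses \hyperref[conje:ic]{IC($L_\infty/K,\chi,\tilde L,S,T,\alpha,\beta$)} verbatim and conversely, and by the uniqueness in Remark \ref{rem:ic} ii) (via Corollary \ref{cor:uniqueness_series_type_w}) the two interpolating elements coincide — proving i). For ii), since $K_1(\cQ(\cG))$, $\partial$, the $\psi_\chi$ and $\chi_{\Lambda(\cG),\cQ(\cG)}(\cC_{S,T}\q,t^\alpha)$ are all unchanged and the $F_{S,T,\chi}^{\alpha,\beta}$ coincide by i), a $\zeta_{S,T}^{\alpha,\beta}$ satisfying \hyperref[conje:emc]{eMC} for one choice of $L$ satisfies it for the other, and it is literally the same element of $K_1(\cQ(\cG))$.

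The only subtlety — and the point deserving the most care — is making rigorous the claim that $\cY_{S_\infty}$, $E_{S,T}$, $\cX_S$, and the finite-level maps $\varphi_n^\alpha$ for $n$ large are ``the same'' after shrinking $\Gamma$; this is a cofinality argument (the layers of $L_\infty/\tilde L$ are cofinal among finite extensions of $K$ in $L_\infty$, as are those of $L_\infty/L$) combined with the observation that the critical integer $n(S)$, while it shifts under reindexing the tower, still determines a stable regime in which $\varphi_n^\alpha$ has an $L$-free description. I expect no real obstacle here beyond bookkeeping, which is why the introduction remarks that this proof ``will be completely straightforward''.
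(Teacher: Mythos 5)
Your proposal is correct and follows essentially the same route as the paper: reduce via the compositum $L\tilde L$ to a change of layer along the cyclotomic tower, observe that all objects ($\cC_{S,T}\q$, $\alpha$, $\cQ(\cG)$, $\psi_\chi$, $\gamma_\chi$, the evaluation maps, the regulator via Lemma \ref{lem:r_chi_independent_of_n}) are intrinsic to $L_\infty/K$, and absorb the finite discrepancy between the two sets $\cK_S^\alpha(\chi)$ into the ``almost all'' quantifier. The only cosmetic difference is that the paper justifies the $L$-independence of the refined Euler characteristic by citing the inverse-limit description of $\cC_{S,T}\q$ from Theorem \ref{thm:iso_bks_complex}, whereas you appeal directly to the construction; both are fine.
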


            \begin{proof}
                Let us consider the Interpolation Conjecture first. The compositum $L\widetilde{L}$ is an intermediate extension of $L_\infty/L$ and $L_\infty/\widetilde{L}$, and hence equal to $L_m$ and $\widetilde{L}_{\widetilde{m}}$ for some $m, \widetilde{m} \in \NN$. Therefore, it suffices to show that the conjecture is independent of the choice of a layer along the cyclotomic tower, and for this, in turn, that
                \[
                    IC(L_\infty/K, \chi, L, S, T, \alpha, \beta) \iff IC(L_\infty/K, \chi, L_1, S, T, \alpha, \beta)
                \]
                (note that $L_m$ is the first layer of the cyclotomic $\ZZ_p$-extension of $L_{m - 1}$), from which the same assertion for $\widetilde{L}$ and $\widetilde{L}_1$ will follow by analogy.

                The elements involved in the definition of $\cQ^c(\Gamma_\chi)$ (cf. proposition \ref{prop:structure_gamma_chi}), most notably $\gamma_\chi$, are oblivious to $\Gamma$, as are the evaluation maps $ev_{\gamma_{\chi \otimes \rho}} \colon \cQ^c(\Gamma_\chi) \to \QQ_p^c \cup \set{\infty}$ (see for instance lemma \ref{lem:twisted_evaluation_maps} and use ${T_\chi = \gamma_\chi - 1}$). In other words, they are the same in the formulation of IC($L_\infty/K, \chi, L, S, T, \alpha, \beta$) and IC($L_\infty/K, \chi, L_1, S, T, \alpha, \beta$).

                Let $\cK_{S, 0}^\alpha(\chi)$ be the set $\cK_S^\alpha(\chi)$ defined as in $\ref{defn:r_chi}$ for $L = L_0$, and analogously for $\cK_{S, 1}^\alpha(\chi)$. Then $\cK_{S, 0}^\alpha(\chi) \cap \cK_{S, 1}^\alpha(\chi)$ contains almost all type-$W$ characters of $\cG$, since each set does (it is in fact easy to see that $\cK_{S, 1}^\alpha(\chi) \subseteq \cK_{S, 0}^\alpha(\chi)$ and they very frequently coincide).

                Suppose $F_{S, T, \chi}^{\alpha, \beta} \in \units{\cQ^c(\Gamma_\chi)}$ satisfies IC($L_\infty/K, \chi, L, S, T, \alpha, \beta$).  Then, for almost all characters ${\rho \in \cK_{S, 0}^\alpha(\chi) \cap \cK_{S, 1}^\alpha(\chi)}$ (and therefore almost all $\rho \in \cK_{S, 1}^\alpha(\chi)$), one has
                \begin{equation}
                \label{eq:interpolation_L_L1}
                    ev_{\gamma_{\chi \otimes \rho}}(F_{S, T, \chi}^{\alpha, \beta}) = \frac{\beta^{-1}(L_{K, S, T}^\ast(\beta(\check{\chi} \otimes \rho^{-1}), 0))}{R_S^\beta(\alpha, \chi \otimes \rho)},
                \end{equation}
                where the numerator and the denominator are the leading coefficient and Stark-Tate regulator computed at some large enough layer $L_n$, which we can choose to contain $L_1$ by lemmas \ref{lem:properties_of_L-functions} ii) and \ref{lem:r_chi_independent_of_n}. It follows that $F_{S, T, \chi}^{\alpha, \beta}$ satisfies IC($L_\infty/K, \chi, L_1, S, T, \alpha, \beta$), which concludes the proof of i).

                In order to prove ii), we first point out that the $K$-groups $K_1(\cQ(\cG))$ and $K_0(\Lambda(\cG), \cQ(\cG))$ are independent of the choice of $\Gamma$ by definition (recall \eqref{eq:lambda_torsionness_independent_gamma} regarding $\Lambda(\Gamma)$-torsionness), as is the connecting homomorphism $\partial$ between them. The main complex $\cC_{S, T}\q$ was shown to be isomorphic in the derived category $\cD(\Lambda(\cG))$ to an inverse limit of complexes $\cB_{L_n, S, T}\q$ along the cyclotomic tower (cf. definition \ref{defn:complexes_bks} and theorem \ref{thm:iso_bks_complex}), but it has been established above that such limits do not depend on $L$. As a consequence, neither does the refined Euler characteristic $\chi_{\Lambda(\cG), \cQ(\cG)}(\cC_{S, T}\q, t^\alpha)$.

                We are done now, as the field $L$ does not appear in (or otherwise affect) the definition \ref{defn:k1_to_gamma_chi} of ${\psi_\chi \colon K_1(\cQ(\cG)) \to \units{\cQ^c(\Gamma_\chi)}}$, and part i) showed that the Interpolation Conjectures for $L$ and $\widetilde{L}$ are equivalent and the series quotients $F_{S, T, \chi}^{\alpha, \beta}$ coincide whenever they hold. Therefore, any ${\zeta_{S, T}^{\alpha, \beta} \in K_1(\cQ(\cG))}$ satisfying eMC($L_\infty/K, L, S, T, \alpha, \beta$) will also satisfy eMC($L_\infty/K, \widetilde{L}, S, T, \alpha, \beta$) and vice versa.
            \end{proof}

        \subsection{\texorpdfstring{The choice of $\alpha$}{The choice of alpha}}
        \label{subsec:the_choice_of_alpha}

            The next goal is to show the independence of the Main Conjecture of the choice of $\alpha \colon \cY_{S_\infty} \ia E_{S, T}$. Although this requires a more technical proof than that in the previous subsection, we have chosen not to split it due to how intertwined the impact of a change in $\alpha$ on the analytic and algebraic sides of the conjecture is. Indeed, the way to adjust the interpolating elements $F_{S, T, \chi}^{\alpha, \beta}$ to a new $\alpha'$ will be to construct a certain element in the group $K_0(\Lambda(\cG), \cQ(\cG))$ and transfer it to $\units{\cQ^c(\Gamma_\chi)}$.

            As in the case of $L$, the homomorphism $\alpha$ does not affect any of the other parameters of the conjectures ($\chi, S, T$ and $\beta$). It does have an influence on $n(S, \alpha)$, and therefore on $\cK_S^\alpha(\chi)$ (cf. definition \ref{defn:r_chi}), but this will again not be of any consequence. An object which is fundamentally linked to $\alpha$ is trivialisation $t^\alpha$ (\eqref{eq:integral_trivialisation} and preceding lines), and consequently the refined Euler characteristic $\chi_{\Lambda(\cG), \cQ(\cG)}(\cC_{S, T}\q, t^\alpha) \in K_0(\Lambda(\cG), \cQ(\cG))$. Precisely this will inform how to prove the equivalence of the Interpolation Conjectures.

            \begin{prop}
            \label{prop:independence_of_alpha}
                Setting \ref{sett:formulation}. Let $\alpha'$ be another valid choice for the parameter $\alpha$. Then:
                \begin{enumerate}[i)]
                    \item{
                        For any $\chi \in \Irr_p(\cG)$, \hyperref[conje:ic]{IC($L_\infty/K, \chi, L, S, T, \alpha, \beta$)} holds if and only if \hyperref[conje:ic]{IC($L_\infty/K, \chi, L, S, T, \alpha', \beta$)} does.
                    }
                    \item{
                        \hyperref[conje:emc]{eMC($L_\infty/K, L, S, T, \alpha, \beta$)} holds if and only if \hyperref[conje:emc]{eMC($L_\infty/K, L, S, T, \alpha', \beta$)} does.
                    }
                \end{enumerate}
            \end{prop}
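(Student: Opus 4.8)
The plan is to reduce the comparison to a single controlled change: both $\alpha$ and $\alpha'$ are homomorphisms $\cY_{S_\infty} \to E_{S, T}$ which become isomorphisms after tensoring with $\cQ(\cG)$ (equivalently, after applying $\cQ(\Gamma) \otimes_{\Lambda(\Gamma)} -$, by \eqref{eq:lambda_regular_elements} and the injectivity established in proposition \ref{prop:existence_injectivity_alpha}). Hence $\theta = \cQ(\cG) \otimes_{\Lambda(\cG)} (\alpha' \circ \alpha^{-1})$, a priori only defined after inverting denominators, is a $\cQ(\cG)$-automorphism of $\cQ(\cG) \otimes_{\Lambda(\cG)} E_{S, T}$. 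Since $E_{S, T}$ is a finitely generated $\Lambda(\cG)$-module, $\theta$ can be realised (after clearing denominators as in the proof of proposition \ref{prop:existence_injectivity_alpha}, using the isomorphism \eqref{eq:tensor_hom}) as scalar extension of an honest $\Lambda(\cG)$-endomorphism, and it defines a class $[\cQ(\cG) \otimes_{\Lambda(\cG)} E_{S, T}, \theta] \in K_1(\cQ(\cG))$. First I would record the behaviour of the refined Euler characteristic under this change: the trivialisations $t^\alpha$ and $t^{\alpha'}$ differ precisely by $\theta$ on $H^\even(\cC_{S, T, \cQ(\cG)}\q) = \cQ(\cG) \otimes_{\Lambda(\cG)} E_{S, T}$, so by the standard additivity of refined Euler characteristics with respect to a change of trivialisation (compare \cite{bb} sections 5--6, and the relation \eqref{eq:partial_on_k1det}) one gets
\[
    \chi_{\Lambda(\cG), \cQ(\cG)}(\cC_{S, T}\q, t^{\alpha'}) - \chi_{\Lambda(\cG), \cQ(\cG)}(\cC_{S, T}\q, t^\alpha) = \partial([\cQ(\cG) \otimes_{\Lambda(\cG)} E_{S, T}, \theta]) \in K_0(\Lambda(\cG), \cQ(\cG)).
\]
This identifies the algebraic discrepancy as the boundary of an explicit $K_1$-class $\delta^\alpha_{\alpha'} := [\cQ(\cG) \otimes_{\Lambda(\cG)} E_{S, T}, \theta]$.

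Next I would compute the image of $\delta^\alpha_{\alpha'}$ under $\psi_\chi$, i.e. its reduced norm projected to $\units{\cQ^c(\Gamma_\chi)}$. The key is to evaluate $ev_{\gamma_{\chi \otimes \rho}}(\psi_\chi(\delta^\alpha_{\alpha'}))$ for $\rho \in \cK_S^\alpha(\chi) \cap \cK_S^{\alpha'}(\chi)$ (which is cofinite among type-$W$ characters, by lemma \ref{lem:properties_of_r_chi} i) applied to both $\alpha$ and $\alpha'$), and to show it equals $R_S^\beta(\alpha', \chi \otimes \rho) / R_S^\beta(\alpha, \chi \otimes \rho)$. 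This is where the regulators do their job: passing to coinvariants at a layer $L_n$ with $\Gp{n} \subseteq \ker(\chi \otimes \rho)$, and using lemma \ref{lem:complexes_coinvariants_finite_level} together with the $\chi$-part computation of $\varphi_n^\alpha$ versus $\varphi_n^{\alpha'}$ (both isomorphisms on $\chi$-parts by proposition \ref{prop:isomorphism_on_chi_parts}), the reduced norm of $\theta$ on the $(\chi \otimes \rho)$-component is the determinant of $\varphi_n^{\alpha'} \circ (\varphi_n^\alpha)^{-1}$ on $\Hom_{\CC_p[\cG_n]}(V_{\chi \otimes \rho}, -)$, which is exactly the ratio of Stark--Tate regulators by definition \ref{defn:regulator}. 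The twisted evaluation map just specialises the variable $T_\chi$ to $\rho(\gamma_K)^{w_\chi} - 1$ (lemma \ref{lem:twisted_evaluation_maps}), matching the finite-level picture at $L_n$.

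With these two computations in hand, part i) is immediate: if $F_{S, T, \chi}^{\alpha, \beta}$ satisfies \hyperref[conje:ic]{IC($L_\infty/K, \chi, L, S, T, \alpha, \beta$)}, then $F_{S, T, \chi}^{\alpha', \beta} := \psi_\chi(\delta^\alpha_{\alpha'}) \cdot F_{S, T, \chi}^{\alpha, \beta} \in \units{\cQ^c(\Gamma_\chi)}$, and for almost all $\rho \in \cK_S^{\alpha'}(\chi)$,
\[
    ev_{\gamma_{\chi \otimes \rho}}(F_{S, T, \chi}^{\alpha', \beta}) = \frac{R_S^\beta(\alpha', \chi \otimes \rho)}{R_S^\beta(\alpha, \chi \otimes \rho)} \cdot \frac{\beta^{-1}(L_{K, S, T}^\ast(\beta(\check{\chi} \otimes \rho^{-1}), 0))}{R_S^\beta(\alpha, \chi \otimes \rho)} = \frac{\beta^{-1}(L_{K, S, T}^\ast(\beta(\check{\chi} \otimes \rho^{-1}), 0))}{R_S^\beta(\alpha', \chi \otimes \rho)},
\]
using that $ev_{\gamma_{\chi \otimes \rho}}$ is multiplicative where nothing is $\infty$ (remark \ref{rem:definition_of_evaluation} ii)). For part ii), if $\zeta_{S, T}^{\alpha, \beta}$ satisfies \hyperref[conje:emc]{eMC($L_\infty/K, L, S, T, \alpha, \beta$)}, set $\zeta_{S, T}^{\alpha', \beta} = \delta^\alpha_{\alpha'} \cdot \zeta_{S, T}^{\alpha, \beta} \in K_1(\cQ(\cG))$; then $\psi_\chi(\zeta_{S, T}^{\alpha', \beta}) = \psi_\chi(\delta^\alpha_{\alpha'}) \cdot F_{S, T, \chi}^{\alpha, \beta} = F_{S, T, \chi}^{\alpha', \beta}$ for all $\chi$ (the equality on the nose, not just up to the \emph{almost all} quantifier, following from uniqueness of the interpolating element via corollary \ref{cor:uniqueness_series_type_w}), while $\partial(\zeta_{S, T}^{\alpha', \beta}) = \partial(\delta^\alpha_{\alpha'}) + \partial(\zeta_{S, T}^{\alpha, \beta}) = \partial(\delta^\alpha_{\alpha'}) - \chi_{\Lambda(\cG), \cQ(\cG)}(\cC_{S, T}\q, t^\alpha) = -\chi_{\Lambda(\cG), \cQ(\cG)}(\cC_{S, T}\q, t^{\alpha'})$ by the displayed boundary formula; the converse is symmetric. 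The main obstacle I anticipate is the reduced-norm computation in the second paragraph: one must check that the abstractly-defined $K_1$-class $\delta^\alpha_{\alpha'}$, built from clearing denominators on a module that is only finitely generated (not projective) over $\Lambda(\cG)$, has $\psi_\chi$-image computed correctly via finite-level determinants — this requires knowing that $\cQ(\cG) \otimes_{\Lambda(\cG)} E_{S, T}$ decomposes compatibly with the $e_\chi$-decomposition of $\cQ^c(\cG)$ and that the reduced norm on $K_1(\cQ^c(\cG)e_\chi)$ recovers the $\Hom$-space determinant, which is essentially a bookkeeping exercise in Morita invariance but needs care because $E_{S, T}$ is not itself $\cQ(\cG)$-free.
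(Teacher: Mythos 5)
Your overall architecture matches the paper's: both proofs isolate the discrepancy in a single class $\delta \in K_1(\cQ(\cG))$ (your $[\cQ(\cG)\otimes_{\Lambda(\cG)}E_{S,T},\theta]$ is conjugate via $\alpha_\cQ$ to the paper's $[(\cY_{S_\infty})_\cQ,(\alpha_\cQ)^{-1}\circ\alpha'_\cQ]$, hence the same $K_1$-class), show that $\psi_\chi(\delta)$ interpolates the ratio of regulators, and show that $\partial(\delta)$ is the difference of the two refined Euler characteristics. The Euler-characteristic half and the formal deductions of i) and ii) from the two computations are fine.

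The gap is in the second paragraph, exactly where you flag "the main obstacle" and then dismiss it as bookkeeping. The identity you need — that $ev_{\gamma_{\chi\otimes\rho}}(\psi_\chi(\delta))$ equals the determinant of $(\varphi_n^{\alpha})^{-1}\circ\varphi_n^{\alpha'}$ on the relevant $\Hom$-space — is an instance of the relation $\det(\Hom_{\QQ_p^c[\cG_n]}(V_\varsigma',\QQ_p^c\otimes_{\ZZ_p}\gcoinv{f}{n})) = ev_{\gamma_K}(j_\varsigma(\nr([P_\cQ,f_\cQ])))$, and this relation is only available for an \emph{integral} endomorphism $f$ of a finitely generated \emph{projective} $\Lambda(\cG)$-module $P$ with torsion kernel and cokernel. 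Your $\theta$ fails both hypotheses: it acts on (the localisation of) $E_{S,T}$, which is not projective over $\Lambda(\cG)$, and after clearing denominators via \eqref{eq:tensor_hom} you only obtain $\theta = d^{-1}\cdot(\cQ(\cG)\otimes_{\Lambda(\cG)}\tau)$ for some integral $\tau$ and some nonzero $d\in\Lambda(\Gamma)$ — $\theta$ itself is not the scalar extension of anything integral, so "passing to coinvariants at a layer $L_n$" is not a defined operation on it. The same problem persists after transporting to $\cY_{S_\infty}$: the automorphism $(\alpha_\cQ)^{-1}\circ\alpha'_\cQ$ is not integral because $\alpha$ cannot be inverted on integral level. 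The device that closes this gap in the paper is to choose $g\in\Lambda(\Gamma)$ annihilating $\coker(\alpha)$, form the genuinely integral endomorphism $a_g=\alpha^{-1}\circ\alpha'\circ m_g$ of the projective module $\cY_{S_\infty}$, apply the infinite-to-finite-level formula to both $a_g$ and $m_g$, check (via lemma \ref{lem:torsion_modules_bounded_coinvariants} and the kernel condition, which is where the restriction to $\rho\in\cK_S^\alpha(\chi)\cap\cK_S^{\alpha'}(\chi)$ actually earns its keep) that $m_{\overline g}$ is an isomorphism on $\varsigma$-parts, and cancel. Without this step, or a genuinely different argument replacing it, the central interpolation property of $\psi_\chi(\delta)$ is asserted rather than proved; it is not a Morita-invariance bookkeeping exercise.
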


            \begin{proof}
                Assume that IC($L_\infty/K, \chi, L, S, T, \alpha', \beta$) holds and let $F_{S, T, \chi}^{\alpha', \beta} \in \units{\cQ^c(\Gamma_\chi)}$ be the element predicted therein. It suffices to show the existence of a series quotient $f \in \units{\cQ^c(\Gamma_\chi)}$ such that, for almost all type-$W$ characters $\rho$, one has
                \begin{equation}
                \label{eq:interpolation_quotient_of_regulators}
                    ev_{\gamma_{\chi \otimes \rho}}(f) = \frac{R_S^\beta(\alpha', \chi \otimes \rho)}{R_S^\beta(\alpha, \chi \otimes \rho)},
                \end{equation}
                since then the element $F_{S, T, \chi}^{\alpha', \beta} \cdot f$ satisfies the interpolation property for IC($L_\infty/K, \chi, L, S, T, \alpha, \beta$). Recall the multiplicativity of evaluation (under the logical restrictions) explained in remark \ref{rem:definition_of_evaluation} ii) and the fact that $\cK_S^\alpha(\chi) \cap \cK_S^{\alpha'}(\chi)$ contains almost all $\rho$.

                For a $\varsigma \in \Irr_p(\cG)$ satisfying the kernel condition \ref{manualcond:kc} for both $\alpha$ and $\alpha'$ (for instance, $\varsigma = \chi \otimes \rho$ as above) and any $n$ such that $\Gp{n} \subseteq \ker(\varsigma)$ (so $n \geq n(S, \alpha)$), consider the commutative diagram of $\CC_p [\cG_n]$-homomorphisms
                \begin{center}
                    \begin{tikzcd}[column sep=large, row sep=large]
                        {\CC_p \otimes_{\ZZ_p} \cX_{L_n, S}} \arrow[r, "\sim"] \arrow[rrdd, dashed, shift left=0.6] \arrow[rrdd, dashed, shift right=0.6, "{\CC_p \otimes_{\ZZ_p} \varphi_n^\alpha}"', "{\CC_p \otimes_{\ZZ_p}  \varphi_n^{\alpha'}}"] & \CC_p \otimes_{\ZZ_p} \gcoinv{(\cX_S)}{n} \arrow[r, "\CC_p \otimes_{\ZZ_p} \gcoinv{\varphi}{n}"] & \CC_p \otimes_{\ZZ_p} \gcoinv{(\cY_{S_\infty})}{n} \arrow[d, shift right=.50ex] \arrow[d, shift left=.50, "{\CC_p \otimes_{\ZZ_p} \gcoinv{\alpha}{n}}\ "', "{\CC_p \otimes_{\ZZ_p} \gcoinv{\alpha'}{n}}"] &                 \\
                                                                                                            &                               & \CC_p \otimes_{\ZZ_p} \gcoinv{(E_{S, T})}{n} \arrow[d, "\CC_p \otimes_{\ZZ_p} \iota_n"]                                                    &                 \\
                                                                                                            &                               & {\CC_p \otimes \cO_{L_n, S, T}^\ast} \arrow[r, "\lambda^\beta_{n, S}"]                                                  & {\CC_p \otimes_{\ZZ_p} \cX_{L_n, S}}
                    \end{tikzcd}
                \end{center}
                (cf. definition \ref{defn:finite_level_map}). By proposition \ref{prop:isomorphism_on_chi_parts}, all arrows become $\CC_p$-isomorphisms after applying the covariant functor $\Hom_{\CC_p[\cG_n]}(V_\varsigma, -)$, where $V_\varsigma$ is any simple $\CC_p[\cG_n]$-module affording $\varsigma$. The regulators $R_S^\beta(\alpha, \varsigma)$ and $R_S^\beta(\alpha', \varsigma)$ are defined as the determinants of the two resulting compositions of maps on $\Hom$-spaces. But all arrows involved coincide except for the first vertical one, and therefore
                \begin{equation}
                \label{eq:difference_between_regulators}
                    \frac{R_S^\beta(\alpha', \varsigma)}{R_S^\beta(\alpha, \varsigma)} = \det(\Hom_{\CC_p[\cG_n]}(V_\varsigma, (e(\varsigma) \CC_p \otimes_{\ZZ_p} \gcoinv{\alpha}{n})^{-1} \circ (e(\varsigma) \CC_p \otimes_{\ZZ_p} \gcoinv{\alpha'}{n}))),
                \end{equation}
                which denotes the determinant of the $\CC_p$-linear automorphism of $\Hom_{\CC_p[\cG_n]}(V_\varsigma, \CC_p \otimes_{\ZZ_p} \cY_{L_n, S_\infty})$ (recall proposition \ref{prop:isomoprhisms_y_x_modules}) given by postcomposition with $(e(\varsigma) \CC_p \otimes_{\ZZ_p} \gcoinv{\alpha}{n})^{-1} \circ (e(\varsigma) \CC_p \otimes_{\ZZ_p} \gcoinv{\alpha'}{n})$. Since $\gcoinv{\alpha}{n}$ and $\gcoinv{\alpha'}{n}$ are already isomorphisms on $\varsigma$-parts after extending scalars to $\QQ_p^c$, we may replace $\CC_p$ by $\QQ_p^c$ and $V_\varsigma$ by $V_\varsigma'$ in the above equation, with $V_\varsigma'$ a $\QQ_p^c[\cG]$-module with character $\varsigma$.

                In order to look for series quotients which interpolate that difference of regulators, we turn to $K$-theory. This will also naturally lead to the equivalence of the equivariant Main Conjectures for $\alpha$ and $\alpha'$. We shall make use of (an intermediate result in the proof of)  \cite{nickel_fitting} theorem 6.4 to compare the augmentation of the reduced norm of a certain element in $K_1(\cQ(\cG))$ to the quotient \eqref{eq:difference_between_regulators}. The obstacle is that the element in question should essentially describe $\alpha^{-1} \circ \alpha'$, but $\alpha$ cannot be inverted on integral level. We use the following trick: let
                \[
                    g_0 = p^{\mu(\coker(\alpha))} \Char(\coker(\alpha)),
                \]
                where the $\mu$-invariant and the characteristic polynomial refer to the structure of $\coker(\alpha)$ as a $\Lambda(\Gamma)$-module and were defined after theorem \ref{thm:structure_theorem_iwasawa}. Then there exists a non-zero $l \in \NN$ such that $g = l g_0 \in \Lambda(\Gamma)$ annihilates $\coker(\alpha)$, or in other words, $gx \in \img(\alpha)$ for all $x \in E_{S, T}$. Since $\alpha$ is injective, such a $gx$ has a unique pre-image $\alpha^{-1}(gx) \in \cY_{S_\infty}$. We define $a_g$ as the composition
                \[
                    a_g \colon \cY_{S_\infty} \xrightarrow{m_g} \cY_{S_\infty} \xrightarrow{\alpha'} E_{S, T}  \stackrel{\alpha^{-1}}{\dashrightarrow} \cY_{S_\infty},
                \]
                where $m_g$ denotes multiplication by $g$ and the last arrow is dashed to emphasise the fact that it does not exist in isolation, but it makes sense as part of the entire composition (note that $\alpha'$ commutes with multiplication by $g$). The map $a_g$ is a homomorphism of $\Lambda(\cG)$-modules because $g$ is central in $\Lambda(\cG)$. The idea here is that multiplication by $g$ allows us to construct an integral homomorphism containing information about the inverse of $\alpha$, and the perturbations thus introduced into $K_1(\cQ(\cG))$ and $\Hom_{\CC_p[\cG_n]}(V_\varsigma, \CC_p \otimes_{\ZZ_p} \cY_{L_n, S_\infty})$ cancel out.

                For the remainder of this proof, let $-_{\cQ}$ denote $\cQ(\cG) \otimes_{\Lambda(\cG)} -$. Then $(a_g)_\cQ$ is a $\cQ(\cG)$-automorphism of $(\cY_{S_\infty})_\cQ$ which arises as the (now honest) composition
                \begin{equation}
                \label{eq:triple_composition_after_tensoring_q}
                    (a_g)_\cQ = (\alpha_\cQ)^{-1} \circ \alpha'_\cQ \circ (m_g)_\cQ,
                \end{equation}
                with the inverse of $\alpha_\cQ$ mapping $\lambda^{-1} e$ to $(\lambda g)^{-1} \alpha^{-1}(ge)$ for $\lambda \in \Lambda(\cG), e \in E_{S, T}$. Both $(\alpha_\cQ)^{-1} \circ \alpha'_\cQ$ and $(m_g)_\cQ$ are $\cQ(\cG)$-automorphisms of $(\cY_{S_\infty})_\cQ$.

                We now turn our attention to finite level again. With $\varsigma$ and $n$ as above, let $\overline{x}$ denote the class of $x \in M$ in $\gcoinv{M}{n}$ for any $\Lambda(\cG)$-module $M$. Then $\gcoinv{(m_g)}{n} = m_{\overline{g}}$, i.e. multiplication by $\overline{g} \in \Lambda(\cG_n)$. The map $\gcoinv{\alpha}{n}$ is no longer injective in general ($\gcoinv{-}{n}$ is only right-exact), so one cannot describe $\gcoinv{(a_g)}{n}$ as a composition of three maps. This is solved by passing to $\varsigma$-parts, where everything is an isomorphism. Namely, let $\overline{y} \in \cY_{L_n, S_\infty} = \gcoinv{(\cY_{S_\infty})}{n}$ and choose any lift $y \in \cY_{S_\infty}$. Then
                \[
                    \gcoinv{\alpha}{n}\big(\overline{a_g(y)}\big) = \overline{\alpha(a_g(y))} = \overline{\alpha'(m_g(y))} = \gcoinv{\alpha'}{n} (m_{\overline{g}}(\overline{y}))
                \]
                and hence
                \[
                    \gcoinv{\alpha}{n}\big(e(\varsigma) (\QQ_p^c \otimes_{\ZZ_p} \gcoinv{(a_g)}{n})(\overline{y})\big) = e(\varsigma) (\QQ_p^c \otimes_{\ZZ_p} \gcoinv{\alpha'}{n})(m_{\overline{g}}(\overline{y})).
                \]
                We thus obtain a finite-level decomposition
                \begin{equation}
                \label{eq:triple_decomposition_on_sigma_parts}
                    e(\varsigma) \QQ_p^c \otimes_{\ZZ_p} \gcoinv{(a_g)}{n} =
                    \big(e(\varsigma) \QQ_p^c \otimes_{\ZZ_p} \gcoinv{\alpha}{n}\big)^{-1} \circ
                    \big(e(\varsigma) \QQ_p^c \otimes_{\ZZ_p} \gcoinv{\alpha'}{n}\big) \circ
                    \big(e(\varsigma) \QQ_p^c \otimes_{\ZZ_p} m_{\overline{g}}\big),
                \end{equation}
                where the first map on the right-hand side is well defined because $\gcoinv{\alpha}{n}$ is an isomorphism on $\varsigma$-parts (this follows from  proposition \ref{prop:isomorphism_on_chi_parts} and \ref{manualcond:kc}).

                In order to see that $m_{\overline{g}}$ also induces an isomorphism on $\varsigma$-parts, consider the exact sequence of $\Lambda(\cG_n)$-modules
                \begin{equation}
                \label{eq:multiplication_by_scalar_isomorphism_on_chi_parts}
                    \ginv{\coker(m_g)}{n} \to \cY_{L_n, S_\infty} \xrightarrow{m_{\overline{g}}} \cY_{L_n, S_\infty} \to \gcoinv{\coker(m_g)}{n} \to 0,
                \end{equation}
                where we have simply dropped the first two terms of the invariants-coinvariants sequence \eqref{eq:invariants_coinvariants} associated to the injective homomorphism $m_g$. It suffices to show that $e(\varsigma) \QQ_p^c \otimes_{\ZZ_p} \gcoinv{\coker(m_g)}{n}$ and $e(\varsigma) \QQ_p^c \otimes_{\ZZ_p} \ginv{\coker(m_g)}{n}$ are trivial (note that the latter surjects onto the kernel of $m_{\overline{g}}$ on $\varsigma$-parts by the above sequence). Since $\cY_{S_\infty}$ is a free $\Lambda(\Gamma)$-module of rank $\abs{S_\infty(L)}$, it is enough in turn to prove $e(\varsigma) \QQ_p^c \otimes_{\ZZ_p} \gcoinv{(\Lambda(\Gamma)/\ideal{g})}{n} = e(\varsigma) \QQ_p \otimes_{\ZZ_p} \ginv{(\Lambda(\Gamma)/\ideal{g})}{n} = 0$. But this follows from \eqref{eq:introduction_alpha_n} and lemma \ref{lem:torsion_modules_bounded_coinvariants}, as $\supp(\Lambda(\Gamma)/\ideal{g})$ and $\supp(\coker(\alpha))$ coincide by definition of $g$ except possibly for the prime $\ideal{p}$ (due to the $p$-power $l$ introduced into $g$).

                Now that the relevant properties of $\gcoinv{(a_g)}{n}$ and $m_{\overline{g}}$ have been determined, a connection must be established to infinite level. The heavy lifting is done by equation (8) from \cite{nickel_fitting}. In our notation, it states that, given a finitely generated projective $\Lambda(\cG)$-module\footnote{Technically, the equation in the article only refers to the specific case where $P = \Lambda(\cG)$ and the endomorphism $f$ is given by right multiplication by an $\alpha \in \Lambda(\cG)$. However, as the reference states, the same relation holds when those objects are replaced by general $P$ and $f$ as above by essentially the same proof - only with more involved indexing.} $P$ and an endomorphism ${f \in \End_{\Lambda(\cG)}(P)}$ with torsion kernel and cokernel, one has
                \begin{equation}
                \label{eq:relation_reduced_norm_and_invariants}
                    \det(\Hom_{\QQ_p^c[\cG_n]}(V_\varsigma', \QQ_p^c \otimes_{\ZZ_p} \gcoinv{f}{n})) = ev_{\gamma_K}(j_\varsigma(\nr([P_\cQ, f_\cQ]))),
                \end{equation}
                where $[P_\cQ, f_\cQ] \in K_1^{\det}(\cQ(\cG))$ and the reduced norm $\nr$ are as in section \ref{sec:algebraic-k-theory} and $j_\varsigma$ was introduced in proposition \ref{prop:structure_gamma_chi} (extending scalars from $E$ to $\QQ_p^c$ now). Recall that we identify $K_1^{\det}(\cQ(\cG))$ with $K_1(\cQ(\cG))$. The left-hand side of the above equation denotes, as usual, the determinant of the $\QQ_p^c$-linear endomorphism of $\Hom_{\QQ_p^c[\cG_n]}(V_\varsigma', \QQ_p^c \otimes_{\ZZ_p} \gcoinv{P}{n})$ induced by postcomposition with $\QQ_p^c \otimes_{\ZZ_p} \gcoinv{f}{n}$.

                Diagram \eqref{eq:psi_and_j}, together with the fact that $\gamma_\varsigma \mapsto \gamma_K^{w_\varsigma}$ commutes with evaluation (see for instance the last diagram in the proof of lemma \ref{lem:twisted_evaluation_maps_independent_gamma_K}), implies
                \begin{equation}
                \label{eq:evaluation_j_and_psi}
                    ev_{\gamma_K}(j_\varsigma(\nr([P_\cQ, f_\cQ]))) = ev_{\gamma_\varsigma}(\psi_\varsigma([P_\cQ, f_\cQ])).
                \end{equation}
                Consider the two choices $f = a_g$ and $f = m_g$, which are indeed endomorphisms of the finitely generated projective $\Lambda(\cG)$-module $\cY_{S_\infty}$ with torsion kernel and cokernel. Equation \eqref{eq:relation_reduced_norm_and_invariants} becomes
                \[
                    \det(\Hom_{\QQ_p^c[\cG_n]}(V_\varsigma', \QQ_p^c \otimes_{\ZZ_p} \gcoinv{(a_g)}{n})) = ev_{\gamma_\varsigma}(\psi_\varsigma([(\cY_{S_\infty})_\cQ, (a_g)_\cQ]))
                \]
                and
                \[
                    \det(\Hom_{\QQ_p^c[\cG_n]}(V_\varsigma', \QQ_p^c \otimes_{\ZZ_p} m_{\overline{g}})) = ev_{\gamma_\varsigma}(\psi_\varsigma([(\cY_{S_\infty})_\cQ, (m_g)_\cQ])).
                \]
                We now rewrite both sides of the first equation of the two using decompositions \eqref{eq:triple_composition_after_tensoring_q} and \eqref{eq:triple_decomposition_on_sigma_parts}, as well as the multiplicativity of determinants and the relations of $K_1^{\det}(\cQ(\cG))$:
                \begin{align*}
                    & \det(\Hom_{\QQ_p^c[\cG_n]}(V_\varsigma', (e(\varsigma) \QQ_p^c \otimes_{\ZZ_p} \gcoinv{\alpha}{n})^{-1} \circ (e(\varsigma) \QQ_p^c \otimes_{\ZZ_p} \gcoinv{\alpha'}{n}))) \cdot \det(\Hom_{\QQ_p^c[\cG_n]}(V_\varsigma', \QQ_p^c \otimes_{\ZZ_p} m_{\overline{g}})) \\
                    = \ & ev_{\gamma_\varsigma}(\psi_\varsigma([(\cY_{S_\infty})_\cQ, (\alpha_\cQ)^{-1} \circ \alpha_\cQ'])) \cdot ev_{\gamma_\varsigma}(\psi_\varsigma([(\cY_{S_\infty})_\cQ, (m_g)_\cQ])).
                \end{align*}
                The second factors cancel out by the second of the two equations above, since $e(\varsigma) \QQ_p^c \otimes_{\ZZ_p} m_{\overline{g}}$ has already been shown to be an isomorphism. This results in the fundamental relation
                \[
                \det\nolimits(\Hom_{\QQ_p^c[\cG_n]}(V_\varsigma', (e(\varsigma) \QQ_p^c \otimes_{\ZZ_p} \gcoinv{\alpha}{n})^{-1} \circ (e(\varsigma) \QQ_p^c \otimes_{\ZZ_p} \gcoinv{\alpha'}{n}))) = ev_{\gamma_\varsigma}(\psi_\varsigma([(\cY_{S_\infty})_\cQ, (\alpha_\cQ)^{-1} \circ \alpha_\cQ'])),
                \]
                where the left-hand side is precisely the quotient $R_S^\beta(\alpha', \varsigma) / R_S^\beta(\alpha, \varsigma)$ by equation \eqref{eq:difference_between_regulators} (after restring scalars to $\QQ_p^c$ as explained immediately after it).

                The upshot from the above argument is that the element $[(\cY_{S_\infty})_\cQ, (\alpha_\cQ)^{-1} \circ \alpha_\cQ'] \in K_1^{\det}(\cQ(\cG))$, which is equivariant (i.e. independent of the choice of a particular character), has the following property: for every $\varsigma \in \Irr_p(\cG)$ satisfying \ref{manualcond:kc}, one has
                \begin{equation}
                \label{eq:evaluation_of_psi_alphas}
                    ev_{\gamma_\varsigma}(\psi_\varsigma([(\cY_{S_\infty})_\cQ, (\alpha_\cQ)^{-1} \circ \alpha_\cQ'])) = \frac{R_S^\beta(\alpha', \varsigma)}{R_S^\beta(\alpha, \varsigma)}.
                \end{equation}
                Let now $\chi \in \Irr_p(\cG)$ be as in the beginning of the proof. Since $\psi_{\chi \otimes \rho} = \psi_\chi$ for all $\rho$ of type $W$, the element $f = \psi_\chi([(\cY_{S_\infty})_\cQ, (\alpha_\cQ)^{-1} \circ \alpha_\cQ']) \in \cQ^c(\Gamma_\chi)$ satisfies \eqref{eq:interpolation_quotient_of_regulators}, from which the equivalence of the Interpolation Conjectures follows.

                Most of the work required to prove part ii) has already been done. Suppose $\zeta_{S, T}^{\alpha', \beta} \in K_1(\cQ(\cG))$ satisfies eMC($L_\infty/K, L, S, T, \alpha', \beta$) and set
                \[
                    \zeta_{S, T}^{\alpha, \beta} = \zeta_{S, T}^{\alpha', \beta} \cdot [(\cY_{S_\infty})_\cQ, (\alpha_\cQ)^{-1} \circ \alpha_\cQ'].
                \]
                Let $\chi \in \Irr_p(\cG)$. By assumption, $F_{S, T, \chi}^{\alpha', \beta} = \psi_\chi(\zeta_{S, T}^{\alpha', \beta})$ satisfies IC($L_\infty/K, \chi, L, S, T, \alpha', \beta$), which implies that
                \begin{equation}
                \label{eq:change_alpha_interpolating_element}
                    \psi_\chi(\zeta_{S, T}^{\alpha, \beta}) = F_{S, T, \chi}^{\alpha', \beta} \cdot \psi_\chi([(\cY_{S_\infty})_\cQ, (\alpha_\cQ)^{-1} \circ \alpha_\cQ'])
                \end{equation}
                satisfies IC($L_\infty/K, \chi, L, S, T, \alpha, \beta$) by part i).

                It remains to study the algebraic side of the conjecture. For this we resort to \cite{bb}, which shows that $(\alpha_\cQ)^{-1} \circ \alpha_\cQ'$ measures exactly the difference between the refined Euler characteristics defined via $\alpha$ and $\alpha'$. As mentioned in section \ref{sec:an_integral_trivialisation}, the cited article features two versions of the refined Euler characteristic: one denoted by $\chi^{\text{old}}$, which coincides with our $\chi_{\Lambda(\cG), \cQ(\cG)}$; and one denoted by $\chi$, which we write here as $\chi^{\text{new}}_{\Lambda(\cG), \cQ(\cG)}$. Choosing a strictly perfect representative $P\q$ of $\cC_{S, T}\q$, one has
                \begin{align*}
                    & \ \partial([H^\odd(\cC_{S, T}\q)_\cQ, (t^\alpha)^{-1} \circ t^{\alpha'}]) \\
                    = & \ \chi^{\text{new}}_{\Lambda(\cG), \cQ(\cG)}(\cC_{S, T}\q, (t^\alpha)^{-1}) - \chi^{\text{new}}_{\Lambda(\cG), \cQ(\cG)}(\cC_{S, T}\q, (t^{\alpha'})^{-1}) \\
                    = & \ \left(- \chi_{\Lambda(\cG), \cQ(\cG)}(\cC_{S, T}\q, t^\alpha) - \partial([B^\odd(P\q)_\cQ, -\Id])\right) \\
                     & \quad - \left(- \chi_{\Lambda(\cG), \cQ(\cG)}(\cC_{S, T}\q, t^{\alpha'}) - \partial([B^\odd(P\q)_\cQ, -\Id])\right) \\
                    = & \ \chi_{\Lambda(\cG), \cQ(\cG)}(\cC_{S, T}\q, t^{\alpha'}) - \chi_{\Lambda(\cG), \cQ(\cG)}(\cC_{S, T}\q, t^\alpha),
                \end{align*}
                where the first two equalities are \cite{bb} proposition 5.6 (2) and theorem 6.2, respectively, and $H^\odd$ and $B^\odd$ are as in section \ref{sec:an_integral_trivialisation}. As can be seen in the definition of the integral trivialisation $t_\iota^\alpha = \alpha \varphi \pi$ (equation \eqref{eq:integral_trivialisation}), the difference between $H^\odd(\cC_{S, T}\q)$ and $\cY_{S_\infty}$ is independent of $\alpha$. Since $t^\alpha = (t_\iota^\alpha)_\cQ$ by definition, a simple computation using the group law of $K_0(\Lambda(\cG), \cQ(\cG))$ yields
                \begin{align*}
                    & \partial([H^\odd(\cC_{S, T}\q)_\cQ, (t^\alpha)^{-1} \circ t^{\alpha'}]) \\
                    = & \ [H^\odd(\cC_{S, T}\q)_\cQ, (t^\alpha)^{-1} \circ t^{\alpha'}, H^\odd(\cC_{S, T}\q)_\cQ] \\
                    = & \ [(\cY_{S_\infty})_\cQ, (\alpha_\cQ)^{-1} \circ \alpha_\cQ', (\cY_{S_\infty})_\cQ] \\
                    = & \ \partial([(\cY_{S_\infty})_\cQ, (\alpha_\cQ)^{-1} \circ \alpha_\cQ']),
                \end{align*}
                where the first and last equalities are \eqref{eq:partial_on_k1det}. It follows that $\zeta_{S, T}^{\alpha, \beta}$ satisfies
                \begin{align*}
                    \partial(\zeta_{S, T}^{\alpha, \beta})
                    = \ & \partial(\zeta_{S, T}^{\alpha'}) + \partial([H^\odd(\cC_{S, T}\q)_\cQ, (t^\alpha)^{-1} \circ t^{\alpha'}]) \\
                    = \ & \left(- \chi_{\Lambda(\cG), \cQ(\cG)}(\cC_{S, T}\q, t^{\alpha'})\right) +  \left(\chi_{\Lambda(\cG), \cQ(\cG)}(\cC_{S, T}\q, t^{\alpha'}) - \chi_{\Lambda(\cG), \cQ(\cG)}(\cC_{S, T}\q, t^\alpha)\right) \\
                    = \ & - \chi_{\Lambda(\cG), \cQ(\cG)}(\cC_{S, T}\q, t^\alpha),
                \end{align*}
                and therefore also eMC($L_\infty/K, L, S, T, \alpha, \beta$) by \eqref{eq:change_alpha_interpolating_element}.
            \end{proof}

        \subsection{The choice of $S$ and $T$}
        \label{subsec:the_choice_of_s_and_t}

            When dealing with $L$-functions, groups of units and other objects of arithmetical nature, it is often very convenient to have some flexibility in the choice of the sets of places $S$ and $T$. By way of example, two common techniques are to enlarge $S$ enough that the $S$-class group of some number field vanishes; and to enlarge $T$ enough that the $(S, T)$-units become torsion-free as an abelian group, whether on finite or infinite level. In this subsection, we show one is free to modify these sets without affecting the validity of the Main Conjecture.

            We will treat the case of $S$ first, although most of the preparatory work will overlap with that for $T$. In both instances, the strategy is to enlarge the set in question one place at a time and construct an analogue of the corresponding Euler factor in $K_1(\cQ(\cG))$. In terms of distinguishing features, changing $S$ requires studying the regulator, which is not affected by $T$; whereas modifications in $T$ will entail changes in $\alpha$ (if small), unlike those in $S$. This is in fact the content of the first result:

            \begin{lem}
            \label{lem:difference_galois_modules_s}
                Setting \ref{sett:construction}. Let $v_0 \not \in S \cup T$ be a place of $K$ and set $S' = S \cup \set{v_0}$. Then, the canonical embeddings and surjections of $\Lambda(\cG)$-modules
                \[
                    E_S \ia E_{S'}, \quad E_{S, T} \ia E_{S', T}, \quad X_S^{cs} \sa X_{S'}^{cs} \quad \et{and} \quad X_{T, S}^{cs} \sa X_{T, S'}^{cs}
                \]
                in the notation of section \ref{sec:the_main_complex} (see especially proposition \ref{prop:kernel_cokernel_h0_alpha}) are all equalities.
            \end{lem}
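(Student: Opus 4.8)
The plan is to reduce everything to the key observation that adjoining a place $v_0 \notin S \cup T$ affects none of the arithmetic modules in question, because $v_0$ plays no special role: it is unramified in $L_\infty/K$ (since $S \supseteq S_{\ram}(L_\infty/K)$) and it is not a place where we smooth. First I would recall how these modules are built. The modules $E_S = \varprojlim_n (\ZZ_p \otimes \units{\cO_{L_n, S}})$ and $E_{S, T} = \varprojlim_n (\ZZ_p \otimes \units{\cO_{L_n, S, T}})$ are inverse limits over the cyclotomic tower of $p$-completed $(S, T)$-units, and $X_S^{cs}$, $X_{T, S}^{cs}$ are Galois groups over $L_\infty$ of maximal abelian pro-$p$ extensions subject to completely-split and ramification conditions at $S$ and $T$. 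The point is that each can be reduced to the corresponding question at finite level, where it becomes an elementary statement about units and class field theory, and then the limit is taken.

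For the unit modules, the essential claim is that at each layer $L_n$ one has $\units{\cO_{L_n, S}} = \units{\cO_{L_n, S'}}$. But $S'$-units are $S$-units which are additionally allowed to have poles at places above $v_0$; and $v_0$ splits into only finitely many places in $L_n$. The containment $\units{\cO_{L_n, S}} \subseteq \units{\cO_{L_n, S'}}$ is automatic. For the reverse, the subtlety is that in general an $S'$-unit need not be an $S$-unit. What saves us is the \emph{completely split} nature of the cyclotomic $\ZZ_p$-extension: $v_0 \notin S_p$ (as $S_f \supseteq S_p$) so $v_0$ is finitely decomposed, but more importantly the inverse limit kills the difference. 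Concretely: an element of $\varprojlim_n (\ZZ_p \otimes \units{\cO_{L_n, S'}})$ which has a nontrivial valuation at some place above $v_0$ at level $n$ would, upon applying the norm map $N_{L_m/L_n}$ for $m \geq n$, pick up a factor of $p^{m-n}$ if that place is non-split, or be carried around a nontrivial decomposition group if split; in either case the valuation component becomes infinitely $p$-divisible in the limit, hence zero. This is exactly the kind of argument used in the proof of Proposition \ref{prop:kernel_cokernel_h0_alpha} (the vanishing of $\ginv{E_{S,T}}{n}$-type terms) and in Proposition \ref{prop:isomoprhisms_y_x_modules}. Hence the valuation at $v_0$ of any element of $E_{S'}$ (resp. $E_{S', T}$) is trivial, so $E_S = E_{S'}$ and $E_{S, T} = E_{S', T}$, with the smoothing condition at $T$ (which does not involve $v_0$) unaffected.

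For the completely-split Iwasawa modules, the surjections $X_S^{cs} \sa X_{S'}^{cs}$ and $X_{T, S}^{cs} \sa X_{T, S'}^{cs}$ come from the fact that $M_{S'}^{cs} \subseteq M_S^{cs}$ and $M_{T, S'}^{cs} \subseteq M_{T, S}^{cs}$ (imposing complete splitting at more places gives a smaller field). That these are equalities amounts to: the maximal (completely-split-at-$S$, $T$-ramified) abelian pro-$p$ extension of $L_\infty$ is automatically completely split at $v_0$ as well. Again this is best seen at finite level via global class field theory. By Proposition \ref{prop:kernel_cokernel_h0_alpha} these modules are inverse limits of ($p$-parts of) $S$-ray class groups $Cl_{L_n, S}$ and $Cl_{L_n, S, T}$. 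Adding $v_0$ to $S$ means further quotienting by the classes of primes above $v_0$; but a prime $w_0$ of $L_n$ above $v_0$ is finitely decomposed in $L_\infty/L_n$, and in the Iwasawa limit the Frobenius of $w_0$ acts through a procyclic quotient of $\Gamma^{p^n}$. Since $v_0 \notin S$, its Frobenius already lies in the Galois group of the relevant extension over $L_\infty$, and the transitivity / norm-compatibility along the tower forces its contribution to vanish in the limit — the class of $w_0$ becomes infinitely $p$-divisible. Thus the extra quotient is trivial and the surjections are isomorphisms.

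The main obstacle — really the only non-formal point — is making the finite-level statements rigorous, i.e. verifying that at level $n$ the ``difference'' caused by $v_0$ is always a finite group (a power of $p$ related to $[\Gamma^{p^n} : \Gamma^{p^n} \cap \cG_{v_0}]$, exactly as in Proposition \ref{prop:isomoprhisms_y_x_modules}) and then confirming that this finite error dies in the inverse limit. I would handle this uniformly: for both the unit modules and the class-group modules, exhibit a short exact sequence at each layer whose extra term is killed by $v_0$-Frobenius, take the inverse limit (exact on compact modules by \cite{nsw} proposition 5.2.4, or by the compactness argument already invoked for universal norms), and observe the limit of the error terms vanishes by the infinite-$p$-divisibility argument above. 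Since $v_0$ is chosen outside $S \cup T$, none of the conditions defining $E_{S,T}$, $X_{T,S}^{cs}$ that reference $T$ are touched, so the same argument applies verbatim to the $(S,T)$-modified versions. This lemma is then exactly what is needed to compare the main complexes $\cC_{S,T}\q$ and $\cC_{S',T}\q$ and to define the ``Euler factor at $v_0$'' in $K_1(\cQ(\cG))$ in the subsequent arguments.
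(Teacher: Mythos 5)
Your proposal is correct and follows essentially the same route as the paper: the paper also reduces to the finite-level five-term exact sequence $1 \to \units{\cO_{L_n,S,T}} \to \units{\cO_{L_n,S',T}} \to \bigoplus_{w_n \mid v_0} \ZZ \cdot w_n \to Cl_{L_n,S,T} \to Cl_{L_n,S',T} \to 1$, passes to the inverse limit along the tower, and kills the middle term because $v_0$ (being finite and non-$p$-adic) is eventually inert, so the norm transition maps on the divisor term become multiplication by $p$. Your opening remark that $\units{\cO_{L_n,S}} = \units{\cO_{L_n,S'}}$ at each layer is false in general, but you immediately and correctly replace it with the infinite $p$-divisibility argument in the limit, which is exactly the paper's mechanism.
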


            \begin{proof}
                It is enough to prove the claim for the second and fourth arrows, since the rest correspond to the particular case $T = \varnothing$. We start by considering the exact sequence of $\cG_n$-modules
                \[
                    1 \to \units{\cO_{L_n, S, T}} \to \units{\cO_{L_n, S', T}} \to \bigoplus_{w_n \in \set{v_0}(L_n)} \ZZ \cdot w_n \to Cl_{L_n, S, T} \to Cl_{L_n, S', T} \to 1,
                \]
                where $Cl_{L_n, S, T}$ denotes the $(S, T)$-ray class group of $L_n$ (see the proof of proposition \ref{prop:kernel_cokernel_h0_alpha}), and analogously for  $Cl_{L_n, S', T}$. Here the third arrow is given by the valuation and the next one sends $z \cdot w_n$ to the class $[w_n]^z \in Cl_{L_n, S, T}$. By the same argument which allowed us to deduce \eqref{eq:modified_five_term_sequence_alpha} from \eqref{eq:classical_five_term_sequence_units}, the inverse limit of the previous sequence with respect to the norm maps along the cyclotomic tower $L_\infty/L$ results in the exact sequence of $\Lambda(\cG)$-modules
                \begin{equation}
                \label{eq:five_term_sequence_enlarge_s}
                    0 \to E_{S, T} \to E_{S', T} \to \varprojlim_n \bigoplus_{w_n \in \set{v_0}(L_n)} \ZZ_p \cdot w_n \to X_{T, S}^{cs} \to X_{T, S'}^{cs} \to 0.
                \end{equation}

                We therefore need to show the vanishing of the middle term $\varprojlim_n \bigoplus_{w_n \in \set{v_0}(L_n)} \ZZ_p \cdot w_n$, which coincides with $\Ind_{\cG_v}^\cG \varprojlim_n \ZZ_p \cdot v(L_n)$ (in the notation of setting \ref{sett:construction}) by the commutativity of inverse limits with induction (cf. \eqref{eq:induction_inverse_limit}). Hence, it suffices to consider $\varprojlim_n \ZZ_p \cdot v(L_n)$. The transition maps, induced by the norm on units, are given by $v(L_{n + 1}) \mapsto f_{n + 1 \mid n} \cdot v(L_n)$ (see for instance \cite{ant} proposition III.1.2 (iv)), where $f_{n + 1 \mid n}$ is the inertia degree of $v(L_{n + 1})$ in $L_{n + 1}/L_n$. But the properties of the cyclotomic $\ZZ_p$-extension imply that, for a high enough layer $L_{n_0}$, $v(L_{n_0})$ is inert (i.e. unramified and non-split) in $L_\infty/L_{n_0}$, as $v_0 \in S_f \setminus S_p$. Thus, the transition maps become multiplication by $p$ from the layer $L_{n_0}$ onwards, from which $\varprojlim_n \ZZ_p \cdot v(L_n) = 0$ follows.
            \end{proof}

            \begin{rem}
            \label{rem:difference_galois_modules_s}
                Even though we always assume $S$ to contain all ramified places in $L_\infty/K$, all this proof requires is $v_0 \in S_f \setminus S_p$. In other words, the objects in the statement all coincide with their counterparts defined on the set $\Sigma = S_\infty \cup S_p$, which holds a distinguished place in Iwasawa theory. Another path to the same conclusion is the realisation that it suffices to consider the $\Lambda(\Gamma)$-structure of those modules (i.e. the case $L_\infty = K_\infty, L = K$), and the ramified places in the cyclotomic $\ZZ_p$-extension are precisely the $p$-adic ones.\qedef
            \end{rem}

            Let us explore how the parameters of the Main Conjecture are affected by the addition of a place $v_0  \not \in S \cup T$ to $S$:
            \begin{itemize}
                \item{
                    $L_\infty/K$ and $L$ are defined before introducing $S$ and $T$. The condition $S \supseteq S_\infty \cup S_{\ram}(L_\infty/K)$ and $S \cap T = \varnothing$ automatically implies $S \cup \set{v_0} \supseteq S_\infty \cup S_{\ram}(L_\infty/K)$ and $(S \cup \set{v_0}) \cap T = \varnothing$.
                }
                \item{
                    By the previous lemma, the canonical embedding $E_{S, T} \ia E_{S \cup \set{v_0}, T}$ is in fact an equality. Therefore, the choice of a $\Lambda(\cG)$-homomorphism $\alpha \colon \cY_{S_\infty} \to E_{S, T}$ is exactly the the same as the choice of an $\alpha \colon \cY_{S_\infty} \to E_{S \cup \set{v_0}, T}$.
                }
                \item{
                    The choice of $\chi \in \Irr_p(\cG)$ is independent of $S$. Note that $n(S \cup \set{v_0}) \geq n(S)$, and therefore $n(S \cup \set{v_0}, \alpha) \geq n(S, \alpha)$ if the same $\alpha$ is chosen as explained in the previous point. This means $\cK_{S \cup \set{v_0}}^\alpha(\chi) \subseteq \cK_S^\alpha(\chi)$ in general, but the difference only consists of finitely many characters by lemma \ref{lem:properties_of_r_chi}.
                }
                \item{
                    $\beta \colon \CC_p \xrightarrow{\sim} \CC$ is unrelated to $S$.
                }
            \end{itemize}

            The question of how the Main Conjecture varies under enlargement of $S$ must be addressed on both the analytic side, concerning the existence of series quotients interpolating regulated special $L$-values; and the algebraic one, in terms of the refined Euler characteristic of the arithmetic complex $\cC_{S, T}\q$. We start with the latter.

            The key homological result in our study of refined Euler characteristics is their additivity in exact triangles of perfect complexes with compatible trivialisations. We are thus tasked with finding an exact triangle measuring the difference between $\cC_{S, T}\q$ and $\cC_{S \cup \set{v_0}, T}\q$ for $v_0 \not \in S \cup T$. The proof will involve answering the analogous question for $\cC_{S, T}\q$ and $\cC_{S, T \cup \set{v_0}}\q$. We note that, although the argument below relies on the explicit construction of the complex from chapter \ref{chap:construction_of_the_complex}, it is likely an alternative (and potentially simpler) proof exists using the $R\Gamma$-formulation of section \ref{sec:description_in_terms_of_rgamma_complexes}.

            \begin{prop}
            \label{prop:exact_triangle_difference_s_and_t}
                Setting \ref{sett:construction}. Let $v_0 \not \in S \cup T$ be a place of $K$. Then there exist exact triangles\footnote{The second triangle may seem obvious in light of definition \ref{defn:complexes_bks} and theorem \ref{thm:iso_bks_complex} - but this theorem was proved precisely on the promise that we would show such a relation later on.}
                \begin{equation}
                \label{eq:exact_triangle_difference_s_and_t}
                    \cC_{S, T}\q \to \cC_{S \cup \set{v_0}, T}\q \to \Ind_{\cG_{v_0}}^{\cG} \ZZ_p [-1] \to
                \end{equation}
                and
                \begin{equation}
                    \label{eq:exact_triangle_difference_t}
                    \cC_{S, T \cup \set{v_0}}\q \to \cC_{S, T}\q \to \overline{\Ind}_{\cG_{v_0}}^\cG \ZZ_p(1)[0] \to
                \end{equation}
                in the derived category $\cD(\Lambda(\cG))$, where
                \[
                    \overline{\Ind}_{\cG_{v_0}}^\cG \ZZ_p(1) =
                    \begin{cases}
                        \Ind_{\cG_{v_0}}^\cG \ZZ_p(1), \quad &L_{v_0(L)} \et {contains a primitive} p\text{-th root of unity} \\
                        0, &\text{otherwise}.
                    \end{cases}
                \]
            \end{prop}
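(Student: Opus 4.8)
The plan is to build both exact triangles directly from the explicit cone description of $\cC_{S, T}\q$ given in chapter \ref{chap:construction_of_the_complex}, tracking how each ingredient (the global complex $\cT_{S \cup T}\q$, the local complexes $\cL_v\q$, and the local-to-global morphism $\alpha_{S, T}$) changes when a place is added to $S$ or to $T$. Recall that $\cC_{S, T}\q = \Cone(\cL_S\q \xrightarrow{\alpha_{S, T}} \cT_{S \cup T}\q)[-1]$, where $\cL_S\q = \bigoplus_{v \in S} \Ind_{\cG_v}^\cG \cL_v\q$. The octahedral axiom will be the main organizing tool: since forming a cone is functorial and exact triangles behave well under cones of morphisms of triangles, differences between $\cC_{S, T}\q$ for nearby parameter choices should reduce to cones of the ``difference'' data.

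First I would treat triangle \eqref{eq:exact_triangle_difference_s_and_t}. Enlarging $S$ to $S' = S \cup \set{v_0}$ with $v_0 \notin S \cup T$ changes $\cL_S\q$ to $\cL_{S'}\q = \cL_S\q \oplus \Ind_{\cG_{v_0}}^\cG \cL_{v_0}\q$, while $\cT_{S \cup T}\q$ becomes $\cT_{S' \cup T}\q$. By lemma \ref{lem:difference_galois_modules_s} (and the argument of proposition \ref{prop:kernel_cokernel_h0_alpha}), the modules $E_{S, T}$, $X_{T, S}^{cs}$ and $\cX_{S_f}$ are unchanged, so the only genuine change in the cohomology of $\cC\q$ lies in the $\cX_S$ part of $H^1$, which grows by one copy of $\Ind_{\cG_{v_0}}^\cG \ZZ_p$ (coming from the extra place in $S$). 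The clean way to see this on the level of complexes: the inflation map from $M_{S \cup T}$ to $M_{S' \cup T}$ is an equality (both equal the maximal $(S \cup T)$-ramified extension, since $v_0 \notin S \cup T$), so $\cT_{S \cup T}\q = \cT_{S' \cup T}\q$ verbatim. Then $\cC_{S', T}\q$ is the $[-1]$-shifted cone of $\cL_S\q \oplus \Ind_{\cG_{v_0}}^\cG \cL_{v_0}\q \to \cT_{S \cup T}\q$, where the component from $\Ind_{\cG_{v_0}}^\cG \cL_{v_0}\q$ is $\Ind_{\cG_{v_0}}^\cG \alpha_{v_0}$. Octahedral considerations applied to the composition $\cL_S\q \hookrightarrow \cL_{S'}\q \xrightarrow{\alpha_{S', T}} \cT_{S \cup T}\q$ yield an exact triangle $\cC_{S, T}\q \to \cC_{S', T}\q \to \Cone(\Ind_{\cG_{v_0}}^\cG \cL_{v_0}\q)[-1]' \to$, where the third term is $\Cone$ of $\Ind_{\cG_{v_0}}^\cG \cL_{v_0}\q$ mapping into a two-term complex quasi-isomorphic to $0$ in the relevant slot. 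Since $v_0 \notin S_p$ is a non-$p$-adic finite place, $\cL_{v_0}\q$ has $H^0(\cL_{v_0}\q) = G_{L_\infty, v_0}^{ab}(p)$ and $H^1(\cL_{v_0}\q) = \ZZ_p$, and the unramified-$\ZZ_p$-extension structure forces $G_{L_\infty, v_0}^{ab}(p)$ to contribute nothing in the limit beyond $\ZZ_p$-rank considerations (exactly the vanishing exploited in lemma \ref{lem:difference_galois_modules_s}); a short computation identifies the third term as $\Ind_{\cG_{v_0}}^\cG \ZZ_p[-1]$.

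Next I would treat triangle \eqref{eq:exact_triangle_difference_t}. Here $\cL_S\q$ is unchanged, but $\cT_{S \cup T}\q$ becomes $\cT_{S \cup T \cup \set{v_0}}\q$, and the relevant comparison is of the global translation complexes for the two ramification sets $\Sigma = S \cup T$ and $\Sigma' = S \cup T \cup \set{v_0}$. The inclusion $M_\Sigma \subseteq M_{\Sigma'}$ and functoriality of the translation functor of Ritter and Weiss give a morphism $\cT_\Sigma\q \to \cT_{\Sigma'}\q$, whose cone I would identify using sequence \eqref{eq:modified_five_term_sequence_alpha} (with $S \cup T$ in place of $S$). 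Concretely, adding $v_0$ to $T$ replaces the smoothed-unit module $E_{S, T}$ by $E_{S, T \cup \set{v_0}}$, and the difference is governed by whether $L_{v_0(L)}$ contains a primitive $p$-th root of unity: if it does, one picks up $\Ind_{\cG_{v_0}}^\cG \ZZ_p(1)$ (the inverse limit over the cyclotomic tower of the $p$-parts of $\bigoplus_{w \in \set{v_0}(L_n)} \units{\kappa(w)}$, exactly as computed in the proof of proposition \ref{prop:kernel_cokernel_h0_alpha}), and otherwise nothing survives the limit. Passing through the isomorphism $\cC_{S, T}\q \iso \cB_{S, T}\q$ of theorem \ref{thm:iso_bks_complex} would give an alternative derivation, since \eqref{eq:exact_triangle_bks} is literally the finite-level version of \eqref{eq:exact_triangle_difference_t}; but to keep the argument self-contained and to honor the promise made in theorem \ref{thm:iso_bks_complex}, I would instead argue directly from the cone construction, taking the inverse limit over $n$ of the finite-level exact triangles (which commutes with cones and shifts, the $\ZZ_p$-modules involved being compact).

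The main obstacle, I expect, will be the bookkeeping in the octahedral step for \eqref{eq:exact_triangle_difference_s_and_t}: one must make sure that the cone of $\Ind_{\cG_{v_0}}^\cG \cL_{v_0}\q \to \cT_{S \cup T}\q$ (as a sub-cone inside $\cC_{S', T}\q$) really collapses to $\Ind_{\cG_{v_0}}^\cG \ZZ_p[-1]$ and not to some two-term complex involving $G_{L_\infty, v_0}^{ab}(p)$. This is where the hypothesis $v_0 \notin S$ (hence $v_0 \notin S_p$, so $v_0$ is unramified in $L_\infty/L$ beyond the cyclotomic tower) is essential: it forces the relevant local cohomology to degenerate, via the same vanishing of $\varprojlim_n \ZZ_p \cdot v(L_n)$ used in lemma \ref{lem:difference_galois_modules_s}. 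A secondary subtlety is checking compatibility of the degree conventions (the shift by $[-1]$ in the cone defining $\cC_{S, T}\q$ versus the $[-1]$ in the third term), which I would pin down by comparing cohomology in each degree against theorem \ref{thm:cohomology_of_complex} and proposition \ref{prop:kernel_cokernel_h0_alpha}.
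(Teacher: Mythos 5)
There is a genuine gap in your treatment of the first triangle \eqref{eq:exact_triangle_difference_s_and_t}, stemming from two false claims. First, $\cT_{S \cup T}\q$ and $\cT_{S' \cup T}\q$ are \emph{not} equal: $M_{S' \cup T} = M_{S \cup T \cup \set{v_0}}$ is the maximal extension of $L_\infty$ unramified outside $S \cup T \cup \set{v_0}$, which strictly contains $M_{S \cup T}$ (extensions ramified at $v_0$ exist), so the global complex genuinely changes when $v_0$ is added to $S$. The identity that does hold, and that the paper's proof exploits, is $\cT_{S \cup T'}\q = \cT_{S' \cup T}\q$ with $T' = T \cup \set{v_0}$, which is why the argument must route through the auxiliary complex $\cC_{S, T'}\q = \Cone(\cL_S\q \to \cT_{S \cup T'}\q)[-1]$ and compare it to \emph{both} $\cC_{S,T}\q$ and $\cC_{S',T}\q$. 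Second, even if the global complexes did agree, your octahedron applied to $\cL_S\q \ia \cL_{S'}\q \to \cT\q$ would produce $\Ind_{\cG_{v_0}}^\cG \cL_{v_0}\q$ (suitably shifted) as the third term, whose degree-zero cohomology $\Ind_{\cG_{v_0}}^\cG G_{L_\infty, v_0}^{ab}(p)$ does \emph{not} vanish: by local class field theory it is $\overline{\Ind}_{\cG_{v_0}}^\cG \ZZ_p(1)$, which is nonzero exactly when $L_{v_0(L)}$ contains a primitive $p$-th root of unity. The vanishing of $\varprojlim_n \ZZ_p \cdot v(L_n)$ from lemma \ref{lem:difference_galois_modules_s} concerns a different module (with norm transition maps becoming multiplication by $p$) and does not apply here.

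The missing idea is a cancellation, not a vanishing: the extra local contribution $\Ind_{\cG_{v_0}}^\cG G_{L_\infty, v_0}^{ab}(p)$ must be matched against the change in the global complex, namely $\ker(X_{S \cup T'} \sa X_{S \cup T}) \iso \overline{\Ind}_{\cG_{v_0}}^\cG \ZZ_p(1)$ (the computation already done in proposition \ref{prop:kernel_cokernel_h0_alpha}). The class-field-theoretic isomorphism between these two modules is precisely what lets one build a morphism $\Cone(\varepsilon_\cC) \to \Cone(\mu_\cC)$ whose cone collapses to $\Ind_{\cG_{v_0}}^\cG \ZZ_p[0]$; the residual $\Ind_{\cG_{v_0}}^\cG \ZZ_p$ comes from $H^1(\cL_{v_0}\q)$, not from a degeneration of $H^0$. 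Your treatment of the second triangle \eqref{eq:exact_triangle_difference_t} is essentially sound (modulo the direction of the map, which goes $\cT_{S \cup T'}\q \to \cT_{S \cup T}\q$ since enlarging $\Sigma$ enlarges $M_\Sigma$ and hence gives a surjection of Galois groups), and in fact the paper proves it first for exactly this reason: the cone of $Y_{S \cup T'} \sa Y_{S \cup T}$ is the input needed to repair the first triangle.
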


            \begin{proof}
                Let us denote $S \cup \set{v_0}$ and $T \cup \set{v_0}$ by $S'$ and $T'$, respectively.  We will make use of the fact that $S \cup T' = S' \cup T$, and in particular, $\cT_{S \cup T'}\q = \cT_{S' \cup T}\q$. There are no obvious non-trivial morphisms between $\cC_{S, T}\q$ and $\cC_{S', T}\q$ on the level of complexes. On the global side, the natural map goes in the direction $\cT_{S' \cup T}\q\to \cT_{S \cup T}\q$ (this induces, for instance, the projection ${X_{S' \cup T} \sa X_{S \cup T}}$ on $H^0$). In the case of the $\cC\q$-complexes, however, the canonical maps on cohomology (cf. theorem \ref{thm:cohomology_of_complex}) are reversed - most notably $\cX_S \ia \cX_{S'}$.

                We solve this issue by considering the auxiliary complex
                \[
                    \cC_{S, T'}\q = \Cone(\alpha_{S, T'} \colon \cL_S\q \to \cT_{S \cup T'}\q)[-1]
                \]
                in the notation of section \ref{sec:the_main_complex}. Recall that $\alpha_{S, T'}$ is induced by $G_{K_v} = (G_K)_{v^c} \ia G_K \sa G_{S \cup T}$ in degree 0 (equation \eqref{eq:local_to_global_maps_galois}) and given by the natural map $\big(\bigoplus_{v \in S} \Lambda(\cG)\big) \sa \Lambda(\cG)$ in degree 1. Our aim is to define \textit{morphisms of morphisms of complexes} from $\alpha_{S, T'}$ to $\alpha_{S, T}$ and $\alpha_{S', T}$ - that is, four morphisms of complexes $\varepsilon_\cL, \varepsilon_\cT, \mu_\cL$ and $\mu_\cT$ making the two diagrams
                \begin{equation}
                \label{eq:diagrams_morphisms_of_morphisms}
                    \begin{tikzcd}
                        \cL_S\q \arrow[r, "{\alpha_{S, T'}}"] \arrow[d, "\varepsilon_\cL"'] & \cT_{S \cup T'}\q \arrow[d, "\varepsilon_\cT"'] &  & \cL_S\q \arrow[r, "{\alpha_{S, T'}}"] \arrow[d, "\mu_\cL"'] & \cT_{S \cup T'}\q \arrow[d, "\mu_\cT"'] \\
                        \cL_S\q \arrow[r, "{\alpha_{S, T}}"]                                & \cT_{S \cup T}\q                                &  & \cL_{S'}\q \arrow[r, "{\alpha_{S', T}}"]                              & \cT_{S' \cup T}\q
                    \end{tikzcd}
                \end{equation}
                commute. Set $\varepsilon_\cL = \mu_\cT = \Id$. As $\mu_\cL$, we choose the natural morphism
                \[
                    \mu_\cL \colon \cL_S\q = \bigoplus_{v \in S} \Ind_{\cG_v}^\cG \cL_v\q \to \bigoplus_{v \in S'} \Ind_{\cG_v}^\cG \cL_v\q = \cL_{S'}\q.
                \]
                In the case of $\varepsilon_\cT$, we specify the morphism on each degree, of which there are two non-trivial ones. In degree 0, we let $\varepsilon_\cT^0 \colon Y_{S' \cup T} \sa Y_{S \cup T}$ be the canonical surjection induced by $G_{S' \cup T} \sa G_{S \cup T}$ (cf. section \ref{sec:the_global_complex}). As for degree 1, take $\varepsilon_\cT^1 = \Id \colon \Lambda(\cG) \to \Lambda(\cG)$.

                A number of verifications are in order, namely whether those maps are indeed morphisms of complexes and whether \eqref{eq:diagrams_morphisms_of_morphisms} commutes. In the case of $\varepsilon$, this follows from the commutativity of
                \begin{center}
                    \begin{tikzcd}[column sep=tiny, row sep=small]
                        {[Y_{S \cup T'}} \arrow[rd, "\varepsilon_\cT^0", two heads] \arrow[rr]                                                  &                                                                                      & {\Lambda(\cG)]} \arrow[rd, "\varepsilon_{\cT}^1", equals]                                                    &                                                                     \\
                                                                                                                                                & {[Y_{S \cup T}} \arrow[rr]                                                             &                                                                                                      & {\Lambda(\cG)]}                                                     \\
                        {\bigg[\bigoplus_{v \in S}} \Ind_{\cG_v}^\cG Y_v \arrow[uu, "{\alpha_{S, T'}^0}"] \arrow[rr] \arrow[rd, "\varepsilon_\cL^0", equals] &                                                                                      & {\bigoplus_{v \in S} \Lambda(\cG)\bigg]} \arrow[uu, "{\alpha_{S, T'}^1}", pos=0.82] \arrow[rd, "\varepsilon_\cL^1", equals] &                                                                     \\
                                                                                                                                                & {\bigg[\bigoplus_{v \in S} \Ind_{\cG_v}^\cG Y_v} \arrow[uu, "{\alpha_{S, T}^0}", pos=0.85] \arrow[rr] &                                                                                                      & {\bigoplus_{v \in S} \Lambda(\cG)\bigg]} \arrow[uu, "{\alpha_{S, T}^1}"]
                    \end{tikzcd}
                \end{center}
                The only two squares which deserve mention are the top and left ones, which hinge on the compatibility of Galois restriction $G_K \sa G_{S \cup T'} \sa G_{S \cup T} \sa \cG$. A similar diagram proves the analogous claims for $\mu_\cL$ and $\mu_\cT$. Although immediate, these checks should not be neglected - for instance, commutativity fails for the very natural choice $\nu_\cL \colon \cL_{S'}\q \to \cL_S\q$ and $\nu_\cT = \varepsilon_\cT$.

                A morphism of morphisms of complexes induces a morphism between their cones, which in our case implies the existence of maps
                \[
                    \varepsilon_\cC \colon \Cone(\alpha_{S, T'}) \to \Cone(\alpha_{S, T}) \quad \text{and} \quad \mu_\cC \colon  \Cone(\alpha_{S, T'}) \to \Cone(\alpha_{S', T}).
                \]
                Our interest now lies in the cones of these new maps. Since the horizontal arrows in \eqref{eq:diagrams_morphisms_of_morphisms} are more complicated than the vertical ones, we resort to the following property of triangulated categories: the four cones resulting from a commutative square of morphisms fit into a nine-term diagram with two new exact triangles connecting them. In the case of $\varepsilon$, this translates into
                \begin{equation}
                \label{eq:nine_diagram_change_t}
                    \begin{tikzcd}
                        \cL_S\q \arrow[r, "{\alpha_{S, T'}}"] \arrow[d, "\varepsilon_\cL"'] & \cT_{S \cup T'}\q \arrow[d, "\varepsilon_\cT"'] \arrow[r] & {\Cone(\alpha_{S, T'})} \arrow[d, dashed, "\varepsilon_\cC"] \arrow[r]                   & {} \\
                        \cL_S\q \arrow[r, "{\alpha_{S, T}}"] \arrow[d]                      & \cT_{S \cup T}\q \arrow[d] \arrow[r]                      & {\Cone(\alpha_{S, T})} \arrow[d, dashed] \arrow[r] \arrow[d, dashed] & {} \\
                        \Cone(\varepsilon_\cL) \arrow[d] \arrow[r, dashed]                  & \Cone(\varepsilon_\cT) \arrow[d] \arrow[r, dashed, "\iota"]        & \Cone(\varepsilon_\cC) \arrow[d, dashed] \arrow[r, dashed]            & {} \\
                        {}                                                                  & {}                                                        & {}                                                                    &
                    \end{tikzcd}
                \end{equation}
                where all arrows exist on level of complexes except possibly $\iota$. The map $\varepsilon_\cL = \Id$ has trivial cone, and hence $\iota \colon \Cone(\varepsilon_\cT) \isoa \Cone(\varepsilon_\cC)$ is an isomorphism in $\cD(\Lambda(\cG))$. Consider the cohomology of $\Cone(\varepsilon_\cT)$, which fits into the exact sequence
                \begin{center}
                    \begin{tikzcd}
                        0 \arrow[r] & H^{-1}(\Cone(\varepsilon_\cT)) \arrow[r] & H^0(\cT_{S \cup T'}\q) \arrow[r, "H^0(\varepsilon_\cT)"] & H^0(\cT_{S \cup T}\q) \arrow[r] & H^0(\Cone(\varepsilon_\cT)) \arrow[lld, out=0, in=180, looseness=1.5, overlay] &   \\
                                    &             & H^1(\cT_{S \cup T'}\q) \arrow[r, "H^1(\varepsilon_\cT)"] & H^1(\cT_{S \cup T}\q) \arrow[r] & H^1(\Cone(\varepsilon_\cT)) \arrow[r]   & 0
                    \end{tikzcd}
                \end{center}
                and is trivial elsewhere because $\cT_{S \cup T'}\q$ and $\cT_{S \cup T}\q$ are acyclic outside degrees 0 and 1 (proposition \ref{prop:global_complex}). The maps induced by $\varepsilon_\cT$ on cohomology are the canonical projection ${H^0(\varepsilon_\cT) \colon X_{S \cup T'} \sa X_{S \cup T}}$ and the identity $H^1(\varepsilon_\cT) \colon \ZZ_p \xrightarrow{\Id} \ZZ_p$. Thus, $H^0(\Cone(\varepsilon_\cT)) = H^1(\Cone(\varepsilon_\cT)) = 0$ and $H^{-1}(\Cone(\varepsilon_\cT))$ is isomorphic to $\ker(X_{S \cup T'} \sa X_{S \cup T})$. This kernel already appeared in the proof of proposition \ref{prop:kernel_cokernel_h0_alpha} as the term $\ker(\pi)$, where it was shown to be precisely $\overline{\Ind}_{\cG_{v_0}}^\cG \ZZ_p(1)$ in our newly introduced notation. It follows that $\Cone(\varepsilon_\cC)$ itself has cohomology concentrated in at most one degree, which in turn implies
                \begin{equation}
                \label{eq:cone_varepsilon_c}
                    \Cone(\varepsilon_\cC) \iisoo \overline{\Ind}_{\cG_{v_0}}^\cG \ZZ_p(1)[1]
                \end{equation}
                in $\cD(\Lambda(\cG))$. Shifting the triangle $\Cone(\alpha_{S, T'}) \to \Cone(\alpha_{S, T}) \to \Cone(\varepsilon_\cC) \to$ by $-1$ yields \eqref{eq:exact_triangle_difference_t}.

                We use the same technique to compute $\Cone(\mu_\cC)$. An analogous 9-term diagram shows
                \[
                    \Cone(\mu_\cC) \iso \Cone\big(\Cone(\mu_\cL) \to \Cone(\mu_\cT)\big) \iso \Cone(\mu_\cL)[1],
                \]
                where the term $\Cone(\mu_\cL)$ can be determined directly using the obvious short exact sequence of complexes
                \[
                    0 \to \cL_S\q \xrightarrow{\mu_\cL} \cL_{S'}\q \to \Ind_{\cG_{v_0}}^\cG \cL_{v_0}\q \to 0.
                \]
                This results in an isomorphism $\Cone(\mu_\cC) \iso \Ind_{\cG_{v_0}}^\cG \cL_{v_0}\q[1] = [\stackrel{-1}{\Ind_{\cG_{v_0}}^\cG Y_{v_0}} \to \stackrel{0}{\Lambda(\cG)}]$ in $\cD(\Lambda(\cG))$ in the notation of section \ref{sec:the_local_complexes}. The cohomology of the local complexes was established in proposition \ref{prop:local_complexes} and commutes with induction by \eqref{eq:local_cohomology_compatible_induction}, so we have
                \[
                    H^i(\Cone(\mu_\cC)) \iisoo \Ind_{\cG_{v_0}}^\cG H^{i+1}(\cL_{v_0}\q) =
                    \begin{cases}
                        \Ind_{\cG_{v_0}}^\cG G_{L_\infty, v_0}^{ab}(p), & i = -1 \\
                        \Ind_{\cG_{v_0}}^\cG \ZZ_p, & i = 0 \\
                        0 & \text{otherwise.}
                    \end{cases}
                \]

                Now class field theory brings $\Cone(\varepsilon_\cC)$ and $\Cone(\mu_\cC)$ together by virtue of the $\Lambda(\cG)$-isomorphism
                \[
                    \overline{\Ind}_{\cG_{v_0}}^\cG \ZZ_p(1) \iso \Ind_{\cG_{v_0}}^\cG G_{L_\infty, v_0}^{ab}(p) = \ker(\Ind_{\cG_{v_0}}^\cG Y_{v_0} \to \Lambda(\cG))
                \]
                (cf. \cite{nsw} theorem 11.2.3 (ii)), which allows us to define a morphism
                \[
                    \varphi \colon \Cone(\varepsilon_\cC) \isoa
                    \overline{\Ind}_{\cG_{v_0}}^\cG \ZZ_p(1)[1] \to
                    [\stackrel{-1}{\Ind_{\cG_{v_0}}^\cG Y_{v_0}} \ \to \ \stackrel{0}{\Lambda(\cG)}]  =
                    \Ind_{\cG_{v_0}}^\cG \cL_{v_0}\q[1] \isoa
                    \Cone(\mu_\cC)
                \]
                in $\cD(\Lambda(\cG))$. The cone of $\varphi$ is acyclic outside degree 0 and thus isomorphic in the derived category to
                \[
                    \Cone(\varphi) \iso H^0(\Cone(\varphi))[0] \iso H^0(\Ind_{\cG_{v_0}}^\cG \cL_{v_0}\q)[0] = \Ind_{\cG_{v_0}}^\cG \ZZ_p[0].
                \]

                We conclude the argument by considering the commutative diagram
                \begin{center}
                    \begin{tikzcd}
                        {\Cone(\varepsilon_\cC)[-1]} \arrow[r] \arrow[d, "{\varphi[-1]}"] & {\Cone(\alpha_{S, T'})} \arrow[r, "\varepsilon_\cC"] \arrow[d, equals] & {\Cone(\alpha_{S, T})} \arrow[r]  & {} \\
                        {\Cone(\mu_\cC)[-1]} \arrow[r]                                           & {\Cone(\alpha_{S, T'})} \arrow[r, "\mu_\cC"]           & {\Cone(\alpha_{S', T})} \arrow[r] & {}
                    \end{tikzcd}
                \end{center}
                where the rows are rotated versions of natural exact triangles in $\cD(\Lambda(\cG))$. This induces a third vertical morphism $\kappa \colon \Cone(\alpha_{S, T}) \to \Cone(\alpha_{S', T})$ whose cone we can compute using the same technique as before:
                \[
                    \Cone(\kappa) \iisoo \Cone(\varphi[-1])[1] = \Cone(\varphi) \iso \Ind_{\cG_{v_0}}^\cG \ZZ_p[0].
                \]
                The desired triangle \eqref{eq:exact_triangle_difference_s_and_t} is now a shift of $\Cone(\alpha_{S, T}) \xrightarrow{\kappa} \Cone(\alpha_{S', T}) \to \Cone(\kappa) \to$.
            \end{proof}

            In order to relate the refined Euler characteristics of $\cC_{S, T}\q$ and $\cC_{S \cup \set{v_0}, T}\q$ using the above proposition, we must trivialise them by choosing a map $\alpha \colon \cY_{S_\infty} \to E_{S, T} = E_{S \cup \set{v_0}, T}$ as in setting \ref{sett:formulation}. Note that, even though the same $\alpha$ can be employed for both complexes, the trivialisations $t^\alpha$ themselves will differ (cf. \eqref{eq:integral_trivialisation}), since the cohomology in degree 1 does not coincide.

            \begin{cor}
            \label{cor:difference_euler_characteristics_change_s}
                Setting \ref{sett:formulation}. Let $v_0 \not \in S \cup T$ be a place of $K$ and denote by $t^\alpha$ and $t_0^\alpha$ the trivialisations constructed in section \ref{sec:an_integral_trivialisation} for the complexes $\cC_{S, T}\q$ and $\cC_{S \cup \set{v_0}, T}\q$, respectively. Then one has
                \[
                    \chi_{\Lambda(\cG), \cQ(\cG)}(\cC_{S \cup \set{v_0}, T}\q, t_0^\alpha) = \chi_{\Lambda(\cG), \cQ(\cG)}(\cC_{S, T}\q, t^\alpha) - [\Lambda(\cG), (1 - \varphi_{v_0(L_\infty)}^{-1})_r, \Lambda(\cG)]
                \]
                in $K_0(\Lambda(\cG), \cQ(\cG))$, where $\varphi_{v_0(L_\infty)} \in \cG_{v_0}$ is the Frobenius automorphism at $v_0(L_\infty)$ and $(1 - \varphi_{v_0(L_\infty)}^{-1})_r$ denotes right multiplication by $1 - \varphi_{v_0(L_\infty)}^{-1}$.
            \end{cor}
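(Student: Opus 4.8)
The plan is to combine the exact triangle from Proposition \ref{prop:exact_triangle_difference_s_and_t} with the additivity of refined Euler characteristics in exact triangles, after verifying that the trivialisations on the three complexes are compatible. Concretely, by \eqref{eq:exact_triangle_difference_s_and_t} there is an exact triangle $\cC_{S, T}\q \to \cC_{S \cup \set{v_0}, T}\q \to \Ind_{\cG_{v_0}}^{\cG} \ZZ_p [-1] \to$ in $\cD(\Lambda(\cG))$. Since $v_0 \in S_f \setminus S_p$, the local complex $\Ind_{\cG_{v_0}}^\cG \ZZ_p[-1]$ is concentrated in degree $1$ (so $H^\even$ of it vanishes), and its degree-$1$ cohomology, extended to $\cQ(\cG)$, becomes trivial because $\ZZ_p$ has finite $\ZZ_p$-rank and is hence $\Lambda(\Gamma)$-torsion. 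Therefore the trivialisation of this third complex is (forced to be) the zero map, and the long exact cohomology sequence of the triangle identifies $H^\odd(\cC_{S \cup \set{v_0}, T}\q)_\cQ$ with $H^\odd(\cC_{S, T}\q)_\cQ$ in a way compatible with $t^\alpha$ and $t_0^\alpha$ — this compatibility is exactly what makes the two trivialisations match across the triangle, since both are built from the same $\alpha \colon \cY_{S_\infty} \to E_{S, T} = E_{S \cup \set{v_0}, T}$ via \eqref{eq:integral_trivialisation} and $H^\even$ is $E_{S, T}$ on both sides by Theorem \ref{thm:cohomology_of_complex} and Lemma \ref{lem:difference_galois_modules_s}.

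Granting that compatibility, the additivity of refined Euler characteristics in exact triangles of perfect complexes with compatible trivialisations (the homological tool referenced in Section \ref{sec:an_integral_trivialisation}, cf. \cite{bb}) gives
\[
    \chi_{\Lambda(\cG), \cQ(\cG)}(\cC_{S \cup \set{v_0}, T}\q, t_0^\alpha) = \chi_{\Lambda(\cG), \cQ(\cG)}(\cC_{S, T}\q, t^\alpha) + \chi_{\Lambda(\cG), \cQ(\cG)}(\Ind_{\cG_{v_0}}^\cG \ZZ_p[-1], 0).
\]
It remains to compute the last term. The complex $\Ind_{\cG_{v_0}}^\cG \ZZ_p[-1]$ has a length-one strictly perfect resolution coming from the augmentation sequence for $\cG_{v_0}$: local class field theory (as in the proof of Proposition \ref{prop:exact_triangle_difference_s_and_t}, using $v_0 \notin S_p$) presents $\Ind_{\cG_{v_0}}^\cG \ZZ_p$ as the cokernel of the map $\Lambda(\cG) \xrightarrow{(1 - \varphi_{v_0(L_\infty)}^{-1})_r} \Lambda(\cG)$ — that is, the unramified local complex at $v_0$ is $[\Lambda(\cG) \xrightarrow{(1 - \varphi^{-1})_r} \Lambda(\cG)]$ with $H^1 \iso \ZZ_p$ (the $\ZZ_p$-coinvariants, on which Frobenius acts trivially). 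Placing this in degrees $0$ and $1$ and using that $1 - \varphi_{v_0(L_\infty)}^{-1}$ is a regular element of $\Lambda(\cG)$ (so becomes a unit in $\cQ(\cG)$, since $\varphi_{v_0(L_\infty)}$ has infinite order), the refined Euler characteristic with the zero trivialisation is by definition $-[\Lambda(\cG), (1 - \varphi_{v_0(L_\infty)}^{-1})_r, \Lambda(\cG)]$ — the sign coming from the degree shift, exactly matching the convention of $\chi^{\text{old}}$. This yields the stated formula.

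The main obstacle I anticipate is the bookkeeping around the trivialisation compatibility: one must check that under the connecting maps of the cohomology long exact sequence of \eqref{eq:exact_triangle_difference_s_and_t}, the isomorphism $H^\odd(\cC_{S, T}\q)_\cQ \isoa H^\odd(\cC_{S \cup \set{v_0}, T}\q)_\cQ$ intertwines $t^\alpha$ and $t_0^\alpha$ (and that $H^\even$ is carried over by an isomorphism respecting the identification with $(E_{S, T})_\cQ$). This is precisely where the structure of $t_\iota^\alpha = \alpha\varphi\pi$ from \eqref{eq:integral_trivialisation} must be traced: the only piece affected by enlarging $S$ by $v_0$ is the surjection $\varphi \colon \cX_S \sa \cY_{S_\infty}$ (equivalently $H^1(\cC_{S, T}\q) \sa \cX_S \sa \cY_{S_\infty}$), and one must confirm that the discrepancy between $\cX_S$ and $\cX_{S \cup \set{v_0}}$ disappears over $\cQ(\cG)$ and is compatible with the map induced by the triangle. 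Once this is in place, the determinant computation for the local term is routine. As an alternative route, one could instead derive the formula by comparing the $R\Gamma$-descriptions of Section \ref{sec:description_in_terms_of_rgamma_complexes}, where the change from $S$ to $S \cup \set{v_0}$ corresponds to removing a local cohomology term, but the above cone-based argument keeps everything within the explicit framework of Chapter \ref{chap:construction_of_the_complex}.
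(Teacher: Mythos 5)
Your proposal is correct and follows essentially the same route as the paper: the exact triangle of Proposition \ref{prop:exact_triangle_difference_s_and_t}, the zero trivialisation on the torsion third term, additivity of refined Euler characteristics from \cite{bb} (where the paper is careful to invoke theorem 5.7 rather than corollary 6.6, verifying exactly the trivialisation-compatibility diagram you flag as the main obstacle), and the computation of the local term via the strictly perfect representative $[\Lambda(\cG) \xrightarrow{(1 - \varphi_{v_0(L_\infty)}^{-1})_r} \Lambda(\cG)]$ with the sign coming from the degree placement. The only minor inaccuracy is attributing the presentation of $\Ind_{\cG_{v_0}}^\cG \ZZ_p$ as the cokernel of $(1 - \varphi_{v_0(L_\infty)}^{-1})_r$ to local class field theory; in the paper it is just the augmentation sequence for the procyclic group $\cG_{v_0}$, with injectivity checked by an inverse limit argument along the cyclotomic tower.
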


            \begin{proof}
                The first step is to show that $[\stackrel{0}{\Lambda(\cG)} \xrightarrow{(1 - \varphi_{v_0(L_\infty)}^{-1})_r} \stackrel{1}{\Lambda(\cG)}]$ is a strictly perfect representative of $\Ind_{\cG_{v_0}}^{\cG} \ZZ_p [-1]$. Consider the endomorphism $(1 - \varphi_{v_0(L_\infty)}^{-1})_r$ of the regular left $\Lambda(\cG_{v_0})$-module, which maps $x \in \Lambda(\cG_{v_0})$ to $x (1 - \varphi_{v_0(L_\infty)}^{-1})$. Recall that $\cG_{v_0}$ is notation for $\cG_{v_0(L_\infty)}$, which is a procyclic group topologically generated by $\varphi_{v_0(L_\infty)}$ (because $v_0 \not \in S_{\ram}(L_\infty/K)$), or equivalently, by $\varphi_{v_0(L_\infty)}^{-1}$. In particular, $\Lambda(\cG_{v_0})$ is a commutative ring. The sequence of (topological left) $\Lambda(\cG_{v_0})$-modules
                \begin{equation}
                \label{eq:strictly_perfect_representative_local}
                    0 \to \Lambda(\cG_{v_0}) \xrightarrow{(1 - \varphi_{v_0(L_\infty)}^{-1})_r} \Lambda(\cG_{v_0}) \to \ZZ_p \to 0
                \end{equation}
                is exact: the third arrow is simply the augmentation map and injectivity follows from taking inverse limits on the exact sequence
                \[
                    0 \to \ZZ_p \cdot \Tr_{\cG_{n, v_0}} \to \Lambda(\cG_{n, v_0}) \xrightarrow{(1 - \varphi_{v_0(L_n)}^{-1})_r} \Lambda(\cG_{n, v_0})
                \]
                along the cyclotomic tower. Here $\cG_{n, v_0}$ is the decomposition group of $v_0(L_n)$ in $L_n/K$, which is a finite cyclic group, and the transition maps on the first term send $\Tr_{\cG_{n + 1, v_0}} \in \Lambda(\cG_{n + 1, v_0})$ to $p \cdot \Tr_{\cG_{n, v_0}} \in \Lambda(\cG_{n, v_0})$ for $n$ large enough (because $v_0$ splits into finitely many places in $L_\infty$). The left-exactness of the inverse limit implies $\ker((1 - \varphi_{v_0(L_\infty)}^{-1})_r) = \varprojlim_n \ZZ_p \cdot \Tr_{\cG_{n, v_0}} = 0$.

                This yields a quasi-isomorphism of complexes of $\Lambda(\cG_{v_0})$-modules
                \[
                    [\stackrel{0}{\Lambda(\cG_{v_0})} \xrightarrow{(1 - \varphi_{v_0(L_\infty)}^{-1})_r} \stackrel{1}{\Lambda(\cG_{v_0})}] \to \ZZ_p [-1]
                \]
                which can then be induced to $\cG$ by applying compact induction (introduced in section \ref{sec:iwasawa_algebras_and_modules}). We have ${\Ind_{\cG_{v_0}}^\cG \Lambda(\cG_{v_0}) = \Lambda(\cG)}$ and $\Ind_{\cG_{v_0}}^\cG (1 - \varphi_{v_0(L_\infty)}^{-1})_r = (1 - \varphi_{v_0(L_\infty)}^{-1})_r$. Note that $\Lambda(\cG)$ is no longer abelian and thus the left-right distinction becomes relevant. The functor $\Ind_{\cG_{v_0}}^\cG -$ is exact on \eqref{eq:strictly_perfect_representative_local} for the reasons described in section \ref{sec:local-to-global_maps}, which shows that
                \[
                    [\stackrel{0}{\Lambda(\cG)} \xrightarrow{(1 - \varphi_{v_0(L_\infty)}^{-1})_r} \stackrel{1}{\Lambda(\cG)}] \iisoo \Ind_{\cG_{v_0}}^{\cG} \ZZ_p [-1]
                \]
                in $\cD(\Lambda(\cG))$ - and, in particular, that the right-hand side is perfect. Since $\Ind_{\cG_{v_0}}^{\cG} \ZZ_p$ is $\Lambda(\cG)$-torsion, the zero map constitutes a trivialisation for $\Ind_{\cG_{v_0}}^{\cG} \ZZ_p[-1]$, with associated refined Euler characteristic
                \begin{equation}
                \label{eq:difference_s_rec}
                    \chi_{\Lambda(\cG), \cQ(\cG)}(\Ind_{\cG_{v_0}}^{\cG} \ZZ_p[-1], 0) = - [\Lambda(\cG), (1 - \varphi_{v_0(L_\infty)}^{-1})_r, \Lambda(\cG)]
                \end{equation}
                according to \eqref{eq:definition_rec_general}. The negative sign appears because the map $(1 - \varphi_{v_0(L_\infty)}^{-1})_r$ goes from even to odd degree.

                We must now combine the exact triangle \eqref{eq:exact_triangle_difference_s_and_t} with the additivity of refined Euler characteristics studied in \cite{bb}. Although corollary 6.6 therein is specific to the semisimple case, it cannot be applied here, as it requires injectivity of the reduced norm $\nr \colon K_1(\cQ(\cG)) \to \units{Z(\cQ(\cG))}$ (recall at this point remark \ref{rem:emcu} i)). Instead, we rely on theorem 5.7 from the same source. As in the proof of proposition $\ref{prop:independence_of_alpha}$, two versions of refined Euler characteristics must be considered, which we again denote by $\chi_{\Lambda(\cG), \cQ(\cG)}$ (the one in section \ref{sec:an_integral_trivialisation}) and $\chi^{\text{new}}_{\Lambda(\cG), \cQ(\cG)}$. Of particular importance is the fact that trivialisations for $\chi^{\text{new}}_{\Lambda(\cG), \cQ(\cG)}$ are maps from even to odd degree, as opposed to those for $\chi_{\Lambda(\cG), \cQ(\cG)}$.

                Let $S' = S \cup \set{v_0}$. In order to apply theorem 5.7 to \eqref{eq:exact_triangle_difference_s_and_t}, we need to verify the commutativity hypothesis on diagram (15) from the cited article. The square brackets therein denote a certain \textit{universal determinant functor} $[ - ] \colon \cD(\cQ(\cG)) \to \cV(\cQ(\cG))$ from the derived category of left $\cQ(\cG)$-modules to a Picard category $\cV(\cQ(\cG))$. The following facts will be used:
                \begin{itemize}
                    \item{
                        $\cQ(\cG) \otimes_{\Lambda(\cG)} H^\odd(\Ind_{\cG_{v_0}}^{\cG} \ZZ_p [-1]) = \cQ(\cG) \otimes_{\Lambda(\cG)} H^\even(\Ind_{\cG_{v_0}}^{\cG} \ZZ_p [-1]) = 0$. Therefore, the last term in each row of the diagram is $[0]$, the unit object $\bbone \in \cV(\cQ(\cG))$. The vertical arrow between them is $[0_{\Hom}]$, where $0_{\Hom}$ denotes the zero homomorphism.
                    }
                    \item{
                        The homomorphisms induced by the arrow $\cC_{S, T}\q \to \cC_{S', T}$ in cohomology are injective: in degree 0, it is simply the embedding $H^{0}(\cC_{S, T}\q) \iso E_{S, T} \ia E_{S', T} \iso H^{0}(\cC_{S', T}\q)$; and in degree 1, it fits into the diagram
                        \begin{center}
                            \begin{tikzcd}
                                0 \arrow[r] & {X_{T, S}^{cs}} \arrow[d, two heads] \arrow[r] & {H^1(\cC_{S, T}\q)} \arrow[r] \arrow[d] & \cX_S \arrow[d, hook] \arrow[r] & 0 \\
                                0 \arrow[r] & {X_{T, S'}^{cs}} \arrow[r]                     & {H^1(\cC_{S', T}\q)} \arrow[r]          & \cX_{S'} \arrow[r]              & 0
                            \end{tikzcd}
                        \end{center}
                        where the left vertical arrow is in fact an equality by lemma \ref{lem:difference_galois_modules_s}.
                    }
                \end{itemize}
                This shows that the second and third terms in each row of diagram (15) are again $[0] = \bbone$, and the vertical arrows $[\Id]$ and $[-\Id]$ are both equal to $[0_{\Hom}]$. Since functors preserve commutativity, it suffices to verify that of
                \begin{center}
                    \begin{tikzcd}
                        {\cQ(\cG) \otimes_{\Lambda(\cG)} H^\even(\cC_{S, T}\q)} \arrow[r] \arrow[d, "(t^\alpha)^{-1}"] & {\cQ(\cG) \otimes_{\Lambda(\cG)} H^\even(\cC_{S', T}\q)} \arrow[d, "(t_0^\alpha)^{-1}"] \\
                        {\cQ(\cG) \otimes_{\Lambda(\cG)} H^\odd(\cC_{S, T}\q)} \arrow[r]          & {\cQ(\cG) \otimes_{\Lambda(\cG)} H^\odd(\cC_{S', T}\q)}
                    \end{tikzcd}
                \end{center}
                which follows immediately from the commutative diagram
                \begin{center}
                    \begin{tikzcd}
                        {H^1(\cC_{S, T}\q)} \arrow[d] \arrow[r, two heads] & \cX_S \arrow[r, two heads] \arrow[d, hook] & \cY_{S_\infty} \arrow[d, equals] \arrow[r, "\alpha", hook] & {{E_{S, T}}} \arrow[d, equals] \\
                        {H^1(\cC_{S', T}\q)} \arrow[r, two heads]          & \cX_{S'} \arrow[r, two heads]              & \cY_{S_\infty} \arrow[r, "\alpha", hook]           & {E_{S', T}}
                    \end{tikzcd}
                \end{center}
                where all arrows become isomorphisms after $\cQ(\cG) \otimes_{\Lambda(\cG)} -$ (cf. \eqref{eq:integral_trivialisation}). We can therefore apply \cite{bb} theorem 5.7 and conclude that
                \begin{equation}
                     \chi_{\Lambda(\cG), \cQ(\cG)}^{\text{new}}(\cC_{S', T}\q, (t_0^\alpha)^{-1}) = \chi_{\Lambda(\cG), \cQ(\cG)}^{\text{new}}(\cC_{S, T}\q, (t^\alpha)^{-1}) + \chi_{\Lambda(\cG), \cQ(\cG)}^{\text{new}}(\Ind_{\cG_{v_0}}^{\cG} \ZZ_p [-1], 0).
                \end{equation}

                Now the same argument as in the proof of proposition \ref{prop:independence_of_alpha} yields the corresponding relation on our usual refined Euler characteristics:
                \[
                \chi_{\Lambda(\cG), \cQ(\cG)}(\cC_{S', T}\q, t_0^\alpha) = \chi_{\Lambda(\cG), \cQ(\cG)}(\cC_{S, T}\q, t^\alpha) + \chi_{\Lambda(\cG), \cQ(\cG)}(\Ind_{\cG_{v_0}}^{\cG} \ZZ_p [-1], 0).
                \]
                This concludes the proof by \eqref{eq:difference_s_rec}.
            \end{proof}

            The above results offer a homological comparison of the Main Conjectures defined on $S$ and $S \cup \set{v_0}$. Over on the analytic side, one needs to determine the change in leading coefficients and regulators caused by the addition of the prime $v_0$. The difference between the Artin $L$-series is given by the local Euler factor $L_{v_0}$ defined in \eqref{eq:local_L-factors}, which \textit{almost never} vanishes at $s = 0$:

            \begin{lem}
            \label{lem:change_s_vanishing_euler_factors}
                Setting \ref{sett:construction}, $\beta$ as in setting \ref{sett:formulation}. Let $v_0 \not\in S_\ram(L_\infty/K)$ be a place of $K$ and $\chi$ an Artin character of $\cG$. Then, for almost all $\rho$ of type $W$, one has
                \[
                    L_{v_0}(\chi \otimes \rho, 0)^{-1} = \det(1 - \varphi_{v_0(L_n)} \mid V_{\beta (\chi \otimes \rho)}) \neq 0
                \]
                in the notation of \eqref{eq:local_L-factors}, where $n$ is chosen large enough that $\Gp{n} \subseteq \ker(\chi \otimes \rho)$.
            \end{lem}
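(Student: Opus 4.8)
The plan is to unwind the definition of the local Euler factor \eqref{eq:local_L-factors} at the unramified place $v_0$ and show that the determinant in question is a nonzero algebraic number for all but finitely many type-$W$ twists $\rho$. Since $v_0 \notin S_\ram(L_\infty/K)$, the inertia group $I_{v_0(L_\infty)}$ is trivial, so $V_{\beta(\chi \otimes \rho)}^{I_{v_0(L_n)}} = V_{\beta(\chi\otimes\rho)}$ as soon as $n$ is large enough that $\Gp{n} \subseteq \ker(\chi\otimes\rho)$ (so that the place is unramified all the way up to $L_n$), and the Frobenius $\varphi_{v_0(L_n)}$ is a well-defined element of the decomposition group $(\cG_n)_{v_0(L_n)}$. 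Hence by \eqref{eq:local_L-factors} one has
\[
    L_{v_0}(\chi \otimes \rho, 0)^{-1} = \det(1 - \fN(v_0)^0 \varphi_{v_0(L_n)} \mid V_{\beta(\chi\otimes\rho)}) = \det(1 - \varphi_{v_0(L_n)} \mid V_{\beta(\chi\otimes\rho)}),
\]
which is the stated equality (the subscript ${}_{v_0}$ being independent of the choice of prolongation as recalled after \eqref{eq:local_delta-factors}). Here I would also invoke lemma \ref{lem:properties_of_L-functions} iv), or just the definition, to reduce everything to linear characters via Brauer induction if needed, but the direct computation with the representation $V_{\beta(\chi\otimes\rho)}$ suffices.

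The substance of the claim is the non-vanishing, and the key observation is that $1 - \varphi_{v_0(L_n)}$ acting on $V_{\beta(\chi\otimes\rho)}$ is singular precisely when $1$ is an eigenvalue of $\varphi_{v_0(L_n)}$, i.e. when $V_{\beta(\chi\otimes\rho)}$ has a nonzero subspace on which $\varphi_{v_0(L_n)}$ acts trivially --- equivalently, when the decomposition group $\cG_{v_0}$ of $v_0$ in $L_\infty/K$ is \emph{not} contained in $\ker(\chi \otimes \rho)$ fails in the relevant way; more precisely, $\det(1 - \varphi_{v_0(L_n)} \mid V_{\beta(\chi\otimes\rho)}) = 0$ iff $\chi\otimes\rho$ restricted to the procyclic group $\cG_{v_0} = \overline{\ideal{\varphi_{v_0(L_\infty)}}}$ contains the trivial character. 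The plan is to fix $\chi$ and show this can happen for only finitely many $\rho$ of type $W$. Write $\chi\otimes\rho$ restricted to $\cG_{v_0}$ as a sum of characters; since $\rho$ is of type $W$, $\rho$ is trivial on $H$ and $\rho|_{\cG_{v_0}}$ is determined by $\rho(\varphi_{v_0(L_\infty)})$. The condition that $\trivial \subseteq (\chi\otimes\rho)|_{\cG_{v_0}}$ forces $\rho(\varphi_{v_0(L_\infty)})$ to be the inverse of one of the finitely many eigenvalues of $\varphi_{v_0(L_\infty)}$ on $V_\chi$. Because $\rho$ varies over the characters of the \emph{infinite} procyclic group $\Gamma_K$ (via the bijection \eqref{eq:bijection_type_W_rou} with $\mu_{p^\infty}$) and the image of $\varphi_{v_0(L_\infty)}$ in $\Gamma_K$ is a nontrivial element of a torsion-free group — here I use that $v_0$ is \emph{finitely} split in the cyclotomic $\ZZ_p$-extension, so its Frobenius has infinite order in $\Gamma_K$ — the map $\rho \mapsto \rho(\varphi_{v_0(L_\infty)})$ has image all of $\mu_{p^\infty}$ but each value is attained by only finitely many $\rho$; combined with the finitely many eigenvalues, only finitely many $\rho$ can satisfy the bad condition.

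Concretely, I would structure the argument as: (1) reduce to $I_{v_0}$ trivial and identify the determinant as above; (2) note $\det(1 - \varphi_{v_0(L_n)} \mid V_{\beta(\chi\otimes\rho)})$ is, after applying $\beta^{-1}$, a nonzero algebraic number times the corresponding $p$-adic determinant $\det(1 - \varphi_{v_0(L_n)} \mid V_{\chi\otimes\rho})$ — in fact the $\beta$-equivariance here is exactly as in \eqref{eq:delta-factors_galois-equivariant} for $\delta$-factors, so it is harmless; (3) observe the determinant vanishes iff $\chi\otimes\rho|_{\cG_{v_0}}$ contains the trivial character, iff $\rho(\varphi_{v_0(L_\infty)})^{-1}$ is an eigenvalue of $\varphi_{v_0(L_\infty)}$ on $V_\chi$; (4) since $\varphi_{v_0(L_\infty)}$ has infinite order in $\Gamma_K$ (finitely split prime) and $V_\chi$ is finite-dimensional, there are only finitely many such $\rho$. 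The main obstacle — really the only point needing care — is step (3)/(4): making the reduction ``$\varphi_{v_0(L_n)}$ has eigenvalue $1$ on $V_{\chi\otimes\rho}$ iff $\rho(\gamma_K)$ takes one of finitely many values'' fully rigorous, which requires decomposing $V_\chi|_{\cG_{v_0}}$ into eigenspaces and tracking how the type-$W$ twist shifts the eigenvalues by the scalar $\rho(\varphi_{v_0(L_\infty)})$; this is straightforward linear algebra once one notes $\cG_{v_0}$ is abelian (procyclic) and $\rho$ is linear, so $\varphi_{v_0(L_n)}$ and multiplication by $\rho(\varphi_{v_0(L_\infty)})$ commute. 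Everything else is routine.
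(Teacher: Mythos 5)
Your proposal is correct and follows essentially the same route as the paper's proof: after reducing to $I_{v_0}$ trivial, both arguments exploit linearity of $\rho$ to write $\det(1-\varphi_{v_0(L_n)}\mid V_{\beta(\chi\otimes\rho)})$ as a fixed degree-$\chi(1)$ polynomial (equivalently, a product over the eigenvalues of $\varphi_{v_0(L_n)}$ on $V_{\beta\chi}$) evaluated at $\rho(\varphi_{v_0(L_n)})$, so that vanishing forces $\rho(\varphi_{v_0(L_\infty)})$ into a finite set, each value of which is attained by only finitely many type-$W$ characters. Your justification of that last finiteness (Frobenius has infinite order in $\Gamma_K$ because $v_0$ is finitely decomposed) is a harmless variant of the paper's (any such $\rho$ factors through the finite quotient $\cG/(\cG_{v_0}\cap\ker(\rho))$).
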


            \begin{proof}
                The proof is almost immediate and can be done entirely over $\CC$. To see this, note that there is an equality of sets
                \[
                    \set{\beta (\chi \otimes \rho)}_\rho = \set{(\beta \chi) \otimes \tilde{\rho}}_{\tilde{\rho}},
                \]
                where $\rho$ runs over the type-$W$ characters of $\cG$ ($p$-adic by definition) and $\tilde{\rho}$ runs over the $\CC$-valued linear characters of $\cG$ with open kernel which are trivial on $H$.

                For $\rho$ as above, set $\tilde{\chi} = \beta \chi$ and $\tilde{\rho} = \beta \rho$ and choose $n = n(\rho) \in \NN$ such that $\tilde{\chi}$ and $\tilde{\rho}$ factor over $\cG_n$. Then
                \[
                    \det(1 - \varphi_{v_0(L_n)} \mid V_{\tilde{\chi} \otimes \tilde{\rho}}) = \det(1 - \tilde{\rho}(\varphi_{v_0(L_n)}) M_{\tilde{\chi}}(\varphi_{v_0(L_n)})),
                \]
                where $M_{\tilde{\chi}}(\varphi_{v_0(L_n)})$ is the matrix describing the action of $\varphi_{v_0(L_n)}$ on the $\CC[\cG_n]$-module $V_{\tilde{\chi}}$. We now note that the right-hand side is the value of the polynomial $f(t) = \det(1 - t M_{\tilde{\chi}}(\varphi_{v_0(L_n)}))$ at $t = \tilde{\rho}(\varphi_{v_0(L_n)}) \in \units{\CC}$. This $f$ (which is almost the characteristic polynomial of $M_{\tilde{\chi}}(\varphi_{v_0(L_n)})$) has degree $\chi(1) = \tilde{\chi}(1)$ and is independent of $\tilde{\rho}$ (even though $n$ may change, the matrix of the action of $\varphi_{v_0(L_n)}$ on $V_{\tilde{\chi}}$ does not).

                Now the result follows from the fact that $f(t)$ has finitely many roots $x \in \CC$ and, for any such $x$, only finitely many $\tilde{\rho}$ as above satisfy $\tilde{\rho}(\varphi_{v_0(L_\infty)}) = x$. To see why, fix one such $\tilde{\rho}$ and note that every other type-$W$ (and hence irreducible) character with that property factors through the finite group $\cG/(\cG_{v_0} \cap \ker(\tilde{\rho}))$.
            \end{proof}

            The last preparatory step to show the independence of the conjectures of the choice of $S$ concerns the Stark-Tate regulator:
            \begin{lem}
            \label{lem:change_s_regulators}
                Setting \ref{sett:formulation}. Let $v_0 \not\in S \cup T$ be a place of $K$ and $\chi \in \Irr_p(\cG)$. Then one has
                \begin{equation}
                \label{eq:change_s_equality_of_regulators}
                    R_S^\beta(\alpha, \chi \otimes \rho) = R_{S \cup \set{v_0}}^\beta(\alpha, \chi \otimes \rho)
                \end{equation}
                for almost all $\rho \in \cK_{S \cup \set{v_0}}^\alpha(\chi) \subseteq \cK_S^\alpha(\chi)$.
            \end{lem}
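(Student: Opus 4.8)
Set $S' = S \cup \set{v_0}$. The plan is to fall back on the determinant description of the regulator from definition~\ref{defn:regulator} and to argue that, after restricting to a cofinite set of twists $\rho$ and passing to $(\chi \otimes \rho)$-isotypic components, neither the $\Hom$-spaces nor the endomorphism whose determinant defines the regulator are affected by enlarging $S$ to $S'$. First I would fix $\chi$ and work with those $\rho \in \cK_{S'}^\alpha(\chi)$ — already all but finitely many by lemma~\ref{lem:properties_of_r_chi}, and contained in $\cK_S^\alpha(\chi)$ — for which, in addition, the Frobenius $\varphi_{v_0(L_n)}$ acts with no eigenvalue $1$ on $V_{\chi \otimes \rho}$ (for $n$ large enough that $\Gp{n} \subseteq \ker(\chi \otimes \rho)$); this excludes only finitely many further $\rho$ by lemma~\ref{lem:change_s_vanishing_euler_factors}, the eigenvalue-$1$ condition being visibly invariant under the field isomorphism $\beta$. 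Enlarging $n$ if necessary we may also assume $n \geq n(S', \alpha) \geq n(S, \alpha)$, so that both $\varphi_n^\alpha$ (defined for $S$) and its analogue for $S'$ — write $\varphi_n^{\alpha, S}$ and $\varphi_n^{\alpha, S'}$ — make sense and are isomorphisms on $(\chi \otimes \rho)$-parts by proposition~\ref{prop:isomorphism_on_chi_parts}; in particular $R_S^\beta(\alpha, \chi \otimes \rho)$ and $R_{S'}^\beta(\alpha, \chi \otimes \rho)$ are defined and non-zero, and by lemma~\ref{lem:r_chi_independent_of_n} neither depends on the auxiliary $n$.

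The second step is to compare the finite-level data for $S$ and $S'$. On the source side, the canonical inclusion $\cY_{L_n, S} \ia \cY_{L_n, S'}$ together with the snake lemma applied to the two augmentation sequences yields a short exact sequence of $\Lambda(\cG_n)$-modules $0 \to \cX_{L_n, S} \to \cX_{L_n, S'} \to \cY_{L_n, \set{v_0}} \to 0$, with $\cY_{L_n, \set{v_0}} \iso \Ind_{\cG_{n, v_0}}^{\cG_n} \ZZ_p$. On the target side, tensoring the classical exact sequence relating $S$- and $S'$-units (cf.\ the proof of lemma~\ref{lem:difference_galois_modules_s}) with $\CC_p$ — which kills the finite ray class groups appearing in it — gives $0 \to \CC_p \otimes \units{\cO_{L_n, S, T}} \to \CC_p \otimes \units{\cO_{L_n, S', T}} \to \CC_p \otimes \cY_{L_n, \set{v_0}} \to 0$. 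Moreover $E_{S, T} = E_{S', T}$ by lemma~\ref{lem:difference_galois_modules_s}, hence $\gcoinv{(E_{S, T})}{n} = \gcoinv{(E_{S', T})}{n}$ and $\cE_{L_n, S, T} = \cE_{L_n, S', T}$, so the maps $\iota_n$ of definition~\ref{defn:finite_level_map} for $S$ and $S'$ are identified compatibly with $\ZZ_p \otimes \units{\cO_{L_n, S, T}} \ia \ZZ_p \otimes \units{\cO_{L_n, S', T}}$; since $\alpha$ is literally the same map and the coinvariant isomorphisms of proposition~\ref{prop:isomoprhisms_y_x_modules} are canonical (hence compatible with $\cX_{L_n, S} \ia \cX_{L_n, S'}$), and since $\varphi \colon \cX_{S'} \to \cY_{S_\infty}$ discards all components at finite places (in particular the new $v_0$-component), one obtains a commutative square exhibiting $\varphi_n^{\alpha, S'}\big\vert_{\cX_{L_n, S}}$ as $\varphi_n^{\alpha, S}$ followed by the inclusion into the larger unit group. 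Finally, the Dirichlet regulator map $\lambda_{n, S'}^\beta$ restricts on $S$-units to $\lambda_{n, S}^\beta$: an $S$-unit $u$ has $\abs{u}_w = 1$ at every $w \mid v_0$ (as $v_0 \notin S$), so the extra summands in \eqref{eq:dirichlet_regulator_map_real} vanish.

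The last step is to apply the exact functor $\Hom_{\CC_p[\cG_n]}(V_{\chi \otimes \rho}, -)$ (exactness by semisimplicity of $\CC_p[\cG_n]$) to the two short exact sequences above. By Frobenius reciprocity and the no-eigenvalue-$1$ condition,
\[
    \Hom_{\CC_p[\cG_n]}(V_{\chi \otimes \rho}, \CC_p \otimes \cY_{L_n, \set{v_0}}) \iso \Hom_{\CC_p[\cG_{n, v_0}]}(\rest_{\cG_{n, v_0}}^{\cG_n} V_{\chi \otimes \rho}, \CC_p) = 0,
\]
so the canonical maps
\[
    \Hom_{\CC_p[\cG_n]}(V_{\chi \otimes \rho}, \CC_p \otimes_{\ZZ_p} \cX_{L_n, S}) \isoa \Hom_{\CC_p[\cG_n]}(V_{\chi \otimes \rho}, \CC_p \otimes_{\ZZ_p} \cX_{L_n, S'})
\]
and its analogue for the unit groups are isomorphisms. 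Under these identifications, the endomorphism $(\lambda_{n, S'}^\beta \circ (\CC_p \otimes_{\ZZ_p} \varphi_n^{\alpha, S'}))_\ast$ of $\Hom_{\CC_p[\cG_n]}(V_{\chi \otimes \rho}, \CC_p \otimes_{\ZZ_p} \cX_{L_n, S'})$ corresponds exactly to $(\lambda_{n, S}^\beta \circ (\CC_p \otimes_{\ZZ_p} \varphi_n^{\alpha, S}))_\ast$ of $\Hom_{\CC_p[\cG_n]}(V_{\chi \otimes \rho}, \CC_p \otimes_{\ZZ_p} \cX_{L_n, S})$ by the commutative square from the second step, so their determinants coincide and \eqref{eq:change_s_equality_of_regulators} follows. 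I expect the bookkeeping in the second step to be the main obstacle: one must check that every constituent of $\varphi_n^\alpha$ — the coinvariants isomorphism of proposition~\ref{prop:isomoprhisms_y_x_modules}, the map $\alpha \varphi$, and $\iota_n$ — is genuinely natural in $S$, so that the comparison square commutes on the nose \emph{before} taking isotypic components; the Dirichlet-regulator compatibility and the vanishing of the $(\chi \otimes \rho)$-isotypic part of the $v_0$-direction are comparatively routine.
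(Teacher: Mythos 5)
Your proposal is correct and follows essentially the same route as the paper: the exact sequence $\cX_{L_n,S} \ia \cX_{L_n,S'} \sa \cY_{L_n,\set{v_0}}$, vanishing of the $(\chi\otimes\rho)$-isotypic part of $\cY_{L_n,\set{v_0}}$ for almost all $\rho$, and compatibility of $\varphi_n^\alpha$ and $\lambda_{n,S}^\beta$ with the inclusions. The only cosmetic difference is that you derive the vanishing from the no-eigenvalue-$1$ condition of lemma \ref{lem:change_s_vanishing_euler_factors}, whereas the paper computes $\sprod{\rest_{\cG_{n,v_0}}^{\cG_n}\rho, \rest_{\cG_{n,v_0}}^{\cG_n}\check{\chi}}$ directly and invokes the finiteness of type-$W$ characters with a prescribed restriction to $\cG_{v_0}$ — the two criteria are equivalent.
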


            \begin{proof}
                Set $S' = S \cup \set{v_0}$. The canonical short exact sequence of $\Lambda(\cG)$-modules
                \[
                    0 \to \cX_S \to \cX_{S'} \to \cY_{\set{v_0}} \to 0,
                \]
                which remains exact on finite level, induces an embedding
                \[
                    \Hom_{\CC_p[\cG_n]}(V_{\chi \otimes \rho}, \CC_p \otimes_{\ZZ_p} \cX_{L_n, S}) \ia \Hom_{\CC_p[\cG_n]}(V_{\chi \otimes \rho}, \CC_p \otimes_{\ZZ_p} \cX_{L_n, S'})
                \]
                for all $\rho \in \cK_{S \cup \set{v_0}}^\alpha(\chi)$ and sufficiently large $n$ depending on $\rho$. We claim this is an equality, that is, $\Hom_{\CC_p[\cG_n]}(V_{\chi \otimes \rho}, \CC_p \otimes_{\ZZ_p} \cY_{L_n, \set{v_0}}) = 0$, for almost all such $\rho$. As explained in the proof of lemma \ref{lem:r_chi_independent_of_n}, this amounts to $\ideal{\chi \otimes \rho, \Ind_{\cG_{n, v_0}}^{\cG_n} \bbone_{\cG_{n, v_0}}}_{\cG_n} = 0$, since  $\Ind_{\cG_{n, v_0}}^{\cG_n} \bbone_{\cG_{n, v_0}}$ is the character afforded by the $\CC_p[\cG_n]$-module $\CC_p \otimes_{\ZZ_p} \cY_{L_n, \set{v_0}}$. Recall at this point that $\cG_{n, v_0}$ denotes $(\cG_n)_{v_0(L_n)}$. By \eqref{eq:frobenius_reciprocity} and \eqref{eq:scalar_product_dual}, one has
                \[
                    \ideal{\chi \otimes \rho, \indu_{\cG_{n, v_0}}^{\cG_n} \bbone_{\cG_{n, v_0}}}_{\cG_n} = \ideal{\rest_{\cG_{n, v_0}}^{\cG_n} \chi \otimes \rho,  \bbone_{\cG_{n, v_0}}}_{\cG_{n, v_0}} = \ideal{\rest_{\cG_{n, v_0}}^{\cG_n} \rho, \rest_{\cG_{n, v_0}}^{\cG_n} \check{\chi}}_{\cG_{n, v_0}}.
                \]
                The restriction $\rest_{\cG_{n, v_0}}^{\cG_n} \rho$ is a linear, and therefore irreducible, character of $\cG_{n, v_0}$. This implies that the last scalar product is non-zero if and only if $\rest_{\cG_{n, v_0}}^{\cG_n} \rho$ is one of the finitely many irreducible divisors of $\rest_{\cG_{n, v_0}}^{\cG_n} \check{\chi}$ (note that these do not depend on the layer $n$). But only finitely many type-$W$ characters of $\cG$ have a particular restriction to $\cG_{v_0}$, as explained at the end of the proof of lemma \ref{lem:change_s_vanishing_euler_factors}.

                Let us now turn to the finite-level maps $\varphi_n^\alpha$ induced by $\alpha$ for $n \geq n(S)$ (cf. definition \ref{defn:finite_level_map}), which we denote here by $\varphi_{n, S}^\alpha$ and $\varphi_{n, S'}^\alpha$ depending on the set of places used. These two homomorphisms arise as the composition of the rows of the commutative diagram
                \begin{center}
                    \begin{tikzcd}
                        {\cX_{L_n, S}} \arrow[r, "\sim"] \arrow[d, hook] & \gcoinv{(\cX_S)}{n} \arrow[r, "\gcoinv{(\varphi_S)}{n}"] \arrow[d]           & \gcoinv{(\cY_{S_\infty})}{n} \arrow[r, "\gcoinv{\alpha}{n}"] \arrow[d, equals] & {\gcoinv{(E_{S, T})}{n}} \arrow[r, "(\iota_S)_n", hook] \arrow[d, equals]      & {\ZZ_p \otimes \units{\cO_{L_n, S, T}}} \arrow[d, hook] \\

                        {\cX_{L_n, S'}} \arrow[r, "\sim"]  & {\gcoinv{(\cX_{S'})}{n}} \arrow[r, "\gcoinv{(\varphi_{S'})}{n}"] & \gcoinv{(\cY_{S_\infty})}{n} \arrow[r, "\gcoinv{\alpha}{n}"]                                & {\gcoinv{(E_{S', T})}{n}} \arrow[r, "(\iota_{S'})_n", hook] & {\ZZ_p \otimes \units{\cO_{L_n, S', T}}}
                    \end{tikzcd}
                \end{center}
                in the notation of section \ref{sec:morphisms_on_finite_level}, where we have added the subscripts $S$ and $S'$ to some of the morphisms for distinction purposes. It follows that $\varphi_{n, S'}^\alpha$ restricted to $\cX_{L_n, S}$ coincides with $\varphi_{n, S}^\alpha$ (under the canonical inclusion of their codomains). A similar statement holds for the Dirichlet regulator map: the commutativity of
                \begin{center}
                    \begin{tikzcd}
                        {\RR \otimes \units{\cO_{L_n, S}}} \arrow[d, hook] \arrow[r, "\sim"] & {\RR \otimes \cX_{L_n, S}^\ZZ} \arrow[d, hook] \\
                        {\RR \otimes \units{\cO_{L_n, S'}}} \arrow[r, "\sim"] & {\RR \otimes \cX_{L_n, S'}^\ZZ}
                    \end{tikzcd}
                \end{center}
                implies $\restr{\lambda_{n, S'}^\beta}{\CC_p \otimes \units{\cO_{L_n, S}}} = \lambda_{n, S}^\beta$ (cf. definition \ref{defn:p-adic_dirichlet_regulator_map}), and this in turn
                \[
                    \restr{\lambda_{n, S'}^\beta \circ (\CC_p \otimes_{\ZZ_p} \varphi_{n, S'}^\alpha)}{\CC_p \otimes_{\ZZ_p} \cX_{L_n, S}} = \lambda_{n, S}^\beta \circ (\CC_p \otimes_{\ZZ_p} \varphi_{n, S}^\alpha).
                \]

                We are essentially done, since $R_S^\beta(\alpha, \chi \otimes \rho)$ and $R_{S'}^\beta(\alpha, \chi \otimes \rho)$ are given by the determinant of the $\CC_p$-linear vertical maps in the diagram
                \begin{center}
                    \begin{tikzcd}
                        {\Hom_{\CC_p[\cG_n]}(V_{\chi \otimes \rho}, \CC_p \otimes_{\ZZ_p} \cX_{L_n, S})} \arrow[d, "{(\lambda_{n, S}^\beta \circ (\CC_p \otimes_{\ZZ_p} \varphi_{n, S}^\alpha))_\ast}"] \arrow[r, hook] & {\Hom_{\CC_p[\cG_n]}(V_{\chi \otimes \rho}, \CC_p \otimes_{\ZZ_p} \cX_{L_n, S'})} \arrow[d, "{(\lambda_{n, S'}^\beta \circ (\CC_p \otimes_{\ZZ_p} \varphi_{n, S'}^\alpha))_\ast}"] \\
                        {\Hom_{\CC_p[\cG_n]}(V_{\chi \otimes \rho}, \CC_p \otimes_{\ZZ_p} \cX_{L_n, S})} \arrow[r, hook]                                                                                  & {\Hom_{\CC_p[\cG_n]}(V_{\chi \otimes \rho}, \CC_p \otimes_{\ZZ_p} \cX_{L_n, S'})}
                    \end{tikzcd}
                \end{center}
                which commutes by the above. The first part of the proof shows that the horizontal arrows are equalities for almost all $\rho \in \cK_{S'}^\alpha(\chi)$, and this concludes the argument.
            \end{proof}

            The first main result of this subsection now follows easily:
            \begin{prop}
            \label{prop:independence_of_s}
            Setting \ref{sett:formulation}. Let $v_0 \not\in S \cup T$ be a place of $K$ and set $S' = S \cup \set{v_0}$. Then:
                \begin{enumerate}[i)]
                    \item{
                        For any $\chi \in \Irr_p(\cG)$, \hyperref[conje:ic]{IC($L_\infty/K, \chi, L, S, T, \alpha, \beta$)} holds if and only if \hyperref[conje:ic]{IC($L_\infty/K, \chi, L, S', T, \alpha, \beta$)} does.
                    }
                    \item{
                        \hyperref[conje:emc]{eMC($L_\infty/K, L, S, T, \alpha, \beta$)} holds if and only if \hyperref[conje:emc]{eMC($L_\infty/K, L, S', T, \alpha, \beta$)} does.
                    }
                \end{enumerate}
            \end{prop}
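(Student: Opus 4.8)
The plan is to mirror exactly the strategy used for the change of $\alpha$ in Proposition \ref{prop:independence_of_alpha}: produce a single equivariant element of $K_1(\cQ(\cG))$ which simultaneously corrects the interpolating series quotients and the refined Euler characteristic. Concretely, I would take the element
\[
    c_{v_0} = [\Lambda(\cG), (1 - \varphi_{v_0(L_\infty)}^{-1})_r, \Lambda(\cG)] \in K_1^{\det}(\cQ(\cG)) = K_1(\cQ(\cG)),
\]
which makes sense because $1 - \varphi_{v_0(L_\infty)}^{-1}$ is a regular element of $\Lambda(\cG)$ (its cokernel $\Ind_{\cG_{v_0}}^\cG \ZZ_p$ is $\Lambda(\Gamma)$-torsion, as seen in the proof of Corollary \ref{cor:difference_euler_characteristics_change_s}). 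The two ingredients are then: (a) on the analytic side, that $\psi_\chi(c_{v_0})$ evaluates under $ev_{\gamma_{\chi \otimes \rho}}$ to the reciprocal Euler factor $L_{v_0}(\check{\chi} \otimes \rho^{-1}, 0)^{-1}$ for almost all $\rho$; and (b) on the algebraic side, that $\partial(c_{v_0}) = [\Lambda(\cG), (1-\varphi_{v_0(L_\infty)}^{-1})_r, \Lambda(\cG)] = -\,\chi_{\Lambda(\cG), \cQ(\cG)}(\Ind_{\cG_{v_0}}^{\cG}\ZZ_p[-1], 0)$, which is precisely the discrepancy recorded in Corollary \ref{cor:difference_euler_characteristics_change_s} (up to sign, coming from relation \eqref{eq:partial_on_k1det}).

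For part i), assume $F_{S', T, \chi}^{\alpha, \beta} \in \units{\cQ^c(\Gamma_\chi)}$ satisfies the Interpolation Conjecture over $S'$ and set $F_{S, T, \chi}^{\alpha, \beta} = F_{S', T, \chi}^{\alpha, \beta} \cdot \psi_\chi(c_{v_0})$. By Lemma \ref{lem:properties_of_L-functions} iv), the definition \eqref{eq:artin_l_series} of the $(S', T)$-Artin $L$-series, and the fact that $v_0 \notin S_{\ram}(L_\infty/K)$ so that $I_{v_0}$ is trivial and $L_{v_0}(\psi, s) = \det(1 - \fN(v_0)^{-s}\varphi_{v_0} \mid V_\psi)^{-1}$, one has $L_{K, S, T}(\beta\psi, s) = L_{v_0}(\beta\psi, s) \cdot L_{K, S', T}(\beta\psi, s)$ for all $\psi$; and by Lemma \ref{lem:change_s_vanishing_euler_factors}, applied with $\psi = \check{\chi} \otimes \rho^{-1}$, the factor $L_{v_0}(\beta(\check{\chi} \otimes \rho^{-1}), 0)$ is finite and non-zero for almost all $\rho$, so it also relates leading coefficients at $0$. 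Combined with Lemma \ref{lem:change_s_regulators} (equality of regulators over $S$ and $S'$ for almost all $\rho \in \cK_{S'}^\alpha(\chi)$) and the multiplicativity of the twisted evaluation maps away from $\infty$ (Remark \ref{rem:definition_of_evaluation} ii)), this shows $F_{S, T, \chi}^{\alpha, \beta}$ satisfies \eqref{eq:ic} over $S$, and conversely by symmetry (dividing by $\psi_\chi(c_{v_0})$ instead). Here I would use $\cK_{S'}^\alpha(\chi) \subseteq \cK_S^\alpha(\chi)$ and Lemma \ref{lem:properties_of_r_chi} i) to absorb the finitely many bad characters. Establishing (a) — that $\psi_\chi(c_{v_0})$ really does interpolate the Euler factors — is the main computational point: one applies \eqref{eq:relation_reduced_norm_and_invariants} from \cite{nickel_fitting} (as in the proof of Proposition \ref{prop:independence_of_alpha}) with $P = \Lambda(\cG)$ and $f = (1 - \varphi_{v_0(L_\infty)}^{-1})_r$, identifies $\QQ_p^c \otimes_{\ZZ_p} \gcoinv{f}{n}$ with the matrix $1 - \varphi_{v_0(L_n)}^{-1}$ acting on the regular module, and matches its character-wise determinant against $\det(1 - \varphi_{v_0(L_n)} \mid V_{\beta\check\chi})$ from Lemma \ref{lem:change_s_vanishing_euler_factors}. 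The $\check{\chi}$ versus $\chi$ bookkeeping (right-multiplication giving the transpose-inverse matrix, hence the dual character) is exactly the reason the Interpolation Conjecture \eqref{eq:ic} is phrased with $\check\chi \otimes \rho^{-1}$ in the numerator.

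For part ii), suppose $\zeta_{S', T}^{\alpha, \beta} \in K_1(\cQ(\cG))$ satisfies \hyperref[conje:emc]{eMC($L_\infty/K, L, S', T, \alpha, \beta$)} and set $\zeta_{S, T}^{\alpha, \beta} = \zeta_{S', T}^{\alpha, \beta} \cdot c_{v_0}$. Part i) gives $\psi_\chi(\zeta_{S, T}^{\alpha, \beta}) = F_{S', T, \chi}^{\alpha, \beta} \cdot \psi_\chi(c_{v_0}) = F_{S, T, \chi}^{\alpha, \beta}$ for all $\chi \in \Irr_p(\cG)$ (using that the Interpolation Conjecture over $S'$ holds for every $\chi$, hence also over $S$). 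For the $K_0$-relation, $\partial$ is a homomorphism, so
\[
    \partial(\zeta_{S, T}^{\alpha, \beta}) = \partial(\zeta_{S', T}^{\alpha, \beta}) + \partial(c_{v_0}) = -\,\chi_{\Lambda(\cG), \cQ(\cG)}(\cC_{S', T}\q, t_0^\alpha) + [\Lambda(\cG), (1 - \varphi_{v_0(L_\infty)}^{-1})_r, \Lambda(\cG)],
\]
and by Corollary \ref{cor:difference_euler_characteristics_change_s} the right-hand side equals $-\,\chi_{\Lambda(\cG), \cQ(\cG)}(\cC_{S, T}\q, t^\alpha)$. The converse is the symmetric argument with $c_{v_0}^{-1}$. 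The main obstacle I anticipate is purely notational rather than conceptual: carefully keeping track of the duality ($\chi$ vs.\ $\check\chi$) and the sign conventions (which degree the trivialisation map goes to, the negative sign in \eqref{eq:difference_s_rec}) so that the Euler-factor correction on the analytic side and the $K_0$-correction from Corollary \ref{cor:difference_euler_characteristics_change_s} are genuinely induced by the \emph{same} element $c_{v_0}$ and not by $c_{v_0}$ on one side and $c_{v_0}^{-1}$ on the other — the consistency of these conventions is exactly what was arranged in \eqref{eq:ic} (numerator $\check\chi \otimes \rho^{-1}$) and in the statement of eMC (zeta element mapping to the \emph{inverse} refined Euler characteristic), so the bookkeeping should close up, but it requires care.
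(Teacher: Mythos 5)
Your architecture is exactly the paper's: a single class $c_{v_0} \in K_1(\cQ(\cG))$ built from $1-\varphi_{v_0(L_\infty)}^{-1}$ simultaneously corrects the interpolating series quotients (via \eqref{eq:relation_reduced_norm_and_invariants}, \eqref{eq:evaluation_j_and_psi} together with lemmas \ref{lem:change_s_vanishing_euler_factors} and \ref{lem:change_s_regulators}) and the refined Euler characteristic (via corollary \ref{cor:difference_euler_characteristics_change_s}). The problem is that you apply this class in the wrong direction, consistently on both sides. From your own relation $L_{K,S,T}(\beta\psi,s) = L_{v_0}(\beta\psi,s)\cdot L_{K,S',T}(\beta\psi,s)$ and $L_{v_0}(\psi,0)^{-1} = \det(1-\varphi_{v_0}\mid V_\psi)$, the regulated $S'$-value is the regulated $S$-value \emph{multiplied} by $\det(1-\varphi_{v_0(L_n)}\mid V_{\beta(\check\chi\otimes\rho^{-1})})$, which is precisely what $ev_{\gamma_{\chi\otimes\rho}}(\psi_\chi(c_{v_0}))$ computes. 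Hence $F_{S,T,\chi}^{\alpha,\beta}\cdot\psi_\chi(c_{v_0})$ satisfies interpolation over $S'$, and to descend from $S'$ to $S$ you must set $F_{S,T,\chi}^{\alpha,\beta} = F_{S',T,\chi}^{\alpha,\beta}\cdot\psi_\chi(c_{v_0})^{-1}$, not $F_{S',T,\chi}^{\alpha,\beta}\cdot\psi_\chi(c_{v_0})$ as you wrote.

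The same inversion falsifies your displayed computation in part ii). Corollary \ref{cor:difference_euler_characteristics_change_s} reads $\chi_{\Lambda(\cG),\cQ(\cG)}(\cC_{S',T}\q, t_0^\alpha) = \chi_{\Lambda(\cG),\cQ(\cG)}(\cC_{S,T}\q, t^\alpha) - \partial(c_{v_0})$, so your right-hand side is
\[
    -\chi_{\Lambda(\cG),\cQ(\cG)}(\cC_{S',T}\q, t_0^\alpha) + \partial(c_{v_0}) = -\chi_{\Lambda(\cG),\cQ(\cG)}(\cC_{S,T}\q, t^\alpha) + 2\,\partial(c_{v_0}),
\]
which is not $-\chi_{\Lambda(\cG),\cQ(\cG)}(\cC_{S,T}\q, t^\alpha)$ unless $\partial(c_{v_0})$ vanishes. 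The correct choices are $\zeta_{S',T}^{\alpha,\beta} = \zeta_{S,T}^{\alpha,\beta}\cdot c_{v_0}$ and $\zeta_{S,T}^{\alpha,\beta} = \zeta_{S',T}^{\alpha,\beta}\cdot c_{v_0}^{-1}$, which is what the paper does; once the direction is flipped everywhere, the argument closes up exactly as you describe, and your points (a) and (b) are then the two computations actually carried out in the paper. A minor notational slip: the element you want in $K_1^{\det}(\cQ(\cG))$ is the pair $[\cQ(\cG),(1-\varphi_{v_0(L_\infty)}^{-1})_r]$, i.e. the class of the $1\times 1$ matrix $(1-\varphi_{v_0(L_\infty)}^{-1})\in\GLg_1(\cQ(\cG))$; the triple you wrote down is an element of the relative group $K_0(\Lambda(\cG),\cQ(\cG))$, namely $\partial(c_{v_0})$ itself.
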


            \begin{proof}
                Let $\varphi_{v_0(L_\infty)} \in \cG_{v_0}$ denote the Frobenius at $v_0(L_\infty)$, and $[(1 - \varphi_{v_0(L_\infty)}^{-1})] \in K_1(\cQ(\cG))$ the class of the 1-by-1 invertible matrix ${(1 - \varphi_{v_0(L_\infty)}^{-1}) \in \GLg_1(\cQ(\cG))}$. Here left invertibility follows from the proof of corollary \ref{cor:difference_euler_characteristics_change_s}, and right invertibility from a symmetric argument.

                Fix first $\chi \in \Irr_p(\cG)$ and assume that $F_{S, T, \chi}^{\alpha, \beta} \in \units{\cQ^c(\Gamma_\chi)}$ satisfies IC($L_\infty/K, \chi, L, S, T, \alpha, \beta$). Set
                \[
                    F' = F_{S, T, \chi}^{\alpha, \beta} \cdot \psi_\chi([(1 - \varphi_{v_0(L_\infty)}^{-1})]) \in \units{\cQ^c(\Gamma_\chi)}.
                \]

                Recall that $\psi_\chi = \psi_{\chi \otimes \rho} \colon K_1(\cQ(\cG)) \to \units{\cQ^c(\Gamma_{\chi})} = \units{\cQ^c(\Gamma_{\chi \otimes \rho})}$ for all $\rho$ of type $W$. Let $\rho$ be such a character and choose $n$ such that $\chi \otimes \rho$ factors through $\cG_n$. Equations \eqref{eq:relation_reduced_norm_and_invariants} and \eqref{eq:evaluation_j_and_psi} imply
                \[
                    ev_{\gamma_{\chi \otimes \rho}}(\psi_\chi([(1 - \varphi_{v_0(L_\infty)}^{-1})])) = \det(\QQ_p^c \otimes_{\ZZ_p} \gcoinv{((1 - \varphi_{v_0(L_\infty)}^{-1})_r)}{n} \mid \Hom_{\QQ_p^c[\cG_n]}(V_{\chi \otimes \rho}, \QQ_p^c[\cG_n])),
                \]
                where $\QQ_p^c \otimes_{\ZZ_p} \gcoinv{((1 - \varphi_{v_0(L_\infty)}^{-1})_r)}{n}$ coincides with the endomorphism of $\QQ_p^c[\cG_n]$ given by right multiplication by $1 - \varphi_{v_0(L_n)}^{-1}$. Denote by $M_{\chi \otimes \rho}(\varphi_{v_0(L_n)}^{-1}) \in M_{\chi(1)}(\QQ_p^c)$ the image of $\varphi_{v_0(L_n)}^{-1}$ under the representation associated to $\chi \otimes \rho$. Then an easy verification shows that, under a suitable $\QQ_p^c$-basis, postcomposition with $(1 - \varphi_{v_0(L_n)}^{-1})_r$ acts on $\Hom_{\QQ_p^c[\cG_n]}(V_{\chi \otimes \rho}, \QQ_p^c[\cG_n])$ via the transpose $M_{\chi \otimes \rho}(\varphi_{v_0(L_n)}^{-1})^t$. Thus, the above determinant coincides with $\det(\Id - M_{\chi \otimes \rho}(\varphi_{v_0(L_n)}^{-1})^t) = \det(\Id - M_{\check{\chi} \otimes \rho^{-1}}(\varphi_{v_0(L_n)}))$ and, as a consequence,
                \begin{equation}
                \label{eq:twisted_evaluation_frobenius_factor}
                    ev_{\gamma_{\chi \otimes \rho}}(\psi_\chi([(1 - \varphi_{v_0(L_\infty)}^{-1})])) = \det(1 - \varphi_{v_0(L_n)} \mid V_{\check{\chi} \otimes \rho^{-1}}) = \beta^{-1}(\det(1 - \varphi_{v_0(L_n)} \mid V_{\beta (\check{\chi} \otimes \rho^{-1})})).
                \end{equation}
                This provides us with the necessary information about leading coefficients, since lemma \ref{lem:change_s_vanishing_euler_factors} shows that
                \[
                    L_{K, S', T}^\ast(\beta (\check{\chi} \otimes \rho^{-1}), 0) = L_{K, S, T}^\ast(\beta (\check{\chi} \otimes \rho^{-1}), 0) \cdot
                     \det(1 - \varphi_{v_0} \mid V_{\beta (\check{\chi} \otimes \rho^{-1})})
                \]
                for almost all $\rho$. Here we are implicitly using definition \eqref{eq:artin_l_series}, which applies to the half-plane $\re(s) > 1$, at the point $s = 0$. The reason we can do so was already outlined after equation \eqref{eq:l_functions_t-smoothed}.

                It now follows that the series quotient $F'$ satisfies
                \begin{align*}
                    ev_{\gamma_{\chi \otimes \rho}}(F') & = ev_{\gamma_{\chi \otimes \rho}}(F_{S, T, \chi}^{\alpha, \beta})\cdot ev_{\gamma_{\chi \otimes \rho}}(\psi_\chi([(1 - \varphi_{v_0(L_\infty)}^{-1})])) \\
                    & = \frac{\beta^{-1}(L_{K, S, T}^\ast(\beta (\check{\chi} \otimes \rho^{-1}), 0))}{R_S^\beta(\alpha, \chi \otimes \rho)} \cdot ev_{\gamma_{\chi \otimes \rho}}(\psi_\chi([(1 - \varphi_{v_0(L_\infty)}^{-1})])) \\
                    & = \frac{\beta^{-1}(L_{K, S', T}^\ast(\beta (\check{\chi} \otimes \rho^{-1}), 0))}{R_{S}^\beta(\alpha, \chi \otimes \rho)} \\
                    & = \frac{\beta^{-1}(L_{K, S', T}^\ast(\beta (\check{\chi} \otimes \rho^{-1}), 0))}{R_{S'}^\beta(\alpha, \chi \otimes \rho)}
                \end{align*}
                for almost all $\rho \in \cK_{S'}^\alpha(\chi) \subseteq \cK_S^\alpha(\chi)$, where the first equation relies on remark \ref{rem:evaluation_of_series_quotients} ii) together with \eqref{eq:twisted_evaluation_frobenius_factor}, and the last one is lemma  \ref{lem:change_s_regulators}. In other words, IC($L_\infty/K, \chi, L, S', T, \alpha, \beta$) holds.

                An analogous argument shows the converse implication: if the element $F_{S', T, \chi}^{\alpha, \beta} \in \units{\cQ^c(\Gamma_\chi)}$ satisfies IC($L_\infty/K, \chi, L, S', T, \alpha, \beta$), then $F = F_{S', T, \chi}^{\alpha, \beta} \cdot \psi_\chi([(1 - \varphi_{v_0(L_\infty)}^{-1})])^{-1}$ has the interpolation property required by IC($L_\infty/K, \chi, L, S, T, \alpha, \beta$).

                Part ii) is a direct consequence of part i) and corollary \ref{cor:difference_euler_characteristics_change_s}, as $[(1 - \varphi_{v_0(L_\infty)}^{-1})] \in K_1(\cQ(\cG))$ does not depend on the choice of a character. Assume first eMC($L_\infty/K, L, S, T, \alpha, \beta$) and let $\zeta_{S, T}^{\alpha, \beta} \in K_1(\cQ(\cG))$ be the zeta element predicted therein. By assumption, IC($L_\infty/K, \chi, L, S, T, \alpha, \beta$) holds for all $\chi \in \Irr_p(\cG)$, and hence so does IC($L_\infty/K, \chi, L, S', T, \alpha, \beta$).

                Consider $\zeta' = \zeta_{S, T}^{\alpha, \beta} \cdot [(1 - \varphi_{v_0(L_\infty)}^{-1})]$, which satisfies
                \begin{align*}
                    \partial(\zeta') & = \partial(\zeta_{S, T}^{\alpha, \beta}) + \partial([(1 - \varphi_{v_0(L_\infty)}^{-1})]) \\
                    & = - \chi_{\Lambda(\cG), \cQ(\cG)}(\cC_{S, T}\q, t^\alpha) + [\Lambda(\cG), (1 - \varphi_{v_0(L_\infty)}^{-1})_r, \Lambda(\cG)] \\
                    & = - \chi_{\Lambda(\cG), \cQ(\cG)}(\cC_{S', T}\q, t^\alpha)
                \end{align*}
                by corollary \ref{cor:difference_euler_characteristics_change_s}. Together with the relation $F_{S', T, \chi}^{\alpha, \beta} = F_{S, T, \chi}^{\alpha, \beta} \cdot \psi_\chi([(1 - \varphi_{v_0(L_\infty)}^{-1})])$ established in part i), this proves that $\zeta'$ satisfies eMC($L_\infty/K, L, S, T, \alpha, \beta$). The reverse implication follows from the choice $\zeta = \zeta_{S', T}^{\alpha, \beta} \cdot [(1 - \varphi_{v_0(L_\infty)}^{-1})]^{-1}$ in the obvious notation.
            \end{proof}

            We now turn our attention to the change of $T$. Much of the necessary work has already been done in the treatment of $S$, and the elements which do differ follow similar arguments. The following aspects distinguish the two cases:
            \begin{itemize}
                \item{
                    The unit module $E_{S, T} = \varprojlim_n \ZZ_p \otimes \units{\cO_{L_n, S, T}}$ does become smaller as we enlarge $T$, so the homomorphism $\alpha$ needs to be modified now. The next lemma provides a natural way to fix this discrepancy.
                }
                \item{
                    The Stark-Tate regulator $R_S^\beta(\alpha, \chi)$ is independent of $T$.
                }
                \item{
                    Unlike $L$-factors, the $\delta$-factors of Artin $L$-functions never vanish at $s = 0$, as shown at the beginning of the proof of proposition \ref{prop:stark_independent_of_t}.
                }
            \end{itemize}

            \begin{lem}
            \label{lem:alpha_change_of_t}
                 Setting \ref{sett:construction}. Let $v_0 \not\in S \cup T$ be a place of $K$. If $\alpha \colon \cY_{S_\infty} \ia E_{S, T \cup \set{v_0}}$ is an injective $\Lambda(\cG)$-homomorphism with $\Lambda(\Gamma)$-torsion cokernel, then the same is true of the composition $\tilde{\alpha} \colon \cY_{S_\infty} \xhookrightarrow{\alpha} E_{S, T \cup \set{v_0}} \ia E_{S, T}$.
            \end{lem}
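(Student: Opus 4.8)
The plan is to factor $\tilde{\alpha}$ as $\tilde{\alpha} = \iota \circ \alpha$, where $\iota \colon E_{S, T \cup \set{v_0}} \ia E_{S, T}$ is the canonical monomorphism obtained by passing to the inverse limit along the cyclotomic tower the finite-index inclusions $\units{\cO_{L_n, S, T \cup \set{v_0}}} \leq \units{\cO_{L_n, S, T}}$ (the left-exactness of $\varprojlim_n$ makes $\iota$ injective). Then $\tilde{\alpha}$ is injective, being a composition of monomorphisms, so only $\coker(\tilde{\alpha})$ needs to be controlled. Since both $\alpha$ and $\iota$ are injective, the standard kernel--cokernel exact sequence attached to the composition $\cY_{S_\infty} \xrightarrow{\alpha} E_{S, T \cup \set{v_0}} \xrightarrow{\iota} E_{S, T}$ collapses to a short exact sequence $0 \to \coker(\alpha) \to \coker(\tilde{\alpha}) \to \coker(\iota) \to 0$. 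Because $\cQ(\Gamma) \otimes_{\Lambda(\Gamma)} -$ is exact (field of fractions of an integral domain), an extension of two $\Lambda(\Gamma)$-torsion modules is again $\Lambda(\Gamma)$-torsion; as $\coker(\alpha)$ is $\Lambda(\Gamma)$-torsion by hypothesis, it therefore suffices to show that $\coker(\iota)$ is $\Lambda(\Gamma)$-torsion.

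The cokernel of $\iota$ is treated exactly as the passage from \eqref{eq:classical_five_term_sequence_units} to \eqref{eq:modified_five_term_sequence_alpha} in the proof of proposition \ref{prop:kernel_cokernel_h0_alpha}. One starts from the exact sequence of $\ZZ[\cG_n]$-modules
\[
    1 \to \units{\cO_{L_n, S, T \cup \set{v_0}}} \to \units{\cO_{L_n, S, T}} \to \bigoplus_{w_n \in \set{v_0}(L_n)} \units{\kappa(w_n)} \to Cl_{L_n, S, T \cup \set{v_0}} \to Cl_{L_n, S, T} \to 1,
\]
applies the exact functor $\ZZ_p \otimes -$ (the last three terms being finite, only their $p$-parts survive), and takes the inverse limit with respect to the norm maps. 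This produces an exact sequence of $\Lambda(\cG)$-modules
\[
    0 \to E_{S, T \cup \set{v_0}} \to E_{S, T} \to \overline{\Ind}_{\cG_{v_0}}^\cG \ZZ_p(1) \to X_{T \cup \set{v_0}, S}^{cs} \to X_{T, S}^{cs} \to 0,
\]
where, exactly as in \eqref{eq:modified_five_term_sequence_alpha}, the roots-of-unity term surviving the limit is $\Ind_{\cG_{v_0}}^\cG \ZZ_p(1)$ if $L_{v_0(L)}$ contains a primitive $p$-th root of unity and $0$ otherwise. In particular $\coker(\iota)$ embeds into $\overline{\Ind}_{\cG_{v_0}}^\cG \ZZ_p(1)$, which has finite $\ZZ_p$-rank and is therefore $\Lambda(\Gamma)$-torsion by the structure theorem \ref{thm:structure_theorem_iwasawa}; hence so is $\coker(\iota)$, and the lemma follows.

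An even more direct route to the same conclusion is to observe that $\units{\cO_{L_n, S, T \cup \set{v_0}}}$ has finite index in $\units{\cO_{L_n, S, T}}$, so the two groups share the same free $\ZZ$-rank by Dirichlet's unit theorem; applying $\ZZ_p \otimes -$ and passing to the limit, $E_{S, T \cup \set{v_0}}$ and $E_{S, T}$ acquire the same $\Lambda(\Gamma)$-rank, whence $\coker(\iota)$ has $\Lambda(\Gamma)$-rank $0$. I do not anticipate any genuine difficulty here: the single point needing care, namely which roots-of-unity contributions persist under the norm-compatible inverse limit, is already settled in the proof of proposition \ref{prop:kernel_cokernel_h0_alpha} and requires no new idea.
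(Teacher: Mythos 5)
Your proof is correct and follows essentially the same route as the paper: both arguments derive the five-term exact sequence $0 \to E_{S, T \cup \set{v_0}} \to E_{S, T} \to \overline{\Ind}_{\cG_{v_0}}^\cG \ZZ_p(1) \to \cdots$ by the inverse-limit argument from proposition \ref{prop:kernel_cokernel_h0_alpha}, conclude that $\coker(\iota)$ is $\Lambda(\Gamma)$-torsion since it embeds into a module of finite $\ZZ_p$-rank, and then splice $\coker(\tilde{\alpha})$ between $\coker(\alpha)$ and $\coker(\iota)$ (you via the kernel--cokernel sequence of a composition, the paper via an equivalent snake-lemma diagram). No gaps.
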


            \begin{proof}
                Let $T'$ denote $ T \cup \set{v_0}$. Taking inverse limits of the exact sequence
                \[
                    1 \to \units{\cO_{L_n, S, T'}} \to \units{\cO_{L_n, S, T}} \to \sum_{w_n \in \set{v_0}(L_n)} \units{\kappa(w_n)} \to Cl_{L_n, S, T'} \to Cl_{L_n, S, T} \to 1
                \]
                along the cyclotomic tower $L_\infty/L$ gives rise to an exact sequence of $\Lambda(\cG)$-modules
                \begin{equation}
                \label{eq:galois_sequence_change_t}
                    0 \to E_{S, T'} \to E_{S, T} \to \overline{\Ind}_{\cG_{v_0}}^\cG \ZZ_p(1) \to X_{T', S}^{cs} \to X_{T, S}^{cs} \to 0
                \end{equation}
                by the same argument used on \eqref{eq:classical_five_term_sequence_units}, where $\overline{\Ind}_{\cG_v}^\cG \ZZ_p(1)$ is as in proposition \ref{prop:exact_triangle_difference_s_and_t}. In particular,  $\coker(E_{S, T'} \ia E_{S, T}) \ia \overline{\Ind}_{\cG_{v_0}}^\cG \ZZ_p(1)$ is $\Lambda(\Gamma)$-torsion, as it has finite $\ZZ_p$-rank. Now apply the snake lemma to
                \begin{center}
                    \begin{tikzcd}
                        0 \arrow[r] & \cY_{S_\infty} \arrow[d, "\tilde{\alpha}", hook] \arrow[r, "\alpha"] & {E_{S, T'}} \arrow[d, hook] \arrow[r] & \coker(\alpha) \arrow[r] \arrow[d] & 0 \\
                        0 \arrow[r] & {E_{S, T}} \arrow[r, equals]                            & {E_{S, T}} \arrow[r]                                         & 0 \arrow[r]                      & 0
                    \end{tikzcd}
                \end{center}
            \end{proof}

            It only remains to find a strictly perfect representative of the complex measuring the difference between $\cC_{S, T}\q$ and $\cC_{S, T'}\q$ in the sense of \eqref{eq:exact_triangle_difference_t}. This result is likely well known among experts.

            \begin{lem}
            \label{lem:change_t_perfect_representative}
                Setting \ref{sett:construction}, $\alpha$ as in setting \ref{sett:formulation}. Let $v_0 \not\in S_{\ram}(L_\infty/K)$ be a place of $K$. Then there exists a quasi-isomorphism of complexes of $\Lambda(\cG)$-modules
                \[
                    [\stackrel{-1}{\Lambda(\cG)} \xrightarrow{(1 - \fN(v_0)\varphi_{v_0(L_\infty)}^{-1})_r} \stackrel{0}{\Lambda(\cG)}] \to \overline{\Ind}_{\cG_{v_0}}^\cG \ZZ_p(1)[0],
                \]
                where $\varphi_{v_0(L_\infty)}$ is the Frobenius automorphism at $v_0(L_\infty)$ and $(1 - \varphi_{v_0(L_\infty)}^{-1} \fN(v_0))_r$ denotes multiplication by $1 - \fN(v_0)\varphi_{v_0(L_\infty)}^{-1}$ on the right.
            \end{lem}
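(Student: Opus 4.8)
The plan is to reduce the assertion to a short exact sequence of $\Lambda(\cG_{v_0})$-modules and then induce up, exactly as in the local computation carried out in the proof of Corollary \ref{cor:difference_euler_characteristics_change_s}. Write $\varphi_0 = \varphi_{v_0(L_\infty)} \in \cG_{v_0}$ for short. Since $v_0 \notin S_{\ram}(L_\infty/K)$, the group $\cG_{v_0}$ is procyclic, topologically generated by $\varphi_0$; in particular $\Lambda(\cG_{v_0})$ is commutative, and $\cG_{v_0}$ is open in $\cG$ because $v_0$ is non-archimedean, so $\Lambda(\cG)$ is free of finite rank as a right $\Lambda(\cG_{v_0})$-module. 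As in section \ref{sec:local-to-global_maps}, compact induction $\Ind_{\cG_{v_0}}^\cG - = \Lambda(\cG) \otimes_{\Lambda(\cG_{v_0})} -$ is then exact, sends $\Lambda(\cG_{v_0})$ to $\Lambda(\cG)$, and carries multiplication by $1 - \fN(v_0)\varphi_0^{-1}$ on $\Lambda(\cG_{v_0})$ to right multiplication $(1 - \fN(v_0)\varphi_0^{-1})_r$ on $\Lambda(\cG)$ (moving the central element across the tensor). Hence it suffices to produce an exact sequence of $\Lambda(\cG_{v_0})$-modules
\[
0 \to \Lambda(\cG_{v_0}) \xrightarrow{(1 - \fN(v_0)\varphi_0^{-1})} \Lambda(\cG_{v_0}) \to C_{v_0} \to 0,
\]
with $C_{v_0}$ isomorphic as a $\Lambda(\cG_{v_0})$-module to $\ZZ_p(1) = \varprojlim_n \mu_{p^n}$ when $L_{v_0(L)}$ contains a primitive $p$-th root of unity and to $0$ otherwise: applying $\Ind_{\cG_{v_0}}^\cG -$ then yields precisely the short exact sequence which says that the stated morphism of complexes has vanishing $H^{-1}$ and cokernel $\overline{\Ind}_{\cG_{v_0}}^\cG \ZZ_p(1)$, i.e. is a quasi-isomorphism.

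To build this sequence, observe that $1 - \fN(v_0)\varphi_0^{-1} = -\varphi_0^{-1}(\varphi_0 - \fN(v_0))$ with $\varphi_0^{-1} \in \units{\Lambda(\cG_{v_0})}$, so it generates the same ideal as $\varphi_0 - \fN(v_0)$; moreover $\fN(v_0) \in \units{\ZZ_p}$ since $v_0 \notin S_p$. We therefore work with $\varphi_0 - \fN(v_0)$. Passing to finite level, $\Lambda(\cG_{v_0}) = \varprojlim_n \ZZ_p[\cG_{n, v_0}]$ with $\cG_{n, v_0}$ cyclic of order $f_n = [(L_n)_{v_0(L_n)} : K_{v_0}]$ generated by the image $\varphi_n$ of $\varphi_0$, so $\ZZ_p[\cG_{n, v_0}] \iso \ZZ_p[x]/(x^{f_n} - 1)$ with $x = \varphi_n$. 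Here $x - \fN(v_0)$ is a nonzerodivisor (it is not a root of $x^{f_n} - 1$ in any extension, as $\fN(v_0)^{f_n} \neq 1$), and its cokernel is $\ZZ_p[x]/(x^{f_n} - 1,\, x - \fN(v_0)) \iso \ZZ_p/(\fN(v_0)^{f_n} - 1) = \ZZ_p/(\fN(v_0(L_n)) - 1)$, with transition maps the natural reductions (legitimate because $f_n \mid f_{n+1}$, hence $\fN(v_0(L_n)) - 1 \mid \fN(v_0(L_{n+1})) - 1$). Since the system $\{\ZZ_p[\cG_{n, v_0}]\}$ has surjective transition maps, $\varprojlim_n$ is exact on the level-$n$ sequences, which gives both the injectivity of multiplication by $\varphi_0 - \fN(v_0)$ on $\Lambda(\cG_{v_0})$ and the identification $C_{v_0} \iso \varprojlim_n \ZZ_p/(\fN(v_0(L_n)) - 1)$.

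It remains to identify $C_{v_0}$ as a Galois module. Because $L_n/L$ is a subextension of the $\ZZ_p$-extension $L_\infty/L$, the residue degree of $L_n/L$ at $v_0$ is a power of $p$, whence $\fN(v_0(L_n)) \equiv \fN(v_0(L)) \pmod p$. So if $L_{v_0(L)}$ contains no primitive $p$-th root of unity — equivalently $p \nmid \fN(v_0(L)) - 1$ by Hensel's lemma (setting \ref{sett:construction}) — then every level-$n$ cokernel vanishes and $C_{v_0} = 0$, so $\Ind_{\cG_{v_0}}^\cG -$ makes $(1 - \fN(v_0)\varphi_0^{-1})_r$ an isomorphism and both complexes in the lemma are acyclic. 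Otherwise $p \mid \fN(v_0(L_n)) - 1$ for all $n$, and since $v_0$ is eventually inert in the tower, an elementary lifting-the-exponent computation gives $v_p(\fN(v_0(L_n)) - 1) \to \infty$, so $C_{v_0}$ is free of rank one over $\ZZ_p$. Now $\varphi_0$ acts on $C_{v_0} = \Lambda(\cG_{v_0})/(\varphi_0 - \fN(v_0))$ by multiplication by $\fN(v_0)$, which is exactly how the Frobenius $\varphi_0$ acts on $\ZZ_p(1) = \varprojlim_n \mu_{p^n}$ (the $\fN(v_0)$-power map on roots of unity, cf. \eqref{eq:tate_twist_rou}); hence the $\Lambda(\cG_{v_0})$-linear surjection $\Lambda(\cG_{v_0}) \sa \ZZ_p(1)$ sending $1$ to a compatible system $(\zeta_{p^n})_n$ of primitive roots of unity kills $\varphi_0 - \fN(v_0)$ and induces a $\Lambda(\cG_{v_0})$-linear surjection $C_{v_0} \sa \ZZ_p(1)$ between free $\ZZ_p$-modules of rank one, hence an isomorphism. (Alternatively one matches $C_{v_0}$ with $\varprojlim_n(\ZZ_p \otimes \units{\kappa(v_0(L_n))})$ under the norm maps and quotes the computation in the proof of Proposition \ref{prop:kernel_cokernel_h0_alpha}.) Inducing the resulting sequence to $\cG$ finishes the proof. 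The only genuinely delicate step is this last, $\cG_{v_0}$-equivariant identification of $C_{v_0}$ with the Tate twist $\ZZ_p(1)$ — the root-of-unity/Frobenius bookkeeping, and, in the alternative route, the compatibility of the reduction maps with the norm maps, which is where local class field theory enters; everything else runs parallel to the local argument in Corollary \ref{cor:difference_euler_characteristics_change_s}.
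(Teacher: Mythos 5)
Your proof is correct and follows essentially the same route as the paper's (which is deliberately terse): finite-level exact sequences for multiplication by $\varphi_0 - \fN(v_0)$ on the group rings $\ZZ_p[\cG_{n, v_0}]$, passage to the inverse limit, identification of the limit of the cokernels with $0$ or $\ZZ_p(1)$ according to whether $p \mid \fN(v_0(L)) - 1$, adjustment by a unit, and compact induction up to $\cG$. The only blemish is the sign in the factorisation, which should read $1 - \fN(v_0)\varphi_0^{-1} = \varphi_0^{-1}(\varphi_0 - \fN(v_0))$ (no leading minus); this is immaterial since you only use that the two elements differ by a unit.
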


            \begin{proof}
                 We will be concise, since the argument is very similar to that of corollary \ref{cor:difference_euler_characteristics_change_s}. Taking the inverse limit of the finite-level sequences
                \[
                    0 \to \Lambda(\cG_{n, v_0}) \xrightarrow{(\varphi_{v_0(L_n)} - \fN(v_0))_r} \Lambda(\cG_{n, v_0}) \to \faktor{\Lambda(\cG_{n, v_0})}{\ideal{\varphi_{v_0(L_n)} -  \fN(v_0)}} \to 0
                \]
                along the cyclotomic tower yields an exact sequence of $\Lambda(\cG_{v_0})$-modules. If $L_{v_0(L)}$ does not contain a primitive $p$-th root of unity (that is, $p \nmid 1 - \fN(v_0(L))$), the cokernel is trivial on all finite levels and hence in the limit. Otherwise, the limit of the cokernels is isomorphic to $\ZZ_p$ with the $\cG_{v_0}$-action $\varphi_{v_0(L_\infty)} z = \fN(v_0) z$, which is precisely $\ZZ_p(1)$. Multiplying the arrow by the unit $\varphi_{v_0(L_n)}^{-1}$ and applying induction results in an exact sequence
                \[
                    0 \to \Lambda(\cG) \xrightarrow{(1 - \fN(v_0) \varphi_{v_0(L_n)}^{-1})_r} \Lambda(\cG) \to \Ind_{\cG_{v_0}}^\cG \faktor{\ZZ_p}{\ideal{\varphi_{v_0(L_n)}^{-1} - \fN(v_0)}} \to 0,
                 \]
                 from which the lemma follows.
             \end{proof}

             This allows us to prove the independence of the choice of $T$:

            \begin{prop}
            \label{prop:independence_of_t}
             Setting \ref{sett:construction}, $\beta$ as in setting \ref{sett:formulation}. Let $v_0 \not\in S \cup T$ be a place of $K$ and set ${T' = T \cup \set{v_0}}$. Choose $\alpha \colon \cY_{S_\infty} \ia E_{S, T'}$ as in setting \ref{sett:formulation} and define $\tilde{\alpha}$ as in lemma \ref{lem:alpha_change_of_t}. Then:
                \begin{enumerate}[i)]
                    \item{
                         For any $\chi \in \Irr_p(\cG)$, \hyperref[conje:ic]{IC($L_\infty/K, \chi, L, S, T, \tilde{\alpha}, \beta$)} holds if and only if \hyperref[conje:ic]{IC($L_\infty/K, \chi, L, S, T', \alpha, \beta$)} does.
                    }
                    \item{
                        \hyperref[conje:emc]{eMC($L_\infty/K, L, S, T, \tilde{\alpha}, \beta$)} holds if and only if \hyperref[conje:emc]{eMC($L_\infty/K, L, S, T \cup \set{v_0}, \alpha, \beta$)} does.
                    }
                \end{enumerate}
            \end{prop}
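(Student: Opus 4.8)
The plan is to mirror the structure of the proof of Proposition \ref{prop:independence_of_s}, using the exact triangle \eqref{eq:exact_triangle_difference_t} in place of \eqref{eq:exact_triangle_difference_s_and_t}. The key new arithmetic input is the factorisation of Artin $L$-functions: enlarging $T$ to $T' = T \cup \set{v_0}$ multiplies the $(S, T)$-modified $L$-series by the $\delta$-factor $\delta_{v_0}(\beta(\check\chi \otimes \rho^{-1}), s)$, which (as shown at the start of the proof of Proposition \ref{prop:stark_independent_of_t}) is entire and \emph{never} vanishes at $s = 0$ — so, unlike the $S$-case, there is no ``almost all'' restriction coming from vanishing Euler factors. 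Concretely, for the $K$-theoretic element I would use $[(1 - \fN(v_0)\varphi_{v_0(L_\infty)}^{-1})] \in K_1(\cQ(\cG))$, whose reduced norm and twisted evaluation I would compute exactly as in \eqref{eq:twisted_evaluation_frobenius_factor}: applying \eqref{eq:relation_reduced_norm_and_invariants} and \eqref{eq:evaluation_j_and_psi} to the endomorphism $(1 - \fN(v_0)\varphi_{v_0(L_\infty)}^{-1})_r$ of $\Lambda(\cG)$ and taking transposes on $\Hom$-spaces should give
\[
    ev_{\gamma_{\chi \otimes \rho}}(\psi_\chi([(1 - \fN(v_0)\varphi_{v_0(L_\infty)}^{-1})])) = \det(1 - \fN(v_0)\varphi_{v_0(L_n)} \mid V_{\check\chi \otimes \rho^{-1}}) = \beta^{-1}(\delta_{v_0}(\beta(\check\chi \otimes \rho^{-1}), 0)),
\]
where the last equality uses \eqref{eq:delta-factors_galois-equivariant}. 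Since the Stark-Tate regulator is visibly independent of $T$ (it only involves $\cX_{L_n, S}$, the Dirichlet map and $\varphi_n^\alpha = \varphi_n^{\tilde\alpha}$, which coincide because $\cX_S$, $\cY_{S_\infty}$ and the maps $\varphi$, $\iota_n$ are unchanged — only the target $E_{S, T'} \ia E_{S, T}$ has shrunk, compatibly with the embedding), setting $F = F_{S, T', \chi}^{\alpha, \beta} \cdot \psi_\chi([(1 - \fN(v_0)\varphi_{v_0(L_\infty)}^{-1})])^{-1}$ and reversing gives part i).

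For part ii), I would first record the analogue of Corollary \ref{cor:difference_euler_characteristics_change_s}. Lemma \ref{lem:change_t_perfect_representative} provides a strictly perfect representative $[\Lambda(\cG) \xrightarrow{(1 - \fN(v_0)\varphi_{v_0(L_\infty)}^{-1})_r} \Lambda(\cG)]$ of $\overline{\Ind}_{\cG_{v_0}}^\cG \ZZ_p(1)[0]$, now with the non-trivial differential going from degree $-1$ to degree $0$ (odd to even), so its refined Euler characteristic with the zero trivialisation is $+[\Lambda(\cG), (1 - \fN(v_0)\varphi_{v_0(L_\infty)}^{-1})_r, \Lambda(\cG)]$. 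Applying \cite{bb} theorem 5.7 to the triangle \eqref{eq:exact_triangle_difference_t} — after the routine verification that the trivialisations $t^{\tilde\alpha}$ (for $\cC_{S, T}\q$) and $t^\alpha$ (for $\cC_{S, T'}\q$) are compatible via the commutative diagram with $H^1 \sa \cX_S$, $\cX_S \sa \cY_{S_\infty}$, $\tilde\alpha$, $\alpha$ and the equality $E_{S, T'} \ia E_{S, T}$ matching $\coker(\alpha) = \coker(\tilde\alpha)$ up to the torsion piece $\overline{\Ind}_{\cG_{v_0}}^\cG \ZZ_p(1)$, which dies over $\cQ(\cG)$ — should yield
\[
    \chi_{\Lambda(\cG), \cQ(\cG)}(\cC_{S, T'}\q, t^\alpha) = \chi_{\Lambda(\cG), \cQ(\cG)}(\cC_{S, T}\q, t^{\tilde\alpha}) + [\Lambda(\cG), (1 - \fN(v_0)\varphi_{v_0(L_\infty)}^{-1})_r, \Lambda(\cG)].
\]
Then, as in Proposition \ref{prop:independence_of_s} ii), from a zeta element $\zeta_{S, T}^{\tilde\alpha, \beta}$ for \hyperref[conje:emc]{eMC($L_\infty/K, L, S, T, \tilde\alpha, \beta$)} I would set $\zeta' = \zeta_{S, T}^{\tilde\alpha, \beta} \cdot [(1 - \fN(v_0)\varphi_{v_0(L_\infty)}^{-1})]$: its image under $\partial$ picks up the Euler-characteristic discrepancy with the correct sign (using $\partial([(1 - \fN(v_0)\varphi_{v_0(L_\infty)}^{-1})]) = [\Lambda(\cG), (1 - \fN(v_0)\varphi_{v_0(L_\infty)}^{-1})_r, \Lambda(\cG)]$ by \eqref{eq:partial_on_k1det}), and $\psi_\chi(\zeta') = F_{S, T, \chi}^{\tilde\alpha, \beta} \cdot \psi_\chi([(1 - \fN(v_0)\varphi_{v_0(L_\infty)}^{-1})]) = F_{S, T', \chi}^{\alpha, \beta}$ by part i). The converse uses $\zeta = \zeta_{S, T'}^{\alpha, \beta} \cdot [(1 - \fN(v_0)\varphi_{v_0(L_\infty)}^{-1})]^{-1}$.

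The main obstacle will be the bookkeeping in applying \cite{bb} theorem 5.7 to \eqref{eq:exact_triangle_difference_t}: unlike in Corollary \ref{cor:difference_euler_characteristics_change_s}, the difference complex now sits in \emph{negative} degree relative to the main complexes (the triangle is $\cC_{S, T'}\q \to \cC_{S, T}\q \to \overline{\Ind}_{\cG_{v_0}}^\cG \ZZ_p(1)[0] \to$, so $\cC_{S, T'}\q$ is the ``sub'' and the torsion term the ``quotient''), and one must be careful that the $H^\even$/$H^\odd$ parity of the trivialisations and the sign conventions in the passage between $\chi^{\text{new}}_{\Lambda(\cG), \cQ(\cG)}$ and $\chi_{\Lambda(\cG), \cQ(\cG)}$ (as in the proof of Proposition \ref{prop:independence_of_alpha}) combine to give exactly the $+$ sign above rather than a $-$. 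The analytic half and the $K_1$-manipulations are essentially a transcription of the $S$-case, simplified by the non-vanishing of $\delta$-factors.
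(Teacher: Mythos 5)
Your proposal follows the paper's proof essentially step for step: the same class $[(1 - \fN(v_0)\varphi_{v_0(L_\infty)}^{-1})] \in K_1(\cQ(\cG))$, the same use of the non-vanishing of $\delta$-factors at $s = 0$ and of the equality of regulators for $\alpha$ and $\tilde{\alpha}$, the same application of \cite{bb} theorem 5.7 to the triangle \eqref{eq:exact_triangle_difference_t} via lemma \ref{lem:change_t_perfect_representative}, and the same zeta elements $\zeta' = \zeta_{S, T}^{\tilde{\alpha}, \beta}\cdot[(1 - \fN(v_0)\varphi_{v_0(L_\infty)}^{-1})]$ and $\zeta = \zeta_{S, T'}^{\alpha, \beta}\cdot[(1 - \fN(v_0)\varphi_{v_0(L_\infty)}^{-1})]^{-1}$.

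One concrete correction: your displayed comparison of refined Euler characteristics is written the wrong way round. You correctly identify $\cC_{S, T'}\q$ as the first term of the triangle $\cC_{S, T'}\q \to \cC_{S, T}\q \to \overline{\Ind}_{\cG_{v_0}}^\cG \ZZ_p(1)[0] \to$, but additivity gives $\chi(\text{middle}) = \chi(\text{first}) + \chi(\text{third})$, i.e.
\[
    \chi_{\Lambda(\cG), \cQ(\cG)}(\cC_{S, T}\q, t^{\tilde{\alpha}}) = \chi_{\Lambda(\cG), \cQ(\cG)}(\cC_{S, T'}\q, t^{\alpha}) + [\Lambda(\cG), (1 - \fN(v_0)\varphi_{v_0(L_\infty)}^{-1})_r, \Lambda(\cG)],
\]
not the relation you wrote with $\cC_{S, T'}\q$ isolated on the left. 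Your own computation of $\partial(\zeta')$ needs the correct version: plugging your displayed relation into $\partial(\zeta') = -\chi_{\Lambda(\cG), \cQ(\cG)}(\cC_{S, T}\q, t^{\tilde{\alpha}}) + \partial([(1 - \fN(v_0)\varphi_{v_0(L_\infty)}^{-1})])$ would produce $-\chi_{\Lambda(\cG), \cQ(\cG)}(\cC_{S, T'}\q, t^{\alpha}) + 2\,\partial([(1 - \fN(v_0)\varphi_{v_0(L_\infty)}^{-1})])$ rather than the required $-\chi_{\Lambda(\cG), \cQ(\cG)}(\cC_{S, T'}\q, t^{\alpha})$. Your determination of the sign of $\chi_{\Lambda(\cG), \cQ(\cG)}(\overline{\Ind}_{\cG_{v_0}}^\cG \ZZ_p(1)[0], 0)$ as $+[\Lambda(\cG), (1 - \fN(v_0)\varphi_{v_0(L_\infty)}^{-1})_r, \Lambda(\cG)]$ (the differential going from odd to even degree) is correct and agrees with the paper; the slip is solely in which of the two main complexes plays the role of the middle term in the additivity formula. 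With that reversed, the argument is exactly the paper's.
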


            \begin{proof}
                We proceed along very similar lines to the proof of proposition \ref{prop:independence_of_s} and omit many of the details. The element $1 - \fN(v_0)\varphi_{v_0(L_\infty)}^{-1}$ is a unit in $\cQ(\cG)$ for the same reasons as $1 - \varphi_{v_0(L_\infty)}^{-1}$ before and we may therefore consider its class $[(1 - \fN(v_0)\varphi_{v_0(L_\infty)}^{-1})] \in K_1(\cQ(\cG))$.

                In order to show i), we first study the difference between the $L$-values and regulators associated to $T$ and $T'$. Given $\varsigma \in \Irr_p(\cG)$, choose $n \in \NN$ large enough that $\varsigma$ factors through $\cG_n$. Since $\delta$-factors never vanish at 0 (see for instance the proof of proposition \ref{prop:stark_independent_of_t}), one has
                \[
                    L_{K, S, T'}^\ast(\varsigma, 0) = L_{K, S, T}^\ast(\varsigma, 0) \cdot \delta_{v_0}(\varsigma, 0).
                \]
                As for the regulators, suppose $n \geq n(S)$ and consider the commutative diagram
                \begin{center}
                    \begin{tikzcd}
                    {\cX_{L_n, S}} \arrow[r, "\sim"] \arrow[d, equals] & \gcoinv{(\cX_S)}{n} \arrow[r, two heads] \arrow[d, equals] & \gcoinv{(\cY_{S_\infty})}{n} \arrow[d, equals] \arrow[r, "\gcoinv{\alpha}{n}"] & {\gcoinv{(E_{S, T'})}{n}} \arrow[d, "\gcoinv{\iota}{n}"] \arrow[r, "\sim"] & {\cE_{L_n, S, T'}} \arrow[d, hook] \arrow[r, hook] & {\ZZ_p \otimes \units{\cO_{L_n, S, T'}}} \arrow[d, hook] \\
                    {\cX_{L_n, S}} \arrow[r, "\sim"]                       & \gcoinv{(\cX_S)}{n} \arrow[r, two heads]                       & \gcoinv{(\cY_{S_\infty})}{n} \arrow[r, "\gcoinv{\tilde{\alpha}}{n}"]   & {\gcoinv{(E_{S, T})}{n}} \arrow[r, "\sim"]                                 & {\cE_{L_n, S, T}} \arrow[r, hook]                  & {\ZZ_p \otimes \units{\cO_{L_n, S, T}}}
                    \end{tikzcd}
                \end{center}
                in the notation of section \ref{sec:morphisms_on_finite_level}. After applying $\CC_p \otimes_{\ZZ_p} -$, the last vertical arrow becomes an equality (in fact, all vertical arrows do) because the embedding is canonical. It follows that $R_{S}^\beta(\alpha, \varsigma) = R_{S}^\beta(\tilde{\alpha}, \varsigma)$.

                Assume now that $F_{S, T, \chi}^{\tilde{\alpha}, \beta} \in \units{\cQ^c(\Gamma_\chi)}$ satisfies IC($L_\infty/K, \chi, L, S, T, \tilde{\alpha}, \beta$) for some $\chi \in \Irr_p(\cG)$. Then
                \[
                    F' = F_{S, T, \chi}^{\tilde{\alpha}, \beta} \cdot \psi_\chi([(1 - \fN(v_0)\varphi_{v_0(L_\infty)}^{-1})]) \in \units{\cQ^c(\Gamma_\chi)}
                \]
                satisfies IC($L_\infty/K, \chi, L, S, T', \alpha, \beta$) by the same argument as in proposition \ref{prop:independence_of_s}, which also yields the reverse implication.

                We now address the equivalence of the equivariant Main Conjectures, which involves comparing the refined Euler characteristics of $\cC_{S, T}\q$ and $\cC_{S, T'}\q$ induced by the trivialisations $t^{\tilde{\alpha}}$ and $t^{\alpha}$, respectively. The difference between the complexes themselves is given by the exact triangle \eqref{eq:exact_triangle_difference_t}. Applying theorem 5.7 from \cite{bb}, we obtain
                \[
                    \chi_{\Lambda(\cG), \cQ(\cG)}(\cC_{S, T}\q, t^{\tilde{\alpha}}) = \chi_{\Lambda(\cG), \cQ(\cG)}(\cC_{S, T'}\q, t^\alpha) + \chi_{\Lambda(\cG), \cQ(\cG)}(\overline{\Ind}_{\cG_{v_0}}^\cG \ZZ_p(1)[0], 0)
                \]
                this time. Note that 0 is a valid trivialisation for $\overline{\Ind}_{\cG_{v_0}}^\cG \ZZ_p(1)[0]$, as its only non-trivial module is torsion. The second summand can be computed explicitly using the strictly perfect representative from lemma \ref{lem:change_t_perfect_representative}. This results in
                \begin{align*}
                    \chi_{\Lambda(\cG), \cQ(\cG)}(\cC_{S, T}\q, t^{\tilde{\alpha}})
                    & = \chi_{\Lambda(\cG), \cQ(\cG)}(\cC_{S, T'}\q, t^\alpha) + [\Lambda(\cG), (1 - \fN(v_0) \varphi_{v_0(L_\infty)}^{-1})_r, \Lambda(\cG)] \\
                    & = \chi_{\Lambda(\cG), \cQ(\cG)}(\cC_{S, T'}\q, t^\alpha) + \partial([(1 - \fN(v_0) \varphi_{v_0(L_\infty)}^{-1})]).
                \end{align*}

                Suppose that $\zeta_{S, T}^{\tilde{\alpha}, \beta} \in K_1(\cQ(\cG))$ satisfies eMC($L_\infty/K, L, S, T, \tilde{\alpha}, \beta$). Then $\zeta' = \zeta_{S, T}^{\tilde{\alpha}, \beta} \cdot [(1 - \fN(v_0) \varphi_{v_0}^{-1})]$ satisfies eMC($L_\infty/K, L, S, T', \alpha, \beta$) by part i) and the above equation. The converse is proved by setting $\zeta = \zeta_{S, T'}^{\alpha, \beta} \cdot [(1 - \fN(v_0) \varphi_{v_0}^{-1})]^{-1}$.
            \end{proof}

            This proposition, together with \ref{prop:independence_of_s}, shows that the Main Conjecture is unaffected by the enlargement of $S$ or $T$ one place at a time. Therefore, it is independent of the choice $S$ and $T$ altogether. As we have seen, the homomorphism $\alpha$ needs to be modified when $T$ changes. The previous subsection showed that changes in $\alpha$ do not affect the conjecture, but one can also formulate the following result without relying on that fact:
            \begin{cor}
            \label{cor:independence_of_s_and_t}
                Setting \ref{sett:construction}, $\beta$ as in setting \ref{sett:formulation}. Let $\widetilde{S}, \widetilde{T}$ be another valid choice of the corresponding sets of places. In particular, so is $S \cap \widetilde{S}, T \cup \widetilde{T}$. Consider an injective $\Lambda(\cG)$-homomorphism  $\alpha_0 \colon \cY_{S_\infty} \ia E_{S \cap \widetilde{S}, T \cup \widetilde{T}}$ with $\Lambda(\Gamma)$-torsion cokernel and set
                \[
                    \alpha \colon \cY_{S_\infty} \xhookrightarrow{\alpha_0} E_{S \cap \widetilde{S}, T \cup \widetilde{T}} \ia E_{S \cap \widetilde{S}, T} = E_{S, T}
                    \quad \text{and} \quad
                     \widetilde{\alpha} \colon \cY_{S_\infty} \xhookrightarrow{\alpha_0} E_{S \cap \widetilde{S}, T \cup \widetilde{T}} \ia E_{S \cap \widetilde{S}, \widetilde{T}} = E_{\widetilde{S}, \widetilde{T}}.
                \]
                (cf. lemma \ref{lem:difference_galois_modules_s}). Then:
                \begin{enumerate}[i)]
                    \item{
                        For all $\chi \in \Irr_p(\cG)$, \hyperref[conje:ic]{IC($L_\infty/K, \chi, L, S, T, \alpha, \beta$)} holds if and only if \hyperref[conje:ic]{IC($L_\infty/K, \chi, L, \widetilde{S}, \widetilde{T}, \widetilde{\alpha}, \beta$)} does.
                    }
                    \item{
                        \hyperref[conje:emc]{eMC($L_\infty/K, L, S, T, \alpha, \beta$)} holds if and only if \hyperref[conje:emc]{eMC($L_\infty/K, L, \widetilde{S}, \widetilde{T}, \widetilde{\alpha}, \beta$)} does.
                    }
                \end{enumerate}
            \end{cor}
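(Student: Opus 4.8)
The plan is to deduce this corollary from the three elementary independence results already in hand: Proposition \ref{prop:independence_of_s} (enlarging $S$ by a single place), Proposition \ref{prop:independence_of_t} (enlarging $T$ by a single place, with the auxiliary homomorphism adjusted exactly as in Lemma \ref{lem:alpha_change_of_t}), and Proposition \ref{prop:independence_of_alpha} (the conjecture does not depend on $\alpha$ within a fixed $(S,T)$). The key observation is that it suffices to compare each of the two configurations $(S,T,\alpha)$ and $(\widetilde S,\widetilde T,\widetilde\alpha)$ with the single ``meet--join'' configuration $(S\cap\widetilde S,\,T\cup\widetilde T,\,\alpha_0)$, i.e. to prove
\[
  \mathrm{IC}(L_\infty/K,\chi,L,S,T,\alpha,\beta)\iff\mathrm{IC}(L_\infty/K,\chi,L,S\cap\widetilde S,T\cup\widetilde T,\alpha_0,\beta),
\]
together with the same statement for $(\widetilde S,\widetilde T,\widetilde\alpha)$ in place of $(S,T,\alpha)$ --- which follows from the first by the symmetry of the data under interchanging the two configurations (note that $S\cap\widetilde S$ and $T\cup\widetilde T$, as well as $\alpha_0$, are manifestly symmetric). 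Granting this, part i) is immediate, and part ii) follows word for word by invoking the eMC halves (part ii)) of the same three propositions in place of their part i).

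To establish the displayed equivalence I would pass from $(S,T,\alpha)$ to $(S\cap\widetilde S,\,T\cup\widetilde T,\,\alpha_0)$ by a finite chain of elementary moves, taking care that every intermediate configuration is again valid in the sense of setting \ref{sett:formulation} --- in particular that the $S$-set always contains $S_\infty\cup S_\ram(L_\infty/K)$ and stays disjoint from the $T$-set. The order of the moves matters: I first remove from $S$, one place at a time, all places of $S\setminus\widetilde S$. Such a place lies outside $\widetilde S\supseteq S_\infty\cup S_\ram(L_\infty/K)$ and outside $T$, and is non-archimedean and non-$p$-adic (since $S\supseteq S_\infty\cup S_p$), so it may be deleted by Proposition \ref{prop:independence_of_s} read in reverse; by Lemma \ref{lem:difference_galois_modules_s} the relevant modules $E_{\bullet,\bullet}$ are unchanged, so $\alpha$ survives literally as a valid homomorphism throughout this phase and we arrive at $(S\cap\widetilde S,T,\alpha)$. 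Only then do I adjoin to $T$, one place at a time, the places of $\widetilde T\setminus T$: each such $v_0$ lies in $\widetilde T$, hence outside $\widetilde S\supseteq S\cap\widetilde S$ and outside $T$, so the configurations $(S\cap\widetilde S,T\cup\{v_0\},\cdot)$ remain disjoint, and interleaving applications of Proposition \ref{prop:independence_of_alpha} (to replace the current auxiliary map by one factoring through $E_{S\cap\widetilde S,T\cup\{v_0\}}$) with Proposition \ref{prop:independence_of_t} lifts us to $(S\cap\widetilde S,T\cup\widetilde T,\alpha_0'')$ for some valid $\alpha_0''$. A last application of Proposition \ref{prop:independence_of_alpha} --- using that $\alpha_0$ is injective with $\Lambda(\Gamma)$-torsion cokernel, hence itself a legitimate choice for this configuration --- replaces $\alpha_0''$ by $\alpha_0$ and closes the chain; the same chain of equivalences works for eMC.

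The only genuinely delicate point is the ordering constraint just mentioned: a place of $S\cap\widetilde T$ must be deleted from $S$ before it is allowed into $T$, for otherwise it would have to lie simultaneously in the $S$-set and the $T$-set of an intermediate configuration, which is forbidden. Everything else is routine bookkeeping: verifying that the removed and added places satisfy the hypotheses of Propositions \ref{prop:independence_of_s} and \ref{prop:independence_of_t} --- automatic from $S\supseteq S_p$ and the disjointness relations among $S,\widetilde S,T,\widetilde T$ --- and that the composites of $\alpha_0$ with the canonical inclusions $E_{S\cap\widetilde S,\,T\cup\widetilde T}\hookrightarrow E_{S,T}$ and $E_{S\cap\widetilde S,\,T\cup\widetilde T}\hookrightarrow E_{\widetilde S,\widetilde T}$ are again injective with $\Lambda(\Gamma)$-torsion cokernel, so that $\alpha$ and $\widetilde\alpha$ really are valid choices. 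The latter is a repeated application of Lemma \ref{lem:alpha_change_of_t} for the $T$-part (the cokernels being controlled by the exact sequence \eqref{eq:galois_sequence_change_t} and the structure theorem \ref{thm:structure_theorem_iwasawa}) and of Lemma \ref{lem:difference_galois_modules_s} for the $S$-part, where the relevant inclusion is in fact an equality.
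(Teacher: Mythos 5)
Your proposal is correct and follows essentially the same route as the paper: both reduce the comparison to the single ``meet--join'' configuration $(S\cap\widetilde S,\,T\cup\widetilde T,\,\alpha_0)$ and then chain together the one-place-at-a-time propositions \ref{prop:independence_of_s} and \ref{prop:independence_of_t}. The only (harmless) deviation is your interleaved use of proposition \ref{prop:independence_of_alpha}; the paper deliberately avoids it by always taking as auxiliary map the composite of $\alpha_0$ with the relevant canonical inclusion $E_{S\cap\widetilde S,\,T\cup\widetilde T}\hookrightarrow E_{S\cap\widetilde S,\,T\cup\widetilde T\setminus\{v_0,\dots,v_i\}}$, which is exactly the adjustment prescribed by lemma \ref{lem:alpha_change_of_t}, so that the corollary stands independently of subsection \ref{subsec:the_choice_of_alpha}.
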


            \begin{proof}
                In part i), it suffices to show that both conjectures are equivalent (for each fixed $\chi$) to IC($L_\infty/K, \chi, L, S \cap \widetilde{S}, T \cup \widetilde{T}, \alpha_0, \beta$). The same is true of ii) and eMC(${L_\infty/K, L, S \cap \widetilde{S}, T \cup \widetilde{T}, \alpha_0, \beta}$). This is achieved by propositions \ref{prop:independence_of_s} and \ref{prop:independence_of_t}, since the embedding $E_{S \cap \widetilde{S}, T \cup \widetilde{T}} \ia E_{S \cap \widetilde{S}, T}$ is the composition
                \[
                    E_{S \cap \widetilde{S}, T \cup \widetilde{T}} \ia E_{S \cap \widetilde{S}, T \cup \widetilde{T} \setminus \set{v_0}} \ia \cdots \ia E_{S \cap \widetilde{S}, T \cup \widetilde{T} \setminus \set{v_0, \ldots, v_n}} = E_{S \cap \widetilde{S}, T}
                \]
                and analogously for $E_{S \cap \widetilde{S}, T \cup \widetilde{T}} \ia E_{S \cap \widetilde{S},\widetilde{T}}$.
            \end{proof}

            This corollary, together with propositions \ref{prop:indep_ic_l} and \ref{prop:independence_of_alpha}, proves theorem \ref{thm:independence_of_parameters}: one may first change $L$ without affecting any other parameters; then replace $\alpha$ by the homomorphism induced by $\alpha_0$ in corollary \ref{cor:independence_of_s_and_t}, also without altering the remaining parameters; and finally modify $S$ and $T$. Recall that Stark's conjecture \hyperref[conje:stark]{Stark\textsuperscript{T}($L/K, \chi, f, S, T$)} is independent of $S$ and $T$ as well, as explained in section \ref{sec:starks_conjecture_and_the_choice_of_beta}.

    \section{Functoriality}
    \label{sec:functoriality}

        It has now been established that, for a fixed extension $L_\infty/K$, the Main Conjecture is independent of all choices of parameters except possibly $\beta$, and conjecturally of this last one too. The next natural step is to study changes in $L_\infty$ and $K$. As we shall see, the conjecture for $\Gal(L_\infty/K)$ will imply that for \textit{smaller} groups - be it subgroups or quotients. The challenge lies in the impact modifying the Galois group has on the algebraic machinery developed so far. In the last subsection, we apply $K$-theoretic techniques to briefly address the converse question: whether one can deduce the Main Conjecture for a given extension by assuming it for sufficiently many smaller ones.

        \subsection{\texorpdfstring{Change of $L_\infty$}{Change of L\_infinity}}
        \label{subsec:functoriality_1}

            We first discuss the top field by considering $L_\infty'/L_\infty/K$. The new situation can be formalised as follows:
            \begin{sett}
            \label{sett:functoriality_1}
            \addcontentsline{toc}{subsubsection}{Setting C}
                The objects $p$, $L_\infty'/K_\infty/K$, $\Gamma'$, $S$ and $T$ are fixed as in setting \ref{sett:construction} in the obvious notation. We set $\Gamma_K = \Gal(K_\infty/K) = \overline{{\ideal{\gamma_K}}}$ and $H' = \Gal(L_\infty'/K_\infty) \trianglelefteq_c \cG' = \Gal(L_\infty'/K)$. Additionally, we consider:
                \begin{itemize}
                    \item{
                        A finite normal subgroup $\widetilde{H} \trianglelefteq_c \cG'$. In particular, one has $\widetilde{H} \trianglelefteq H'$ and we can define ${L_\infty = (L_\infty')^{\widetilde{H}} \supseteq K_\infty,} \, H = \Gal(L_\infty/K_\infty) = H'/\widetilde{H}$ and $\cG = \Gal(L_\infty/K) = \cG'/\widetilde{H}$.
                    }
                    \item{
                        The projection of $\Gamma' \subseteq \cG'$ to $\cG$, which we denote by $\Gamma$. Since $\Gamma' \cap \widetilde{H} \subseteq \Gamma' \cap H' = 1$, the group $\Gamma'$ identifies with $\Gamma$, and $\Lambda(\Gamma) \subseteq \Lambda(\cG)$ with $\Lambda(\Gamma') \subseteq \Lambda(\cG')$. It follows immediately that $\Gamma \subseteq Z(\cG)$ is open in $\cG$ and $\Gamma \cap H = 1$. Therefore, $L = (L_\infty)^\Gamma$ is a finite Galois extension of $K$ such that $L_\infty$ is the cyclotomic $\ZZ_p$-extension of $L$.

                        As in setting \ref{sett:construction}, we use the notation $L_n'$ and $L_n$ for the layers of $L_\infty'/K$ and $L_\infty/K$, respectively, and set $\cG_n' = \Gal(L_n'/K)$ and $\cG_n = \Gal(L_n/K)$. Note that, for all $n \in \NN$, the field $L_n'$ is a Galois extension of $L_n$ such that $\Gal(L_n'/L_n) = \ker(\cG_n' \sa \cG_n) \iso \widetilde{H}$.
                    }
                    \item{
                        The natural numbers $n'(S)$ and $n(S)$ given by
                        \[
                            (\Gamma')^{p^{n'(S)}} = \Gamma' \cap \bigcap_{w_\infty' \in S_f(L_\infty')} \cG_w' \qquad \text{and} \qquad \Gp{n(S)} = \Gamma \cap \bigcap_{w_\infty \in S_f(L_\infty)} \cG_w.
                        \]
                        This implies that $L_{n(S)}$ is the smallest layer of $L_\infty/L$ such that all finite places in $S$ are non-split in $L_\infty/L_{n(S)}$, and analogously for $n'(S)$. Although it will not play a role, it is easy to show that $n'(S) \geq n(S)$. As usual, we denote the decomposition groups $\cG_{v(L_\infty')}'$ and $\cG_{v(L_\infty)}$ of the distinguished prolongations of $v$ by $\cG_{v}$ and $\cG_{v}'$, respectively.
                    }
                \end{itemize}
            \end{sett}
            With the above conventions, $(L_\infty'/K, \Gamma', S, T)$ and $(L_\infty/K, \Gamma, S, T)$ are two valid choices of the corresponding parameters in setting \ref{sett:construction}. The following diagram illustrates the relations between the relevant fields:
            \begin{center}
            \hspace{-1.6cm}
                \begin{tikzcd}[row sep=huge]
                                                                            &                                                                            &                                                                                                                  & L_\infty' \arrow[rd, "\widetilde{H}"', no head] \arrow[rrdd, "H'", no head, bend left] \arrow[lddddd, "\cG'"', no head, in=150, out=200, looseness=1.7] &                                    &          \\
                                                                            &                                                                            &                                                                                                                  &                                                                                                                                        & L_\infty \arrow[rd, "H"', no head] &          \\
                                                                            & L_n' \arrow[rruu, "(\Gamma')^{p^n}"', no head] \arrow[rd, "\widetilde{H}", no head] &                                                                                                                  &                                                                                                                                        &                                    & K_\infty \\
                L' \arrow[ru, no head] \arrow[rd, "\widetilde{H}", no head] &                                                                            & L_n \arrow[rruu, "\Gp{n}", no head] \arrow[dd, "\cG_n", no head]                                                 &                                                                                                                                        &                                    &          \\
                                                                            & L \arrow[ru, no head] \arrow[rd, no head]                                  &                                                                                                                  &                                                                                                                                        &                                    &          \\
                                                                            &                                                                            & K \arrow[rrruuu, "\Gamma_K"', no head] \arrow[luuu, "\cG_n'", no head, near end] \arrow[rruuuu, "\cG"', no head] &                                                                                                                                        &                                    &
                \end{tikzcd}
            \end{center}

            Our first aim is to study the relation between the semisimple algebras $\cQ^c(\cG')$ and $\cQ^c(\cG)$, and in particular the fields $\cQ^c(\Gamma_\chi)$ contained in them. In general, it is not true that a continuous surjection of profinite groups induces a ring homomorphism between the total rings of fractions of their Iwasawa algebras, but the properties of $\cG'$ and $\cG$ ensure this is the case in our situation. The following discussion makes repeated use of the obvious identification
            \begin{align}
            \label{eq:functoriality_1_artin_characters}
                \Irr_p(\cG) & \leftrightarrow \set{\chi \in \Irr_p(\cG') \et{such that} \widetilde{H} \subseteq \ker(\chi)} \\
                \overline{\chi} & \leftrightarrow \chi = \infl_{\cG}^{\cG'} \overline{\chi}, \nonumber
            \end{align}
            which restricts to an identification of type-$W$ characters.

            \newpage
            \begin{lem}
            \label{lem:functoriality_1_chi_parts}
                Setting \ref{sett:functoriality_1}. The following hold:
                \begin{enumerate}[i)]
                    \item{
                        The projection $\cG' \sa \cG$ induces a canonical surjection of $\cQ(\Gamma')$-algebras ${\varepsilon \colon \cQ(\cG') \sa \cQ(\cG)}$ (where the codomain is regarded as such via $\cQ(\Gamma') \isoa \cQ(\Gamma) \ia Z(\cQ(\cG))$) which extends to one of $\cQ^c(\Gamma')$-algebras ${\varepsilon \colon \cQ^c(\cG') \sa \cQ^c(\cG)}$.
                    }
                    \item{
                        Let $\chi \in \Irr_p(\cG')$. Then
                        \[
                            \varepsilon(e_\chi) =
                            \begin{cases}
                                e_{\overline{\chi}}, & \widetilde{H} \subseteq \ker(\chi) \\
                                0,  & \text{otherwise},
                            \end{cases}
                        \]
                        where $\overline{\chi}$ denotes the projection of $\chi$ to $\cG$ and $e_\chi$ and $e_{\overline{\chi}}$ are defined as in proposition \ref{prop:rw_properties_q}. Furthermore, the map $\varepsilon$ from part i) restricts to an isomorphism $\cQ^c(\cG')e_\chi \iso \cQ^c(\cG)e_{\overline{\chi}}$ whenever $\chi$ factors through $\cG$ as $\overline{\chi}$.
                    }
                    \item{
                        Let $\overline{\chi} \in \Irr_p(\cG)$ and set $\chi = \infl_{\cG}^{\cG'} \overline{\chi}$. Then $\varepsilon(\gamma_\chi) = \gamma_{\overline{\chi}}$ (cf. proposition \ref{prop:structure_gamma_chi}) and $\varepsilon$ induces a canonical isomorphism $\cQ^c(\Gamma_{\chi}) \iso \cQ^c(\Gamma_{\overline{\chi}})$.
                    }
                \end{enumerate}
            \end{lem}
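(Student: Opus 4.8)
The plan is to prove the three parts in order, bootstrapping each from the previous one. For i) I would start from the observation that the projection $\cG' \sa \cG$ is precisely the augmentation map $\aug_{\widetilde{H}}$ and hence induces a continuous surjection $\Lambda(\cG') \sa \Lambda(\cG)$ (section \ref{sec:iwasawa_algebras_and_modules}); since $\Gamma' \cap \widetilde{H} = 1$, this is a $\Lambda(\Gamma')$-algebra homomorphism once $\Lambda(\Gamma')$ is identified with the central subring $\Lambda(\Gamma) \subseteq \Lambda(\cG)$ via the isomorphism $\Gamma' \isoa \Gamma$. As $\Lambda(\Gamma')$ is an integral domain mapping isomorphically onto a central subring of $\Lambda(\cG)$, its regular (i.e. non-zero) elements are sent to regular elements of $\Lambda(\cG)$, so by \eqref{eq:lambda_regular_elements} and the universal property of localisation the map extends to $\cQ(\cG') = \cQ(\Gamma') \otimes_{\Lambda(\Gamma')} \Lambda(\cG') \sa \cQ(\Gamma) \otimes_{\Lambda(\Gamma)} \Lambda(\cG) = \cQ(\cG)$, still surjective; extending scalars along $\QQ_p \ia \QQ_p^c$ then gives $\varepsilon \colon \cQ^c(\cG') \sa \cQ^c(\cG)$.

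For ii) the key input is \eqref{eq:pci_projection}: restricting $\varepsilon$ to $\QQ_p^c[H'] \subseteq \cQ^c(\cG')$ recovers the projection $\QQ_p^c[H'] \sa \QQ_p^c[H]$ induced by $H' \sa H = H'/\widetilde{H}$, so for $\eta \in \Irr_{\QQ_p^c}(H')$ one has $\varepsilon(e(\eta)) = e(\overline{\eta})$ when $\widetilde{H} \subseteq \ker(\eta)$ and $\varepsilon(e(\eta)) = 0$ otherwise. If $\widetilde{H} \subseteq \ker(\chi)$, writing $\chi = \infl_\cG^{\cG'}\overline{\chi}$ makes $\rest_{H'}^{\cG'}\chi$ the inflation $\infl_H^{H'}(\rest_H^\cG \overline{\chi})$, whose irreducible constituents are exactly the inflations of those of $\rest_H^\cG\overline{\chi}$ and all contain $\widetilde{H}$ in their kernel; summing $\varepsilon$ over them yields $\varepsilon(e_\chi) = e_{\overline{\chi}}$. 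If instead $\widetilde{H} \not\subseteq \ker(\chi)$, I would argue that no constituent $\eta$ of $\rest_{H'}^{\cG'}\chi$ has $\widetilde{H} \subseteq \ker(\eta)$: by Clifford theory these constituents form a single $\cG'$-orbit, and since $\widetilde{H} \trianglelefteq \cG'$ the property $\widetilde{H} \subseteq \ker(\eta)$ is invariant under $\cG'$-conjugation, so if one constituent had it all would, forcing $\rest_{\widetilde{H}}^{\cG'}\chi = \chi(1)\bbone$ and hence $\widetilde{H} \subseteq \ker(\chi)$, a contradiction; thus $\varepsilon(e_\chi) = 0$ in this case. Finally, when $\chi$ factors through $\cG$ as $\overline{\chi}$, the relation $\varepsilon(e_\chi) = e_{\overline{\chi}}$ shows $\varepsilon$ carries $\cQ^c(\cG')e_\chi$ into $\cQ^c(\cG)e_{\overline{\chi}}$; this restricted map is a non-zero homomorphism of simple rings (proposition \ref{prop:rw_properties_q}), hence injective, and it is surjective (lift a preimage of a target element and multiply by $e_\chi$), so it is a ring isomorphism.

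For iii) I would invoke the uniqueness characterisation of $\gamma_\chi$ in proposition \ref{prop:structure_gamma_chi} i). With $\chi = \infl_\cG^{\cG'}\overline{\chi}$, the module $V_\chi$ is the inflation of $V_{\overline{\chi}}$, so the $\cQ^c(\cG')e_\chi$-action on $V_\chi$ matches the $\cQ^c(\cG)e_{\overline{\chi}}$-action on $V_{\overline{\chi}}$ through the isomorphism of ii); in particular $x \in \cQ^c(\cG')e_\chi$ kills $V_\chi$ if and only if $\varepsilon(x)$ kills $V_{\overline{\chi}}$. Writing $\gamma_\chi = gc$ with $g \in \cG'$ projecting to $\gamma_K^{w_\chi} \in \Gamma_K$ and $c \in \punits{\QQ_p^c[H']e_\chi}$, the image $\varepsilon(g) \in \cG$ still projects to $\gamma_K^{w_\chi} = \gamma_K^{w_{\overline{\chi}}}$ — here one checks $w_\chi = w_{\overline{\chi}}$ because the stabilisers $St(\eta)$ and $St(\overline{\eta})$ of matching constituents are preimages of each other under $\cG' \sa \cG$, whose kernel $\widetilde{H}$ lies inside $St(\eta)$ — and $\varepsilon(c) \in \punits{\QQ_p^c[H]e_{\overline{\chi}}}$. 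Hence $\varepsilon(\gamma_\chi)$ satisfies both defining properties of $\gamma_{\overline{\chi}}$, so by uniqueness $\varepsilon(\gamma_\chi) = \gamma_{\overline{\chi}}$. Consequently $\varepsilon$ maps $\Gamma_\chi = \overline{\ideal{\gamma_\chi}}$ isomorphically onto $\Gamma_{\overline{\chi}}$, and since $\cQ^c(\Gamma_\chi) = Z(\cQ^c(\cG')e_\chi)$ (proposition \ref{prop:structure_gamma_chi} ii), scalar-extended to $\QQ_p^c$) and likewise for $\overline{\chi}$, the ring isomorphism of ii) restricts to the desired field isomorphism $\cQ^c(\Gamma_\chi) \isoa \cQ^c(\Gamma_{\overline{\chi}})$.

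The main obstacle I anticipate is the "all-or-nothing" dichotomy for the constituents of $\rest_{H'}^{\cG'}\chi$ in part ii), case $\widetilde{H} \not\subseteq \ker(\chi)$, where the Clifford-theory conjugacy argument must be made airtight; and in part iii) the bookkeeping needed to confirm that the several identifications in play — $\varepsilon$, inflation of representations, and $\cQ^c(\Gamma_\chi) = Z(\cQ^c(\cG')e_\chi)$ — are mutually compatible, so that trivial action on $V_\chi$ genuinely transfers to trivial action on $V_{\overline{\chi}}$.
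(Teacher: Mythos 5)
Your proposal is correct and follows essentially the same route as the paper's proof: part i) via the augmentation map and \eqref{eq:lambda_regular_elements}, part ii) via \eqref{eq:pci_projection}, the inflation--restriction compatibility of constituents, the Clifford-theoretic transitivity argument for the vanishing case (you phrase it contrapositively, the paper directly), and simplicity of $\cQ^c(\cG')e_\chi$ for the isomorphism, and part iii) via the uniqueness characterisation of $\gamma_{\overline{\chi}}$ in proposition \ref{prop:structure_gamma_chi}. The only cosmetic difference is that you deduce $w_\chi = w_{\overline{\chi}}$ by matching stabilisers rather than counting constituents, which amounts to the same thing.
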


            \begin{proof}
                \begin{enumerate}[i)]
                    \item{
                        The augmentation sequence \eqref{eq:general_augmentation_ideal} associated to $\widetilde{H} \ia \cG' \sa \cG$ features the continuous surjection $\aug_{\widetilde{H}} \colon \Lambda(\cG') \sa \Lambda(\cG)$, which in this case restricts to an isomorphism $\Lambda(\Gamma') \iso \Lambda(\Gamma)$. Therefore, the $\Lambda(\Gamma')$-algebra structure of $\Lambda(\cG')$ is compatible with that of $\Lambda(\cG)$ as a $\Lambda(\Gamma)$-algebra. Since $\cQ(\cG') = \cQ(\Gamma') \otimes_{\Lambda(\Gamma')} \Lambda(\cG')$ and $\cQ(\cG) = \cQ(\Gamma) \otimes_{\Lambda(\Gamma)} \Lambda(\cG)$ by \eqref{eq:lambda_regular_elements}, the map $\varepsilon = \cQ(\Gamma') \otimes_{\Lambda(\Gamma')} \aug_{\widetilde{H}}$ satisfies the first claim. The second one is then obtained by applying $\QQ_p^c \otimes_{\QQ_p} -$.
                    }
                    \item{
                        This follows from the analogous result for group algebras over finite groups, since the primitive central idempotents of $\cQ^c(\cG')$ and $\cQ^c(\cG)$ only depend on the restriction of characters to $H$. Recall that $e_\chi$ is defined as
                        \[
                            e_\chi = \sum_{\substack{\eta \in \Irr_{\QQ_p^c}(H')\\\eta \mid \rest_{H'}^{\cG'} \chi}} e(\eta)
                        \]
                        where $e(\eta)$ is the primitive central idempotent of $\QQ_p^c[H']$ corresponding to $\eta$ (cf. \eqref{eq:definition_pci}). By \eqref{eq:pci_projection}, the projection $\varepsilon$ (which restricts to $\QQ_p^c[H'] \sa \QQ_p^c[H]$) sends $e(\eta)$ to $e(\overline{\eta})$ if $\eta$ factors through $H$ (as $\overline{\eta}$), and to 0 otherwise.

                        Suppose first that $\widetilde{H} \subseteq \ker(\chi)$ and let $\overline \chi$ be the projection of $\chi$ to $\cG$. Then  $\widetilde{H} \subseteq \ker(\rest_{H'}^{\cG'} \chi)$ and $\rest_{H'}^{\cG'} \chi = \rest_{H'}^{\cG'} \infl_{\cG}^{\cG'} \overline{\chi} = \infl_H^{H'} \rest_H^{\cG} \overline{\chi}$. In particular, an $\eta \in \Irr_{\QQ_p^c}(H')$ divides $\rest_{H'}^{\cG'} \chi$ if and only if it factors through $H$ and its projection $\overline{\eta}$ divides $\rest_H^{\cG} \overline{\chi}$. Here we are using the general fact that if a character factors through a quotient group, then all of its divisors do as well. Note also that a character of a quotient group is irreducible if and only if its inflation to the entire group is so. We therefore have:
                        \[
                            \varepsilon(e_\chi) = \sum_{\substack{\eta \in \Irr_{\QQ_p^c}(H')\\\eta \mid \rest_{H'}^{\cG'} \chi}} \varepsilon(e(\eta)) = \sum_{\substack{\eta \in \Irr_{\QQ_p^c}(H')\\\eta \mid \rest_{H'}^{\cG'} \chi}} e(\overline{\eta}) = \sum_{\substack{\overline{\eta} \in \Irr_{\QQ_p^c}(H)\\\overline{\eta} \mid \rest_H^{\cG} \overline{\chi}}} e(\overline{\eta}) = e_{\overline{\chi}}.
                        \]

                        Conversely, assume that $\widetilde{H} \nsubseteq \ker(\chi)$ and hence $\widetilde{H} \nsubseteq \ker(\rest_{H'}^{\cG'}\chi)$. Then at least one irreducible constituent $\eta$ of $\rest_{H'}^{\cG'}\chi$ satisfies $\widetilde{H} \nsubseteq \ker(\eta)$. But Clifford theory shows that all such divisors are conjugate to one another by elements of $\cG'$, so $\widetilde{H}$ cannot be contained in the kernel of any of them. Thus, one has
                        \[
                            \varepsilon(e_\chi) = \sum_{\eta \mid \rest_{H'}^{\cG'} \chi} \varepsilon(e(\eta)) = 0,
                        \]
                        which concludes the proof of the first claim.

                        Let now $\chi \in \Irr_p(\cG')$ factor through $\cG$ as $\overline{\chi}$. By the above argument, $\varepsilon$ restricts to a surjective ring homomorphism $\cQ^c(\cG')e_\chi \sa \cQ^c(\cG)e_{\overline{\chi}}$ (note that the unities of $\cQ^c(\cG')e_\chi$ and $\cQ^c(\cG)e_{\overline{\chi}}$ are $e_\chi$ and $e_{\overline{\chi}}$ respectively). The kernel of a ring homomorphism is a two-sided ideal, but the only such ideals of $\cQ^c(\cG')e_\chi$ are $\set{0}$ and the entire ring by simplicity.
                    }
                    \item{
                        Part ii) already implies that $\varepsilon$ restricts to an isomorphism
                        \[
                            \cQ^c(\Gamma_{\chi}) = Z(\cQ^c(\cG')e_\chi) \iso Z(\cQ^c(\cG)e_{\overline{\chi}}) = \cQ^c(\Gamma_{\overline{\chi}})
                        \]
                        by proposition \ref{prop:structure_gamma_chi}. This does not a priori mean that $\Gamma_\chi$ is mapped to $\Gamma_{\overline{\chi}}$ (see for instance the proof of lemma \ref{lem:twisted_evaluation_maps}), which we prove now. The generator $\gamma_\chi$ of $\Gamma_\chi \subseteq \units{Z(\cQ^c(\cG')e_\chi)}$ is given by $\gamma_\chi = gc$ as in proposition \ref{prop:structure_gamma_chi} i). Here $g \in \cG'$ projects to $\gamma_K^{w_\chi} \in \Gamma_K$, where $w_\chi$ coincides with the number of irreducible constituents $\eta$ of $\rest_{H'}^{\cG'} \chi$. We have shown these correspond bijectively with the irreducible constituents $\overline{\eta}$ of $\rest_H^{\cG} \overline{\chi}$, and thus $w_\chi = w_{\overline{\chi}}$.

                        The element $\varepsilon(\gamma_\chi) = \varepsilon(g)\varepsilon(c) \in Z(\cQ^c(\cG)e_{\overline{\chi}})$ satisfies the following:
                        \begin{itemize}
                            \item{
                                $\varepsilon(g)$ projects to $\gamma_K^{w_\chi} = \gamma_K^{w_{\overline{\chi}}}$ by the above.
                            }
                            \item{
                                $\varepsilon(c) \in \varepsilon(\units{(\QQ_p^c[H']e_\chi)}) = \units{(\QQ_p^c[H]e_{\overline{\chi}})}$ (observe that $\varepsilon$ restricts to a ring isomorphism $\QQ_p^c[H']e_\eta \iso \QQ_p^c[H]e_{\overline{\eta}}$ for all $\eta$ which factor through $H$).
                            }
                            \item{
                                $\varepsilon(\gamma_\chi)$ acts trivially on $V_{\overline{\chi}}$ (the $\QQ_p^c$-vector space afforded by $\overline{\chi}$), since so does $\gamma_\chi$ on $V_\chi$.
                            }
                        \end{itemize}
                        By proposition $\ref{prop:structure_gamma_chi}$, there exists only one $\gamma_{\overline{\chi}} \in Z(\cQ^c(\cG)e_{\overline{\chi}})$ with those three properties, which must therefore coincide with $\varepsilon(\gamma_\chi)$.
                    }
                \end{enumerate}
            \end{proof}

            \begin{rem}
            \label{rem:projection_gamma_chi_independent}
                Although the elements $\gamma_\chi$ and $\gamma_{\overline{\chi}}$ depend on the choice of $\gamma_{K}$, the lemma shows that one always has $\varepsilon(\gamma_\chi) = \gamma_{\overline{\chi}}$ as long as the same $\gamma_{K}$ is chosen in both cases. In fact, as already mentioned in remark \ref{rem:twisted_evaluation_maps_independent_gamma_K}, replacing $\gamma_{K}$ by another generator $\gamma_K^v$ (where $v \in \units{\ZZ_p}$ necessarily) results in the substitution of $\gamma_\chi$ by $\gamma_\chi^v$ and $\gamma_{\overline{\chi}}$ by $\gamma_{\overline{\chi}}^v$. In particular, $\gamma_\chi \mapsto \gamma_{\overline{\chi}}$ and $\gamma_\chi^v \mapsto \gamma_{\overline{\chi}}^v$ define the same map on $\cQ^c(\Gamma_{\chi})$: the canonical isomorphism induced by $\varepsilon$. \qedef
            \end{rem}

            The above result sheds light on the structural relation between $\cQ^c(\cG')$ and $\cQ^c(\cG)$. If $\chi$ and $\rho$ are irreducible Artin characters of $\cG'$ with $\rho$ of type $W$, then $\chi \otimes \rho$ factors through $\cG$ if and only if $\chi$ does. Furthermore, characters of $\cG'$ which factor through $\cG$ are $W$-equivalent (cf. section \ref{sec:evaluation_maps}) if and only if their projections to $\cG$ are. This allows one to interpret the lemma as follows: for a chosen set of representatives $\Xi_\cG$ of $\Irr_p(\cG) / {\sim}_W$, set $\Xi_{\cG'} = \set{\infl_\cG^{\cG'} \overline{\chi} : \overline{\chi} \in \Xi_\cG}$. Let $\widetilde{\Xi}_{\cG'}$ be an arbitrary set of $\sim_W$-representatives of the irreducible Artin characters of $\cG'$ which do not factor through $\cG$ (well defined by the above argument). Then the diagram
            \begin{center}
                \begin{tikzcd}[column sep=tiny]
                    \cQ^c(\cG') \arrow[d, "\varepsilon"] \arrow[rrr, "\sim"] &  &  & \prod_{\chi \in \widetilde{\Xi}_{\cG'}} \cQ^c(\cG')e_\chi \arrow[rrd, "0"'] & \hspace{-1.5em} \times \hspace{-1.5em} & \prod_{\chi \in \Xi_{\cG'}} \cQ^c(\cG')e_\chi \arrow[d, "\prod \restr{\varepsilon}{e_\chi}"]             &  &  & \prod_{\chi \in \Xi_{\cG'}} \cQ^c(\Gamma_{\chi}) \arrow[lll, hook'] \arrow[d, "\gamma_\chi \mapsto \gamma_{\overline{\chi}}"]     \\
                    \cQ^c(\cG) \arrow[rrrrr, "\sim"]                 &  &  &                                                                &        & \prod_{\overline{\chi} \in \Xi_\cG} \cQ^c(\cG)e_{\overline{\chi}} &  &  & \prod_{\overline{\chi} \in \Xi_\cG} \cQ^c(\Gamma_{\overline{\chi}}) \arrow[lll, hook']
                \end{tikzcd}
            \end{center}
            commutes, where the first map in each row is \eqref{eq:decomposition_algebra_chi-parts}. Although this decomposition only exists after extending scalars to a large enough $p$-adic field, the element
            \[
                e_{\widetilde{H}} = \sum_{\chi \in \Xi_{\cG'}} e_\chi = \frac{1}{\abs{\widetilde{H}}} N_{\widetilde{H}}
            \]
            (where $N_{\widetilde{H}} = \sum_{\sigma \in \widetilde{H}} \sigma \in \ZZ[\widetilde{H}] \subseteq{\ZZ[H']}$ is the norm\footnote{In this section, we will refer to the \textit{norm} $N_G$ of a finite group $G$, rather than its \textit{trace} $Tr_G$ - but the distinction is purely conventional. What is meant by this is the element $N_G = \sum_{\sigma \in G} \sigma \in \ZZ[G]$ of any ring where $\ZZ[G]$ can be naturally embedded. If $m$ is an element of a $G$-module (that is, a $\ZZ[G]$-module) $M$, then $N_G m$ is either $\prod_{\sigma \in G} (\sigma m)$ or $\sum_{\sigma \in G} (\sigma m)$ depending on whether $M$ uses multiplicative or additive notation.} of $\widetilde{H}$) is a central idempotent of $\cQ(\cG')$ which induces a decomposition
            \begin{equation}
            \label{eq:decomposition_trace_idempotent}
                \cQ(\cG') \iisoo \cQ(\cG')e_{\widetilde{H}} \times \cQ(\cG')(1 - e_{\widetilde{H}})
            \end{equation}
            of $\cQ(\cG)$ as a product of rings. The restriction of $\varepsilon$ to the first factor yields an isomorphism ${\cQ(\cG')e_{\widetilde{H}} \iso \cQ(\cG)}$.

            So far we have only addressed the elements in setting \ref{sett:construction}. The two other necessary parameters for the Main Conjecture are the $\alpha$ and $\beta$ from setting \ref{sett:formulation}, the second of which is manifestly unrelated to the rest. As for $\alpha$, we can use the one choice for $L_\infty'/K$ to construct its counterpart for $L_\infty/K$ in a way which will make the Main Conjectures easy to compare:

            \begin{lem}
            \label{lem:change_of_alpha_quotient}
                Setting \ref{sett:functoriality_1}. The following hold:
                \begin{enumerate}[i)]
                    \item{
                        Let $\cY_{S_\infty}$ and $E_{S, T}$ be the usual modules for the extension $L_\infty/K$ (cf. section \ref{sec:the_main_complex}), and $\cY_{S_\infty}'$ and $E_{S, T}'$ their analogues for $L_\infty'/K$, that is,
                        \[
                            \cY_{S_\infty} = \bigoplus_{v \in S_\infty} \Ind_{\cG_v}^\cG \ZZ_p, \quad \cY_{S_\infty}' = \bigoplus_{v \in S_\infty} \Ind_{\cG_v'}^{\cG'} \ZZ_p
                        \]
                        and
                        \[
                            E_{S, T} = \varprojlim_n \ZZ_p \otimes \units{\cO_{L_n, S, T}}, \quad E_{S, T}' = \varprojlim_n \ZZ_p \otimes \units{\cO_{L_n', S, T}}.
                        \]
                        Suppose $\alpha' \colon \cY_{S_\infty}' \ia E_{S, T}'$ is an injective $\Lambda(\cG')$-homomorphism with torsion cokernel. Then the map
                        \[
                            \alpha \colon \cY_{S_\infty} \isoa (\cY_{S_\infty}')_{\widetilde{H}} \xrightarrow{\alpha_{\widetilde{H}}'} (E_{S, T}')_{\widetilde{H}} \xrightarrow{N_{\widetilde{H}}} (E_{S, T}')^{\widetilde{H}} = E_{S, T}
                        \]
                        is an injective $\Lambda(\cG)$-homomorphism with torsion cokernel.
                    }
                    \item{
                        Let $\overline{\chi} \in \Irr_p(\cG)$ and denote its inflation to $\cG'$ by $\chi$. If $\alpha'$ and $\alpha$ are as in part i) and $\beta \colon \CC_p \to \CC$ is a field isomorphism, the Stark-Tate regulators
                        \[
                            R_S^\beta(\alpha', \chi) = R_S^\beta(\alpha, \overline{\chi})
                        \]
                        coincide.
                    }
                \end{enumerate}
            \end{lem}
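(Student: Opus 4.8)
The plan is to handle the two assertions in turn, the first being essentially bookkeeping with $\widetilde{H}$-(co)invariants and the second the substantive point.

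For part i), I would first record the two identifications behind the construction of $\alpha$. Since $\ZZ_p$ carries the trivial action and the image of $\cG_v'$ in $\cG$ is the decomposition group $\cG_v$ of $v(L_\infty)$, a short computation with the complete tensor product gives $(\Ind_{\cG_v'}^{\cG'}\ZZ_p)_{\widetilde{H}}\iso\Ind_{\cG_v}^{\cG}\ZZ_p$, hence $(\cY_{S_\infty}')_{\widetilde{H}}\iso\cY_{S_\infty}$; and from $(\units{\cO_{L_n',S,T}})^{\Gal(L_n'/L_n)}=\units{\cO_{L_n,S,T}}$, the exactness of $\ZZ_p\otimes-$ and the left-exactness of $\varprojlim_n$ yield $(E_{S,T}')^{\widetilde{H}}=E_{S,T}$. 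As $\Gamma\subseteq Z(\cG)$, the composite $\alpha$ is then a homomorphism of $\Lambda(\cG)$-modules. To see that $\ker\alpha$ and $\coker\alpha$ are $\Lambda(\Gamma)$-torsion I would take $\widetilde{H}$-homology of the exact sequence $0\to\cY_{S_\infty}'\xrightarrow{\alpha'}E_{S,T}'\to\coker\alpha'\to0$ ($\alpha'$ is injective with torsion cokernel by hypothesis): the kernel of $\alpha_{\widetilde{H}}'$ is a quotient of $H_1(\widetilde{H},\coker\alpha')$ and its cokernel is $(\coker\alpha')_{\widetilde{H}}$, both $\Lambda(\Gamma)$-torsion --- the latter as a quotient of $\coker\alpha'$, the former because the homology of the finite group $\widetilde{H}$ is annihilated by $\abs{\widetilde{H}}$, hence by a power of $p$, a nonzero element of the domain $\Lambda(\Gamma)$. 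Likewise the kernel and cokernel of $N_{\widetilde{H}}\colon(E_{S,T}')_{\widetilde{H}}\to(E_{S,T}')^{\widetilde{H}}$ are the Tate cohomology groups $\widehat{H}^{-1}(\widetilde{H},E_{S,T}')$ and $\widehat{H}^{0}(\widetilde{H},E_{S,T}')$, again killed by $\abs{\widetilde{H}}$ and so $\Lambda(\Gamma)$-torsion. Composing, $\ker\alpha$ and $\coker\alpha$ are $\Lambda(\Gamma)$-torsion; since $\cY_{S_\infty}$ is $\Lambda(\Gamma)$-free (remark \ref{rem:existence_alpha} i)), a torsion kernel must vanish, so $\alpha$ is injective.

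For part ii), fix $n\geq n'(S)$ large enough that $\chi$ factors through $\cG_n'$; because $\widetilde{H}\subseteq\ker\chi$, the same $n$ serves for $\overline{\chi}$ over $\cG_n$ and $V_\chi=\infl_{\cG_n}^{\cG_n'}V_{\overline{\chi}}$, so $\widetilde{H}$ acts trivially on $V_\chi$. The key observation is that for every $\CC_p[\cG_n']$-module $M$ one has $\Hom_{\CC_p[\cG_n']}(V_\chi,M)=\Hom_{\CC_p[\cG_n']}(V_\chi,e_{\widetilde{H}}M)=\Hom_{\CC_p[\cG_n]}(V_{\overline{\chi}},e_{\widetilde{H}}M)$, functorially in $M$. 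The endomorphism $T'=(\lambda_{n,S}^{\beta})'\circ(\CC_p\otimes_{\ZZ_p}\varphi_n^{\alpha'})$ of $\CC_p\otimes_{\ZZ_p}\cX_{L_n',S}$ defining $R_S^\beta(\alpha',\chi)$ (with $(\lambda_{n,S}^{\beta})'$ the $p$-adic Dirichlet regulator map of $L_n'$) is $\cG_n'$-linear, hence $\widetilde{H}$-equivariant, so it restricts to $e_{\widetilde{H}}T'$ on $e_{\widetilde{H}}(\CC_p\otimes_{\ZZ_p}\cX_{L_n',S})$; therefore $R_S^\beta(\alpha',\chi)=\det\bigl((e_{\widetilde{H}}T')_\ast\mid\Hom_{\CC_p[\cG_n]}(V_{\overline{\chi}},e_{\widetilde{H}}(\CC_p\otimes_{\ZZ_p}\cX_{L_n',S}))\bigr)$. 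It then suffices to exhibit a $\CC_p[\cG_n]$-isomorphism $\theta\colon e_{\widetilde{H}}(\CC_p\otimes_{\ZZ_p}\cX_{L_n',S})\iso\CC_p\otimes_{\ZZ_p}\cX_{L_n,S}$ with $\theta\circ e_{\widetilde{H}}T'=T\circ\theta$, where $T=\lambda_{n,S}^{\beta}\circ(\CC_p\otimes_{\ZZ_p}\varphi_n^{\alpha})$ defines $R_S^\beta(\alpha,\overline{\chi})$; the determinants then coincide on isomorphic $\Hom$-spaces, and independence of $n$ is lemma \ref{lem:r_chi_independent_of_n}. For $\theta$ I would use the $\CC_p$-linearisation of $e_{\widetilde{H}}\cX_{L_n',S}^\ZZ\ia\cX_{L_n',S}^\ZZ\sa(\cX_{L_n',S}^\ZZ)_{\widetilde{H}}\iso\cX_{L_n,S}^\ZZ$ (the last map induced by restriction of places, an isomorphism after $\otimes\QQ$ by the argument of proposition \ref{prop:isomoprhisms_y_x_modules}), and the analogous natural $\CC_p[\cG_n]$-isomorphism $\vartheta\colon(\CC_p\otimes\units{\cO_{L_n',S,T}})^{\widetilde{H}}\iso\CC_p\otimes\units{\cO_{L_n,S,T}}$ on the unit side. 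The relation $\theta\circ e_{\widetilde{H}}T'=T\circ\theta$ then splits into two compatibilities. The first, $\vartheta\circ e_{\widetilde{H}}(\CC_p\otimes_{\ZZ_p}\varphi_n^{\alpha'})=(\CC_p\otimes_{\ZZ_p}\varphi_n^{\alpha})\circ\theta$, is precisely where the definition $\alpha=N_{\widetilde{H}}\circ\alpha_{\widetilde{H}}'\circ\iota$ enters: unwinding definition \ref{defn:finite_level_map}, $\varphi_n^{\alpha}$ and $\varphi_n^{\alpha'}$ are assembled from $\alpha$, resp. $\alpha'$, via $\Gp{n}$-coinvariants and the universal-norm maps $\iota_n$; $\theta$ (and the forgetful step $\cX\to\cY$) is expressed through the \emph{coinvariants} identifications whereas $\vartheta$ rests on the \emph{invariants} one, so the norm $N_{\widetilde{H}}$ in $\alpha$ is exactly the bridge between them, and the scalar $\abs{\widetilde{H}}$ it introduces is cancelled by the scalar arising in the second compatibility, $\theta\circ e_{\widetilde{H}}(\lambda_{n,S}^{\beta})'=\lambda_{n,S}^{\beta}\circ\vartheta$, which follows from the behaviour of the classical Dirichlet regulator map under restriction of the ground field: for $u\in L_n$ one has $\abs{u}_{w'}=\abs{u}_w^{[L'_{n,w'}:L_{n,w}]}$, and since $L_n'/L_n$ is Galois the local degree is constant over the $w'\mid w$, so $\sum_{w'\mid w}[L'_{n,w'}:L_{n,w}]\,w'$ is sent by $\theta$ to $\abs{\widetilde{H}}\,w$. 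I expect this cancellation of the two factors $\abs{\widetilde{H}}$ --- i.e. checking that $e_{\widetilde{H}}T'$ and $T$ are intertwined on the nose rather than up to a power of $\abs{\widetilde{H}}$ --- to be the main technical obstacle, the remaining verifications being routine diagram chases. As a consistency check, both $\Hom$-spaces have dimension equal to the common order of vanishing $r_S(\chi)=r_S(\overline{\chi})$ by lemma \ref{lem:properties_of_L-functions} v) and ii) (as $\chi=\infl_{\cG}^{\cG'}\overline{\chi}$), so the two regulators are at least determinants of endomorphisms of spaces of the same dimension.
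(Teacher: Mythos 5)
Your argument is correct and, in substance, the paper's own. Part i) is essentially identical: the same identifications $(\cY_{S_\infty}')_{\widetilde{H}}\iso\cY_{S_\infty}$ and $(E_{S,T}')^{\widetilde{H}}=E_{S,T}$, the Tate cohomology groups $\hat{H}_0$ and $\hat{H}^0$ of $\widetilde{H}$ controlling $N_{\widetilde{H}}$, and the long exact homology sequence controlling $\alpha'_{\widetilde{H}}$; concluding injectivity from torsion-freeness of $\cY_{S_\infty}$ is the same reasoning as the paper's citation of proposition \ref{prop:existence_injectivity_alpha}. In part ii) the paper assembles one large commutative diagram whose top and bottom rows are the compositions defining the two regulators and whose vertical arrows are the coinvariant projections on the $\cX$- and $\cY$-sides and the norm map $N_{\widetilde{H}}$ on the unit side; every square then commutes on the nose (the right-hand one because $\abs{N_{L_n'/L_n}(u)}_v=\prod_{w\mid v}\abs{u}_w$), and applying $\Hom_{\CC_p[\cG_n']}(V_\chi,-)$ turns the outer vertical arrows into isomorphisms by the dimension count of lemma \ref{lem:r_chi_independent_of_n}. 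Your variant restricts instead to $e_{\widetilde{H}}$-parts, which is equivalent (over $\CC_p$ the natural map from invariants to coinvariants is an isomorphism, and your $\theta$ is the restriction of the paper's projection), but it forces the scalar bookkeeping you flag: as literally displayed, your two intermediate compatibilities cannot both hold, since each is only true up to a factor of $\abs{\widetilde{H}}$ — the norm acts as $\abs{\widetilde{H}}\cdot\vartheta$ on invariant units, while $\theta$ sends the $L_n'$-regulator of $u\in L_n$ to $\abs{\widetilde{H}}$ times its $L_n$-regulator. You correctly identify that these two factors cancel in the composite, and they do, so the intertwining $\theta\circ e_{\widetilde{H}}T'=T\circ\theta$ and hence the equality of determinants is sound; the paper's choice of $N_{\widetilde{H}}$ (rather than the idempotent) as the vertical arrow is precisely what makes each individual square commute exactly and removes this obstacle.
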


            \begin{proof}
                \begin{enumerate}[i)]
                    \item{
                        We first verify some implicit claims in the definition of $\alpha$. The canonical isomorphism $\cY_{S_\infty} \iso (\cY_{S_\infty}')_{\widetilde{H}}$ is proved in the exact same way as the first part of proposition \ref{prop:isomoprhisms_y_x_modules}. The norm of $\widetilde{H}$ defines a homomorphism from coinvariants to invariants of any $\widetilde{H}$ module, and in particular $N_{\widetilde{H}} \colon (E_{S, T}')_{\widetilde{H}} \to (E_{S, T}')^{\widetilde{H}}$. This map is Galois-equivariant and hence a $\Lambda(\cG)$-homomorphism too. Lastly, one has
                        \[
                            (E_{S, T}')^{\widetilde{H}} = (\varprojlim_n \ZZ_p \otimes \units{\cO_{L_n', S, T}})^{\widetilde{H}} = \varprojlim_n \ZZ_p \otimes (\units{\cO_{L_n', S, T}})^{\widetilde{H}} = \varprojlim_n \ZZ_p \otimes \units{\cO_{L_n, S, T}} = E_{S, T}.
                        \]
                        It remains to show that $\alpha$ is injective and has torsion cokernel. The Tate cohomology groups
                        \[
                            \hat{H}_0(\widetilde{H}, E_{S, T}') = \ker(N_{\widetilde{H}}) \quad \text{and} \quad \hat{H}^0(\widetilde{H}, E_{S, T}') = \coker(N_{\widetilde{H}}),
                        \]
                        are $\abs{\widetilde{H}}$-torsion (cf. \cite{nsw} propositions 1.2.6 and 1.6.1) and hence $\Lambda(\cG)$-torsion. As for $\alpha_{\widetilde{H}}'$, we consider the usual long exact cohomology sequence
                        \[
                            \cdots \to H_1(\widetilde{H}, \coker(\alpha')) \to (\cY_{S_\infty}')_{\widetilde{H}} \xrightarrow{\alpha_{\widetilde{H}}'} (E_{S, T}')_{\widetilde{H}} \to \coker(\alpha')_{\widetilde{H}} \to 0
                        \]
                        associated to $\cY_{S_\infty}' \xhookrightarrow{\alpha'} E_{S, T}' \sa \coker(\alpha')$. The term $H_1(\widetilde{H}, \coker(\alpha')) = \hat{H}_1(\widetilde{H}, \coker(\alpha'))$, which surjects onto $\ker(\alpha_{\widetilde{H}}')$, is $\abs{\widetilde{H}}$-torsion for the same reasons as above. The cokernel of $\alpha_{\widetilde{H}}'$ coincides with $\coker(\alpha')_{\widetilde{H}}$, which is $\Lambda(\cG)$-torsion (i.e. $\Lambda(\Gamma)$-torsion) by assumption. It follows that $\alpha$ itself has torsion kernel and cokernel, which implies injectivity by proposition \ref{prop:existence_injectivity_alpha}.
                    }
                    \item{
                        The diagram
                        \begin{center}
                            \begin{tikzcd}
                                \cY_{S_\infty}' \arrow[r, equals] \arrow[d, phantom] \arrow[d, two heads] & \cY_{S_\infty}' \arrow[d, two heads] \arrow[rr, "\alpha'"]                 &                                                      & {E_{S, T}'} \arrow[d, "N_{\widetilde{H}}"] \arrow[r, equals] & {E_{S, T}'} \arrow[d, "N_{\widetilde{H}}"] \\
                                \cY_{S_\infty} \arrow[r, "\sim"]                                 & (\cY_{S_\infty}')_{\widetilde{H}} \arrow[r, "\alpha_{\widetilde{H}}'"] \arrow[r] & {(E_{S, T}')_{\widetilde{H}}} \arrow[r, "N_{\widetilde{H}}"] & {(E_{S, T}')^{\widetilde{H}}} \arrow[r, equals]              & {E_{S, T}}
                            \end{tikzcd}
                        \end{center}
                        commutes because, by definition, the norm map $N_{\widetilde{H}}$ on coinvariants is given by first choosing a lift to the original module (and it is independent of that choice). The composition of the bottom row is precisely $\alpha$.

                        Let $n \in \NN$ be  larger than $n(S)$ and $n(S')$, and large enough that $\overline{\chi}$ factors through $\cG_n$ (in particular, $\chi$ factors through $\cG_n'$). Recall that the projection $\Lambda(\cG') \sa \Lambda(\cG)$ restricts to an isomorphism $\Lambda(\Gamma') \iso \Lambda(\Gamma)$. Therefore, taking $(\Gamma')^{p^n}$- and $\Gp{n}$-coinvariants on the previous diagram yields a commutative square which fits into
                        \begin{center}
                        \vspace{0.5em}
                            \begin{small}
                                \begin{tikzcd}[row sep=large, column sep=small]
                                {\cX_{L_n', S}} \arrow[d, two heads] \arrow[r, "\sim"] & (\cX_S')_{(\Gamma')^{p^n}} \arrow[d, two heads] \arrow[r, two heads] & (\cY_{S_\infty}')_{(\Gamma')^{p^n}} \arrow[d, phantom] \arrow[d, two heads] \arrow[r, "\alpha_{(\Gamma')^{p^n}}'"] & (E_{S, T}')_{(\Gamma')^{p^n}} \arrow[d, "N_{\widetilde{H}}"] \arrow[r, "\sim"] & {\cE_{L_n', S, T}} \arrow[d, "N_{\widetilde{H}}"] \arrow[r, hook] & {\ZZ_p \otimes \units{\cO_{L_n', S, T}}} \arrow[d, "N_{\widetilde{H}}"] \arrow[r] & {\CC_p \otimes \cX_{L_n', S}} \arrow[d, two heads] \\
                                {\cX_{L_n, S}} \arrow[r, "\sim"]                       & \gcoinv{(\cX_S)}{n} \arrow[r, two heads]                       & \gcoinv{(\cY_{S_\infty})}{n} \arrow[r, "\gcoinv{\alpha}{n}"]                                           & {\gcoinv{(E_{S, T})}{n}} \arrow[r, "\sim"]                                 & {\cE_{L_n, S, T}} \arrow[r, hook]                                 & {\ZZ_p \otimes \units{\cO_{L_n, S, T}}} \arrow[r]                                 & {\CC_p \otimes \cX_{L_n, S}}
                                \end{tikzcd}
                            \end{small}
                            \vspace{0.5em}
                        \end{center}
                        The last arrow of the bottom row is given by
                        \[
                            \ZZ_p \otimes \units{\cO_{L_n, S, T}} \ia \CC_p \otimes \units{\cO_{L_n, S, T}} \xrightarrow{\lambda_{n,S}^\beta} \CC_p \otimes_{\ZZ_p} \cX_{L_n, S},
                        \]
                        where $\lambda_{n,S}^\beta$ denotes the $p$-adic Dirichlet regulator map (cf. definition \ref{defn:p-adic_dirichlet_regulator_map}), and analogously for the arrow above it. The commutativity of the rightmost square boils down to the general fact that $\abs{N_{L_n'/L_n}(u)}_v = \prod_{w \mid v} \abs{u}_w$ in the obvious notation. The remaining squares are manifestly commutative.

                        We now apply the functor $\Hom_{\CC_p[\cG_n']}(V_\chi, \CC_p \otimes_{\ZZ_p} -)$ to the above diagram except for the last column, where we apply $\Hom_{\CC_p[\cG_n']}(V_\chi, -)$ instead. On the bottom row, these functors coincide with $\Hom_{\CC_p[\cG_n]}(V_{\overline{\chi}}, \CC_p \otimes_{\ZZ_p} -)$ and $\Hom_{\CC_p[\cG_n]}(V_{\overline{\chi}}, -)$, respectively. Both the first and last vertical arrows of the resulting diagram are the $\CC_p$-linear homomorphism
                        \[
                            \Hom_{\CC_p[\cG_n']}(V_\chi, \CC_p \otimes_{\ZZ_p} \cX_{L_n', S}) \sa \Hom_{\CC_p[\cG_n]}(V_{\overline{\chi}}, \CC_p \otimes_{\ZZ_p} \cX_{L_n, S})
                        \]
                        (note that $\CC_p[\cG_n']$ is semisimple), which is in fact an isomorphism by the same argument used at the beginning of the proof of lemma \ref{lem:r_chi_independent_of_n}. This shows that the $\CC_p$-determinants of the two rows of the new diagram coincide - but these are precisely the regulators $R_S^\beta(\alpha', \chi)$ and $R_S^\beta(\alpha, \overline{\chi})$.
                    }
                \end{enumerate}
            \end{proof}

            The equality of regulators in the previous lemma covers the analytic side of the functoriality under change of $L_\infty$. On the homological side, we will resort to the good behaviour of refined Euler characteristics with respect to extension of scalars (lemma \ref{lem:functoriality_rec}). It is therefore necessary to show that this is precisely the relation between the complexes constructed from $L_\infty'/K$ and $L_\infty/K$. The key result to do so is due to Fukaya and Kato \cite{fk}.

            The following piece of notation will be useful: if $\Lambda(G)$ is the usual Iwasawa algebra of a profinite group $G$, we denote by $\iota \colon \Lambda(G) \to \Lambda(G)$ the continuous isomorphism of $\ZZ_p$-modules which sends an element $\sigma \in G \subseteq \Lambda(G)$ to $\sigma^{-1}$. Given a topological \textit{right} $\Lambda(G)$-module $M$, we denote by $M^\iota$ the topological \textit{left} $\Lambda(G)$-module consisting of the topological abelian group $M$ endowed with the $\Lambda(G)$-action
            \begin{equation}
            \label{eq:involution_action}
                \lambda \cdot^\iota m = m \cdot \iota(\lambda)
            \end{equation}
            for $m \in M$, $\lambda \in \Lambda(G)$.

            \begin{prop}
            \label{prop:isomorphism_complexes_derived_extension}
                Setting \ref{sett:functoriality_1}. Let $(\cC_{S, T}')\q$ and $\cC_{S, T}\q$ be the complexes given by definition \ref{defn:main_complex} for $L_\infty'/K$ and $L_\infty/K$, respectively. Denote by $\Lambda(\cG) \otimes_{\Lambda(\cG')}^\LL -$ the derived extension of scalars  induced by $\Lambda(\cG') \sa \Lambda(\cG)$. Then
                \[
                    \cC_{S, T}\q \iisoo \Lambda(\cG) \otimes_{\Lambda(\cG')}^\LL (\cC_{S, T}')\q
                \]
                in $\cD(\Lambda(\cG))$.
            \end{prop}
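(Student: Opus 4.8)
The strategy is to reduce the statement to the corresponding claim for the complex $\cB_{S, T}\q$ of Burns, Kurihara and Sano via theorem \ref{thm:iso_bks_complex} (applied to both $L_\infty'/K$ and $L_\infty/K$), and then invoke the Fukaya--Kato descent result for $R\Gamma_c$-complexes. More precisely, theorem \ref{thm:iso_bks_complex} gives $\cC_{S, T}\q \iso \cB_{S, T}\q = \varprojlim_n \cB_{L_n, S, T}\q$ and $(\cC_{S, T}')\q \iso \cB_{S, T}'\q = \varprojlim_n \cB_{L_n', S, T}\q$ in the respective derived categories, where (cf. definition \ref{defn:complexes_bks}) $\cB_{F, S, \varnothing}\q = R\Hom_{\ZZ_p}(R\Gamma_c(H_{F, S}, \ZZ_p), \ZZ_p)[-2]$ and the $T$-modification is given by the exact triangle \eqref{eq:exact_triangle_bks}. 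So the first step is to establish the analogous derived base-change isomorphism at \emph{each finite layer}: for fixed $n$, one has $\cB_{L_n, S, T}\q \iso \Lambda(\cG_n) \otimes_{\Lambda(\cG_n')}^\LL \cB_{L_n', S, T}\q$ in $\cD(\Lambda(\cG_n))$, where $\cG_n' \sa \cG_n$ has kernel $\widetilde{H} \iso \Gal(L_n'/L_n)$.

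First I would treat the case $T = \varnothing$. The maximal $S$-ramified extension $M_S$ of $L_\infty'$ is the same as that of $L_\infty$ and of all the $L_n, L_n'$, so $H_{L_n, S} = \Gal(M_S/L_n)$ and $H_{L_n', S} = \Gal(M_S/L_n')$ with $H_{L_n', S} \trianglelefteq H_{L_n, S}$ of index $[L_n' : L_n] = \abs{\widetilde{H}}$ (and $H_{L_n', S}$ normal in $G_S = \Gal(M_S/K)$). The base-change property of compactly supported cohomology under the open subgroup $H_{L_n', S} \leq H_{L_n, S}$ --- which is exactly the kind of statement recorded by Fukaya and Kato in \cite{fk} (and used, for example, in the cited theorems from \cite{lim} and \cite{lim_sharifi} on Tate and Nekov\'a\v{r} duality that already appeared in the proof of theorem \ref{thm:iso_bks_complex}) --- gives a canonical isomorphism $\ZZ_p \otimes_{\Lambda(H_{L_n, S}/H_{L_n', S})}^\LL R\Gamma_c(H_{L_n, S}, \ZZ_p) \iso R\Gamma_c(H_{L_n', S}, \ZZ_p)$ of complexes of $\Lambda(\cG_n')$-modules; equivalently, after the shift and the $R\Hom_{\ZZ_p}(-, \ZZ_p)$-duality (which, being $\ZZ_p$-linear and commuting with the relevant finite group actions, turns derived coinvariants into derived extension of scalars along $\Lambda(\cG_n') \sa \Lambda(\cG_n)$ up to the involution $\iota$ introduced before the statement), this becomes $\Lambda(\cG_n) \otimes_{\Lambda(\cG_n')}^\LL \cB_{L_n', S, \varnothing}\q \iso \cB_{L_n, S, \varnothing}\q$. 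The bookkeeping with $\iota$ and the transpose action on duals has to be done carefully, but it is formal once one uses that $R\Hom_{\ZZ_p}(\Lambda(\cG_n) \otimes_{\Lambda(\cG_n')}^\LL C\q, \ZZ_p) \iso R\Hom_{\Lambda(\cG_n')}(\Lambda(\cG_n), R\Hom_{\ZZ_p}(C\q, \ZZ_p))$ and that for the finite group $\widetilde{H}$ both adjoints of restriction/inflation agree. To incorporate $T$: derived base change along $\Lambda(\cG_n') \sa \Lambda(\cG_n)$ is exact (it is a functor between derived categories) and the middle term of \eqref{eq:exact_triangle_bks} for $L_n'$, namely $\bigoplus_{w' \in T(L_n')} \ZZ_p \otimes \units{\kappa(w')} = \bigoplus_{v \in T}\Ind_{\cG_{n, v}'}^{\cG_n'} \ZZ_p \otimes \units{\kappa(w')}$, satisfies $\Lambda(\cG_n) \otimes_{\Lambda(\cG_n')}^\LL \Ind_{\cG_{n, v}'}^{\cG_n'}(-) \iso \Ind_{\cG_{n, v}}^{\cG_n}((-)_{\widetilde{H}})$ concentrated in degree $0$ (since $\widetilde{H}$ is finite and these are finite modules, one checks there is no higher homology; alternatively this is part of the $A_S$-type formalism from \cite{nsw} XI\S3 already used earlier), and $(\ZZ_p \otimes \units{\kappa(w')})_{\widetilde{H}} \iso \ZZ_p \otimes \units{\kappa(w)}$ because the residue extension $\kappa(w')/\kappa(w)$ induces the norm map on $p$-parts. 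Applying $\Lambda(\cG_n) \otimes_{\Lambda(\cG_n')}^\LL -$ to the triangle \eqref{eq:exact_triangle_bks} for $L_n'$ and comparing with that for $L_n$ then yields $\cB_{L_n, S, T}\q \iso \Lambda(\cG_n) \otimes_{\Lambda(\cG_n')}^\LL \cB_{L_n', S, T}\q$, with compatible identifications of the second and third terms coming from the natural norm and transition maps --- exactly as at the end of the proof of theorem \ref{thm:iso_bks_complex}.

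The second step is to pass to the inverse limit over $n$. One has $\Lambda(\cG) = \varprojlim_n \Lambda(\cG_n)$ and $\Lambda(\cG') = \varprojlim_n \Lambda(\cG_n')$, and $\Lambda(\cG) = \Lambda(\cG) \ctp_{\Lambda(\cG_n)} \Lambda(\cG_n)$ is compatible with $\Lambda(\cG_n) \otimes_{\Lambda(\cG_n')}^\LL -$ in the sense that $\Lambda(\cG) \otimes_{\Lambda(\cG')}^\LL (\varprojlim_n \cB_{L_n', S, T}\q) \iso \varprojlim_n (\Lambda(\cG_n) \otimes_{\Lambda(\cG_n')}^\LL \cB_{L_n', S, T}\q)$ in $\cD(\Lambda(\cG))$. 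The cleanest way to see this is to work with strictly perfect representatives: $\cB_{L_n', S, T}\q$ is perfect over $\Lambda(\cG_n')$ (this follows from proposition \ref{prop:complexes_are_perfect} via theorem \ref{thm:iso_bks_complex}), and one can choose the representatives compatibly along the tower so that $\varprojlim_n$ is term-wise and $\Lambda(\cG_n) \otimes_{\Lambda(\cG_n')}^\LL(-)$ becomes the ordinary tensor product of finitely generated projective modules, which commutes with the inverse limit by the usual argument (the transition maps being surjective, there is no $\varprojlim^1$). Combining with the finite-level isomorphisms and the identifications $\cC_{S, T}\q \iso \varprojlim_n \cB_{L_n, S, T}\q$ and $(\cC_{S, T}')\q \iso \varprojlim_n \cB_{L_n', S, T}\q$ from theorem \ref{thm:iso_bks_complex} then gives $\cC_{S, T}\q \iso \Lambda(\cG) \otimes_{\Lambda(\cG')}^\LL (\cC_{S, T}')\q$, as desired.

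\textbf{Main obstacle.} The genuinely delicate point is the finite-level duality bookkeeping: getting the exact form of the Fukaya--Kato base-change isomorphism for $R\Gamma_c$ right, and then tracking how $R\Hom_{\ZZ_p}(-, \ZZ_p)$ converts derived $\widetilde{H}$-coinvariants (which is what the open-subgroup formula for $R\Gamma_c$ naturally produces) into the derived extension of scalars $\Lambda(\cG_n) \otimes_{\Lambda(\cG_n')}^\LL(-)$ --- including the appearance of the involution $\iota$ and the transpose of the $\widetilde{H}$-action on the $\ZZ_p$-dual, and verifying that all of it is compatible with the Galois $\cG_n'$-structure and with the transition maps in the tower. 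Once these compatibilities are nailed down, the rest --- exactness of derived base change, handling of the $T$-term, and the passage to the limit via compatible perfect representatives --- is routine and parallels arguments already carried out in section \ref{sec:description_in_terms_of_rgamma_complexes}.
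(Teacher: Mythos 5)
Your route is genuinely different from the paper's, and while it could in principle be pushed through, the two steps you defer as ``formal'' or ``routine'' are exactly where the real content lies, so as written the argument has gaps. First, your displayed base-change formula is in the wrong direction: $R\Gamma_c(H_{L_n, S}, \ZZ_p)$ carries an action of $\cG_n$, on which $\widetilde{H}$ does not act, so one cannot take its $\widetilde{H}$-coinvariants; the correct statement is that the derived $\widetilde{H}$-coinvariants of $R\Gamma_c(H_{L_n', S}, \ZZ_p)$ recover $R\Gamma_c(H_{L_n, S}, \ZZ_p)$. More seriously, converting derived coinvariants \emph{inside} $R\Hom_{\ZZ_p}(-, \ZZ_p)$ into derived extension of scalars \emph{outside} produces, a priori, derived $\widetilde{H}$-\emph{invariants} rather than coinvariants; your claim that ``both adjoints of restriction agree'' is false for arbitrary complexes when $p \mid \abs{\widetilde{H}}$ and only holds here because the norm map is an isomorphism on finitely generated projective $\Lambda(\cG_n')$-modules, so perfectness must be invoked at precisely this point. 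Finally, the commutation of $\Lambda(\cG) \otimes_{\Lambda(\cG')}^\LL -$ with $\varprojlim_n$ requires the existence of strictly perfect representatives chosen compatibly along the whole tower; your remark about surjective transition maps and $\varprojlim^1$ addresses exactness of $\varprojlim$ on modules but not this derived-categorical compatibility, which is the nontrivial content packaged by Fukaya--Kato's proposition 1.6.5.

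The paper sidesteps all three issues by working at the infinite level from the start. It first reduces to $T = \varnothing$ using the exact triangle of proposition \ref{prop:exact_triangle_difference_s_and_t} (not the BKS triangle \eqref{eq:exact_triangle_bks}), then uses \eqref{eq:iso_inverse_zp(1)} together with Shapiro's lemma to identify $(\cC_{S, \varnothing}')\q$ with $R\Gamma(G_S, \Lambda(\cG')^\iota(1))[1]$ --- note this is the \emph{ordinary} cohomology of the fixed group $G_S$ with the free coefficient module $\Lambda(\cG')^\iota(1)$, so no dualisation of $R\Gamma_c$ and no limit over $n$ is needed --- and then applies \cite{fk} proposition 1.6.5 once to commute $\Lambda(\cG) \otimes_{\Lambda(\cG')}^\LL -$ past $R\Gamma(G_S, -)$. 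The $T$-term is handled via the explicit two-term projective resolution of $\overline{\Ind}_{\cG_v'}^{\cG'}\ZZ_p(1)[0]$ from lemma \ref{lem:change_t_perfect_representative}, on which the derived tensor product is computed directly. If you want to keep your finite-level strategy, you would need to supply the compatible-representative argument honestly; otherwise the cleaner path is to reformulate at infinite level as the paper does and let the Fukaya--Kato descent result do the work in a single application.
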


            \begin{proof}
                Consistently with previous notation, let $M_S$ denote the maximal $S$-ramified extension of $K$, which coincides with that of $L_\infty'$ and any intermediate field. Set $G_S = \Gal(M_S/K)$ and ${H_{L_n, S} = \Gal(M_S/L_n)}$ for $n \in \NN$, and analogously for $H_{L_n', S}$. Note that $H_{L_n, S}$ and $H_{L_n', S}$ are open and normal in $G_S$ for all $n$.

                By the same argument as in proposition \ref{prop:exact_triangle_difference_s_and_t}, the complex $(\cC_{S, T}')\q$ lies in an exact triangle
                \[
                    (\cC_{S, T}')\q \to (\cC_{S, \varnothing}')\q \to \bigoplus_{v \in T} \overline{\Ind}_{\cG_v'}^{\cG'} \ZZ_p(1)[0] \to.
                \]
                in the notation introduced therein. Since derived functors are triangulated, extension of scalars yields
                \begin{equation}
                \label{eq:exact_triangle_derived_tensor}
                    \Lambda(\cG) \otimes_{\Lambda(\cG')}^\LL (\cC_{S, T}')\q \to \Lambda(\cG) \otimes_{\Lambda(\cG')}^\LL (\cC_{S, \varnothing}')\q \to \Lambda(\cG) \otimes_{\Lambda(\cG')}^\LL \bigoplus_{v \in T} \overline{\Ind}_{\cG_v'}^{\cG'} \ZZ_p(1)[0] \to
                \end{equation}
                in $\cD(\Lambda(\cG))$. We consider the middle term first. As shown in \eqref{eq:iso_inverse_zp(1)}, $(\cC_{S, \varnothing}')\q$ is isomorphic in $\cD(\Lambda(\cG'))$ to $\varprojlim_n R\Gamma(H_{L_n', S}, \ZZ_p(1))[1]$. By Shapiro's lemma (see for instance \cite{lim} lemma 5.2.3), we have
                \[
                   R\Gamma(H_{L_n', S}, \ZZ_p(1)) \iisoo  R\Gamma(G_S, \ZZ_p[G_S/H_{L_n', S}]^\iota \otimes_{\ZZ_p} \ZZ_p(1)),
                \]
                where $\ZZ_p[G_S/H_{L_n', S}]$ is regarded as a right $G_S$-module so that $\ZZ_p[G_S/H_{L_n', S}]^\iota$ is a left $G_S$-module. This endows $\ZZ_p[G_S/H_{L_n', S}]^\iota \otimes_{\ZZ_p} \ZZ_p(1)$ with a left $\Lambda(G_S)$-module structure via
                \[
                    \sigma (\lambda \otimes z) = (\sigma \cdot^\iota \lambda) \otimes \sigma z = (\lambda \sigma^{-1}) \otimes \sigma z
                \]
                for $\sigma \in G_S$, under which it is clearly isomorphic to $\ZZ_p[\cG_n']^\iota(1)$. Therefore, one has
                \begin{equation}
                \label{eq:iso_c_rgamma_infinite}
                    (\cC_{S, \varnothing}')\q \iso \varprojlim_n R\Gamma(G_S, \ZZ_p[\cG_n']^\iota(1))[1] \iso R\Gamma(G_S, \varprojlim_n \ZZ_p[\cG_n']^\iota(1))[1] = R\Gamma(G_S, \Lambda(\cG')^\iota(1))[1]
                \end{equation}
                in $\cD(\Lambda(\cG'))$, where the second isomorphism is \cite{lim} lemma 5.2.3. By the same token, $\cC_{S, \varnothing}\q$ is isomorphic to $R\Gamma(G_S, \Lambda(\cG)^\iota(1))[1]$ in $\cD(\Lambda(\cG))$.

                We now apply proposition 1.6.5 from \cite{fk} (case 1). The ring $\Lambda = \Lambda(\cG')$ satisfies the necessary hypotheses by 1.4.1 and 1.4.2 therein, and $T = \Lambda(\cG')^\iota(1)$ is a free $\Lambda$-module with a compatible continuous $G = G_S$-action as above. The reason $G$ satisfies condition (ii) in the reference was explained at the beginning of the proof of proposition \ref{prop:complexes_are_perfect}. Condition (i) can be verified with some work using \cite{nsw} propositions 1.6.7, 8.6.10 and 10.11.3. The upshot is that
                \[
                    \Lambda(\cG) \otimes_{\Lambda(\cG')}^\LL R\Gamma(G_S, \Lambda(\cG')^\iota(1))[1] \iso R\Gamma(G_S, \Lambda(\cG) \otimes_{\Lambda(\cG')} \Lambda(\cG')^\iota(1))[1] \iso R\Gamma(G_S, \Lambda(\cG)^\iota(1))[1],
                \]
                and thus $\Lambda(\cG) \otimes_{\Lambda(\cG')}^\LL (\cC_{S, \varnothing}')\q \iso \cC_{S, \varnothing}\q$.

                Let us now address the last term in \eqref{eq:exact_triangle_derived_tensor}. By the additivity of $\Lambda(\cG) \otimes_{\Lambda(\cG')} -$, it suffices to determine $\Lambda(\cG) \otimes_{\Lambda(\cG')}^\LL \overline{\Ind}_{\cG_v'}^{\cG'} \ZZ_p(1)[0]$ for $v \in T$. The strictly perfect representative of $\overline{\Ind}_{\cG_v'}^{\cG'} \ZZ_p(1)[0]$ obtained in lemma \ref{lem:change_t_perfect_representative} doubles as a projective resolution to compute the derived tensor product:
                \begin{align*}
                    \Lambda(\cG) \otimes_{\Lambda(\cG')}^\LL \overline{\Ind}_{\cG_v'}^{\cG'} \ZZ_p(1)[0] & \iso [\stackrel{-1}{\Lambda(\cG) \otimes_{\Lambda(\cG')} \Lambda(\cG')} \xrightarrow{\Lambda(\cG) \otimes_{\Lambda(\cG')} (1 - \fN(v_0)\varphi_{v_0(L_\infty')}^{-1})_r} \stackrel{0}{\Lambda(\cG) \otimes_{\Lambda(\cG')} \Lambda(\cG')}] \\
                    & \iso [\stackrel{-1}{\Lambda(\cG)} \xrightarrow{(1 - \fN(v_0)\varphi_{v_0(L_\infty)}^{-1})_r} \stackrel{0}{\Lambda(\cG)}] \\
                    & \iso \overline{\Ind}_{\cG_v}^{\cG} \ZZ_p(1)[0].
                \end{align*}

                The result now follows from the diagram of exact triangles
                \begin{center}
                    \begin{tikzcd}[column sep=small]
                        \Lambda(\cG) \otimes_{\Lambda(\cG')}^\LL (\cC_{S, T}')\q \arrow[r] \arrow[d, dashed, "\rsim"] & \Lambda(\cG) \otimes_{\Lambda(\cG')}^\LL (\cC_{S, \varnothing}')\q \arrow[r] \arrow[d, "\rsim"] & \Lambda(\cG) \otimes_{\Lambda(\cG')}^\LL \bigoplus_{v \in T} \overline{\Ind}_{\cG_v'}^{\cG'} \ZZ_p(1)[0] \arrow[r] \arrow[d, "\rsim"] & {} \\
                        \cC_{S, T}\q \arrow[r]                   & \cC_{S, \varnothing}\q \arrow[r]                   & \bigoplus_{v \in T} \overline{\Ind}_{\cG_v}^{\cG} \ZZ_p(1)[0] \arrow[r]                   & {}
                    \end{tikzcd}
                \end{center}
                where the second square commutes because both horizontal arrows come from the same maps on the level of Galois modules.
            \end{proof}

            The same technique allows us to prove a fact which was necessary for the construction of the finite-level maps $\varphi_n^\alpha$ in section \ref{sec:morphisms_on_finite_level}, but whose proof we deferred to a later time. It concerns passage to finite level in setting \ref{sett:construction}, rather than the change of groups $\cG' \sa \cG$ from setting \ref{sett:functoriality_1} - the choice of placement here is motivated by the close similarity to the previous argument. Note that none of the objects involved in the above proposition rely on $\varphi_n^\alpha$, and hence no circular dependencies arise.

            \begin{lem}
            \label{lem:complexes_coinvariants_finite_level}
                Setting \ref{sett:construction}. For all $n \in \NN$, one has
                \[
                    \cB_{L_n, S, T}\q \iso \Lambda(\cG_n) \otimes_{\Lambda(\cG)}^\LL \cC_{S, T}\q
                \]
                in the derived category $\cD(\Lambda(\cG_n))$, where $\cB_{L_n, S, T}\q$ is defined as in \ref{defn:complexes_bks}.
            \end{lem}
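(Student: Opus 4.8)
The plan is to mirror the structure of the proof of Proposition~\ref{prop:isomorphism_complexes_derived_extension}, replacing the change of Galois group $\cG' \sa \cG$ by the passage to finite level $\cG \sa \cG_n$ via the canonical augmentation $\Lambda(\cG) \sa \Lambda(\cG_n)$. First I would reduce to the case $T = \varnothing$: the exact triangle from Proposition~\ref{prop:exact_triangle_difference_s_and_t} relates $\cC_{S, T}\q$ to $\cC_{S, \varnothing}\q$ via $\bigoplus_{v \in T^p} \Ind_{\cG_v}^\cG \ZZ_p(1)[0]$, and applying $\Lambda(\cG_n) \otimes_{\Lambda(\cG)}^\LL -$ gives a corresponding triangle; on the other side, the limit of the defining triangle \eqref{eq:exact_triangle_bks} for the $\cB$-complexes yields an analogous triangle for $\cB_{L_n, S, T}\q$ whose third term is $\bigoplus_{w \in T(L_n)} \ZZ_p \otimes \units{\kappa(w)}[0]$. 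One checks that the derived base change of $\bigoplus_{v \in T^p} \Ind_{\cG_v}^\cG \ZZ_p(1)[0]$ is exactly this finite-level term, using the strictly perfect representative from Lemma~\ref{lem:change_t_perfect_representative} as a projective resolution (just as in Proposition~\ref{prop:isomorphism_complexes_derived_extension}); so it suffices to prove $\cB_{L_n, S, \varnothing}\q \iso \Lambda(\cG_n) \otimes_{\Lambda(\cG)}^\LL \cC_{S, \varnothing}\q$.

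For the $T = \varnothing$ case I would use the identification \eqref{eq:iso_inverse_zp(1)}, namely $\cC_{S, \varnothing}\q \iso \varprojlim_m R\Gamma(H_{L_m, S}, \ZZ_p(1))[1]$ in $\cD(\Lambda(\cG))$, and then rewrite this as $R\Gamma(G_S, \Lambda(\cG)^\iota(1))[1]$ exactly as in equations \eqref{eq:iso_c_rgamma_infinite} via Shapiro's lemma (\cite{lim} lemma 5.2.3), with $G_S = \Gal(M_S/K)$ and $H_{L_m, S} = \Gal(M_S/L_m)$. The same argument at finite level gives $\cB_{L_n, S, \varnothing}\q = R\Hom_{\ZZ_p}(R\Gamma_c(H_{L_n, S}, \ZZ_p), \ZZ_p)[-2] \iso R\Gamma(H_{L_n, S}, \ZZ_p(1))[1]$ by Nekov\'a\v{r} duality (equations \eqref{eq:rgamma_finite_level}--\eqref{eq:iso_inverse_zp(1)} specialised to the single layer $L_n$), and then Shapiro's lemma rewrites this as $R\Gamma(G_S, \ZZ_p[\cG_n]^\iota(1))[1]$. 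So the claim reduces to
\[
    \Lambda(\cG_n) \otimes_{\Lambda(\cG)}^\LL R\Gamma(G_S, \Lambda(\cG)^\iota(1))[1] \iisoo R\Gamma(G_S, \ZZ_p[\cG_n]^\iota(1))[1]
\]
in $\cD(\Lambda(\cG_n))$, which is an instance of \cite{fk} proposition 1.6.5 (case 1) applied to $\Lambda = \Lambda(\cG)$, the free $\Lambda$-module $T = \Lambda(\cG)^\iota(1)$ with its compatible continuous $G_S$-action, and the quotient $\Lambda(\cG_n) = \Lambda \otimes_\Lambda \Lambda(\cG_n)$; one notes $\Lambda(\cG) \otimes_{\Lambda(\cG)} \Lambda(\cG)^\iota(1) \iso \ZZ_p[\cG_n]^\iota(1)$ as $\Lambda(G_S)$-modules. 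The hypotheses on $\Lambda(\cG)$ and $G_S$ are verified precisely as in Proposition~\ref{prop:isomorphism_complexes_derived_extension} (via \cite{nsw} propositions 1.4.1, 1.4.2, 1.6.7, 8.6.10, 10.11.3 and the cohomological dimension bound from the proof of Proposition~\ref{prop:complexes_are_perfect}).

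Finally I would assemble the two triangles into a diagram with vertical isomorphisms between corresponding terms, as in the last display of Proposition~\ref{prop:isomorphism_complexes_derived_extension}, checking that the square involving the second arrows commutes because both horizontal maps descend from the same maps on Galois modules (the norm-residue identification from \cite{nsw} theorem 11.2.3 at finite and infinite level are compatible under restriction). The main obstacle I anticipate is the bookkeeping needed to match up the finite-level Nekov\'a\v{r}/Tate duality isomorphisms with the limit of those at infinite level in a way that is genuinely functorial in the transition maps — essentially, verifying that the isomorphism \eqref{eq:iso_inverse_zp(1)} is the inverse limit of its finite-level analogues. This is morally clear since all the dualities in \cite{lim} and \cite{lim_sharifi} are stated at a fixed finite level and the transition maps are the natural restriction maps, but spelling out the compatibility (and ensuring no sign or shift discrepancy arises when passing through $R\Gamma_c$) requires care. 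Once that is in place, the derived base-change identity follows formally from the Fukaya--Kato proposition and Shapiro's lemma, exactly as in the proof of Proposition~\ref{prop:isomorphism_complexes_derived_extension}.
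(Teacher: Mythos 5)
Your proposal is correct and follows essentially the same route as the paper: reduce to $T = \varnothing$, use the identification $\cC_{S, \varnothing}\q \iso R\Gamma(G_S, \Lambda(\cG)^\iota(1))[1]$, apply the Fukaya--Kato base-change proposition together with Shapiro's lemma, and identify the result with $\cB_{L_n, S, \varnothing}\q$ via the finite-level duality \eqref{eq:rgamma_finite_level}. The compatibility issue you flag at the end is not actually needed, since the lemma only asserts an isomorphism in $\cD(\Lambda(\cG_n))$ at each fixed $n$ rather than any compatibility with the transition maps or with the inverse-limit isomorphism \eqref{eq:iso_inverse_zp(1)}.
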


            \begin{proof}
                It suffices to prove the result for $T = \varnothing$ for the same reasons as above. We again have
                \[
                    \Lambda(\cG_n) \otimes_{\Lambda(\cG)}^\LL \cC_{S, \varnothing}\q \iso R\Gamma(G_S, \Lambda(\cG_n) \otimes_{\Lambda(\cG)} \Lambda(\cG)^\iota(1))[1] \iso R\Gamma(H_{L_n, S}, \ZZ_p(1))[1]
                \]
                in $\cD(\Lambda(\cG_n))$ by \cite{fk} proposition 1.6.5 (whose hypotheses are still satisfied) and Shapiro's lemma. Since $R\Gamma(H_{L_n, S}, \ZZ_p(1))[1]$ is isomorphic to $\cB_{L_n, S, T}\q$ by \eqref{eq:rgamma_finite_level}, we are done.
            \end{proof}

            We can now prove the first functoriality result:
            \begin{prop}
            \label{prop:functoriality_1}
                Setting \ref{sett:functoriality_1}. Let $\alpha'$ and $\alpha$ be as in lemma \ref{lem:change_of_alpha_quotient} and $\beta$ as in setting \ref{sett:formulation}. Then:
                \begin{enumerate}[i)]
                    \item{
                        Given $\overline{\chi} \in \Irr_p(\cG)$, set $\chi = \infl_{\cG}^{\cG'} \overline{\chi} \in \Irr_p(\cG')$. Then \hyperref[conje:ic]{IC($L_\infty'/K, \chi, L', S, T, \alpha', \beta$)} holds if and only if \hyperref[conje:ic]{IC($L_\infty/K, \overline{\chi}, L, S, T, \alpha, \beta$)} does.
                    }
                    \item{
                        \hyperref[conje:emc]{eMC($L_\infty'/K, L', S, T, \alpha', \beta$)} implies \hyperref[conje:emc]{eMC($L_\infty/K, L, S, T, \alpha, \beta$)}.
                    }
                    \item{
                        \hyperref[conje:emcu]{eMC\textsuperscript{u}($L_\infty'/K, L', S, T, \alpha', \beta$)} implies  \hyperref[conje:emcu]{eMC\textsuperscript{u}($L_\infty/K, L, S, T, \alpha, \beta$)}.
                    }
                \end{enumerate}
            \end{prop}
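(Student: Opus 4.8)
The plan is to reduce the equivariant Main Conjecture for $L_\infty/K$ to that for $L_\infty'/K$ by transferring the zeta element along the surjection $\varepsilon \colon \cQ(\cG') \sa \cQ(\cG)$, using the structural results already assembled. First I would treat part i): having fixed $\alpha$ as the composite map from lemma \ref{lem:change_of_alpha_quotient}, the regulators $R_S^\beta(\alpha', \chi)$ and $R_S^\beta(\alpha, \overline\chi)$ agree by part ii) of that lemma, and the $L$-value $L_{K, S, T}^\ast(\beta\check\chi, 0) = L_{K, S, T}^\ast(\beta(\infl_\cG^{\cG'}\check{\overline\chi}), 0)$ is unchanged under inflation by lemma \ref{lem:properties_of_L-functions} ii). Since $\varepsilon$ induces a canonical isomorphism $\cQ^c(\Gamma_\chi) \iso \cQ^c(\Gamma_{\overline\chi})$ carrying $\gamma_\chi$ to $\gamma_{\overline\chi}$ (lemma \ref{lem:functoriality_1_chi_parts} iii)), it intertwines the twisted evaluation maps $ev_{\gamma_{\chi \otimes \rho}}$ and $ev_{\gamma_{\overline\chi \otimes \overline\rho}}$ under the identification \eqref{eq:functoriality_1_artin_characters} of type-$W$ characters. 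It remains to check that the two sets $\cK_S^{\alpha'}(\chi)$ and $\cK_S^\alpha(\overline\chi)$ agree up to finitely many characters: this should follow by comparing the relevant invariants $n(S', \alpha')$ and $n(S, \alpha)$, or more robustly just from lemma \ref{lem:properties_of_r_chi} i), since both sets contain almost all type-$W$ characters. Then $F := \varepsilon^{-1}(F_{S, T, \overline\chi}^{\alpha, \beta})$ (resp. $\varepsilon(F_{S, T, \chi}^{\alpha', \beta})$) satisfies the other Interpolation Conjecture, giving the equivalence in i) — and by corollary \ref{cor:uniqueness_series_type_w} the interpolating elements correspond under $\varepsilon$.

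For part ii), I would first invoke i) to obtain that \hyperref[conje:ic]{IC($L_\infty/K, \overline\chi, L, S, T, \alpha, \beta$)} holds for all $\overline\chi \in \Irr_p(\cG)$, with $F_{S, T, \overline\chi}^{\alpha, \beta} = \varepsilon(F_{S, T, \infl_\cG^{\cG'}\overline\chi}^{\alpha', \beta})$. Next, given the zeta element $\zeta' = \zeta_{S, T}^{\alpha', \beta} \in K_1(\cQ(\cG'))$ provided by the conjecture for $L_\infty'/K$, I would set $\zeta := K_1(\varepsilon)(\zeta') \in K_1(\cQ(\cG))$. For the analytic side, functoriality of the reduced norm along $\varepsilon$ together with the commuting square in lemma \ref{lem:functoriality_1_chi_parts} shows $\psi_{\overline\chi}(\zeta) = \varepsilon(\psi_{\infl_\cG^{\cG'}\overline\chi}(\zeta')) = \varepsilon(F_{S, T, \infl_\cG^{\cG'}\overline\chi}^{\alpha', \beta}) = F_{S, T, \overline\chi}^{\alpha, \beta}$ for all $\overline\chi$, as required. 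For the algebraic side, I need $\partial(\zeta) = -\chi_{\Lambda(\cG), \cQ(\cG)}(\cC_{S, T}\q, t^\alpha)$. Here proposition \ref{prop:isomorphism_complexes_derived_extension} gives $\cC_{S, T}\q \iso \Lambda(\cG) \otimes_{\Lambda(\cG')}^\LL (\cC_{S, T}')\q$ in $\cD(\Lambda(\cG))$, and the diagram \eqref{eq:functoriality_relative_k0_payoff} (with $R = \Lambda(\cG')$, $S = \cQ(\cG')$, $R' = \Lambda(\cG)$, $S' = \cQ(\cG)$, the vertical maps induced by $\aug_{\widetilde H}$ and $\varepsilon$) makes $\partial$ compatible with $K_1(\varepsilon)$ and $K_0(\aug_{\widetilde H}, \varepsilon)$; so it suffices to verify that $K_0(\aug_{\widetilde H}, \varepsilon)$ sends $\chi_{\Lambda(\cG'), \cQ(\cG')}((\cC_{S, T}')\q, t^{\alpha'})$ to $\chi_{\Lambda(\cG), \cQ(\cG)}(\cC_{S, T}\q, t^\alpha)$. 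This is precisely the good behaviour of refined Euler characteristics under extension of scalars, proved in appendix \ref{app:functoriality_of_refined_euler_characteristics} (lemma \ref{lem:functoriality_rec}), once one checks that the trivialisations $t^{\alpha'}$ and $t^\alpha$ are compatible with the scalar extension — which reduces, via the explicit formula \eqref{eq:integral_trivialisation}, to the compatibility of $\alpha'$ with $\alpha$ recorded in the commuting diagram in the proof of lemma \ref{lem:change_of_alpha_quotient} ii), together with the analogous compatibilities of $\pi$ and $\varphi$.

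For part iii), by remark \ref{rem:emcu} i) the only extra ingredient beyond ii) is that $\nr \colon K_1(\cQ(\cG)) \to \units{Z(\cQ(\cG))}$ is injective given that $\nr \colon K_1(\cQ(\cG')) \to \units{Z(\cQ(\cG'))}$ is. This follows from the decomposition \eqref{eq:decomposition_trace_idempotent}: $\varepsilon$ restricts to a ring isomorphism $\cQ(\cG')e_{\widetilde H} \iso \cQ(\cG)$, so $K_1(\cQ(\cG))$ is a direct factor of $K_1(\cQ(\cG'))$ compatibly with reduced norms (reduced norm sends products to products), and injectivity of $SK_1$ passes to direct factors. Hence $SK_1(\cQ(\cG)) = 0$ whenever $SK_1(\cQ(\cG')) = 0$, and the uniqueness clause transfers.

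The main obstacle I anticipate is the algebraic half of part ii): pinning down precisely that the trivialised complex $((\cC_{S, T}')\q, t^{\alpha'})$ maps to $(\cC_{S, T}\q, t^\alpha)$ under derived extension of scalars. The isomorphism of the underlying complexes is handed to us by proposition \ref{prop:isomorphism_complexes_derived_extension}, and the additivity/scalar-extension machinery for refined Euler characteristics is in the appendix, but one must carefully match the two trivialisations — each built from the three-step composition $\alpha \varphi \pi$ at integral level — through the scalar extension. The subtle point is that $\varepsilon$-extension of the cohomology of $(\cC_{S, T}')\q$ does not literally compute the cohomology of $\cC_{S, T}\q$ (the complexes only agree in the derived category, and $\aug_{\widetilde H}$ is not flat), so the comparison of trivialisations has to be made on the level of strictly perfect representatives and their spectral-sequence boundary data, or else circumvented by expressing both trivialisations through the map $\alpha$ and the extension-compatible maps $\pi$, $\varphi$ directly. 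I expect this to be essentially bookkeeping once the correct representatives are fixed, but it is the step requiring the most care.
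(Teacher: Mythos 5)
Your proposal is correct and follows essentially the same route as the paper's proof: part i) via the regulator equality of lemma \ref{lem:change_of_alpha_quotient} ii), inflation-invariance of $L$-values and the $\varepsilon$-intertwining of twisted evaluation maps; part ii) via $\zeta = K_1(\varepsilon)(\zeta')$, diagram \eqref{eq:functoriality_relative_k0_payoff}, proposition \ref{prop:isomorphism_complexes_derived_extension} and lemma \ref{lem:functoriality_rec} i); part iii) via the decomposition \eqref{eq:decomposition_trace_idempotent}. The "subtle point" you flag about matching the two trivialisations across the non-flat scalar extension is resolved in the paper exactly as you suggest, by tracking the composition $\alpha\varphi\pi$ through the $\widetilde H$-coinvariants/norm diagram and only then tensoring with $\cQ(\cG)$.
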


            \begin{proof}
                Suppose $F_{S, T, \chi}^{\alpha', \beta} \in \units{\cQ^c(\Gamma_{\chi})}$ satisfies IC($L_\infty'/K, \chi, L', S, T, \alpha', \beta$) and let ${F = \varepsilon(F_{S, T, \chi}^{\alpha', \beta}) \in \units{\cQ^c(\Gamma_{\overline{\chi}})}}$ (cf. lemma \ref{lem:functoriality_1_chi_parts}). Recall that we identify the type-$W$ characters of $\cG$ and $\cG'$ by virtue of \eqref{eq:functoriality_1_artin_characters}. Thus, for almost all $\rho \in \cK_S^{\alpha'}(\chi) \cap \cK_S^{\alpha}(\overline{\chi})$, one has
                \begin{align*}
                    ev_{\gamma_{\overline{\chi} \otimes \rho}}(F) = ev_{\gamma_{\chi \otimes \rho}}(F_{S, T, \chi}^{\alpha', \beta})  & = \frac{\beta^{-1}(L_{K, S, T}^\ast(\beta (\check{\chi} \otimes \rho^{-1}), 0))}{R_S^\beta(\alpha', \chi \otimes \rho)} \\
                    & = \frac{\beta^{-1}(L_{K, S, T}^\ast(\beta ((\overline{\chi}){\check{}} \otimes \rho^{-1}), 0))}{R_S^\beta(\alpha', \chi \otimes \rho)} \\
                    & = \frac{\beta^{-1}(L_{K, S, T}^\ast(\beta ((\overline{\chi}){\check{}} \otimes \rho^{-1}), 0))}{R_S^\beta(\alpha, \overline{\chi} \otimes \rho)},
                \end{align*}
                where the equalities are, in order: lemmas \ref{lem:functoriality_1_chi_parts} iii) and \ref{lem:twisted_evaluation_maps}; the interpolation property of $F_{S, T, \chi}^{\alpha', \beta}$; lemma \ref{lem:properties_of_L-functions} ii); and lemma \ref{lem:change_of_alpha_quotient} ii). This shows $F$ satisfies IC($L_\infty/K, \chi, L, S, T, \alpha, \beta$). Since $\varepsilon$ is an isomorphism on $\chi$-parts by lemma \ref{lem:functoriality_1_chi_parts} ii), the same argument shows the converse.

                In order to prove ii), assume eMC($L_\infty'/K, L', S, T, \alpha', \beta$) holds and let $\zeta_{S, T}^{\alpha', \beta} \in K_1(\cQ(\cG'))$ be the element predicted therein. In particular, IC($L_\infty'/K, \chi, L', S, T, \alpha', \beta$) holds for all $\chi \in \Irr_p(\cG')$ and therefore so does IC($L_\infty/K, \overline{\chi}, L, S, T, \alpha, \beta$) for all $\overline{\chi} \in \Irr_p(\cG)$ by i). This uniquely determines interpolating elements $F_{S, T, \chi}^ {\alpha', \beta} \in \units{\cQ^c(\Gamma_{\chi})}$ and $F_{S, T, \overline{\chi}}^ {\alpha, \beta} \in \units{\cQ^c(\Gamma_{\overline{\chi}})}$.

                Let $\varepsilon \colon \cQ(\cG') \sa \cQ(\cG)$ be the projection map from lemma \ref{lem:functoriality_1_chi_parts}. Then the diagram
                \begin{center}
                    \begin{tikzcd}
                        K_1(\cQ(\cG')) \arrow[d, "K_1(\varepsilon)"] \arrow[r, "\psi_\chi"] & \units{\cQ^c(\Gamma_{\chi})} \arrow[d, "\varepsilon"] \\
                        K_1(\cQ(\cG)) \arrow[r, "\psi_{\overline{\chi}}"]             & \units{\cQ^c(\Gamma_{\overline{\chi}})}
                    \end{tikzcd}
                \end{center}
                commutes for any $\overline{\chi}$ as above and $\chi = \infl_\cG^{\cG'}\overline{\chi}$. Part i) now yields
                \[
                    \psi_{\overline{\chi}}(K_1(\varepsilon)(\zeta_{S, T}^{\alpha, \beta})) = \varepsilon(\psi_\chi(\zeta_{S, T}^{\alpha', \beta})) = F_{S, T, \overline{\chi}}^{\alpha, \beta},
                \]
                and hence the element $\zeta_{S, T}^{\alpha, \beta} = K_1(\varepsilon)(\zeta_{S, T}^{\alpha', \beta})$ satisfies the analytic part of eMC($L_\infty/K, L, S, T, \alpha, \beta$).

                On the homological side, let $(\cC_{S, T}')\q$ and $\cC_{S, T}\q$ be the main complexes constructed from $L_\infty'/K$ and $L_\infty/K$, respectively. The maps $R = \Lambda(\cG') \xrightarrow{\rho = \varepsilon} \Lambda(\cG) = R'$ and $S = \cQ(\cG') \xrightarrow{\sigma = \varepsilon} \cQ(\cG) = S'$ constitute an example of setup \eqref{eq:functoriality_relative_k0_setup} together with the canonical inclusions $\Lambda(\cG') \ia \cQ(\cG')$ and $\Lambda(\cG) \ia \cQ(\cG)$. As a consequence, diagram \eqref{eq:functoriality_relative_k0_payoff} implies
                \[
                    \partial(\zeta_{S, T}^{\alpha, \beta}) = K_0(\varepsilon, \varepsilon)(\partial(\zeta_{S, T}^{\alpha', \beta})) = K_0(\varepsilon, \varepsilon)(- \chi_{\Lambda(\cG'), \cQ(\cG')}((\cC_{S, T}')\q, t^{\alpha'})) \in K_0(\Lambda(\cG), \cQ(\cG)).
                \]
                By lemma \ref{lem:functoriality_rec} i),  the the last term coincides with $- \chi_{\Lambda(\cG), \cQ(\cG)}(\Lambda(\cG) \otimes_{\Lambda(\cG')}^\LL (\cC_{S, T}')\q, t')$ for a certain trivialisation $t'$. Proposition \ref{prop:isomorphism_complexes_derived_extension} provides an isomorphism $\varphi \colon \Lambda(\cG) \otimes_{\Lambda(\cG')}^\LL (\cC_{S, T}')\q \iso \cC_{S, T}\q$ in $\cD(\Lambda(\cG))$, and thus it only remains to show that $t'$ becomes $t^{\alpha}$ under $\varphi$. To see this, recall \eqref{eq:integral_trivialisation} and consider the commutative diagram
                \begin{center}
                    \begin{tikzcd}
                        {H^1((\cC_{S, T}')\q)_{\widetilde{H}}} \arrow[d, "\rsim"] \arrow[r] & (\cX_S')_{\widetilde{H}} \arrow[d, two heads] \arrow[r, two heads] & (\cY_{S_\infty}')_{\widetilde{H}} \arrow[d, "\rsim"] \arrow[r, "\alpha'_{\widetilde{H}}"] & {(E_{S, T}')_{\widetilde{H}}} \arrow[d, "N_{\widetilde{H}}"] \arrow[r, "\sim"] & {H^0((\cC_{S, T}')\q)_{\widetilde{H}}} \arrow[d, "N_{\widetilde{H}}"] \\
                        {H^1(\cC_{S, T})} \arrow[r]                                      & \cX_S \arrow[r, two heads]                                     & \cY_{S_\infty} \arrow[r, "\alpha"]                                               & {E_{S, T}} \arrow[r, "\sim"]                                 & {H^0(\cC_{S, T})}
                    \end{tikzcd}
                \end{center}
                where the first vertical arrow is induced by $\varphi$ (note that $\Lambda(\cG) \otimes_{\Lambda(\cG')} -$ is right exact) and $\cX_S', \cY_{S_\infty}'$ and $E_{S, T}'$ are the $L_\infty'/K$-analogues of their $L_\infty/K$ counterparts. Apply now $\cQ(\cG) \otimes_{\Lambda(\cG)} -$ to both rows. The composition of the bottom row then becomes the trivialisation $t^\alpha$ by definition. On the top row, we use the fact that $\cQ(\cG) \otimes_{\Lambda(\cG)} (-)_{\widetilde{H}} = \cQ(\cG) \otimes_{\Lambda(\cG)} \Lambda(\cG) \otimes_{\Lambda(\cG')} -$ is the same functor as (or more formally, canonically naturally isomorphic to) $\cQ(\cG) \otimes_{\cQ(\cG')} \cQ(\cG') \otimes_{\Lambda(\cG')} -$, which turns that row into $\cQ(\cG) \otimes_{\cQ(\cG')} t^{\alpha'}$. But this is precisely the trivialisation $t'$ constructed from $t = t^{\alpha'}$ in lemma \ref{lem:functoriality_rec} i), which concludes the proof of ii).

                Lastly, we address iii). By part ii) and remark \ref{rem:emcu} ii), it suffices to show that the injectivity of $K_1(\cQ(\cG')) \xrightarrow{\nr}   \units{Z(\cQ(\cG'))}$ implies that of $K_1(\cQ(\cG)) \xrightarrow{\nr} \units{Z(\cQ(\cG))}$. This follows immediately from the decomposition \eqref{eq:decomposition_trace_idempotent} and the compatibility of $K_1$ and the reduced norm with products:
                \begin{center}
                    \begin{tikzcd}
                        K_1(\cQ(\cG')) \arrow[r, "\sim"] \arrow[d, "\nr"] & K_1(\cQ(\cG')e_{\widetilde{H}}) \oplus K_1(\cQ(\cG')(1 - e_{\widetilde{H}})) \arrow[r, "\sim"] \arrow[d, "\nr \oplus \nr"] & K_1(\cQ(\cG)) \oplus K_1(\cQ(\cG')(1 - e_{\widetilde{H}}) \arrow[d, "\nr \oplus \nr"] \\
                        \units{Z(\cQ(\cG'))} \arrow[r, "\sim"]                      & \units{Z(\cQ(\cG')e_{\widetilde{H}})} \oplus \units{Z(\cQ(\cG')(1 - e_{\widetilde{H}}))}    \arrow[r, "\sim"]                                                                               & \units{Z(\cQ(\cG))} \oplus \units{Z(\cQ(\cG')(1 - e_{\widetilde{H}}))}
                    \end{tikzcd}
                \end{center}
                where the last horizontal arrow of each horizontal arrow is induced by functoriality on $\varepsilon$.
            \end{proof}

            The decomposition $\cQ(\cG') \iso \cQ(\cG) \times \cQ(\cG')(1 - e_{\widetilde{H}})$ illustrates why we do not have equivalence of the Main Conjectures for $L_\infty'/K$ and $L_\infty/K$, but rather implication in one direction. Any potential zeta element $\zeta_{S, T}^{\alpha, \beta}$ for $L_\infty/K$ has no information about the part of $\cQ(\cG')$ orthogonal to $e_{\widetilde{H}}$.

        \subsection{Change of $K$}
        \label{subsec:functoriality_2}

            We now address the situation of a tower of extensions $L_\infty/K'/K$, where $K'$ and $K$ are number fields and over both of which $L_\infty$ satisfies our usual hypotheses. This is equivalent to considering $L_\infty/K$ as in setting \ref{sett:construction} and choosing an open (not necessarily normal) subgroup $\cG'$ of $\cG = \Gal(L_\infty/K)$. We summarise the new situation as follows:

            \begin{sett}
            \label{sett:functoriality_2}
            \addcontentsline{toc}{subsubsection}{Setting D}
                The objects $p$, $L_\infty/K_\infty/K$, $S$ and $T$ are fixed as in setting \ref{sett:construction}. We also denote ${\Gamma_K = \Gal(K_\infty/K) = \overline{{\ideal{\gamma_K}}}}$ and $H = \Gal(L_\infty/K_\infty) \trianglelefteq_c \cG = \Gal(L_\infty/K)$. Additionally, we consider:
                \begin{itemize}
                    \item{
                        $\cG'$, an arbitrary open subgroup of $\cG$. Let $K' = L_\infty^{\cG'}$, which is a finite (but not necessarily Galois) extension of $K$. Then $L_\infty$ is a Galois extension of the cyclotomic $\ZZ_p$-extension $K_\infty'$ of $K'$ and $H' = \Gal(L_\infty/K_\infty') = \cG' \cap H$. Set $\Gamma_{K'} = \Gal(K_\infty'/K)$.

                        We now fix an open central subgroup $\widetilde{\Gamma} \leq_o \cG$ isomorphic to $\ZZ_p$ as in setting \ref{sett:construction} and let $\Gamma = \widetilde{\Gamma} \cap \cG'$. In particular, $\Gamma$ is open and central in both $\cG$ and $\cG'$. As usual, we set $L = L_\infty^\Gamma$ and define $L_n = L_\infty^{\Gamma^{p^n}}, \cG_n = \Gal(L_n/K) = \cG/\Gp{n}, \cG_n' = \Gal(L_n/K') = \cG'/\Gp{n} \leq \cG_n$.

                        The restriction $\Gamma_{K'} \ia \Gamma_K$ is injective (because $K_\infty' = K_\infty K'$) and its image is an open subgroup $\Gamma_K^{p^M} \leq_o \Gamma_K$. We define the topological generator $\gamma_{K'}$ of $\Gamma_{K'}$ as the only preimage of $\gamma_K^{p^M}$ under the restriction map.
                    }
                    \item{
                        For each place $w$ of $K'$, a distinguished prolongation $w^c$ to $(K')^c = \QQ^c$. Then, for each place $v$ of $K$, we choose a place $v^c$ among $\set{w^c : w \in \set{v}(K')}$. This is only tangentially relevant, namely for the isomorphisms in \eqref{eq:isomorphism_y_functoriality_2} below.
                    }
                \end{itemize}
            \end{sett}
            With the above conventions, $(L_\infty/K, \Gamma, S, T)$ and $(L_\infty/K', \Gamma, S(K'), T(K'))$ are two valid choices of the corresponding parameters in setting \ref{sett:construction}. The relations between the relevant fields and Galois groups are depicted in the diagram
            \begin{center}
                \begin{tikzcd}[row sep=huge, column sep=large]
                                                           &                                                                                                                     & L_\infty \arrow[rd, "H'"', no head] \arrow[rrdd, "H"', no head, bend left] \arrow[lldddd, "\Gamma"', no head, bend right] \arrow[dddddd, "\cG", no head, in=20, out=350, looseness=1.6, pos=0.6] \arrow[lddddd, "\cG'"', no head, in=165, out=190, looseness=1.4] &                               &          \\
                                                           &                                                                                                                     &                                                                                                                                                                                                                   & K_\infty' \arrow[rd, no head] &          \\
                                                           & L_n \arrow[ruu, "\Gamma^{p^n}"', no head] \arrow[ddd, "\cG_n'"', no head] \arrow[rdddd, "\cG_n", no head, near start] &                                                                                                                                                                                                                   &                               & K_\infty \\
                                                           &                                                                                                                     &                                                                                                                                                                                                                   &                               &          \\
                L \arrow[ruu, no head] \arrow[rd, no head] &                                                                                                                     &                                                                                                                                                                                                                   &                               &          \\
                                                           & K' \arrow[rruuuu, "\Gamma_{K'}"', no head] \arrow[rd, no head]                                            &                                                                                                                                                                                                                   &                               &          \\
                                                           &                                                                                                                     & K \arrow[rruuuu, "\Gamma_K"', no head]                                                                                                                                                                            &                               &
                \end{tikzcd}
            \end{center}

            The restriction of scalars induced by the embedding of topological rings $\Lambda(\cG') \ia \Lambda(\cG)$ allows the parameter $\alpha$ from setting \ref{sett:formulation} to remain unchanged when passing from $K$ to $K'$:
            \begin{itemize}
                \item{
                    The $\Lambda(\cG')$-module structure of
                    \[
                        E_{S(K'), T(K')} = \varprojlim_n \ZZ_p \otimes \units{\cO_{L_n, S(K'), T(K)}} = \varprojlim_n \ZZ_p \otimes \units{\cO_{L_n, S, T}} = E_{S, T}
                    \]
                    is simply the restriction of its $\Lambda(\cG)$-structure.
                }
                \item{
                    There is an isomorphism of $\Lambda(\cG')$-modules
                    \begin{align}
                    \label{eq:isomorphism_y_functoriality_2}
                        \cY_{S_\infty} =
                        \varprojlim_n \cY_{S_\infty, L_n} & \iso
                        \varprojlim_n \bigoplus_{w_n \in S(L_n)} \ZZ_p \cdot w_n \\
                        & = \varprojlim_n \bigoplus_{w_n \in S(K')(L_n)} \ZZ_p \cdot w_n \iso
                        \varprojlim_n \cY_{S_\infty(K'), L_n} =
                        \cY_{S_\infty(K')}.
                    \end{align}
                }
            \end{itemize}

            Therefore, any $\Lambda(\cG)$-homomorphism $\alpha \colon \cY_{S_\infty} \to E_{S, T}$ as in setting \ref{sett:formulation} constitutes a valid $\Lambda(\cG')$-homomorphism $E_{S(K'), T(K')} \to \cY_{S_\infty(K')}$. Note that conditions on $\Lambda(\Gamma)$-torsionness also remain unchanged, as the same $\Gamma$ has been chosen in $\cG'$ and $\cG$. As usual, the isomorphism $\beta \colon \CC_p \isoa \CC$ is independent of all other parameters.

            A few words are in order regarding the relation between Artin characters of $\cG$ and those of $\cG'$. We will repeatedly write expressions like $\indu_{\cG'}^\cG \chi'$ for $\chi' \in \Irr_p(\cG')$ even though induction was only introduced for characters of finite groups in section \ref{sec:representations_of_finite_groups}. This simply refers to the character
            \[
                \indu_{\cG'}^\cG \chi' = \infl_{\cG_n}^\cG \indu_{\cG_n'}^{\cG_n} \overline{\chi'}
            \]
            of the representation
            \[
                \indu_{\cG'}^\cG V_{\chi'} = \infl_{\cG_n}^\cG \indu_{\cG_n'}^{\cG_n} V_{\overline{\chi'}},
            \]
            where $\overline{\chi'}$ is the projection of $\chi'$ to any quotient $\cG_n'$ through which it factors. The result is independent of the choice of $\cG_n'$ and one has
            \[
                 \big(\indu_{\cG'}^\cG \chi'\big)(\sigma) = \sum_{\substack{\tau \in \cG/\cG'\\\tau^{-1} \sigma \tau \in \cG'}} \chi'(\tau^{-1} \sigma \tau)
            \]
            with $\tau$ running over any family of coset representatives of $\cG/\cG'$. This is essentially a reformulation of \eqref{eq:induction_character} which accommodates both the finite and infinite cases. Similarly, scalar products and divisibility of Artin characters can be addressed by passing to any finite quotient over which they factor.

            As for type-$W$ characters, it is clear that the restriction $\rest_{\cG'}^\cG \rho$ of such a character $\rho$ of $\cG$ is again a type-$W$ character of $\cG'$. Conversely, given any type-$W$ character $\rho'$ of $\cG'$, there exists a $\rho$ as above such that $\rho' = \rest_{\cG'}^\cG \rho$. To see this, note that such a $\rho'$ corresponds uniquely to the choice of a $p$-power root of unity $\rho'(\gamma_{K'}) \in \mu_{p^\infty}$ and $\gamma_{K'}$ identifies with $\gamma_K^{p^M}$ as in setting \ref{sett:functoriality_2}. One may therefore define $\rho$ by mapping $\gamma_K$ to a $p^M$-th root of $\rho'(\gamma_{K'})$.

            Character induction is fundamental to the relation of the Interpolation Conjectures for $\cG'$ and $\cG$. In this regard, we have the following fact, which is one of the reasons the interpolation property \eqref{eq:ic} is only claimed for almost all $\rho \in \cK_S^\alpha(\chi)$: each $\chi \in \Irr_p(\cG)$ divides $\indu_{\cG'}^\cG \chi'$ for at most finitely many $\chi' \in \Irr_p(\cG')$. This follows immediately from Frobenius reciprocity \eqref{eq:frobenius_reciprocity}, as
            \begin{equation}
            \label{eq:division_induction_finitely_many}
                \chi \mid \indu_{\cG'}^\cG \chi' \iff \sprod{\chi, \indu_{\cG'}^\cG \chi'}_\cG > 0 \iff \sprod{\rest_{\cG'}^\cG \chi, \chi'}_{\cG'} > 0 \iff \chi' \mid \rest_{\cG'}^\cG \chi,
            \end{equation}
            where the last condition is satisfied by finitely many $\chi'$ only.

            Unlike in the previous subsection, there is no natural ring homomorphism $\Lambda(\cG) \to \Lambda(\cG')$. $K$-theory does provide us, however, with a restriction map on $K_1$ groups: if $\iota \colon \Lambda(\cG') \ia \Lambda(\cG)$ denotes the canonical embedding, which extends to $\cQ(\cG') \ia \cQ(\cG)$ and $\cQ^c(\cG') \ia \cQ^c(\cG)$, one can define a group homomorphism $K_1^\rest(\iota) \colon K_1(\cQ(\cG)) \to K_1(\cQ(\cG'))$ as in section \ref{sec:algebraic-k-theory}. Together with some results from \cite{rwii}, this will be enough to prove the relation of the equivariant Main Conjectures for $L_\infty/K$ and $L_\infty/K'$. However, directly relating the Interpolation Conjectures demands a slightly finer analysis (which also relies on \cite{rwii}) and the construction of homomorphisms $\rest_{\chi'}^{\chi} \colon \units{\cQ^c(\Gamma_\chi)} \to \units{\cQ^c(\Gamma_{\chi'})}$ for $\chi \in \Irr_p(\cG), \chi' \in \Irr_p(\cG')$ in a compatible manner with $K_1^\rest(\iota)$. Given such a $\chi$, consider the injective ring homomorphism $j_\chi \colon Z(\cQ^c(\cG)e_\chi) = \cQ^c(\Gamma_\chi) \ia \cQ^c(\Gamma_K)$ induced by ${\gamma_\chi \mapsto \gamma_K^{w_\chi}}$, where $w_\chi$ is as in proposition \ref{prop:structure_gamma_chi} (compare with the map $j_\chi^F$ therein). These fields fit into the diagram
            \begin{equation}
            \label{eq:iwasawa_algebras_embeddings}
                \begin{tikzcd}
                    Z(\cQ^c(\cG)e_\chi) = \cQ^c(\Gamma_\chi) \arrow[r, "j_\chi", hook]       & \cQ^c(\Gamma_K)                                                           \\
                    Z(\cQ^c(\cG')e_{\chi'}) = \cQ^c(\Gamma_{\chi'}) \arrow[r, "j_{\chi'}", hook] & \cQ^c(\Gamma_{K'}) \arrow[u, "\gamma_{K'} \mapsto \gamma_K^{p^M}"', hook]
                \end{tikzcd}
            \end{equation}
            with $j_{\chi'}$ defined analogously to $j_\chi$ for $\chi'$ as above. Recall that these maps are closely related to our $\psi_\chi \colon K_1(\cQ(\cG)) \to \units{\cQ^c(\Gamma_\chi)}$, as shown in \eqref{eq:psi_and_j}.

            \begin{prop}
            \label{prop:character_res_interpolating}
                Setting \ref{sett:functoriality_2}. Let $\chi \in \Irr_p(\cG)$ and $\chi' \in \Irr_p(\cG')$. Then the restriction map
                \begin{align}
                \label{eq:definition_restriction_series_quotients}
                    \rest_{\chi'}^{\chi} \colon \units{\cQ^c(\Gamma_\chi)} & \to \units{\cQ^c(\Gamma_{\chi'})} \\
                    z & \mapsto j_{\chi'}^{-1} \Bigg(\prod_{\substack{\varsigma \in \Irr_p(\cG)\\\varsigma \sim_W \chi}} j_\varsigma(z)^{\sprod{\indu_{\cG'}^\cG \chi', \varsigma}}\Bigg) \nonumber
                \end{align}
                is a well-defined group homomorphism. Furthermore, $\rest_{\chi'}^{\chi} = \rest_{\widetilde{\chi}'}^{\widetilde{\chi}}$ for any $\widetilde{\chi} \sim_W \chi$ and $\widetilde{\chi}' \sim_W \chi'$, and the diagram
                \begin{center}
                    \begin{tikzcd}[column sep=large]
                        K_1(\cQ(\cG)) \arrow[r, "\prod_\chi \psi_\chi"] \arrow[d, "K_1^\rest(\iota)"] & \prod_\chi \units{\cQ^c(\Gamma_\chi)} \arrow[d, "\prod_{\chi} \prod_{\chi'} \rest_{\chi'}^\chi"] \\
                        K_1(\cQ(\cG')) \arrow[r, "\prod_{\chi'} \psi_{\chi'}"]                          & \prod_{\chi'} \units{\cQ^c(\Gamma_{\chi'})}
                    \end{tikzcd}
                \end{center}
                commutes, where $\chi$ and $\chi'$ run over $\Irr_p(\cG)/{\sim}_W$ and $\Irr_p(\cG')/{\sim}_W$, respectively. For any ${z \in \units{\cQ^c(\Gamma_\chi)}}$, one has
                \[
                    ev_{\gamma_{\chi'}}\big(\rest_{\chi'}^{\chi}(z) \big) = \prod_{\substack{\varsigma \in \Irr_p(\cG)\\\varsigma \sim_W \chi}} ev_{\gamma_\varsigma}(z)^{\sprod{\indu_{\cG'}^\cG \chi', \varsigma}}
                \]
                if the factors on the right-hand side with non-zero exponent are all finite.
            \end{prop}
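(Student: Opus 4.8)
The plan is to verify the three assertions in turn, relying on the structural results of Ritter and Weiss (propositions \ref{prop:rw_properties_q} and \ref{prop:structure_gamma_chi}) and on diagram \eqref{eq:iwasawa_algebras_embeddings}. First I would check that $\rest_{\chi'}^\chi$ is well defined. The sum in \eqref{eq:division_induction_finitely_many} shows that only finitely many $\varsigma$ (up to equality, hence finitely many even without restricting to a $W$-class) satisfy $\sprod{\indu_{\cG'}^\cG \chi', \varsigma} > 0$, so the product over $\varsigma \sim_W \chi$ is finite; the maps $j_\varsigma$ all land in $\cQ^c(\Gamma_K)$, so the product makes sense there. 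The key point is that this product actually lies in the image of $j_{\chi'}$. For this I would argue that $\prod_{\varsigma \sim_W \chi} j_\varsigma(z)^{\sprod{\indu_{\cG'}^\cG \chi', \varsigma}}$ is fixed by the automorphisms of $\cQ^c(\Gamma_K)$ coming from $\rho^\sharp$ for type-$W$ characters $\rho$ of $\cG'$ which are trivial on the relevant subgroup — or, more concretely, that it is a power series in $\gamma_K^{p^M}$ (equivalently in $\gamma_{K'}$ under the identification $\gamma_{K'} \mapsto \gamma_K^{p^M}$). The natural way to see this is to use Frobenius reciprocity $\sprod{\indu_{\cG'}^\cG \chi', \varsigma} = \sprod{\chi', \rest_{\cG'}^\cG \varsigma}$ together with the observation that $\rest_{\cG'}^\cG$ sends a full $W$-class over $\cG$ into $W$-classes over $\cG'$, and that $j_\varsigma(z)$ for $\varsigma$ in a fixed $W$-class only depends on $z$ and the root of unity $\rho(\gamma_K)$ distinguishing $\varsigma$ from $\chi$ (proposition \ref{prop:structure_gamma_chi} iv). Summing the exponents over those $\varsigma$ which restrict to a fixed $\chi'$-component then collapses the $\gamma_K$-dependence to a $\gamma_K^{p^M}$-dependence. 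I expect this is essentially the content of (the proof of) a result in \cite{rwii}, which I would cite; the verification that $\rest_{\chi'}^\chi$ is a group homomorphism is then immediate since each $j_\varsigma$ and $j_{\chi'}^{-1}$ is a homomorphism of fields and the exponents are integers.

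Next I would address the independence of $\rest_{\chi'}^\chi$ under $W$-equivalence. Since $\cQ^c(\Gamma_\chi) = \cQ^c(\Gamma_{\widetilde\chi})$ and $\cQ^c(\Gamma_{\chi'}) = \cQ^c(\Gamma_{\widetilde\chi'})$ as subfields of $\cQ^c(\cG)$ and $\cQ^c(\cG')$ respectively (propositions \ref{prop:rw_properties_q} iv and \ref{prop:structure_gamma_chi} ii, cf.\ the explicit argument in the proof of lemma \ref{lem:twisted_evaluation_maps}), the sets $\set{\varsigma : \varsigma \sim_W \chi}$ and $\set{\varsigma : \varsigma \sim_W \widetilde\chi}$ are literally equal, and similarly $\indu_{\cG'}^\cG \chi'$ and $\indu_{\cG'}^\cG \widetilde\chi'$ differ only by the type-$W$ twist $\rest_{\cG'}^\cG(\widetilde\rho)$ where $\widetilde\chi = \chi \otimes \widetilde\rho$; reindexing the product by this twist and using $j_{\chi \otimes \widetilde\rho} = $ (a shift of) $j_\chi$ in the sense of proposition \ref{prop:structure_gamma_chi} iv gives the claimed equality. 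The commutativity of the displayed diagram with $K_1$ then reduces, via \eqref{eq:psi_and_j} and \eqref{eq:k1_reduced_norm_chi_parts}, to a statement about reduced norms: namely that for $A \in K_1(\cQ(\cG))$ the $\chi'$-component of $\nr(K_1^\rest(\iota)(A))$ is obtained from the components of $\nr(A)$ by exactly the recipe \eqref{eq:definition_restriction_series_quotients}. This is the standard compatibility of the transfer (restriction) map on $K_1$ with reduced norms and Brauer-type induction of characters — I would invoke the corresponding statement from \cite{rwii} (or \cite{cr2}), where it appears in the guise of the behaviour of $j_\chi$ under restriction of scalars; the description of $K_1^\rest(\iota)$ in terms of matrices in section \ref{sec:algebraic-k-theory} makes the verification mechanical once one knows $\nr$ is computed character-wise.

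Finally, the evaluation formula. Since $j_{\chi'}$ and the inclusion $\gamma_{K'} \mapsto \gamma_K^{p^M}$ both intertwine the evaluation-at-$0$ maps (this was noted in \eqref{eq:twisted_evaluation_maps_composition} and in the proof of lemma \ref{lem:twisted_evaluation_maps_independent_gamma_K}: $\gamma_\chi \mapsto \gamma_K^{w_\chi}$ commutes with $ev_{\gamma_K}$, and analogously for $\gamma_{\chi'}$), applying $ev_{\gamma_{\chi'}}$ to $\rest_{\chi'}^\chi(z)$ amounts to applying $ev_{\gamma_K}$ to $\prod_{\varsigma \sim_W \chi} j_\varsigma(z)^{\sprod{\indu_{\cG'}^\cG\chi',\varsigma}}$. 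Because $ev_{\gamma_K}$ restricted to $\Lambda^{\cO_E}(\Gamma_K)_{\Delta}$ is a homomorphism of $E$-algebras (remark \ref{rem:definition_of_evaluation} ii), and the exponents are integers, this equals $\prod_{\varsigma \sim_W \chi} (ev_{\gamma_K}(j_\varsigma(z)))^{\sprod{\indu_{\cG'}^\cG\chi',\varsigma}} = \prod_{\varsigma \sim_W \chi} ev_{\gamma_\varsigma}(z)^{\sprod{\indu_{\cG'}^\cG\chi',\varsigma}}$ — the last step again by $ev_{\gamma_K} \circ j_\varsigma = ev_{\gamma_\varsigma}$ — provided none of the factors with non-zero exponent is $\infty$, which is exactly the hypothesis; the finiteness hypothesis is precisely what is needed for the multiplicativity of $ev$ to apply in $E \cup \set{\infty}$. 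The main obstacle, as indicated above, is the well-definedness assertion: showing that the product $\prod_{\varsigma \sim_W \chi} j_\varsigma(z)^{\sprod{\indu_{\cG'}^\cG\chi',\varsigma}}$ genuinely descends into $j_{\chi'}(\cQ^c(\Gamma_{\chi'}))$ rather than just sitting in $\cQ^c(\Gamma_K)$, and identifying this with the transfer map behaviour on $K_1$ — everything else is bookkeeping with the dictionary between $\psi_\chi$, $j_\chi$, the reduced norm, and the evaluation maps already set up in sections \ref{sec:evaluation_maps} and \ref{sec:the_main_conjecture}, plus citations to \cite{rwii}.
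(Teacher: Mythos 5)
Your proposal is correct and follows essentially the same route as the paper: the well-definedness (i.e.\ that the product descends into $j_{\chi'}(\cQ^c(\Gamma_{\chi'}))$) and the compatibility with $K_1^\rest(\iota)$ are both delegated to the $\Map^W$-description of $\units{Z(\cQ^c(\cG))}$ and the restriction lemma of \cite{rwii} (theorem 7 and lemma 9), exactly as in the paper's proof, and your treatment of the $W$-twist invariance and of the evaluation formula via $ev_{\gamma_{\chi'}} = ev_{\gamma_K}\circ(\text{inclusion})\circ j_{\chi'}$ and the multiplicativity of $ev$ matches the paper's argument.
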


            Before delving into the proof, we clarify that the last product is a shorthand for
            \[
                \prod_{\substack{\varsigma \in \Irr_p(\cG)\\\varsigma \sim_W \chi \\\varsigma \mid \indu_{\cG'}^\cG \chi'}} ev_{\gamma_\varsigma}(z)^{\sprod{\indu_{\cG'}^\cG \chi', \varsigma}},
            \]
            i.e. factors with exponent 0 are entirely disregarded (in particular, the product is finite). In other words, we adopt the convention that $0^0 = \infty^0 = 1$, which will be in place in other similar products as well.

            \begin{proof}
                As in \cite{rwii} theorem 7, we define $\Map^W(\Irr_p(\cG), \cQ^c(\Gamma_K))$ as the set of all maps $f \colon \Irr_p(\cG) \to \cQ^c(\Gamma_K)$ such that $f(\varsigma \otimes \rho) = \rho^\sharp(f(\varsigma))$ for all $\varsigma \in \Irr_p(\cG)$ and $\rho$ of type $W$. Here $\rho^\sharp$ is as in \eqref{eq:twisted_evaluation_maps_composition}, namely the field automorphism of $\cQ^c(\Gamma_K)$ induced by $\rho^\sharp(\gamma_K) = \rho(\gamma_K) \gamma_K$. The ring structure of $\cQ^c(\Gamma_K)$ induces a ring structure on $\Map^W(\Irr_p(\cG), \cQ^c(\Gamma_K))$, the unit group of which is precisely
                $\units{\Map^W(\Irr_p(\cG), \cQ^c(\Gamma_K))} = \Map^W(\Irr_p(\cG), \units{\cQ^c(\Gamma_K)})$. The proof of the cited theorem shows that
                \begin{align}
                \label{eq:isomorphism_hom_description}
                    \delta \colon \units{Z(\cQ^c(\cG))} &\to \Map^W(\Irr_p(\cG), \units{\cQ^c(\Gamma_K)}) \\
                    z &\mapsto \left[\varsigma \mapsto j_\varsigma(z e_\varsigma) \nonumber
                    \right]
                \end{align}
                is a group isomorphism. In particular, given $f \in \Map^W(\Irr_p(\cG), \units{\cQ^c(\Gamma_K)})$ and $\varsigma \in \Irr_p(\cG)$, one has $f(\varsigma) \in j_\varsigma(\units{\cQ(\Gamma_\varsigma)}) \subseteq \units{\cQ^c(\Gamma_K)}$. For any such $f$, the map
                \begin{align*}
                    \rest^W(f) \colon \Irr_p(\cG') & \to \units{\cQ^c(\Gamma_{K'})} \\
                               \varsigma' & \mapsto \prod_{\substack{\varsigma \in \Irr_p(\cG)\\}} f(\varsigma)^{\sprod{\indu_{\cG'}^\cG \varsigma', \varsigma}}
                \end{align*}
                is a well-defined element of $\Map^W(\Irr_p(\cG'), \units{\cQ^c(\Gamma_{K'})})$ under the vertical arrow in \eqref{eq:iwasawa_algebras_embeddings} by \cite{rwii} lemma 9.

                The central square of the diagram
                \begin{equation}
                \label{diag:large_diagram_restriction}
                    \begin{tikzcd}[row sep=large]
                                                                                                                             &                                                                              &                                                                                  & \units{Z(\cQ^c(\cG))} \arrow[ldd, "\delta"] \arrow[ddddd, dashed] \\
                                                                                                                             &                                                                              &                                                                                  &                                                                   \\
                        K_1(\cQ(\cG)) \arrow[d, "K_1^\rest(\iota)"] \arrow[r] \arrow[rrruu, "\prod_{\varsigma /{\sim}_W}\psi_\varsigma"] & K_1(\cQ^c(\cG)) \arrow[d, "K_1^\rest(\iota)"] \arrow[rruu, "\nr"'] \arrow[r, "\mu"] & {\Map^W(\Irr_p(\cG), \units{\cQ^c(\Gamma_K)})} \arrow[d, "\rest^W"]              &                                                                   \\
                        K_1(\cQ(\cG')) \arrow[r] \arrow[rrrdd, "\prod_{\varsigma'/{\sim}_W}\psi_{\varsigma'}"']                          & K_1(\cQ^c(\cG')) \arrow[r, "\mu'"] \arrow[rrdd, "\nr"]                               & {\Map^W(\Irr_p(\cG'), \units{\cQ^c(\Gamma_{K'})})} \arrow[rdd, "(\delta')^{-1}"] &                                                                   \\
                                                                                                                             &                                                                              &                                                                                  &                                                                   \\
                                                                                                                             &                                                                              &                                                                                  & \units{Z(\cQ^c(\cG'))}
                    \end{tikzcd}
                \end{equation}
                where $\mu$ and $\mu'$ are defined by commutativity of the triangles they are part of (note that $\delta'$ is an isomorphism), commutes by the same result\footnote{The lemma in question concerns $\Hom^\ast(\cR_p(\cG), \units{\cQ^c(\Gamma_K)})$, where $\cR_p(\cG)$ denotes the ring of virtual Artin characters of $\cG$ (cf. section \ref{sec:representations_of_finite_groups}). The essential difference between $\Map^W$ and $\Hom^\ast$ is that the latter adds a an extra Galois-equivariance condition. However, the same proof shows the case of interest to us - one simply needs to replace $P$ by a finitely generated $\cQ^c(\cG)$-module in the claim on p. 560 of \cite{rwii}, and the objects in the commutative triangle on p. 558 by their counterparts with scalars in $\QQ_p^c$.}. The dashed vertical arrow is the homomorphism $(\delta')^{-1} \rest^W \delta$, and its restriction to $\chi$- and $\chi'$-parts (in the domain and codomain, respectively) is precisely the map $\rest_{\chi'}^{\chi}$ from the statement. The fact that it is invariant under $\rho$-twist of $\chi$ and $\chi'$ is clear, since so are all elements of the diagram and the simple components of $\units{Z(\cQ^c(\cG))}$ and $\units{Z(\cQ^c(\cG'))}$. It can also be deduced directly from \eqref{eq:definition_restriction_series_quotients}.

                In order to determine the behaviour under twisted evaluation maps, we resort to the commutative diagram
                \begin{center}
                    \begin{tikzcd}
                                                                                                      & \units{\cQ^c(\Gamma_\varsigma)} \arrow[ld, "j_{\varsigma}"', hook] \arrow[rr, "ev_{\gamma_\varsigma}"]        &  & \QQ_p^c \cup \set{\infty} \arrow[dd, no head, equals] \\
                        \units{\cQ^c(\Gamma_K)} \arrow[rrru, "ev_{\gamma_K}"']                        &                                                                                                &  &                                                           \\
                                                                                                      & \units{\cQ^c(\Gamma_{\chi'})} \arrow[ld, "j_{\chi'}"', hook] \arrow[rr, "ev_{\gamma_{\chi'}}"] &  & \QQ_p^c \cup \set{\infty}                                 \\
                        \units{\cQ^c(\Gamma_{K'})} \arrow[uu, hook] \arrow[rrru, "ev_{\gamma_{K'}}"'] &                                                                                                &  &
                    \end{tikzcd}
                \end{center}
                for $\varsigma \in \Irr_p(\cG)$. We have already shown why the top and bottom faces commute (see for instance the triangles in \eqref{eq:diagram_ev_independent_gamma_K}), and the front face does for essentially the same reason: $\gamma_{\chi'}$ is mapped to 1 either way, and this determines the map uniquely. For any $z \in \units{\cQ^c(\Gamma_\chi)}$, one therefore has
                \begin{align*}
                     ev_{\gamma_{\chi'}}\big(\rest_{\chi'}^{\chi}(z) \big)
                     &= ev_{\gamma_{\chi'}}\Bigg(j_{\chi'}^{-1} \Bigg(\prod_{\substack{\varsigma \in \Irr_p(\cG)\\\varsigma \sim_W \chi}} j_\varsigma(z)^{\sprod{\indu_{\cG'}^\cG \chi', \varsigma}}\Bigg)\Bigg) \\
                     &= ev_{\gamma_{K'}}\Bigg(\prod_{\substack{\varsigma \in \Irr_p(\cG)\\\varsigma \sim_W \chi}} j_\varsigma(z)^{\sprod{\indu_{\cG'}^\cG \chi', \varsigma}}\Bigg) \\
                     &= ev_{\gamma_{K}}\Bigg(\prod_{\substack{\varsigma \in \Irr_p(\cG)\\\varsigma \sim_W \chi}} j_\varsigma(z)^{\sprod{\indu_{\cG'}^\cG \chi', \varsigma}}\Bigg) \\
                     &= \prod_{\substack{\varsigma \in \Irr_p(\cG)\\\varsigma \sim_W \chi}} ev_{\gamma_K}(j_\varsigma(z))^{\sprod{\indu_{\cG'}^\cG \chi', \varsigma}} \\
                     &= \prod_{\substack{\varsigma \in \Irr_p(\cG)\\\varsigma \sim_W \chi}} ev_{\gamma_\varsigma}(z)^{\sprod{\indu_{\cG'}^\cG \chi', \varsigma}}.
                \end{align*}
                The only equalities which do not follow from the last diagram are the first one, which is simply the definition of $\rest_{\chi'}^{\chi}$, and the fourth one, which follows from remark \ref{rem:definition_of_evaluation} ii).
            \end{proof}

            One may be tempted to rewrite the product in \eqref{eq:definition_restriction_series_quotients} as
            \[
                \prod_{\rho} j_\varsigma(z)^{\sprod{\indu_{\cG'}^\cG \chi', \chi \otimes \rho}}
            \]
            with $\rho$ running over the type-$W$ characters of $\cG$. We point out that the two expressions will differ in general, since $\chi \otimes \rho$ and $\chi \otimes \tilde{\rho}$ may coincide even if $\rho$ and $\tilde{\rho}$ do not (cf. remark \ref{rem:properties_of_ev} i)).

            The previous proposition establishes a relation between the Interpolation Conjectures for $\cG$ and $\cG'$, and this in turn between the corresponding equivariant Main Conjectures:

            \begin{prop}
            \label{prop:functoriality_2}
                Setting \ref{sett:functoriality_2}. Let $\alpha$ and $\beta$ be chosen as in setting \ref{sett:formulation} for the extension $L_\infty/K$. In particular, they are valid choices for $L_\infty/K'$ as explained above. Then:
                \begin{enumerate}[i)]
                    \item{
                        Let $\chi' \in \Irr_p(\cG')$. If \hyperref[conje:ic]{IC($L_\infty/K, \chi, L, S, T, \alpha, \beta$)} holds for all $\chi \in \Irr_p(\cG)$ dividing $\indu_{\cG'}^\cG \chi'$, then so does \hyperref[conje:ic]{IC($L_\infty/K', \chi', L, S(K'), T(K'), \alpha, \beta$)}.
                    }
                    \item{
                        \hyperref[conje:emc]{eMC($L_\infty/K, L, S, T, \alpha, \beta$)} implies \hyperref[conje:emc]{eMC($L_\infty/K', L, S(K'), T(K'), \alpha, \beta$)}.
                    }
                \end{enumerate}
            \end{prop}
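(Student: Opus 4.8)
The plan is to leverage the restriction machinery from Proposition \ref{prop:character_res_interpolating} for the analytic side, and the $K$-theoretic restriction map $K_1^\rest(\iota)$ together with good behaviour of refined Euler characteristics under restriction of scalars (Appendix \ref{app:functoriality_of_refined_euler_characteristics}, i.e.\ lemma \ref{lem:functoriality_rec} ii)) for the algebraic side.

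For part i), I would start by fixing $\chi' \in \Irr_p(\cG')$ and, for each $\chi \in \Irr_p(\cG)$ dividing $\indu_{\cG'}^\cG \chi'$, letting $F_{S, T, \chi}^{\alpha, \beta} \in \units{\cQ^c(\Gamma_\chi)}$ be the element predicted by IC($L_\infty/K, \chi, L, S, T, \alpha, \beta$); by \eqref{eq:division_induction_finitely_many} there are only finitely many such $\chi$ up to $W$-equivalence. The candidate interpolating element for $\cG'$ is then $F' = \prod_{\chi} \rest_{\chi'}^{\chi}(F_{S, T, \chi}^{\alpha, \beta}) \in \units{\cQ^c(\Gamma_{\chi'})}$, where the product runs over $W$-equivalence classes of the relevant $\chi$ and $\rest_{\chi'}^{\chi}$ is as in \eqref{eq:definition_restriction_series_quotients}. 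To verify the interpolation property, I would apply the twisted evaluation formula at the end of Proposition \ref{prop:character_res_interpolating}: for a type-$W$ character $\rho'$ of $\cG'$, pick a type-$W$ character $\rho$ of $\cG$ restricting to it (such a $\rho$ exists by the discussion before the proposition), and use that $ev_{\gamma_{\chi' \otimes \rho'}}(\rest_{\chi'}^{\chi}(z)) = \prod_{\varsigma \sim_W \chi} ev_{\gamma_\varsigma}(z)^{\sprod{\indu_{\cG'}^\cG(\chi' \otimes \rho'), \varsigma}}$ (applying the formula to $\chi' \otimes \rho'$ and $\chi \otimes \rho$, which are $W$-equivalent to $\chi'$ and $\chi$ respectively, and invoking the $W$-invariance of $\rest$). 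Then the interpolation hypotheses give $ev_{\gamma_{\chi \otimes \rho}}(F_{S, T, \chi}^{\alpha, \beta}) = \beta^{-1}(L_{K, S, T}^\ast(\beta(\check\chi \otimes \rho^{-1}), 0))/R_S^\beta(\alpha, \chi \otimes \rho)$ for almost all $\rho$, and the claim reduces to matching this product of local contributions with $\beta^{-1}(L_{K', S(K'), T(K')}^\ast(\beta((\check{\chi'}) \otimes (\rho')^{-1}), 0))/R_{S(K')}^\beta(\alpha, \chi' \otimes \rho')$. The $L$-function factorisation is exactly lemma \ref{lem:properties_of_L-functions} iii)--iv): $L_{K, S, T}(\indu_{\cG'}^\cG \psi', s) = L_{K', S(K'), T(K')}(\psi', s)$ combined with additivity over the constituents $\varsigma$ of $\indu_{\cG'}^\cG(\chi' \otimes \rho')$ (with the correct duality bookkeeping, noting $(\indu_{\cG'}^\cG \psi')^{\check{}} = \indu_{\cG'}^\cG \check{\psi'}$). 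For the regulators, I would need a multiplicativity statement $R_{S(K')}^\beta(\alpha, \chi' \otimes \rho') = \prod_{\varsigma} R_S^\beta(\alpha, \varsigma)^{\sprod{\indu_{\cG'}^\cG(\chi' \otimes \rho'), \varsigma}}$ for almost all $\rho'$, which follows from the additivity of determinants of the finite-level maps $\varphi_n^\alpha$ on $\Hom$-spaces under the decomposition of $\Hom_{\CC_p[\cG_n']}(V_{\chi'}, -)$ induced by Frobenius reciprocity on $\CC_p \otimes \cX_{L_n, S}$ (the same module viewed over $\cG_n$ versus $\cG_n'$, the regulated finite-level maps being literally the same $\varphi_n^\alpha$ since $\alpha$, $S$-units, and $\cX_S$ are unchanged when restricting scalars), together with the ``almost all'' clause needed to discard constituents $\varsigma$ not satisfying \ref{manualcond:kc}. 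Running this through Proposition \ref{prop:character_res_interpolating} shows $F'$ has the required interpolation property for almost all $\rho' \in \cK_{S(K')}^\alpha(\chi')$.

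For part ii), assuming eMC($L_\infty/K, L, S, T, \alpha, \beta$) with zeta element $\zeta_{S, T}^{\alpha, \beta} \in K_1(\cQ(\cG))$, I would set $\zeta' = K_1^\rest(\iota)(\zeta_{S, T}^{\alpha, \beta}) \in K_1(\cQ(\cG'))$. Since IC($L_\infty/K, \chi, \ldots$) holds for all $\chi$, part i) gives IC($L_\infty/K', \chi', \ldots$) for all $\chi'$, with interpolating elements $F_{S, T, \chi'}^{\alpha, \beta} = \prod_\chi \rest_{\chi'}^\chi(F_{S, T, \chi}^{\alpha, \beta})$ (uniqueness forces this value). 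The commutative square in Proposition \ref{prop:character_res_interpolating}, together with $\psi_\chi(\zeta_{S, T}^{\alpha, \beta}) = F_{S, T, \chi}^{\alpha, \beta}$, then yields $\psi_{\chi'}(\zeta') = \prod_\chi \rest_{\chi'}^\chi(\psi_\chi(\zeta_{S, T}^{\alpha, \beta})) = F_{S, T, \chi'}^{\alpha, \beta}$ for all $\chi'$, which is the analytic half of eMC for $\cG'$. For the algebraic half, I would use the commutative diagram \eqref{eq:localisation_sequence_restriction} with $R = \Lambda(\cG) \hookleftarrow \Lambda(\cG') = R'$ and $S = \cQ(\cG) \hookleftarrow \cQ(\cG') = S'$ (so the vertical arrows are $K_i^\rest$ of the inclusions and $K_0^\rest(\rho, \sigma)$ in the middle; the finite-projectivity hypothesis holds because $\cG'$ is open in $\cG$): this gives $\partial(\zeta') = K_0^\rest(\iota, \iota)(\partial(\zeta_{S, T}^{\alpha, \beta})) = -K_0^\rest(\iota, \iota)(\chi_{\Lambda(\cG), \cQ(\cG)}(\cC_{S, T}\q, t^\alpha))$. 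By lemma \ref{lem:functoriality_rec} ii), $K_0^\rest(\iota, \iota)$ applied to a refined Euler characteristic is the refined Euler characteristic of the restricted complex $\restr{\cC_{S, T}\q}{\Lambda(\cG')}$ with the restricted trivialisation $\restr{t^\alpha}{\cQ(\cG')}$. It then remains to identify $\restr{\cC_{S, T}\q}{\Lambda(\cG')}$ with the main complex $(\cC_{S(K'), T(K')}')\q$ for $L_\infty/K'$: using the $R\Gamma$-description \eqref{eq:iso_inverse_zp(1)}, both are $\varprojlim_n R\Gamma(H_{L_n, S}, \ZZ_p(1))[1]$ as complexes of abelian groups, with the $\Lambda(\cG')$-action on the first being the restriction of the $\Lambda(\cG)$-action (this is just the statement that cohomology of $H_{L_n, S}$ with its $G_S$-module structure restricts compatibly), and the trivialisations also match because $\cX_S$, $\cY_{S_\infty}$, $E_{S, T}$ and $\alpha$ are literally unchanged under restriction of scalars. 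Hence $\partial(\zeta') = -\chi_{\Lambda(\cG'), \cQ(\cG')}((\cC_{S(K'), T(K')}')\q, t^\alpha)$ as required.

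The main obstacle I anticipate is the regulator multiplicativity in part i): making precise that the finite-level map $\varphi_n^\alpha$, when its domain and codomain are regarded as $\cG_n'$-modules (rather than $\cG_n$-modules), has $\chi'$-part determinant equal to the product over $\cG_n$-constituents $\varsigma \mid \indu_{\cG_n'}^{\cG_n}\chi'$ of its $\varsigma$-part determinants, with all the ``almost all'' caveats about \ref{manualcond:kc} (failing for only finitely many $\varsigma$, and only finitely many $\rho'$ pulling a bad $\varsigma$ into the induced character by \eqref{eq:division_induction_finitely_many}) handled cleanly. The $L$-function side is routine given lemma \ref{lem:properties_of_L-functions}, and the $K$-theory side is a direct application of the appendix and the $R\Gamma$-formalism, but the regulator identity requires a careful unwinding of the Frobenius-reciprocity decomposition of the relevant $\Hom$-space over $\CC_p[\cG_n']$ versus $\CC_p[\cG_n]$ and the fact that the Dirichlet regulator map $\lambda_{n, S}^\beta$ is itself compatible (being Galois-equivariant over the larger group and hence over the smaller one).
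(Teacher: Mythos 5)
Your proposal matches the paper's proof essentially step for step: the same candidate element $\prod_\chi \rest_{\chi'}^\chi(F_{S,T,\chi}^{\alpha,\beta})$ built from Proposition \ref{prop:character_res_interpolating}, the same handling of the ``almost all'' clause via \eqref{eq:division_induction_finitely_many}, the same zeta element $K_1^\rest(\iota)(\zeta_{S,T}^{\alpha,\beta})$, and the same use of \eqref{eq:localisation_sequence_restriction} with lemma \ref{lem:functoriality_rec} ii) on the algebraic side. The one point where you diverge is the regulator multiplicativity under induction: you propose to prove it directly by a Frobenius-reciprocity decomposition of the relevant $\Hom$-spaces, whereas the paper sidesteps this by invoking the known functorial properties of Tate's regulator (his equations 6.4 (1) and (2)) together with the comparison \eqref{eq:relation_regulator_tate_mine} already established in lemma \ref{lem:stark_invariance_interpolated} --- a shortcut worth noting, since it spares exactly the careful bookkeeping you flag as the main obstacle.
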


            \begin{proof}
                For each irreducible $\chi \mid \indu_{\cG'}^\cG \chi'$, let $F_{S, T, \chi}^{\alpha, \beta} \in \units{\cQ^c(\Gamma_\chi)}$ be the unique series quotient satisfying IC($L_\infty/K, \chi, L, S, T, \alpha, \beta$). Choose a set $\Xi_{\chi'}$ of representatives of these $\chi$ modulo $\sim_W$ (we do not claim that $\chi \mid \indu_{\cG'}^\cG \chi'$ implies $\chi \otimes \rho \mid \indu_{\cG'}^\cG \chi'$) and define
                \[
                    F_{\chi'} = \prod_{\chi \in \Xi_{\chi'}} \rest_{\chi'}^\chi (F_{S, T, \chi}^{\alpha, \beta}) \in \units{\cQ^c(\Gamma_{\chi'})}.
                \]

                Given a character $\varsigma \in \Irr_p(\cG)$ which is $W$-equivalent to some (necessarily unique) $\chi \in \Xi_{\chi'}$, set
                \[
                    F_{S, T, \varsigma}^{\alpha, \beta} = F_{S, T, \chi}^{\alpha, \beta} \in \units{\cQ^c(\Gamma_\chi)} = \units{\cQ^c(\Gamma_\varsigma)}.
                \]
                Note that if two $W$-equivalent characters $\chi$ and $\widetilde{\chi}$ both divide $\indu_{\cG'}^\cG \chi'$, one has $F_{S, T, \chi}^{\alpha, \beta} = F_{S, T, \widetilde{\chi}}^{\alpha, \beta}$ by proposition \ref{prop:ic_rho_twist} so that the two definitions of $F_{S, T, \widetilde{\chi}}^{\alpha, \beta}$ (one from the Interpolation Conjecture and one from the last equation) do not conflict. The same proposition shows that $F_{S, T, \varsigma}$ satisfies IC($L_\infty/K, \varsigma, L, S, T, \alpha, \beta$).

                Let $\rho'$ be a character of $\cG'$ of type $W$ and choose a type-$W$ character $\rho$ of $\cG$ such that $\rest_{\cG'}^\cG \rho = \rho'$ as explained earlier. It is easy to see (for instance, using Frobenius reciprocity) that
                \begin{equation}
                \label{eq:induction_and_rho_twists}
                    \indu_{\cG'}^\cG (\chi' \otimes \rho') = (\indu_{\cG'}^\cG \chi') \otimes \rho.
                \end{equation}

                Proposition \ref{prop:character_res_interpolating} then yields
                \begin{align*}
                    ev_{\gamma_{\chi' \otimes \rho'}}(F_{\chi'})
                    &= \prod_{\chi \in \Xi_{\chi'}} ev_{\gamma_{\chi' \otimes \rho'}}(\rest_{\chi'}^\chi(F_{S, T, \chi}^{\alpha, \beta})) \\
                    &= \prod_{\chi \in \Xi_{\chi'}} ev_{\gamma_{\chi' \otimes \rho'}}(\rest_{\chi' \otimes \rho'}^\chi(F_{S, T, \chi}^{\alpha, \beta})) \\
                    & = \prod_{\chi \in \Xi_{\chi'}} \: \prod_{\substack{\varsigma \in \Irr_p(\cG)\\\varsigma \sim_W \chi}} ev_{\gamma_\varsigma}(F_{S, T, \varsigma}^{\alpha, \beta})^{\sprod{\indu_{\cG'}^\cG (\chi' \otimes \rho'), \varsigma}} \\
                    & = \prod_{\substack{\varsigma \in \Irr_p(\cG)\\\varsigma \mid \indu_{\cG'}^\cG (\chi' \otimes \rho')}} ev_{\gamma_\varsigma}(F_{S, T, \varsigma}^{\alpha, \beta})^{\sprod{\indu_{\cG'}^\cG (\chi' \otimes \rho'), \varsigma}}
                \end{align*}
                whenever all factors are finite, where the last equality follows from \eqref{eq:induction_and_rho_twists}. Consider now the set
                \[
                    \Omega = \set{\varsigma \in \Irr_p(\cG) \et{such that} \varsigma \sim_W \chi \et{for some} \chi \in \Xi_{\chi'}}.
                \]
                which coincides with
                \[
                    \set{\varsigma \in \Irr_p(\cG) \et{such that} \varsigma \mid \indu_{\cG'}^\cG (\chi' \otimes \rho') \et{for some} \rho' \et{of type} W}
                \]
                by \eqref{eq:induction_and_rho_twists}. Then
                \[
                    ev_{\gamma_{\varsigma}}(F_{S, T, \varsigma}^{\alpha, \beta}) = \frac{\beta^{-1}(L_{K, S, T}^\ast(\beta \check{\varsigma}, 0))}{R_S^\beta(\alpha, \varsigma)}
                \]
                for almost all $\varsigma \in \Omega$. The finitely many $\varsigma$ where interpolation fails divide only finitely many irreducible characters of $\cG'$ by \eqref{eq:division_induction_finitely_many}, which implies that
                \[
                    ev_{\gamma_{\chi' \otimes \rho'}}(F_{\chi'}) = \prod_{\substack{\varsigma \in \Irr_p(\cG)\\\varsigma \mid \indu_{\cG'}^\cG (\chi' \otimes \rho')}} ev_{\gamma_\varsigma}(F_{S, T, \varsigma}^{\alpha, \beta})^{\sprod{\indu_{\cG'}^\cG (\chi' \otimes \rho'), \varsigma}} = \prod_{\substack{\varsigma \in \Irr_p(\cG)\\\varsigma \mid \indu_{\cG'}^\cG (\chi' \otimes \rho')}} \left(\frac{\beta^{-1}(L_{K, S, T}^\ast(\beta \check{\varsigma}, 0))}{R_S^\beta(\alpha, \varsigma)}\right)^{\sprod{\indu_{\cG'}^\cG (\chi' \otimes \rho'), \varsigma}}
                \]
                for almost all type-$W$ characters $\rho'$ of $\cG'$. Now we simply use the functoriality of the regulated special $L$-values, namely their behaviour with respect to character addition and induction. For the leading coefficient at 0, this is lemma \ref{lem:properties_of_L-functions} iii) and iv). We cannot directly speak of functoriality of the Stark-Tate regulator from section \ref{sec:regulators}, since it was only defined for irreducible characters. However, its relation to that defined in \cite{tate} and the functorial properties of the latter suffice to show
                \begin{equation}
                \label{eq:special_value_functorial}
                    \prod_{\substack{\varsigma \in \Irr_p(\cG)\\\varsigma \mid \indu_{\cG'}^\cG (\chi' \otimes \rho')}} \left(\frac{\beta^{-1}(L_{K, S, T}^\ast(\beta \check{\varsigma}, 0))}{R_S^\beta(\alpha, \varsigma)}\right)^{\sprod{\indu_{\cG'}^\cG (\chi' \otimes \rho'), \varsigma}}
                    = \frac{\beta^{-1}(L_{K', S(K'), T(K')}^\ast(\beta (\check{\chi'} \otimes (\rho')^{-1}), 0))}{R_{S(K')}^\beta(\alpha, \chi' \otimes \rho')}.
                \end{equation}
                More specifically, the equality of denominators is a consequence of equations 6.4 (1) and (2) on p. 29 of \cite{tate} together with \eqref{eq:relation_regulator_tate_mine} above. Recall that Tate only considers $S$-modified Artin $L$-functions (i.e. with $T = \varnothing$), but the $\delta$-factors at places in $T$ obey the same formalism as already argued in the proof of proposition \ref{prop:stark_independent_of_t}. This proves that $F_{\chi'}$ satisfies IC($L_\infty/K', \chi', L, S(K'), T(K'), \alpha, \beta$).

                The implication of equivariant Main Conjectures follows readily. Suppose $\zeta_{S, T}^{\alpha, \beta} \in K_1(\cQ(\cG))$ satisfies eMC($L_\infty/K, L, S, T, \alpha, \beta$). In particular, for each irreducible Artin character $\chi$ of $\Irr_p(\cG)$, the series quotient $F_{S, T, \chi}^{\alpha, \beta} = \psi_{\chi}(\zeta_{S, T}^{\alpha, \beta}) \in \units{\cQ^c(\Gamma_\chi)}$ satisfies IC($L_\infty/K, \chi, L, S, T, \alpha, \beta$). Define
                \[
                    \zeta' = K_1^\rest(\iota)(\zeta_{S, T}^{\alpha, \beta}) \in K_1(\cQ(\cG')).
                \]

                For a fixed $\chi' \in \Irr_p(\cG')$, define $\Xi_{\chi'}$ as above and extend it to a set $\Xi$ of representatives of $\Irr_p(\cG)/{\sim}_W$. Then by proposition \ref{prop:character_res_interpolating}, one has
                \[
                    \psi_{\chi'}(\zeta') = \psi_{\chi'}(K_1^\rest(\iota)(\zeta_{S, T}^{\alpha, \beta}) ) = \prod_{\chi \in \Xi} \rest_{\chi'}^\chi(\psi_\chi(\zeta_{S, T}^{\alpha, \beta})) = \prod_{\chi \in \Xi} \rest_{\chi'}^\chi(F_{S, T, \chi}^{\alpha, \beta}) = \prod_{\chi \in \Xi_{\chi'}} \rest_{\chi'}^\chi(F_{S, T, \chi}^{\alpha, \beta}),
                \]
                where the last equality relies on the observation that, for $\chi \in \Xi \setminus \Xi_{\chi'}$, the map $\rest_{\chi'}^\chi$ is identically 1. Note that by the proof of part i), the right-hand side is precisely the element $F_{S(K'), T(K'), \chi'}^{\alpha, \beta}$ satisfying IC($L_\infty/K', \chi', L, S(K'), T(K'), \alpha, \beta$).

                Moving on to the homological side, let
                \[
                    t^\alpha \colon \cQ(\cG) \otimes_{\Lambda(\cG)} H^1(\cC_{S, T}\q) \isoa \cQ(\cG) \otimes_{\Lambda(\cG)} H^0(\cC_{S, T}\q)
                \]
                and
                \[
                    \tilde{t}^\alpha \colon \cQ(\cG') \otimes_{\Lambda(\cG')} H^1(\cC_{S(K'), T(K')}\q) \isoa \cQ(\cG') \otimes_{\Lambda(\cG')} H^0(\cC_{S(K'), T(K')}\q)
                \]
                denote the trivialisations induced by $\alpha$ as a homomorphism of $\Lambda(\cG)$- and $\Lambda(\cG')$-modules, respectively, as in \eqref{eq:integral_trivialisation}. Then $\tilde{t}^\alpha = \restr{t^\alpha}{\cQ(\cG')}$ and $\cC_{S(K'), T(K')}\q = \restr{\cC_{S, T}\q}{\Lambda(\cG')}$ in the notation of lemma \ref{lem:functoriality_rec} ii), which shows
                \begin{align*}
                    \partial(\zeta') = \partial(K_1^\rest(\iota)(\zeta_{S, T}^{\alpha, \beta})) &= K_0^\rest(\iota, \iota)(\partial(\zeta_{S, T}^{\alpha, \beta})) \\
                    &=  K_0^\rest(\iota, \iota)(-\chi_{\Lambda(\cG), \cQ(\cG)}(\cC_{S, T}\q, t^\alpha)) \\
                    &= - \chi_{\Lambda(\cG'), \cQ(\cG')}(\restr{\cC_{S, T}\q}{\Lambda(\cG')}, \restr{t^\alpha}{\cQ(\cG')}) \\
                    &= - \chi_{\Lambda(\cG'), \cQ(\cG')}(\cC_{S(K'), T(K')}\q, \tilde{t}_\alpha)
                \end{align*}
                (for the second equality, see \eqref{eq:localisation_sequence_restriction}). Therefore, $\zeta'$ satisfies eMC($L_\infty/K', L, S(K'), T(K'), \alpha, \beta$).
            \end{proof}

        \subsection{Reduction steps}
        \label{subsec:reduction_steps}

            We end this section, and by extension this chapter, by exploring the converse to the question treated in the preceding lines, namely: can the conjectures for $L_\infty/K$ be deduced from their counterparts for smaller extensions? The answer is affirmative if the latter are chosen appropriately, and we give two examples of such possible choices. The technique we use mirrors the argument in \cite{johnston_nickel} section 10, the main difference being that the relation between the Interpolation Conjectures must be proved in our setting - as opposed to the totally real case the cited paper is concerned with, where $p$-adic $L$-functions interpolating $L$-values are known to exist.

            Before proceeding with the outlined plan, we make a minor detour to state an immediate consequence of the two preceding functoriality results. Only in this corollary do we dispense with the explicit notation of all parameters of the conjectures, opting instead for the abbreviations \hyperref[conje:ic]{IC($L_\infty/K, \chi, \beta$)} and \hyperref[conje:emc]{eMC($L_\infty/K, \beta$)} introduced at the beginning of the present chapter (recall theorem \ref{thm:independence_of_parameters}).

            \begin{cor}
                Let the choice of an odd prime $p$ and an isomorphism $\beta \colon \CC_p \isoa \CC$ be fixed for all conjectures below.
                \begin{enumerate}[i)]
                    \item{
                        Suppose \hyperref[conje:ic]{IC($\widetilde{L}_\infty/\QQ, \chi, \beta$)} holds for every choice of $\widetilde{L}_\infty/K$ as in setting \ref{sett:construction} with $K = \QQ$ and every $\chi \in \Irr_p(\Gal(\widetilde{L}_\infty/\QQ))$. Then so does \hyperref[conje:ic]{IC($L_\infty'/K', \chi', \beta$)} for any valid choice of $L_\infty'/K'$ and any $\chi' \in \Irr_p(\Gal(L_\infty'/K'))$.
                    }
                    \item{
                        Suppose \hyperref[conje:emc]{eMC($\widetilde{L}_\infty/\QQ, \beta$)} holds for every choice of $\widetilde{L}_\infty/K$ as in setting \ref{sett:construction} with $K = \QQ$. Then so does \hyperref[conje:emc]{eMC($L_\infty'/K', \beta$)} for any valid choice of $L_\infty'/K'$.
                    }
                \end{enumerate}
            \end{cor}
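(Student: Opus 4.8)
The plan is to reduce to the case $K = \QQ$ by passing to a Galois closure and then applying the two functoriality results of this section in succession. Given a valid choice of $L_\infty'/K'$ as in setting \ref{sett:construction} with the fixed odd prime $p$, I would fix $\Gamma$ as in that setting and set $L' = (L_\infty')^\Gamma$, so that $L_\infty' = L' \cdot (K')_\infty$, where $(K')_\infty$ denotes the cyclotomic $\ZZ_p$-extension of $K'$. Let $\widetilde L$ be the Galois closure of $L'$ over $\QQ$ and put $\widetilde L_\infty = \widetilde L \cdot \QQ_\infty$, the cyclotomic $\ZZ_p$-extension of $\widetilde L$. A short computation with Galois conjugates — using that each $\QQ$-conjugate of $(K')_\infty$ is again the cyclotomic $\ZZ_p$-extension of the corresponding conjugate of $K'$ and that $\QQ_\infty/\QQ$ is Galois — shows that $\widetilde L_\infty$ is in fact the Galois closure of $L_\infty'$ over $\QQ$; in particular it contains $L_\infty'$, $(K')_\infty$ and $\QQ_\infty$. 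Since $\widetilde L/\QQ$ is finite Galois, the extension $\widetilde L_\infty/\QQ$ is Galois with $[\widetilde L_\infty : \QQ_\infty] < \infty$, so $\widetilde L_\infty/\QQ$ is an admissible choice in setting \ref{sett:construction} with base field $\QQ$ (this is where $p$ odd is used, $\QQ$ not being totally imaginary).

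Next I would check that both changes of field fall under settings \ref{sett:functoriality_2} and \ref{sett:functoriality_1}. On the one hand $\Gal(\widetilde L_\infty/K')$ is an open subgroup of $\Gal(\widetilde L_\infty/\QQ)$, and $\widetilde L_\infty/K'$ is again admissible in setting \ref{sett:construction} (it is Galois, being an intermediate extension of $\widetilde L_\infty/\QQ$, it contains $(K')_\infty$, and $[\widetilde L_\infty : (K')_\infty] < \infty$), so setting \ref{sett:functoriality_2} applies with base $K = \QQ$ enlarged to $K'$. On the other hand $\widetilde H := \Gal(\widetilde L_\infty/L_\infty')$ is finite, since $[\widetilde L_\infty : L_\infty'] = [\widetilde L : L']$, and normal in $\Gal(\widetilde L_\infty/K')$ because $L_\infty'/K'$ is Galois, so setting \ref{sett:functoriality_1} applies with base field $K'$, larger field $\widetilde L_\infty$ and smaller field $L_\infty'$. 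In both applications the auxiliary parameters $L$, $S$, $T$, $\alpha$ are transported as prescribed there (in particular via lemma \ref{lem:change_of_alpha_quotient}), and by theorem \ref{thm:independence_of_parameters} the resulting assertions are equivalent to the abbreviated conjectures appearing in the statement.

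The corollary then follows by chaining the propositions. For ii), assuming the equivariant Main Conjecture for $\widetilde L_\infty/\QQ$, proposition \ref{prop:functoriality_2} ii) gives it for $\widetilde L_\infty/K'$, and proposition \ref{prop:functoriality_1} ii) then gives it for $L_\infty'/K'$, i.e.\ \hyperref[conje:emc]{eMC($L_\infty'/K', \beta$)}. For i), fix $\chi' \in \Irr_p(\Gal(L_\infty'/K'))$ and let $\psi'$ be its inflation to $\Gal(\widetilde L_\infty/K')$. Since the hypothesis provides the Interpolation Conjecture for $\widetilde L_\infty/\QQ$ at \emph{every} irreducible Artin character of $\Gal(\widetilde L_\infty/\QQ)$ — in particular at every irreducible constituent of the induction of $\psi'$ to $\Gal(\widetilde L_\infty/\QQ)$ — proposition \ref{prop:functoriality_2} i) yields it for $\widetilde L_\infty/K'$ at $\psi'$; and because $\psi'$ is the inflation of $\chi'$, proposition \ref{prop:functoriality_1} i) yields it for $L_\infty'/K'$ at $\chi'$, i.e.\ \hyperref[conje:ic]{IC($L_\infty'/K', \chi', \beta$)}.

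I do not expect a genuine obstacle here: once the functoriality propositions and theorem \ref{thm:independence_of_parameters} are on hand, the argument is essentially bookkeeping. The one point requiring a (routine) verification is that the Galois closure of $L_\infty'$ over $\QQ$ is again of the admissible shape — a cyclotomic $\ZZ_p$-extension of a finite extension of $\QQ$ — which is precisely the Galois-conjugate computation sketched in the first paragraph; the only other care needed is to keep track of the notational collision between the corollary's $L_\infty'$ and the larger field denoted $L_\infty'$ in setting \ref{sett:functoriality_1}.
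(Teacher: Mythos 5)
Your proposal is correct and follows essentially the same route as the paper: pass to the cyclotomic $\ZZ_p$-extension $\widetilde{L}_\infty = \widetilde{L}\QQ_\infty$ of the Galois closure $\widetilde{L}$ of $L'/\QQ$, then chain proposition \ref{prop:functoriality_2} (base change $\QQ \rightsquigarrow K'$) with proposition \ref{prop:functoriality_1} (top-field change $\widetilde{L}_\infty \rightsquigarrow L_\infty'$). The extra verifications you flag (that $\widetilde{L}_\infty$ is admissible, contains $L_\infty'$, and that both functoriality settings apply) are exactly the points the paper treats more tersely, so nothing is missing.
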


            \begin{proof}
                Let $\QQ_\infty$ denote the cyclotomic $\ZZ_p$-extension of $\QQ$. Given $L_\infty'/K'$ as in the statement, choose a number field $L' \supseteq K'$ whose cyclotomic $\ZZ_p$-extension coincides with $L_\infty'$ and define $\widetilde{L}$ as the Galois closure of $L'/\QQ$. Its cyclotomic $\ZZ_p$-extension $\widetilde{L}_\infty$ is Galois over $\QQ$ (as it coincides with $\widetilde{L}\QQ_\infty$) and has finite degree over $\QQ_\infty$, which means $\widetilde{L}_\infty/\QQ$ is a suitable extension for the formulation of the conjectures. Propositions \ref{prop:functoriality_2} and \ref{prop:functoriality_1} now yield
                \begin{align*}
                    & \ \text{IC($\widetilde{L}_\infty/\QQ, \chi, \beta$) for all} \ \chi \in \Irr_p(\Gal(\widetilde{L}_\infty/\QQ))\\
                    \Rightarrow & \ \text{IC($\widetilde{L}_\infty/K', \widetilde{\chi}, \beta$) for all} \ \widetilde{\chi} \in \Irr_p(\Gal(\widetilde{L}_\infty/K'))\\
                    \Rightarrow & \ \text{IC($L_\infty'/K', \chi', \beta$) for all} \ \chi' \in \Irr_p(\Gal(L_\infty'/K'))
                \end{align*}
                and
                \vspace{-1em}
                \begin{center}
                    eMC($\widetilde{L}_\infty/\QQ, \beta$) $\Rightarrow$ eMC($\widetilde{L}_\infty/K', \beta$) $\Rightarrow$ eMC($L_\infty'/K', \beta$).
                \end{center}
            \end{proof}

            Returning to the main purpose of this subsection, the first step is to reduce the Interpolation Conjecture to the case of linear characters, which conspicuously hints at Brauer's induction theorem \ref{thm:brauer_induction} and thus leads to the consideration of elementary groups. Since our treatment of Artin characters of $\cG$ often relies on factoring them through some finite layer, the following definition from \cite{johnston_nickel} section 10 should not come as a surprise:
            \begin{defn}
            \label{defn:elementary_infinite}
                Let $p$ be a prime and $\cG$ a profinite group which fits into a short exact sequence $H \ia \cG \sa \overline{\Gamma}$ with $H$ finite and $\overline{\Gamma} \iso \ZZ_p$. Given a prime $l$, we say $\cG$ is \textbf{$l$-elementary}\index{p-elementary@$p$-elementary!one-dimensional $p$-adic Lie group} if there exists an open central subgroup $\Gamma \iso \ZZ_p$ of $\cG$ such that $\cG/\Gamma$ is an $l$-elementary finite group (see for instance section \ref{sec:representations_of_finite_groups} for a definition). More generally, $\cG$ is said to be \textbf{elementary} if it is $l$-elementary for at least one $l$.\qedef
            \end{defn}

            In the notation of setting \ref{sett:construction}, let $\chi$ be an Artin character of $\cG$. Choose a finite quotient $\cG_n = \cG/\Gp{n}$ over which $\chi$ factors and denote the corresponding projection by $\overline{\chi}$. By Brauer's theorem, there exists a decomposition
            \[
                \overline{\chi} = \sum_{i = 1}^n z_i \; \indu_{\overline{\cH}_i}^{\cG_n} \overline{\lambda}_i,
            \]
            where $z_i \in \ZZ$, $\overline{\cH}_i$ is an elementary subgroup of $\cG_n$ and $\overline{\lambda}_i$ is a $\QQ_p^c$-valued linear character of $\overline{\cH}_i$. The preimage $\cH_i$ of $\overline{\cH}_i$ under $\cG \sa \cG_n$ is elementary in the sense defined above, and $\overline{\lambda}_i$ inflates to a linear character $\lambda_i = \infl_{\overline{\cH}_i}^{\cH_i} \overline{\lambda}_i$ which fits into
            \begin{equation}
            \label{eq:brauer_infinite}
                \chi = \sum_{i = 1}^n z_i \; \indu_{\cH_i}^\cG \lambda_i.
            \end{equation}
            As explained in the previous subsection, a choice of parameters as in setting \ref{sett:formulation} for $L_\infty/K$ yields a natural set of parameters for $L_\infty/K^{\cH_i}$ for each $i$.

            \begin{lem}
            \label{lem:ic_brauer_induction}
                Setting \ref{sett:formulation}. Given $\chi \in \Irr_p(\cG)$, choose a decomposition $\chi = \sum_{i = 1}^n z_i \; \indu_{\cH_i}^\cG \lambda_i$ as in \eqref{eq:brauer_infinite}. For each $i$, set $K^i = L_\infty^{\cH_i}$ and $L^i = L_\infty^{\cH_i \cap \Gamma}$. Then, if \hyperref[conje:ic]{IC($L_\infty/K^i, \lambda_i, L^i, S(K^i), T(K^i), \alpha, \beta$)} holds for all $i$, so does \hyperref[conje:ic]{IC($L_\infty/K, \chi, L, S, T, \alpha, \beta$)}.
            \end{lem}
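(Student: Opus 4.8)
�The plan is to combine the Brauer decomposition \eqref{eq:brauer_infinite} with the functoriality results from the previous two subsections and the multiplicativity of all the relevant analytic and $K$-theoretic quantities. Concretely, for each $i$ the hypothesis provides a unique series quotient $F_{S(K^i), T(K^i), \lambda_i}^{\alpha, \beta} \in \units{\cQ^c(\Gamma_{\lambda_i})}$ satisfying the interpolation property for $L_\infty/K^i$ at $\lambda_i$. Using proposition \ref{prop:character_res_interpolating} (with $\cG' = \cH_i$ and $K' = K^i$), one transports each $F_{S(K^i), T(K^i), \lambda_i}^{\alpha, \beta}$ to $\units{\cQ^c(\Gamma_\chi)}$ via a restriction-type map and defines the candidate element
\[
    F_{S, T, \chi}^{\alpha, \beta} = \prod_{i = 1}^n \big(\text{(image of $F_{S(K^i), T(K^i), \lambda_i}^{\alpha, \beta}$ in $\units{\cQ^c(\Gamma_\chi)}$)}\big)^{z_i} \in \units{\cQ^c(\Gamma_\chi)}.
\]
First I would make this construction precise: one needs the $\chi$-component of an appropriate $\rest$-map in the direction $\units{\cQ^c(\Gamma_{\lambda_i})} \to \units{\cQ^c(\Gamma_\chi)}$, which is essentially the transpose of the construction in proposition \ref{prop:character_res_interpolating}, built out of the $j$-maps from proposition \ref{prop:structure_gamma_chi}. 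The key point will be to record, as in that proposition, the behaviour of this transported element under the twisted evaluation maps: for almost all $\rho \in \cK_S^\alpha(\chi)$ one should get
\[
    ev_{\gamma_{\chi \otimes \rho}}\big(F_{S, T, \chi}^{\alpha, \beta}\big) = \prod_{i = 1}^n \: \prod_{\substack{\varsigma' \in \Irr_p(\cH_i)\\\varsigma' \mid \rest_{\cH_i}^\cG (\chi \otimes \rho)}} ev_{\gamma_{\varsigma'}}\big(F_{S(K^i), T(K^i), \varsigma'}^{\alpha, \beta}\big)^{z_i \sprod{\indu_{\cH_i}^\cG \varsigma', \chi \otimes \rho}},
\]
where $F_{S(K^i), T(K^i), \varsigma'}^{\alpha, \beta}$ denotes the interpolating element for the $W$-twist of $\lambda_i$ to which $\varsigma'$ belongs (this is legitimate by proposition \ref{prop:ic_rho_twist}, exactly as in the proof of proposition \ref{prop:functoriality_2}).

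Next I would feed in the interpolation hypotheses: for almost all $\varsigma'$ as above, $ev_{\gamma_{\varsigma'}}(F_{S(K^i), T(K^i), \varsigma'}^{\alpha, \beta})$ equals the regulated special value $\beta^{-1}(L_{K^i, S(K^i), T(K^i)}^\ast(\beta \check{\varsigma'}, 0))/R_{S(K^i)}^\beta(\alpha, \varsigma')$. One must check that the finitely many bad $\varsigma'$ only affect finitely many $\rho$ --- this is precisely the content of \eqref{eq:division_induction_finitely_many} applied to each $\cH_i \leq \cG$, exactly as in the proof of proposition \ref{prop:functoriality_2}. The remaining task is purely a functoriality computation for the regulated $L$-value: on leading coefficients this is lemma \ref{lem:properties_of_L-functions} ii), iii) and iv) together with the decomposition \eqref{eq:brauer_infinite} twisted by $\rho$ (noting that $\infl$, $\indu$ and $\sum$ commute appropriately with $\otimes \rho$), while for the Stark-Tate regulator one invokes the relation \eqref{eq:relation_regulator_tate_mine} between $R_S^\beta(\alpha, -)$ and Tate's regulator $R_S(-, f^\beta)$ together with the functorial properties of the latter recorded in \cite{tate} (equations 6.4 (1) and (2) on p.~29), as was already done in \eqref{eq:special_value_functorial}. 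Once assembled, this yields
\[
    ev_{\gamma_{\chi \otimes \rho}}\big(F_{S, T, \chi}^{\alpha, \beta}\big) = \frac{\beta^{-1}(L_{K, S, T}^\ast(\beta(\check{\chi} \otimes \rho^{-1}), 0))}{R_S^\beta(\alpha, \chi \otimes \rho)}
\]
for almost all $\rho \in \cK_S^\alpha(\chi)$, which is exactly IC($L_\infty/K, \chi, L, S, T, \alpha, \beta$), and the uniqueness of the interpolating element follows from corollary \ref{cor:uniqueness_series_type_w}. By theorem \ref{thm:independence_of_parameters} the precise choice of $L^i$ and of the depleted/smoothed sets $S(K^i), T(K^i)$ is immaterial, so one may as well work with the induced parameters throughout.

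The main obstacle I anticipate is purely bookkeeping rather than conceptual: one has to track the interaction of three separate combinatorial operations --- Brauer induction from the $\cH_i$, $W$-twisting by $\rho$, and passage between $\Gamma_\chi$ and the various $\Gamma_{\varsigma'}$ --- and verify that the exponents $z_i \sprod{\indu_{\cH_i}^\cG \varsigma', \chi \otimes \rho}$ on the two sides genuinely match up. The cleanest way to do this is to not prove it by hand but to observe that the map $\rest$ of proposition \ref{prop:character_res_interpolating} is built so that the diagram there commutes, so that applying it to the relation $\prod_i \psi_{\lambda_i}(\text{--})^{z_i}$ and using \eqref{eq:psi_and_j} reduces everything to the single identity \eqref{eq:brauer_infinite} inside the character ring, which then propagates to the level of $L$-values and regulators by their established additivity and inductivity. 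The subtlety about $\chi \otimes \rho = \chi \otimes \tilde\rho$ for distinct $\rho, \tilde\rho$ (remark \ref{rem:properties_of_ev} i)) must be handled with the same care as in proposition \ref{prop:functoriality_2}, i.e.\ by always summing over actual divisors $\varsigma \mid \indu_{\cH_i}^\cG(\chi \otimes \rho)$ rather than over type-$W$ characters.
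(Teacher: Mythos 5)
Your overall strategy is the right one and the second half of your argument (feeding in the interpolation hypotheses, controlling the finitely many bad characters via \eqref{eq:division_induction_finitely_many}, and the functoriality computation for the regulated $L$-value via lemma \ref{lem:properties_of_L-functions} and Tate's equations 6.4) matches the paper's proof. The gap is in the very first step: the ``restriction-type map in the direction $\units{\cQ^c(\Gamma_{\lambda_i})} \to \units{\cQ^c(\Gamma_\chi)}$'' that you posit does not exist componentwise, and constructing your candidate element \emph{inside} $\units{\cQ^c(\Gamma_\chi)}$ is precisely the crux of the lemma. What you can write down naturally is only the product $F_\bbone = \prod_i j_{K^i}(j_{\lambda_i}(F_i^{z_i}))$ living in the larger field $\units{\cQ^c(\Gamma_K)}$; each individual factor $j_{K^i}(j_{\lambda_i}(F_i))$ is a series quotient in $\gamma_K^{p^{M_i}} - 1$ and has no reason to lie in the subfield $j_\chi(\cQ^c(\Gamma_\chi))$ of series quotients in $\gamma_K^{w_\chi} - 1$. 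Since the Interpolation Conjecture demands an element of $\units{\cQ^c(\Gamma_\chi)}$, you must show that the full product descends, and this cannot be done factor by factor.

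The paper's resolution is global rather than componentwise: one defines a function $f$ on \emph{all} of $\Irr_p(\cG)$ by $f(\chi \otimes \rho) = F_\rho = \prod_i j_{K^i}(j_{\lambda_i \otimes \rho_i}(F_i^{z_i}))$ and $f \equiv 1$ off the $W$-equivalence class of $\chi$, checks that $f$ is well defined (when $\chi \otimes \rho = \chi \otimes \rho'$ with $\rho \neq \rho'$, one needs $F_\rho = F_{\rho'}$, which is deduced from the interpolation property already established for these elements together with lemma \ref{lem:series_quotient_uniqueness} --- not merely from careful indexing of divisors), and verifies the twist compatibility $f(\varsigma \otimes \rho) = \rho^\sharp(f(\varsigma))$ so that $f \in \Map^W(\Irr_p(\cG), \units{\cQ^c(\Gamma_K)})$. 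Only then does the Ritter--Weiss isomorphism \eqref{eq:isomorphism_hom_description} between $\units{Z(\cQ^c(\cG))}$ and $\Map^W(\Irr_p(\cG), \units{\cQ^c(\Gamma_K)})$ guarantee that $F_\bbone = f(\chi)$ lies in $j_\chi(\units{\cQ^c(\Gamma_\chi)})$, producing the element $\tilde F_\chi$ required by the conjecture. Your proposal needs this structural input (or an equivalent transfer construction on centres, which would itself have to be built and its compatibility with the $j$-maps proved); without it the candidate element is not known to live in the right field.
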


            \begin{proof}
                Let $\Gamma_{K^i}$ denote the Galois group over $K^i$ of its cyclotomic $\ZZ_p$-extension $K^i_\infty = K_\infty K^i$. Restriction of Galois automorphisms yields an embedding $\Gamma_{K^i} \ia \Gamma_K$ with image $\Gamma_K^{p^{M_i}}$ for some $M_i \in \NN$. The formulation of the Interpolation Conjecture for $\lambda_i$ contains the implicit choice of a topological generator $\gamma_{K^i}$ of $\Gamma_{K^i}$, which we choose to be the unique preimage of $\gamma_K^{p^{M_i}}$.

                Consider the injective ring homomorphisms
                \[
                    \cQ^c(\Gamma_{\lambda_i}) \xhookrightarrow{j_{\lambda_i}} \cQ^c(\Gamma_{K^i}) \xhookrightarrow{j_{K^i}} \cQ^c(\Gamma_K),
                \]
                where $j_{K^i}$ is induced by $\Gamma_{K^i} \ia \Gamma_K$, and $\cQ^c(\Gamma_{\lambda_i}) = Z(\cQ^c(\cG_i)e_{\lambda_i})$ and $j_{\lambda_i} \colon \gamma_{\lambda_i} \mapsto \gamma_{K^i}^{w_{\lambda_i}} = \gamma_{K^i}$ are as in section \ref{sec:evaluation_maps} (cf. proposition \ref{prop:structure_gamma_chi}). Note that $w_{\lambda_i} = 1$ because $\lambda_i$ is linear.

                Let $F_i = F_{S(K^i), T(K^i), \lambda_i}^{\alpha, \beta} \in \units{\cQ^c(\Gamma_{\lambda_i})}$ be the element predicted by IC($L_\infty/K^i, \lambda_i, L^i, S(K^i), T(K^i), \alpha, \beta$). Recall that, for any type-$W$ character $\rho'$ of $\cH_i$, one has $\cQ^c(\Gamma_{\lambda_i}) = \cQ^c(\Gamma_{\lambda_i \otimes \rho'}) \subseteq Z(\cQ^c(\cH_i)e_{\lambda_i \otimes \rho'})$ and, as shown in proposition \ref{prop:ic_rho_twist}, $F_i$ also satisfies IC($L_\infty/K^i, \lambda_i \otimes \rho', L^i, S(K^i), T(K^i), \alpha, \beta$).

                Fix now a type-$W$ character $\rho$ of $\cG$ and set $\rho_i = \rest_{\cH_i}^\cG \rho$ (which is a type-$W$ character of $\cH_i$) for each $i$. Define
                \[
                    F_\rho = \prod_{i = 1}^n j_{K_i}(j_{\lambda_i \otimes \rho_i}(F_i^{z_i})) \in \units{\cQ^c(\Gamma_K)}.
                \]
                We first prove an interpolation property for this element. Namely, for any type-$W$ character $\tilde{\rho}$ of $\cG$, one has
                \[
                   \restr{F_\rho}{T_K = \tilde{\rho}(\gamma_K) - 1} = \prod_{i = 1}^n \restr{j_{K_i}(j_{\lambda_i \otimes \rho_i}(F_i^{z_i}))}{T_K = \tilde{\rho}(\gamma_K) - 1} = \prod_{i = 1}^n ev_{\gamma_K}(j_{K_i}(j_{\lambda_i \otimes \rho_i \otimes \tilde{\rho}_i}(F_i^{z_i}))) = \prod_{i = 1}^n ev_{\gamma_{\lambda_i \otimes \rho_i \otimes \tilde{\rho}_i}}(F_i)^{z_i}
                \]
                whenever all factors in the last product are finite (this is used in the first and last equalities). Here $\tilde{\rho}_i = \rest_{\cH_i}^\cG \tilde{\rho}$, the last equality is a property we have already encountered multiple times, and the second one follows from the commutativity of
                \begin{equation}
                \label{eq:j_chi_in_subfields}
                    \begin{tikzcd}[column sep=large]
                        \cQ^c(\Gamma_{\lambda_i \otimes \rho_i}) \arrow[rr, "j_{\lambda_i \otimes \rho_i}"] \arrow[d, equals]                             &  & \cQ^c(\Gamma_{K_i}) \arrow[r, "j_{K_i}"] \arrow[d, "\tilde{\rho}_i^\sharp"] & \cQ^c(\Gamma_K) \arrow[d, "\tilde{\rho}^\sharp"] \arrow[rr, "T_K = \tilde{\rho}(\gamma_K) - 1"] &  & \QQ_p^c \cup \set{\infty} \arrow[d, equals] \\
                        \cQ^c(\Gamma_{\lambda_i \otimes \rho_i \otimes \tilde{\rho}_i}) \arrow[rr, "j_{\lambda_i \otimes \rho_i \otimes \tilde{\rho}_i}"] &  & \cQ^c(\Gamma_{K_i}) \arrow[r, "j_{K_i}"]                                    & \cQ^c(\Gamma_K) \arrow[rr, "ev_{\gamma_K}"]                                             &  & \QQ_p^c \cup \set{\infty}
                    \end{tikzcd}
                \end{equation}
                (recall that $\rho^\sharp$ is the automorphism of $\cQ^c(\Gamma_K)$ determined by mapping $\gamma_K$ to $\rho(\gamma_K)\gamma_K$, and analogously for $\rho_i^\sharp \colon \cQ^c(\Gamma_{K_i}) \isoa \cQ^c(\Gamma_{K_i}), \gamma_{K_i} \mapsto \rho_i(\gamma_{K_i})\gamma_{K_i}$).

                In order to determine the above product, let $\cK_i$ be the set of all $\rho' \in \cK_{S(K^i)}^\alpha(\lambda_i)$ such that
                \[
                    ev_{\gamma_{\lambda_i \otimes \rho_i \otimes \rho'}}(F_i) = \frac{\beta^{-1}(L_{K^i, S(K^i), T(K^i)}^\ast(\beta ((\lambda_i \otimes \rho_i)\check{} \otimes (\rho')^{-1}), 0))}{R_{S(K^i)}^\beta(\alpha, \lambda_i \otimes \rho_i \otimes \rho')},
                \]
                which contains almost all type-$W$ characters of $\cH_i$ by IC($L_\infty/K_i, \lambda_i \otimes \rho_i, L^i, S(K^i), T(K^i), \alpha, \beta$). It follows that, for almost all type-$W$ characters $\tilde{\rho}$ of $\cG$, one has: $\rest_{\cH_i}^\cG(\rho) \in \cK_i$ for all $i$. In particular,
                \begin{align}
                \label{eq:equation_f_rho_interpolates}
                    \restr{F_\rho}{T_K = \tilde{\rho}(\gamma_K) - 1}
                    = \prod_{i = 1}^n ev_{\gamma_{\lambda_i \otimes \rho_i \otimes \tilde{\rho}_i}}(F_i)^{z_i}
                    & = \prod_{i = 1}^n \left(\frac{\beta^{-1}(L_{K^i, S(K^i), T(K^i)}^\ast(\beta ((\lambda_i \otimes \rho_i)\check{} \otimes \tilde{\rho}_i^{-1}), 0))}{R_{S(K^i)}^\beta(\alpha, \lambda_i \otimes \rho_i \otimes \tilde{\rho}_i)}\right)^{z_i}\nonumber \\
                    & = \frac{\beta^{-1}(L_{K, S, T}^\ast(\beta (\check{\chi} \otimes \rho^{-1} \otimes \tilde{\rho}^{-1}), 0))}{R_S^\beta(\alpha, \chi \otimes \rho \otimes \tilde{\rho})}
                \end{align}
                for any such $\tilde{\rho}$, where the last equality is a consequence of
                \[
                    \chi \otimes \rho \otimes \tilde{\rho} = \sum_{i = 1}^n z_i (\indu_{\cG_i}^\cG \lambda_i) \otimes \rho \otimes \tilde{\rho} = \sum_{i = 1}^n z_i \; \indu_{\cG_i}^\cG (\lambda_i \otimes \rho_i \otimes \tilde{\rho}_i)
                \]
                and an analogous argument to that for \eqref{eq:special_value_functorial} (which is unaffected by the fact that the $z_i$ may be negative).

                We now prove that $F_\bbone$ lies in the image of $\units{\cQ^c(\Gamma_{\chi})} \xhookrightarrow{j_{\chi}} \units{\cQ^c(\Gamma_K)}$. Suppose first that ${\chi \otimes \rho = \chi \otimes \rho'}$ for two type-$W$-characters $\rho$ and $\rho'$ of $\cG$. Then $F_\rho$ and $F_{\rho'}$ take the same value at infinitely many points of $B_1^{\QQ_p^c}(0)$ by \eqref{eq:equation_f_rho_interpolates}, since the regulated $L$-values for $\chi \otimes \rho \otimes \tilde{\rho}$ and $\chi \otimes \rho' \otimes \tilde{\rho}$ coincide for any $\tilde{\rho}$. Thus, lemma \ref{lem:series_quotient_uniqueness} implies $F_\rho = F_{\rho'}$. In particular, the map ${f \colon \Irr_p(\cG) \to \units{\cQ^c(\Gamma_K)}}$ given by
                \begin{equation}
                \label{eq:definition_brauer_induction_hom}
                    f(\varsigma) =
                    \begin{cases}
                        F_\rho, & \varsigma = \chi \otimes \rho \et{for some} \rho \et{of type} W \\
                        1, & \text{otherwise}
                    \end{cases}
                \end{equation}
                is well defined.

                We claim that $f \in \Map^W(\Irr_p(\cG), \units{\cQ^c(\Gamma_K)})$ in the notation of the previous subsection (see the proof of \ref{prop:character_res_interpolating}), i.e. $f(\varsigma \otimes \rho) = \rho^\sharp(f(\varsigma))$ for all $\varsigma \in \Irr_p(\cG)$ and $\rho$ of type $W$. We distinguish three cases:
                \begin{itemize}
                    \item{
                        If $\varsigma \nsim_W \chi$ then $f(\varsigma \otimes \rho) = f(\varsigma) = 1$.
                    }
                    \item{
                        If $\varsigma = \chi$, one has
                        \[
                            \rho^\sharp(f(\chi)) = \prod_{i = 1}^n \rho^\sharp(j_{K_i}(j_{\lambda_i}(F_i^{z_i}))) = \prod_{i = 1}^n j_{K_i}(\rho_i^\sharp(j_{\lambda_i}(F_i^{z_i}))) = \prod_{i = 1}^n j_{K_i}(j_{\lambda_i \otimes \rho_i}(F_i^{z_i})) = f(\chi \otimes \rho),
                        \]
                        where the second and third equalities follow from an analogous diagram to \eqref{eq:j_chi_in_subfields}.
                    }
                    \item{
                        If $\varsigma = \chi \otimes \tilde{\rho}$ for some $\tilde{\rho}$ of type $W$, the previous case implies
                        \[
                            f(\varsigma \otimes \rho) = f(\chi \otimes (\tilde{\rho} \otimes \rho)) = (\tilde{\rho} \otimes \rho)^\sharp(f(\chi)) = \rho^\sharp(\tilde{\rho}^\sharp(f(\chi))) = \rho^\sharp(f(\chi \otimes \tilde{\rho})) = \rho^\sharp(f(\varsigma)).
                        \]
                    }
                \end{itemize}
                Therefore, \eqref{eq:isomorphism_hom_description} implies that $F_\bbone = f(\chi) = j_{\chi}(\tilde{F}_\chi)$ for some $\tilde{F}_{\chi} \in \units{\cQ^c(\Gamma_\chi)} \subseteq \units{Z(\cQ^c(\cG)e_\chi)}$. Now the commutativity of
                \begin{center}
                    \begin{tikzcd}[column sep=large]
                        \cQ^c(\Gamma_{\chi}) \arrow[d, equals] \arrow[r, "j_\chi"]                         & \cQ^c(\Gamma_K) \arrow[rr, "T_K = \tilde{\rho}(\gamma_K) - 1"] &  & \QQ_p^c \cup \set{\infty} \arrow[d, equals] \\
                        \cQ^c(\Gamma_{\chi \otimes \tilde{\rho}}) \arrow[rrr, "ev_{\gamma_{\chi \otimes \tilde{\rho}}}"] &                                                                &  & \QQ_p^c \cup \set{\infty}
                    \end{tikzcd}
                \end{center}
                for any $\tilde{\rho}$, together with \eqref{eq:equation_f_rho_interpolates}, shows that $\tilde{F}_\chi \in \units{\cQ^c(\Gamma_\chi)}$ satisfies IC($L_\infty/K, \chi, L, S, T, \alpha, \beta$).
            \end{proof}

            The following technical result will reduce the number of elementary subextensions which need to be considered in order to deduce the Interpolation Conjecture for characters of $L_\infty/K$:
            \begin{lem}
            \label{lem:rho_twist_factors}
                Let $p$, $\cG$ and $\Gamma$ be as in setting \ref{sett:construction}. Then, given any $\chi \in \Irr_p(\cG)$, there exists a type-$W$ character $\rho$ of $\cG$ such that $\chi \otimes \rho$ factors through $\cG_0 = \cG/\Gamma$.
            \end{lem}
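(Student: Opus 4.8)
The plan is to twist $\chi$ so that its restriction to $\Gamma$ becomes trivial, which is exactly the condition for $\chi\otimes\rho$ to factor through $\cG_0=\cG/\Gamma$. Since $\chi$ has open kernel, it factors through $\cG_n=\cG/\Gp{n}$ for some $n\in\NN$, say as an irreducible $\overline{\chi}\in\Irr_{\QQ_p^c}(\cG_n)$, and the image of $\Gamma$ in $\cG_n$ is the finite cyclic \emph{central} subgroup $\Gamma/\Gp{n}$. By Schur's lemma each element of $\Gamma/\Gp{n}$ acts on the simple module $V_{\overline{\chi}}$ by a scalar in $\QQ_p^c$, and inflating back to $\cG$ this yields a linear character $\omega$ of $\cG$ with $\Gp{n}\subseteq\ker(\omega)$, hence with image in $\mu_{p^\infty}$, such that $\rest_\Gamma^\cG\chi=\chi(1)\,\omega$.

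Next I would show that every continuous character $\Gamma\to\mu_{p^\infty}$ is the restriction to $\Gamma$ of a type-$W$ character of $\cG$. A type-$W$ character $\rho$ of $\cG$ is the inflation along $\cG\sa\cG/H=\Gamma_K$ of a continuous character $\Gamma_K\to\mu_{p^\infty}$ and, by \eqref{eq:bijection_type_W_rou}, is determined by $\rho(\gamma_K)\in\mu_{p^\infty}$. From setting \ref{sett:construction} we have $\Gamma\cap H=\set{1}$, so the projection $\cG\sa\Gamma_K$ identifies $\Gamma$ with the open subgroup $\Gamma_K^{p^N}$ of $\Gamma_K$ (where $L\cap K_\infty=K_N$); in particular the fixed topological generator $\gamma$ of $\Gamma$ maps to $\gamma_K^{p^N v}$ for some $v\in\units{\ZZ_p}$. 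Identifying both the set of type-$W$ characters of $\cG$ and the set of continuous characters $\Gamma\to\mu_{p^\infty}$ with $\mu_{p^\infty}$ (via evaluation at $\gamma_K$ and at $\gamma$ respectively), the restriction map $\rho\mapsto\rest_\Gamma^\cG\rho$ becomes the endomorphism $\zeta\mapsto\zeta^{p^N v}$ of $\mu_{p^\infty}$. This is surjective: it is the composite of the automorphism $\zeta\mapsto\zeta^{v}$ with $\zeta\mapsto\zeta^{p^N}$, and the latter is onto because $\mu_{p^\infty}$ is $p$-divisible.

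Combining the two steps, I would pick a type-$W$ character $\rho$ of $\cG$ with $\rest_\Gamma^\cG\rho=\omega^{-1}$ (note $\omega^{-1}$ is again a continuous character $\Gamma\to\mu_{p^\infty}$). Then $\rest_\Gamma^\cG(\chi\otimes\rho)=\chi(1)(\omega\cdot\omega^{-1})=\chi(1)\,\bbone_\Gamma$, so for every $\sigma\in\Gamma$ one has $(\chi\otimes\rho)(\sigma)=\chi(1)=(\chi\otimes\rho)(1)$, i.e. $\Gamma\subseteq\ker(\chi\otimes\rho)$; equivalently $\chi\otimes\rho$ factors through $\cG/\Gamma=\cG_0$. (It is still an irreducible Artin character of $\cG$ by the remarks in section \ref{sec:evaluation_maps}, linear characters being invertible.)

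The argument is elementary, and the only point requiring care — the natural candidate for the main obstacle — is the surjectivity of restriction to $\Gamma$ on type-$W$ characters: that a finite-order character of the finite-index subgroup $\Gamma\iso\Gamma_K^{p^N}$ of $\Gamma_K$ always extends to $\Gamma_K$. This is precisely where the divisibility (equivalently, $\ZZ_p$-injectivity) of $\mu_{p^\infty}$ is used, and it is also why the statement is phrased with a $W$-twist rather than demanding that $\rho$ be obtained from a character of $\Gamma_K$ of prescribed order.
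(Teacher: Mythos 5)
Your argument is correct and takes essentially the same route as the paper's proof: the paper invokes Clifford theory where you invoke Schur's lemma to see that the central subgroup $\Gamma/\Gp{n}$ acts on $V_{\overline{\chi}}$ through a linear character, and then extracts a $p^N$-th root of the resulting root of unity (your surjectivity of $\zeta \mapsto \zeta^{p^N v}$ on $\mu_{p^\infty}$), so the two proofs differ only in that you track the unit $v \in \units{\ZZ_p}$ relating $\gamma$ to $\gamma_K^{p^N}$ explicitly. The only slip is the phrase ``a linear character $\omega$ of $\cG$'': the scalar character is a priori only a character of $\Gamma$ (inflated from $\Gamma/\Gp{n}$), which is in any case how you use it afterwards.
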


            \begin{proof}
                Let $\overline{\chi}$ be the projection of $\chi$ to a quotient $\cG_n = \cG/\Gp{n}$ through which it factors. Since the subgroup $\Gamma_n = \Gamma/\Gp{n}$ is central in $\cG_n$, Clifford theory (cf. \cite{cr1} theorem 11.1) implies $\rest_{\Gamma_n}^{\cG_n} \overline{\chi} = e \lambda$ for some $e \in \NN$ and $\lambda \in \Irr_{\QQ_p^c}(\Gamma_n)$. By the abelianity of $\Gamma_n$, $\lambda$ is linear and $e = \overline{\chi}(1) = \chi(1)$. In particular, $\lambda(\overline{\gamma})^{p^n} = \lambda(\overline{\gp{n}}) = 1$ and so $\lambda(\overline{\gamma})$ is a $p^n$-th root of unity $\zeta \in \QQ_p^c$. Let $p^N$ be as in setting \ref{sett:construction}, that is, the index in $\Gamma_K$ of the image of $\Gamma$ under the canonical projection $\cG \sa \Gamma_K$. Choose $\tilde{\zeta} \in \QQ_p^c$ such that $\tilde{\zeta}^{p^N} = \zeta$ and define $\rho$ by (lifting to $\cG$) $\rho(\gamma_K) = \tilde{\zeta}^{-1}$. Then $\chi \otimes \rho$ has the desired property.
            \end{proof}

            The two preceding lemmas, together with some algebraic results from \cite{johnston_nickel}, allow us to characterise the Interpolation and equivariant Main Conjectures for $L_\infty/K$ in terms of certain families of subextensions. In doing so, we follow closely the approach in section 10 of the cited article. The key difference is that no analogue of lemma \ref{lem:ic_brauer_induction} is needed there, since the existence of $p$-adic $L$-functions which play the role of our $F_{S, T, \chi}^{\alpha, \beta}$ is a known fact in the totally real case. Furthermore, it will be necessary for us to prove a certain Galois-invariance property of interpolating series quotients.

            As explained in remark \ref{rem:emcu}, the connecting homomorphism $\partial \colon K_1(\cQ(\cG)) \to K_0(\Lambda(\cG), \cQ(\cG))$ is surjective for any $\cG$ as in setting \ref{sett:construction}. By the localisation sequence \eqref{eq:exact_sequence_k-theory}, its kernel is the image of $K_1(\Lambda(\cG))$ in $K_1(\cQ(\cG))$, which implies the existence of a homomorphism
            \begin{equation}
            \label{eq:nu_g}
                \nu_\cG \colon K_0(\Lambda(\cG), \cQ(\cG)) \to \faktor{\units{Z(\cQ(\cG))}}{\nr(K_1(\Lambda(\cG)))}
            \end{equation}
            via the reduced norm.
            \begin{rem}
            \label{rem:emc_reformulation}
                It is clear (alternatively, see \cite{johnston_nickel} lemma 10.13) that \hyperref[conje:emc]{eMC($L_\infty/K, L, S, T, \alpha, \beta$)} can be reformulated as the combination of the claims:
                \begin{itemize}
                    \item{
                        \hyperref[conje:ic]{IC($L_\infty/K, \chi, L, S, T, \alpha, \beta$)} holds for all $\chi \in \Irr_p(\cG)$.
                    }
                    \item{
                        The product $F_{S, T, \cG}^{\alpha, \beta} = \prod_{\chi/{\sim}_W} F_{S, T, \chi}^{\alpha, \beta} \in \units{Z(\cQ^c(\cG))}$ of the interpolating series quotients lies in $\units{Z(\cQ(\cG))}$ (recall at this point diagram \eqref{diag:centres_galois_invariants}).
                    }
                    \item{
                        One has $\nu_\cG(-\chi_{\Lambda(\cG), \cQ(\cG)}(\cC_{S, T}\q, t^\alpha)) = [F_{S, T, \cG}^{\alpha, \beta}]$, where $[-]$ denotes class modulo $\nr(K_1(\Lambda(\cG)))$.
                    }
                \end{itemize}
                \qedef
            \end{rem}

            We introduce two families of one-dimensional compact $p$-adic Lie groups: on one hand,
            \[
                \cS = \set{\pi_0^{-1}(\cG_0') : \cG_0' \ \text{an elementary subgroup of $\cG_0$}},
            \]
            where $\pi_0$ denotes the canonical projection $\pi_0 \colon \cG \sa \cG_0 = \cG/\Gamma = \Gal(L/K)$; and on the other,
            \[
                \cE_p = \set{\cG'/H' : \cG' \ \text{a $p$-elementary subgroup of $\cG$ containing $\Gamma$}, H' \et{a finite normal subgroup of} \cG'}.
            \]
            Here the term \textit{elementary} refers to the classical notion for finite groups in the case of $\cS$, and to the notion from definition \ref{defn:elementary_infinite} in that of $\cE_p$. Both families are finite, as $\Gamma$ is open in $\cG$ and $\cG$ has only finitely many subgroups of finite order (they are all contained in $H$).

            As described in the two preceding subsections, a choice of parameters for the formulation of the Main Conjecture for $L_\infty/K$ yields parameters for the subextensions defined by elements of $\cS$ and $\cE_p$. The choice of prolongations $v^c$ of places of $K$ to $\QQ^c$ plays no role, since, as already explained, different prolongations give rise to naturally isomorphic modules.

            \begin{prop}
            \label{prop:reduction_step}
                Setting \ref{sett:formulation} with $p$ odd. The following are equivalent:
                \begin{enumerate}[i)]
                    \item{
                        \hyperref[conje:emc]{eMC($L_\infty/K, L, S, T, \alpha, \beta$)} holds.
                    }
                    \item{
                        \hyperref[conje:emc]{eMC($L_\infty/K', L', S(K'), T(K'), \alpha, \beta$)} holds for all $\cG' \in \cS$, where $K' = L_\infty^{\cG'}$ and $L' = LK'$.
                    }
                    \item{
                        \hyperref[conje:emc]{eMC($L_\infty'/K', L', S(K'), T(K'), \alpha', \beta$)} holds for all $\cG'/H' \in \cE_p$, where $K' = L_\infty^{\cG'}$, $L_\infty' = L_\infty^{H'}$ and $L'$ and $\alpha'$ are constructed from the corresponding parameters for $L_\infty/K'$ as in setting \ref{sett:functoriality_1} and lemma \ref{lem:change_of_alpha_quotient}, respectively.
                    }
                \end{enumerate}
            \end{prop}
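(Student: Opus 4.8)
The plan is to deduce the two forward implications (i) $\Rightarrow$ (ii) and (i) $\Rightarrow$ (iii) directly from the functoriality results of the previous subsections, and to obtain the converse implications by combining the Brauer-induction argument for interpolating series quotients with an algebraic descent statement for the relevant quotient of the relative $K$-group, in close analogy with \cite{johnston_nickel} section 10. For the forward directions: each $\cG' \in \cS$ is an open subgroup of $\cG$, so Proposition \ref{prop:functoriality_2} yields (i) $\Rightarrow$ (ii); and for (i) $\Rightarrow$ (iii) one first applies Proposition \ref{prop:functoriality_2} to pass from $L_\infty/K$ to $L_\infty/K'$ (with $K' = L_\infty^{\cG'}$) and then Proposition \ref{prop:functoriality_1} to pass from $L_\infty/K'$ to $L_\infty'/K' = L_\infty^{H'}/K'$, the compatibility of the parameters $\alpha$, $\alpha'$ being exactly what Lemma \ref{lem:change_of_alpha_quotient} and the surrounding discussion guarantee.

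For the converse implications I would use the reformulation of the equivariant Main Conjecture in Remark \ref{rem:emc_reformulation}, which splits it into (a) the validity of \hyperref[conje:ic]{IC($L_\infty/K, \chi, \dots$)} for all $\chi \in \Irr_p(\cG)$; (b) the condition $F_{S, T, \cG}^{\alpha, \beta} = \prod_{\chi/{\sim}_W} F_{S, T, \chi}^{\alpha, \beta} \in \units{Z(\cQ(\cG))}$; and (c) the identity $\nu_\cG(-\chi_{\Lambda(\cG), \cQ(\cG)}(\cC_{S, T}\q, t^\alpha)) = [F_{S, T, \cG}^{\alpha, \beta}]$ modulo $\nr(K_1(\Lambda(\cG)))$. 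Part (a) for (ii): given $\chi$, Lemma \ref{lem:rho_twist_factors} produces a type-$W$ character $\rho$ with $\chi \otimes \rho$ factoring through $\cG_0 = \cG/\Gamma$; Brauer's theorem \ref{thm:brauer_induction} applied to $\cG_0$ then gives a decomposition $\chi \otimes \rho = \sum_i z_i \, \indu_{\cH_i}^\cG \lambda_i$ as in \eqref{eq:brauer_infinite} with $\cH_i = \pi_0^{-1}(\cG_{0,i}') \in \cS$; since eMC for $L_\infty/L_\infty^{\cH_i}$ forces IC for every character of $\cH_i$, Lemma \ref{lem:ic_brauer_induction} delivers IC for $\chi \otimes \rho$, hence for $\chi$ by Proposition \ref{prop:ic_rho_twist}. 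Part (a) for (iii) requires, in place of the purely elementary Brauer decomposition, the refinement that characters of $\cG_0$ are $\ZZ$-combinations of inflations of inductions from $p$-elementary subquotients (legitimate here since the cokernel of $\nr$ on $K_1(\Lambda(\cG))$ and all the analogous objects are pro-$p$); the resulting implication on interpolating elements is then obtained by threading Lemma \ref{lem:ic_brauer_induction} together with the inflation step of Proposition \ref{prop:functoriality_1} and the restriction step of Proposition \ref{prop:functoriality_2}.

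Part (b) amounts to checking that the series quotients assembled in the proof of Lemma \ref{lem:ic_brauer_induction} through the $\Hom^W$-isomorphism of \cite{rwii} are $G_{\QQ_p}$-invariant, so that by diagram \eqref{diag:centres_galois_invariants} their product lies in $\units{Z(\cQ(\cG))}$; this uses the compatibility of that isomorphism with the Galois action and with restriction, together with the fact that the $F_{S, T, \chi'}^{\alpha, \beta}$ for the subextensions are $G_{\QQ_p}$-invariant because eMC there furnishes a genuine zeta element in a $\QQ_p$-rational $K_1$. Part (c) is the genuinely algebraic one: refined Euler characteristics behave well under restriction and under extension/restriction of scalars (Lemma \ref{lem:functoriality_rec}, proved in the appendices), and $\nu_\cG$ is compatible with the $K$-theoretic change-of-ring maps by diagrams \eqref{eq:functoriality_relative_k0_payoff} and \eqref{eq:localisation_sequence_restriction}; hence (c) for $\cG$ reduces to (c) for the members of $\cS$ (resp.\ $\cE_p$) once one knows that the restriction map
\[
    \faktor{\units{Z(\cQ(\cG))}}{\nr(K_1(\Lambda(\cG)))} \longrightarrow \prod_{\cG' \in \cS} \faktor{\units{Z(\cQ(\cG'))}}{\nr(K_1(\Lambda(\cG')))}
\]
(resp.\ the analogous map for $\cE_p$, which additionally involves the quotient maps induced by the $H' \trianglelefteq \cG'$) is injective. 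For $\cS$ this is the algebraic input quoted in \cite{johnston_nickel} section 10, ultimately Brauer induction applied to the $\Lambda(\Gamma)$-orders at hand; for $\cE_p$ one invokes the corresponding $p$-elementary refinement, available because the objects in question are pro-$p$.

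The main obstacle, and the one genuinely new feature compared to the totally real situation of \cite{johnston_nickel}, is that there is no $p$-adic $L$-function available a priori: the interpolating series quotient for $\chi$ must be manufactured from those of the subextensions via the $\Hom^W$-formalism, and one must verify that this manufactured element is simultaneously independent of the Brauer decomposition and all auxiliary choices, $W$-equivariant, and $G_{\QQ_p}$-invariant, so that the family $(F_{S, T, \chi}^{\alpha, \beta})_\chi$ really descends to $\units{Z(\cQ(\cG))}$ and is compatible with the image of $K_1(\cQ(\cG))$. Lemma \ref{lem:ic_brauer_induction} already handles most of this for the family $\cS$, so the crux of the work is to combine it with the $K$-theoretic descent of part (c) and, for the family $\cE_p$, with the $p$-elementary induction theorem and the inflation compatibility of Proposition \ref{prop:functoriality_1}.
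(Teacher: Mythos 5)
Your treatment of (i) $\Rightarrow$ (ii), (i) $\Rightarrow$ (iii), of the whole equivalence (i) $\Leftrightarrow$ (ii), and of parts (b) and (c) of the reformulation in remark \ref{rem:emc_reformulation} matches the paper's argument: the forward implications are propositions \ref{prop:functoriality_2} and \ref{prop:functoriality_1}, the Interpolation Conjecture for $\cG$ is recovered from the members of $\cS$ via lemma \ref{lem:rho_twist_factors}, Brauer's theorem and lemma \ref{lem:ic_brauer_induction}, the Galois-invariance of $\prod_\chi F_{S,T,\chi}^{\alpha,\beta}$ is checked through the $\Map^W$-description, and the $K$-theoretic descent rests on the injectivity statements of \cite{johnston_nickel} (corollary 10.12 for $\cS$, theorem 10.5 for $\cE_p$ --- this is also where $p \neq 2$ enters).

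There is, however, a genuine gap in your part (a) for (iii) $\Rightarrow$ (i). You invoke ``the refinement that characters of $\cG_0$ are $\ZZ$-combinations of inflations of inductions from $p$-elementary subquotients'' and justify it by the pro-$p$-ness of the cokernel of the reduced norm. No such exact induction theorem exists: inductions from $p$-elementary subgroups only generate a subgroup of the character ring of index prime to $p$, and the pro-$p$ argument, while correct for the quotient $\units{Z(\cQ(\cG))}/\nr(K_1(\Lambda(\cG)))$ in part (c), does not apply to the Interpolation Conjecture, which asserts exact equalities of regulated $L$-values rather than congruences in a pro-$p$ group; a prime-to-$p$ multiple $N\chi = \sum_i z_i \indu \lambda_i$ would only produce the $N$-th power of the desired interpolating element. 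The paper circumvents this entirely: it keeps the \emph{ordinary} (elementary, exact) Brauer decomposition $\chi = \sum_i z_i \, \indu_{\cH_i}^{\cG} \lambda_i$ with $\Gamma \subseteq \ker(\lambda_i) \subseteq \cH_i$, and then observes that each \emph{linear} $\lambda_i$ factors through the quotient $\cH_i/(H \cap \ker(\lambda_i))$, which is an extension of a finite cyclic group by a central open $\ZZ_p$ and is therefore $p$-elementary in the sense of definition \ref{defn:elementary_infinite}, hence lies in $\cE_p$; the Interpolation Conjecture for $\lambda_i$ then follows from that for its projection by proposition \ref{prop:functoriality_1} i), and lemma \ref{lem:ic_brauer_induction} applies as before. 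You need this observation (linear characters of elementary subgroups factor through members of $\cE_p$) to make the character-theoretic half of (iii) $\Rightarrow$ (i) work.
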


            \begin{proof}
                The fact that i) implies ii) and iii) is a direct consequence of propositions \ref{prop:functoriality_2} and \ref{prop:functoriality_1}. In order to prove their respective converses, we rely on the reformulation from remark \ref{rem:emc_reformulation}.

                \begin{itemize}
                    \item{
                        \underline{ii) $\Rightarrow$ i)}: We first need to show that IC($L_\infty/K, \chi, L, S, T, \alpha, \beta$) holds for any given ${\chi \in \Irr_p(\cG)}$. By lemmas \ref{prop:ic_rho_twist} and \ref{lem:rho_twist_factors}, we can assume $\chi$ factors through $\cG_0$. In particular, a Brauer decomposition $\chi = \sum_{i = 1}^n z_i \; \indu_{\cH_i}^\cG \lambda_i$ as in \eqref{eq:brauer_infinite} can be chosen such that ${\Gamma \subseteq \ker(\lambda_i) \subseteq \cH_i}$ for all $i$. It follows that all $\cH_i$ belong to $\cS$ by construction, and thus lemma \ref{lem:ic_brauer_induction} immediately yields IC($L_\infty/K, \chi, L, S, T, \alpha, \beta$).

                        The next step is to prove that $F_{S, T, \cG}^{\alpha, \beta} = \prod_{\chi/{\sim}_W} F_{S, T, \chi}^{\alpha, \beta} \in \units{Z(\cQ^c(\cG))}$ lies in fact in $\units{Z(\cQ(\cG))}$, or equivalently by diagram \eqref{diag:centres_galois_invariants}, that it is $G_{\QQ_p}$-invariant. The proof of lemma \ref{lem:ic_brauer_induction} realises each $F_{S, T, \chi}^{\alpha, \beta}$ as the preimage of a certain map $f_\chi$ ($f$ in the referenced proof) under the isomorphism $\delta \colon \units{Z(\cQ^c(\cG))} \isoa \Map^W(\Irr_p(\cG), \units{\cQ^c(\Gamma_K)})$. This isomorphism is in fact $G_{\QQ_p}$-equivariant if one endows its codomain with the natural left $G_{\QQ_p}$-action $(\sigma g)(\varsigma) = \sigma(g(\sigma^{-1}\varsigma))$ for $g \in \Map^W(\Irr_p(\cG), \units{\cQ^c(\Gamma_K)})$, $\varsigma \in \Irr_p(\cG)$ (cf. \cite{rwii} proof of theorem 8). Therefore, it suffices to show that $f_\cG = \prod_{\chi/{\sim}_W} f_\chi$ is itself invariant under that action. But this is a direct consequence of its definition \eqref{eq:definition_brauer_induction_hom} and the equivariant Main Conjecture for the $\cG' \in \cS$: given $\sigma \in G_{\QQ_p}$ and $\chi \in \Irr_p(\cG)$ with Brauer decomposition as before (where we can again assume $\Gamma \subseteq \cH_i \in \cS$), one has $\sigma^{-1} \chi = \sum_{i = 1}^n z_i \; \indu_{\cH_i}^\cG \sigma^{-1} \lambda_i$ and hence
                        \begin{align*}
                            (\sigma f_\cG)(\chi) = \sigma(f_\cG(\sigma^{-1}\chi)) = \sigma(f_{\sigma^{-1}\chi}(\sigma^{-1}\chi))
                            & = \sigma\left(\prod_{i = 1}^n j_{K^i}(j_{\sigma^{-1}\lambda_i}((F_{S(K^i), T(K^i), \sigma^{-1}\lambda_i}^{\alpha, \beta})^{z_i}))\right) \\
                            & = \prod_{i = 1}^n j_{K^i}(\sigma(j_{\sigma^{-1}\lambda_i}((F_{S(K^i), T(K^i), \sigma^{-1}\lambda_i}^{\alpha, \beta})^{z_i}))) \\
                            & = \prod_{i = 1}^n j_{K^i}(j_{\lambda_i}((F_{S(K^i), T(K^i), \lambda_i}^{\alpha, \beta})^{z_i})) \\
                            & = f_\cG(\chi),
                        \end{align*}
                        where $K^i = L_\infty^{\cH_i}$ and the next-to-last equality follows from $F_{S(K^i), T(K^i), \lambda_i}^{\alpha, \beta} = \sigma(F_{S(K^i), T(K^i), \sigma^{-1}\lambda_i}^{\alpha, \beta})$, which is in turn implied by eMC($L_\infty/K^i, LK^i, S(K^i), T(K^i), \alpha, \beta$) (see for instance the proof of corollary \ref{cor:emc_implies_weak_stark}). The only subtlety lies in the third equality, as it is not guaranteed that the decomposition ${\sigma^{-1} \chi = \sum_{i = 1}^n z_i \; \indu_{\cH_i}^\cG \sigma^{-1} \lambda_i}$ was the one used to \textit{construct} $f_{\sigma^{-1}\chi}$. However, the cited lemma shows that it is at least a \textit{valid} one for the construction, and the uniqueness of the interpolating elements (cf. remark \ref{rem:ic} ii)) ensures that all choices of a decomposition lead to the same map $f_{\sigma^{-1}\chi}$. The same goes for the last equality. Since the value of $\sigma f_\cG$ at $\chi$ determines its values at all $\rho$-twists of $\chi$, applying the above argument to a suitable set of representatives of $\Irr_p(\cG)/{\sim_W}$ yields $\sigma f_\cG = f_\cG$, as required.

                        We now move on to the equivariant part of the proof. For each $\cG' \in \cS$, the canonical embeddings $\iota_{\cG'} \colon \Lambda(\cG') \ia \Lambda(\cG)$ and $\iota_{\cG'} \colon \cQ(\cG') \ia \cQ(\cG)$ induce restriction maps on $K$-theory as explained in section \ref{sec:algebraic-k-theory}. Furthermore, the homomorphism $\rest^W$ from the proof of proposition \ref{prop:character_res_interpolating} gives rise to an arrow $\iota_{Z, \cG'}$ making the diagram
                        \begin{center}
                            \begin{tikzcd}
                                K_1(\cG) \arrow[d, "K_1^\rest(\iota_{\cG'})"] \arrow[r, "\nr"] & \units{Z(\cQ(\cG))} \arrow[d, "\iota_{Z, \cG'}"] \\
                                K_1(\cG') \arrow[r, "\nr"]                                     & \units{Z(\cQ(\cG'))}
                            \end{tikzcd}
                        \end{center}
                        commute (cf. \cite{johnston_nickel} (3.9)), which in particular factors as
                        \[
                            [\iota_{Z, \cG'}] \colon \faktor{\units{Z(\cQ(\cG))}}{\nr(K_1(\Lambda(\cG)))} \to \faktor{\units{Z(\cQ(\cG'))}}{\nr(K_1(\Lambda(\cG')))}.
                        \]
                        Now \eqref{eq:localisation_sequence_restriction} implies the commutativity of
                        \begin{center}
                            \begin{tikzcd}[column sep=large]
                                {K_0(\Lambda(\cG), \cQ(\cG))} \arrow[d, "{\prod_{\cG'} K_0^\rest(\iota_{\cG'}, \iota_{\cG'})}"] \arrow[r, "{\nu_\cG}"]    & \faktor{\units{Z(\cQ(\cG))}}{\nr(K_1(\Lambda(\cG)))} \arrow[d, "{\prod_{\cG'} [\iota_{Z, \cG'}]}"]              \\
                                {\prod_{\cG' \in \cS} K_0(\Lambda(\cG'), \cQ(\cG'))} \arrow[r, "{\prod_{\cG'} \nu_{\cG'}}"] & \prod_{\cG' \in \cS} \faktor{\units{Z(\cQ(\cG'))}}{\nr(K_1(\Lambda(\cG')))}
                            \end{tikzcd}
                        \end{center}
                        with $\nu_{\cG'}$ as in \eqref{eq:nu_g}. The proof of proposition \ref{prop:functoriality_2}, together with \eqref{diag:large_diagram_restriction}, shows that
                        \[
                            [\iota_{Z, \cG'}]([F_{S, T, \cG}^{\alpha, \beta}]) = [F_{S, T, \cG'}^{\alpha, \beta}],
                        \]
                        where $F_{S, T, \cG'}^{\alpha, \beta} = \prod_{\chi' \in \Irr_p(\cG')/{\sim}_W} F_{S(K'), T(K'), \chi'}^{\alpha, \beta} \in \units{Z(\cQ(\cG'))} \subseteq \units{Z(\cQ^c(\cG'))}$. The cornerstone of the present argument is the injectivity of the arrow $\prod_{\cG'} [\iota_{Z, \cG'}]$ in the last diagram, which is a direct application of \cite{johnston_nickel} corollary 10.12 (this is the reason we require $p \neq 2$). Since the left vertical arrow sends $-\chi_{\Lambda(\cG), \cQ(\cG)}(\cC_{S, T}\q, t^\alpha)$ to
                        \[
                            \prod_{\cG' \in \cS} K_0^\rest(\iota_{\cG'}, \iota_{\cG'})(-\chi_{\Lambda(\cG), \cQ(\cG)}(\cC_{S, T}\q, t^\alpha)) = \prod_{\cG' \in \cS} - \chi_{\Lambda(\cG'), \cQ(\cG')}(\cC_{S(K'), T(K')}\q, \restr{t^\alpha}{\cQ(\cG')})
                        \]
                        by the proof of \ref{prop:functoriality_2}, $\nu_\cG(-\chi_{\Lambda(\cG), \cQ(\cG)}(\cC_{S, T}\q, t^\alpha))$ is the \textit{unique} preimage of $\prod_{\cG' \in \cS} [F_{S, T, \cG'}^{\alpha, \beta}]$ under $\prod_{\cG' \in \cS} [\iota_{Z, \cG'}]$, i.e. $[F_{S, T, \cG}^{\alpha, \beta}]$.
                    }
                    \item{
                        \underline{iii) $\Rightarrow$ i)}: The proof that IC($L_\infty/K, \chi, L, S, T, \alpha, \beta$) holds for a given $\chi \in \Irr_p(\cG)$ is essentially the same as above, the only difference being that the $\cH_i$ appearing in the Brauer decomposition of $\chi$ may not belong to $\cE_p$. Instead, we reason as follows: each $\lambda_i$ is a linear character of $\cH_i$ (as before, we can assume $\Gamma \subseteq \ker(\lambda_i) \subseteq \cH_i$), and hence establishes an isomorphism between $\cH_i/\ker(\lambda_i)$ and a finite subgroup of $\units{(\QQ_p^c)}$, which is necessarily cyclic. Set $K^i = L_\infty^{\cH_i}$ and $\widetilde{K}^i = L_\infty^{\ker(\lambda_i)}$, so $K \subseteq K^i \subseteq \widetilde{K}^i \subseteq L$.

                        Let $\widetilde{K}_\infty^i$ be the cyclotomic $\ZZ_p$-extension of $\widetilde{K}^i$ and denote $\Gal(L_\infty/\widetilde{K}_\infty^i) = H \cap \ker(\lambda_i)$ by $H_i$. Then $H_i$ is a normal subgroup of $\cH_i$ and $\lambda_i$ factors through the abelian quotient $\cH_i/H_i = \Gal(\widetilde{K}_\infty^i/K^i)$ as a character $\tilde{\lambda}_i$. This quotient has a (central) open subgroup ${\Gal(\widetilde{K}_\infty^i/\widetilde{K}^i) \iso \ZZ_p}$ with cyclic factor group $\Gal(\widetilde{K}^i/K^i) \iso \cH_i/\ker(\lambda_i)$ and is therefore $p$-elementary in the sense of definition \ref{defn:elementary_infinite}. In particular, $\cH_i/H_i \in \cE_p$ and the Interpolation Conjecture holds for $\tilde{\lambda}_i$ and thus also for $\lambda_i$ by proposition \ref{prop:functoriality_1} i). As a result, so does IC($L_\infty/K, \chi, L, S, T, \alpha, \beta$). The fact that $F_{S, T, \cG}^{\alpha, \beta} = \prod_{\chi/{\sim}_W} F_{S, T, \chi}^{\alpha, \beta} \in \units{Z(\cQ^c(\cG))}$ is $G_{\QQ_p}$-invariant follows from the analogous property for each $\cH_i$, which is in turn a consequence of all arrows of
                        \begin{center}
                            \begin{tikzcd}
                                \units{Z(\cQ^c(\cH_i))} \arrow[d, "\rsim"] \arrow[r, "\pi_{\cH_i/H_i}"] & \units{Z(\cQ^c(\cH_i/H_i))} \arrow[d, "\rsim"]           \\
                                \prod_{\chi' \in \Irr_p(\cH_i)/{\sim}_W} \units{\cQ^c(\Gamma_{\chi'})} \arrow[r] & \prod_{\tilde{\chi} \in \Irr_p(\cH_i/H_i)/{\sim}_W} \units{\cQ^c(\Gamma_{\tilde{\chi}})}
                            \end{tikzcd}
                        \end{center}
                        being $G_{\QQ_p}$-equivariant and the equivariant Main Conjecture for $\cH_i/H_i \in \cE_p$. Recall that the bottom arrow is an isomorphism on the components corresponding to characters which factor through $H_i$, and identically 1 elsewhere (cf. lemma \ref{lem:functoriality_1_chi_parts} iii)).  Note that, given $\sigma \in G_{\QQ_p}$ and $\chi' \in \Irr_p(\cH_i)$, $\chi'$ factors through $H_i$ if and only if $\sigma \chi'$ does.

                        The equivariant part of the proof is completely analogous to that for ii) $\Rightarrow$ i), with the caveat that the restriction maps (on $K$-theory, centres and characters) associated to $\cG' \ia \cG$ now have to be composed with the projection maps arising from $\pi_{\cG'/H'} \colon \cG' \sa \cG'/H'$ for $\cG'/H' \in \cE_p$ (these were studied in subsection \ref{subsec:functoriality_1}). The full argument can be found in \cite{johnston_nickel}, specifically in theorem 10.14 and the results leading up to it. We limit ourselves to pointing out that the key commutative diagram is
                        \begin{center}
                            \begin{tikzcd}[column sep=huge]
                            {K_0(\Lambda(\cG), \cQ(\cG))} \arrow[r, "\nu_\cG"] \arrow[d, "{\prod_{\cG'} K_0^\rest(\iota_{\cG'}, \iota_{\cG'})}"]                    & \faktor{\units{Z(\cQ(\cG))}}{\nr(K_1(\Lambda(\cG)))} \arrow[d, "{\prod_{\cG'} [\iota_{Z, \cG'}]}"]                                              \\
                            {\prod_{\cG'} K_0(\Lambda(\cG'), \cQ(\cG'))} \arrow[r, "\prod_{\cG'} \nu_{\cG'}"] \arrow[d, "{\prod_{\cG'/H'} K_0^\rest(\pi_{\cG'/H'}, \pi_{\cG'/H'})}"] & \prod_{\cG'} \faktor{\units{Z(\cQ(\cG'/H'))}}{\nr(K_1(\Lambda(\cG'/H')))} \arrow[d, "{\prod_{\cG'/H'} [\pi_{Z, \cG'/H'}]}"] \\
                            {\prod_{\cG'/H' \in \cE_p} K_0(\Lambda(\cG'/H'), \cQ(\cG'/H'))} \arrow[r, "\prod_{\cG'/H'} \; \nu_{\cG'/H'}"]                            & \prod_{\cG'/H' \in \cE_p} \faktor{\units{Z(\cQ(\cG'/H'))}}{\nr(K_1(\Lambda(\cG'/H')))}
                            \end{tikzcd}
                        \end{center}
                        where the products in the middle row run over $\cG'$ for each $\cG'/H' \in \cE_p$ and the composition of the vertical arrows on the right is now injective by \cite{johnston_nickel} theorem 10.5.
                    }
                \end{itemize}
            \end{proof}

            \begin{rem}
                The proof also shows that, in the notation of the statement of the proposition, the following are equivalent:
                \begin{enumerate}[i)]
                    \item{
                        \hyperref[conje:ic]{IC($L_\infty/K, \chi, L, S, T, \alpha, \beta$)} holds for all $\chi \in \Irr_p(\cG)$.
                    }
                    \item{
                        \hyperref[conje:ic]{IC($L_\infty/K', \lambda', L', S(K'), T(K'), \alpha, \beta$)} holds for all $\cG' \in \cS$ and all linear $\lambda' \in \Irr_p(\cG')$.
                    }
                    \item{
                        \hyperref[conje:ic]{IC($L_\infty'/K', \tilde{\lambda}, L', S(K'), T(K'), \alpha', \beta$)} holds for all $\cG'/H' \in \cE_p$ and all linear $\tilde{\lambda} \in \Irr_p(\cG'/H')$.
                    }
                \end{enumerate}
                \qedef
            \end{rem}

\newpage
\chapter{Compatibility with existing conjectures and new cases}
\label{chap:compatibility_with_existing_conjectures}

    As mentioned in the preceding chapters, setting \ref{sett:construction} is, to a certain extent, a generalisation of that of some existing Main Conjectures. Most notably, $\cG = \Gal(L_\infty/K)$ is not required to be abelian, or $L_\infty$ totally real. In this final chapter, we make that claim precise by discussing the Main Conjectures of Burns, Kurihara and Sano; and of Ritter and Weiss. Although more details will be given at the beginning of each section, the conclusion is the following:
    \begin{itemize}
        \item{
            In the abelian case, our conjecture recovers a slight modification of that in \cite{bks}. This allows us to deduce validity of \hyperref[conje:emc]{eMC($L_\infty/\QQ, \beta$)} whenever $L_\infty/\QQ$ is abelian.
        }
        \item{
            For a suitably defined CM extension $L_\infty/K$, the \textit{minus part} of our conjecture recovers that in \cite{rwii} for the maximal totally real subextension $L_\infty\pl/K$. This implies that the minus part of \hyperref[conje:emc]{eMC($L_\infty/K, \beta$)} holds (for extensions of that form) in the so-called $\mu = 0$ case.
        }
    \end{itemize}

    While not difficult, the proofs of these these equivalences necessitate some recalls on objects such as determinant functors and $\Ext$-modules of Iwasawa modules. We strive to do so in a moderately self-contained manner.

    We point out that the Main Conjecture from chapter \ref{chap:formulation_of_the_main_conjecture} is by no means the most general conjecture in Iwasawa theory of number fields in every possible regard. For instance, it is limited to the rank-one cyclotomic case - which suggests a natural direction for further research.

    \section{The conjecture of Burns, Kurihara and Sano}
    \label{sec:the_conjecture_of_burns_kurihara_and_sano}

        The aim of this section is to prove that the Main Conjecture from chapter \ref{chap:formulation_of_the_main_conjecture} is \textit{essentially equivalent} to that put forward by Burns, Kurihara and Sano in 2017 in the cases where both can be formulated. Specifically, we are alluding to conjecture 3.1 in \cite{bks}, the setup of which was partially introduced in section \ref{sec:description_in_terms_of_rgamma_complexes}. As explained in said section, the main difference is that the cited article is restricted to the case where $L_\infty/K$ is an abelian extension, and in turn does not require $K_\infty$ to be the \textit{cyclotomic} $\ZZ_p$-extension of $K$. In order to compare the two conjectures, therefore, we will be working in setting \ref{sett:formulation} (sometimes simply \ref{sett:construction}) under the additional assumption that $\cG$ is abelian. By ``essentially equivalent'' we mean that a claim about all characters must be replaced by one about almost all characters - in line with our Main Conjecture.

        \begin{rem}
        \label{rem:k1_commutative}
            The commutativity of $\cG$, and therefore of $\Lambda(\cG)$, has profound implications. For instance, it means that $\cQ(\cG)$ is a commutative semisimple Artinian ring and therefore a finite product of fields by Wedderburn's theorem. In particular, the reduced norm becomes an isomorphism $\nr \colon K_1(\cQ(\cG)) \isoa \units{\cQ(\cG)}$, which renders the equivariant Main Conjectures with and without uniqueness equivalent (cf. remark \ref{rem:emcu} i)). Another consequence is that $\cG \iso \Gamma_K \times H$ and all irreducible Artin characters of $\cG$ are linear.\qedef
        \end{rem}

        Recall that abelianity of $\cG$ was already assumed in theorem \ref{thm:iso_bks_complex}, which proves the crucial fact that the complexes employed by Burns, Kurihara and Sano and ourselves are isomorphic in the derived category $\cD(\Lambda(\cG))$. The relevant invariants of complexes (determinant functors and refined Euler characteristics) are unaffected by such isomorphisms, and thus it suffices to consider our usual main complex $\cC_{S, T}\q$ moving forward. We need to introduce its finite-level counterpart: in the notation of setting \ref{sett:construction}, given $n \in \NN$, we define the complex of $\Lambda(\cG_n)$-modules
        \[
            \cC_{L_n, S, T}\q = \Lambda(\cG_n) \otimes_{\Lambda(\cG)}^\LL \cC_{S, T}\q.
        \]
        By lemma \ref{lem:complexes_coinvariants_finite_level}, $\cC_{L_n, S, T}\q$ is isomorphic in $\cD(\Lambda(\cG_n))$ to the complex introduced in \cite{bks} p. 1535, denoted by $\cB_{L_n, S, T}\q$ in section \ref{sec:description_in_terms_of_rgamma_complexes} above. As stated in said article, $\cC_{S, T}\q = \varprojlim_n \cC_{L_n, S, T}\q$ (cf. also the proof of theorem \ref{thm:iso_bks_complex}) and $\cC_{L_n, S, T}\q$ is a perfect complex which is acyclic outside degrees 0 and 1 and satisfies:
        \begin{itemize}
            \item{
                $H^0(\cC_{L_n, S, T}\q) \iso \ZZ_p \otimes \units{\cO_{L_n, S, T}}$.
            }
            \item{
                $H^1(\cC_{L_n, S, T}\q)$ fits into the short exact sequence of $\Lambda(\cG_n)$-modules
                \begin{equation}
                \label{eq:cohomology_finite_level}
                    0 \to Cl_{L_n, S, T}(p) \to H^1(\cC_{L_n, S, T}\q) \to \cX_{L_n, S} \to 0,
                \end{equation}
                where $Cl_{L_n, S, T}(p)$ is the $p$-part of the $(S, T)$-ray class group $Cl_{L_n, S, T}$ of $L_n$ (see the proof of proposition \ref{prop:kernel_cokernel_h0_alpha} for a definition).
            }
        \end{itemize}
        Compare this with the cohomology of $\cC_{S, T}\q$ established in theorem \ref{thm:cohomology_of_complex}. Since $Cl_{L_n, S, T}$ is finite, sequence \eqref{eq:cohomology_finite_level} induces an isomorphism $\CC_p \otimes_{\ZZ_p} H^1(\cC_{L_n, S, T}\q) \iso \CC_p \otimes_{\ZZ_p} \cX_{L_n, S}$.

        The conjecture of Burns, Kurihara and Sano is formulated in terms of determinant functors of these complexes. A brief introduction to these functors and some of their relevant properties can be found in appendix \ref{app:determinant_functors}. We now explain how to express $\Det_{\CC_p[\cG_n]}(\CC_p \otimes_{\ZZ_p}^\LL \cC_{L_n, S, T}\q)$ in terms of $\Det_{\CC_p[\cG_n]}(\CC_p \otimes_{\ZZ_p} H^0(\cC_{L_n, S, T}\q))$ and $\Det_{\CC_p[\cG_n]}(\CC_p \otimes_{\ZZ_p} H^1(\cC_{L_n, S, T}\q))$. For simplicity, assume that $\cC_{S, T}\q$ has a strictly perfect representative of the form
        \begin{equation}
        \label{eq:stricly_perfect_representative_two_degrees}
            \cP\q = [\stackrel{0}{\cP^0} \to \stackrel{1}{\cP^1}]
        \end{equation}
        in $\cD(\Lambda(\cG))$ with $\cP^0$ and $\cP^1$ free. This is always the case if the set $T$ is chosen large enough that $\units{\cO_{L_n, S, T}}$ is $\ZZ$-torsion free for all $n$ (cf. \cite{bks} p. 1538 or \cite{bullach_hofer} equation (12)). Since neither conjecture depends on $T$ (cf. subsection \ref{subsec:the_choice_of_s_and_t} and \cite{bks} p. 1541), this assumption is harmless. In particular, $\cC_{L_n, S, T}\q$ has a strictly perfect representative $\cP_n\q = [\cP_n^0 \to \cP_n^1]$ in $\cD(\Lambda(\cG_n))$ with $\cP_n^0$ and $\cP_n^1$ free.

        Split the four-term exact sequence induced by \eqref{eq:stricly_perfect_representative_two_degrees} into two short ones:
        \begin{center}
            \begin{tikzcd}[column sep=small]
                {H^0(\cC_{L_n, S, T}\q)} \arrow[rr, hook] &  & \cP_n^0 \arrow[rr] \arrow[rd, two heads] &                    & \cP_n^1 \arrow[rr, two heads] &  & {H^1(\cC_{L_n, S, T}\q)} \\
                                                          &  &                                      & W \arrow[ru, hook] &                           &  &
            \end{tikzcd}
        \end{center}

        After applying $\CC_p \otimes_{\ZZ_p} -$, the semisimplicity of $\CC_p[\cG_n]$ ensures the existence of splittings
        \begin{center}
            \begin{tikzcd}
                {\CC_p \otimes_{\ZZ_p} H^0(\cC_{L_n, S, T}\q)} \arrow[r, hook] & \CC_p \otimes_{\ZZ_p} \cP_n^0 \arrow[r, two heads] & \CC_p \otimes_{\ZZ_p} W \arrow[l, "\sigma_0", dashed, bend left]                \\
                \CC_p \otimes_{\ZZ_p} W \arrow[r, hook]                        & \CC_p \otimes_{\ZZ_p} \cP_n^1 \arrow[r, two heads] & {\CC_p \otimes_{\ZZ_p} H^1(\cC_{L_n, S, T}\q)} \arrow[l, "\sigma_1", dashed, bend left]
            \end{tikzcd}
        \end{center}
        which in turn induce isomorphisms
        \begin{equation}
        \label{eq:choice_of_splittings_iso_det}
            \Det_{\CC_p[\cG_n]}(\CC_p \otimes_{\ZZ_p} H^0(\cC_{L_n, S, T}\q)) \otimes_{\CC_p[\cG_n]} \Det_{\CC_p[\cG_n]}(\CC_p \otimes_{\ZZ_p} W) \xrightarrow{\ \sim \ } \Det_{\CC_p[\cG_n]}(\CC_p \otimes_{\ZZ_p} \cP_n^0)
        \end{equation}
        and
        \begin{equation}
        \label{eq:choice_of_splittings_iso_det_two}
            \Det_{\CC_p[\cG_n]}(\CC_p \otimes_{\ZZ_p} W) \otimes_{\CC_p[\cG_n]} \Det_{\CC_p[\cG_n]}(\CC_p \otimes_{\ZZ_p} H^1(\cC_{L_n, S, T}\q)) \xrightarrow{\ \sim \ } \Det_{\CC_p[\cG_n]}(\CC_p \otimes_{\ZZ_p} \cP_n^1)
        \end{equation}
        as in \eqref{eq:additivity_det_functor}. This allows for the following definition from \cite{bks} paragraph 3B1:
        \begin{defn}
        \label{defn:bks_map}
            Setting \ref{sett:construction} with $\cG$ abelian. Let $\beta \colon \CC_p \xrightarrow{\sim} \CC$ be a ring isomorphism. Assume $T$ is large enough that $\cC_{S, T}\q$ has a strictly perfect representative $\cP\q$ of the form \eqref{eq:stricly_perfect_representative_two_degrees}.

            Given $\chi \in \Irr_p(\cG)$, choose $n \in \NN$ such that $\chi$ factorises over $\cG_n$. Set $\cP_n\q = \Lambda(\cG_n) \otimes_{\Lambda(\cG)}^\LL \cP\q$, which is a strictly perfect representative of $\cC_{L_n, S, T}\q$. We define the $\ZZ_p$-homomorphism $\lambda_\chi$ as the composition
            \begin{align*}
                \lambda_\chi \colon \Det_{\Lambda(\cG)}(\cC_{S, T}\q) = & \ \Det_{\Lambda(\cG)}(\cP\q) \\
                \to & \ \Det_{\Lambda(\cG_n)}(\cP_n\q) \\
                \ia & \ \Det_{\CC_p[\cG_n]}(\CC_p \otimes_{\ZZ_p}^\LL \cP_n\q) \\
                = & \ \Det_{\CC_p[\cG_n]}(\CC_p \otimes_{\ZZ_p} \cP_n^0) \otimes_{\CC_p[\cG_n]} \Det_{\CC_p[\cG_n]}(\CC_p \otimes_{\ZZ_p} \cP_n^1)^{-1} \\
                \xrightarrow{\sim} & \ \Det_{\CC_p[\cG_n]}(\CC_p \otimes_{\ZZ_p} H^0(\cC_{L_n, S, T}\q)) \otimes_{\CC_p[\cG_n]} \Det_{\CC_p[\cG_n]}(\CC_p \otimes_{\ZZ_p} H^1(\cC_{L_n, S, T}\q))^{-1} \\
                \xrightarrow{\sim} & \ \Det_{\CC_p[\cG_n]}(\CC_p \otimes \units{\cO_{L_n, S, T}}) \otimes_{\CC_p[\cG_n]} \Det_{\CC_p[\cG_n]}(\CC_p \otimes_{\ZZ_p} \cX_{L_n, S})^{-1} \\
                \xrightarrow{\sim} & \ \Det_{\CC_p[\cG_n]}(\CC_p \otimes_{\ZZ_p} \cX_{L_n, S}) \otimes_{\CC_p[\cG_n]} \Det_{\CC_p[\cG_n]}(\CC_p \otimes_{\ZZ_p} \cX_{L_n, S})^{-1} \\
                \xrightarrow{\sim} & \ \CC_p[\cG_n] \\
                \sa & \ \CC_p.
            \end{align*}
            Here the first two arrows are induced by extension of scalars (cf. appendix \ref{app:determinant_functors}), whereas the fourth one is given by \eqref{eq:choice_of_splittings_iso_det}, \eqref{eq:choice_of_splittings_iso_det_two} and evaluation $\Det_{\CC_p[\cG_n]}(\CC_p \otimes_{\ZZ_p} W) \otimes_{\CC_p[\cG_n]} \Det_{\CC_p[\cG_n]}(\CC_p \otimes_{\ZZ_p} W)^{-1} \xrightarrow{\sim} \CC_p[\cG_n]$. The last three arrows are, in order, the determinant $\Det_{\CC_p[\cG_n]}(\lambda_{n, S}^\beta)$ of the $p$-adic Dirichlet regulator $\lambda_{n, S}^\beta$ (definition \ref{defn:p-adic_dirichlet_regulator_map}), the evaluation map and the homomorphism induced by $\chi$ (i.e. the projection to $e(\chi) \CC_p[\cG_n] \iso \CC_p$).\qedef
        \end{defn}

        \begin{rem}
        \phantomsection
        \label{rem:map_bks}
            \begin{enumerate}[i)]
                \item{
                    The definition in \cite{bks} passes to the finite level $G_\chi = \Gal(L_\infty^{\ker\chi}/K)$ instead of $\cG_n$. However, the resulting map $\lambda_\chi$ is identical by virtue of the surjection $\CC_p[\cG_n] \sa \CC_p[G_\chi]$. In particular, it is independent of the choice of $n$.
                }
                \item{
                    Instead of irreducible (hence linear) $p$-adic Artin characters of $\cG$, the article considers linear $\CC$-valued characters with open kernel - the collection of which is denoted by $\hat{\cG}$. Note that $\beta$ induces a group isomorphism $\Irr_p(\cG) \xrightarrow{\sim} \hat{\cG}, \chi \mapsto \beta \chi$, and therefore everything can be formulated in terms of $\Irr_p(\cG)$ instead.
                }
                \item{
                    The homomorphism $\lambda_\chi$ is known to be independent of the choice of a strictly perfect representative $\cP\q$ of $\cC_{S, T}\q$, as well as splittings $\sigma_0$ and $\sigma_1$ inducing sequences \eqref{eq:choice_of_splittings_iso_det} and \eqref{eq:choice_of_splittings_iso_det_two}, by the properties of determinant functors (see for instance \cite{ckv} p. 71). However, it is not independent of the choice of $\beta \colon \CC_p \isoa \CC_p$ - and hence neither is the conjecture in \cite{bks}.
                }
                \item{
                    The condition on $T$ is not necessary for the definition: the same map can be constructed using an arbitrary representative $\cP\q$ (this is indeed the case in \cite{bks}, although the explicit construction is not given). This assumption, which will be dropped in the statement of the conjecture below, appears above only to simplify the description of the maps. Recall that both conjectures are independent of $T$.
                }
            \end{enumerate}
        \end{rem}

        We are now in a position to state conjecture 3.1 from \cite{bks}\footnote{The cited, published article contains the following erratum: the $\hat{G}_\chi$ in the statement should read $\hat{\cG}_\chi$. This has been confirmed by the authors and is in fact correct in available versions of the article prior to publication.}. However, in order to prove equivalence with our conjecture (in the abelian case), it is necessary to apply a minor tweak to the former. We refer to this as the \textbf{modified BKS conjecture}\index{equivariant Main Conjecture!of Burns, Kurihara and Sano}:
        \begin{conje*}[BKS'($L_\infty/K, L, S, T, \beta$)]
        \phantomsection
        \label{conje:bksp}
            Setting \ref{sett:construction} with $\cG$ abelian.

            There exists a generator\footnote{Despite the notation, which is taken from \cite{bks}, this element is unrelated to the $\cL_{S, T}(\chi, f) \in \CC_p$ from Stark's conjecture \hyperref[conje:stark]{Stark\textsuperscript{T}($L/K, \chi, f, S, T$)}.} $\cL_{S, T}$ of the $\Lambda(\cG)$-module $\Det_{\Lambda(\cG)}(\cC_{S, T}\q)$ such that, for almost all ${\psi \in \Irr_p(\cG)}$, one has $\lambda_\psi(\cL_{S, T}) = \beta^{-1}(L_{K, S, T}^\ast(\beta \psi^{-1}, 0))$.
        \end{conje*}

        \begin{rem}
            Although perhaps not entirely evident, we claim that the above conjecture is indeed equivalent to the outcome of replacing ``every $\psi \in \hat{\cG}_\chi$'' by ``almost every $\psi \in \hat{\cG}_\chi$'' in \cite{bks} conjecture 3.1. We recall the following definitions therein, which will not be used beyond the scope of this remark: for $\psi \in \Irr_p(\cG)$, let
            \[
                r_{\beta \psi, S} =
                \begin{cases}
                    \abs{\set{v \in S \colon \cG_v \subseteq \ker(\psi)}}, & \psi \neq \bbone \\
                    \abs{S} - 1, & \psi = \bbone.
                \end{cases}
            \]
            This is precisely the order of vanishing of the Artin $L$-function $L_{K, S, T}(\beta \psi, s)$ at $s = 0$ (i.e. $r_{\beta \psi}(S)$ in the notation of section \ref{sec:artin_l-series}) - see for instance \cite{bks} subsection 2A. It is immediate from the definition that $r_{\beta \psi, S} = r_{\beta \check{\psi}, S}$.

            Given a character $\eta \in \Irr_p(H) = \Hom(H, \units{\CC_p})$ (in the article: a $\chi \in \hat{\Delta}$), let $\cG_\eta = \cG/\ker(\eta)$ (where $\ker(\eta)$ is really a subgroup of $H$). We regard the canonical embedding $\Irr_p(\cG_\eta) \ia \Irr_p(\cG)$ as an inclusion. Set
            \[
                r_\eta = \abs{\set{v \in S \colon \cG_v \subseteq \ker(\eta)}},
            \]
            which is the number of places in $S$ which  split completely in $L_\infty^{\ker(\eta)}/K$. Since $K_\infty/K$ is the cyclotomic $\ZZ_p$-extension, that can only be the case for archimedean places.

            Assume first \hyperref[conje:bksp]{BKS'($L_\infty/K, L, S, T, \beta$)}. Let $\eta \in \Irr_p(H)$. By assumption, one has\footnote{In the notation of the article, $L_{K, S, T}^{(r)}(\beta \psi^{-1}, 0)$ is the coefficient of $s^r$ in the series expansion around 0, rather than the value of the $r$-th derivative at 0 (see p. 1533). In particular, $L_{K, S, T}^{(r_{\beta\psi, S})}(\beta \psi^{-1}, 0) = L_{K, S, T}^\ast(\beta \psi^{-1}, 0)$.}
            \[
                \lambda_\psi(\cL_{S, T}) = \beta^{-1}(L_{K, S, T}^\ast(\beta \psi^{-1}, 0))
            \]
            for almost all $\psi \in \Irr_p(\cG)$. In particular, the same holds for almost all $\psi \in \Irr_p(\cG_\eta) \subset \Irr_p(\cG)$ satisfying $r_\eta = r_{\beta\psi, S}$ (or any other condition). This shows one implication.

            The converse is subtler because of the way characters are grouped in terms of $\Irr_p(\cG_\eta)$ in \cite{bks} conjecture 3.1. We first point out that it is not necessarily true that, for a given $\eta \in \Irr_p(H)$, one has $r_\eta = r_{\beta\psi, S}$ for almost all $\psi \in \Irr_p(\cG_\eta)$. For instance, if $p \neq 2, L = \QQ(\zeta_p), K = L^+$ (the maximal totally real subfield of $L$), $S$ is arbitrary and $\eta$ is the non-trivial character of $H \iso \zmod{2}$, then $r_\eta = 0$ but any $\psi \in \Irr_p(\cG_\eta) = \Irr_p(\cG)$ which is trivial on $H$ will have $r_{\beta\psi, S} > 0$ (as $H$ contains the decomposition groups of all archimedean places) and there are infinitely many such $\psi$.

            However, the following does hold: given $\eta \in \Irr_p(H)$, the equality $r_\eta = r_{\beta\psi, S}$ is satisfied for almost all $\psi \in \Irr_p(\cG)$ such that $\rest_H^\cG \psi = \eta$ (this latter condition is stricter than $\psi \in \Irr_p(\cG_\eta) \subseteq \Irr_p(\cG)$, which just amounts to $\ker(\eta) \subseteq \ker(\psi)$). To see why, observe that if $\psi \in \Irr_p(\cG)$ restricts to $\eta$, one has
            \[
                \set{v \in S_\infty \colon \cG_v \subseteq \ker(\psi)} = \set{v \in S_\infty \colon \cG_v \subseteq \ker(\eta)},
            \]
            as all archimedean places have finite decomposition group and $H$ contains all elements of $\cG$ of finite order. Therefore, any potential difference between $r_{\beta\psi, S}$ and $r_\eta$ must come from the non-archimedean places, that is,
            \[
                r_{\beta\psi, S} \neq r_\eta \quad \iff \quad \set{v \in S_f \colon \cG_v \subseteq \ker(\psi)} \neq \varnothing.
            \]
            But $\cG_v \subseteq \ker(\psi)$ implies $\cap_{v \in S_f} \cG_v \subseteq \ker(\psi)$, which is only satisfied by finitely many $\psi$ (precisely those which factor over the finite quotient $\cG/\cap_{v \in S_f} \cG_v$).

            Having established this, the remaining implication follows easily: assume \cite{bks} conjecture 3.1 with ``every $\psi \in \hat{\cG}_\eta$'' replaced by ``almost every $\psi \in \hat{\cG}_\eta$''. The set $\Irr_p(\cG)$ is the disjoint union of the equivalence classes of the relation $\psi \sim \psi' \Leftrightarrow \rest_H^\cG \psi = \rest_H^\cG \psi'$ (although unnecessary, we know this is precisely the $W$-twist equivalence relation $\sim_W$). Let $C_\eta$ be the the class represented by $\eta \in \Irr_p(\cG)$, which in particular contains $\Irr_p(\cG_\eta)$. The assumption implies $\lambda_\psi(\cL_{S, T}) = \beta^{-1}(L_{K, S, T}^\ast(\beta \psi^{-1}, 0))$ for all $\psi \in \Irr_p(\cG_\eta)$ satisfying $r_\eta = r_{\beta\psi, S}$, which we have shown is the case for almost all $\psi \in C_\eta$. Now \hyperref[conje:bksp]{BKS'($L_\infty/K, L, S, T, \beta$)} follows from the fact that there are only finitely many classes $C_\eta$.
            \qedef
        \end{rem}

        In preparation for the central result of this section, we prove two technical results. The first one concerns determinant functors and (somewhat in disguise) refined Euler characteristics:

        \begin{prop}
        \label{prop:generator_det_rec}
            Let $\varphi \colon R \ia S$ be an injective homomorphism of commutative rings such such that $S$ is a flat right $R$-module (via $\varphi$) and semisimple Artinian. Suppose given a strictly perfect (cochain) complex $\cP\q$ of $R$-modules and an isomorphism of $S$-modules $t \colon S \otimes_R \cP^\even \xrightarrow{\sim} S \otimes_R \cP^\odd$ (in the notation of the beginning of section \ref{sec:an_integral_trivialisation}). Consider the composition
            \begin{equation}
            \label{diag:det_and_rec}
                \begin{tikzcd}[column sep=small]
                {\varphi_t \colon \Det_R(\cP\q)} \arrow[r, hook] & \Det_S(S \otimes_R^\LL \cP\q) \arrow[r, equals] & \Det_S(S \otimes_R \cP^\even) \otimes_S \Det_S(S \otimes_R \cP^\odd)^{-1} \arrow[d, "\Det_S(t) \otimes_S \Id"] &   \\
                                                                                 &                                     & \Det_S(S \otimes_R \cP^\odd) \otimes_S \Det_S(S \otimes_R \cP^\odd)^{-1} \arrow[r, "ev"]                       & S
                \end{tikzcd}
            \end{equation}
            where arrows are defined as in appendix \ref{app:determinant_functors}. Then, for any generator $g$ of $\Det_R(\cP\q)$ as an $R$-module (if it exists), one has $\varphi_t(g) \in \units{S} \subseteq S$.

            \begin{enumerate}[i)]
                \item{
                    Assume that $[\cP^\even, t, \cP^\odd]$ lies in the image of the connecting map ${\partial \colon K_1(S) \to K_0(R, S)}$ (cf. \eqref{eq:exact_sequence_k-theory}). Then any $g$ as above satisfies
                    \begin{equation}
                    \label{eq:det_and_pseudo_rec}
                        \partial(\varphi_t(g)) = [\cP^\even, t, \cP^\odd],
                    \end{equation}
                    where $\varphi_t(g) \in \units{S}$ is regarded as an element of $K_1(S)$ via the canonical injection $\units{S} \ia K_1(S)$ (cf. \cite{cr2} (40.27)).
                }
                \item{
                    Assume that $[\cP^\even, t, \cP^\odd] = \partial(x)$ for some $x \in K_1(S)$. Then there exists a generator $g$ of $\Det_R(\cP\q)$ as an $R$-module such $\varphi_t(g) = \nr(x) \in \units{S}$.
                }
            \end{enumerate}
        \end{prop}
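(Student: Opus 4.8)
The plan is to separate the soft, purely formal part of the statement (that $\varphi_t(g)\in\units S$) from the two $K$-theoretic refinements, and to prove i) and ii) by an acyclic modification that reduces everything to the tautological situation of a two-term complex of free modules given by an invertible matrix. First I would dispose of the general assertion: a generator $g$ of $\Det_R(\cP\q)$ can exist only when $\Det_R(\cP\q)\iso R$, and in that case every arrow occurring in the composition defining $\varphi_t$ becomes an isomorphism of $S$-modules after applying $S\otimes_R-$ to the source. Indeed, the base-change map $S\otimes_R\Det_R(\cP\q)\xrightarrow{\sim}\Det_S(S\otimes_R^\LL\cP\q)$ is an isomorphism since $\cP\q$ is strictly perfect; $\Det_S(t)\otimes_S\Id$ is one because $t$ is an isomorphism; and the evaluation pairing $ev$ is one. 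Hence $\varphi_t$ extends to an $S$-linear isomorphism $S\otimes_R\Det_R(\cP\q)\xrightarrow{\sim}S$, and since $1\otimes g$ generates the source over $S$, its image $\varphi_t(g)$ generates $S$, i.e.\ $\varphi_t(g)\in\units S$.

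For i) and ii) I would start from the observation that $[\cP^\even,t,\cP^\odd]\in\partial(K_1(S))=\ker(\psi)$ forces $[\cP^\even]=[\cP^\odd]$ in $K_0(R)$ by exactness of \eqref{eq:exact_sequence_k-theory}; thus $\cP^\even$ and $\cP^\odd$ are stably isomorphic, so $\Det_R(\cP^\even)\iso\Det_R(\cP^\odd)$ and $\Det_R(\cP\q)$ is free of rank one (in particular a generator exists, as needed in ii)). The reduction itself proceeds by choosing a finitely generated projective $R$-module $W$ with $\cP^\even\oplus W$ and $\cP^\odd\oplus W$ both free of the same rank $N$, forming the acyclic complex $A\q=[\,W\xrightarrow{\Id}W\,]$ placed in one odd and the adjacent even degree, and replacing $(\cP\q,t)$ by $(\cP\q\oplus A\q,\,t\oplus\Id)$. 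I would then check that this replacement leaves all three relevant quantities unchanged: $\Det_R(\cP\q\oplus A\q)=\Det_R(\cP\q)\otimes_R\Det_R(A\q)$ is canonically $\Det_R(\cP\q)$ since $\Det_R(A\q)=R$; the class $[\cP^\even\oplus W,\,t\oplus\Id,\,\cP^\odd\oplus W]$ equals $[\cP^\even,t,\cP^\odd]+[W,\Id,W]=[\cP^\even,t,\cP^\odd]$ by relation ii) in the definition of $K_0(R,S)$ applied to the evident short exact sequence of triples together with $[W,\Id,W]=0$; and $\varphi_{t\oplus\Id}(g)=\varphi_t(g)$, because the $A\q$-part of the determinant composition contributes only $\Det_S(\Id)\otimes\Id$ followed by $ev$, i.e.\ $1\in S$. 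Consequently I may assume $\cP^\even=R^N=\cP^\odd$ and $t=f_A$ for some $A\in\GLg_N(S)$.

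In this normalised situation everything is explicit. By the definition of the boundary homomorphism one has $[\cP^\even,t,\cP^\odd]=[R^N,f_A,R^N]=\partial([A])$, while for the standard generator $g_0$ of $\Det_R(\cP\q)\iso R$ the composition $\varphi_t$ evaluates to $\varphi_t(g_0)=\det(A)=\nr([A])\in\units S$, using that $\nr\colon K_1(S)\xrightarrow{\sim}\units S$ is the componentwise determinant because $S$ is a commutative semisimple Artinian ring (cf.\ \eqref{eq:reduced_norm_iso}). Any generator of $\Det_R(\cP\q)$ is of the form $g=ug_0$ with $u\in\units R$, whence $\varphi_t(g)=\varphi(u)\det(A)\in\units S$. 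For i) this gives $\partial(\varphi_t(g))=\partial(\varphi(u))+\partial(\det(A))=\partial(\det(A))=\partial([A])=[\cP^\even,t,\cP^\odd]$, the term $\partial(\varphi(u))$ vanishing since $\partial\circ K_1(\varphi)=0$. For ii), from $\partial(x)=[\cP^\even,t,\cP^\odd]=\partial([A])$ and exactness one gets $x\cdot[A]^{-1}\in\ker(\partial)=K_1(\varphi)(K_1(R))$, whose image under $\nr$ is precisely $\varphi(\units R)$; hence $x=\varphi(u)\det(A)$ for a suitable $u\in\units R$, and then the generator $g=ug_0$ satisfies $\varphi_t(g)=\varphi(u)\det(A)=\nr(x)$.

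The step I expect to be most delicate is the reduction: one must verify simultaneously that passing to $\cP\q\oplus A\q$ leaves the determinant-functor output $\varphi_t(g)$ untouched — which rests on the multiplicativity of $\Det$ on direct sums and on the behaviour of $\Det_S$ applied to $t\oplus\Id$ — and that it leaves the relative $K_0$-class untouched, all while tracking the canonical identifications $\Det_R(A\q)=R$ and $K_1(S)=\units S$. A minor additional point to handle carefully is that a strictly perfect $\cP\q$ need not have $\cP^\even,\cP^\odd$ free a priori, so the existence of a generator is genuinely a hypothesis in the first assertion but becomes automatic in i) and ii). Once this bookkeeping is in place, the remaining manipulations are routine applications of the exactness of the localisation sequence and of the equality $\nr=\det$ over $S$.
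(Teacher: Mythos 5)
Your argument is correct and follows essentially the same route as the paper's own proof: the same observation that all maps after $g\mapsto 1\otimes g$ are $S$-isomorphisms (giving $\varphi_t(g)\in\units{S}$), the same stabilisation by an acyclic complex $[W\xrightarrow{\Id}W]$ to reduce to free $\cP^\even,\cP^\odd$ of equal rank, and the same use of the exact sequence of $K$-theory in part ii). The only difference is presentational: in the endgame of i) you identify $t$ with a matrix $A\in\GLg_N(S)$ and use $[A]=[(\det A)]$ in $K_1(S)$ together with $\varphi_t(g_0)=\det(A)$, whereas the paper works with the determinant modules themselves and checks that $\varphi_t(g)^{-1}\cdot\Det_S(t)$ preserves the integral lattices; both hinge on the isomorphism $\nr\colon K_1(S)\isoa\units{S}$ for commutative semisimple $S$.
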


        \begin{proof}
            The first claim is immediate. If $g$ generates $\Det_R(\cP\q)$ over $R$, then $1 \otimes g$ is a generator of ${S \otimes_R \Det_R(\cP\q) = \Det_S(S \otimes_R^\LL \cP\q)}$ over $S$. The remaining maps defining $\varphi_t$ are isomorphisms of $S$-modules, and thus $\varphi_t(g) \in S$ generates $S$ as an $S$-module - that is, it is a unit.

            Recall that, since $S$ is commutative semisimple, the reduced norm $\nr \colon K_1(S) \to \units{S}$ is an isomorphism (cf. \eqref{eq:reduced_norm_iso}) with obvious inverse $s \mapsto [(s)]$.

            \begin{enumerate}[i)]
                \item{
                    We can assume without loss of generality that $\cP^\even$ and $\cP^\odd$ are free: since $[\cP^\even, t, \cP^\odd]$ lies in $\img(\partial)$, its image $[\cP^\even] - [\cP^\odd]$ in $K_0(R)$ is zero by exactness of \eqref{eq:exact_sequence_k-theory}. This implies there exists a finitely generated projective $R$-module $N$ such that $\cP^\even \oplus N \iso \cP^\odd \oplus N$, and we can assume this sum to be free. Now define
                    \[
                        \widetilde{\cP}\q = \cP\q \oplus [\stackrel{0}{N} \xrightarrow{\Id} \stackrel{1}{N}].
                    \]
                    Then the relations of $K_0(R, S)$ immediately imply $[\cP^\even, t, \cP^\odd] = [\widetilde{\cP}^\even, t \oplus \Id, \widetilde{\cP}^\odd]$, and the isomorphisms
                    \[
                        \Det_R(\widetilde{\cP}^\even) = \Det_R(\cP^\even \oplus N) \xrightarrow{\sim} \Det_R(\cP^\even) \otimes_R \Det_R(N),
                    \]
                    $\Det_R(\widetilde{\cP}^\odd) \xrightarrow{\sim} \Det_R(\cP^\odd) \otimes_R \Det_R(N)$ (analogously) and $ev \colon \Det_R(N) \otimes_R \Det_R(N)^{-1} \xrightarrow{\sim} R$ induce an $R$-isomorphism $f$ making the diagram
                    \begin{center}
                        \begin{tikzcd}
                            \Det_R(\cP\q) \arrow[d, "f"] \arrow[r, "\varphi_t"]                & S \arrow[d, equals] \\
                            \Det_R(\widetilde{\cP}\q) \arrow[r, "\varphi_{t \oplus \Id}"] & S
                        \end{tikzcd}
                    \end{center}
                    commute. Hence it suffices to show that $\partial(\varphi_{t \oplus \Id}(f(g))) = [\widetilde{\cP}^\even, t \oplus \Id, \widetilde{\cP}^\odd]$, which allows us to assume $\cP^\even$ and $\cP^\odd$ to be free from now on. In particular, $\Det_R(\cP^\even)$ and $\Det_R(\cP^\odd)$ are free $R$-modules of rank 1.

                    In order to prove \eqref{eq:det_and_pseudo_rec}, we perform some simplification steps. Firstly, the theory of determinant functors shows that $[\cP^\even, t, \cP^\odd] = [\Det_R(\cP^\even), \Det_S(t), \Det_R(\cP^\odd)]$. Choose and fix an isomorphism of $S$-modules $\iota \colon \Det_S(S \otimes_R \cP^\even) \xrightarrow{\sim} S$. The commutative diagram
                    \begin{center}
                        \begin{tikzcd}
                            \Det_S(S \otimes_R \cP^\even) \arrow[d, "\iota"] \arrow[r, "\varphi_t(g)"] & \Det_S(S \otimes_R \cP^\even) \arrow[d, "\iota"] \\
                            S \arrow[r, "\varphi_t(g)"]                                              & S
                        \end{tikzcd}
                    \end{center}
                    where horizontal arrows denote multiplication by the scalar $\varphi_t(g) \in S$, implies that
                    \[
                        [\Det_R(\cP^\even), \varphi_t(g), \Det_R(\cP^\even)] = [R, \varphi_t(g), R] = \partial(\varphi_t(g))
                    \]
                    in $K_0(R, S)$ (see relation i) in the definition of $K_0(R, S)$ from section \ref{sec:algebraic-k-theory}). This reduces the proof of \eqref{eq:det_and_pseudo_rec} to showing that
                    \[
                        [\Det_R(\cP^\even), \varphi_t(g), \Det_R(\cP^\even)] = [\Det_R(\cP^\even), \Det_S(t), \Det_R(\cP^\odd)],
                    \]
                    or equivalently,
                    \[
                        [\Det_R(\cP^\even), \Det_S(t) \circ \varphi_t(g)^{-1}, \Det_R(\cP^\odd)] = 0.
                    \]
                    Equation \eqref{eq:iso_k0_over_r_trivial} implies it suffices in turn to prove
                    \begin{equation}
                    \label{eq:det_and_scalars_in_k_0}
                        \varphi_t(g)^{-1} \cdot \Det_S(t) (1 \otimes \Det_R(\cP^\even)) = 1 \otimes \Det_R(\cP^\odd) \subseteq S \otimes \Det_R(\cP^\odd),
                    \end{equation}
                    as this would induce an isomorphism of $R$-modules
                    \[
                        \Det_R(\cP^\even) \iso 1 \otimes \Det_R(\cP^\even) \xrightarrow{\varphi_t(g)^{-1} \cdot \Det_S(t)} 1 \otimes \Det_R(\cP^\odd) \iso \Det_R(\cP^\odd)
                    \]
                    by the injectivity of $R \ia S$.

                    Fix an arbitrary generator $g^\odd$ of $\Det_R(\cP^\odd)$ as an $R$-module and consider the generator $g^{\odd, *}$ of $\Det_R(\cP^\odd)^{-1} = \Hom_R(\Det_R(\cP^\odd), R)$ uniquely determined by ${g^{\odd, *}(g^\odd) = 1}$ as described in lemma \ref{lem:invertible_module_dual}. Then ${g = g^\even \otimes g^{\odd, *}}$ for some generator $g^\even$ of $\Det_R(\cP^\even)$. To see this, choose any generator $\tilde{g}^\even$ of $\Det_R(\cP^\even)$ and note that, since both $g$ and $\tilde{g}^\even \otimes g^{\odd, *}$ generate $\Det_R(\cP\q)$, they must differ by a unit in $R$.

                    The image of $g$ in $\Det_S(S \otimes_R^\LL \cP\q) = (S \otimes_R \Det_R(\cP^\even)) \otimes_S (S \otimes_R \Det_R(\cP^\odd)^{-1})$ is simply $(1 \otimes g^\even) \otimes (1 \otimes g^{\odd, *})$, which the vertical arrow in \eqref{diag:det_and_rec} sends to
                    \[
                        \Det_S(t)(1 \otimes g^\even) \otimes (1 \otimes g^{\odd, *}) \in (S \otimes_R \Det_R(\cP^\odd)) \otimes_S (S \otimes_R \Det_R(\cP^\odd)^{-1}).
                    \]
                    But $\Det_S(t)(1 \otimes g^\even)$ is equal to $(s \otimes g^\odd)$ for some $s \in S$, in which case one has
                    \[
                        \varphi_t(g) = ev(\Det_S(t)(1 \otimes g^\even) \otimes (1 \otimes g^{\odd, *})) = s
                    \]
                    by definition of the dual generator $g^{\odd, *}$. Since $g^\even$ and $g^\odd$ generate $\Det_R(\cP^\even)$ and $\Det_R(\cP^\odd)$ over $R$ respectively and $\Det_S(t)$ is $S$-(and hence $R$-)equivariant, \eqref{eq:det_and_scalars_in_k_0} follows.
                }
                \item{
                    By the same argument as above, we can assume without loss of generality that $\cP^\even$ and $\cP^\odd$ are free $R$-modules. In particular, $\Det_R(\cP\q)$ is free of rank 1 and we may choose a generator $g'$. Part i) implies $\partial(\varphi_t(g')) = [\cP^\even, t, \cP^\odd] = \partial(x)$.

                    By the exact sequence of $K$-theory \eqref{eq:exact_sequence_k-theory}, there exists a $y \in K_1(R)$ such that $x = \varphi_t(g') K_1(\varphi)(y)$. Then $g = \det(y) g'$  is a generator of $\Det_R(\cP\q)$ which satisfies
                    \[
                        \varphi_t(g) = \det(y) \varphi_t(g') = \nr(K_1(\varphi)(y) \varphi_t(g')) = \nr(x).
                    \]
                }
            \end{enumerate}

        \end{proof}

        \begin{rem}
            \begin{enumerate}[i)]
                \item{
                    The connecting homomorphism $\partial \colon K_1(\cQ(\cG)) \to K_0(\Lambda(\cG), \cQ(\cG))$ is surjective in our case of interest, as explained in remark \ref{rem:emcu} ii). Therefore, the hypothesis in part i) of the proposition is automatically satisfied.
                }
                \item{
                    The only reason for the flatness assumption is to ensure $S \otimes_R^\LL \cP\q$ coincides with the result of applying $S \otimes_R -$ to $\cP\q$ degree-wise, which allows us to state the proposition in a convenient form for our purposes. However, the proof yields a completely analogous result without the flatness requirement after replacing $\Det_R(\cP\q) \ia \Det_S(S \otimes_R^\LL \cP\q)$ by
                    \[
                        \Det_R(M) \otimes_R \Det_R(N)^{-1} \ia \Det_S(S \otimes_R M) \otimes_S \Det_S(S \otimes_R N)^{-1},
                    \]
                    where $M$ and $N$ are two finitely generated projective $R$-modules playing the role of $\cP^\even$ and $\cP^\odd$, respectively.
                }
            \end{enumerate}
        \end{rem}

        The second preparatory result, which is rather specific to our setting, will be instrumental in relating the maps in our Main Conjecture to the $\lambda_\chi$ from definition \ref{defn:bks_map} used by Burns, Kurihara and Sano:

        \begin{lem}
        \label{lem:trivialisation_det_infinite_finite_level}
            Setting \ref{sett:construction} with $\cG$ abelian. Let $M$ and $N$ be free $\Lambda(\cG)$-modules of the same rank and $\tau \colon M \to N$ a homomorphism between them such that $\cQ(\cG) \otimes_{\Lambda(\cG)} \tau$ is a $\cQ(\cG)$-isomorphism. For a given $\chi \in \Irr_p(\cG)$, choose $n \in \NN$ large enough that $\chi$ factorises through $\cG_n$. Denote by $\lambda_l$ and $\lambda_r$ the composition of the left and right columns of the diagram
            \begin{center}
                \addtolength{\leftskip} {-2cm}
                \addtolength{\rightskip} {-2cm}
                \begin{tikzcd}[column sep=tiny]
                    \Det_{\Lambda(\cG)}(M) \otimes_{\Lambda(\cG)} \Det_{\Lambda(\cG)}(N)^{-1} \arrow[d] \arrow[r, equals]                                                                                                                                                      & \Det_{\Lambda(\cG)}(M) \otimes_{\Lambda(\cG)} \Det_{\Lambda(\cG)}(N)^{-1} \arrow[d, hook]                                                                                                               \\
                    {\Det_{\QQ_p^c[\cG_n]}(\QQ_p^c \otimes_{\ZZ_p} \gcoinv{M}{n}) \otimes_{\QQ_p^c[\cG_n]} \Det_{\QQ_p^c[\cG_n]}(\QQ_p^c \otimes_{\ZZ_p} \gcoinv{N}{n})^{-1}} \arrow[d, "{\Det_{\QQ_p^c[\cG_n]}(\QQ_p^c \otimes_{\ZZ_p} \gcoinv{\tau}{n}) \otimes \Id}"] & \Det_{\cQ(\cG)}(\cQ(\cG) \otimes_{\Lambda(\cG)} M) \otimes_{\cQ(\cG)} \Det_{\cQ(\cG)}(\cQ(\cG) \otimes_{\Lambda(\cG)} N)^{-1} \arrow[d, "\Det_{\cQ(\cG)}(\cQ(\cG) \otimes_{\Lambda(\cG)} \tau) \otimes \Id"] \\
                    {\Det_{\QQ_p^c[\cG_n]}(\QQ_p^c \otimes_{\ZZ_p} \gcoinv{N}{n}) \otimes_{\QQ_p^c[\cG_n]} \Det_{\QQ_p^c[\cG_n]}(\QQ_p^c \otimes_{\ZZ_p} \gcoinv{N}{n})^{-1}} \arrow[d, "ev"]                                                                            & \Det_{\cQ(\cG)}(\cQ(\cG) \otimes_{\Lambda(\cG)} N) \otimes_{\cQ(\cG)} \Det_{\cQ(\cG)}(\cQ(\cG) \otimes_{\Lambda(\cG)} N)^{-1} \arrow[d, "ev"]                                                                \\
                    {\QQ_p^c[\cG_n]} \arrow[d, "\chi"]                                                                                                                                                                                                                 & \cQ(\cG)                                                                                                                                                                                                \\
                    \QQ_p^c
                \end{tikzcd}
            \end{center}
            respectively. Then, for any generator $g$ of $\Det_{\Lambda(\cG)}(M) \otimes_{\Lambda(\cG)} \Det_{\Lambda(\cG)}(N)^{-1}$ over $\Lambda(\cG)$, one has $\lambda_r(g) \in \units{\cQ(\cG)}$ and
            \[
                \lambda_l(g) = ev_{\gamma_\chi}(\psi_\chi(\nr^{-1}(\lambda_r(g)))),
            \]
            where the maps $\units{\cQ(\cG)} \xrightarrow{\nr^{-1}} K_1(\cQ(\cG)) \xrightarrow{\psi_\chi} \units{\cQ^c(\Gamma_\chi)} \xrightarrow{ev_{\gamma_\chi}} \QQ_p^c \cup \set{\infty}$ are given by remark \ref{rem:k1_commutative} and definitions \ref{defn:k1_to_gamma_chi} and \ref{defn:evaluation_at_zero}.
        \end{lem}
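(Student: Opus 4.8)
The plan is to unravel both $\lambda_l$ and $\lambda_r$ in terms of a fixed pair of $\Lambda(\cG)$-bases of $M$ and $N$, thereby identifying $\lambda_r(g)$ with a concrete regular element $w\in\Lambda(\cG)$ and $\lambda_l(g)$ with the value of $\chi$ on its image in $\QQ_p^c[\cG_n]$, and then to reduce the asserted equality to the statement that two continuous ring homomorphisms $\Lambda(\cG)\to\QQ_p^c$ agree on a topological generating set.

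First I would treat the right column. Regarding $\cP\q=[\stackrel{0}{M}\xrightarrow{\tau}\stackrel{1}{N}]$ as a strictly perfect complex of $\Lambda(\cG)$-modules and $t=\cQ(\cG)\otimes_{\Lambda(\cG)}\tau$ as a trivialisation of $\cQ(\cG)\otimes_{\Lambda(\cG)}^{\LL}\cP\q$, the map $\lambda_r$ is exactly the map $\varphi_t$ of Proposition \ref{prop:generator_det_rec} with $R=\Lambda(\cG)$ and $S=\cQ(\cG)$ (the hypotheses hold: $\Lambda(\cG)\ia\cQ(\cG)$ is injective, $\cQ(\cG)$ is flat over $\Lambda(\cG)$ and semisimple Artinian); hence $\lambda_r(g)\in\units{\cQ(\cG)}$ for any generator $g$. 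Concretely, fix $\Lambda(\cG)$-bases of $M$ and $N$, let $g_0$ be the induced generator of $\Det_{\Lambda(\cG)}(M)\otimes_{\Lambda(\cG)}\Det_{\Lambda(\cG)}(N)^{-1}$, and write $g=vg_0$ with $v\in\units{\Lambda(\cG)}$ (this module is free of rank one). If $A\in M_r(\Lambda(\cG))$ is the matrix of $\tau$ in these bases, then $\det A$ is regular in $\Lambda(\cG)$ (it becomes a unit of $\cQ(\cG)$ by hypothesis), and the behaviour of the determinant functor on morphisms of free modules gives $\lambda_r(g)=v\det A=:w\in\Lambda(\cG)$, a regular element. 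Running the same computation over $\Lambda(\cG_n)$ — using $\det(\gcoinv{\tau}{n})=\overline{\det A}$ — and then applying the ring homomorphism $\QQ_p^c[\cG_n]\to\QQ_p^c$ induced by the linear character $\chi$ (here we use that $\chi$ factors through $\cG_n$), one obtains $\lambda_l(g)=\chi(\bar w)$, where $\bar w$ is the image of $w$ in $\QQ_p^c[\cG_n]$.

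Next I would rewrite the right-hand side of the claim in the same terms. Since $\cG$ is abelian, $\cQ(\cG)$ is commutative, $\nr\colon K_1(\cQ(\cG))\isoa\units{\cQ(\cG)}$ and $Z(\cQ(\cG))=\cQ(\cG)$; unwinding Definition \ref{defn:k1_to_gamma_chi} therefore gives $\psi_\chi(\nr^{-1}(w))=we_\chi$, the image of $w$ under the projection $\cQ^c(\cG)\sa\cQ^c(\cG)e_\chi=\cQ^c(\Gamma_\chi)$. Because $w\in\Lambda(\cG)$, I would check — it suffices to check on a topological generating set of $\cG$ — that $we_\chi$ lies in the integral subring $\Lambda^{\cO_E}(\Gamma_\chi)$ for a $p$-adic field $E$ over which $\chi$ has a realisation, so that $ev_{\gamma_\chi}(we_\chi)=\aug_{\Gamma_\chi}(we_\chi)$ by Definition \ref{defn:evaluation_at_zero} (in particular no value is $\infty$). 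Thus the lemma reduces to the equality of the two continuous ring homomorphisms $\Lambda(\cG)\to\QQ_p^c$ given by
\[
    w\longmapsto\chi(\bar w)\qquad\text{and}\qquad w\longmapsto\aug_{\Gamma_\chi}(we_\chi).
\]

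The heart of the matter, and the step I expect to be the main obstacle, is verifying this equality on a topological generating set of $\cG$. Using abelianity one has $w_\chi=1$ in Proposition \ref{prop:structure_gamma_chi} and a direct product decomposition $\cG\iso\Gamma_K\times H$; writing $\tilde\gamma_K$ for the lift of $\gamma_K$, $\eta_0=\rest_H^{\cG}\chi$, $e_\chi=e(\eta_0)$, and $\zeta=\chi(\tilde\gamma_K)$ (a $p$-power root of unity, since $\chi$ is an Artin character), the uniqueness clause of Proposition \ref{prop:structure_gamma_chi} i) forces $\gamma_\chi=\zeta^{-1}\tilde\gamma_K e(\eta_0)$. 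Consequently $\tilde\gamma_K e(\eta_0)=\zeta\gamma_\chi\in\Lambda^{\cO_E}(\Gamma_\chi)$ with $\aug_{\Gamma_\chi}(\tilde\gamma_K e(\eta_0))=\zeta$, while $\chi(\tilde\gamma_K)=\zeta$ as well; and for $h\in H$ one has $h e(\eta_0)=\eta_0(h)e(\eta_0)\in\cO_E\cdot e(\eta_0)$, so $\aug_{\Gamma_\chi}(h e(\eta_0))=\eta_0(h)=\chi(h)$. As $\cG$ is topologically generated by $\tilde\gamma_K$ together with $H$, the two ring homomorphisms coincide, and applying the resulting equality to $w=\lambda_r(g)$ yields $\lambda_l(g)=\chi(\bar w)=\aug_{\Gamma_\chi}(we_\chi)=ev_{\gamma_\chi}(\psi_\chi(\nr^{-1}(\lambda_r(g))))$. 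The technical care required is almost entirely in the bookkeeping around $e_\chi$ — in particular the integrality of $we_\chi$ in $\Lambda^{\cO_E}(\Gamma_\chi)$ and pinning down $\gamma_\chi$ precisely via Proposition \ref{prop:structure_gamma_chi} — rather than in the computations on the $K$-theoretic side.
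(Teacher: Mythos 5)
Your proposal is correct, and the reduction of both $\lambda_l$ and $\lambda_r$ to the single element $w = v\det A \in \Lambda(\cG)$ (so that $\lambda_r(g)=w$ and $\lambda_l(g)=\chi(\overline{w})$) is exactly what the paper does, phrased via generators $\mu\otimes\nu^\ast$ rather than matrices. Where you diverge is the bridge between the two sides: the paper simply quotes equation \eqref{eq:relation_reduced_norm_and_invariants} (Nickel's formula relating $\det(\Hom_{\QQ_p^c[\cG_n]}(V_\chi,\QQ_p^c\otimes_{\ZZ_p}\gcoinv{f}{n}))$ to $ev_{\gamma_K}(j_\chi(\nr([P_\cQ,f_\cQ])))$) together with \eqref{eq:evaluation_j_and_psi}, applied to $P=\Lambda(\cG)$ and $f$ multiplication by $w$, whereas you verify the identity $\chi(\overline{w}) = \aug_{\Gamma_\chi}(we_\chi)$ from scratch by pinning down $\gamma_\chi = \chi(\tilde\gamma_K)^{-1}\tilde\gamma_K e_\chi$ via the uniqueness clause of Proposition \ref{prop:structure_gamma_chi} and comparing two continuous ring homomorphisms on $\tilde\gamma_K$ and $H$. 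Your route is self-contained and genuinely elementary, but it leans on abelianity in an essential way ($w_\chi=1$, the direct product $\cG\iso\Gamma_K\times H$, linearity of $\chi$); the paper's citation-based route is the one that generalises and is reused elsewhere (e.g.\ in the proof of Proposition \ref{prop:independence_of_alpha}). The only point to tighten is the integrality claim $we_\chi\in\Lambda^{\cO_E}(\Gamma_\chi)$: agreeing on a topological generating set is not by itself enough — you should note that $w\mapsto we_\chi$ is continuous and $\Lambda^{\cO_E}(\Gamma_\chi)$ is a compact (hence closed) subring, so the preimage is a closed subalgebra containing $\tilde\gamma_K$ and $H$, exactly as in the proof of Lemma \ref{lem:twisted_evaluation_maps}.
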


        \begin{proof}
            Recall the notation $-^\ast$ for dual generators of determinant modules introduced in lemma \ref{lem:invertible_module_dual}. Since $\Det_{\Lambda(\cG)}(M)$ and $\Det_{\Lambda(\cG)}(N)^{-1}$ are free rank-one $\Lambda(\cG)$-modules, any $g$ as in the statement is necessarily of the form $g = \mu \otimes \nu^\ast$ for some generators $\mu$ and $\nu$ of $\Det_{\Lambda(\cG)}(M)$ and $\Det_{\Lambda(\cG)}(N)$, respectively. The two bottom arrows in the right column of the diagram are $\cQ(\cG)$-isomorphisms, and therefore $g$ is mapped under $\lambda_r$ to a $\cQ(\cG)$-generator of $\cQ(\cG)$, that is, a unit $\lambda_r(g) \in \units{\cQ(\cG)}$.

            Consider the ring homomorphism $\overline{-} \colon \Lambda(\cG) \sa \Lambda(\cG_n) \ia \QQ_p^c[\cG_n]$, which induces a homomorphism of $\Lambda(\cG)$-modules $\overline{-} \colon T \to \QQ_p^c[\cG_n] \otimes_{\Lambda(\cG)} T = \QQ_p^c \otimes_{\ZZ_p} \gcoinv{T}{n}$ for any $\Lambda(\cG)$-module $T$. The elements $\overline{\mu}$ and  $\overline{\nu^\ast} = \overline{\nu}^\ast$ generate $\Det_{\QQ_p^c[\cG_n]}(\QQ_p^c \otimes_{\ZZ_p} \gcoinv{M}{n})$ and $\Det_{\QQ_p^c[\cG_n]}(\QQ_p^c \otimes_{\ZZ_p} \gcoinv{N}{n})^{-1}$ over $\QQ_p^c[\cG_n]$, respectively (see for instance \eqref{eq:wedge_product_basis}). Similarly, $1 \otimes \mu$ is a $\cQ(\cG)$-generator of $\cQ(\cG) \otimes_{\Lambda(\cG)} \Det_{\Lambda(\cG)}(M)$ (which coincides with $\Det_{\cQ(\cG)}(\cQ(\cG) \otimes_{\Lambda(\cG)} M)$), and analogously for $(1 \otimes \nu)^\ast = 1 \otimes \nu^\ast$.

            Since $\nu$ generates $\Det_{\Lambda(\cG)}(N)$, there exists an $x \in \Lambda(\cG)$ such that $\Det_{\Lambda(\cG)}(\tau)(\mu) = x \nu$. It follows that $\Det_{\QQ_p^c[\cG_n]}(\QQ_p^c \otimes_{\ZZ_p} \gcoinv{\tau}{n})(\mu) = \overline{x} \overline{\nu}$ and $\Det_{\cQ(\cG)}(\cQ(\cG) \otimes_{\Lambda(\cG)} \tau)(1 \otimes \mu) = x(1 \otimes \nu) = x \otimes \nu$. Therefore, the image of $g = \mu \otimes \nu^\ast$ in $\QQ_p^c[\cG_n]$ (that is, under $\lambda_l$ minus the last arrow) is $ev(\overline{x} \overline{\nu} \otimes \overline{\nu}^\ast) = \overline{x}$, whereas $\lambda_r(g) = \lambda_r(\mu \otimes \nu^\ast) = ev(x(1 \otimes \overline{\nu}) \otimes (1 \otimes \overline{\nu})^\ast) = x \in \cQ(\cG)$.

            Finally, we apply $\eqref{eq:relation_reduced_norm_and_invariants}$ and \eqref{eq:evaluation_j_and_psi} to $P = \Lambda(\cG)$ and $f = x$ (the endomorphism of $\Lambda(\cG)$ given by left multiplication by $x$) to conclude that
            \[
                \lambda_l(g) = \chi(\overline{x}) = \det(\Hom_{\QQ_p^c[\cG_n]}(V_\chi, \overline{f})) = ev_{\gamma_\chi}(\psi_{\chi}([\cQ(\cG), \cQ(\cG) \otimes_{\Lambda(\cG)} f])) = ev_{\gamma_\chi}(\psi_\chi(\nr^{-1}(\lambda_r(g)))),
            \]
            as desired.
        \end{proof}

        We are now in a position to prove the main result of this section:

        \begin{thm}
        \label{thm:bks}
            Setting \ref{sett:formulation} with $\cG$ abelian. The modified BKS conjecture \hyperref[conje:bksp]{BKS'($L_\infty/K, L, S, T, \beta$)} holds if and only if \hyperref[conje:emc]{eMC($L_\infty/K, L, S, T, \alpha, \beta$)} does.
        \end{thm}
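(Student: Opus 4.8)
The plan is to set up an explicit bijection between the distinguished generators of $\Det_{\Lambda(\cG)}(\cC_{S, T}\q)$ predicted by \hyperref[conje:bksp]{BKS'($L_\infty/K, L, S, T, \beta$)} and the zeta elements of $K_1(\cQ(\cG))$ predicted by \hyperref[conje:emc]{eMC($L_\infty/K, L, S, T, \alpha, \beta$)}, and then to check that the two lists of analytic conditions correspond once the regulator factor $R_S^\beta(\alpha, -)$ is inserted. First I would make some harmless reductions. By theorem \ref{thm:iso_bks_complex} all invariants below may be computed with $\cC_{S, T}\q$; and since \hyperref[conje:emc]{eMC} is independent of $L$, $S$, $T$ and $\alpha$ (theorem \ref{thm:independence_of_parameters}) while \hyperref[conje:bksp]{BKS'} is independent of $S$ and $T$ (by the arguments of subsection \ref{subsec:the_choice_of_s_and_t}, or as in \cite{bks}, and is defined for arbitrary $T$ by remark \ref{rem:map_bks} iv)), I may enlarge $T$ so that $\cC_{S, T}\q$ admits a strictly perfect representative $\cP\q = [\cP^0 \to \cP^1]$ with $\cP^0$ and $\cP^1$ free. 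These have equal rank, since $\CC_p \otimes_{\ZZ_p} H^0(\cC_{S, T}\q) = \CC_p \otimes E_{S, T}$ and $\CC_p \otimes_{\ZZ_p} H^1(\cC_{S, T}\q) \iso \CC_p \otimes_{\ZZ_p} \cX_S$ have the same $\cQ(\cG)$-dimension by Dirichlet's unit theorem; hence $\Det_{\Lambda(\cG)}(\cC_{S, T}\q) = \Det_{\Lambda(\cG)}(\cP^0) \otimes_{\Lambda(\cG)} \Det_{\Lambda(\cG)}(\cP^1)^{-1}$ is free of rank one over $\Lambda(\cG)$, so its generators form a single $\units{\Lambda(\cG)}$-orbit.

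For the algebraic correspondence I would apply proposition \ref{prop:generator_det_rec} with $R = \Lambda(\cG)$, $S = \cQ(\cG)$, the representative $\cP\q$, and the trivialisation $t = (\varphi_{t^\alpha})^{-1} \colon \cQ(\cG) \otimes_{\Lambda(\cG)} \cP^0 \isoa \cQ(\cG) \otimes_{\Lambda(\cG)} \cP^1$, where $\varphi_{t^\alpha}$ is the isomorphism \eqref{eq:refined_euler_characteristic_map} associated to $t^\alpha$; the $K_0$-relations give $[\cP^0, (\varphi_{t^\alpha})^{-1}, \cP^1] = - \chi_{\Lambda(\cG), \cQ(\cG)}(\cC_{S, T}\q, t^\alpha)$. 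Denote by $\varphi_t \colon \Det_{\Lambda(\cG)}(\cC_{S, T}\q) \to \cQ(\cG)$ the composition of that proposition. By its parts i) and ii), together with the surjectivity of $\partial$ (remark \ref{rem:emcu} ii)) and the fact that $\nr \colon K_1(\cQ(\cG)) \isoa \units{\cQ(\cG)}$ is an isomorphism in the abelian case (remark \ref{rem:k1_commutative}), the assignment $\cL_{S, T} \mapsto \zeta := \nr^{-1}(\varphi_t(\cL_{S, T}))$ is a bijection from the $\Lambda(\cG)$-generators of $\Det_{\Lambda(\cG)}(\cC_{S, T}\q)$ onto the set of $\zeta \in K_1(\cQ(\cG))$ with $\partial(\zeta) = - \chi_{\Lambda(\cG), \cQ(\cG)}(\cC_{S, T}\q, t^\alpha)$.

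The core of the argument is the analytic identity: under this correspondence, for every $\chi \in \Irr_p(\cG)$ satisfying \ref{manualcond:kc} one should have
\[
    ev_{\gamma_\chi}\big(\psi_\chi(\zeta)\big) = \frac{\lambda_\chi(\cL_{S, T})}{R_S^\beta(\alpha, \chi)},
\]
with $\lambda_\chi$ the map of definition \ref{defn:bks_map}. To prove it I would descend to a layer $L_n$ with $\Gp{n} \subseteq \ker(\chi)$ and $n \geq n(S, \alpha)$ and invoke lemma \ref{lem:trivialisation_det_infinite_finite_level} (after clearing denominators of $(\varphi_{t^\alpha})^{-1}$ to an integral map $\cP^0 \to \cP^1$, the auxiliary denominator cancelling as in the proof of proposition \ref{prop:independence_of_alpha}): the left-hand side equals the value at the $\chi$-component of the finite-level determinant built from the coinvariants of $t_\iota^\alpha = \alpha\varphi\pi$, which — by the construction of $t_\iota^\alpha$ and of $\varphi_n^\alpha$ — collapses $\Det_{\CC_p[\cG_n]}(\CC_p \otimes_{\ZZ_p} H^1(\cC_{L_n, S, T}\q)) \otimes \Det_{\CC_p[\cG_n]}(\CC_p \otimes_{\ZZ_p} H^0(\cC_{L_n, S, T}\q))^{-1}$ onto $\CC_p[\cG_n]$ using the surjection onto $\cX_{L_n, S}$ followed by $\varphi_n^\alpha \colon \cX_{L_n, S} \to \ZZ_p \otimes \units{\cO_{L_n, S, T}} = H^0(\cC_{L_n, S, T}\q)$. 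The map $\lambda_\chi$ performs the same collapse but with the $p$-adic Dirichlet regulator $\lambda_{n, S}^\beta$ in place of $\varphi_n^\alpha$; hence the two collapsing isomorphisms differ by $\Det_{\CC_p[\cG_n]}$ of the endomorphism $\CC_p \otimes_{\ZZ_p} \cX_{L_n, S} \xrightarrow{\varphi_n^\alpha} \CC_p \otimes \units{\cO_{L_n, S, T}} \xrightarrow{\lambda_{n, S}^\beta} \CC_p \otimes_{\ZZ_p} \cX_{L_n, S}$, whose determinant on $\chi$-parts is $R_S^\beta(\alpha, \chi)$ by definition \ref{defn:regulator}. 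Carrying out this comparison precisely — matching the two determinant collapses, tracking the duality $\chi \mapsto \check{\chi}$ built into $\Hom_{\CC_p[\cG_n]}(V_\chi, -)$ and the even/odd sign conventions, and checking the denominator cancellation — is, I expect, the main obstacle.

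Granting the identity, the equivalence is formal. One uses that $\psi_\chi = \psi_{\chi \otimes \rho}$ for every type-$W$ character $\rho$, that \ref{manualcond:kc} holds for almost all $\chi$ (lemma \ref{lem:properties_of_r_chi} i)), and that $\Irr_p(\cG)/{\sim_W}$ is finite (see \eqref{eq:decomposition_algebra_chi-parts}). If \hyperref[conje:bksp]{BKS'} holds with generator $\cL_{S, T}$, set $\zeta = \nr^{-1}(\varphi_t(\cL_{S, T}))$; then $\partial(\zeta) = - \chi_{\Lambda(\cG), \cQ(\cG)}(\cC_{S, T}\q, t^\alpha)$, and since $\lambda_{\chi \otimes \rho}(\cL_{S, T}) = \beta^{-1}(L_{K, S, T}^\ast(\beta(\check{\chi} \otimes \rho^{-1}), 0))$ for almost all $\rho \in \cK_S^\alpha(\chi)$, the identity shows $\psi_\chi(\zeta) \in \units{\cQ^c(\Gamma_\chi)}$ satisfies \hyperref[conje:ic]{IC($L_\infty/K, \chi, L, S, T, \alpha, \beta$)}, whence $F_{S, T, \chi}^{\alpha, \beta} = \psi_\chi(\zeta)$ and \hyperref[conje:emc]{eMC} holds. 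Conversely, if \hyperref[conje:emc]{eMC} holds with zeta element $\zeta$, take the generator $\cL_{S, T}$ with $\varphi_t(\cL_{S, T}) = \nr(\zeta)$; then, for almost all $\rho$ in each of the finitely many $W$-classes, $\lambda_{\chi \otimes \rho}(\cL_{S, T}) = R_S^\beta(\alpha, \chi \otimes \rho) \cdot ev_{\gamma_{\chi \otimes \rho}}(F_{S, T, \chi}^{\alpha, \beta}) = \beta^{-1}(L_{K, S, T}^\ast(\beta(\check{\chi} \otimes \rho^{-1}), 0))$, hence $\lambda_\psi(\cL_{S, T}) = \beta^{-1}(L_{K, S, T}^\ast(\beta \psi^{-1}, 0))$ for almost all $\psi \in \Irr_p(\cG)$, so \hyperref[conje:bksp]{BKS'} holds.
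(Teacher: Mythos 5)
Your plan follows essentially the same route as the paper's proof: reduce to a free two-term representative by enlarging $T$, use proposition \ref{prop:generator_det_rec} (plus surjectivity of $\partial$ and $\nr$ being an isomorphism) to match generators of $\Det_{\Lambda(\cG)}(\cC_{S,T}\q)$ with preimages of $-\chi_{\Lambda(\cG), \cQ(\cG)}(\cC_{S, T}\q, t^\alpha)$ under $\partial$, and prove the key identity $ev_{\gamma_\chi}(\psi_\chi(\zeta)) = \lambda_\chi(\cL_{S,T})/R_S^\beta(\alpha,\chi)$ by descending to a finite layer via lemma \ref{lem:trivialisation_det_infinite_finite_level} and identifying the two determinant collapses up to the Stark--Tate regulator. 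The only divergence is in how the inversion is handled: you feed $(\varphi_{t^\alpha})^{-1}$ directly into proposition \ref{prop:generator_det_rec}, whereas the paper shifts the complex by $[1]$ via the generator-dualising isomorphism of lemma \ref{lem:trivialising_dual_inverse} so that $\varphi_n^\alpha$ is never inverted; either way one must scale the splittings by regular central elements $d_0, d_1$ and restrict to characters satisfying the strengthened kernel condition (KC') so that these scalars act invertibly on $\chi$-parts at finite level — this is precisely the verification you defer as the ``main obstacle'', and it is what the paper's two large commutative diagrams carry out.
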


        \begin{proof}
            In this proof, we denote $\Det_R(-)$ for a commutative ring $R$ by $\rD_R(-)$. In the diagrams and displayed equations, we will simplify the notation of $\Lambda(\cG), \Lambda(\cG_n), \cQ(\cG)$ and $\cQ^c(\cG)$ to $\Lambda, \Lambda_n, \cQ$ and $\cQ^c$ respectively. In diagrams, we will furthermore remove the subscript $R$ from $\otimes_R$ when the involved modules are of the form $\Det_R(M)$ for some $R$-module $M$.

            As explained after equation \eqref{eq:stricly_perfect_representative_two_degrees}, we may assume $\cC_{S, T}\q$ has a representative in $\cD(\Lambda(\cG))$ of the form ${\cP\q = [\cP^0 \to \cP^1]}$ (in degrees 0 and 1), with both $\cP^i$ free, by replacing the original set $T$ with a suitable one, which does not affect the validity of either conjecture. We set $\cP_n\q = \Lambda(\cG_n) \otimes_{\Lambda(\cG)}^\LL \cP\q$.

            The first step is to bridge some differences in the formulation of both conjectures. Let $\chi \in \Irr_p(\cG)$ satisfy the kernel condition \ref{manualcond:kc}. Choose an $n \in \NN$ such that $\chi$ factors over $\cG_n = \cG/\Gp{n}$ and let ${}_{\chi, \CC_p} -$ denote $e(\chi) \cdot \CC_p \otimes_{\ZZ_p} -$, where $e(\chi) \in \QQ_p^c[\cG_n] \subseteq \CC_p[\cG_n]$ is the primitive central idempotent corresponding to (the projection of) $\chi$. Then one has the following commutative diagram:
            \begin{center}
            \label{diag:bks_diagram}
                \begin{tikzcd}
                    {\rD_\Lambda(\cC_{S, T}\q)} \arrow[dddddd, "\lambda_\chi"] \arrow[rr, equals] &  & {\rD_\Lambda(\cC_{S, T}\q)} \arrow[d]                                                                                                                                                                              \\
                      &  & \rD_{\Lambda_n}(\cP_n\q) \arrow[d]                                                                                                                                                                       \\
                      &  & {\rD_{\CC_p[\cG_n]}(\CC_p \otimes_{\ZZ_p} \cP_n^\even) \otimes \rD_{\CC_p[\cG_n]}(\CC_p \otimes_{\ZZ_p} \cP_n^\odd)^{-1}} \arrow[d]                                                               \\
                      &  & {\rD_{\CC_p[\cG_n]}({}_{\chi, \CC_p} H^\even(\cC_{L_n, S, T}\q)) \otimes \rD_{\CC_p[\cG_n]}({}_{\chi, \CC_p} H^\odd(\cC_{L_n, S, T}\q))^{-1}} \arrow[d, "{\rD_{\CC_p[\cG_n]}(({}_{\chi, \CC_p} \varphi_n^\alpha)^{-1}) \otimes \Id}"] \\
                      &  & {\rD_{\CC_p[\cG_n]}({}_{\chi, \CC_p} H^\odd(\cC_{L_n, S, T}\q)) \otimes \rD_{\CC_p[\cG_n]}({}_{\chi, \CC_p} H^\odd(\cC_{L_n, S, T}\q))^{-1}} \arrow[d, "ev"]                                                             \\
                      &  & {\CC_p[\cG_n]} \arrow[d, "\chi"]                                                                                                                                                                                   \\
\CC_p                                                                 &  & \CC_p \arrow[ll, "{R_S^\beta(\alpha, \chi)}"']
                \end{tikzcd}
            \end{center}
            Here the bottom arrow is to be understood as multiplication by the regulator $R_S^\beta(\alpha, \chi) \in \units{\CC_p}$. Commutativity follows from the fact that the right column is defined by essentially the same maps as $\lambda_\chi$ (cf. definition \ref{defn:bks_map}, including the implicit choice of splittings $\sigma_0$ and $\sigma_1$), the difference being that this time we take $\chi$-parts \textit{before} the last step (which does not affect the resulting composition), and we choose as the isomorphism
            \[
                \rD_{\CC_p[\cG_n]}({}_{\chi, \CC_p} H^\even(\cC_{L_n, S, T}\q)) \xrightarrow{\sim} \rD_{\CC_p[\cG_n]}({}_{\chi, \CC_p} H^\odd(\cC_{L_n, S, T}\q))
            \]
            the map $\rD_{\CC_p[\cG_n]}(({}_{\chi, \CC_p} \varphi_n^\alpha)^{-1})$ rather than $\rD_{\CC_p[\cG_n]}({}_{\chi, \CC_p} \lambda_S^\beta)$. Since the Stark-Tate regulator $R_S^\beta(\alpha, \chi)$ is precisely the determinant of the $\CC_p$-action of ${}_{\chi, \CC_p} \lambda_S^\beta \circ ({}_{\chi, \CC_p} \varphi_n^\alpha)$ on $\Hom_{\CC_p[\cG_n]}(V_\chi, H^\odd(\cC_{L_n, S, T}\q))$, and determinant functors take endomorphisms of free modules to actual matrix determinants (cf. \eqref{eq:determinant_map_free_modules}), a simple computation shows the diagram commutes indeed. This relies crucially on the fact that $\chi(1) = 1$ by virtue of the abelianity of $\cG$.

            Consider now the right column of said diagram. Since it does not feature the Dirichlet regulator map, all arrows and objects after $\rD_\Lambda(\cC_{S, T}\q)$ can be replaced by their analogues with scalars in $\QQ_p^c$ instead of $\CC_p$. To do so, we also replace, ${}_{\chi, \CC_p} -$ by ${}_\chi - = e(\chi) \cdot \QQ_p^c \otimes_{\ZZ_p} -$. Here we tacitly use the fact that splittings exist over $\QQ_p^c[\cG_n]$ by semisimplicity (the resulting map is independent of the choice of splittings, similarly to remark \ref{rem:map_bks}). Let $\tilde{\lambda}_\chi \colon \rD_{\Lambda(\cG)}(\cC_{S, T}\q) \to \QQ_p^c$ denote the composition of the entire resulting column.

            In order to relate the finite and infinite levels, we would like to shift the complex so that $\varphi_n^\alpha$ does not need to be inverted (the trivialisation $t^\alpha$ is a map from odd to even degree, too). This can be done as follows: since $\cP^\even$ and $\cP^\odd$ are free $\Lambda(\cG)$-modules by assumption, $\rD_{\Lambda(\cG)}(\cP^\even)$ and $\rD_{\Lambda(\cG)}(\cP^\odd)$ are free of rank 1. Let $g^\even$ and $g^\odd$ be arbitrary respective generators. In particular, $g^{\odd, \ast}$ is a generator of $\rD_{\Lambda(\cG)}(\cP^\odd)^{-1}$ in the notation of lemma \ref{lem:invertible_module_dual}, and $g^\even \otimes g^{\odd, \ast}$ generates $\rD_{\Lambda(\cG)}(\cP\q) = \rD_{\Lambda(\cG)}(\cC_{S, T}\q)$. This yields an isomorphism
            \begin{equation}
            \label{eq:isomorphism_shift_generators}
                \iota_{g^\even} \otimes \iota_{g^{\odd, \ast}} \colon \rD_\Lambda(\cC_{S, T}\q) = \rD_\Lambda(\cP^\even) \otimes \rD_\Lambda(\cP^\odd)^{-1} \xrightarrow{\sim} \rD_\Lambda(\cP^\even)^{-1} \otimes \rD_\Lambda(\cP^\odd) = \rD_\Lambda(\cC_{S, T}\q[1]),
            \end{equation}
            with $\iota_{g^\even}$ and $\iota_{g^{\odd, \ast}}$ given by \eqref{eq:isomorphism_inverse_module}. Lemma \ref{lem:trivialising_dual_inverse} then shows that the diagram
            \begin{equation}
            \label{diag:bks_diagram_2}
                \begin{tikzcd}
                    {\rD_\Lambda(\cC_{S, T}\q)} \arrow[dddddd, "\tilde{\lambda}_\chi"] \arrow[rr, "{\iota_{g^\even} \otimes \iota_{g^{\odd, \ast}}}"] &  & {\rD_\Lambda(\cC_{S, T}\q[1])} \arrow[d]                                                                                                                                                                              \\
                      &  & \rD_{\Lambda_n}(\cP_n\q[1]) \arrow[d]                                                                                                                                                                                   \\
                      &  & {\rD_{\QQ_p^c[\cG_n]}(\QQ_p^c \otimes_{\ZZ_p} \cP_n^\even)^{-1} \otimes \rD_{\QQ_p^c[\cG_n]}(\QQ_p^c \otimes_{\ZZ_p} \cP_n^\odd)} \arrow[d]                                                                                                                 \\
                      &  & {\rD_{\QQ_p^c[\cG_n]}({}_\chi H^\even(\cC_{L_n, S, T}\q))^{-1} \otimes \rD_{\QQ_p^c[\cG_n]}({}_\chi H^\odd(\cC_{L_n, S, T}\q))} \arrow[d, "{\Id \otimes \rD_{\QQ_p^c[\cG_n]}({}_\chi \varphi_n^\alpha)}"] \\
                      &  & {\rD_{\QQ_p^c[\cG_n]}({}_\chi H^\even(\cC_{L_n, S, T}\q)^{-1}) \otimes \rD_{\QQ_p^c[\cG_n]}({}_\chi H^\even(\cC_{L_n, S, T}\q))} \arrow[d, "ev"]                                                             \\
                      &  & {\QQ_p^c[\cG_n]} \arrow[d, "\chi"]                                                                                                                                                                                   \\
                    \QQ_p^c \arrow[rr, "{\tilde{\lambda}_\chi(g^\even \otimes g^{\odd, \ast})^{-2}}"]  &  & \QQ_p^c
                    \end{tikzcd}
            \end{equation}
            commutes, where the bottom arrow again denotes multiplication by the square of the inverse of $\tilde{\lambda}_\chi(g^\even \otimes g^{\odd, \ast}) \in \units{(\QQ_p^c)}$. This relies on the fact that the arrow $\chi \colon \QQ_p^c[\cG_n] \to \QQ_p^c$ is a ring homomorphism.

            The last necessary simplification in order to relate the finite- and infinite-level maps is to replace the above morphisms in cohomology by maps on $\cP\q$. A convenient way to do so is to find a map on integral level $\Lambda(\cG)$ which induces both the finite-level splittings used in the construction of $\lambda_\chi$ (definition \ref{defn:bks_map}); and the infinite-level splittings used in the construction of the refined Euler characteristic (cf. \eqref{eq:refined_euler_characteristic_map}). This presents some difficulties, which are however not new: we encountered essentially the same issue in the proof of proposition \ref{prop:independence_of_alpha} (where $\alpha$ itself might not be invertible), and we shall use the same workaround to resolve it. Namely, let $\pi_0$ be the arrow in $\cP\q = [\cP^0 \to \cP^1]$, $W = \img(\pi_0) \subseteq \cP^1$ its image, and $\pi_1$ the surjection $\pi_1 \colon \cP^1 \sa H^1(\cP\q) = H^1(\cC_{S, T}\q)$. Chose splittings $\sigma_0$ and $\sigma_1$ for the complex $\cQ(\cG) \otimes_{\Lambda(\cG)}^\LL \cP\q$ as in \eqref{eq:refined_euler_characteristic_map}:
            \begin{equation}
            \label{diag:splittings_rec}
                \begin{tikzcd}[column sep=2em]
                    \cQ \otimes_\Lambda H^0(\cP\q) \arrow[r, hook] & \cQ \otimes_\Lambda \cP^0 \arrow[rr] \arrow[rd, "\cQ \otimes_\Lambda \pi_0", two heads] &                                                                          & \cQ \otimes_\Lambda \cP^1 \arrow[rr, "\cQ \otimes_\Lambda \pi_1", two heads] &  & \cQ \otimes_\Lambda  H^1(\cP\q) \arrow[ll, "\sigma_1", bend left] \\
                                                                   &                                                                                         & \cQ \otimes_\Lambda W \arrow[ru, hook] \arrow[lu, "\sigma_0", bend left] &                                                                              &  &
                \end{tikzcd}
            \end{equation}

            We can scale $\sigma_0$ and $\sigma_1$ by regular elements elements $d_0, d_1 \in \Lambda(\Gamma) \subseteq \Lambda(\cG)$ to obtain $\Lambda(\cG)$-homomorphisms $d_0 \sigma_0 \colon W \to \cP^0$ and $d_1 \sigma_1 \colon H^1(\cP\q) \to \cP^1$ (recall, for instance, equation \eqref{eq:tensor_hom}). By lemma \ref{lem:torsion_modules_bounded_coinvariants}, there exists an $n_0 \in \NN$ such that multiplication by $d_0 d_1$ (or, equivalently, its projection $\overline{d_0 d_1} \in \Lambda(\cG_m)$) is an automorphism of $e(\chi)\QQ_p^c[\cG_m]$ for all $m \geq n_0$ (simply take $M = \Lambda(\cG)/\ideal{d_0 d_1}$ regarded as a $\Lambda(\Gamma)$-module) \textit{as long as $\chi$ doesn't factor over $\cG_{n_0}$}. This is essentially the same argument as in \eqref{eq:multiplication_by_scalar_isomorphism_on_chi_parts}.

            Fix $d_0, d_1$ and $n_0$ as above, which do not depend on the choice of a character $\chi$. Assume $\chi$ factors through $\cG_n$ but not $\cG_{n_0}$, and it still satisfies \ref{manualcond:kc}. Let $\delta_0$ and $\delta_1$ be the images of $d_0$ and $d_1$ under $\Lambda(\cG) \sa \Lambda(\cG_n) \ia \QQ_p^c[\cG_n] \xrightarrow{\chi} \QQ_p^c$ respectively, both of which are non-zero. Then $\delta_0^{-1} \cdot {}_\chi \gcoinv{(d_0 \sigma_0)}{n}$ and $\delta_1^{-1} \cdot {}_\chi \gcoinv{(d_1 \sigma_1)}{n}$ are splittings for the finite-level sequences
            \begin{equation}
            \label{diag:bks_splittings_finite_level}
                \begin{tikzcd}[column sep=2.2em]
                    {}_\chi \gcoinv{H^0(\cP\q)}{n} \arrow[rr, hook] &  & {}_\chi \gcoinv{\cP^0}{n} \arrow[rr] \arrow[rd, two heads, "{}_\chi \gcoinv{(\pi_0)}{n}"] &                                                                                                                  & {}_\chi \gcoinv{\cP^1}{n} \arrow[rr, two heads, , "{}_\chi \gcoinv{(\pi_1)}{n}"] &  & {}_\chi \gcoinv{H^1(\cP\q)}{n} \arrow[ll, "\delta_1^{-1} \cdot {}_\chi \gcoinv{(d_1 \sigma_1)}{n}", bend left] \\
                                                        &  &                                     & {}_\chi \gcoinv{W}{n} \arrow[ru, hook] \arrow[lu, "\delta_0^{-1} \cdot {}_\chi \gcoinv{(d_0 \sigma_0)}{n}", bend left] &                                     &  &
                \end{tikzcd}
            \end{equation}
            Note that $\gcoinv{\cP^0}{n}$ and $\gcoinv{\cP^1}{n}$ coincide with $\cP_n^0$ and $\cP_n^1$ respectively, and ${}_\chi \gcoinv{H^0(\cP\q)}{n} = {}_\chi \cE_{L_n, S, T}$ is precisely ${}_\chi \ZZ_p \otimes_\ZZ \units{\cO_{L_n, S, T}} = {}_\chi H^0(\cP_n\q)$ by proposition \ref{prop:isomorphism_on_chi_parts} and the preceding results. The injectivity of ${}_\chi \gcoinv{W}{n} \ia {}_\chi \gcoinv{\cP^1}{n}$ then follows from a $\QQ_p^c$-dimension argument.

            We can now define our desired integral map
            \[
                \tau \colon \cP^\odd = \cP^1 \to H^\odd(\cP\q) \oplus W \xrightarrow{t_\iota^\alpha \oplus \Id} H^\even(\cP\q) \oplus W \to \cP^0 = \cP^\even,
            \]
            where $t_\iota^\alpha$ is the integral trivialisation defined in \eqref{eq:integral_trivialisation}, and the first and last arrows are given by $d_1 \pi_1 \oplus (d_1\Id - (d_1\sigma_1)\pi_1)$ and $(d_0 \sigma_0) \oplus d_0 \Id$. These are simply the standard maps arising from a split short exact sequence, only scaled by $d_0$ and $d_1$ to make them integral (which is why we write $(d_1\sigma_1)$ and $(d_0 \sigma_0)$ in brackets).

            Denote by $\dot{\lambda_\chi}$ the composite map given by the second column of \eqref{diag:bks_diagram_2}. Since the choice of splittings implicit in the third vertical arrow is irrelevant, we can assume they coincide with those in \eqref{diag:bks_splittings_finite_level} on $\chi$-parts, after which an easy verification shows the commutativity of the leftmost square in our last (and key) diagram
            \begin{center}
                \addtolength{\leftskip} {-2cm}
                \addtolength{\rightskip} {-2cm}
                \begin{tikzcd}[column sep=tiny]
                    {\rD_\Lambda(\cC_{S, T}\q[1])} \arrow[ddddd, "\dot{\lambda}_\chi"] \arrow[r, equals] & {\rD_\Lambda(\cC_{S, T}\q[1])} \arrow[d] \arrow[r, hook]                                                                                    & {\cQ  \rD_\Lambda(\cC_{S, T}\q[1])} \arrow[dd, equals] \arrow[r, equals]                                                                             & {\cQ  \rD_\Lambda(\cC_{S, T}\q[1])} \arrow[dd, equals]                                                                                                \\
                     & {\rD_{\Lambda_n}(\cP\q[1])} \arrow[d]                                                                                                       &                                                                                                                                      &                                                                                                                                               \\
                     & {\rD_n(\QQ_p^c \otimes_{\ZZ_p} \cP_n^\even)^{-1} \otimes \rD_n(\QQ_p^c \otimes_{\ZZ_p} \cP_n^\odd)} \arrow[d, "\Id \otimes \rD_n(\QQ_p \otimes_{\ZZ_p} \gcoinv{\tau}{n})"'] & \rD_{\cQ}(\cQ  \cP^\even)^{-1} \otimes \rD_{\cQ}(\cQ  \cP^\odd) \arrow[d, "\Id \otimes \rD_{\cQ}(\cQ \otimes_\Lambda \tau)"'] \arrow[r, equals] & \rD_{\cQ}(\cQ  \cP^\even)^{-1} \otimes \rD_{\cQ}(\cQ  \cP^\odd) \arrow[d, "\Id \otimes \rD_{\cQ}((d_0 d_1)^{-1} \cdot \cQ \otimes_\Lambda \tau)"'] \\
                     & {\rD_n(\QQ_p^c \otimes_{\ZZ_p} \cP_n^\even)^{-1} \otimes \rD_n(\QQ_p^c \otimes_{\ZZ_p} \cP_n^\even)} \arrow[d, "ev"]                                                       & \rD_{\cQ}(\cQ  \cP^\even)^{-1} \otimes \rD_\cQ(\cQ  \cP^\even) \arrow[d, "ev"]                                                 & \rD_{\cQ}(\cQ  \cP^\even)^{-1} \otimes \rD_{\cQ}(\cQ  \cP^\even) \arrow[d, "ev"]                                                        \\
                     & {\QQ_p^c[\cG_n]} \arrow[d, "\chi"]                                                                                                          & \cQ \arrow[r, "(d_0 d_1)^{-r}"]                                                                                                      & \cQ                                                                                                                                           \\

                    \QQ_p^c \arrow[r, "(\delta_0 \delta_1)^r"]                                       & \QQ_p^c                                                                                                                                     &
                    \units{\cQ} \arrow[r, "(d_0 d_1)^{-r}"] \arrow[u, hook]                                                                       & \units{\cQ} \arrow[u, hook]
                    \\
                    & & K_1(\cQ) \arrow[d, "ev_{\gamma_\chi} \psi_\chi"] \arrow[r, "(d_0 d_1)^{-r}"] \arrow[u, "\nr"]                                                                       & K_1(\cQ) \arrow[d, "ev_{\gamma_\chi} \psi_\chi"] \arrow[u, "\nr"]                                                                              \\
                                                                                                 &                                                                                                                                             & \QQ_p^c \cup \set{\infty} \arrow[r, "(\delta_0 \delta_1)^{-r}"]                                                                      & \QQ_p^c \cup \set{\infty}
                \end{tikzcd}
            \end{center}
            where $r = \rank_{\Lambda}\cP^0 = \rank_{\Lambda}\cP^1$. In this diagram (and only there), we have made the following further notational simplification: $\rD_n -$ and $\cQ -$ stand for $\rD_{\QQ_p^c[\cG_n]}$ and $\cQ \otimes_{\Lambda} -$, respectively. Commutativity of the bottom-right square follows from \eqref{eq:relation_reduced_norm_and_invariants} and \eqref{eq:evaluation_j_and_psi} (choosing $f$ to be be multiplication by $(d_0 d_1)^r$ on a free rank one $\Lambda(\cG)$-module), and the rest is clear.

            When then together, the above diagrams draw a clear connection between the map $\lambda_\chi$ of Burns, Kurihara and Sano and our evaluation maps $ev_{\gamma_\chi}$. In combination with the two previous lemmas, this suffices to conclude the proof. We only need to consider characters $\chi$ satisfying the following condition:
            \begin{equation}
            \tag{KC'}
            \label{eq:kcp}
                \Gp{\max(n_0, n(S, \alpha))} \nsubseteq \ker(\chi).
            \end{equation}
            This is a slight strengthening of \ref{manualcond:kc} (i.e. $\Gp{n(S, \alpha)} \nsubseteq \ker(\chi)$) which still holds for almost all $\chi \in \Irr_p(\cG)$.

            Assume first BKS'($L_\infty/K, L, S, T, \beta$) and let $\cL_{S, T}$ be the element predicted therein. As a generator of $\rD_{\Lambda(\cG)}(\cC_{S, T}\q) = \rD_{\Lambda(\cG)}(\cP^\even) \otimes_{\Lambda(\cG)} \rD_{\Lambda(\cG)}(\cP^\odd)^{-1}$, it admits an expression $\cL_{S, T} = g^\even \otimes g^{\odd, \ast}$ as above. Set $\widetilde{\cL}_{S, T} = (\iota_{g^\even} \otimes \iota_{g^{\odd, \ast}})(\cL_{S, T}) \in \rD_{\Lambda(\cG)}(\cC_{S, T}\q[1])$ and define $\widetilde{\zeta}$ as the image of $\widetilde{\cL}_{S, T}$ under the composition
            \[
                \rD_\Lambda(\cC_{S, T}\q[1]) \ia
                \rD_{\cQ}(\cQ \otimes_\Lambda \cP^\even)^{-1} \otimes \rD_\cQ(\cQ \otimes_\Lambda \cP^\odd) \to
                \rD_{\cQ}(\cQ \otimes_\Lambda \cP^\even)^{-1} \otimes \rD_{\cQ}(\cQ \otimes_\Lambda \cP^\even) \xrightarrow{ev} \cQ
            \]
            where the middle arrow is $\rD_{\cQ(\cG)}((d_0 d_1)^{-1} \cdot \cQ(\cG) \otimes_{\Lambda(\cG)} \tau)$. This amounts to the top row and first vertical arrows in the fourth column of the last diagram above. Proposition \ref{prop:generator_det_rec} i) now shows that $\widetilde{\zeta}$ lies in lies in $\units{\cQ(\cG)}$, which we identify with $K_1(\cQ(\cG))$ via the reduced norm; and its inverse, which we denote by $\zeta \in K_1(\cQ(\cG))$, satisfies
            \[
                \partial(\zeta) = -\partial(\widetilde{\zeta}) = - [\cP^\odd, (d_0 d_1)^{-1} \cdot \cQ \otimes_\Lambda \tau, \cP^\even]
            \]
            (beware the difference in shift between $\cP\q[1]$ here and $\cP\q$ in the proposition). But $(d_0 d_1)^{-1} \cdot \cQ \otimes_\Lambda \tau$ is precisely the map constructed from the trivialisation $t^\alpha = \cQ(\cG) \otimes_{\Lambda(\cG)} t_\iota^\alpha$ and the splittings $\sigma_0$ and $\sigma_1$ from \eqref{diag:splittings_rec}. In other words, $\partial(\zeta) = - \chi_{\Lambda(\cG), \cQ(\cG)}(\cC_{S, T}\q, t^\alpha)$ is the inverse of the refined Euler characteristic of the trivialised complex $(\cC_{S, T}\q, t^\alpha)$. On the analytic side, we have
            \begin{align*}
                ev_{\gamma_\chi}(\psi_\chi(\zeta))
                & \stackrel{\cdot}{=} ev_{\gamma_\chi}(\psi_\chi(\widetilde{\zeta}))^{-1} \\
                & = \dot{\lambda}_\chi(\widetilde{\cL}_{S, T})^{-1} \\
                & = \tilde{\lambda}_\chi(\cL_{S, T}) \\
                & = R_S^\beta(\chi, \alpha)^{-1} \cdot \lambda_\chi(\cL_{S, T}) \stackrel{\cdot}{=} \frac{\beta^{-1}(L_{K, S, T}^\ast(\beta \check{\chi}, 0))}{R_S^\beta(\alpha, \chi)},
            \end{align*}
            where $\stackrel{\cdot}{=}$ denotes that the equality holds for almost all $\chi \in \Irr_p(\cG)$ satisfying \eqref{eq:kcp}. The equalities follow from, in order: multiplicativity of $ev_{\gamma_\chi}$ (this holds whenever the evaluation is neither $0$ nor $\infty$, which is true a fortiori for almost all $\chi$ by the remaining equalities); the last diagram above together with lemma \ref{lem:trivialisation_det_infinite_finite_level}; diagram \eqref{diag:bks_diagram_2}; diagram $\eqref{diag:bks_diagram}$; and the assumption BKS'($L_\infty/K, L, S, T, \beta$).

            Since $\zeta$ does not depend on $\chi$ and the map $\psi_\chi$ is invariant under $\rho$-twists (see the explanation before \eqref{eq:psi_and_j}), the element $F_{S, T, \chi}^{\alpha, \beta} = \psi_\chi(\zeta) \in \units{\cQ^c(\Gamma_\chi)}$ satisfies IC($L_\infty/K, \chi, L, S, T, \alpha, \beta$). It follows that $\zeta$ itself satisfies eMC($L_\infty/K, L, S, T, \alpha, \beta$).

            The converse is proved very similarly. Namely, suppose the zeta element $\zeta_{S, T}^{\alpha, \beta} \in K_1(\cQ(\cG))$ satisfies eMC($L_\infty/K, L, S, T, \alpha, \beta$). Then proposition \ref{prop:generator_det_rec} ii) yields a generator $\widetilde{\cL}_{S, T} \in \rD_{\Lambda(\cG)}(\cC_{S, T}\q[1])$ which is mapped to $\zeta_{S, T}^{\alpha, \beta}$ under the top row and rightmost column of the last diagram. The preimage $\cL_{S, T}$ of $\widetilde{\cL}_{S, T}$ under the isomorphism $\iota_{g^\even} \otimes \iota_{g^{\odd, \ast}}$ is a generator of $\rD_{\Lambda(\cG)}(\cC_{S, T}\q)$ which can be easily verified to have the interpolation property in BKS'($L_\infty/K, L, S, T, \beta$) using the same argument as above.
        \end{proof}

        \begin{rem}
            It is a by-product of this theorem that our Main Conjecture is independent from the choice $\alpha$, as proved separately in subsection \ref{subsec:the_choice_of_alpha} - indeed, some of the core ideas of both arguments coincide. Note, however, that the above result concerns the abelian case exclusively.\qedef
        \end{rem}

        This settles the case ``abelian over $\QQ$'' as well as, under some conditions, ``abelian over imaginary quadratic'':
        \begin{cor}
            Setting \ref{sett:formulation} with $\cG$ abelian. Conjecture \hyperref[conje:emc]{eMC($L_\infty/\QQ, L, S, T, \alpha, \beta$)} holds in the following cases:
            \begin{enumerate}[i)]
                \item{
                    $K = \QQ$ (so $p \neq 2$).
                }
                \item{
                    $K$ is an imaginary quadratic field where $p$ does not split, and the $\mu$-invariant $\mu(X_S^{cs})$ vanishes (for the extension $L_\infty/L$). Condition $\mu(X_S^{cs}) = 0$ is automatically satisfied if $[L : K]$ is a power of $p$.
                }
            \end{enumerate}
        \end{cor}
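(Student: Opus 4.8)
The plan is to reduce both statements to the modified BKS conjecture via theorem \ref{thm:bks}, after which the result follows from known cases of that conjecture. By theorem \ref{thm:bks}, \hyperref[conje:emc]{eMC($L_\infty/\QQ, L, S, T, \alpha, \beta$)} is equivalent to \hyperref[conje:bksp]{BKS'($L_\infty/\QQ, L, S, T, \beta$)}, which in turn is the ``almost all characters'' version of conjecture 3.1 of \cite{bks}. So it suffices to invoke the cases in which the original BKS conjecture (hence a fortiori its modification) is known. For part i), the relevant input is the main result of \cite{bks} itself (see their section 5, building on the validity of the eTNC for abelian extensions of $\QQ$ due to Burns--Greither and Flach), which settles conjecture 3.1 when the base field is $\QQ$; recall that our standing hypothesis forces $p \neq 2$ here since $\QQ$ is not totally imaginary. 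For part ii), the input is the work of Bley \cite{bley} and, more recently, Bullach and Hofer \cite{bullach_hofer}, who prove the corresponding case of the eTNC — and hence, via the limit procedure, the BKS conjecture — for abelian extensions of imaginary quadratic fields under the hypothesis that $p$ does not split in $K$ and a vanishing-$\mu$ assumption.

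First I would record the precise reduction: fix an abelian $L_\infty/\QQ$ as in setting \ref{sett:formulation}, and note that by theorem \ref{thm:independence_of_parameters} the validity of \hyperref[conje:emc]{eMC($L_\infty/\QQ, \cdots$)} does not depend on the auxiliary choices $L, S, T, \alpha$, so we are free to choose them conveniently — in particular $T$ large enough that $\cC_{S, T}\q$ admits a strictly perfect representative concentrated in degrees $0$ and $1$ with free terms, as in definition \ref{defn:bks_map}. Then theorem \ref{thm:bks} applies verbatim and transports the question to \hyperref[conje:bksp]{BKS'}. Next I would explain why the ``almost all'' weakening is harmless in the direction we need: the cited results establish conjecture 3.1 of \cite{bks} in its original ``all characters'' form, which trivially implies the ``almost all'' form, so there is no loss.

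For part i) the core citation is that conjecture 3.1 of \cite{bks} holds for $K = \QQ$; this is essentially Theorem 5.2 (or the discussion in section 5) of that paper, which deduces it from the equivariant Tamagawa number conjecture for the motive $h^0(\Spec L')(0)$ over abelian $L'/\QQ$, known by Burns--Flach and Burns--Greither. For part ii), I would cite that the eTNC for abelian extensions of an imaginary quadratic field $K$ in which $p$ is non-split holds when $\mu(X_S^{cs}) = 0$ — this is the content of the main theorems of \cite{bley} and \cite{bullach_hofer} — and that, by the general ``limit'' principle relating the eTNC at finite level to Main Conjectures over $\ZZ_p$-towers (used in \cite{bks}), this yields conjecture 3.1 of \cite{bks}, hence \hyperref[conje:bksp]{BKS'}, hence \hyperref[conje:emc]{eMC} by theorem \ref{thm:bks}. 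Finally I would note that when $[L:K]$ is a $p$-power, $X_S^{cs}$ is a quotient of the unramified Iwasawa module $X_{nr}$ of $L_\infty$, whose $\mu$-invariant vanishes over an abelian $p$-extension of an imaginary quadratic field by a theorem of Gillard and Schneps (the analogue of the Ferrero--Washington theorem in that setting), so the hypothesis $\mu(X_S^{cs}) = 0$ is automatic; this requires only a short remark that passing to a quotient cannot increase the $\mu$-invariant, together with the observation in remark \ref{rem:difference_galois_modules_s} that $X_S^{cs}$ depends only on $S_\infty \cup S_p$.

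The main obstacle — really the only non-formal point — is making sure the translation from ``eTNC / \cite{bks} conjecture 3.1 for the relevant finite layers'' to ``\hyperref[conje:bksp]{BKS'} for the $\ZZ_p$-extension $L_\infty$'' is legitimate in our precise formulation. The subtlety is that our $\cC_{S,T}\q$ is the inverse limit of the finite-level complexes $\cB_{L_n,S,T}\q$ (theorem \ref{thm:iso_bks_complex}, lemma \ref{lem:complexes_coinvariants_finite_level}), and the generator $\cL_{S,T}$ of $\Det_{\Lambda(\cG)}(\cC_{S,T}\q)$ whose existence \hyperref[conje:bksp]{BKS'} asserts must be compatible with the finite-level determinants under $\lambda_\chi$; but \cite{bks} already addresses exactly this compatibility (their subsections 3D and 4B, and their deduction of conjecture 3.1 from the eTNC in the cyclotomic case), so in practice this amounts to citing their argument rather than reproving it. I would therefore phrase the proof as: reduce via theorem \ref{thm:bks}, invoke \cite{bks} (case $K=\QQ$) for i) and \cite{bley}, \cite{bullach_hofer} together with \cite{bks} for ii), and finish the parenthetical claim about $[L:K]$ a $p$-power by the Gillard--Schneps vanishing of $\mu$ plus the elementary monotonicity of $\mu$ under quotients.
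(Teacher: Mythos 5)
Your proposal follows the paper's proof essentially verbatim: reduce via theorem \ref{thm:bks} to \hyperref[conje:bksp]{BKS'($L_\infty/\QQ, L, S, T, \beta$)}, note that the original ``all characters'' form of \cite{bks} conjecture 3.1 trivially implies the ``almost all'' modification, and invoke the known cases — \cite{bks} remark 5.6 / Burns--Greither / Mazur--Wiles for i), and \cite{bullach_hofer} (theorem 5.5) for ii). The one point to repair is your justification of the parenthetical claim that $[L:K]$ a $p$-power forces $\mu(X_S^{cs}) = 0$: the Gillard--Schneps vanishing theorems concern primes that \emph{split} in the imaginary quadratic field, whereas part ii) assumes $p$ is non-split, so that citation does not apply. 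The standard argument (and the content of \cite{bullach_hofer} proposition 5.6, which the paper simply cites) is Ferrero--Washington applied to $K$ itself — which is abelian over $\QQ$ — together with Iwasawa's theorem that vanishing of $\mu$ propagates through finite $p$-extensions along cyclotomic $\ZZ_p$-towers; your observation that $\mu$ cannot increase when passing from $X_{nr}$ to its quotient $X_S^{cs}$ is correct and is still the right way to finish.
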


        \begin{proof}
            Conjecture 3.1 from \cite{bks} is known in both cases, which implies \hyperref[conje:bksp]{BKS'($L_\infty/\QQ, L, S, T, \beta$)} by the above theorem. For part i), see \cite{bks} remark 5.6, which in turn refers to \cite{burns_greither}. Remark 6.1 ii) in the latter points to the original work of Mazur and Wiles.

            For part ii), see \cite{bullach_hofer} theorem 5.5 and proposition 5.6.
        \end{proof}

        \begin{rem}
            As an application, we outline the proof of a new case of the Main Conjecture which relies on the above corollary and the functoriality results from section \ref{sec:functoriality}. The relevance of this example is that, to the best of our knowledge, it is not covered by existing formulations of a Main Conjecture - let alone proved.

            Consider setting \ref{sett:formulation} with $K = \QQ$ (so $p \neq 2$) and $\Gal(L/\QQ)$ of the form $A \rtimes \zmod{2}$, where $A$ is abelian of odd order and the non-trivial element of $\zmod{2}$ acts on $A$ by inversion. Such a group is said to be \textbf{generalised dihedral}. Assume furthermore that $L^A$ is imaginary (quadratic) and satisfies the conditions of part ii) of the corollary.

            By proposition \ref{prop:reduction_step}, the Main Conjecture for $L_\infty/\QQ$ can be deduced from that for $L_\infty/L^U$ for each elementary subgroup $U$ of $\Gal(L/K)$. It is easy to verify that any such $U$ satisfies at least one of the following:
            \begin{enumerate}[1)]
                \item{
                    $U \leq A$.
                }
                \item{
                    $\abs{U} = 2$.
                }
            \end{enumerate}
            The Main Conjecture for $L_\infty/L^A$ holds by ii) of the corollary, which settles all subgroups of type 1) by proposition \ref{prop:functoriality_2}.

            For $U$ of type 2), it is convenient to recall the notion of so-called \textit{maximal-order} Main Conjectures. These have the same structure as our equivariant Main Conjecture except for the fact that a preimage in $K_1(\cQ(\cG))$ of the refined Euler characteristic of the trivialised complex is claimed to have reduced norm congruent to the analytic object modulo \textit{not} the image of $K_1(\Lambda(\cG))$ (recall \eqref{eq:nu_g}), but rather all of $\units{Z(\fM(\cG))}$, where $\fM(\cG) \subseteq \Lambda(\cG)$ denotes a maximal $\Lambda(\Gamma)$-order in $\cQ(\cG)$ (note that the resulting claim is weaker). These conjectures are known to enjoy similar functoriality properties to the ones in chapter \ref{chap:properties_of_the_main_conjecture} and, more importantly, to decompose into $\chi$-parts (with $\chi$ an irreducible character), since so does $\fM(\cG)$.

            If $U$ is of type 2), then one has $\Gal(L_\infty/L^U) = \Gamma \times U$ and $\Lambda(\Gamma \times U)$ is a maximal $\Lambda(\Gamma)$-order in ${\cQ(\Gamma \times U)}$. Therefore, it suffices to prove the maximal-order Main Conjecture for $L_\infty/\QQ$ by functoriality - and for this, in turn, to prove the $\chi$-part of said conjecture for each $\chi \in \Irr_p(\cG)$.

            We distinguish two further cases. If $\chi$ is linear, it factors as a character of an abelian extension of $\QQ$, in which case the full equivariant Main Conjecture is known by part i) of the corollary - and therefore so is (the $\chi$-part of) the maximal-order conjecture.

            If $\chi$ is not linear, then \cite{jn_brumer} theorem 9.3 shows that $\chi = \indu_A^{\Gal(L/K)} \chi'$ for some non-trivial irreducible character $\chi'$ of $A$. But then functoriality together with case 1) above yields the desired conjecture. This covers the remaining cases and therefore concludes the argument.\qedef
        \end{rem}

    \section{The conjecture of Ritter and Weiss}
    \label{sec:the_conjecture_of_ritter_and_weiss}

        This last section studies the relation between our Main Conjecture and that formulated by Ritter and Weiss in \cite{rwii}, the first example of an equivariant Main Conjecture for arbitrary totally real base field and one of the main motivations for the present work. It was proved in the so-called ``$\mu = 0$ case'' by the same authors in a series of articles leading up to \cite{rw_on_the}. Independently from this, Kakde formulated and proved a Main Conjecture in \cite{kakde_mc} which was shown to be equivalent to that of Ritter and Weiss by, separately, Nickel (cf. \cite{nickel_plms}) and Venjakob (cf. \cite{venjakob_on_rw}).

        A common feature of the aforementioned conjectures is their limitation to the totally real case. Through a duality argument, it will be shown that, whenever $L$ is a CM field containing a primitive $p$-th root of unity, the \textit{minus part} of our Main Conjecture for $L_\infty/K$ is equivalent to the conjecture of Ritter and Weiss for the maximal totally real subextension $L_\infty\pl/K$. The set of places $T$ can be disregarded altogether, as it is not featured in \cite{rwii}. The setting is as follows:

        \begin{sett}
        \label{sett:rw}
        \addcontentsline{toc}{subsection}{Setting E}
            We consider the same objects as in setting \ref{sett:construction} with the exception of $T$. The following additional restrictions are imposed:
            \begin{itemize}
                \item{
                    $K$ is totally real. In particular, $p$ is odd.
                }
                \item{
                    $L$ is a CM field containing a primitive $p$-th root of unity. The former amounts to the existence of a totally real number field $L\pl \subseteq L$ such that $[L : L\pl] = 2$. This $L\pl$ is necessarily a Galois extension of $K$, as is its own cyclotomic $\ZZ_p$-extension $L_\infty\pl$. We set $\cG\pl = \Gal(L_\infty\pl/K)$.
                }
            \end{itemize}
            The Galois group of $L/L\pl$ is generated by complex conjugation, which has order 2 and will be denoted by $\tau$ here. It identifies naturally with the generator of $\Gal(L_\infty/L_\infty\pl)$, which allows us to write $\cG\pl = \cG/\ideal{\tau}$.\qedef
        \end{sett}

        We point out that the $\alpha$ and $\beta$ from setting \ref{sett:formulation} are not necessary here for reasons which will become apparent soon.

        Since $\tau$ is central in $\cG$ and has order $2$, every $\chi \in \Irr_p(\cG)$ satisfies either $\chi(\tau) = \chi(1)$ or ${\chi(\tau) = -\chi(1)}$. We say $\chi$ is \textbf{totally even}\index{character!totally even} in the former case, and \textbf{totally odd}\index{character!totally odd} in the latter. We denote the set of totally even Artin characters of $\cG$ by $\Irr_p(\cG)\pl$, which is in bijection with $\Irr_p(\cG\pl)$; and that of totally odd ones, by $\Irr_p(\cG)\mi$. The subgroup $H = \Gal(L_\infty/K_\infty)$ contains $\tau$, and therefore the $W$-equivalence relation $\sim_W$ introduced in section \ref{sec:morphisms_on_finite_level}
        restricts separately to $\Irr_p(\cG)\pl$ and $\Irr_p(\cG)\mi$.

        The primitive central idempotents $e\pl = (1 + \tau)/2$ and $e\mi = (1 - \tau)/2$ yield a ring decomposition $\Lambda(\cG) = e\pl \Lambda(\cG) \times e\mi \Lambda(\cG)$, and hence also for $\cQ(\cG)$ and $\cQ^c(\cG)$. We denote the \textbf{plus part}\index{plus part} of a $\Lambda(\cG)$-module $M$ by $M\pl = e\pl M$, and analogously for its \textbf{minus part}\index{minus part} $M\mi$. An immediate computation shows that $M\pl$ and $M\mi$ consist precisely of the elements of $M$ upon which $\tau$ acts as the identity and as $-1$, respectively. Taking plus parts is an exact functor (whether one chooses the target category to be that of left modules over $\Lambda(\cG)$ or $\Lambda(\cG)\pl$), as is taking minus parts. Note that these functors identify naturally with $\Lambda(\cG)\pl \otimes_{\Lambda(\cG)} -$ and $\Lambda(\cG)\mi \otimes_{\Lambda(\cG)} -$, respectively. The canonical augmentation map $\Lambda(\cG) \sa \Lambda(\cG\pl)$ restricts to a ring isomorphism on $\Lambda(\cG)\pl$ and to the zero map on $\Lambda(\cG)\mi$.

        The reduced norm and structure of $Z(\cQ^c(\cG))$ also decompose into plus and minus parts:
        \begin{center}
            \begin{tikzcd}[row sep=large, column sep=small]
                K_1(\cQ(\cG)) \arrow[r, equals] \arrow[d]                                          & K_1(\cQ(\cG)\pl) \times K_1(\cQ(\cG)\mi) \arrow[d, shift right=13] \arrow[d, shift left=13]                                                                                            \\
                K_1(\cQ^c(\cG)) \arrow[r, equals] \arrow[d, "\nr"]                                 & K_1(\cQ^c(\cG)\pl) \times K_1(\cQ^c(\cG)\mi) \arrow[d, "\nr", shift right=13] \arrow[d, "\nr", shift left=13]                                                                                 \\
                \units{Z(\cQ^c(\cG))} \arrow[r, equals] \arrow[d, "\rsim"]                         & \units{Z(\cQ^c(\cG)\pl)} \times \units{Z(\cQ^c(\cG)\mi)} \arrow[d, "\rsim", shift right=18] \arrow[d, "\rsim", shift left=18]                                                                   \\
                \prod_{\chi \in \Irr_p(\cG)/{\sim}_W} \units{\cQ^c(\Gamma_\chi)} \arrow[r, equals] & \prod_{\chi \in \Irr_p(\cG)\pl/{\sim}_W} \units{\cQ^c(\Gamma_\chi)} \times \prod_{\chi \in \Irr_p(\cG)\mi/{\sim}_W} \units{\cQ^c(\Gamma_\chi)}
            \end{tikzcd}
        \end{center}
        This corresponds essentially to the discussion in the first part of subsection \ref{subsec:functoriality_1} for the particular case $\widetilde{H} = \ideal{\tau}$. Recall in particular lemma \ref{lem:functoriality_1_chi_parts} and the first few lines following remark \ref{rem:projection_gamma_chi_independent}. Similarly, the relative $K_0$ group decomposes canonically as
        \[
            K_0(\Lambda(\cG), \cQ(\cG)) = K_0(\Lambda(\cG)\pl, \cQ(\cG)\pl) \times K_0(\Lambda(\cG)\mi, \cQ(\cG)\mi).
        \]

        Although it will be stated in precise terms shortly, the shape of the ``Main Conjecture on minus parts'' should be apparent from the above discussion. There is no need, however, for an analogue of the Interpolation Conjecture: it follows from the well-known existence of $p$-adic $L$-functions and a recent result of Ellerbrock and Nickel. We briefly recall these concepts now.

        The \textbf{$p$-adic cyclotomic character}\index{cyclotomic character}\index{character!cyclotomic} of $\cG$ (introduced in more generality in section \ref{sec:iwasawa_algebras_and_modules}) is the homomorphism
        \[
            \chi_\tcyc \colon \cG = \Gal(L_\infty/K) \sa \Gal(K(\mu_{p^\infty})/K) \ia \units{\ZZ_p} = \ZZ_p^{(1)} \times \mu_{p - 1},
        \]
        where $\ZZ_p^{(1)} = 1 + p \ZZ_p$ and $\mu_{p - 1}$ denote the principal units and the $(p - 1)$-th roots of unity in $\ZZ_p$, respectively. Since complex conjugation acts on roots of unity as inversion, one has $\chi_\tcyc(\tau) = -1$. The projection of
        $\chi_\tcyc$ to $\ZZ_p^{(1)}$ factors as a character of $\Gal(K_\infty/K) = \Gamma_K = \overline{\ideal{\gamma_K}}$ and is often denoted by $\kappa$. The projection of $\chi_\tcyc$ to $\mu_{p - 1}$ factors as a character of $\Gal(K(\zeta_p)/K)$ and its usual notation is $\omega$: the \textbf{Teichmüller character}\index{character!Teichmüller}.

        Let $\chi \in \Irr_p(\cG)\pl$ be a totally even character of $\cG$. Then there exists a unique $p$-adic meromorphic function $L_{p, S}(\chi, -) \colon \ZZ_p \to \CC_p$ such that, for all integers $r > 1$, one has
        \begin{equation}
        \label{eq:p-adic_l_function}
            L_{p, K, S}(\chi, 1 - r) = L_{K, S}(\chi \omega^{-r}, 1 - r).
        \end{equation}
        The right-hand side should be interpreted as $\beta^{-1}(L_{K, S}(\beta(\chi \omega^{-r}), 1 - r))$ for an arbitrarily chosen $\beta \colon \CC_p \isoa \CC$. This notation, which is standard, reflects the classical fact - due to Siegel \cite{siegel} - that the outcome is independent of that choice. The function $L_{p, K, S}(\chi, -)$ is known as the \textbf{$S$-truncated $p$-adic $L$-function attached to $\chi$}\index{p-adic L-function@$p$-adic $L$-function}. Its construction in the abelian case (i.e. for linear $\chi$) was given, independently, by Pierrette Cassou-Nogu\`es in \cite{cassou-nogues}; Deligne and Ribet in \cite{deligne_ribet}; and Barsky in \cite{barsky}. Interpolation \eqref{eq:p-adic_l_function} at 0 (that is, for $r = 1$), which is the point of interest to us, was shown to hold later on.

        Greenberg extended the definition of $L_{p, K, S}$ beyond the abelian case by means of Brauer induction (cf. \cite{greenberg}). Note that interpolation at 0 does not follow automatically from the linear case due to the potential vanishing of $L$-values in the denominator. However, Ellerbrock and Nickel have shown it to hold through the study of bounds on denominators of Stickelberger elements along cyclotomic towers. More precisely, \eqref{eq:p-adic_l_function} holds for $r = 1$ and arbitrary $\chi \in \Irr_p(\cG)\pl$ by \cite{ellerbrock_nickel} theorem 3.

        A crucial fact about $p$-adic $L$-functions is their relation to series quotients. Namely, in the above notation, there exists a $\Phi_{p, K, S, \chi} \in \QQ_p^c \otimes_{\QQ_p} \ffrac(\ZZ_p[[T]])$ such that, for all $s \in \ZZ_p$, one has
        \[
            L_{p, K, S}(\chi, 1 - s) = \Phi_{p, K, S, \chi}(\kappa(\gamma_K)^s - 1).
        \]
        These series quotients satisfy the usual functoriality properties and exhibit the following behaviour under $W$-twists: for any $\rho \in \Irr_p(\cG)$ of type $W$, there is an equality
        \begin{equation}
        \label{eq:p-adic_l-function_twist}
             \Phi_{p, K, S, \chi \otimes \rho}(T) =  \Phi_{p, K, S, \chi}(\rho(\gamma_K)(T + 1) - 1)
        \end{equation}
        (see for instance \cite{rwii} p. 563). It is therefore natural to identify $T$ with $T_K = \gamma_K - 1$ and thus regard $\Phi_{p, K, S, \chi}$ as an element of $\QQ_p^c \otimes_{\QQ_p} \ffrac(\ZZ_p[[T_K]]) = \cQ^c(\Gamma_{K})$, since then the above equation becomes
        \[
            \Phi_{p, K, S, \chi \otimes \rho} =  \rho^\sharp(\Phi_{p, K, S, \chi})
        \]
        in the notation of section \ref{sec:evaluation_maps}.

        In order to relate these power series to the Interpolation Conjecture on minus parts, we introduce the continuous $\ZZ_p$-algebra automorphism $t_\tcyc \colon \Lambda(\cG) \to \Lambda(\cG)$ uniquely determined by $t_\tcyc(g) = \chi_\tcyc(g) g$ for all $g \in \cG$. This is a particular case of the $t_\tcyc^s$ from \cite{jn_brumer} section 6.1. It is easy to verify it induces field automorphisms on $\cQ(\cG)$ and $\cQ^c(\cG)$, and therefore group automorphisms on $K_1(\cQ(\cG))$, $K_1(\cQ^c(\cG))$ and $\units{Z(\cQ^c(\cG))}$ - all of which we denote by $t_\tcyc$ as well. Note that $t_\tcyc$ swaps plus and minus parts, as $t_\tcyc(e\pl) = e\mi$ and $t_\tcyc(e\mi) = e\pl$.

        Recall also the involution $\iota$ of $\Lambda(\cG)$ given by $g \mapsto g^{-1}$ introduced immediately before proposition \ref{prop:isomorphism_complexes_derived_extension}, which induces a map $\iota \colon \units{Z(\cQ^c(\cG))} \to \units{Z(\cQ^c(\cG))}$. Since the dual $\check{\chi}$ of a character $\chi \in \Irr_p(\cG)$ is precisely given by $\check{\chi}(g) = \chi(g^{-1})$, a simple argument shows that $\iota(\gamma_{\chi}) = \units{\gamma_{\check{\chi}} \in Z(\cQ^c(\cG))}$.

        The following is a direct application of some results in \cite{jn_brumer}:
        \begin{lem}
        \label{lem:ic_totally_real_case}
            Setting \ref{sett:rw}. There exists a unique element $F_{S, \cG}\pl \in \units{Z(\cQ(\cG)\pl)} \subseteq \units{Z(\cQ(\cG))}$ such that, for any $\chi \in \Irr_p(\cG)\pl$, one has
            \[
                j_\chi(F_{S, \cG}\pl) = \Phi_{p, K, S, \chi} \in \units{\cQ^c(\Gamma_K)},
            \]
            where $j_\chi \colon Z(\cQ^c(\cG)) \to \units{\cQ^c(\Gamma_K)}$ denotes the map $j_\chi^c$ from \eqref{eq:psi_and_j}.

            For each $\psi \in \Irr_p(\cG)\mi$, let $F_{S, \psi} = \iota(t_\tcyc(F_{S, \cG}\pl))e_\psi \in \units{\cQ^c(\Gamma_\psi)} \subseteq \units{Z(\cQ^c(\cG)\mi)} \subseteq \units{Z(\cQ^c(\cG))}$. Then one has
            \begin{equation}
                ev_{\gamma_\psi}(F_{S, \psi}) = L_{K, S}(\check{\psi}, 0)
            \end{equation}
            for all such $\psi$, where the notation of the right-hand is as in \eqref{eq:p-adic_l_function}.
        \end{lem}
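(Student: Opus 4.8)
The plan is to obtain both assertions from the structural results of Ritter--Weiss on the centre of $\cQ^c(\cG)$ combined with the established interpolation properties of the $p$-adic $L$-functions $\Phi_{p, K, S, \chi}$. First I would establish existence and uniqueness of $F_{S, \cG}\pl$. By proposition \ref{prop:rw_properties_q} and the decomposition \eqref{eq:decomposition_algebra_chi-parts}, the plus part $Z(\cQ^c(\cG)\pl)$ is isomorphic to $\prod_{\chi \in \Irr_p(\cG)\pl/{\sim}_W} \cQ^c(\Gamma_\chi)$. Under the isomorphism \eqref{eq:isomorphism_hom_description}, an element of $\units{Z(\cQ^c(\cG)\pl)}$ corresponds bijectively to a map $f \in \Map^W(\Irr_p(\cG)\pl, \units{\cQ^c(\Gamma_K)})$, i.e. a map satisfying $f(\chi \otimes \rho) = \rho^\sharp(f(\chi))$. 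The assignment $\chi \mapsto \Phi_{p, K, S, \chi}$ is exactly such a map by the twist formula \eqref{eq:p-adic_l-function_twist} rewritten as $\Phi_{p, K, S, \chi \otimes \rho} = \rho^\sharp(\Phi_{p, K, S, \chi})$; and each $\Phi_{p, K, S, \chi}$ lies in $j_\chi(\units{\cQ^c(\Gamma_\chi)})$ by the same argument as in \cite{rwii} (this is where one uses that $\Phi_{p, K, S, \chi}$ is genuinely a series quotient in $T_K$ whose restriction to $T_K = \gamma_K^{w_\chi} - 1$-type evaluations behaves compatibly). Hence the map $\chi \mapsto \Phi_{p, K, S, \chi}$ is the image under \eqref{eq:isomorphism_hom_description} of a unique element of $\units{Z(\cQ^c(\cG)\pl)}$. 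To see that this element actually lies in the smaller group $\units{Z(\cQ(\cG)\pl)}$, I would invoke the $G_{\QQ_p}$-descent: the relevant invariants $(\units{Z(\cQ^c(\cG))})^{G_{\QQ_p}}$ are precisely $\units{Z(\cQ(\cG))}$ (as in diagram \eqref{diag:centres_galois_invariants}), and $\chi \mapsto \Phi_{p, K, S, \chi}$ is $G_{\QQ_p}$-equivariant because the $\Phi_{p, K, S, \chi}$ have coefficients controlled by Galois-stable data and satisfy $\sigma(\Phi_{p, K, S, \chi}) = \Phi_{p, K, S, \sigma\chi}$ — this follows from the classical independence of $\beta$ in the interpolation formula \eqref{eq:p-adic_l_function}, which forces the Galois action to permute the factors exactly as it does the characters. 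This gives $F_{S, \cG}\pl \in \units{Z(\cQ(\cG)\pl)}$ with $j_\chi(F_{S, \cG}\pl) = \Phi_{p, K, S, \chi}$ for all totally even $\chi$.

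Next I would handle the minus-part assertion. Fix $\psi \in \Irr_p(\cG)\mi$. The automorphism $t_\tcyc$ swaps plus and minus parts ($t_\tcyc(e\pl) = e\mi$), so $t_\tcyc(F_{S, \cG}\pl) \in \units{Z(\cQ^c(\cG)\mi)}$, and applying the involution $\iota$ keeps us in the minus part (since $\iota$ sends $\gamma_\chi$ to $\gamma_{\check\chi}$ and duality preserves the plus/minus dichotomy as $\tau$ is fixed by $\iota$); multiplying by $e_\psi$ lands in $\units{\cQ^c(\Gamma_\psi)}$. So $F_{S, \psi}$ is well defined. The computation of $ev_{\gamma_\psi}(F_{S, \psi})$ then proceeds by tracking $\chi := \psi \otimes \omega$-type relations: the character $\psi$ is totally odd, and the recipe $\iota \circ t_\tcyc$ is designed precisely to convert the totally even $\chi$ with $\check\psi = \chi \omega^{-1}$ (equivalently $\chi = \check\psi \omega$, which is totally even since $\psi$ is odd and $\omega$ is odd) into the correct evaluation. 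Concretely, evaluation at $\gamma_\psi$ commutes with $j_\psi$ and $ev_{\gamma_K}$ as in the proof of lemma \ref{lem:twisted_evaluation_maps_independent_gamma_K}; the twist by $t_\tcyc$ on the level of $\units{\cQ^c(\Gamma_K)}$ corresponds, under $ev_{\gamma_K}$, to the substitution induced by the cyclotomic character, which is exactly what turns $\Phi_{p, K, S, \chi}$ evaluated appropriately into $L_{p, K, S}(\chi, 0)$; and by Ellerbrock--Nickel's interpolation at $r = 1$ (\cite{ellerbrock_nickel} theorem 3) together with \eqref{eq:p-adic_l_function} one has $L_{p, K, S}(\chi, 0) = L_{K, S}(\chi\omega^{-1}, 0) = L_{K, S}(\check\psi, 0)$. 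The involution $\iota$ accounts for the passage $\psi \leftrightarrow \check\psi$ so that the final answer is $L_{K, S}(\check\psi, 0)$ as claimed, rather than $L_{K, S}(\psi, 0)$.

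I expect the main obstacle to be the bookkeeping in the second paragraph: making precise how $ev_{\gamma_\psi} \circ (e_\psi \cdot) \circ \iota \circ t_\tcyc$ acts on $F_{S, \cG}\pl$ in terms of the original $j_\chi(F_{S, \cG}\pl) = \Phi_{p, K, S, \chi}$, and verifying that the cyclotomic-character substitution built into $t_\tcyc$ matches exactly the shift $1 - s \leadsto 0$ (i.e. $s = 1$) in the interpolation identity $L_{p, K, S}(\chi, 1 - s) = \Phi_{p, K, S, \chi}(\kappa(\gamma_K)^s - 1)$. One has to be careful that $t_\tcyc$ involves the full cyclotomic character $\chi_\tcyc = \kappa \cdot \omega$, not just $\kappa$, and that the $\omega$-component is precisely what converts $\chi$ into $\check\psi = \chi\omega^{-1}$; this is the point where the identification of the totally odd character $\psi$ with the even character $\chi = \check\psi\omega$ has to be pinned down unambiguously. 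The rest — existence, uniqueness, and membership in the integral group $\units{Z(\cQ(\cG)\pl)}$ — should follow cleanly from \eqref{eq:isomorphism_hom_description}, the $W$-twist formula, and $G_{\QQ_p}$-descent, all of which are already available in the excerpt or in \cite{rwii} and \cite{jn_brumer}.
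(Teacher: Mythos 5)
Your proposal is correct and follows essentially the same route as the paper: existence, uniqueness and integrality of $F_{S, \cG}\pl$ via the $\Map^W$-description, the twist formula and $G_{\QQ_p}$-descent, and the minus-part evaluation via $\iota \circ t_\tcyc$ combined with the Ellerbrock--Nickel interpolation at $r = 1$. The bookkeeping you flag as the main obstacle is exactly what the paper outsources to \cite{jn_brumer} (the map $j^1_{\check{\psi}\omega}$, lemma 6.1 and equation (6.4) there), which formalise the splitting $\chi_\tcyc = \kappa\omega$ into the character twist and the $s$-shift precisely as you describe.
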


        \begin{proof}
             The first claim takes place entirely on plus parts and a brief proof can be found in \cite{nickel_plms} p. 1231. The argument is straightforward: the twisting property \eqref{eq:p-adic_l-function_twist} together with the $\Map^W$-description \eqref{eq:isomorphism_hom_description} of $\units{Z(\cQ^c(\cG\pl))} \iso \units{Z(\cQ^c(\cG)\pl)}$ immediately yield a unique $F_{S, \cG}\pl \in \units{Z(\cQ^c(\cG)\pl)}$ with the asserted property. The fact that $F_{S, \cG}\pl$ actually lies in $\units{Z(\cQ(\cG)\pl)}$, i.e. it is $G_{\QQ_p}$-invariant (cf. \eqref{diag:centres_galois_invariants}), is a direct consequence of behaviour of the $G_{\QQ_p}$-action on $\Phi_{p, K, S, \chi}$ (see for instance \cite{rwii} p. 563 property (5) and p. 558 equation (*)).

             Now the second claim follows from
             \begin{align*}
                ev_{\gamma_\psi}(F_{S, \psi}) & = ev_{\gamma_K}(j_\psi(\iota(t_\tcyc(F_{S, \cG}\pl)))) \\
                & = ev_{\gamma_K}(j_{\check{\psi}}(t_\tcyc(F_{S, \cG}\pl))) \\
                & = ev_{\gamma_K}(j_{\check{\psi}\omega}^1(F_{S, \cG}\pl)) \\
                & = L_{p, S}(\check{\psi}\omega, 1 - 1) \\
                & = L_{K, S}(\check{\psi}\omega\omega^{-1}, 0) \\
                & = L_{K, S}(\check{\psi}, 0).
             \end{align*}
             The map $j_{\check{\psi}\omega}^1$ is defined in \cite{jn_brumer} section 6.1. It will not appear again, so we shall dispense with its definition. The two equalities involving it (third and fourth) are lemma 6.1 and equation (6.4) in the cited article, whereas the remaining equalities are clear from the preceding discussion. Note at this point that $\check{\psi}\omega$ is even, as the Teichmüller character $\omega$ is odd.
        \end{proof}

        \begin{rem}
        \phantomsection
        \label{rem:ic_totally_real_case}
            \begin{enumerate}[i)]
                \item{
                    Let $\rho$ be a type-$W$ character of $\cG$. Then $F_{S, \psi}$ and $F_{S, \psi \otimes \rho}$ coincide, since so do $e_\psi$ and $e_{\psi \otimes \rho}$.
                }
                \item{
                    For almost all $\psi \in \Irr_p(\cG)\mi$, the term $L_{K, S}(\check{\psi}, 0)$ is precisely the regulated leading coefficient in the right-hand side of the interpolation property \eqref{eq:ic} (when $\psi = \chi \otimes \rho$). The reason for the apparent lack of $\beta$ was explained after \eqref{eq:p-adic_l_function}. As $L_{K, S_\infty}(\check{\psi}, s)$ never vanishes at $s = 0$ (for $\psi$ totally odd), the leading coefficient $L_{K, S}^\ast(\check{\psi}, 0)$ coincides with the value $L_{K, S}(\check{\psi}, 0)$ whenever the Euler factors introduced by passing from $L_{K, S_\infty}$ to $L_{K, S}$ are non-zero, which is indeed the case for almost all $\psi$ (this follows from a similar argument to lemma \ref{lem:change_s_vanishing_euler_factors}). By virtue of lemma \ref{lem:properties_of_L-functions} v), $L_{K, S}(\check{\psi}, 0) \neq 0$ implies that the $\CC_p$-vector space $\Hom_{\CC_p[\cG_n]}(V_\psi, \CC_p \otimes_{\ZZ_p} \cX_{L_n, S}))$ featured in the definition of the regulator (definition \ref{defn:regulator}) has dimension 0. In particular, $R_S^\beta(\alpha, \psi) = 1$ for any valid choice of $\alpha$ and $\beta$.
                }
                \item{
                    Although not necessary for our purposes, the $G_{\QQ_p}$-invariance of $\prod_{\psi \in \Irr_p(\cG)\mi/{\sim}_W} F_{S, \psi} \in \units{Z(\cQ^c(\cG)\mi)}$ follows immediately from that of $F_{S, \cG}\pl$. In other words, the former lies in $\units{Z(\cQ(\cG)\mi)}$.
                }
            \end{enumerate}
        \end{rem}

        Together with points i) and ii) above, the lemma immediately shows that \hyperref[conje:ic]{IC($L_\infty/K, \psi, L, S, \varnothing, \alpha, \beta$)} holds for any $\psi \in \Irr_p(\cG)\mi$ and any (irrelevant) choice of $L$, $\alpha$ and $\beta$ (one may also add a set $T$ by \ref{cor:independence_of_s_and_t}). This covers the analytic side of our current pursuits. Before delving into the algebraic side, however, we introduce a piece of notation and present the conjecture of Ritter and Weiss.

        Let $\widetilde{R}$ be an integral domain and $\widetilde{S} = \ffrac(\widetilde{R})$ its field of fractions. Suppose given an $\widetilde{R}$-order $R$ inside an $\widetilde{S}$-algebra\footnote{There should be no ambiguity between the ring $S$ and the set of places $S$. In this section, the former (whose notation is carried over from section \ref{sec:an_integral_trivialisation}) appears only in the current paragraph, and the latter only outside of it.} $S = \widetilde{S} \otimes_{\widetilde{R}} R$. Assume furthermore that $S$ is semisimple. Then the canonical embedding $R \ia S$ satisfies the conditions of the $\varphi$ in section \ref{sec:an_integral_trivialisation} - that is, $S$ is flat as an $R$-module. A perfect complex $\cC\q$ of $R$-modules with $\widetilde{R}$-torsion cohomology admits only one trivialisation over $S$, namely the zero map. Accordingly, we set
        \begin{equation}
        \label{eq:class_zero_trivialisation}
            [\cC\q] = \chi_{R, S}(\cC\q, 0) \in K_0({R, S}),
        \end{equation}
        where the right-hand side denotes the refined Euler characteristic defined in \eqref{eq:definition_rec_general}. If $M$ is an $\widetilde{R}$-torsion $R$-module of finite projective dimension over $R$, then $M[0]$ is a perfect complex (isomorphic in $\cD(R)$ to any finite projective resolution of $M$) and one may define
        \[
            [M] = [M[0]] = \chi_{R, S}(M[0], 0) \in K_0({R, S}).
        \]
        It can be shown that, for any short exact sequence of complexes $\cC_1\q \ia \cC_2\q \sa \cC_3\q$ with each $\cC_i\q$ as $\cC\q$ above, one has $[\cC_2\q] = [\cC_1\q] + [\cC_3\q]$ in $K_0({R, S})$. Note that this is a particular instance of the additivity of refined Euler characteristics which appeared in subsection \ref{subsec:the_choice_of_s_and_t}. An analogous relation holds for any short exact sequence of $R$-modules $M_1 \ia M_2 \sa M_3$ with each $M_i$ as $M$ above.

        Let us specialise to our case of interest $\widetilde{R} = \Lambda(\Gamma)$ in the notation of setting \ref{sett:rw}. The relevant $\Lambda(\Gamma)$-orders are $\Lambda(\cG)$ and its plus and minus parts. In a few lines, the localisations of these objects at height-one prime ideals of $\Lambda(\Gamma)$ will be considered as well. Let $\cT_{S}\q(\cG\pl)$ denote the global complex constructed in section \ref{sec:the_global_complex} for the specific extension $L_\infty\pl/K$, which was shown to be perfect in proposition \ref{prop:complexes_are_perfect}. Its cohomology (determined in proposition \ref{prop:global_complex}) is indeed $\Lambda(\Gamma)$-torsion: this is clear for $H^1(\cT_{S}\q(\cG\pl)) = \ZZ_p$, and it follows from theorem \ref{thm:weak_leopoldt} (the weak Leopoldt conjecture) and the fact that $L\pl$ is totally real for $H^0(\cT_{S}\q(\cG\pl)) = X_S(\cG\pl)$ by \cite{nsw} 10.3.22. Therefore, $\cT_{S}\q(\cG\pl)$ defines a class $[\cT_{S}\q(\cG\pl)] \in K_0(\Lambda(\cG\pl), \cQ(\cG\pl))$ as explained above. This accounts for the lack of a trivialisation, or an analogous map to $\alpha$ (cf. setting \ref{sett:formulation}), in \cite{rwii}. In the context of setting \ref{sett:rw}, the \textbf{Main Conjecture of Ritter and Weiss}\index{equivariant Main Conjecture!of Ritter and Weiss} is the following assertion:

        \begin{conje*}[RW($L_\infty\pl/K, S)$]
        \phantomsection
        \label{conje:rw}
            Setting \ref{sett:rw}.

            There exists an element $\zeta_S\pl \in K_1(\cQ(\cG\pl))$ such that $\partial(\zeta_S\pl) = [\cT_{S}\q(\cG\pl)] \in K_0(\Lambda(\cG\pl), \cQ(\cG\pl))$ and $\nr(\zeta_S\pl) = F_{S, \cG}\pl \in \units{Z(\cQ(\cG)\pl)}$, where $F_{S, \cG}\pl$ is as in lemma \ref{lem:ic_totally_real_case}.
        \end{conje*}

        \begin{rem}
        \phantomsection
        \label{rmk:conje_rw}
            \begin{enumerate}[i)]
                \item{
                    The original conjecture in \cite{rwii} p. 564 is formulated in a slightly more general setting than the above. Namely, it is only required that $L_\infty\pl$ be the cyclotomic $\ZZ_p$-extension of a totally real number field $K'$ containing $K$ - under no assumption that $K'$ is of the form $L\pl$. However, the possibility of a direct comparison with our Main Conjecture seems unclear in that generality, as it will shortly become apparent that the argument relating the two relies on the notion of plus and minus parts, as well as the cyclotomic character and Tate twists. We will briefly touch on this topic again after theorem \ref{thm:rw}.
                }
                \item{
                    The assertion in \hyperref[conje:rw]{RW($L_\infty\pl/K, S)$} is not identical to, but rather an exact reformulation of, that in \cite{rwii}. This follows from the argument in \cite{nickel_plms} p. 1231. Essentially, our $F_{S, \cG}\pl \in \units{Z(\cQ(\cG)\pl)}$ corresponds to the $L_{k, S} \in \Hom^\ast(R_p(\cG\pl), \units{\cQ^c(\Gamma_K)})$ of Ritter and Weiss by theorem 8 in their article; and the $\widetilde{\mho}_S$ therein coincides with $[\cT_{S}\q(\cG\pl)]$ by equation \eqref{eq:iso_global_rhom} and \cite{nickel_plms} theorem 2.4.
                }
                \item{
                    The conjecture in \cite{rwii} includes a uniqueness quantifier on $\zeta_S\pl$ (such that $\nr(\zeta_S\pl) = F_{S, \cG}\pl$). The relation between the resulting claim and \hyperref[conje:rw]{RW($L_\infty\pl/K, S)$} mirrors that between the equivariant Main Conjectures with and without uniqueness from section \ref{sec:the_main_conjecture}. Since this has already been discussed (see for instance remark \ref{rem:emcu} i)), we limit ourselves to the version formulated above.
                }
            \end{enumerate}
        \end{rem}

        Our aim is to prove the equivalence of \hyperref[conje:emc]{eMC($L_\infty/K, L, S, T, \alpha, \beta$)} on \textit{minus parts} and \hyperref[conje:rw]{RW($L_\infty\pl/K, S)$}. Consider first the minus part of the main complex $\cC_{S, \varnothing}\q$ constructed in section \ref{sec:the_main_complex} for the extension $L_\infty/K$ and $T = \varnothing$. More formally, this refers to the perfect complex of $\Lambda(\cG)\mi$-modules
        \[
            \cC_S^{\bullet, -} = \Lambda(\cG)\mi \otimes_{\Lambda(\cG)}^\LL \cC_{S, \varnothing}\q.
        \]
        By the exactness of the minus-parts functor, $\Lambda(\cG)\mi \otimes_{\Lambda(\cG)}^\LL -$ amounts to degree-wise $\Lambda(\cG)\mi \otimes_{\Lambda(\cG)} -$ and there are equalities $H^i(\cC_S^{\bullet, -}) = H^i(\cC_{S, \varnothing}\q)\mi$ at all degrees $i$. Theorem \ref{thm:cohomology_of_complex} therefore implies $H^0(\cC_S^{\bullet, -}) \iso E_{S, \varnothing}\mi = E_S\mi$, as well as the existence of a short exact sequence
        \begin{equation}
        \label{eq:ses_h1_minus_parts}
            0 \to X_S^{cs, -} \to H^1(\cC_S^{\bullet, -}) \to \cX_S\mi \to 0.
        \end{equation}
        The term $E_S\mi$ can be determined using \cite{nsw} theorem 11.3.11 ii). Set $\widetilde{\cG} = \Gal(L_\infty/L\pl)$. Since $e\mi \in \Lambda(\widetilde{\cG})$, it is enough to consider the module structure of $E_S$ over this Iwasawa algebra (this is necessary due to the setup of section XI\S3 in the reference). One then has an isomorphism $E_S \iso \ZZ_p(1) \oplus (\Ind_{\ideal{\tau}}^{\widetilde{\cG}} \ZZ_p)^{\abs{S_\infty(L\pl)}}$. The summand $\ZZ_p(1)$ is the inverse limit of the $p$-power roots of unity along $L_\infty/L$, and lies in $E_S\mi$ because complex conjugation $\tau$ acts on it as $-1$. The second summand vanishes on minus parts, as
        \[
            e\mi \Ind_{\ideal{\tau}}^{\widetilde{\cG}} \ZZ_p = e\mi \Lambda(\widetilde{\cG}) \ctp_{\Lambda(\ideal{\tau})} \ZZ_p = \Lambda(\widetilde{\cG}) \ctp_{\Lambda(\ideal{\tau})} e\mi \ZZ_p = 0.
        \]
        Hence, $H^0(\cC_S^{\bullet, -}) \iso E_S\mi \iso \ZZ_p(1)$ (also as a $\Lambda(\cG)$-module) is $\Lambda(\Gamma)$-torsion. As for sequence \eqref{eq:ses_h1_minus_parts}, one has
        \[
            \cX_S\mi = \cY_S\mi = \cY_{S_f}\mi
        \]
        by the same argument as above. Both $X_S^{cs, -}$ and $\cX_S\mi = \cY_{S_f}\mi$ are $\Lambda(\Gamma)$-torsion as well: the former is so even before taking minus parts, as explained shortly after \eqref{eq:integral_trivialisation}, whereas the latter has finite $\ZZ_p$-rank.

        The upshot is that the main complex $\cC_{S, \varnothing}\q$ defines a class $[\cC_S^{\bullet, -}] \in K_0(\Lambda(\cG)\mi, \cQ(\cG)\mi) \subseteq K_0(\Lambda(\cG), \cQ(\cG))$ on minus parts without the need for a map $\alpha$. In light of this, as well as remark \ref{rem:ic_totally_real_case}, the minus part of \hyperref[conje:emc]{eMC($L_\infty/K, L, S, \varnothing, \alpha, \beta$)} is precisely the following assertion, which we unoriginally refer to as the \textbf{equivariant Main Conjecture on minus parts}\index{equivariant Main Conjecture!on minus parts}:

        \begin{conje*}[eMC\textsuperscript{$-$}($L_\infty/K, S)$]
        \phantomsection
        \label{conje:emcmi}
            Setting \ref{sett:rw}.

            There exists an element $\zeta_S\mi \in K_1(\cQ(\cG)\mi)$ such that $\partial(\zeta_S\mi) = - [\cC_S^{\bullet, -}] \in K_0(\Lambda(\cG)\mi, \cQ(\cG)\mi)$ and, for any $\chi \in \Irr_p(\cG)\mi$, one has $\psi_\chi(\zeta_S\mi) = F_{S, \chi} \in \units{\cQ^c(\Gamma_\chi)}$, where $F_{S, \chi}$ is as in lemma \ref{lem:ic_totally_real_case} (with $\chi$ replaced by $\psi$).
        \end{conje*}

        The fundamental algebraic result which will allow us to relate \hyperref[conje:rw]{RW($L_\infty\pl/K, S)$} to \hyperref[conje:emcmi]{eMC\textsuperscript{$-$}($L_\infty/K, S)$} is (essentially) \cite{johnston_nickel} theorem A.8. Intuitively, it states that a relation of the form $\partial(x) = y, \nr(x) = z$, where $x \in K_1(\cQ(\cG))$, is preserved (meaning it still holds for a possibly different $x$) if one replaces $y$ by a \textit{similar enough} $y' \in K_0(\Lambda(\cG), \cQ(\cG))$. The $y$ of interest to us is $[\cT_{S}\q(\cG\pl)]$, but some brief recalls are in order before $y'$ can be introduced. Our reference is \cite{nsw} section V\S4.

        Given a finitely generated left $\Lambda(\cG)$-module $M$ and $i \in \NN$, we set $E^i(M) = \Ext_{\Lambda(\cG)}^i(M, \Lambda(\cG))$, that is, the $i$-th right derived functor of $\Hom_{\Lambda(\cG)}(-, \Lambda(\cG))$ applied to $M$. The left $\Lambda(\cG)$-action on $\Hom_{\Lambda(\cG)}(N, \Lambda(\cG))$ (for arbitrary $N$) given by $(\lambda f)(n) = f(n) \iota(\lambda)$, where $\iota$ is the involution of $\Lambda(\cG)$ introduced before \eqref{eq:involution_action}, endows $E^i(M)$ with a left $\Lambda(\cG)$-module structure. Some basic properties of these $\Ext$-groups include the following:
        \begin{enumerate}[i)]
            \item{
                There are $\Lambda(\Gamma)$-isomorphisms $\Ext_{\Lambda(\cG)}^i(M, \Lambda(\cG)) \iso \Ext_{\Lambda(\Gamma)}^i(M, \Lambda(\Gamma))$ for each $i \in \NN$ (see \cite{nsw} proposition 5.4.17 and recall that $\Lambda(\cG)$ is Noetherian).
            }
            \item{
                $E^0(M) = \Hom_{\Lambda(\cG)}(M, \Lambda(\cG))$, which is trivial if $M$ is $\Lambda(\Gamma)$-torsion by i).
            }
            \item{
                If $M$ is $\Lambda(\Gamma)$-torsion, then so is $E^1(M)$. If, in addition, $\mu(M) = 0$ (as defined after theorem \ref{thm:structure_theorem_iwasawa}), then $E^1(M) \iso \Hom_{\ZZ_p}(M, \ZZ_p)$ (cf. \cite{nsw} corollary 5.5.7).
            }
            \item{
                $E^i(M) = 0$ for all $i \geq 3$, since the global dimension of $\Lambda(\Gamma)$ is 2.
            }
        \end{enumerate}

        Returning to our complex of interest, it is possible to show that $\cC_{S, \varnothing}\q$ has a representative in $\cD(\Lambda(\cG))$ of the form $[\stackrel{0}{A} \to \stackrel{1}{B}]$ with $B$ projective and $\pd_{\Lambda(\cG)} A \leq 1$. Passing to minus parts, this implies the existence of an exact sequence
        \[
            0 \to \ZZ_p(1) \to A\mi \to B\mi \to H^1(\cC_S^{\bullet, -}) \to 0,
        \]
        where the middle arrow represents $\cC_S^{\bullet, -}$ in $\cD(\Lambda(\cG)\mi)$ (and also in $\cD(\Lambda(\cG))$). Now a straightforward homological computation shows that
        \[
            H^i(R\Hom_{\Lambda(\cG)}(\cC_S^{\bullet, -}, \Lambda(\cG))) =
            \begin{cases}
                E^1(H^1(\cC_S^{\bullet, -})), & i = 0\\
                E^1(\ZZ_p(1)) = \ZZ_p(-1), & i = 1\\
                0, & \text{otherwise}.
            \end{cases}
        \]

        The correct complex $y'$ to compare to $\cT_{S}\q(\cG\pl)$ is the Tate twist $R\Hom_{\Lambda(\cG)}(\cC_S^{\bullet, -}, \Lambda(\cG))(1)$. Here and below, the notation $-(r)$ on a complex of modules over $\Lambda(\cG)$ (or a related ring) formally refers to $\ZZ_p(r) \otimes_{\ZZ_p}^\LL -$. This amounts to applying $\ZZ_p(r) \otimes_{\ZZ_p} -$ (i.e. taking the usual $r$-th Tate twist) on each degree, and it commutes with cohomology.

        The last preparatory step is to show that the two aforementioned complexes are indeed similar enough - in a sense which is made precise by localisation. Given a height-one prime ideal $\fp$ of $\Lambda(\Gamma)$, let $\Lambda_\fp(\Gamma)$ denote the corresponding localisation and set $\Lambda_\fp(\cG) = \Lambda_\fp(\Gamma) \otimes_{\Lambda(\Gamma)} \Lambda(\cG)$. Localisation at $\fp$ constitutes a exact functor from the category of left $\Lambda(\cG)$-modules to that of $\Lambda_\fp(\cG)$-modules and induces a functor on the corresponding categories of cochain complexes in the usual way.

        \begin{prop}
        \label{prop:plus_and_minus_locally}
            Setting \ref{sett:rw}. For each height-one prime ideal $\fp$ of $\Lambda(\Gamma)$, one has
            \[
                [R\Hom_{\Lambda(\cG)}(\cC_S^{\bullet, -}, \Lambda(\cG))(1)_\fp] = [\cT_{S}\q(\cG\pl)_\fp]
            \]
            in $K_0(\Lambda_\fp(\cG), \cQ(\cG))$.
        \end{prop}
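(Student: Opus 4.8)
The plan is to reduce the claimed equality in $K_0(\Lambda_\fp(\cG), \cQ(\cG))$ to an explicit comparison of the cohomology modules of the two complexes after localisation at a height-one prime $\fp$, using the additivity of the class map \eqref{eq:class_zero_trivialisation} on short exact sequences. Since localisation at $\fp$ is exact and both complexes involved have $\Lambda(\Gamma)$-torsion cohomology, each of them is represented (in $\cD(\Lambda_\fp(\cG))$) by a two-term complex of $\Lambda_\fp(\cG)$-modules concentrated in degrees $0$ and $1$, and its class in the relative $K_0$ group is $[H^0]_\fp - [H^1]_\fp$ (where $[M]_\fp$ denotes the class of the $\Lambda(\Gamma)$-torsion module $M_\fp$ as in \eqref{eq:class_zero_trivialisation}, which makes sense because finite projective dimension is preserved). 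So the statement amounts to the identity
\[
    [H^0(R\Hom_{\Lambda(\cG)}(\cC_S^{\bullet, -}, \Lambda(\cG))(1))]_\fp - [H^1(R\Hom_{\Lambda(\cG)}(\cC_S^{\bullet, -}, \Lambda(\cG))(1))]_\fp = [H^0(\cT_S\q(\cG\pl))]_\fp - [H^1(\cT_S\q(\cG\pl))]_\fp.
\]
By the homological computation recalled immediately before the proposition, the left-hand side is $[E^1(H^1(\cC_S^{\bullet, -}))(1)]_\fp - [\ZZ_p(-1)(1)]_\fp = [E^1(H^1(\cC_S^{\bullet, -}))(1)]_\fp - [\ZZ_p]_\fp$, and by proposition \ref{prop:global_complex} the right-hand side is $[X_S(\cG\pl)]_\fp - [\ZZ_p]_\fp$. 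The $[\ZZ_p]_\fp$ terms cancel, so it suffices to prove
\[
    [E^1(H^1(\cC_S^{\bullet, -}))(1)]_\fp = [X_S(\cG\pl)]_\fp
\]
in $K_0(\Lambda_\fp(\cG), \cQ(\cG))$ for every height-one prime $\fp$ of $\Lambda(\Gamma)$.

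The key step is then an Iwasawa-theoretic reflection (Iwasawa adjoint / Spiegelung) argument, which is where I expect most of the work to sit. Starting from the short exact sequence \eqref{eq:ses_h1_minus_parts}, $0 \to X_S^{cs,-} \to H^1(\cC_S^{\bullet,-}) \to \cX_S\mi \to 0$, I would apply the long exact $E^\bullet$-sequence for $\Ext_{\Lambda(\cG)}^\bullet(-,\Lambda(\cG))$. Using $E^0 = 0$ on $\Lambda(\Gamma)$-torsion modules (property ii)) and the vanishing $E^i = 0$ for $i \geq 2$ on modules that are even $\Lambda(\Gamma)$-torsion-free-modulo-torsion in the relevant range (property iv) and the structure of $\cX_S\mi = \cY_{S_f}\mi$, which has projective dimension controlled over $\Lambda(\cG)$), one obtains an exact sequence relating $E^1(H^1(\cC_S^{\bullet,-}))$ to $E^1(X_S^{cs,-})$, $E^1(\cX_S\mi)$ and possibly $E^2(X_S^{cs,-})$. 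Since the class map $[-]_\fp$ is additive on short exact sequences of $\Lambda(\Gamma)$-torsion modules of finite projective dimension, this reduces the computation of $[E^1(H^1(\cC_S^{\bullet,-}))(1)]_\fp$ to computing the classes of $[E^1(X_S^{cs,-})(1)]_\fp$ and a contribution from $\cX_S\mi$. The Tate twist then converts the reflected module $E^1(X_S^{cs,-})(1)$ into the $\Sigma$-ramified Iwasawa module of the totally real field: this is exactly the classical reflection principle identifying, up to pseudo-isomorphism (equivalently, up to the same localisations at height-one primes), the adjoint of the minus part of the $S$-completely-split-ramified Iwasawa module with $X_S(\cG\pl)$. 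Concretely I would invoke the duality results of \cite{nsw} (sections V\S4 and chapter X) — e.g. the identification of Iwasawa adjoints with Tate-twisted Galois modules together with Poitou–Tate duality — to get $E^1(X_S^{cs,-})(1) \approx X_S(\cG\pl)$ as $\Lambda(\cG)$-modules, hence an equality of classes after localisation at every $\fp$, and I would check that the leftover $\cX_S\mi$-contribution and any auxiliary $\ZZ_p(r)$-terms match the bookkeeping on the right-hand side.

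The main obstacle, I expect, is precisely the careful identification of $E^1(X_S^{cs,-})(1)$ with $X_S(\cG\pl)$ at the level needed here: the two are only isomorphic up to pseudo-isomorphism in general, so one must verify that all pseudo-null discrepancies become invisible in $K_0(\Lambda_\fp(\cG), \cQ(\cG))$ — which is true because pseudo-null modules are finite, hence have trivial localisation at any height-one prime (and contribute $[0] = 0$ to the relative $K_0$), so localising kills exactly the ambiguity. A secondary subtlety is tracking the Tate twists and the action of complex conjugation $\tau$ correctly: one must confirm that the minus part on the arithmetic side of $L_\infty/K$ corresponds, after reflection and a twist by $1$, to the (trivial-$\tau$-action) plus side over $L_\infty\pl/K$, which is the content of the sign conventions around $\chi_\tcyc(\tau) = -1$. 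Once these identifications are in place, the proof is just additivity of $[-]_\fp$ along the exact sequences above, together with the explicit cohomology descriptions from theorem \ref{thm:cohomology_of_complex}, proposition \ref{prop:global_complex}, and the $E^\bullet$-computation recalled before the statement. I would also double-check the finite-projective-dimension hypotheses needed to even write down $[M]_\fp$ for each module appearing, using that $\Lambda_\fp(\Gamma)$ is a regular local ring of dimension $\leq 1$ so that every torsion module over it automatically has finite projective dimension.
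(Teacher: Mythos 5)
Your overall strategy is the paper's: reduce to the cohomology modules, run the long exact $\Ext$-sequence starting from \eqref{eq:ses_h1_minus_parts}, invoke Kummer duality, and observe that pseudo-null discrepancies die after localisation at height-one primes. But there are two concrete gaps. First, the duality you propose to invoke is not quite the right one: the classical reflection identifies $E^1(X_{nr}\mi)$ with $X_{S_\infty \cup S_p}(\cG\pl)(-1)$ up to pseudo-isomorphism, \emph{not} $E^1(X_S^{cs,-})$ with $X_S(\cG\pl)(-1)$. To get from one to the other you need two further comparison sequences — one on the minus side replacing $X_S^{cs,-}$ by $X_{nr}\mi$ (coming from $\cY_{S_p}\mi \ia X_{nr}\mi \sa X_S^{cs,-}$, itself extracted from the analogue of \eqref{eq:five_term_sequence_enlarge_s} for $S_\infty \subseteq S$), and one on the plus side replacing $X_{S_\infty\cup S_p}(\cG\pl)$ by $X_S(\cG\pl)$ (which contributes $\cY_{S_f \setminus S_p}\mi(1)$ via \cite{nsw} theorem 11.3.5) — and then the reconciliation $\cY_{S_f\setminus S_p}\mi \ia \cY_{S_f}\mi \sa \cY_{S_p}\mi$ together with an identification of $E^1$ of these induced modules. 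Your phrase ``check that the leftover contribution matches the bookkeeping'' is precisely where the remaining half of the proof lives, and as stated your target identity $E^1(X_S^{cs,-})(1) \approx X_S(\cG\pl)$ is false in general.

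Second, your justification for writing each class as $[H^0]_\fp - [H^1]_\fp$ and for additivity along all the auxiliary exact sequences rests on finite projective dimension over $\Lambda_\fp(\Gamma)$, but what is actually required is finite projective dimension over the (generally non-maximal, non-commutative) order $\Lambda_\fp(\cG)$, since the classes live in $K_0(\Lambda_\fp(\cG), \cQ(\cG))$. At \emph{regular} height-one primes this holds because every finitely generated $\Lambda_\fp(\cG)$-module has finite projective dimension — a nontrivial fact, not a consequence of regularity of $\Lambda_\fp(\Gamma)$. At the singular prime $\ideal{p}$ it fails, and one must argue differently: there, every module finitely generated over $\ZZ_p$ (so all the $\cY$-type and $E^1(\cY)$-type terms, and the degree-one cohomology of both complexes) localises to zero, which collapses all the sequences and reduces the claim directly to Kummer duality; the surviving $H^0$ then has finite projective dimension a fortiori because localisation preserves perfection of the complex. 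Without this case distinction the argument is not complete.
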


        \begin{proof}
            We first point out that the classes of the complexes in the statement are well defined in the sense of \eqref{eq:class_zero_trivialisation}. Indeed, $\Lambda_\fp(\cG)$ is a $\Lambda_\fp(\Gamma)$-order in the $\ffrac(\Lambda_\fp(\Gamma)) = \cQ(\Gamma)$-algebra $\cQ(\cG)$, and both complexes have $\Lambda_\fp(\Gamma)$-torsion cohomology - they do even before localising by the preceding discussion. Here we are tacitly using the fact that $E^1$ preserves torsionness.

            Consider the short exact sequence \eqref{eq:ses_h1_minus_parts}, whose last non-zero term was shown to coincide with $\cY_{S_f}\mi$. In the associated long exact $\Ext$-sequence
            \[
                \cdots \to E^0(X_S^{cs, -}) \to E^1(\cY_{S_f}\mi) \to E^1(H^1(\cC_S^{\bullet, -})) \to E^1(X_S^{cs, -}) \to E^2(\cY_{S_f}\mi) \to \cdots,
            \]
            the terms $E^0(X_S^{cs, -})$ and $E^2(\cY_{S_f}\mi)$ vanish. The former does by property ii) of $E^i$ above, whereas in the case of the latter, vanishing follows immediately from the length-one projective resolution of $\Ind_{\cG_v}^\cG \ZZ_p$ constructed in the proof of corollary \ref{cor:difference_euler_characteristics_change_s}. Thus, we obtain a short exact sequence on the $E^1$-terms.

            In order to apply Kummer duality, $E^1(X_S^{cs, -})$ must be replaced by $E^1(X_{nr}\mi)$, which can be achieved as follows: start at the five-term exact sequence of $\Lambda(\cG)$-modules
            \[
                0 \to E_{S_\infty} \to E_S \to \varprojlim_n \bigoplus_{w_n \in S_f(L_n)} \ZZ_p \cdot w_n \to X_{S_\infty}^{cs} \to X_S^{cs} \to 0,
            \]
            which is constructed in a completely analogous manner to \eqref{eq:five_term_sequence_enlarge_s} (note that $X_{S_\infty}^{cs}$ is precisely $X_{nr}$). As explained in remark \ref{rem:difference_galois_modules_s}, the non-$p$-adic places are irrelevant in the middle term, which therefore becomes $\cY_{S_p}$. Since $E_{S_\infty}\mi = E_S\mi = \ZZ_p(1)$ (apply \cite{nsw} theorem 11.3.11 as before), the sequence $\cY_{S_p}\mi \ia X_{nr}\mi \sa X_S^{cs, -}$ is exact. Using the same $\Ext$-sequence argument as above results in a new short exact sequence on $E^1$-terms which, when spliced with the previous one, yields
            \begin{equation}
            \label{eq:four_term_exact_sequence_minus_e}
                0 \to E^1(\cY_{S_f}\mi) \to E^1(H^1(\cC_S^{\bullet, -})) \to E^1(X_{nr}\mi) \sa E^1(\cY_{S_p}\mi) \to 0.
            \end{equation}

            Less effort is needed on plus parts. Let $(S_f \setminus S_p)^{p+}$ denote the set of $v \in S_f \setminus S_p$ such that $L\pl$ contains a primitive $p$-th root of unity locally at any (equivalently, all) prolongations of $v$. Then there exists a short exact sequence of $\Lambda(\cG)$-modules
            \begin{equation}
            \label{eq:exact_sequence_x_plus_parts}
                0 \to \bigoplus_{v \in (S_f \setminus S_p)^{p+}} \Ind_{\cG_v\pl}^{\cG\pl} (\ZZ_p(1)) \to X_S(\cG\pl) \to X_{S_\infty \cup S_p}(\cG\pl) \to 0
            \end{equation}
            by \cite{nsw} theorem 11.3.5. A simple computation shows that
            \[
                \bigoplus_{v \in (S_f \setminus S_p)^{p+}} \Ind_{\cG_v\pl}^{\cG\pl} (\ZZ_p(1)) = \cY_{(S_f \setminus S_p)^{p+}}\mi (1).
            \]
            Furthermore, $\cY_{\set{v}}\mi$ vanishes for any $v \in S_f \setminus S_p$ which does not lie in $(S_f \setminus S_p)^{p+}$: for any such $v$ and any arbitrarily chosen prolongation of $w$ of $v$ to $L$, the extension $L/L\pl$ is not trivial locally at $w$ ($L$ does contain $\zeta_p$), and therefore $\tau \in \Gal(L/L\pl)_w$. As a consequence, one may replace the first non-zero term of \eqref{eq:exact_sequence_x_plus_parts} by $\cY_{S_f \setminus S_p}\mi (1)$.

            The relation between \eqref{eq:four_term_exact_sequence_minus_e} and \eqref{eq:exact_sequence_x_plus_parts} comes in the form of \textit{Kummer duality}, which provides a pseudo-isomorphism
            \[
                X_{S_\infty \cup S_p}(\cG\pl)(-1) \xrightarrow{\approx} E^1(X_{nr}\mi)
            \]
            (see \cite{nsw} corollary 11.4.4 or, more in line with our notation, \cite{sharifi_notes} corollary 3.4.8). In order to prove the claim in the proposition, we distinguish two cases:
            \begin{itemize}
                \item{
                    \underline{If $\fp$ is the singular prime $p\Lambda(\Gamma)$}: Any $\Lambda(\cG)$-module which is finitely generated over $\ZZ_p$ vanishes after localisation at $\fp$ by the structure theorem \ref{thm:structure_theorem_iwasawa}. This is the case for the degree-1 cohomology modules of both $R\Hom_{\Lambda(\cG)}(\cC_S^{\bullet, -}, \Lambda(\cG))(1)$ and $\cT_{S}\q(\cG\pl)$. In particular, one has
                    \begin{align*}
                        [R\Hom_{\Lambda(\cG)}(\cC_S^{\bullet, -}, \Lambda(\cG))(1)_\fp] & = [H^0(R\Hom_{\Lambda(\cG)}(\cC_S^{\bullet, -}, \Lambda(\cG))(1)_\fp)] \\
                        & = [H^0(R\Hom_{\Lambda(\cG)}(\cC_S^{\bullet, -}, \Lambda(\cG)))(1)_\fp] \\
                        & = [E^1(H^1(\cC_S^{\bullet, -}))(1)_\fp]
                    \end{align*}
                    and $[\cT_{S}\q(\cG\pl)_\fp] = [H^0(\cT_{S}\q(\cG\pl))_\fp] = [X_S(\cG\pl)_\fp]$ in $K_0(\Lambda_\fp(\cG), \cQ(\cG))$. Note that the degree-0 cohomology modules have a fortiori finite projective dimension when localised at $\fp$, as localisation preserves perfection.

                    Localise now \eqref{eq:four_term_exact_sequence_minus_e} and \eqref{eq:exact_sequence_x_plus_parts} at $\fp$, which causes the terms of the form $\cY_-$ and $E^1(\cY_-)$ to vanish by their finite-generatedness over $\ZZ_p$. This results in
                    \[
                        [E^1(H^1(\cC_S^{\bullet, -}))(1)_\fp] = [E^1(X_{nr}\mi)(1)_\fp] = [X_{S_\infty \cup S_p}(\cG\pl)_\fp] = [X_S(\cG\pl)_\fp],
                    \]
                    the middle equality being Kummer duality, which concludes the proof of the singular case.
                }
                \item{
                    \underline{If $\fp$ is a regular (that is, not the singular) prime}: The localisation $\Lambda_\fp(\cG)$ is known to have the following property: any finitely generated $\Lambda_\fp(\cG)$-module has finite projective dimension (cf. \cite{swan}). In particular, the class in $K_0(\Lambda_\fp(\cG), \cQ(\cG))$ of a perfect complex with $\Lambda(\Gamma)_\fp$-torsion cohomology can be computed as an alternating sum of the classes of its cohomology modules. Using this fact, together with \eqref{eq:four_term_exact_sequence_minus_e} and \eqref{eq:exact_sequence_x_plus_parts}, Kummer duality and the obvious short exact sequence $\cY_{S_f \setminus S_p}\mi \ia \cY_{S_f}\mi \sa \cY_{S_p}\mi$ , yields the desired result.
                }
            \end{itemize}
        \end{proof}

        Now the main result of this section follows easily:

        \begin{thm}
        \label{thm:rw}
            Setting \ref{sett:rw}. Conjecture \hyperref[conje:emcmi]{eMC\textsuperscript{$-$}($L_\infty/K, S)$} holds if and only if \hyperref[conje:rw]{RW($L_\infty\pl/K, S)$} does.
        \end{thm}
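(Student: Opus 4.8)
The strategy is to use the two key homomorphisms linking $K$-theory to centres together with the algebraic comparison in Proposition~\ref{prop:plus_and_minus_locally}. Concretely, I would invoke \cite{johnston_nickel} theorem~A.8 (or an analogue thereof): given that the complexes $R\Hom_{\Lambda(\cG)}(\cC_S^{\bullet, -}, \Lambda(\cG))(1)$ and $\cT_{S}\q(\cG\pl)$ have the \emph{same class in $K_0(\Lambda_\fp(\cG), \cQ(\cG))$ for every height-one prime $\fp$} (which is exactly the content of Proposition~\ref{prop:plus_and_minus_locally}), the existence of an element in $K_1(\cQ(\cG)\mi)$ with prescribed boundary $-[\cC_S^{\bullet, -}]$ and prescribed reduced norm is equivalent to the existence of an element in $K_1(\cQ(\cG\pl)) = K_1(\cQ(\cG)\pl)$ with boundary $[\cT_{S}\q(\cG\pl)]$ and a suitably related reduced norm. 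Here one uses the decomposition $K_0(\Lambda(\cG),\cQ(\cG)) = K_0(\Lambda(\cG)\pl,\cQ(\cG)\pl)\times K_0(\Lambda(\cG)\mi,\cQ(\cG)\mi)$ and the fact that classes in $K_0$ are detected locally at height-one primes (this is where the preservation of $[\;\cdot\;]$ under localisation enters). The translation between the boundary appearing in \hyperref[conje:emcmi]{eMC\textsuperscript{$-$}($L_\infty/K, S$)}, namely $-[\cC_S^{\bullet, -}]$, and $[\cT_{S}\q(\cG\pl)]$ is effected by the $R\Hom(-,\Lambda(\cG))(1)$ operation, whose effect on $K$-theory classes I would make explicit using the standard formula relating $[\cP\q]$ and $[R\Hom_{\Lambda(\cG)}(\cP\q,\Lambda(\cG))]$ (dualising swaps even and odd degrees and introduces a sign), composed with the Tate twist (which acts on $K_0(\Lambda(\cG),\cQ(\cG))$ in a controlled way).

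Second, on the analytic side I would match the reduced norms. By Lemma~\ref{lem:ic_totally_real_case}, the tuple $(F_{S,\chi})_{\chi\in\Irr_p(\cG)\mi/{\sim}_W}$, which is the analytic input of \hyperref[conje:emcmi]{eMC\textsuperscript{$-$}}, is precisely $\iota(t_\tcyc(F_{S,\cG}\pl))$ restricted to minus parts; and $F_{S,\cG}\pl$ is the analytic input of \hyperref[conje:rw]{RW($L_\infty\pl/K, S$)} via Remark~\ref{rmk:conje_rw} ii). The composite $\iota\circ t_\tcyc$ is a group automorphism of $\units{Z(\cQ^c(\cG))}$ swapping plus and minus parts; it restricts to an isomorphism $\units{Z(\cQ(\cG)\pl)}\isoa\units{Z(\cQ(\cG)\mi)}$ (and likewise on $K_1$). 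Hence, given a zeta element $\zeta_S\pl$ for \hyperref[conje:rw]{RW}, one sets $\zeta_S\mi = \iota(t_\tcyc(\zeta_S\pl)) \in K_1(\cQ(\cG)\mi)$ (extending $\iota, t_\tcyc$ to $K_1$ as in the excerpt), and conversely; the reduced-norm conditions match because $\nr$ commutes with $\iota$ and with $t_\tcyc$ (both being induced by ring automorphisms), and the $\psi_\chi$-values are recovered from the reduced norm via Definition~\ref{defn:k1_to_gamma_chi}. Here one also uses that IC on minus parts holds unconditionally (Lemma~\ref{lem:ic_totally_real_case} together with Remark~\ref{rem:ic_totally_real_case} i)--ii)), so that \hyperref[conje:emcmi]{eMC\textsuperscript{$-$}} really is the minus part of \hyperref[conje:emc]{eMC($L_\infty/K, L, S, \varnothing, \alpha, \beta$)}, and that by Theorem~\ref{thm:independence_of_parameters} and Corollary~\ref{cor:independence_of_s_and_t} the set $T$ may be reinstated or discarded freely.

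The third ingredient is to check that the boundary map $\partial$ and reduced norm $\nr$ are compatible with $\iota$ and $t_\tcyc$ in the precise sense needed. For $t_\tcyc$, which is induced by a $\Lambda(\cG)$-algebra automorphism, this is formal: it induces compatible automorphisms of the whole localisation sequence \eqref{eq:exact_sequence_k-theory}, and $t_\tcyc$ applied to the class $[\cC_S^{\bullet, -}]$ of a complex is the class of the complex with coefficients twisted by $\chi_\tcyc$; this is how the Tate twist $(1)$ in Proposition~\ref{prop:plus_and_minus_locally} gets absorbed. For $\iota$, which is an \emph{anti}-automorphism, one must be slightly careful: $\iota$ exchanges left and right modules, so its effect on $K_0(\Lambda(\cG),\cQ(\cG))$ is naturally described via $R\Hom_{\Lambda(\cG)}(-,\Lambda(\cG))$, which is exactly the operation already present in Proposition~\ref{prop:plus_and_minus_locally}. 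Assembling: \hyperref[conje:rw]{RW} provides $\zeta_S\pl$ with $\partial(\zeta_S\pl)=[\cT_{S}\q(\cG\pl)]$; apply $R\Hom(-,\Lambda(\cG))(1)$-duality and Proposition~\ref{prop:plus_and_minus_locally} to rewrite this (locally at every $\fp$, hence globally in $K_0(\Lambda(\cG)\mi,\cQ(\cG)\mi)$) as a boundary condition for an element of $K_1(\cQ(\cG)\mi)$ equal to $-[\cC_S^{\bullet, -}]$; transport the reduced-norm condition through $\iota\circ t_\tcyc$ using Lemma~\ref{lem:ic_totally_real_case}; and use surjectivity of $\partial$ (Remark~\ref{rem:emcu} ii)) to promote the local matching to an actual zeta element, yielding \hyperref[conje:emcmi]{eMC\textsuperscript{$-$}}. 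Each step is reversible.

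The main obstacle I anticipate is the bookkeeping around the anti-automorphism $\iota$ and the Tate twist: one must verify that the "similar enough" hypothesis of the $K$-theoretic cancellation lemma of \cite{johnston_nickel} is met in the form that matches Proposition~\ref{prop:plus_and_minus_locally} (equality of classes at all height-one primes), and that the dualising-plus-twisting operation on complexes intertwines $-[\cC_S^{\bullet, -}]$ with $[\cT_{S}\q(\cG\pl)]$ \emph{on the nose} in $K_0$, not merely up to something detected by $\nr(K_1(\Lambda(\cG)))$. This requires tracking the cohomology of $R\Hom_{\Lambda(\cG)}(\cC_S^{\bullet, -}, \Lambda(\cG))(1)$ (computed in the excerpt just before Proposition~\ref{prop:plus_and_minus_locally}) against that of $\cT_{S}\q(\cG\pl)$ and invoking Kummer duality exactly as in that proof; the delicate point is that these complexes are genuinely different as complexes and only agree after localisation, so one genuinely needs the full strength of \cite{johnston_nickel} theorem~A.8 rather than a naive isomorphism of complexes. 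Everything else—compatibility of $\nr$ with ring (anti-)automorphisms, the decomposition of $K$-groups into $\pm$ parts, and the unconditional validity of IC on minus parts—is routine given the machinery already assembled in the excerpt.
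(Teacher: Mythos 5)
Your proposal is correct and follows essentially the same route as the paper: Proposition \ref{prop:plus_and_minus_locally} combined with \cite{johnston_nickel} theorem A.8 (with hypotheses replaced by the local equality of classes at all height-one primes) to exchange the boundary class, then transport through $\iota\circ t_\tcyc$ using Lemma \ref{lem:ic_totally_real_case} together with the relations $\partial(t_\tcyc(x))=\partial(x)(-1)$ and $\partial(\iota(x^t))=-[R\Hom_{\Lambda(\cG)}(\cC\q,\Lambda(\cG))]$. The only detail to make fully explicit is the one you already flag: since $\iota$ is an anti-automorphism, the zeta element is $\zeta_S\mi=\iota\bigl(t_\tcyc(\tilde{\zeta}_S)^t\bigr)$, i.e.\ one inserts a transpose and applies the twist to the intermediate element $\tilde{\zeta}_S$ produced by theorem A.8 rather than to $\zeta_S\pl$ directly.
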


        \begin{proof}
            Suppose first that $\zeta_S\pl \in K_1(\cQ(\cG\pl)) \subseteq K_1(\cQ(\cG))$ satisfies \hyperref[conje:rw]{RW($L_\infty\pl/K, S)$}, so ${\partial(\zeta_S\pl) = [\cT_{S}\q(\cG\pl)]}$ and $\nr(\zeta_S\pl) = F_{S, \cG}\pl$. Apply proposition \ref{prop:plus_and_minus_locally} above and \cite{johnston_nickel} theorem A.8 to conclude that there exists a $\tilde{\zeta}_S$ such that $\partial(\tilde{\zeta}_S) = [R\Hom_{\Lambda(\cG)}(\cC_S^{\bullet, -}, \Lambda(\cG))(1)]$ and $\nr(\tilde{\zeta}_S) = F_{S, \cG}\pl$.  We stress that hypothesis (ii) of the cited theorem may not be satisfied, but its proof shows that hypotheses (i) and (ii) can be replaced by the property in proposition \ref{prop:plus_and_minus_locally}.

            Set $\zeta_S\mi = \iota(t_\tcyc(\tilde{\zeta}_S)^t)$, where $t$ denotes the transpose. For all $\chi \in \Irr_p(\cG)\mi$, one has
            \[
                \psi_\chi(\zeta_S\mi) = \nr(\zeta_S\mi)e_\chi = \iota(t_\tcyc(F_{S, \cG}\pl)) e_\chi = F_{S, \psi}
            \]
            (see lemma \ref{lem:ic_totally_real_case} for the last equality).

            On the homological side, it is well known that $\partial(t_\tcyc(x)) = \partial(x)(-1)$ for any $x \in K_1(\cQ(\cG))$; and if, furthermore, $\partial(x) = [\cC\q]$ for a perfect $\Lambda(\cG)$-complex with torsion cohomology, then one has $\partial(\iota(x^t)) = - [R\Hom_{\Lambda(\cG)}(\cC\q, \Lambda(\cG))]$. It follows that $\partial(\zeta_S\mi) = - [\cC_S^{\bullet, -}]$, and hence $\zeta_S\mi$ satisfies \hyperref[conje:emcmi]{eMC\textsuperscript{$-$}($L_\infty/K, S)$}.

            A completely symmetric argument proves the converse.
        \end{proof}

        As explained in remark \ref{rmk:conje_rw} i), setting \ref{sett:rw} does not cover all cases for which the original conjecture of Ritter and Weiss can be formulated. However, if \hyperref[conje:rw]{RW($L_\infty\pl/K, S)$} holds in general, then so does their original conjecture by the functoriality properties of the latter. The above theorem shows that this is the case if \hyperref[conje:emcmi]{eMC\textsuperscript{$-$}($L_\infty/K, S)$} holds in full generality - which would in turn be implied by the equivariant Main Conjecture from section \ref{sec:the_main_conjecture}.

        We conclude with some immediate consequences of the last theorem:
        \begin{cor}
            Setting \ref{sett:rw}. Conjecture \hyperref[conje:emcmi]{eMC\textsuperscript{$-$}($L_\infty/K, S)$} holds in the following cases:
            \begin{enumerate}[i)]
                \item{
                    The $\mu$-invariant of $X_S(\cG\pl)$ vanishes. For instance, this is always the case if $L\pl$ is an abelian extension of $\QQ$.
                }
                \item{
                    $\cG$ has an abelian Sylow $p$-subgroup.
                }
            \end{enumerate}
        \end{cor}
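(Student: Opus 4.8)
The plan is to combine Theorem~\ref{thm:rw} (the equivalence of \hyperref[conje:emcmi]{eMC\textsuperscript{$-$}($L_\infty/K, S)$} and \hyperref[conje:rw]{RW($L_\infty\pl/K, S)$}) with the known cases of the conjecture of Ritter and Weiss. By Theorem~\ref{thm:rw}, it suffices to verify that \hyperref[conje:rw]{RW($L_\infty\pl/K, S)$} holds in each of the two listed situations, so the corollary is essentially a translation of results in the literature.

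For part~i), the hypothesis is precisely the ``$\mu = 0$'' condition under which the conjecture of Ritter and Weiss was established. More concretely, their conjecture for $L_\infty\pl/K$ is known to hold when the relevant Iwasawa $\mu$-invariant vanishes by the series of papers culminating in \cite{rw_on_the}; equivalently, one may cite the proof by Kakde \cite{kakde_mc} of the (equivalent, by \cite{nickel_plms} and \cite{venjakob_on_rw}) formulation in the $\mu = 0$ case. The $\mu$-invariant in question is that of the minus part $X_S^{cs, -}$ or, up to the archimedean and $p$-adic Euler-factor adjustments discussed in subsection~\ref{subsec:the_choice_of_s_and_t} and Kummer duality (proposition~\ref{prop:plus_and_minus_locally}), that of $X_S(\cG\pl)$; vanishing of one is equivalent to vanishing of the other. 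When $L\pl$ is abelian over $\QQ$, the vanishing of $\mu$ is the classical theorem of Ferrero and Washington, so the special case is immediate.

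For part~ii), the assumption that $\cG$ has an abelian Sylow $p$-subgroup should be reduced, via the functoriality of the conjecture of Ritter and Weiss (the same functoriality invoked after Theorem~\ref{thm:rw}) together with the reduction-to-$p$-elementary-subquotients argument (analogous to proposition~\ref{prop:reduction_step} and, on the totally real side, \cite{johnston_nickel} section 10), to the case where $\cG$ itself is pro-$p$ and abelian, or at worst $\cG\pl$ has abelian Sylow $p$-subgroup. In that situation the relevant $K_1$-groups and reduced norms are well understood: by Morita theory and the structure of $\cQ(\cG\pl)$, the conjecture decomposes into $\chi$-parts for all irreducible $\chi$, each of which is known either directly or by the abelian case over $\QQ$ (corollary after Theorem~\ref{thm:bks}) once one has descended to linear characters. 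I would cite here the treatment of the Main Conjecture for groups with abelian Sylow $p$-subgroup — this is the content of, e.g., the works extending Kakde's methods, or one may appeal to the fact that in this case $SK_1$ vanishes and the localisation argument of \cite{johnston_nickel} applies.

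The main obstacle I anticipate is part~ii): the statement ``$\cG$ has an abelian Sylow $p$-subgroup'' is a purely $p$-group-theoretic hypothesis, and one must be careful that the reduction steps genuinely bring one into a range where the conjecture of Ritter and Weiss (equivalently Kakde's conjecture) is a theorem, rather than merely conjectural. The key point to check is that the $p$-elementary subquotients $\cG'/H' \in \cE_p$ arising in the reduction all have the property that the totally real Main Conjecture is known for the associated subextension — which, for a group with abelian Sylow $p$-subgroup, follows because every such subquotient is itself the product of a finite abelian $p$-group with a cyclic prime-to-$p$ group, hence falls under the abelian case. Part~i) by contrast is a direct citation once the $\mu$-invariant identification of proposition~\ref{prop:plus_and_minus_locally} is noted, and the Ferrero–Washington theorem handles the abelian-over-$\QQ$ sub-case verbatim.
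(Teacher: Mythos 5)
Your proposal takes the same route as the paper: reduce via Theorem~\ref{thm:rw} to \hyperref[conje:rw]{RW($L_\infty\pl/K, S)$} and then cite the known cases, namely \cite{rw_on_the}/\cite{kakde_mc} in the $\mu=0$ case and Ferrero--Washington for $L\pl/\QQ$ abelian. Two small remarks. First, in part~i) no translation of $\mu$-invariants via Kummer duality is needed: the hypothesis of the corollary is stated directly for $X_S(\cG\pl)$, which is exactly the module $H^0(\cT_S\q(\cG\pl))$ entering the Ritter--Weiss conjecture, so the citation is verbatim. Second, part~ii) is simply corollary~1.2 of \cite{johnston_nickel}; your attempted reconstruction of that result is unnecessary and slightly off (the descent there lands on Wiles's theorem for abelian extensions of totally real fields, not on the abelian-over-$\QQ$ corollary of Theorem~\ref{thm:bks}, which lives on the CM/cyclotomic side), but since you do ultimately point to the Johnston--Nickel result, the argument closes.
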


        \begin{proof}
            In the case $\mu(X_S(\cG\pl)) = 0$, \hyperref[conje:rw]{RW($L_\infty\pl/K, S)$} is known by \cite{rw_on_the} or \cite{kakde_mc}. The second statement in part i) is the celebrated Ferrero-Washington theorem from \cite{ferrero_washington}. Part ii) follows from corollary 1.2 in \cite{johnston_nickel}.
        \end{proof}

        Recall at this point that the vanishing of the $\mu$-invariant for the cyclotomic $\ZZ_p$-extension of any number field was conjectured by Iwasawa.

\newpage

\appendix
\newpage

\chapter{Functoriality of refined Euler characteristics}
\label{app:functoriality_of_refined_euler_characteristics}

    We show that refined Euler characteristics (as defined in section \ref{sec:an_integral_trivialisation}) behave well with respect to derived extension and restriction of scalars. This is probably known to experts, as the proof amounts to a formal verification, but the author is not aware of any reference for the precise formulation required above. Given a ring homomorphism $f \colon R \to S$, we denote the corresponding extension- and restriction-of-scalars functors by $S \otimes_R - \colon \fM_R^l \to \fM_S^l$ and $\restr{-}{R} \colon \fM_S^l \to \fM_R^l$, respectively, where $\fM_R^l$ denotes the category of left $R$-modules and analogously for $\fM_S^l$. Recall the notation $K_0(\rho, \sigma) \colon K_0(R, S) \to K_0(R', S')$ and $K_0^\rest(\rho, \sigma) \colon K_0(R', S') \to K_0(R, S)$ from diagrams         \eqref{eq:functoriality_relative_k0_payoff} and \eqref{eq:localisation_sequence_restriction}.

    \begin{lem}
    \label{lem:functoriality_rec}
        Let
        \begin{center}
            \begin{tikzcd}
                R \arrow[d, "\rho"] \arrow[r, "\varphi"] & S \arrow[d, "\sigma"] \\
                R' \arrow[r, "\varphi'"]                 & S'
            \end{tikzcd}
        \end{center}
        be a commutative diagram of ring homomorphisms. Suppose that $S$ and $S'$ are semisimple, $S$ is flat as a right $R$-module via $\varphi$ and $S'$ is flat as a right $R'$-module via $\varphi'$. Then:

        \begin{enumerate}[i)]
            \item{
                Let $\cC\q$ be a perfect complex of left $R$-modules and $t$ a trivialisation, that is, an isomorphism $t \colon S \otimes_R H^\odd(\cC\q) \isoa S \otimes_R H^\even(\cC\q)$. Then $t$ induces a trivialisation
                \[
                    t' \colon S' \otimes_{R'} H^\odd(R' \otimes_R^\LL \cC\q) \to S' \otimes_{R'} H^\even(R' \otimes_R^\LL \cC\q)
                \]
                such that
                \[
                    K_0(\rho, \sigma)(\chi_{R, S}(\cC\q, t)) = \chi_{R', S'}(R' \otimes_R^\LL \cC\q, t').
                \]
            }
            \item{
                Assume that $R'$ is a finitely generated projective $R$-module via $\rho$. Let $\cC\q$ be a perfect complex of left $R'$-modules and $t \colon S' \otimes_{R'} H^\odd(\cC\q) \to S' \otimes_{R'} H^\even(\cC\q)$ a trivialisation. Then $\restr{\cC\q}{R}$ is a perfect complex of $R$-modules, $\restr{t}{S}$ is a trivialisation for it, and one has
                \[
                    K_0^\rest(\rho, \sigma)(\chi_{R', S'}(\cC\q, t)) = \chi_{R, S}(\restr{\cC\q}{R}, \restr{t}{S}).
                \]
            }
        \end{enumerate}
    \end{lem}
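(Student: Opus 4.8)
The strategy is to reduce everything to the definition of the refined Euler characteristic via a strictly perfect representative, carefully tracking how extension and restriction of scalars act on representatives, splittings, and the map $\varphi_t$ of \eqref{eq:refined_euler_characteristic_map}. First I would choose a strictly perfect complex $P\q$ of (finitely generated projective) left $R$-modules representing $\cC\q$ in $\cD(R)$. For part i), apply $R' \otimes_R -$ degree-wise: since $P^i$ is finitely generated projective, $R' \otimes_R P^i$ is again finitely generated projective over $R'$, and because $P\q$ is bounded, $R' \otimes_R^\LL \cC\q$ is represented by the strictly perfect complex $R' \otimes_R P\q$. The flatness of $S$ over $R$ and of $S'$ over $R'$ gives $H^i(S \otimes_R P\q) = S \otimes_R H^i(P\q)$ and $H^i(S' \otimes_{R'} (R' \otimes_R P\q)) = S' \otimes_{R'} H^i(R' \otimes_R P\q) = S' \otimes_{R'} (R' \otimes_R H^i(P\q))$; the last identification is the canonical natural isomorphism $S' \otimes_{R'}(R' \otimes_R -) \iso S' \otimes_S (S \otimes_R -)$ ensured by the commutativity of the square, which is exactly the compatibility invoked for the well-definedness of $K_0(\rho,\sigma)$. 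Thus $t$ induces $t' = S' \otimes_S t$ under these identifications, and this is the trivialisation in the statement. The point is then that the isomorphism $\varphi_{t'}$ of \eqref{eq:refined_euler_characteristic_map} built from $R' \otimes_R P\q$, a choice of $S'$-splittings of its cocycle/coboundary sequences, and $t'$, can be obtained by applying $S' \otimes_S -$ to $\varphi_t$ for a compatible choice of $S$-splittings — here one uses that extension of scalars is additive and sends split short exact sequences to split ones, and that $S' \otimes_S -$ of the splittings chosen for $\varphi_t$ are valid splittings for $\varphi_{t'}$ (independence of the choice of splittings, guaranteed by \cite{bb}, removes any ambiguity). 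Then $[R' \otimes_R P^\odd, \varphi_{t'}, R' \otimes_R P^\even] = K_0(\rho,\sigma)([P^\odd, \varphi_t, P^\even])$ directly from the definition of $K_0(\rho,\sigma)$ on $K_0(R,S)$, and the claimed equality follows since by \eqref{eq:rec_independent_derived_iso} refined Euler characteristics do not depend on the chosen representative.

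For part ii), I would run the same argument with $\restr{-}{R}$ in place of $R' \otimes_R -$. The key additional input is that $R'$ is finitely generated projective over $R$ via $\rho$: this ensures $\restr{P'^i}{R}$ is finitely generated projective over $R$ for each term $P'^i$ of a strictly perfect representative $P'\q$ of $\cC\q$, so $\restr{P'\q}{R}$ is a strictly perfect complex of $R$-modules representing $\restr{\cC\q}{R}$ (restriction of scalars is exact, so it computes the derived restriction and preserves the isomorphism class in the derived category). Exactness of $\restr{-}{R}$ also gives $H^i(\restr{P'\q}{R}) = \restr{H^i(P'\q)}{R}$, and the analogous compatibility $\restr{S' \otimes_{R'} -}{S} \iso S \otimes_R \restr{-}{R}$ coming from the square shows $\restr{t}{S}$ is a trivialisation for $\restr{\cC\q}{R}$ with the stated source and target. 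Choosing $S'$-splittings for the cocycle/coboundary sequences of $S' \otimes_{R'} P'\q$ and restricting them to $S$ yields $S$-splittings for the sequences of $S \otimes_R \restr{P'\q}{R} = \restr{S' \otimes_{R'} P'\q}{S}$ (restriction of scalars is additive and preserves split exactness), so $\varphi_{\restr{t}{S}} = \restr{\varphi_t}{S}$ up to the harmless choice of splittings. Then $[\restr{P'^\odd}{R}, \restr{\varphi_t}{S}, \restr{P'^\even}{R}] = K_0^\rest(\rho,\sigma)([P'^\odd, \varphi_t, P'^\even])$ by the definition of $K_0^\rest(\rho,\sigma)$, and again \eqref{eq:rec_independent_derived_iso} finishes the proof.

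The only genuine subtlety — the part I expect to require the most care rather than being wholly routine — is bookkeeping the canonical natural isomorphisms between the four composites of extension/restriction functors around the square, and verifying that under these identifications the induced trivialisation $t'$ (resp. $\restr{t}{S}$) is precisely the one that makes the isomorphism $\varphi_{t'}$ (resp. $\varphi_{\restr{t}{S}}$) equal to the scalar change of $\varphi_t$; one must check these identifications are compatible with the short exact sequences of cocycles, coboundaries and cohomology that enter \eqref{eq:refined_euler_characteristic_map}, and with the maps $B^i(C_S\q) \ia Z^i(C_S\q)$, $Z^i(C_S\q) \ia C_S^i$. Since all functors in sight are additive and ($\otimes$-)exact on the relevant sequences (flatness in part i), exactness of restriction in part ii)), this is a formal diagram chase, but it is the heart of the argument and the place where the hypotheses are actually used. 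Everything else is an appeal to the independence statements of \cite{bb} and to the definitions of $K_0(\rho,\sigma)$ and $K_0^\rest(\rho,\sigma)$ recalled in section \ref{sec:algebraic-k-theory}.
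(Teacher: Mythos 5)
Your proposal is correct and follows essentially the same route as the paper: reduce to a strictly perfect representative, transport the trivialisation and the splittings through the canonical identification $S'\otimes_{R'}(R'\otimes_R -)\iso S'\otimes_S(S\otimes_R -)$, and read off the equality from the definitions of $K_0(\rho,\sigma)$ and $K_0^{\rest}(\rho,\sigma)$. The one point to phrase with care (which the paper flags explicitly) is that $R'$ is not assumed flat over $R$, so $H^i(R'\otimes_R P\q)$ need not equal $R'\otimes_R H^i(P\q)$ before tensoring with $S'$; your displayed identification is only valid after applying $S'\otimes_{R'}-$ and routing through $S'\otimes_S(S\otimes_R H^i(P\q))$, exactly the order of operations you already acknowledge as the subtle bookkeeping step.
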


    \begin{proof}
        \begin{enumerate}[i)]
            \item{
                We assume first that $\cC\q$ is strictly perfect and denote it by $\cP\q$ to avoid confusion with the general case. The following facts will be relevant:
                \begin{enumerate}[1)]
                    \item{
                        $(R' \otimes_R^\LL \cP\q)^i = R' \otimes_R \cP^i$, where the left-hand side denotes the $i$-th cochain module of $R' \otimes_R^\LL \cP\q$. In other words, the complex $R' \otimes_R^\LL \cP\q$ is the result of applying $R' \otimes_R -$ to $\cP\q$ degree-wise (and also to the differentials). This is an immediate consequence of $\cP^i$ being projective for all $i$. Since $R' \otimes_R -$ sends finitely generated projective $R$-modules to finitely generated projective $R'$-modules, $R' \otimes_R^\LL \cP\q$ is strictly perfect as well. In particular, $(S' \otimes_{R'}^\LL (R' \otimes_R^\LL \cP\q))^i = S' \otimes_{R'} (R' \otimes_R \cP^i)$.
                    }
                    \item{
                        For the same reason as above, $(S \otimes_R^\LL \cP\q)^i = S \otimes_R \cP^i$. The semisimplicity of $S$ therefore implies
                        \[
                            (S' \otimes_S^\LL (S \otimes_R^\LL \cP\q))^i = S' \otimes_S^\LL (S \otimes_R^\LL \cP\q)^i = S' \otimes_S (S \otimes_R \cP^i).
                        \]
                    }
                    \item{
                        The functors $S' \otimes_S (S \otimes_R -)$ and $S' \otimes_{R'} (R' \otimes_R -)$ are canonically naturally isomorphic by the commutative diagram above, so we may identify them. Using 1) and 2) yields
                        \[
                            S' \otimes_S^\LL (S \otimes_R^\LL \cP\q) = S' \otimes_{R'}^\LL (R' \otimes_R^\LL \cP\q).
                        \]
                    }
                    \item{
                        $S \otimes_R -$, $S' \otimes_{R'} -$ and $S' \otimes_S -$ are additive and exact and hence commute with taking cocycles, coboundaries and cohomology when applied to a complex degree-wise.
                    }
                \end{enumerate}
                Note that we do not require $R'$ to be flat over $R$, and hence $H^i(R' \otimes_R^\LL \cP\q)$ might not coincide with $R' \otimes_R H^i(\cP\q)$. Some care is therefore needed in the order the subsequent operations.

                We first define the new trivialisation $t'$. By virtue of 1) - 4), there are canonical isomorphisms
                \begin{equation}
                \label{eq:derived_tensor_product_isomorphisms}
                    S' \otimes_S H^i(S \otimes_R^\LL \cP\q) \iso H^i(S' \otimes_S^\LL (S \otimes_R^\LL \cP\q)) = H^i(S' \otimes_{R'}^\LL (R' \otimes_R^\LL \cP\q)) \iso S' \otimes_{R'} H^i(R' \otimes_R^\LL \cP\q)
                \end{equation}
                for each $i \in \ZZ$. We let $t'$ be unique arrow making the diagram
                \begin{equation}
                \label{eq:trivialisation_extension_of_scalars}
                    \begin{tikzcd}
                    S \otimes_R H^\odd(\cP\q) \arrow[r, "t"] \arrow[d, equals]                        & S \otimes_R H^\even(\cP\q) \arrow[d, equals]            \\
                    H^\odd(S \otimes_R^\LL \cP\q) \arrow[r, "t"] \arrow[d, "S' \otimes_S \Id"]                & H^\even(S \otimes_R^\LL \cP\q) \arrow[d, "S' \otimes_S \Id"]   \\
                    S' \otimes_S H^\odd(S \otimes_R^\LL \cP\q) \arrow[d, "\rsim"] \arrow[r, "S' \otimes_S t"] & S' \otimes_S H^\even(S \otimes_R^\LL \cP\q) \arrow[d, "\rsim"] \\
                    S' \otimes_{R'} H^\odd(R' \otimes_R^\LL \cP\q) \arrow[r, "t'", dashed]                    & S' \otimes_{R'} H^\even(R' \otimes_R^\LL \cP\q)
                    \end{tikzcd}
                \end{equation}
                commute, where the vertical isomorphisms come from \eqref{eq:derived_tensor_product_isomorphisms}. Since $t$ is an isomorphism, so are $S' \otimes_S t$ and $t'$ in the obvious categories.

                The refined Euler characteristic $\chi_{R, S}(\cC\q, t)$ relies on a choice of splittings of certain short exact sequences (cf. \eqref{eq:refined_euler_characteristic_map} and preceding lines). Those choices induce analogous splittings for the complex $S' \otimes_{R'} (R' \otimes_R^\LL \cP\q)$ in a natural way: for each $i \in \ZZ$, let $\delta_i$ be a splitting for the sequence $B^i(S \otimes_R^\LL \cP\q) \ia Z^i(S \otimes_R^\LL \cP\q) \sa H^i(S \otimes_R^\LL \cP\q)$. Then the $S'$-homomorphism $\delta_i'$ induced by $S' \otimes_S \delta_i$ and the last two vertical arrows of
                \begin{center}
                    \begin{tikzcd}[row sep=large]
                        B^i(S \otimes_R^\LL \cP\q) \arrow[r, hook] \arrow[d, "S' \otimes_S \Id"] & Z^i(S \otimes_R^\LL \cP\q) \arrow[r, "\pi_i", two heads] \arrow[d, "S' \otimes_S \Id"]   & H^i(S \otimes_R^\LL \cP\q) \arrow[d, "S' \otimes_S \Id"] \arrow[l, "\delta_i", in=340, out=200]   \\
                        S' \otimes_S B^i(S \otimes_R^\LL \cP\q) \arrow[r, hook] \arrow[d, "\rsim"]            & S' \otimes_S Z^i(S \otimes_R^\LL \cP\q) \arrow[r, "S' \otimes_S \pi_i", two heads] \arrow[d, "\rsim"] & S' \otimes_S H^i(S \otimes_R^\LL \cP\q) \arrow[d, "\rsim"] \arrow[l, "S' \otimes_S \delta_i", in=340, out=200] \\
                        B^i(S' \otimes_{R'}^\LL (R' \otimes_R^\LL \cP\q)) \arrow[r, hook]                      & Z^i(S' \otimes_{R'}^\LL (R' \otimes_R^\LL \cP\q)) \arrow[r, "\pi_i'", two heads]                       & H^i(S' \otimes_{R'}^\LL (R' \otimes_R^\LL \cP\q)) \arrow[l, "\delta_i'", dashed, in=340, out=200]
                    \end{tikzcd}
                \end{center}
                (where the isomorphisms are analogous to \eqref{eq:derived_tensor_product_isomorphisms}) is a splitting of the bottom row. Likewise, we use the splitting $\tau_i$ of
                \begin{center}
                    \begin{tikzcd}
                        Z^i(S \otimes_R^\LL \cP\q) \arrow[r, hook] & (S \otimes_R^\LL \cP\q)^i \arrow[r, two heads] & B^ {i + 1}(S \otimes_R^\LL \cP\q) \arrow[l, "\tau_i", in=340, out=200]
                    \end{tikzcd}
                \end{center}
                to construct one for
                \begin{center}
                    \begin{tikzcd}
                        Z^i(S' \otimes_{R'}^\LL (R' \otimes_R^\LL \cP\q)) \arrow[r, hook] & (S' \otimes_{R'}^\LL (R' \otimes_R^\LL \cP\q))^i \arrow[r, two heads] & B^ {i + 1}(S' \otimes_{R'}^\LL (R' \otimes_R^\LL \cP\q)) \arrow[l, "\tau_i'", in=340, out=200, dashed].
                    \end{tikzcd}
                \end{center}

                If $\varphi_t \colon S \otimes_R \cP^\odd \xrightarrow{\sim} S \otimes_R \cP^\even$ and $\varphi_{t'} \colon S \otimes_{R'} (R' \otimes_R^\LL \cP\q)^\odd \xrightarrow{\sim} S' \otimes_{R'} (R' \otimes_R^\LL \cP\q)^\even$ are as in \eqref{eq:refined_euler_characteristic_map}, the diagram
                \begin{center}
                    \begin{tikzcd}
                        S \otimes_R \cP^\odd \arrow[r, "\varphi_t"] \arrow[d, "S' \otimes_S \Id"]                  & S \otimes_R \cP^\even \arrow[d, "S' \otimes_S \Id"]    \\
                        S' \otimes_S (S \otimes_R \cP^\odd) \arrow[d, "\rsim"] \arrow[r, "S' \otimes_S \varphi_t"] & S' \otimes_S (S \otimes_R \cP^\even) \arrow[d, "\rsim"] \\
                        S' \otimes_{R'} (R' \otimes_R^\LL \cP\q)^\odd \arrow[r, "\varphi_{t'}"]                         & S' \otimes_{R'} (R' \otimes_R^\LL \cP\q)^\even
                    \end{tikzcd}
                \end{center}
                can be easily checked to commute, and hence
                \begin{align*}
                    K_0(\rho, \sigma)(\chi_{R, S}(\cP\q, t))
                    = & K_0(\rho, \sigma)([\cP^\odd, \varphi_t, \cP^\even]) \\
                    = & [R' \otimes_R \cP^\odd, S' \otimes_S \varphi_t, R' \otimes_R \cP^\even] \\
                    = & [(R' \otimes_R^\LL \cP\q)^\odd, \varphi_{t'}, (R' \otimes_R^\LL \cP\q)^\even] = \chi_{R', S'}(R' \otimes_R^\LL \cP\q, t').
                \end{align*}

                We now treat the general case of an arbitrary perfect complex $\cC\q$ trivialised by some $t$ as in the statement. Let $\cP\q$ be a strictly perfect representative of $\cC\q$ and $q \colon \cC\q \isoa \cP\q$ an isomorphism in the derived category $\cD(R)$. This induces isomorphisms on cohomology $q_H^i \colon H^i(\cC\q) \xrightarrow{\sim} H^i(\cP\q)$ for all $i \in \ZZ$, as well as an isomorphism $R' \otimes_R^\LL q \colon R' \otimes_R^\LL \cC\q \isoa R' \otimes_R^\LL \cP\q$ in $\cD(R')$ (so $R' \otimes_R^\LL \cC\q$ is perfect). In particular, there exist $R'$-isomorphisms
                \[
                    (R' \otimes_R^\LL q)_H^i \colon H^i(R' \otimes_R^\LL \cC\q) \xrightarrow{\sim} H^i(R' \otimes_R^\LL \cP\q)
                \]
                in each degree $i$ (as mentioned above, one should not expect $(R' \otimes_R^\LL q)_H^i = R' \otimes_R q_H^i$). Consider the diagram
                \begin{center}
                    \begin{tikzcd}
                        S \otimes_R H^\odd(\cC\q) \arrow[r, "t"] \arrow[d, "S \otimes_R q_H^\odd"]                                                            & S \otimes_R H^\even(\cC\q) \arrow[d, "S \otimes_R q_H^\even"]                                           \\
                        S \otimes_R H^\odd(\cP\q) \arrow[r, "\tilde{t}", dashed] \arrow[d]                                                                      & S \otimes_R H^\even(\cP\q) \arrow[d]                                                                      \\
                        S' \otimes_{R'} H^\odd(R' \otimes_R^\LL \cP\q) \arrow[r, "\tilde{t}'", dashed] \arrow[d, "S' \otimes_{R'} (R' \otimes_R^\LL q)_H^\odd"] & S' \otimes_{R'} H^\even(R' \otimes_R^\LL \cP\q) \arrow[d, "S' \otimes_{R'} (R' \otimes_R^\LL q)_H^\even"] \\
                        S' \otimes_{R'} H^\odd(R' \otimes_R^\LL \cC\q) \arrow[r, "t'", dashed]                                                                  & S' \otimes_{R'} H^\even(R' \otimes_R^\LL \cC\q)
                    \end{tikzcd}
                \end{center}
                In the top square, the three solid arrows are isomorphisms and we define the dashed one by commutativity. The middle square is \eqref{eq:trivialisation_extension_of_scalars} (with its central square omitted). As for the bottom square, the two vertical arrows are isomorphisms and the bottom one is again determined by commutativity. This concludes the proof, since it implies
                \[
                    \chi_{R, S}(\cC\q, t) = \chi_{R, S}(\cP\q, \tilde{t}) = \chi_{R', S'}(\cP\q, \tilde{t}') = \chi_{R', S'}(\cC\q, t'),
                \]
                the middle equality being case treated above.
            }
            \item{
                The argument for restriction of scalars is simpler than that for extension. The functor $\restr{-}{R}$ induced by $\varphi$
                \begin{itemize}
                    \item{
                        is additive and exact.
                    }
                    \item{
                        sends finitely generated (resp. projective) $R'$-modules to finitely generated (resp. projective) $R$-modules.
                    }
                    \item{
                        sends quasi-isomorphisms to quasi-isomorphisms\footnote{It is not necessary to consider a left derived functor as we did with $R' \otimes_R^{\LL} -$: one can directly restrict scalars in the derived category. It is of note, however, that $\restr{\cC\q}{R}$ might have a larger isomorphism class in $\cD(R)$ than $\cC\q$ did in $\cD(R')$, since there may exist quasi-isomorphisms which are $R$-equivariant, but not $R'$-equivariant.}.
                    }
                    \item{
                        commutes with cohomology, coboundaries and cocycles.
                    }
                \end{itemize}
                The analogous statements for $\restr{-}{S}$ (via $\sigma$) hold too. By the commutativity of the diagram in the statement, we can identify the functors $S \otimes_R (\restr{-}{R}) = \restr{(S' \otimes_{R'} -)}{S} \colon \fM_{R'}^l \to \fM_S^l$ (more precisely, they are canonically naturally isomorphic). This explains why $\restr{t}{S}$ is a trivialisation for $\restr{\cC\q}{R}$.

                By the same formal argument as in part i), we can assume $\cC\q$ is strictly perfect, say $\cP\q$. The splittings used to define $\varphi_t \colon S' \otimes_{R'} \cP^\odd \to S' \otimes_{R'} \cP^\even$ induce splittings for the for their counterparts associated to $\restr{\cP\q}{R}$. This results in an $S$-isomorphism
                \[
                    \varphi_{(\restr{t}{S})} \colon S \otimes_R (\restr{\cP\q}{R})^\odd \isoa S \otimes_R (\restr{\cP\q}{R})^\even
                \]
                which coincides with $\restr{(\varphi_t)}{S}$. Therefore,
                \begin{align*}
                    K_0^\rest(\rho, \sigma)(\chi_{R', S'}(\cP\q, t)) & = K_0^\rest(\rho, \sigma)([\cP^\odd, \varphi_t, \cP^\even]) \\
                    & = [\restr{\cP^\odd}{R}, \restr{(\varphi_t)}{S}, \restr{\cP^\even}{R}] \\
                    & = [(\restr{\cP\q}{R})^\odd, \varphi_{(\restr{t}{S})}, (\restr{\cP\q}{R})^\even] = \chi_{R, S}(\restr{\cP\q}{R}, \restr{t}{S}).
                \end{align*}
            }
        \end{enumerate}
    \end{proof}

%\newpage
%\thispagestyle{empty}
%\addtocounter{page}{-1}
%\mbox{}

\chapter{Determinant functors}
\label{app:determinant_functors}

    We recall the notion of determinant functors in our setting of interest and prove two simple facts about their duals. The original, more general definition of these functors is due to Knudsen and Mumford (cf. \cite{km}), but our needs are limited to the case treated in \cite{weibel} p. 21 ff. Another good reference is \cite{ckv} p. 70 ff.

    Let a commutative ring $R$ be given which decomposes as a direct product of rings $R = \prod_{i = 1}^n R_i$. If we denote the unit element of $R_i$ by $e_i$, which we identify with its natural preimage in $R$, one has $1 = \sum_{i = 1}^n e_i$ and each $e_i$ is an idempotent of $R$. Every $R$-module $M$ decomposes as $M = \bigoplus_{i = 1}^n e_i M$, with $e_i M$ inheriting a natural $R_i$-module structure.

    A prime ideal $\fp$ of $R$ is necessarily of the form $\fp = R_1 \times \cdots \times R_{i - 1} \times \fp_i \times R_{i + 1} \cdots \times R_n$ for some $i$ and some prime ideal $\fp_i$ of $R_i$, which yields an identification of $\Spec(R)$ with the disjoint union $\bigsqcup_{i = 1}^n \Spec(R_i)$. For any $R$-module $M$, the localisation $M_\fp$ is canonically isomorphic to $(e_i M)_{\fp_i}$ as an $R_\fp \iso (R_i)_{\fp_i}$-module.

    Let $R$ be an arbitrary commutative ring and $P$ a finitely generated projective $R$-module. Then the rank function
    \begin{align*}
        \rank_P \colon \Spec(R) & \to \NN \\
                    \fp & \mapsto \rank_{R_{\fp}} P_\fp
    \end{align*}
    is locally constant (cf. \cite{weibel} p. 21) when the domain is endowed with the usual Zariski topology. By the compactness of $\Spec(R)$, there exists a decomposition $R = \prod_{i = 1}^n R_i$ such that $\restr{\rank_P}{\Spec(R_i)}$ is constant. We define the \textbf{determinant} \index{determinant functor!on modules} of $P$ as the $R$-module
    \[
        \Det_R(P) = \bigoplus_{i = 1}^n \bigwedge_{R_i}^{\rank_{P, i}} e_i P,
    \]
    where $e_i$ is as before, $\bigwedge$ is the usual exterior power and $\rank_{P, i}$ denotes the constant value of $\rank_P$ at $\Spec(R_i)$. Recall that, for any $R_i$-module $M$, the zero-th exterior power $\bigwedge_{R_i}^0 M$ is $R_i$ by convention. Different valid decompositions of $R$ result in isomorphic determinant modules, and hence $\Det_R(P)$ is well defined. A concise way to rephrase this is: $\Det_R(P)$ is locally defined as the highest exterior power. In order to avoid sign issues, one should regard $\Det_R(P)$ as a \textit{graded} invertible module (see the comment at the beginning of \cite{bks_on_zeta} section 3.2) with the grading being given by the rank function. We omit this grading from the notation at no risk of ambiguity, as it is univocally determined by the module $P$ itself.

    Since the $r$-th power of a rank-$r$ free module is free of rank 1 for any $r \in \NN$, it is easy to verify that $\Det_R(P)$ is projective and locally free of rank 1 - in other words, an invertible $R$-module. This implies that, letting $\Det_R(P)^{-1} = \Hom_R(\Det_R(P), R)$, the $R$-linear evaluation map
    \begin{align*}
        ev \colon \Det_R(P) \otimes_R \Det_R(P)^{-1} & \to R \\
        m \otimes f & \mapsto f(m)
    \end{align*}
    is an isomorphism.

    Suppose now that $P$ and $Q$ are two finitely generated projective $R$-modules which are locally of the same rank (an important example is the case $P \iso Q$), and choose a decomposition $R = \prod_{i = 1}^n R_i$ such that $\rank_P$ and $\rank_Q$ coincide on $\Spec(R_i)$ for all $i$. Let $P_i$ denote $e_i P$, and analogously for $Q_i$. Then, given an $R$-homomorphism $f \colon P \to Q$, which we identify with the sum $f = \bigoplus_{i = 1}^n f_i$ of the $R$-(or $R_i$-)homomorphisms $f_i \colon P_i \to Q_i$, we define
    \begin{align*}
        \Det(f_i) \colon & \bigwedge_{R_i}^{\rank_{P, i}} P_i  \to  \bigwedge_{Q_i}^{\rank_{Q, i}} Q_i \\
                        & \bigwedge_{j = 1}^{\rank_{P, i}} m_j \mapsto \bigwedge_{j = 1}^{\rank_{Q, i}} f_i(m_j)
    \end{align*}
    (extended by $R$-linearity) and\index{determinant functor!on homomorphisms}
    \begin{equation}
    \label{eq:definition_of_det}
        \Det_R(f) = \bigoplus_{i = 1}^n \Det(f_i) \colon \Det(Q) \to \Det(P).
    \end{equation}
    This construction satisfies the following:
    \begin{enumerate}[i)]
        \item{
            $\Det_R(f)$ is a homomorphism of $R$-modules and it is independent of the chosen decomposition of $R$.
        }
        \item{
            It is functorial: $\Det_R(\Id) = \Id$ and $\Det_R(g \circ f) = \Det_R(g) \circ \Det_R(f)$. In particular, it sends isomorphisms to isomorphisms and inverses to inverses.
        }
    \end{enumerate}

    The motivation behind the name of determinant functors is the following: suppose $P$ is a free $R$-module of finite rank $r$ and fix a basis $m^1, \ldots, m^r$. In particular, the map $\rank_P$ is identically $r$ on all of $\Spec(R)$ and
    \begin{equation}
    \label{eq:wedge_product_basis}
         \mu = \bigwedge_{j = 1}^{r} m^j
    \end{equation}
    is a generator of $\Det_R(P) = \bigwedge_R^r P$. It can then be shown that, for any endomorphism $f \in \End_R(P)$, the diagram
    \begin{equation}
    \label{eq:determinant_map_free_modules}
        \begin{tikzcd}[column sep=large]
            R \arrow[r, "\det(M_f)"] \arrow[d, "\mu"] & R \arrow[d, "\mu"] \\
            \Det_R(P) \arrow[r, "\Det_R(f)"]            & \Det_R(P)
        \end{tikzcd}
    \end{equation}
    commutes, where $\mu$ denotes the isomorphism $s \mapsto s\mu$ and $\det(M_f)$ is multiplication by the determinant of the matrix of $f$ in any basis (cf. \cite{weibel} p. 21). Note that the vertical arrows can a fortiori be replaced by any arbitrary $R$-homomorphism.

    Determinant functors behave well with respect to extension of scalars: a ring homomorphism $R \to S$ (with $R$ and $S$ commutative) induces a natural isomorphism $S \otimes_R \Det_R(P) \iso \Det_S(S \otimes_R P)$. They are furthermore additive in short exact sequences in the sense that $P' \xhookrightarrow{\iota} P \sa P''$ yields an isomorphism
    \begin{equation}
    \label{eq:additivity_det_functor}
        \Det_R(P') \otimes_R \Det_R(P'') \iisoo \Det_R(P)
    \end{equation}
    which is locally defined by mapping $(\bigwedge_{i = 1}^n x_i) \otimes (\bigwedge_{j = 1}^m y_j)$ to $(\bigwedge_{i = 1}^n \iota(x_i)) \wedge (\bigwedge_{j = 1}^m \sigma(y_j))$, where $\sigma$ is an arbitrarily chosen splitting of $P \sa P''$.

    The above notion can be extended to perfect complexes as follows: given a strictly perfect complex $\cP\q$ of $R$-modules, the \textbf{determinant of $\cP\q$}\index{determinant functor!on perfect complexes} is the module
    \[
        \Det_R(\cP\q) = \bigotimes_{i \in \ZZ} \Det_R(\cP^i)^{(-1)^i} = \Det_R(\cP^\even) \otimes_R \Det_R(\cP^\odd)^{-1},
    \]
    where the tensor product in the middle term is over $R$. If $\cC\q$ is now a perfect complex of $R$-modules, $\Det_R(\cC\q)$ is defined as $\Det_R(\cP\q)$ for a strictly perfect representative $\cP\q$ of $\cC\q$. This is in fact well defined (up to canonical isomorphism) regardless of the choice of $\cP\q$ (cf. \cite{ckv} p.71).

    This behaves well with respect to derived extension of scalars: consider $\cC\q$ and $\cP\q$ as above and a homomorphism of commutative rings $R \to S$. Then $S \otimes_R^\LL \cC\q$ is represented by $S \otimes_R^\LL \cP\q$, which is simply the result of applying $S \otimes_R -$ to $\cP\q$ degree-wise. Therefore, extension of scalars on determinant modules induces a natural map $\Det_R(\cC\q) \to \Det_S(S \otimes_R^\LL \cC\q)$.

    We conclude this appendix with two small lemmas used in section \ref{sec:the_conjecture_of_burns_kurihara_and_sano}:

    \begin{lem}
    \label{lem:invertible_module_dual}
        Let $R$ be a commutative ring and $M = \ideal{m}$ a cyclic invertible $R$ module. Then there exists a unique $m^\ast \in M^{-1} = \Hom_R(M, R)$ such that $m^\ast(m) = 1$. Furthermore, any $f \in M^{-1}$ satisfies $f = f(m) m^\ast$, and therefore $m^\ast$ generates $M^{-1}$ as an $R$-module.
    \end{lem}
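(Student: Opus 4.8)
The statement is essentially local-to-global: invertibility of $M$ means $M$ is finitely generated projective of rank $1$, so the claim can be checked after localising at each prime, where everything trivialises. The plan is to first establish existence and uniqueness of $m^\ast$, and then deduce the generation statement.

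First I would prove existence of $m^\ast$. Since $M$ is invertible, the evaluation map $\mathrm{ev}\colon M \otimes_R M^{-1} \to R$ is an isomorphism. Hence $1 \in R$ has a preimage $\sum_{k} x_k \otimes f_k$ with $\sum_k f_k(x_k) = 1$. Because $M = \ideal{m}$ is cyclic, write $x_k = r_k m$ for suitable $r_k \in R$; then $\sum_k f_k(r_k m) = \sum_k r_k f_k(m) = 1$, so setting $m^\ast = \sum_k r_k f_k \in M^{-1}$ gives $m^\ast(m) = 1$. For uniqueness, suppose $g \in M^{-1}$ also satisfies $g(m) = 1$; since $m$ generates $M$, the values $m^\ast(am) = a = g(am)$ agree on all of $M$ (for $a \in R$), so $g = m^\ast$. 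Here one uses that $M$ being cyclic means every element is of the form $am$, and this is exactly the place where one must be mildly careful: $m$ is a generator, not a basis, so one cannot a priori speak of ``the coordinate of an element'', but the defining property $m^\ast(m) = 1$ together with $R$-linearity pins down $m^\ast$ completely on the generating element, hence everywhere.

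Next, for the final claim, let $f \in M^{-1}$ be arbitrary. I want to show $f = f(m)\, m^\ast$. Both sides are $R$-linear maps $M \to R$, so it suffices to check they agree on the generator $m$: indeed $(f(m)\, m^\ast)(m) = f(m)\, m^\ast(m) = f(m) \cdot 1 = f(m)$, which matches $f(m)$. Since $M$ is generated by $m$ as an $R$-module, agreement on $m$ forces equality of the two homomorphisms. This exhibits every element of $M^{-1}$ as an $R$-multiple of $m^\ast$, i.e. $M^{-1} = \ideal{m^\ast}$, as required.

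The main obstacle is essentially conceptual rather than computational: one must resist the temptation to treat $m$ as a free basis of $M$ (it need not be, since $M$ is only assumed invertible and cyclic, not free), and instead argue purely through the universal property that a homomorphism out of a cyclically generated module is determined by its value on a generator, combined with the invertibility of $\mathrm{ev}$ to produce $m^\ast$ in the first place. Once that is internalised, each step is a one-line verification; no localisation argument is actually needed, though it would give an alternative route via the fact that locally $M$ becomes free of rank $1$ and $m$ becomes a local basis.
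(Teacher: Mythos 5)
Your proof is correct and follows essentially the same route as the paper: existence of $m^\ast$ via the preimage of $1$ under the evaluation isomorphism and cyclicity of $M$, uniqueness and the identity $f = f(m)\,m^\ast$ by checking agreement on the generator $m$. The remark about a localisation alternative is fine but, as you note, unnecessary.
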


    \begin{proof}
        By the invertibility of $M$, the $R$-module homomorphism $ev \colon M \otimes_R M^{-1} \to R$ which sends $x \otimes f$ to $f(x)$ is an isomorphism. Let $\sum_{i = 1}^n x_i \otimes f_i$ be the (unique) preimage of $1 \in R$ via $ev$. Since $m$ is a generator of $M$, for each $i$ there exists a scalar $r_i \in R$ such that $r_i m = x_i$. Hence $\sum_{i = 1}^n x_i \otimes f_i = m \otimes \sum_{i = 1}^n r_i f_i$. In particular,
        \[
            1 = ev\bigg(\sum_{i = 1}^n x_i \otimes f_i\bigg) = ev\bigg(m \otimes \sum_{i = 1}^n r_i f_i\bigg) = \bigg(\sum_{i = 1}^n r_i f_i\bigg)(m).
        \]
        This shows existence of $m^\ast$. As for uniqueness, assume $f, g \in \Hom_R(M, R)$ satisfy $f(m) = g(m) = 1$. Given any $x \in M$, there exists a scalar $r \in R$ with $rm = x$, and therefore $f(x) = r f(m) = r = g(x)$. Hence, $f = g$.

        For the last claim, we note that $f(m) = (f(m)m^\ast)(m)$ and, by the same argument as above, this implies $f = f(m) m^\ast$.
    \end{proof}

    In view of this lemma, the choice of a generator $m$ of an invertible module $M$ (if it exists) yields an $R$-isomorphism
    \begin{align}
    \label{eq:isomorphism_inverse_module}
        \iota_m \colon M & \xrightarrow{\sim} M^{-1} \\
                       m & \mapsto m^\ast.\nonumber
    \end{align}
    Note that $\iota_m$ does depend on the choice of $m$: if $m'$ is a different generator, say $m' = rm$ for some $r \in \units{R}$, then an immediate computation shows that $\iota_m = r^2 \iota_{m'}$.

    Any invertible $R$-module $M$ is automatically reflexive. That is, the canonical $R$-homomorphism
    \begin{align*}
        M & \to (M^{-1})^{-1} = \Hom_R(\Hom_R(M, R), R) \\
                       x & \mapsto [f \mapsto f(x)]
    \end{align*}
    is an isomorphism. This follows from the fact that it holds for free modules, and hence in particular for the localisation of $M$ at any prime ideal; and a map which becomes an isomorphism after localisation at every maximal ideal is an isomorphism globally too (cf. \cite{eisenbud} corollary 2.8). We often identify $M$ and $(M^{-1})^{-1}$, in light of which one has $(m^\ast)^\ast = m$ and $\iota_{m^\ast} = \iota_m^{-1}$ for any generator $m$ of $M$ (if it exists).

    The following application of this duality is relevant to our endeavours:

    \begin{lem}
    \label{lem:trivialising_dual_inverse}
        Let $R$ be a commutative ring and $f \colon M \xrightarrow{\sim} N$ an isomorphism of finitely generated projective $R$-modules (which are therefore locally of the same rank). Suppose $\Det_R(M)$ and $\Det_R(N)$ are generated over $R$ by $m$ and $n$, respectively. Consider the homomorphisms $\alpha$ and $\beta$ defined by the rows of
        \begin{equation}
        \label{diag:dualising_inverted_trivialisation}
            \begin{tikzcd}
                \alpha \colon \Det_R(M) \otimes_R \Det_R(N)^{-1} \arrow[d, "i_m \otimes i_{n^\ast}"] \arrow[rr, " \Det_R(f) \otimes \Id"] &  & \Det_R(N) \otimes_R \Det_R(N)^{-1} \arrow[r, "ev"] & R \\
                \beta \colon \Det_R(M)^{-1} \otimes_R \Det_R(N) \arrow[rr, "\Id \otimes \Det_R(f^{-1})"]                            &  & \Det_R(M)^{-1} \otimes_R \Det_R(M) \arrow[r, "ev"] & R
            \end{tikzcd}
        \end{equation}
        Then $\beta((i_m \otimes i_{n^\ast})(m \otimes n^\ast)) = \alpha(m \otimes n^\ast)^{-1} \in \units{R}$.
    \end{lem}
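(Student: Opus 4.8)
The plan is to reduce everything to tracking a single scalar unit attached to $\Det_R(f)$, exploiting that the determinant modules involved are cyclic. First I would unwind the left-hand side: by the defining property of the isomorphism $\iota_{(-)}$ in \eqref{eq:isomorphism_inverse_module} one has $\iota_m(m) = m^\ast$, while $\iota_{n^\ast}$ is $\iota$ applied to the generator $n^\ast$ of $\Det_R(N)^{-1}$ (which generates by lemma \ref{lem:invertible_module_dual}), so $\iota_{n^\ast}(n^\ast) = (n^\ast)^\ast$, and under the reflexivity identification $(\Det_R(N)^{-1})^{-1} = \Det_R(N)$ discussed after lemma \ref{lem:invertible_module_dual} this is just $n$. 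Hence $(\iota_m \otimes \iota_{n^\ast})(m \otimes n^\ast) = m^\ast \otimes n$, and the claim becomes the comparison $\beta(m^\ast \otimes n) = \alpha(m \otimes n^\ast)^{-1}$.

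Next I would introduce the key scalar. Since $f \colon M \xrightarrow{\sim} N$ is an isomorphism of finitely generated projective $R$-modules (locally of the same rank), the covariant determinant $\Det_R(f) \colon \Det_R(M) \to \Det_R(N)$ — the one appearing in the top row of \eqref{diag:dualising_inverted_trivialisation}, which is available precisely because $f$ is invertible — is an isomorphism of cyclic $R$-modules. Writing $\Det_R(f)(m) = u\, n$ for the unique $u \in R$, functoriality of the determinant functor gives $\Det_R(f^{-1}) = \Det_R(f)^{-1}$, hence $\Det_R(f^{-1})(n) = u'\, m$ with $u u' = u' u = 1$; in particular $u \in \units{R}$. (If $R$ decomposes as a finite product of rings, so that the exterior powers in $\Det_R$ are taken rank-wise, this argument is run componentwise, or equivalently after localising at each prime; nothing changes.)

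Finally I would compute both sides directly using the normalisation $n^\ast(n) = 1$ and $m^\ast(m) = 1$ from lemma \ref{lem:invertible_module_dual}: on the one hand $\alpha(m \otimes n^\ast) = ev\big((\Det_R(f)(m)) \otimes n^\ast\big) = ev(u\,n \otimes n^\ast) = u\, n^\ast(n) = u$; on the other hand $\beta(m^\ast \otimes n) = ev\big(m^\ast \otimes \Det_R(f^{-1})(n)\big) = ev(m^\ast \otimes u^{-1} m) = u^{-1} m^\ast(m) = u^{-1}$. Combining, $\beta\big((\iota_m \otimes \iota_{n^\ast})(m \otimes n^\ast)\big) = \beta(m^\ast \otimes n) = u^{-1} = \alpha(m \otimes n^\ast)^{-1}$, as required. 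There is no real obstacle here beyond bookkeeping; the only points that merit care are keeping the variance conventions for $\Det_R(f)$ straight and verifying that the scalar $u$ is a unit, which is exactly what functoriality of $\Det_R$ supplies.
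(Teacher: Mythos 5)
Your proof is correct and follows essentially the same route as the paper's: identify $(\iota_m \otimes \iota_{n^\ast})(m \otimes n^\ast) = m^\ast \otimes n$ via the reflexivity identification, write $\Det_R(f)(m) = u\,n$, and compute both evaluations to get $u$ and $u^{-1}$. The only cosmetic difference is how the unit property of $u$ is justified (you via functoriality $\Det_R(f^{-1}) = \Det_R(f)^{-1}$, the paper via $\alpha$ being an isomorphism of cyclic modules), and both are valid.
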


    \begin{proof}
        This is a straightforward computation. Since $\alpha$ is an $R$-isomorphism, it maps the generator $m \otimes n^\ast$ of $\Det_R(M) \otimes_R \Det_R(N)^{-1}$ to a unit $\alpha(m \otimes n^\ast) \in \units{R}$. By definition, one has
        \[
            (i_m \otimes i_{n^\ast})(m \otimes n^\ast) = m^\ast \otimes n \in \Det_R(M)^{-1} \otimes_R \Det_R(N)
        \]
        (note that we are implicitly using the duality $\iota_{n^\ast} = \iota_n^{-1}$ explained above).

        The element $n$ generates $\Det_R(N)$, and hence there exists an $r \in R$ such that $\Det_R(f)(m) = rn$ In particular, $\Det_R(f^{-1})(n) = \Det_R(f)^{-1}(n) = r^{-1} m$. Thus, on one hand,
        \[
            \alpha(m \otimes n^\ast) = ev(\Det_R(f)(m) \otimes n^\ast) = r \cdot ev(n \otimes n^\ast) = r;
        \]
        and on the other,
        \[
            \beta((i_m \otimes i_{n^\ast})(m \otimes n^\ast)) = \beta(m^\ast \otimes n) = ev(m^\ast \otimes \Det_R(f^{-1})(n)) = r^{-1} \cdot ev(m^\ast \otimes m) = r^{-1},
        \]
        as desired.
    \end{proof}

    All arrows in \eqref{diag:dualising_inverted_trivialisation} are isomorphisms, and therefore the diagram can be completed uniquely into a commutative square through a vertical arrow $R \to R$. The lemma shows this arrow is multiplication by $r^{-2}$, where $r = \alpha(m \otimes n^\ast) \in \units{R}$.

%\chapter{A note on the cyclotomic hypothesis}
%\label{app:a_note_on_the_cyclotomic_hypothesis}
%
%    finite places split finitely
%
%    weak Leopoldt
%
%    Andreas' result used for alpha
%
%    things like the inverse limit of the local units being $\ZZ_p(1)$, too, but these should be easy to fix
%
%    compositum with a number field preserves cyclotomicity - this shouldn't play a role

%\newpage
%\thispagestyle{empty}
%\addtocounter{page}{-1}
%\mbox{}

    \newpage
    \cleardoublepage
    \printbibliography[heading=bibintoc]

%\newpage
%\thispagestyle{empty}
%\addtocounter{page}{-1}
%\mbox{}

    \newpage
    \cleardoublepage
    \phantomsection
    \printindex

\end{document}